\newtheorem{proposition}{Proposition}[section]
\newtheorem{theorem}{Theorem}[section]
\newtheorem{lemma}[proposition]{Lemma}
\newtheorem{remark}{Remark}[section]
\newtheorem{corollary}[theorem]{Corollary}
\numberwithin{equation}{section}
\title{Blowup and Scattering problems for the Nonlinear Schr\"odinger equations}
\author{Takafumi Akahori and Hayato Nawa}
\date{}
\begin{document}
\maketitle

\begin{abstract}
We consider $L^{2}$-supercritical and $H^{1}$-subcritical focusing nonlinear Schr\"odinger equations. We introduce a subset $PW$ of $H^{1}(\mathbb{R}^{d})$ for $d\ge 1$, and investigate behavior of the solutions with initial data in this set. For this end, we divide $PW$ into two disjoint components $PW_{+}$ and $PW_{-}$. Then, it turns out that any solution starting from a datum in $PW_{+}$ behaves asymptotically free, and solution starting from a datum in $PW_{-}$ blows up or grows up, from which we find that the ground state has two unstable directions. 
We also investigate some properties of generic global and blowup solutions.
\end{abstract}

\section{Introduction}
In this paper, we consider the Cauchy problem for the nonlinear Schr\"odinger equation 
\begin{equation}\label{08/05/13/8:50}
2i\displaystyle{\frac{\partial \psi}{\partial t}}(x,t) +\Delta \psi(x,t)+|\psi(x,t)|^{p-1}\psi(x,t)=0,  \quad (x,t) \in \mathbb{R}^{d}\times \mathbb{R}, 
\end{equation}
where $i:=\sqrt{-1}$, $\psi$ is a complex-valued function on $\mathbb{R}^{d}\times \mathbb{R}$, $\Delta$ is the Laplace operator on $\mathbb{R}^{d}$ and $p$ satisfies the so-called $L^{2}$-supercritical and $H^{1}$-subcritical condition 
\begin{equation}\label{09/05/13/15:03}
2+\frac{4}{d} <p+1 <2^{*}:=\left\{ \begin{array}{ccl} 
\infty &\mbox{if}& d=1,2,
\\[6pt]
\displaystyle{\frac{2d}{d-2}} &\mbox{if}& d\ge 3. 
\end{array} \right.
\end{equation}
We associate this equation with the initial datum from the usual Sobolev space 
$H^{1}(\mathbb{R}^{d})$: 
\begin{equation}\label{09/12/16/15:39}
\psi(\cdot, 0)=\psi_{0} \in H^{1}(\mathbb{R}^{d}).
\end{equation}
We summarize the basic properties of this Cauchy problem (\ref{08/05/13/8:50}) and (\ref{09/12/16/15:39}) (see, e.g., \cite{Cazenave, Ginibre-Velo1, Kato1, Kato2, Kato1995, Sulem-Sulem}). The unique local existence of solutions is well known: for any $\psi_{0} \in H^{1}(\mathbb{R}^{d})$, there exists a unique solution $\psi$ in $C(I_{\max};H^{1}(\mathbb{R}^{d}))$ for some interval $I_{\max}=(-T_{\max}^{-}, T_{\max}^{+}) \subset \mathbb{R}$: maximal existence interval including $0$;  $T_{\max}^{+}$ ($-T_{\max}^{-}$) is the maximal existence time for the future (the past). If $I_{\max} \subsetneqq \mathbb{R}$, then we have 
\begin{equation}\label{10/01/27/11:26}
\lim_{t \to *T_{\max}^{*}}\left\| \nabla \psi(t) \right\|_{L^{2}}
=\infty \quad  (\mbox{blowup}),
\end{equation}
provided that $T_{\max}^{*}<\infty$, where $*$ stands for $+$ or $-$. Besides, the solution $\psi$ satisfies the following conservation laws of the mass $\mathcal{M}$, the Hamiltonian $\mathcal{H}$ and the momentum $\mathcal{P}$ in this order: for all $t \in I_{\max}$, 
\begin{align}
\label{08/05/13/8:59}
\mathcal{M}(\psi(t))
&:=\|\psi(t)\|_{L^{2}}^{2}=\mathcal{M}(\psi_{0}),
\\[6pt]
\label{08/05/13/9:03}
\mathcal{H}(\psi(t))
&:=\|\nabla \psi(t) \|_{L^{2}}^{2}- \frac{2}{p+1}\|\psi(t)\|_{L^{p+1}}^{p+1}=\mathcal{H}(\psi_{0}),
\\[6pt]
\label{08/10/20/4:37}
\mathcal{P}(\psi(t))
&:=\Im \int_{\mathbb{R}^{d}} \nabla \psi(x,t)\overline{\psi(x,t)}\,dx
=\mathcal{P}(\psi_{0}). 
\end{align}
If, in addition, $\psi_{0}\in L^{2}(\mathbb{R}^{d},|x|^{2}dx)$, then the corresponding solution $\psi$ also belongs to $C(I_{\max};L^{2}(\mathbb{R}^{d},|x|^{2}dx))$ and  satisfies the so-called virial identity (see \cite{Ginibre-Velo1}):
\begin{equation}\label{08/12/03/16:55}
\begin{split}
\int_{\mathbb{R}^{d}}|x|^{2}\left| \psi(x,t) \right|^{2}\,dx
&=
\int_{\mathbb{R}^{d}}|x|^{2}\left| \psi_{0}(x) \right|^{2}\,dx
+2t\Im{\int_{\mathbb{R}^{d}}x\cdot \nabla \psi_{0}(x)\overline{\psi_{0}(x)}\,dx}
\\
&\qquad +2\int_{0}^{t}\int_{0}^{t'}\mathcal{K}(\psi(t''))\,dt''dt'
\qquad 
\mbox{for all $t \in I_{\max}$}, 
\end{split}
\end{equation}
where $\mathcal{K}$ is a functional defined by 
\begin{equation}\label{10/02/13/11:38}
\mathcal{K}(f)
:=\|\nabla f\|_{L^{2}}^{2}-\frac{d(p-1)}{2(p+1)}\|f\|_{L^{p+1}}^{p+1},
\quad 
f \in H^{1}(\mathbb{R}^{d}).
\end{equation}
It is worth while noting that 
\begin{equation}\label{08/05/13/9:16}
\mathcal{K}(f)=\mathcal{H}(f)-\frac{d}{2(p+1)}\left\{ p-\left(1+\frac{4}{d}\right) \right\}\|f\|_{L^{p+1}}^{p+1},
\quad 
f\in H^{1}(\mathbb{R}^{d}), 
\end{equation}
so that: for any $f \in H^{1}(\mathbb{R}^{d})\setminus \{0\}$, we have 
\begin{eqnarray}
\label{08/05/13/9:11}
\mathcal{K}(f) > \mathcal{H}(f) & \mbox{if} & p<1+\frac{4}{d} ,
\\[6pt]
\label{08/05/13/11:12}
\mathcal{K}(f) = \mathcal{H}(f) & \mbox{if} & p=1+\frac{4}{d},
\\[6pt]
\label{08/05/13/11:13}
\mathcal{K}(f) < \mathcal{H}(f) & \mbox{if} & p>1+\frac{4}{d}.
\end{eqnarray}
The virial identity (\ref{08/12/03/16:55}) tells us the behavior of the ``variance'' of solution, from which we expect to obtain a kind of propagation or concentration estimates. However, we can not use (\ref{08/12/03/16:55}) as it is, since we do not require the weight condition $\psi_{0}\in L^{2}(\mathbb{R}^{d},|x|^{2}dx)$. We will work in the pure energy space $H^{1}(\mathbb{R}^{d})$, introducing a generalized version of the virial identity (see (\ref{08/03/29/19:05}) in the appended section \ref{08/10/19/14:57}) to discuss propagation or concentration  of a ``solution'' in Section \ref{09/05/06/9:13} and Section \ref{08/07/03/0:40}. 
\par
In some physical literature, nonlinear Schr\"odinger equations, abbreviated to NLSs, (or Gross-Pitaevskii equation) arise as NLSs with linear potentials: 
\begin{equation}\label{09/05/17/15:43}
2i\displaystyle{\frac{\partial \psi}{\partial t}}(x,t) +\Delta \psi(x,t) +|\psi(x,t)|^{p-1}\psi(x,t)=V(x)\psi(x,t), \quad (x,t) \in \mathbb{R}^{d}\times \mathbb{R}, 
\end{equation}
where $V$ is a real potential on $\mathbb{R}^{d}$. The case $p=d=3$ corresponds to the conventional $\phi^{4}$-model for Bose gasses with negative scattering length (see, e.g., \cite{Leggett}). Typical examples of the potential $V$ are: $V(x)=c_{h}|x|^{2}$ (harmonic potentials with $c_{h}>0$); and $V(x)=g\cdot x$  (the Stark potentials or uniform gravitational fields along $g \in \mathbb{R}^{d}\setminus \{0\}$). However, it is known that these potentials can be ``removed'' by appropriate space-time transformations: For $V=g\cdot x$, the Avrov-Herbst formula works well (see \cite{Carles-Yoshihisa, Cycon-Froese-Kirsch}); For $V=c_{h}|x|^{2}$, the space-time transformations in \cite{Carles, Niederer} do. Therefore, the study of NLSs without potentials may reveal the essential structure of the solutions to NLSs with potentials (\ref{09/05/17/15:43}).  
\par 
Our equation (\ref{08/05/13/8:50}) has several kinds of solutions: standing waves, blowup solutions (see (\ref{10/01/27/11:26}) above), and global-in-time solutions which asymptotically behave like  free solutions in the distant future/ distant past. Here, the standing waves are nontrivial solutions to our equation (\ref{08/05/13/8:50}) of the form 
\begin{equation}\label{10/03/24/16:08}
\psi(x,t)=e^{\frac{i}{2}\omega t}Q(x), \qquad \omega>0,\quad Q \in H^{1}(\mathbb{R}^{d})\setminus \{0\}.
\end{equation}
Thus, $Q$ solves the following semilinear elliptic equation (nonlinear scalar field equation):
\begin{equation}\label{08/05/13/11:22}
\Delta Q- \omega Q+|Q|^{p-1}Q=0, 
\qquad 
\omega>0,
\quad
Q \in H^{1}(\mathbb{R}^{d})\setminus \{0\}.
\end{equation} 
Here, we remark that every solution $Q$ to the equation (\ref{08/05/13/11:22}) satisfies $\mathcal{K}(Q)=0$. Indeed; since any solution $Q$ to (\ref{08/05/13/11:22}) belongs to the space $H^{1}(\mathbb{R}^{d})\cap L^{2}(\mathbb{R}^{d},|x|^{2}dx)$, the standing wave $\psi=e^{\frac{i}{2}\omega t}Q$ enjoys the virial identity (\ref{08/12/03/16:55}), which immediately leads us to $\mathcal{K}(Q)=0$. 
\par  
The standing waves are one of the interesting objects in the study of NLSs for both mathematics and physics: standing waves are considered to be the states of Bose-Einstein condensations.  In this paper, we are interested in precise instability mechanism of what we call the ground state (the least action solution to    (\ref{08/05/13/11:22}), see also (\ref{09/12/16/20:17}) below). 
 For this end, we employ the classical ``potential well'' theory traced back to Sattinger \cite{Stattinger}. To define our potential well $PW$, we need to know some variational properties of the ground state. We shall give the precise definition of $PW$ in (\ref{09/05/18/22:03}) below. Anyway, our $PW$ is divided into $PW_{+}$ and $PW_{-}$ according to the sign of the functional $\mathcal{K}$, i.e.,  $PW_{+}=PW\cap [\mathcal{K}>0]$, $PW_{-}=PW\cap [\mathcal{K}<0]$ (see (\ref{10/02/13/14:44}), (\ref{10/02/13/14:45})), and the ground state belongs to $\overline{PW_{+}}\cap \overline{PW_{-}}$. 
 We shall show that any solution starting from $PW_{+}$ exists globally in time and asymptotically behaves like a free solution in the distant future and past (see Theorem \ref{08/05/26/11:53}); in contrast, $PW_{-}$ gives rise to  ``singular'' solutions (see Theorem \ref{08/06/12/9:48}). Thus, the ground state shows at least two type of instability, since it belongs to $\overline{PW_{+}}\cap \overline{PW_{-}}$. 
\par 
In order to define our potential well $PW$, we shall investigate some properties of the ground states here. 
There are many literature concerning the elliptic equation (\ref{08/05/13/11:22}) (see, e.g.,  \cite{Berestycki-Lions, Gidas-Ni-Nirenberg, Kwong, Strauss}). We know that if $d\ge 2$, there are infinitely many solutions (bound states) $Q_{\omega}^{n}$ ($n=1,2,\ldots$) such that 
\begin{equation}\label{09/12/16/20:16}
\mathcal{S}_{\omega}(Q_{\omega}^{n}):=\frac{1}{2}\left\| \nabla Q_{\omega}^{n} \right\|_{L^{2}}^{2}+
\frac{\omega}{2} \left\| Q_{\omega}^{n} \right\|_{L^{2}}^{2}
-\frac{1}{p+1}\left\| Q_{\omega}^{n} \right\|_{L^{p+1}}^{p+1}\to \infty \quad 
(n\to \infty),
\end{equation}
where the functional $\mathcal{S}_{\omega}$ is called the action for (\ref{08/05/13/11:22}) (see, e.g.,  \cite{Berestycki-Lions, Strauss}). The ground state $Q_{\omega}$ is the least action solution to (\ref{08/05/13/11:22}), more strongly, it satisfies that  
\begin{equation}\label{09/12/16/20:17}
\mathcal{S}_{\omega}(Q_{\omega})=\inf \left\{ \mathcal{S}_{\omega}(Q) \ | \ Q \in H^{1}(\mathbb{R}^{d})\setminus \{0\}, \ 
\mathcal{K}(Q)= 0
 \right\}.
\end{equation}
In the $L^{2}$-supercritical case ($p>1+\frac{4}{d}$), it turns out that the ground state solves the following variational problems (see Proposition \ref{08/05/13/15:13} below):
\begin{align}
\label{08/07/02/23:23}
N_{1}&:=\inf{ \left\{ \|f\|_{\widetilde{H}^{1}}^{2}
\bigm| f \in H^{1}(\mathbb{R}^{d})\setminus \{0\},\ \mathcal{K}(f)\le 0 \right\}},\\[6pt]
\label{08/07/02/23:24}
N_{2}&:=\inf{\left\{ \mathcal{N}_{2}(f) \bigm| f \in H^{1}(\mathbb{R}^{d})\setminus \{0\}, \ \mathcal{K}(f)\le 0
\right\} }, 
\\[6pt]
\label{08/07/02/23:25}
N_{3}&:=\inf{\Big\{ \mathcal{I}(f) \bigm|  f \in H^{1}(\mathbb{R}^{d})\setminus \{0\} \Big\}}, 
\end{align}
where 
\begin{align}
\label{10/02/19/15:27}
\|f\|_{\widetilde{H}^{1}}^{2}
&:=
\frac{d(p-1)-4}{d(p-1)}\|\nabla f \|_{L^{2}}^{2}+\|f\|_{L^{2}}^{2},
\\[6pt]
\label{10/02/19/15:28}
\mathcal{N}_{2}(f)
&:=
\left\| f \right\|_{L^{2}}^{p+1-\frac{d}{2}(p-1)}
\left\| \nabla f \right\|_{L^{2}}^{\frac{d}{2}(p-1)-2},
\\[6pt]
\label{10/02/19/15:29}
\mathcal{I}(f)
&:=
\frac{\|f\|_{L^{2}}^{p+1-\frac{d}{2}(p-1)}\|\nabla f\|_{L^{2}}^{\frac{d}{2}(p-1)}}{\|f\|_{L^{p+1}}^{p+1}}
.
\end{align}
We remark that $N_{3}$ is positive and gives the best constant of the Gagliardo-Nirenberg inequality, i.e., 
\begin{equation}\label{08/05/13/15:45}
\|f\|_{L^{p+1}}^{p+1}\le \frac{1}{N_{3}}\|f\|_{L^{2}}^{p+1-\frac{d}{2}(p-1)}\|\nabla f \|_{L^{2}}^{\frac{d}{2}(p-1)}
\quad 
\mbox{for all $f \in H^{1}(\mathbb{R}^{d})$}
\end{equation}
(for the details, see \cite{Weinstein} and the exposition by Tao \cite{Tao-book1}).
A salient point of our approach to these variational problems (\ref{08/07/02/23:23})--(\ref{08/07/02/23:25}) is that we consider them at the same time, which enables us to obtain some identities automatically (see (\ref{08/05/13/15:06}) and (\ref{08/09/24/15:13}) below). The first variational problem (\ref{08/07/02/23:23}) is introduced to make our variational problems easy to solve. Indeed, all minimizing sequences  for (\ref{08/07/02/23:23}) are bounded in $H^{1}(\mathbb{R}^{d})$; In contrast,  minimizing sequences in the other problems (\ref{08/07/02/23:24}) and (\ref{08/07/02/23:25}) are not necessarily bounded in $H^{1}(\mathbb{R}^{d})$, this fact comes from the invariance of the functionals $\mathcal{K}$, $\mathcal{N}_{2}$ and $\mathcal{I}$ under the scaling 
\begin{equation}\label{09/12/21/10:33}
Q\to \lambda^{\frac{2}{p-1}}Q(\lambda \, \cdot), \quad \lambda>0.
\end{equation} 
The variational values $N_{1}$, $N_{2}$ and $N_{3}$ are closely related each other, as stated in the following: 
\begin{proposition}\label{08/06/16/15:24}
Assume that $d\ge 1$ and $2+\frac{4}{d}<p+1<2^{*}$. Then, we have the following relations:
\begin{equation}\label{08/05/13/11:33}
N_{1}^{\frac{p-1}{2}}=\left( \frac{2}{d}\right)^{\frac{p-1}{2}}
\left\{ \frac{d(p-1)}{(d+2)-(d-2)p}\right\}^{\frac{1}{4}\left\{ (d+2)-(d-2)p \right\}}N_{2}
\end{equation}
and
\begin{equation}\label{08/05/13/11:25}
N_{3}=\frac{d(p-1)}{2(p+1)} N_{2}.
\end{equation}
\end{proposition}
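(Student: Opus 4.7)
The plan is to derive both identities by combining two ingredients: the natural symmetries of our functionals and a weighted arithmetic--geometric mean inequality. Specifically, $\mathcal{K}$, $\mathcal{N}_{2}$ and $\mathcal{I}$ are invariant under the $L^{2}$-critical dilation $f \mapsto f_{\lambda}(x) := \lambda^{2/(p-1)} f(\lambda x)$, while $\mathcal{I}$ is additionally invariant under the multiplicative scaling $f \mapsto \mu f$. These symmetries allow us to restrict minimizing sequences to convenient submanifolds without changing the relevant functional.

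To establish (\ref{08/05/13/11:25}), observe that for nonzero $f$, the equation $\mathcal{K}(\mu f) = \mu^{2}\|\nabla f\|_{L^{2}}^{2} - \mu^{p+1}\frac{d(p-1)}{2(p+1)}\|f\|_{L^{p+1}}^{p+1} = 0$ has a unique positive root $\mu_{0}$ since $p+1 > 2$; together with the $\mu$-invariance of $\mathcal{I}$, this gives $N_{3} = \inf\{\mathcal{I}(f) : \mathcal{K}(f) = 0,\ f \neq 0\}$. On the locus $\mathcal{K}(f) = 0$, the identity $\|f\|_{L^{p+1}}^{p+1} = \frac{2(p+1)}{d(p-1)}\|\nabla f\|_{L^{2}}^{2}$ converts $\mathcal{I}(f)$ into $\frac{d(p-1)}{2(p+1)}\mathcal{N}_{2}(f)$. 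Finally, $\inf\{\mathcal{N}_{2}(f) : \mathcal{K}(f) = 0\} = N_{2}$, because when $\mathcal{K}(f) < 0$, choosing the unique $\mu \in (0,1)$ with $\mathcal{K}(\mu f) = 0$ strictly decreases $\mathcal{N}_{2}$ (which is homogeneous of degree $p-1$ in $f$).

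For (\ref{08/05/13/11:33}), apply the weighted AM--GM inequality to
\begin{equation*}
\|f\|_{\widetilde{H}^{1}}^{2} = \frac{d(p-1)-4}{d(p-1)}\|\nabla f\|_{L^{2}}^{2} + \|f\|_{L^{2}}^{2}
\end{equation*}
with weights $\theta = \frac{d(p-1)-4}{2(p-1)}$ and $1-\theta = \frac{(d+2)-(d-2)p}{2(p-1)}$, both of which lie strictly in $(0,1)$ precisely by the $L^{2}$-supercritical and $H^{1}$-subcritical hypotheses. A direct check shows $\theta(p-1) = \frac{d(p-1)}{2}-2$ and $(1-\theta)(p-1) = p+1-\frac{d(p-1)}{2}$, so after raising to the $(p-1)/2$ power, the right-hand side recombines into a multiple of $\mathcal{N}_{2}(f)$, yielding $\|f\|_{\widetilde{H}^{1}}^{p-1} \ge C(d,p)\,\mathcal{N}_{2}(f)$. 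Equality holds precisely when $\frac{d(p-1)-4}{d(p-1)}(1-\theta)\|\nabla f\|_{L^{2}}^{2} = \theta\|f\|_{L^{2}}^{2}$; for any $f$ with $\mathcal{K}(f) \le 0$, the dilation $f_{\lambda}$ achieves this equality at an explicit $\lambda > 0$ without altering $\mathcal{N}_{2}(f)$ or the sign of $\mathcal{K}(f)$. Taking infima on both sides of the inequality then gives $N_{1}^{(p-1)/2} = C(d,p)\,N_{2}$.

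The only remaining point is identifying $C(d,p)$ with the constant in (\ref{08/05/13/11:33}). Using the key identity $a/\theta = 2/d$ (with $a := (d(p-1)-4)/(d(p-1))$) together with the arithmetic cancellation $(d(p-1)-4) + ((d+2)-(d-2)p) = 2(p-1)$ to absorb the excess powers of $2/d$, one finds after simplification that $C(d,p) = (2/d)^{(p-1)/2}\{d(p-1)/((d+2)-(d-2)p)\}^{((d+2)-(d-2)p)/4}$, matching (\ref{08/05/13/11:33}). This bookkeeping of exponents is the main routine obstacle; no analytical subtleties arise beyond the AM--GM step.
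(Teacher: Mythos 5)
Your proof is correct and follows essentially the same route as the paper: the weighted AM--GM step is just a repackaging of the paper's explicit minimization of $\|f_{\lambda}\|_{\widetilde{H}^{1}}$ over the dilation parameter $\lambda$ (both exploit the invariance of $\mathcal{N}_{2}$ and of the sign of $\mathcal{K}$ under $f\mapsto \lambda^{\frac{2}{p-1}}f(\lambda\,\cdot)$), and your reduction of $N_{2}$ and $N_{3}$ to the constraint set $\{\mathcal{K}=0\}$ via the multiplicative scaling $\mu f$ is the same mechanism the paper uses with its minimizing sequences. Your exponent bookkeeping reproduces the constant in (\ref{08/05/13/11:33}) correctly.
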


The next proposition tells us that the variational problems (\ref{08/07/02/23:23}), (\ref{08/07/02/23:24}) and (\ref{08/07/02/23:25}) have the same minimizer. 
\begin{proposition}
\label{08/05/13/15:13}
Assume that $d\ge 1$ and $2+\frac{4}{d}<p+1<2^{*}$. Then, there exists a positive function $Q$ in the Schwartz space $\mathcal{S}(\mathbb{R}^{d})$, which is unique modulo translation and phase shift, such that 
\begin{align}
\label{08/05/13/11:23}
N_{1}&=\|Q\|_{\widetilde{H}^{1}}^{2},
\\[6pt]
\label{08/05/26/11:41}
N_{2}&=\mathcal{N}_{2}(Q),
\\[6pt]
\label{08/05/13/11:27}
N_{3}&=\mathcal{I}(Q),
\\[6pt]
\label{08/05/13/15:06}
\mathcal{K}(Q)&=0,
\\[6pt]
\label{08/09/24/15:13}
\left\| Q \right\|_{L^{2}}^{2}
&=
\frac{d+2-(d-2)p}{d(p-1)}
\left\| \nabla Q\right\|_{L^{2}}^{2}
=
\frac{d+2-(d-2)p}{2(p+1)}
\left\| Q \right\|_{L^{p+1}}^{p+1},
\end{align}
and $Q$ solves the equation (\ref{08/05/13/11:22}) with $\omega =1$. 
\end{proposition}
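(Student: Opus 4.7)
The natural entry point is $N_{1}$, because (as noted after \eqref{10/02/19/15:29}) minimizing sequences for $N_{1}$ are automatically bounded in $H^{1}(\mathbb{R}^{d})$, while those for $N_{2}$ and $N_{3}$ need not be due to the scaling \eqref{09/12/21/10:33}. Given a minimizing sequence $\{f_{n}\}$ with $\mathcal{K}(f_{n})\le 0$ and $\|f_{n}\|_{\widetilde{H}^{1}}^{2}\to N_{1}$, I would replace $f_{n}$ by $|f_{n}|$ and then by its Schwarz symmetric decreasing rearrangement; neither step alters $\|f_{n}\|_{L^{q}}$ for any $q\ge 1$, and both can only decrease $\|\nabla f_{n}\|_{L^{2}}$, so the constraint $\mathcal{K}\le 0$ and the infimum value are preserved. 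Passing to a subsequence, $f_{n}\rightharpoonup Q$ weakly in $H^{1}$; the Strauss compact embedding $H^{1}_{\mathrm{rad}}\hookrightarrow\hookrightarrow L^{p+1}$ then gives strong $L^{p+1}$-convergence. Combining the constraint $\|\nabla f_{n}\|_{L^{2}}^{2}\le \tfrac{d(p-1)}{2(p+1)}\|f_{n}\|_{L^{p+1}}^{p+1}$ with the Gagliardo--Nirenberg inequality \eqref{08/05/13/15:45} yields a uniform lower bound on $\|f_{n}\|_{L^{p+1}}$, which forces $Q\not\equiv 0$.

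\textbf{Attaining $\mathcal{K}=0$ and the Euler--Lagrange equation.} Weak lower semicontinuity of $\|\nabla\cdot\|_{L^{2}}$ combined with the strong $L^{p+1}$-convergence yields $\mathcal{K}(Q)\le 0$. To rule out $\mathcal{K}(Q)<0$, I would use the $L^{2}$-preserving scaling $Q_{\mu}(x):=\mu^{d/2}Q(\mu x)$, for which
\[
\mathcal{K}(Q_{\mu})=\mu^{2}\|\nabla Q\|_{L^{2}}^{2}-\tfrac{d(p-1)}{2(p+1)}\mu^{\frac{d(p-1)}{2}}\|Q\|_{L^{p+1}}^{p+1}.
\]
Since $\tfrac{d(p-1)}{2}>2$ by \eqref{09/05/13/15:03}, there is a unique $\mu_{*}\in(0,1)$ with $\mathcal{K}(Q_{\mu_{*}})=0$, while $\|Q_{\mu}\|_{\widetilde{H}^{1}}^{2}=\tfrac{d(p-1)-4}{d(p-1)}\mu^{2}\|\nabla Q\|_{L^{2}}^{2}+\|Q\|_{L^{2}}^{2}$ is strictly increasing in $\mu$; this yields $\|Q_{\mu_{*}}\|_{\widetilde{H}^{1}}^{2}<N_{1}$, a contradiction. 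Hence $\mathcal{K}(Q)=0$ and $\|Q\|_{\widetilde{H}^{1}}^{2}=N_{1}$, establishing \eqref{08/05/13/11:23} and \eqref{08/05/13/15:06}. On $\{\mathcal{K}=0\}$ a direct computation shows $\mathcal{S}_{1}=\tfrac{1}{2}\|\cdot\|_{\widetilde{H}^{1}}^{2}$, so $Q$ is also a constrained minimizer of $\mathcal{S}_{1}$ and the Lagrange multiplier rule furnishes $\nu\in\mathbb{R}$ with $\mathcal{S}_{1}'(Q)=\nu\,\mathcal{K}'(Q)$. Pairing both sides with $h:=\left.\tfrac{d}{d\mu}\right|_{\mu=1}Q_{\mu}=\tfrac{d}{2}Q+x\cdot\nabla Q$, one obtains $\langle \mathcal{S}_{1}'(Q),h\rangle=\mathcal{K}(Q)=0$ and, after substituting $\|\nabla Q\|_{L^{2}}^{2}=\tfrac{d(p-1)}{2(p+1)}\|Q\|_{L^{p+1}}^{p+1}$, $\langle \mathcal{K}'(Q),h\rangle=-\tfrac{d(p-1)(d(p-1)-4)}{4(p+1)}\|Q\|_{L^{p+1}}^{p+1}\neq 0$. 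Therefore $\nu=0$ and $Q$ solves $-\Delta Q+Q-|Q|^{p-1}Q=0$, i.e.\ \eqref{08/05/13/11:22} with $\omega=1$.

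\textbf{Positivity, regularity, Pohozaev, and the other two minimizations.} Positivity of $Q$ is inherited from the rearrangement and upgraded to strict positivity by the maximum principle; elliptic regularity and the standard exponential decay of positive solutions of \eqref{08/05/13/11:22} then place $Q$ in $\mathcal{S}(\mathbb{R}^{d})$, and Kwong's uniqueness theorem for positive radial solutions of \eqref{08/05/13/11:22} yields uniqueness modulo translation and phase. Pairing the equation against $Q$ and applying the Pohozaev identity gives two independent linear relations among $\|\nabla Q\|_{L^{2}}^{2}$, $\|Q\|_{L^{2}}^{2}$, $\|Q\|_{L^{p+1}}^{p+1}$; solving the resulting $2\times 2$ system produces \eqref{08/09/24/15:13}. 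Substituting \eqref{08/09/24/15:13} into $\mathcal{N}_{2}(Q)$ and $\mathcal{I}(Q)$ and invoking the identities of Proposition~\ref{08/06/16/15:24} then gives $\mathcal{N}_{2}(Q)=N_{2}$ and $\mathcal{I}(Q)=N_{3}$, proving \eqref{08/05/26/11:41} and \eqref{08/05/13/11:27}.

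\textbf{Main obstacle.} Compactness for $N_{1}$ is handled cleanly by rearrangement, but promoting the weak limit to an actual minimizer requires excluding $\mathcal{K}(Q)<0$, and subsequently killing the Lagrange multiplier; both are dispatched by the $L^{2}$-preserving scaling above and both rely in an essential way on the $L^{2}$-supercritical hypothesis $p>1+\tfrac{4}{d}$, which is what makes $\tfrac{d(p-1)}{2}-2$ strictly positive.
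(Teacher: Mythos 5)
Your argument is essentially correct, but it takes a genuinely different route from the paper's. For compactness the paper does not symmetrize: it keeps a general minimizing sequence for $N_{1}$, extracts a nonvanishing piece through the measure-theoretic lemmas of Appendix B (Lemmas \ref{08/3/28/20:21}, \ref{08/03/28/21:00}, \ref{08/09/27/22:43}), recenters by translations, and uses Brezis--Lieb splitting of $\|\cdot\|_{\widetilde{H}^{1}}$ and $\mathcal{K}$ to show that the weak limit satisfies $\mathcal{K}\le 0$, attains $N_{1}$, and is in fact a strong $H^{1}$ limit; your rearrangement scheme replaces all of this and buys nonnegativity and radial symmetry of $Q$ for free. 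For the Euler--Lagrange equation the paper avoids Lagrange multipliers entirely: after proving $\mathcal{K}(Q)=0$ via the family $sQ$, it obtains \eqref{08/09/24/15:13} not from Pohozaev but from the optimality of $\lambda=1$ in $\lambda\mapsto\|\lambda^{2/(p-1)}Q(\lambda\cdot)\|_{\widetilde{H}^{1}}$ along the $\mathcal{K}$-null family, deduces \eqref{08/05/26/11:41} and \eqref{08/05/13/11:27} from Proposition \ref{08/06/16/15:24}, and only then gets the equation by computing the free G\^ateaux derivative of the unconstrained, scale-invariant functional $\mathcal{I}$ at its minimizer. Your order is reversed (equation first, Pohozaev and testing against $Q$ then give \eqref{08/09/24/15:13}, and the norm identities follow as you say from Proposition \ref{08/06/16/15:24}); I checked that this chain does close.

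Two steps in your write-up need more care. First, for $d=1$ the Strauss embedding $H^{1}_{rad}\hookrightarrow\hookrightarrow L^{p+1}$ fails for merely even functions; what rescues you is that Schwarz rearrangement produces radially \emph{decreasing} functions, whose pointwise bound $|f(x)|\lesssim |x|^{-d/2}\|f\|_{L^{2}}$ restores compactness in $L^{p+1}$ in every dimension $d\ge 1$ --- this should be said, since the proposition covers $d=1$. Second, the pairing of $\mathcal{S}_{1}'(Q)=\nu\,\mathcal{K}'(Q)$ with $h=\tfrac{d}{2}Q+x\cdot\nabla Q$ is not justified at the $H^{1}$ level: $x\cdot\nabla Q$ need not belong to $L^{2}$ and $\mu\mapsto\mu^{d/2}Q(\mu\cdot)$ need not be $H^{1}$-differentiable, so the chain rule you implicitly invoke is not yet available. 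The standard repair: note first that $\nu\neq\tfrac12$ (otherwise the multiplier relation degenerates to $Q=(1-\tfrac{d(p-1)}{4})|Q|^{p-1}Q$, impossible since the coefficient is negative), so $Q$ solves a genuine semilinear elliptic equation; elliptic regularity and decay then make the Pohozaev-type identity rigorous, and your computation kills $\nu$. Alternatively, adopt the paper's device of minimizing $\mathcal{I}$ in \eqref{08/07/02/23:25} without constraint, which produces the equation with no multiplier at all.
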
 
\begin{remark}\label{08/09/27/22:26}
It is known that the function $Q$ found in Proposition \ref{08/05/13/15:13} has the following properties:
\\
{\rm (i)} There exists $y \in \mathbb{R}^{d}$ such that $Q(x-y)$ is radially symmetric (see \cite{Gidas-Ni-Nirenberg}).  
\\
{\rm (ii)} $Q$ is the ground state of the equation (\ref{08/05/13/11:22}) with $\omega=1$. Moreover, for all $\omega>0$, the rescaled function $\omega^{-\frac{1}{p-1}}Q(\omega^{-\frac{1}{2}}\cdot )$ becomes the ground state of (\ref{08/05/13/11:22}), and denoting this function by $Q_{\omega}$, we easily verify that  
\begin{equation}\label{10/03/27/10:23}
N_{2}=\mathcal{N}_{2}(Q_{\omega}), 
\quad 
N_{3}=\mathcal{I}(Q_{\omega}),
\quad 
\mathcal{K}(Q_{\omega})=0
\qquad 
\mbox{for all $\omega >0$.}
\end{equation}
\end{remark}

Now, using the variational value $N_{2}$, we define a ``potential well'' $PW$ by\begin{equation}\label{09/05/18/22:03}
PW=\left\{ f \in H^{1}\setminus \{0\} \bigm| \mathcal{H}(f) <\mathcal{B}(f) \right\},
\end{equation}
where 
\begin{equation}\label{09/05/18/22:02}
\mathcal{B}(f)=\frac{d(p-1)-4}{d(p-1)}\left( \frac{N_{2}}{\|f\|_{L^{2}}^{p+1-\frac{d}{2}(p-1)}}\right)^{\frac{4}{d(p-1)-4}}
.
\end{equation}
 
We divide $PW$ into two components according to the sign of $\mathcal{K}$:
\begin{equation}\label{10/02/13/14:44}
PW_{+}=\left\{ f \in PW \bigm| \mathcal{K}(f) >0 \right\},
\end{equation}
\begin{equation}\label{10/02/13/14:45}
PW_{-}=\left\{ f \in PW \bigm| \mathcal{K}(f) <0 \right\}. 
\end{equation}
It is worth while noting the following facts:
\begin{enumerate}
\item[1.] $PW_{+}$ and $PW_{-}$ are unbounded open sets in $H^{1}(\mathbb{R}^{d})$:  Indeed, one can easily verify this fact by considering the scaled functions  $f_{\lambda}(x):=\lambda^{\frac{2}{p-1}}f(\lambda x)$ for $f \in H^{1}(\mathbb{R}^{d})$ and $\lambda>0$. 
\item[2.]  $PW=PW_{+}\cup PW_{-}$ and $PW_{+}\cap PW_{-}=\emptyset$ (see Lemma \ref{09/06/21/15:50})
\item[3.] 
$PW_{+}$ and $PW_{-}$ are invariant under the flow defined by the equation (\ref{08/05/13/8:50}) (see Proposition \ref{09/06/21/19:28} and Proposition \ref{08/05/26/10:57}). 
\end{enumerate}
\begin{enumerate}
\item[4.] The ground state $Q_{\omega}$ belongs to  $\overline{PW_{+}}\cap \overline{PW_{-}}$ and $Q_{\omega} \not \in PW_{+}\cup PW_{-}$ for all $\omega>0$, where $\overline{PW_{+}}$ and $\overline{PW_{-}}$ are the closures of $PW_{+}$ and $PW_{-}$ in the $H^{1}$-topology, respectively (see Theorem \ref{08/10/20/5:21}). Moreover, the orbit under the action $((0,\infty) \ltimes \mathbb{R}^{d})\times S^{1}$
\begin{equation}\label{10/03/31/12:23}
\left\{ 
\lambda^{\frac{2}{p-1}} e^{i\theta}Q_{\omega}(\lambda (\cdot-a)) \ 
\left| \ \lambda >0, 
\
a \in \mathbb{R}^{d},
\ \theta \in S^{1}  \right.
\right\}
\end{equation}
is contained in $\overline{PW_{+}} \cap \overline{PW_{-}}$. 
\end{enumerate}
Here, the last fact above is the key to show the instability of the ground state. We will prove these facts in Section \ref{08/07/02/23:51}, among other properties of these sets $PW$, $PW_{+}$ and $PW_{-}$.
\par 
For the later convenience, we prepare another expression of $PW_{+}$: Besides the functional $\mathcal{N}_{2}$ and the variational value $N_{2}$, we define a functional $\widetilde{\mathcal{N}}_{2}$ and a number $\widetilde{N}_{2}$ by   
\begin{equation}\label{09/04/29/14:30}
\widetilde{\mathcal{N}}_{2}(f)
:=
\left\| f \right\|_{L^{2}}^{p+1-\frac{d}{2}(p-1)}\sqrt{\mathcal{H}(f)}^{\frac{d}{2}(p-1)-2}, \quad 
f \in H^{1}(\mathbb{R}^{d}) \ \mbox{with}\  \mathcal{H}(f)\ge 0, 
\end{equation}
and 
\begin{equation}\label{09/04/29/14:31}
\widetilde{N}_{2}:=\sqrt{\frac{d(p-1)-4}{d(p-1)}}^{\frac{d}{2}(p-1)-2}N_{2}.
\end{equation}
Then, it follows from (\ref{08/05/26/11:41}) and (\ref{08/09/24/15:13})  in Proposition \ref{08/05/13/15:13} that  
\begin{equation}\label{09/07/14/8:24}
\widetilde{N}_{2}=\widetilde{\mathcal{N}}_{2}(Q).
\end{equation} 
We can easily verify that: if $f\in H^{1}(\mathbb{R}^{d})$ with $\mathcal{H}(f)\ge 0$, then   
\begin{equation}\label{08/06/15/14:38}
f \in PW
\quad \mbox{if and only if} \quad  
\widetilde{\mathcal{N}}_{2}(f)
<
\widetilde{N}_{2}.
\end{equation}
Therefore, we have another expression of $PW_{+}$: 
\begin{equation}\label{09/12/16/17:07}
PW_{+}=\left\{ f \in H^{1}(\mathbb{R}^{d}) \ |\  \mathcal{K}(f)>0, \ 
\ \widetilde{\mathcal{N}}_{2}(f)
<
\widetilde{N}_{2}
\right\}.
\end{equation}

Here, in order to consider the wave operators, we introduce a set $\Omega$ which is a subset of $PW_{+}$ (see Remark \ref{08/11/16/22:46}, (ii) below): 
\begin{equation}\label{10/06/13/11:42}
\Omega:=\left\{f \in H^{1}(\mathbb{R}^{d})\setminus \{0\} \ | \  
\mathcal{N}_{2}(f)<\widetilde{N}_{2} 
 \right\}.
\end{equation}

\begin{remark}\label{08/11/16/22:46} 
{\rm (i)} When $p=1+\frac{4}{d}$, by (\ref{09/04/29/14:30}) and (\ref{09/04/29/14:31}), the condition $\widetilde{\mathcal{N}}_{2}(f)< \widetilde{N}_{2}=\widetilde{\mathcal{N}}_{2}(Q)$ can be reduced to 
 \[
\left\| f \right\|_{L^{2}}< \left\| Q\right\|_{L^{2}},
\]
since  
\[
\lim_{p\downarrow 1+\frac{4}{d}}\sqrt{\frac{d(p-1)-4}{d(p-1)}}^{\frac{d}{2}(p-1)-2} = 1.
\]
Hence, $PW_{+}$ formally becomes in this $L^{2}$-critical case   
\begin{equation}
\begin{split}
\label{10/02/16/17:03}
PW_{+}
&=  
\left\{ f \in H^{1}(\mathbb{R}^{d}) \bigm| 
\mathcal{H}(f)> 0, \ \left\| f \right\|_{L^{2}}<
\left\| Q \right\|_{L^{2}} \right\}
\\[6pt]
&=
\left\{ f \in H^{1}(\mathbb{R}^{d})\setminus \{0\} \bigm| 
\, \left\| f \right\|_{L^{2}}<
\left\| Q \right\|_{L^{2}} \right\},
\end{split}
\end{equation}
where we have used the facts that $\mathcal{K}=\mathcal{H}$ (see (\ref{08/05/13/11:12})) and 
 that $\left\| f \right\|_{L^{2}}<
\left\| Q \right\|_{L^{2}}$ implies that $\mathcal{H}(f)>0$ (see \cite{Weinstein, Nawa5}). 
On the other hand, $PW_{-}$ formally becomes 
\begin{equation}\label{10/02/16/17:04}
PW_{-}=\left\{ f \in H^{1}(\mathbb{R}^{d}) \ | \ 
\mathcal{H}(f)< 0
\right\},
\end{equation}
since $\mathcal{K}=\mathcal{H}$ (\ref{08/05/13/11:12}) again.
It is well known that the solutions of (\ref{08/05/13/8:50}) with $p=1+\frac{4}{d}$ with initial data from the set (\ref{10/02/16/17:03}) exist globally in time (see \cite{Weinstein}), and the ones with data from (\ref{10/02/16/17:04}) blow up or grow up (see \cite{Nawa8}). Thus, we may say that our potential wells $PW_{+}$ and $PW_{-}$ in (\ref{10/02/13/14:44}) and (\ref{10/02/13/14:45}) are natural extensions of those in (\ref{10/02/16/17:03}) and (\ref{10/02/16/17:04}) to the case of $2+\frac{4}{d}<p+1<2^{*}$.
\\
{\rm (ii)} Since $\widetilde{N}_{2}<N_{2}$, we find by the definition of $N_{2}$ (see (\ref{08/07/02/23:24})) that $\Omega \subset PW_{+}$.
\end{remark}

Now, we are in a position to state our main results. When symbols with $\pm$ appear in the following theorems and propositions, we always take both upper signs or both lower signs in the double signs. 
\par 
The first theorem below is concerned with the behavior of the solutions with initial data from $PW_{+}$. 
 
\begin{theorem}[Global existence and scattering]
\label{08/05/26/11:53}
Assume that $d\ge 1$, $2+\frac{4}{d}<p+1 <2^{*}$ and $\psi_{0} \in PW_{+}$. Then, the corresponding solution $\psi$ to the equation (\ref{08/05/13/8:50}) exists globally in time and has the following properties: 
\\[6pt]
{\rm (i)} $\psi$ stays in $PW_{+}$ for all time, and satisfies that   
\begin{equation}\label{08/12/16/10:09}
\inf_{t\in \mathbb{R}}\mathcal{K}(\psi(t))\ge  
\left( 1-\frac{\widetilde{N}_{2}(\psi_{0})}{\widetilde{N}_{2}}\right)
\mathcal{H}( \psi_{0})>0. 
\end{equation}
{\rm (ii)} $\psi$ belongs to $L^{\infty}(\mathbb{R};H^{1}(\mathbb{R}))$. In particular,  
\begin{equation}\label{08/09/03/17:03}
\sup_{t\in \mathbb{R}}\|\nabla \psi(t)\|_{L^{2}}^{2}
\le \frac{d(p-1)}{d(p-1)-4} \mathcal{H}(\psi_{0}).
\end{equation}
Furthermore, 
\\
{\rm (iii)} There exist unique $\phi_{+} \in \Omega$ and $\phi_{-}\in \Omega$ such that 
\begin{equation}\label{10/01/26/20:32}
\lim_{t\to \pm \infty}\left\|\psi(t)-e^{\frac{i}{2}t\Delta}\phi_{\pm} \right\|_{H^{1}}=
\lim_{t\to \pm \infty}\left\|e^{-\frac{i}{2}t\Delta}\psi(t)-\phi_{\pm} \right\|_{H^{1}}
=0.
\end{equation}
This formula defines the operators $W_{\pm}^{*} \colon \psi_{0} \ \mapsto \ \phi_{\pm}=\lim_{t\to \pm \infty}e^{-\frac{i}{2}t\Delta}\psi(t)$. 
 These operators become homeomorphisms from $PW_{+}$ to $\Omega$, so that we can define the scattering operator $S:=W_{+}^{*}W_{-}$ from $\Omega$ into itself, where $W_{-}:=(W_{-}^{*})^{-1}$:
\begin{equation}\label{10/03/31/12:00}
\begin{array}{ccll}
S=W_{+}^{*}W_{-} : &  \Omega & \to &  \Omega
\\
{}& \text{\rotatebox{90}{$\in$}} & {} & 
\text{\rotatebox{90}{$\in$}}
\\
{}& \phi_{-} &\mapsto &  \phi_{+} \qquad .
\end{array}
\end{equation}
\end{theorem}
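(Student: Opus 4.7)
\emph{Parts (i) and (ii).} Granting the flow invariance of $PW_{+}$ (Propositions \ref{09/06/21/19:28} and \ref{08/05/26/10:57}), conservation of $\mathcal{M}$ and $\mathcal{H}$ keeps $\widetilde{\mathcal{N}}_{2}(\psi(t))=\widetilde{\mathcal{N}}_{2}(\psi_{0})$ for all $t$, so it suffices to prove the pointwise inequality $\mathcal{K}(f)\ge \bigl(1-\widetilde{\mathcal{N}}_{2}(f)/\widetilde{N}_{2}\bigr)\mathcal{H}(f)$ for every $f\in PW_{+}$. Plugging the sharp Gagliardo--Nirenberg inequality (\ref{08/05/13/15:45}) together with the relation $N_{3}=\frac{d(p-1)}{2(p+1)}N_{2}$ of Proposition \ref{08/06/16/15:24} into the definition of $\mathcal{K}$ yields $\mathcal{K}(f)\ge \|\nabla f\|_{L^{2}}^{2}\bigl(1-\mathcal{N}_{2}(f)/N_{2}\bigr)$. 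On the other hand, $\mathcal{K}(f)>0$ alone forces $\|\nabla f\|_{L^{2}}^{2}\le \frac{d(p-1)}{d(p-1)-4}\mathcal{H}(f)$ (insert $\|f\|_{L^{p+1}}^{p+1}<\frac{2(p+1)}{d(p-1)}\|\nabla f\|_{L^{2}}^{2}$ into the definition of $\mathcal{H}$); applied to $\psi(t)$ this is (ii). Plugging the same bound into the $\|\nabla f\|_{L^{2}}^{d(p-1)/2-2}$ factor hidden in $\mathcal{N}_{2}(f)$ and invoking the definition (\ref{09/04/29/14:31}) of $\widetilde{N}_{2}$ gives $\mathcal{N}_{2}(f)/N_{2}\le \widetilde{\mathcal{N}}_{2}(f)/\widetilde{N}_{2}$; dropping $\|\nabla f\|_{L^{2}}^{2}$ to $\mathcal{H}(f)$ then yields (i).

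\emph{Part (iii), scheme.} I would follow the concentration-compactness/rigidity program of Kenig--Merle. Fix a scaling-critical Strichartz space $S(\mathbb{R})=L^{q}_{t}L^{r}_{x}$ at the critical Sobolev index $s_{c}=d/2-2/(p-1)$; the corresponding small-data and long-time-perturbation theory show that $\|\psi\|_{S(\mathbb{R})}<\infty$ is equivalent to $H^{1}$-scattering. To establish this bound for every $\psi_{0}\in PW_{+}$, use the scale-invariant quantity $\widetilde{\mathcal{N}}_{2}$ itself as threshold: set $\mathfrak{E}_{c}:=\sup\{E<\widetilde{N}_{2} \mid \text{every } \psi_{0}\in PW_{+} \text{ with } \widetilde{\mathcal{N}}_{2}(\psi_{0})\le E \text{ scatters in both time directions}\}$, and suppose for contradiction that $\mathfrak{E}_{c}<\widetilde{N}_{2}$. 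A linear profile decomposition for $e^{\frac{i}{2}t\Delta}$ adapted to $H^{1}$, combined with the nonlinear perturbation lemma and (i) applied profile by profile, extracts a minimal non-scattering solution $\psi_{c}$ of (\ref{08/05/13/8:50}) whose trajectory $\{\psi_{c}(t):t\in\mathbb{R}\}$ is precompact in $H^{1}$ modulo the spatial translation symmetry.

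\emph{Main obstacle; wrap-up.} The substantive step is the rigidity: showing that such a $\psi_{c}$ cannot exist. Because (i) applied to $\psi_{c}$ forces $\mathcal{K}(\psi_{c}(t))\ge c_{0}>0$ uniformly in $t$, I would integrate a suitably localized form of the generalized virial identity (\ref{08/03/29/19:05}) against a smooth radial cutoff centered at the translation parameter produced by the compactness modulus; almost-periodicity keeps the truncation errors negligible, while the uniform positive lower bound on $\mathcal{K}$ makes the localized second moment grow without bound, contradicting the $H^{1}$-boundedness from (ii). Once the global $S$-bound is in hand, Duhamel's formula and Strichartz yield $\phi_{\pm}=\lim_{t\to\pm\infty}e^{-\frac{i}{2}t\Delta}\psi(t)$ in $H^{1}$; the decay $\|\psi(t)\|_{L^{p+1}}\to 0$ as $|t|\to\infty$ (a byproduct of scattering) gives $\|\nabla\phi_{\pm}\|_{L^{2}}^{2}=\mathcal{H}(\psi_{0})$ and $\|\phi_{\pm}\|_{L^{2}}=\|\psi_{0}\|_{L^{2}}$, hence $\mathcal{N}_{2}(\phi_{\pm})=\widetilde{\mathcal{N}}_{2}(\psi_{0})<\widetilde{N}_{2}$, i.e.\ $\phi_{\pm}\in\Omega$. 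For the inverse direction: given $\phi_{+}\in\Omega$, Duhamel contraction from $t=+\infty$ produces a unique $\psi$ with the prescribed free asymptotic, whose conserved quantities place $\psi_{0}\in PW_{+}$. Continuity of $W_{\pm}^{*}$ and its inverse in $H^{1}$ is Strichartz stability, so $S=W_{+}^{*}W_{-}$ is the composition of two homeomorphisms.
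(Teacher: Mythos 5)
Your parts (i) and (ii) reproduce the paper's own argument: the paper obtains them verbatim from Proposition \ref{09/06/21/19:28}, whose proof is exactly your chain (sharp Gagliardo--Nirenberg with $N_{3}=\frac{d(p-1)}{2(p+1)}N_{2}$, then $\mathcal{K}>0\Rightarrow\|\nabla f\|_{L^{2}}^{2}<\frac{d(p-1)}{d(p-1)-4}\mathcal{H}(f)$, then $\mathcal{N}_{2}/N_{2}\le\widetilde{\mathcal{N}}_{2}/\widetilde{N}_{2}$). For (iii) your scheme is also the paper's: a Kenig--Merle threshold $\widetilde{N}_{c}$ built from $\widetilde{\mathcal{N}}_{2}$ (the paper's (\ref{08/09/02/18:06})), extraction of a minimal non-scattering element that is tight in $H^{1}$ modulo translations (Proposition \ref{08/10/16/21:12}), a localized virial rigidity, wave operators on $\Omega$ by solving the final-value problem (Proposition \ref{09/01/12/16:36}), and Strichartz stability for the continuity of $W_{\pm}^{*}$ and $(W_{\pm}^{*})^{-1}$. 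The only genuine methodological divergence is cosmetic: you invoke a Bahouri--G\'erard/Keraani profile decomposition, whereas the paper deliberately replaces it by the Brezis--Lieb compactness device (its Lemmata \ref{08/08/21/14:12} and \ref{09/09/15/15:12} play the role of the profile decomposition); both deliver the same almost-orthogonal splitting of mass, gradient and Hamiltonian needed to apply (i) profile by profile.

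The one place where your sketch has a real gap is the rigidity step. You propose to run the generalized virial identity with a cutoff ``centered at the translation parameter produced by the compactness modulus'' and claim almost-periodicity makes the truncation errors negligible. As written this does not close: precompactness modulo translations puts no a priori restriction on how fast the center $\gamma(t)$ moves, and a weight following a moving center generates, upon differentiating in time, terms proportional to the localized momentum and to $\dot{\gamma}(t)$ which are not error terms (for a Galilean-boosted bound state the center moves at unit speed, and the localized second moment relative to any center is trivially bounded by $R^{2}$ times the mass, so the contradiction must come from a quantitative comparison of the growth rate, the radius $R$, and the length of the admissible time window). The paper supplies exactly the two missing ingredients: first, minimality of $\widetilde{N}_{c}$ together with Galilean invariance forces the critical element to have zero total momentum, since a nonzero boost strictly decreases $\widetilde{\mathcal{N}}_{2}$ (the argument around (\ref{08/08/31/15:28})--(\ref{10/05/16/22:46})); second, the ``almost center of mass'' Lemma \ref{08/09/04/12:16} uses this zero-momentum property plus the $H^{1}$-tightness to show the concentration center moves at speed $O(\sqrt{\varepsilon})$, so the fixed, origin-centered weight $W_{50R}$ in (\ref{08/03/29/19:05}) can be integrated over a time window of length $\sim\alpha R/\sqrt{\varepsilon}$, and the uniform lower bound $\mathcal{K}(\Psi(t))\ge\omega_{0}\mathcal{H}(\Psi(0))$ then contradicts the bound $(W_{50R},|\Psi(t)|^{2})\le 8(50R)^{2}$ as $\varepsilon\to 0$. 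Without the zero-momentum normalization and the sublinear control of the center, the localized virial argument you describe would fail; everything else in your outline matches the paper.
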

\vspace{12pt}
\begin{remark}\label{10/01/27/18:15}
{\rm (i)} Theorem \ref{08/05/26/11:53} is an extension of the result by Duyckaerts, Holmer and Roudenko \cite{D-H-R}. See {\bf Notes and Comments} below for the details. 
\\[6pt]
{\rm (ii)} $\phi_{+}$ and $\phi_{-}$ found in Theorem \ref{08/05/26/11:53} are 
 called the asymptotic states at $+\infty$ and $-\infty$, respectively.
\\[6pt]
{\rm (iii)} To prove the surjectivity of $W_{\pm}^{*}$, we need to construct so-called wave operators $W_{\pm}$ (see Section \ref{09/05/30/21:25}). In fact,  the operator $W_{+}^{*}$ ($W_{-}^{*}$) is the inverse of the wave operator $W_{+}$ ($W_{-}$). According to the terminology of spectrum scattering theory for linear Schr\"odinger equation, we might say that $\Omega$ is a set of scattering states. 
\end{remark}
\indent 
In contrast to the case of $PW_{+}$, the solutions with initial data from $PW_{-}$ become singular: 
\begin{theorem}[Blowup or growup]
\label{08/06/12/9:48}
Assume that $d\ge 1$, $2+\frac{4}{d}< p+1< 2^{*}$ and $\psi_{0} \in PW_{-}$. 
Then, the corresponding solution $\psi$ to the equation (\ref{08/05/13/8:50}) satisfies the followings:
\\[6pt]
{\rm (i)} $\psi$ stays $PW_{-}$ as long as it exists  and satisfies that  
\begin{equation}\label{09/12/23/22:48}
\mathcal{K}(\psi(t))<- \left( \mathcal{B}(\psi_{0})-\mathcal{H}(\psi_{0})\right)<0
\quad 
\mbox{for all $t \in I_{\max}$}.
\end{equation}
{\rm (ii)} $\psi$ blows up in a finite time or grows up, that is, 
\begin{equation}\label{09/05/14/13:32}
\sup_{t\in [0, T_{\max}^{+})}\left\| \nabla \psi(t) \right\|_{L^{2}}=
\sup_{t\in (-T_{\max}^{-}, 0]}\left\| \nabla \psi(t) \right\|_{L^{2}}
=\infty .
\end{equation}
In particular, if $T_{\max}^{\pm}= \infty$, then we have    
\begin{equation}\label{09/05/13/15:58}
\limsup_{t \to \pm \infty} \int_{|x|>R} |\nabla \psi(x,t)|^{2}\,dx =\infty\quad \mbox{for all $R>0$}.
\end{equation}
\end{theorem}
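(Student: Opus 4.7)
For \textbf{part (i)}, the main ingredient is the following variational gap: for every $f \in H^{1}(\mathbb{R}^{d})\setminus\{0\}$ with $\mathcal{K}(f) \le 0$,
\begin{equation*}
-\mathcal{K}(f) \ge \mathcal{B}(f)-\mathcal{H}(f),
\end{equation*}
with strict inequality when $\mathcal{K}(f) < 0$. Indeed, $\mathcal{K}(f) \le 0$ makes $f$ admissible in (\ref{08/07/02/23:24}), hence $\mathcal{N}_{2}(f)\ge N_{2}$, which by (\ref{09/05/18/22:02}) rearranges to $\tfrac{d(p-1)-4}{d(p-1)}\|\nabla f\|_{L^{2}}^{2} \ge \mathcal{B}(f)$; combining this with the equivalent form $\tfrac{d(p-1)}{2(p+1)}\|f\|_{L^{p+1}}^{p+1} \ge \|\nabla f\|_{L^{2}}^{2}$ of $\mathcal{K}(f) \le 0$ and the identity (\ref{08/05/13/9:16}) yields the gap, strictly when $\mathcal{K}(f) < 0$ since the second inequality is then strict. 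Applied along the orbit and combined with conservation of $\mathcal{M}$ (hence of $\mathcal{B}(\psi(t))$) and of $\mathcal{H}$, the gap produces (\ref{09/12/23/22:48}) wherever $\mathcal{K}(\psi(t)) < 0$. Invariance of $PW_{-}$ follows from continuity of $t \mapsto \mathcal{K}(\psi(t))$: a zero at some $t_{0}\in I_{\max}$ would, by the non-strict gap, give $\mathcal{H}(\psi(t_{0}))\ge \mathcal{B}(\psi(t_{0}))$, contradicting $\psi(t_{0}) \in PW$.

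For \textbf{part (ii)}, set $\delta := \mathcal{B}(\psi_{0})-\mathcal{H}(\psi_{0})>0$. If $\psi_{0}\in L^{2}(|x|^{2}dx)$, then (\ref{08/12/03/16:55}) together with (\ref{09/12/23/22:48}) gives $\frac{d^{2}}{dt^{2}}\int|x|^{2}|\psi(t)|^{2}dx = 2\mathcal{K}(\psi(t)) < -2\delta$, and Glassey-type concavity forces $I_{\max}$ to be finite. For general $\psi_{0}\in H^{1}$ I would use the generalized virial from the appended Section \ref{08/10/19/14:57} with a radial cutoff $\Phi_{R}(x) = R^{2}\phi(|x|/R)$, where $\phi\in C^{\infty}([0,\infty))$ satisfies $\phi(r)=r^{2}/2$ for $r\le 1$, $\phi'(r)\le r$, and has bounded higher derivatives. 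Setting $V_{R}(t) := \int \Phi_{R}|\psi|^{2}dx$, a direct computation yields
\begin{equation*}
V_{R}''(t) = 2\mathcal{K}(\psi(t)) + E_{R}(t),\qquad |E_{R}(t)|\le C\int_{|x|\ge R}\bigl(|\nabla\psi(t)|^{2}+|\psi(t)|^{p+1}\bigr)dx + \frac{C}{R^{2}}\|\psi_{0}\|_{L^{2}}^{2}.
\end{equation*}
If $\sup_{t\in [0,T_{\max}^{+})}\|\nabla\psi(t)\|_{L^{2}}$ were finite and $T_{\max}^{+}=\infty$, Gagliardo--Nirenberg would bound the nonlinear error uniformly; taking $R$ large then gives $|E_{R}|<\delta$ and $V_{R}''<-\delta$, contradicting $0\le V_{R}(t) = O(1+t)$. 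This proves (\ref{09/05/14/13:32}).

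For the \textbf{tail statement (\ref{09/05/13/15:58})}, assume $T_{\max}^{+}=\infty$ but, toward contradiction, $\limsup_{t\to\infty}\int_{|x|>R_{0}}|\nabla\psi(t)|^{2}dx <\infty$ for some $R_{0}>0$. Re-engineer $\Phi_{R}$ so that $\nabla^{2}\Phi_{R} - 2I_{d}$ is supported in $|x|>R_{0}$ (take $\phi$ exactly quadratic on $[0,R_{0}/R]$); then the error $E_{R}$ involves only integrals over $|x|>R_{0}$, dominated by the tail kinetic energy together with a localized Gagliardo--Nirenberg bound on $\int_{|x|>R_{0}}|\psi|^{p+1}dx$, both uniformly bounded in $t$. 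For $R$ large, $|E_{R}|<\delta$ and $V_{R}''<-\delta$; yet $V_{R}(t)$ stays bounded because only the uniformly bounded exterior mass sees the unbounded part of $\Phi_{R}$, a contradiction. The argument at $t\to -\infty$ is identical. \textbf{Main obstacle.} This last engineering step---making every term of $E_{R}(t)$ dominated by the \emph{tail} kinetic energy and conserved quantities, without any bound on the full $\|\nabla\psi(t)\|_{L^{2}}$---is the crux; the generalized virial of the appendix is tailored for this, and once available the rest reduces to the short variational argument of part (i) and standard concavity bookkeeping.
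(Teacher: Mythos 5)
Your part (i) is fine: the direct inequality $-\mathcal{K}(f)\ge \mathcal{B}(f)-\mathcal{H}(f)$ for $\mathcal{K}(f)\le 0$ (strict when $\mathcal{K}(f)<0$), combined with conservation of $\mathcal{M}$ and $\mathcal{H}$ and a continuity-in-time argument, is essentially the paper's Lemma \ref{09/06/21/15:50} plus Proposition \ref{08/05/26/10:57}, just phrased as a direct estimate instead of a contradiction; this part is correct.

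Part (ii) has a genuine gap at the step ``taking $R$ large then gives $|E_{R}(t)|<\delta$''. Under your contradiction hypotheses you only know that the exterior quantities $\int_{|x|\ge R}|\nabla\psi(t)|^{2}\,dx$ and $\int_{|x|\ge R}|\psi(t)|^{p+1}\,dx$ are \emph{bounded} uniformly in $t$ (by $M_{0}$, Gagliardo--Nirenberg and the conserved norms); they are not \emph{small} uniformly in $t$ as $R\to\infty$, because the solution may translate its mass out to spatial infinity — mass is conserved but not spatially localized, and nothing in the hypotheses prevents the exterior mass from being comparable to $\|\psi_{0}\|_{L^{2}}^{2}$ at late times. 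So the concavity inequality $V_{R}''<-\delta$ cannot be obtained this way, either for (\ref{09/05/14/13:32}) or for (\ref{09/05/13/15:58}). (If it could, one would in fact get finite-time blowup for arbitrary data in $PW_{-}$, whereas the paper only obtains this under radial symmetry, via the Strauss lemma, in Theorem \ref{08/04/21/9:28}.) A second, smaller issue: by putting absolute values on the whole error you discard the favorable sign of the exterior kinetic terms; in the paper's generalized virial (\ref{08/03/29/19:05}) the error is $-2\mathcal{K}^{R}(\psi)$ plus an $O(R^{-2})$ mass term, and the exterior gradient contributions in $\mathcal{K}^{R}$ enter with a good sign, so only the exterior potential term $\int\rho_{3}|\psi|^{p+1}$ must be controlled.

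What actually closes the argument in the paper is a bootstrap that you have not supplied and that is not ``standard bookkeeping'': Step 1 is a variational threshold — any $v$ with $\mathcal{K}^{R}(v)\le -\tfrac{1}{4}\varepsilon_{0}$, exterior kinetic energy $\le M_{0}$ and $\|v\|_{L^{2}}\le\|\psi_{0}\|_{L^{2}}$ must have exterior mass larger than a fixed $m_{0}>0$; Step 2 is a continuity-in-time argument, using the virial identity itself together with a choice of $R$ making the initial exterior mass and the $O(R^{-2})$ terms small (conditions (\ref{08/03/29/20:04})--(\ref{08/03/29/20:06})), showing that the exterior mass can never reach $m_{0}$; consequently $\mathcal{K}^{R}(\psi(t))\ge -\tfrac{1}{4}\varepsilon_{0}$ for \emph{all} $t$, and only then does the concavity argument with $\mathcal{K}(\psi(t))<-\varepsilon_{0}$ force $(W_{R},|\psi(t)|^{2})$ negative in finite time, contradicting $T_{\max}^{+}=\infty$. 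This uniform-in-time control of the exterior mass is the missing idea in your proposal; the appendix's generalized virial alone does not provide it.
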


\begin{remark}
{\rm (i)} We do not know whether a solution growing up at infinity exists.
\\[6pt]
{\rm (ii)} We know (see \cite{Glassey}) that if $\psi_{0} \in H^{1}(\mathbb{R}^{d})\cap L^{2}(\mathbb{R}^{d},|x|^{2}dx)$, then $T_{\max}^{\pm}<\infty$ and the corresponding solution $\psi$ satisfies that  
\[
\lim_{t\to \pm T_{\max}^{\pm}}\left\| \nabla \psi(t)\right\|_{L^{2}}=\infty.
\]
For the case $\psi_{0}\not\in L^{2}(\mathbb{R}^{d},|x|^{2}dx)$, see Theorem \ref{08/04/21/9:28} below (see also \cite{Ogawa-Tsutsumi}).
\end{remark}
Combining Theorems \ref{08/05/26/11:53} and \ref{08/06/12/9:48}, we can show the instability of the ground states: Precisely; 
\begin{theorem}[Instability of ground state]
\label{08/10/20/5:21}
Let $Q_{\omega}$ be the ground state of the equation (\ref{08/05/13/11:22}) for  $\omega>0$. Then, $Q_{\omega}$ has two unstable directions in the sense that $Q_{\omega} \in \overline{PW_{+}}\cap \overline{PW_{-}}$. In particular, for any $\varepsilon>0$, there exist $f_{+} \in PW_{+}$ and $f_{-}\in PW_{-}$ such that
\[
\left\| Q_{\omega}-f_{\pm}\right\|_{H^{1}}\le \varepsilon.
\]
\end{theorem}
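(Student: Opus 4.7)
The plan is, for each $\varepsilon>0$, to exhibit explicit approximants $f_{\pm}\in PW_\pm$ with $\|Q_\omega-f_\pm\|_{H^1}\le\varepsilon$, which gives $Q_\omega\in\overline{PW_+}\cap\overline{PW_-}$ and hence the theorem. I would use the one-parameter family of $L^2$-preserving dilations
\[
f_\lambda(x):=\lambda^{d/2}Q_\omega(\lambda x),\qquad \lambda>0.
\]
Two features make this family especially well adapted to the task: by a direct change of variables $\|f_\lambda\|_{L^2}=\|Q_\omega\|_{L^2}$, so $\mathcal{B}(f_\lambda)=\mathcal{B}(Q_\omega)$ is independent of $\lambda$; and $f_\lambda\to Q_\omega$ in $H^1$ as $\lambda\to 1$ by standard continuity of dilation on $H^1$. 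It will therefore be enough to show $f_\lambda\in PW_+$ for $\lambda$ slightly below $1$ and $f_\lambda\in PW_-$ for $\lambda$ slightly above $1$.

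The first step is to track how this scaling acts on $\mathcal{K}$ and $\mathcal{H}$. Using the Pohozaev-type identities (\ref{08/05/13/15:06}) and (\ref{08/09/24/15:13}) of Proposition~\ref{08/05/13/15:13}, which hold for every $Q_\omega$ by Remark~\ref{08/09/27/22:26}(ii), a direct computation with $\mu:=d(p-1)/2$ yields
\[
\mathcal{K}(f_\lambda)=(\lambda^2-\lambda^{\mu})\|\nabla Q_\omega\|_{L^2}^{2},\qquad
\mathcal{H}(f_\lambda)=\Bigl(\lambda^2-\tfrac{2}{\mu}\lambda^{\mu}\Bigr)\|\nabla Q_\omega\|_{L^2}^{2}.
\]
The $L^{2}$-supercritical hypothesis $p+1>2+4/d$ forces $\mu>2$, hence $\mathcal{K}(f_\lambda)>0$ for $0<\lambda<1$ and $\mathcal{K}(f_\lambda)<0$ for $\lambda>1$; the correct sign of $\mathcal{K}$ on each side of $1$ is therefore automatic.

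The real content is to check $f_\lambda\in PW$ (i.e.\ $\mathcal{H}(f_\lambda)<\mathcal{B}(f_\lambda)$) on a punctured neighbourhood of $\lambda=1$. Since $\mathcal{B}(f_\lambda)=\mathcal{B}(Q_\omega)=\mathcal{H}(Q_\omega)=\frac{\mu-2}{\mu}\|\nabla Q_\omega\|_{L^2}^{2}$ (the equality $\mathcal{B}(Q_\omega)=\mathcal{H}(Q_\omega)$ being the translation of $\widetilde{\mathcal{N}}_2(Q_\omega)=\widetilde{N}_2$ through (\ref{08/06/15/14:38})), this reduces to studying
\[
\varphi(\lambda):=\tfrac{\mu-2}{\mu}-\lambda^2+\tfrac{2}{\mu}\lambda^{\mu}.
\]
One checks $\varphi(1)=\varphi'(1)=0$ and $\varphi''(1)=2(\mu-2)=d(p-1)-4>0$; the vanishing of the first two derivatives records that $Q_\omega$ lies on the boundary of $PW$, while the strict positivity of $\varphi''(1)$ is exactly what $L^{2}$-supercriticality gives. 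Hence $\varphi>0$ in a punctured neighbourhood of $1$, and there $\mathcal{H}(f_\lambda)<\mathcal{B}(f_\lambda)$, so $f_\lambda\in PW$.

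Assembling the three steps, $f_\lambda\in PW_+$ for $\lambda\in(1-\delta,1)$ and $f_\lambda\in PW_-$ for $\lambda\in(1,1+\delta)$ for some small $\delta>0$; letting $\lambda\to 1$ and using $f_\lambda\to Q_\omega$ in $H^1$ supplies the required approximants $f_\pm$. The one genuinely delicate point is the sign of $\varphi''(1)$: it is precisely the strict inequality $d(p-1)>4$ that makes $Q_\omega$ a two-sided cluster point of both $PW_+$ and $PW_-$, rather than being isolated from one side as happens in the degenerate $L^{2}$-critical limit.
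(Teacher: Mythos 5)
Your proposal is correct, but it takes a genuinely different route from the paper. The paper's proof (via Lemma \ref{08/06/14/23:37}) uses the scalar path $\Gamma_{\omega}(s)=sQ_{\omega}$: since the mass varies with $s$, membership in $PW$ is checked through the equivalent criterion (\ref{08/06/15/14:38}), computing $\widetilde{\mathcal{N}}_{2}(sQ_{\omega})=s^{p-1}\bigl(1-\tfrac{4}{d(p-1)}s^{p-1}\bigr)^{\frac{d}{4}(p-1)-1}N_{2}<\widetilde{N}_{2}$ for $s\neq 1$ near $1$, together with the signs of $\mathcal{K}(sQ_{\omega})$; the explicit approximants in the paper's remark are $(1\mp\varepsilon/\|Q_{\omega}\|_{H^{1}})Q_{\omega}$. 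You instead use the $L^{2}$-preserving dilation $f_{\lambda}=\lambda^{d/2}Q_{\omega}(\lambda\cdot)$, which freezes $\mathcal{B}(f_{\lambda})=\mathcal{B}(Q_{\omega})$ and reduces the $PW$ check to the elementary one-variable inequality $\varphi(\lambda)>0$ with $\varphi(1)=\varphi'(1)=0$, $\varphi''(1)=d(p-1)-4>0$; your scaling computations of $\mathcal{K}(f_{\lambda})$, $\mathcal{H}(f_{\lambda})$ and the identity $\mathcal{B}(Q_{\omega})=\mathcal{H}(Q_{\omega})=\frac{d(p-1)-4}{d(p-1)}\|\nabla Q_{\omega}\|_{L^{2}}^{2}$ all check out (the latter follows most directly from $\mathcal{K}(Q_{\omega})=0$ and $N_{2}=\mathcal{N}_{2}(Q_{\omega})$ in (\ref{10/03/27/10:23}), rather than by invoking (\ref{08/06/15/14:38}), which is stated as a strict-inequality equivalence). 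Both arguments hinge on the same two facts — the Pohozaev identity $\mathcal{K}(Q_{\omega})=0$ and the supercriticality $d(p-1)>4$ — but they probe different unstable directions: the paper perturbs the amplitude at fixed profile, while you perturb the scale at fixed mass, which incidentally shows the instability is visible already within the mass-constrained sphere; the paper's choice has the small advantage of also exhibiting, along the same path, the regime $\mathcal{H}\le 0\subset PW_{-}$ for large $s$, which your family does not reach since $\mathcal{H}(f_{\lambda})$ stays positive only up to $\lambda^{\mu-2}=\mu/2$ but that extra information is not needed for the theorem.
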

\begin{remark} 
{\rm (i)} 
An example of $f_{\pm}$ is $\left(1\mp \frac{\varepsilon}{\left\| Q_{\omega}\right\|_{H^{1}}}\right)Q_{\omega}$, where both upper or both lower signs should be chosen in the double signs. 
\\
{\rm (ii)}
The ground state $Q_{\omega}$ also has an (orbitally) stable ``direction''. Indeed, if we start with $e^{ivx}Q_{\omega}$ for ``small'' $v \in \mathbb{R}^{d}$, then $e^{i(vx-\frac{1}{2}v^{2}t)}e^{\frac{i}{2}\omega t}Q_{\omega}(x-vt)$ solves the equation (\ref{08/05/13/8:50}) and stays in a neighborhood of the orbit of $Q_{\omega}$ under the action of $\mathbb{R}^{d} \times S^{1}$ 
\[
\left\{ 
e^{i\theta}Q_{\omega}(\cdot-a) \ 
\left| \ a \in \mathbb{R}^{d},\ \theta \in S^{1}
 \right.
\right\}.
\]  
\end{remark}

We state further properties of solutions.

\begin{theorem}\label{09/05/18/10:43}
Assume that $d\ge 1$ and $2+\frac{4}{d}<p+1<2^{*}$. Let $\psi$ be a global solution to the equation (\ref{08/05/13/8:50}) with an initial datum in $H^{1}(\mathbb{R}^{d})$. Then, the following five conditions are equivalent: 
\\[6pt]
{\rm (i)} Decay of $L^{p+1}$-norm:  
\begin{equation}\label{10/04/22/12:19}
\lim_{t\to \infty}\left\| \psi(t) \right\|_{L^{p+1}}=0
.
\end{equation}
{\rm (ii)} Decay of $L^{q}$-norm:  
\begin{equation}\label{10/05/31/8:54}
\lim_{t\to \infty}\left\| \psi(t) \right\|_{L^{q}}=0
\quad 
\mbox{for all $q \in (2,2^{*})$}.
\end{equation}
\noindent 
{\rm (iii)} Boundedness of the Strichartz norms:  
\begin{equation}\label{10/04/22/12:24}
\left\|(1-\Delta)^{\frac{1}{2}} 
\psi 
\right\|_{L^{r}([0,\infty);L^{q})}< 
\infty
\quad 
\mbox{for all admissible pair $(q,r)$}.
\end{equation}
\noindent 
{\rm (iv)} Boundedness of $X$-norm (see (\ref{08/09/01/10:02}) for the definition of the space $X$): 
\begin{equation}\label{10/04/22/12:20}
\left\| \psi \right\|_{X([0,\infty))}<\infty,
\quad 
\left\| \psi \right\|_{L^{\infty}([0,\infty);H^{1})}<\infty.
\end{equation}
{\rm (v)} Existence of an asymptotic state at $+\infty$: There exists $\phi_{+} \in H^{1}(\mathbb{R}^{d})$ such that 
\[
\lim_{t\to \infty}\left\|\psi(t)-e^{\frac{i}{2}t\Delta}\phi_{+} \right\|_{H^{1}}=0.
\]
A similar result holds for the negative time case. 
\end{theorem}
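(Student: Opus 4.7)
The plan is to prove the chain $(v)\Rightarrow(ii)\Rightarrow(i)\Rightarrow(iv)\Rightarrow(iii)\Rightarrow(v)$, so that all five conditions become mutually equivalent; four of the arrows are standard, and the genuine content sits in the step $(i)\Rightarrow(iv)$.

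The routine arrows go as follows. For $(v)\Rightarrow(ii)$, I use that for any $\phi_{+}\in H^{1}(\mathbb{R}^{d})$ the free evolution $e^{\frac{i}{2}t\Delta}\phi_{+}$ decays in every $L^{q}$ with $q\in(2,2^{*})$---by approximating in $H^{1}$ by Schwartz functions, applying the dispersive estimate $\|e^{\frac{i}{2}t\Delta}f\|_{L^{\infty}}\le C|t|^{-d/2}\|f\|_{L^{1}}$, interpolating with mass, and using the Sobolev embedding $H^{1}\hookrightarrow L^{q}$ for the error---and combine this with $\|\psi(t)-e^{\frac{i}{2}t\Delta}\phi_{+}\|_{H^{1}}\to 0$. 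The implication $(ii)\Rightarrow(i)$ is immediate since $p+1\in(2,2^{*})$. For $(iv)\Rightarrow(iii)$, a Strichartz estimate applied to the Duhamel identity controls every admissible norm by the $X$-norm and the $L^{\infty}_{t}H^{1}_{x}$-norm after distributing H\"older exponents on the nonlinearity $|\psi|^{p-1}\psi$. Finally, $(iii)\Rightarrow(v)$ follows from
\[
e^{-\frac{i}{2}t\Delta}\psi(t)-e^{-\frac{i}{2}s\Delta}\psi(s)=\frac{i}{2}\int_{s}^{t}e^{-\frac{i}{2}\tau\Delta}|\psi(\tau)|^{p-1}\psi(\tau)\,d\tau,
\]
whose $H^{1}$-norm is, by a dual Strichartz bound, controlled by a tail quantity that vanishes as $s,t\to\infty$ under (iii); this makes $e^{-\frac{i}{2}t\Delta}\psi(t)$ Cauchy in $H^{1}$, producing $\phi_{+}$, and the two limits in (v) agree because $e^{\frac{i}{2}t\Delta}$ is an $H^{1}$-isometry.

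The heart of the argument is $(i)\Rightarrow(iv)$. Conservation of the Hamiltonian yields
\[
\|\nabla\psi(t)\|_{L^{2}}^{2}=\mathcal{H}(\psi_{0})+\frac{2}{p+1}\|\psi(t)\|_{L^{p+1}}^{p+1},
\]
so together with mass conservation, (i) immediately gives $\psi\in L^{\infty}(\mathbb{R};H^{1})$. For the $X$-bound on $[0,\infty)$ I use a long-time divisibility argument. Given a small threshold $\eta>0$ to be fixed by the local theory, (i) allows me to choose $T$ so large that $\sup_{t\ge T}\|\psi(t)\|_{L^{p+1}}<\eta$. On a subinterval $I=[T,T']$, Duhamel/Strichartz estimates from the left endpoint give
\[
\|\psi\|_{X(I)}\le C\|\psi(T)\|_{H^{1}}+C\bigl\||\psi|^{p-1}\psi\bigr\|_{Y(I)}\le C'+C'\|\psi\|_{L^{\infty}_{t}L^{p+1}_{x}(I)}^{p-1}\|\psi\|_{X(I)}\le C'+C'\eta^{p-1}\|\psi\|_{X(I)},
\]
where $Y$ denotes the appropriate dual admissible space and the H\"older decomposition distributes the remaining exponents between the $X$-norm and the uniform $H^{1}$-bound. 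Choosing $\eta$ small enough to absorb the nonlinear term yields $\|\psi\|_{X([T,T'])}\le 2C'$ uniformly in $T'$, hence $\|\psi\|_{X([T,\infty))}<\infty$. On $[0,T]$ the $X$-norm is finite by local well-posedness.

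The main obstacle is precisely the H\"older book-keeping in this last step: one must arrange the exponents so that a genuine power of $\|\psi\|_{L^{p+1}}$ appears as the small factor while the remaining factors on the nonlinearity are covered by the $X$-norm and the uniform $H^{1}$-bound. This is the only nontrivial analytic input; once the exponent choice is settled, the contraction/absorption mechanism is routine and closes the proof.
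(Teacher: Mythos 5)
Your cycle $(v)\Rightarrow(ii)\Rightarrow(i)\Rightarrow(iv)\Rightarrow(iii)\Rightarrow(v)$ is legitimate, and four of the arrows are fine and use the same ingredients as the paper ($(v)\Rightarrow(ii)$ is the free-decay Lemma \ref{10/04/08/9:20} plus $H^{1}$-convergence, $(iv)\Rightarrow(iii)$ is Lemma \ref{08/08/18/15:53}, $(iii)\Rightarrow(v)$ is the Cauchy/Duhamel argument of Proposition \ref{08/08/18/16:51}). The gap is exactly in the step you yourself single out, $(i)\Rightarrow(iv)$: the H\"older bookkeeping you postpone cannot be carried out in the form your display asserts. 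In the claimed bound $\||\psi|^{p-1}\psi\|_{Y(I)}\le\|\psi\|_{L^{\infty}_{t}L^{p+1}_{x}(I)}^{p-1}\|\psi\|_{X(I)}$ the $p-1$ factors placed in $L^{\infty}_{t}$ contribute nothing to the time integrability of the dual norm, so the single $X$-factor must supply all of it, and H\"older forces exact exponent identities in both variables. Summing them with parabolic weights ($2\times$time $+\,d\times$space) shows your right-hand side has scaling weight $\frac{2}{p-1}+\frac{d(p-1)}{p+1}$, whereas any norm $Y(I)$ for which the Duhamel term is bounded in $X(I)$ uniformly in the interval (the admissible Strichartz dual, or the Foschi pair $L^{\tilde r_{1}'}L^{q_{1}'}$ of Lemma \ref{08/07/31/12:02}) must have weight $2+\frac{2}{p-1}$; these agree only when $\frac{d(p-1)}{p+1}=2$, i.e. $p=2^{*}-1$, the $H^{1}$-critical power excluded here. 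Seen in the time variable alone: every usable dual exponent satisfies $\frac{1}{\gamma'}\ge\frac12$ (admissible dual) or $\frac{1}{\tilde r_{1}'}=1-\frac{1}{r_{0}}-\frac{s_{p}}{2}>\frac{1}{r_{1}}$, while the $X$-components and their interpolates (also with the bounded $L^{\infty}_{t}H^{1}$ norm) have time exponents at most $\frac{1}{r_{1}}<\frac12$, so no exponent choice closes the split.

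The repair is essentially what the paper does. If only one space-time factor is used, scaling forces the remaining $p-1$ factors into the critical space $L^{\infty}_{t}L^{\frac{d}{2}(p-1)}_{x}$, not $L^{\infty}_{t}L^{p+1}_{x}$, so one must first upgrade (i) to (ii) by interpolating the $L^{p+1}$ decay with the uniform $H^{1}$ bound; and even then the paper's estimate (\ref{10/02/24/16:42}) bounds the nonlinearity by $\|(1-\Delta)^{1/2}\psi\|_{L^{r_{0}}L^{q_{1}}}\|\psi\|_{L^{r_{2}}L^{q_{2}}}^{p-1}$ and splits the last factor as a fractional power of the small quantity $\sup_{t\ge t_{0}}\|\psi(t)\|_{L^{q_{2}}}$ times an admissible norm, so the unknown Strichartz norm appears to the power $1+\frac{(p-1)r_{2,0}}{r_{2}}>1$ with a small coefficient; closing this requires a continuity/bootstrap argument, not the direct absorption you invoke. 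Your scheme can be fixed along these lines (insert $(i)\Rightarrow(ii)$ by interpolation, take the small factor in $L^{q_{2}}$ or $L^{\frac{d}{2}(p-1)}$ to a fractional power, accept the superlinear right-hand side and bootstrap), but as written the key display is false and the statement that "once the exponent choice is settled, the contraction/absorption mechanism is routine" conceals precisely the step where the actual work lies.
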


As a corollary of Theorem \ref{08/05/26/11:53}, with the help of Theorem \ref{09/05/18/10:43}, we obtain the following result: 
\begin{corollary}\label{09/12/23/22:22}
$PW_{+}\cup \{0\}$ is connected in the $L^{q}(\mathbb{R}^{d})$-topology for all $q\in (2, 2^{*})$.
\end{corollary}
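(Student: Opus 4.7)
The plan is to show that every element of $PW_{+}$ is joined to $0$ by a continuous $L^{q}$-path lying entirely in $PW_{+} \cup \{0\}$; since any two points of $PW_{+}$ are then connected through $0$, this gives path-connectedness and hence connectedness in the $L^{q}$-topology. The natural candidate for the path is the forward scattering trajectory of the solution, reparametrized on the closed interval and compactified at $t = +\infty$ to the point $0$.

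Concretely, fix $\psi_{0} \in PW_{+}$. By Theorem \ref{08/05/26/11:53}, the corresponding solution $\psi$ is global, belongs to $C(\mathbb{R}; H^{1}(\mathbb{R}^{d}))$, stays in $PW_{+}$ for every $t \in \mathbb{R}$, and admits an asymptotic state $\phi_{+} \in \Omega \subset H^{1}(\mathbb{R}^{d})$ with $\|\psi(t) - e^{\frac{i}{2}t\Delta}\phi_{+}\|_{H^{1}} \to 0$ as $t \to +\infty$. In particular condition (v) of Theorem \ref{09/05/18/10:43} holds, so by the equivalence (v) $\Leftrightarrow$ (ii) established there,
\[
\lim_{t \to +\infty}\|\psi(t)\|_{L^{q}} = 0 \qquad \text{for every } q \in (2, 2^{*}).
\]

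I then define $\gamma : [0,1] \to PW_{+} \cup \{0\}$ by $\gamma(s) = \psi\!\left(\tfrac{s}{1-s}\right)$ for $s \in [0,1)$ and $\gamma(1) = 0$. Invariance of $PW_{+}$ under the flow (Theorem \ref{08/05/26/11:53}(i)) ensures $\gamma(s) \in PW_{+}$ for $s < 1$. On $[0,1)$, continuity of $\gamma$ into $L^{q}$ follows from $\psi \in C(\mathbb{R}; H^{1})$ combined with the Sobolev embedding $H^{1}(\mathbb{R}^{d}) \hookrightarrow L^{q}(\mathbb{R}^{d})$, which holds for all $q \in (2, 2^{*})$ in the range of dimensions and exponents under consideration. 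Continuity at the endpoint $s = 1$ is precisely the $L^{q}$-decay just obtained. Thus $\gamma$ is a continuous $L^{q}$-path in $PW_{+} \cup \{0\}$ from $\psi_{0}$ to $0$, so $PW_{+} \cup \{0\}$ is path-connected, hence connected, in the $L^{q}$-topology. There is no real obstacle once Theorems \ref{08/05/26/11:53} and \ref{09/05/18/10:43} are in hand: the argument is a short topological assembly, and the only nontrivial input is the decay of the $L^{q}$-norm along scattering trajectories, which is exactly what the equivalence of (v) and (ii) provides.
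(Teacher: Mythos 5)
Your proposal is correct and takes essentially the same route as the paper: the paper also invokes Theorem \ref{08/05/26/11:53} for the existence of the asymptotic state, deduces $\lim_{t\to\infty}\|\psi(t)\|_{L^{q}}=0$ for all $q\in(2,2^{*})$ via Theorem \ref{09/05/18/10:43}, and connects data through $0$ along the forward solution trajectories $\{\psi(t)\mid t\ge 0\}\cup\{0\}$. Your explicit reparametrization onto $[0,1]$ and the appeal to the flow-invariance of $PW_{+}$ simply spell out details the paper leaves implicit.
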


If, for example, $PW_{+}\cup\{0\}$ contains a neighborhood of $0$ in the $L^{q}$-topology, then we can say that $PW_{+}$ is connected in the $L^{q}$-topology, without adding $\{0\}$. However, for any $\varepsilon>0$, there is a function $f_{\varepsilon} \in H^{1}(\mathbb{R}^{d})$ such that $f_{\varepsilon} \notin PW_{+}$ and $\|f_{\varepsilon}\|_{L^{p+1}}=\varepsilon$, so that we need to add $\{0\}$ in Corollary \ref{09/12/23/22:22} above. On the other hand, we can verify that  $PW_{+}\cup \{0\}$ contains a sufficiently small ball in $H^{1}(\mathbb{R}^{d})$ (clearly, $PW_{+}$ does not contain $0$). However, we do not know that $PW_{+}$ is connected in the $H^{1}$-topology.
\\
\par 
Next, we consider singular solutions. The following theorem tells us that solutions with radially symmetric data from $PW_{-}$ blow up in a finite time: 
\begin{theorem}[Existence of blowup solution]
\label{08/04/21/9:28}
Assume that $d\ge 2$, $2+\frac{4}{d}< p+1 <2^{*}$, and  $p\le 5$ if $d=2$. Let $\psi_{0}$ be a radially symmetric function in $PW_{-}$ and let $\psi$ be the corresponding solution to the equation (\ref{08/05/13/8:50}).  Then, we have

\begin{equation}\label{10/02/22/20:33}
T_{\max}^{\pm}<\infty
\quad 
\mbox{and}
\quad  
\lim_{t \to \pm T_{\max}^{\pm}}\|\nabla \psi(t)\|_{L^{2}}=\infty.
\end{equation}
Furthermore, we have the followings:
\\
{\rm (i)} 
For all $m >0$,  there exists a constant $R_{m}>0$ such that  
\begin{equation}\label{08/06/12/8:54}
\int_{|x|>R}|\psi(x,t)|^{2}\,dx < m  \quad \mbox{for all $R\ge R_{m}$ and $t \in I_{\max}$}
.
\end{equation}
\vspace{6pt}
{\rm (ii)} For all sufficiently large $R>0$, we have  
\begin{equation}\label{08/06/10/10:21}
\int_{0}^{T_{\max}^{+}}(T_{\max}^{+}-t)\left( \int_{|x|>R} |\nabla \psi(x,t)|^{2}dx \right)dt <\infty ,
\end{equation}
\vspace{6pt}
\begin{equation}\label{08/06/10/10:22}
\int_{0}^{T_{\max}^{+}}(T_{\max}^{+}-t) \left( \int_{|x|>R} |\psi(x,t)|^{p+1}dx \right)dt <\infty.
\end{equation}
\vspace{6pt}
{\rm (iii)} For all sufficiently large $R>0$, we have 
\begin{equation}\label{08/06/12/6:56}
\int_{0}^{T_{\max}^{+}}(T_{\max}^{+}-t)\|\psi(t)\|_{L^{\infty}(|x|>R)}^{4}\,dt < \infty ,
\end{equation}
\vspace{6pt}
\begin{equation}\label{08/06/12/6:57}
\int_{0}^{T_{\max}^{+}}(T_{\max}^{+}-t)\|\psi(t)\|_{L^{p+1}(|x|>R)}^{\frac{4(p+1)}{p-1}}\,dt < \infty ,
\end{equation}
\vspace{6pt}
\begin{equation}\label{08/06/12/6:58}
\liminf_{t \to T_{\max}^{+}} (T_{\max}^{+}-t)\|\psi(t)\|_{L^{\infty}(|x|>R)}^{2}=0 ,
\end{equation}
\vspace{6pt}
\begin{equation}\label{08/06/12/6:59}
\liminf_{t \to T_{\max}^{+}} (T_{\max}^{+}-t)^{\frac{p-1}{2}}\|\psi(t)\|_{L^{p+1}(|x|>R)}^{p+1} =0 .
\end{equation}
For the negative time case, the corresponding results to {\rm (ii)} and {\rm (iii)} hold valid. 
\end{theorem}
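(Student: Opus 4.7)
The plan is to use the Ogawa--Tsutsumi cutoff-virial method \cite{Ogawa-Tsutsumi}, supplied with two crucial inputs proved earlier in the paper: the uniform bound
\[
\mathcal{K}(\psi(t)) \le -\delta,\qquad \delta := \mathcal{B}(\psi_0)-\mathcal{H}(\psi_0) > 0,\quad t\in I_{\max},
\]
from Theorem \ref{08/06/12/9:48}(i), and the generalized virial identity (\ref{08/03/29/19:05}) of the appendix (which permits working without the weight assumption $|x|\psi_0\in L^2$). Since $\psi_0$ is radial, so is $\psi(\cdot,t)$ for every $t\in I_{\max}$, and the Strauss radial embedding $|f(x)| \le C|x|^{-(d-1)/2}\|f\|_{L^2}^{1/2}\|\nabla f\|_{L^2}^{1/2}$ (valid for $d\ge 2$) is available.

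\textbf{Step 1 (Localized variance).} Fix a smooth radial $\phi_R$ with $\phi_R(x)=|x|^2$ on $|x|\le R$, $0\le\phi_R\le CR^2$, and $\|D^k\phi_R\|_{L^\infty}\le CR^{2-k}$ for $k=1,\dots,4$, and set $V_R(t):=\int\phi_R(x)|\psi(x,t)|^2\,dx\ge 0$. The generalized virial identity yields
\[
V_R''(t)=4\mathcal{K}(\psi(t))+E_R(t),
\]
with $E_R$ supported in $\{|x|>R\}$ and satisfying
\(
|E_R(t)|\le C\bigl(R^{-2}\mathcal{M}(\psi_0)+\|\psi(t)\|_{L^{p+1}(|x|>R)}^{p+1}\bigr).
\)
The Strauss embedding plus mass conservation give
\[
\|\psi(t)\|_{L^{p+1}(|x|>R)}^{p+1}\le C R^{-\frac{(d-1)(p-1)}{2}}\mathcal{M}(\psi_0)^{\frac{p+3}{4}}\|\nabla\psi(t)\|_{L^2}^{\frac{p-1}{2}}.
\]
Since $p+1<2^{*}$ forces $(p-1)/2<2$ (in $d=2$ the hypothesis $p\le 5$ is exactly what keeps this strict), Young's inequality absorbs $\|\nabla\psi(t)\|_{L^2}^{(p-1)/2}$ into $\varepsilon\|\nabla\psi(t)\|_{L^2}^2$ plus an $R^{-\alpha}$ constant. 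Using $\mathcal{K}(\psi(t))<0$ together with conservation of $\mathcal{H}$ and identity (\ref{08/05/13/9:16}) gives $\|\nabla\psi(t)\|_{L^2}^2 \le C(|\mathcal{K}(\psi(t))|+|\mathcal{H}(\psi_0)|)$, so for $R$ sufficiently large I obtain
\[
V_R''(t)\le 2\mathcal{K}(\psi(t))\le -2\delta,\qquad t\in I_{\max}.
\]
As $V_R\ge 0$, integrating twice forces a contradiction with $T_{\max}^{+}=\infty$; hence $T_{\max}^{+}<\infty$ and the blowup alternative (\ref{10/01/27/11:26}) gives $\|\nabla\psi(t)\|_{L^2}\to\infty$ at $T_{\max}^{+}$. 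The negative time case is symmetric.

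\textbf{Step 2 (Consequences).} For (i), I would let $\chi_R$ be a smooth radial cutoff on $\{|x|>R\}$ and differentiate $\int\chi_R|\psi|^2$ in time; combining the resulting flux identity with mass conservation and the exterior gradient bound produced by the virial argument yields (\ref{08/06/12/8:54}). For (ii), multiplying the inequality $V_R''(t)\le 4\mathcal{K}(\psi(t))+E_R(t)$ by $(T_{\max}^{+}-t)$ and integrating on $[0,T_{\max}^{+})$, the boundary terms are finite while $V_R\ge 0$, which forces both $\int_0^{T_{\max}^{+}}(T_{\max}^{+}-t)\|\nabla\psi(t)\|_{L^2(|x|>R)}^2\,dt$ and $\int_0^{T_{\max}^{+}}(T_{\max}^{+}-t)\|\psi(t)\|_{L^{p+1}(|x|>R)}^{p+1}\,dt$ to be finite. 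The $L^\infty$ and refined $L^{p+1}$ estimates in (\ref{08/06/12/6:56})--(\ref{08/06/12/6:57}) then follow from inserting the Strauss radial bound into (ii), while the $\liminf=0$ statements (\ref{08/06/12/6:58})--(\ref{08/06/12/6:59}) follow from the elementary fact that the integrability $\int_0^T(T-t)g(t)\,dt<\infty$ precludes $(T-t)^{\beta}g(t)$ from staying bounded below for the appropriate exponent $\beta$.

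\textbf{Main obstacle.} The delicate point is maintaining $V_R''\le -2\delta$ uniformly in $t\in I_{\max}$ while $\|\nabla\psi(t)\|_{L^2}$ itself diverges: the $R$-decay from the radial Sobolev embedding must outpace this blowup in the absorption step. The admissible exponents balance precisely at $(p-1)/2<2$, which explains the tight assumption $p\le 5$ in the two-dimensional case. Constructing $\phi_R$ whose higher-order derivatives produce error terms compatible with this absorption (in particular so that the fourth-derivative contribution is subcritical) is where most of the technical work resides.
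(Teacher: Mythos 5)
Your Step 1 is essentially a viable route to $T_{\max}^{\pm}<\infty$: absorbing the exterior $L^{p+1}$ error into $\varepsilon\|\nabla\psi(t)\|_{L^{2}}^{2}$ and then trading $\|\nabla\psi\|_{L^{2}}^{2}$ for $\mathcal{K}$ and $\mathcal{H}_{0}$ via (\ref{08/05/13/9:16}), together with $\mathcal{K}(\psi(t))<-\varepsilon_{0}$, does close (this is the Ogawa--Tsutsumi/Holmer--Roudenko style argument, whereas the paper absorbs into the \emph{exterior} gradient terms $\rho_{0}|\nabla\psi|^{2}$ of $\mathcal{K}^{R}$). Two repairs are needed even there: your bound $|E_{R}(t)|\le C(R^{-2}\mathcal{M}+\|\psi(t)\|_{L^{p+1}(|x|>R)}^{p+1})$ is false as a two-sided bound, because the Hessian of $\phi_{R}$ produces an exterior term $\int_{|x|>R}\partial_{j}\partial_{k}\phi_{R}\,\partial_{j}\psi\overline{\partial_{k}\psi}$ of size $\int_{|x|>R}|\nabla\psi|^{2}$; you must choose $\phi_{R}$ with $D^{2}\phi_{R}\le 2\,\mathrm{Id}$ (as the paper's $w$ with $w'\le 1$ does) so that this contribution has a favorable sign and only the one-sided bound $E_{R}\le C(\cdots)$ is claimed. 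Also, at $d=2$, $p=5$ one has $(p-1)/2=2$, not $<2$, so Young is unavailable; the endpoint is handled by taking $R$ large so that the prefactor $CR^{-2}\mathcal{M}^{2}$ is itself small.

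The genuine gaps are in (i) and (ii). For (i), differentiating $\int\chi_{R}|\psi|^{2}$ gives a flux term controlled by $R^{-1}\mathcal{M}^{1/2}\|\nabla\psi(t)\|_{L^{2}(R\le|x|\le 2R)}$, and nothing you have established controls its time integral up to $T_{\max}^{+}$ (the weighted bound of (ii) only gives $\int(T_{\max}^{+}-t)\|\nabla\psi\|_{L^{2}(|x|>R)}^{2}\,dt<\infty$, which by Cauchy--Schwarz misses $\int\|\nabla\psi\|_{L^{2}(|x|>R)}\,dt$ logarithmically); so ``the exterior gradient bound produced by the virial argument'' does not close the argument. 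The paper instead proves (i) by a trapping/continuity argument: a variational lemma (Lemma \ref{08/04/20/18:30}, radial Strauss estimate plus Young, where $p\le5$ for $d=2$ enters) shows that $\mathcal{K}^{R}(v)\le-\varepsilon_{0}/4$ forces the exterior mass of $v$ above a threshold $m_{*}$; then at the first time the exterior mass of $\psi$ would reach $m$, one has $\mathcal{K}^{R}(\psi)\ge-\varepsilon_{0}/4$, and feeding this and $\mathcal{K}\le-\varepsilon_{0}$ into the generalized virial identity bounds $(W_{R},|\psi|^{2})$ strictly below $R^{2}m$, a contradiction (Lemma \ref{08/03/30/15:12}). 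This idea is absent from your proposal. For (ii), your scheme cannot deliver the exterior gradient integrability: you only have an \emph{upper} bound on $V_{R}''$, and to extract $\int(T_{\max}^{+}-t)\int_{|x|>R}|\nabla\psi|^{2}$ you must keep the exterior gradient with its good sign and absorb the exterior $L^{p+1}$ term into it \emph{uniformly in $t$}, which is exactly where the uniform exterior-mass smallness of (i) is used in the paper (see (\ref{10/03/07/19:03})--(\ref{10/03/07/16:47})); absorbing into $\varepsilon\|\nabla\psi\|_{L^{2}(\mathbb{R}^{d})}^{2}$ as in your Step 1 destroys the very term whose integrability you are proving. Moreover, your integration by parts produces the boundary term $(T_{\max}^{+}-T)V_{R}'(T)\sim(T_{\max}^{+}-T)\|\nabla\psi(T)\|_{L^{2}}$, which is not known to be bounded; the paper avoids it by keeping the virial identity in its double-time-integral form (only data at $t=0$ appear) and using Fubini. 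Part (iii), granted (ii), follows as you say and matches the paper.
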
 

In Theorem \ref{08/04/21/9:28}, we require the condition $p\le 5$ if $d=2$, as well as  Ogawa and Tsutsumi \cite{Ogawa-Tsutsumi}. A difficulty in the case $p>5$ with $d=2$ might come from a kind of quantum forces such that the stronger the nonlinear effect becomes, the stronger the dispersion effect does. 
\\
\par 
We do not know a lot of things about the asymptotic behavior of such singular solutions as found in Theorem \ref{08/06/12/9:48}. What we can say is the following (for simplicity, we state the forward time case only):

\begin{proposition}[Asymptotic profiles of singular solutions]
\label{08/11/02/22:52}
Assume that $d\ge 1$ and $2+\frac{4}{d}<p+1<2^{*}$. Let $\psi$ be a solution to the equation (\ref{08/05/13/8:50}) such that 
\begin{equation}\label{10/03/31/16:55}
\limsup_{t\to T_{\max}^{+}}\|\nabla \psi(t)\|_{L^{2}}=
\limsup_{t\to T_{\max}^{+}}\|\psi(t)\|_{L^{p+1}}
=\infty,
\end{equation}
and let $\{t_{n}\}_{n\in \mathbb{N}}$ be a sequence in $[0,T_{\max}^{+})$ such that 
\begin{equation}\label{10/01/25/19:06}
\lim_{n\to \infty}t_{n}= T_{\max}^{+}, 
\qquad 
\left\| \psi(t_{n})\right\|_{L^{p+1}}
=\sup_{t\in [0,t_{n})}
\left\| \psi(t) \right\|_{L^{p+1}}.
\end{equation}
For this sequence $\{t_{n}\}$, we put  
\begin{equation}\label{10/03/31/16:59}
\lambda_{n}=
\left\| \psi(t_{n})\right\|_{L^{p+1}}^{-\frac{(p-1)(p+1)}{d+2-(d-2)p}},
\end{equation}
and consider the scaled functions   
\begin{equation}\label{10/03/31/17:00}
\psi_{n}(x,t):=\lambda_{n}^{\frac{2}{p-1}}\overline{\psi(\lambda_{n}x,t_{n}-\lambda_{n}^{2}t)}, \quad 
t \in \biggm( -\frac{T_{\max}^{+}-t_{n}}{\lambda_{n}^{2}}, 
\frac{t_{n}}{\lambda_{n}^{2}}
\biggm],
\quad 
n \in \mathbb{N}.
\end{equation}
Suppose that 
\begin{equation}\label{10/01/26/15:36}
\left\| \psi \right\|_{L^{\infty}([0,T_{\max}^{+});L^{\frac{d}{2}(p-1)})}<\infty
. 
\end{equation}
Then, there exists a subsequence of $\{\psi_{n}\}$ (still denoted by the same symbol) with the following properties: There exist
\\
{\rm (i)} a nontrivial function $\psi_{\infty} \in L^{\infty}([0,\infty);\dot{H}^{1}(\mathbb{R}^{d})\cap L^{p+1}(\mathbb{R}^{d}))$ solving the equation  (\ref{08/05/13/8:50}) in the $\mathscr{D}'([0,\infty);\dot{H}^{-1}(\mathbb{R}^{d})+L^{\frac{p+1}{p}}(\mathbb{R}^{d}))$-sense and
\\
{\rm (ii)} a sequence $\{\gamma_{n}\}$ in $\mathbb{R}^{d}$
\\
such that, putting $\widetilde{\psi}_{n}(x,t):=\psi_{n}(x+\gamma_{n},t)$, we have, for any $T>0$,   
\begin{align*}
&\lim_{n\to \infty}\widetilde{\psi}_{n}= \psi_{\infty} \quad \mbox{strongly in $L^{\infty}([0,T];L_{loc}^{q}(\mathbb{R}^{d}))$} 
\quad 
\mbox{for all $q \in [1,  2^{*})$}, 
\\[6pt]
&\lim_{n\to \infty}\nabla \widetilde{\psi}_{n} = \nabla \psi_{\infty} \quad \mbox{weakly* in $L^{\infty}([0,T];L_{loc}^{2}(\mathbb{R}^{d}))$}.
\end{align*}
Furthermore, for any $\varepsilon>0$, there exists $R>0$ such that, 
for $y_{n}=\lambda_{n}\gamma_{n}$,  
\begin{equation}\label{10/03/10/18:47}
\lim_{n\to \infty}\int_{|x-y_{n}|\le \lambda_{n}R}|\psi(x,t_{n})|^{\frac{d}{2}(p-1)} dx \ge (1-\varepsilon)\| \psi_{\infty} (0) \|_{L^{\frac{d}{2}(p-1)}}^{\frac{d}{2}(p-1)}.
\end{equation}
\end{proposition}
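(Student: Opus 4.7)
I will run a concentration-compactness argument on the rescaled sequence $\{\psi_n\}$; conjugation together with time reversal preserves (\ref{08/05/13/8:50}), so each $\psi_n$ is an NLS solution on $\bigl(-\tfrac{T_{\max}^+-t_n}{\lambda_n^2},\tfrac{t_n}{\lambda_n^2}\bigr]$. The first task is to collect uniform bounds. A direct computation using the choice of $\lambda_n$ gives $\|\psi_n(0)\|_{L^{p+1}}=1$, while (\ref{10/01/25/19:06}) yields $\|\psi_n(t)\|_{L^{p+1}}\le 1$ on $[0,t_n/\lambda_n^2]$. The $L^{\frac{d}{2}(p-1)}$-norm is scale invariant, so (\ref{10/01/26/15:36}) provides a uniform bound $\|\psi_n(t)\|_{L^{\frac{d}{2}(p-1)}}\le C$. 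The Hamiltonian $\mathcal{H}$ scales by the factor $\lambda_n^{(d+2-(d-2)p)/(p-1)}=\|\psi(t_n)\|_{L^{p+1}}^{-(p+1)}$, and $\|\psi(t_n)\|_{L^{p+1}}\to\infty$ by (\ref{10/03/31/16:55}) and the supremum property, so $\mathcal{H}(\psi_n(0))\to 0$; combined with conservation this gives
\[
\|\nabla\psi_n(t)\|_{L^2}^2=\mathcal{H}(\psi_n(0))+\tfrac{2}{p+1}\|\psi_n(t)\|_{L^{p+1}}^{p+1}\le C,
\]
and $\lambda_n\to 0$ yields $t_n/\lambda_n^2\to\infty$, so every compact interval $[0,T]$ is eventually in the domain of $\psi_n$.

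\smallskip
Next, to locate the concentration center, I apply a Lions-type non-vanishing lemma adapted to the scale-invariant pair $\dot H^1\cap L^{\frac{d}{2}(p-1)}$: a sequence bounded there whose $L^{p+1}$-norm does not tend to $0$ cannot locally $L^{p+1}$-vanish everywhere. Since $\|\psi_n(0)\|_{L^{p+1}}=1$, there exist $R_0,\eta>0$ and $\gamma_n\in\mathbb{R}^d$ with
\[
\int_{|x-\gamma_n|\le R_0}|\psi_n(0,x)|^{p+1}\,dx\ge\eta.
\]
Set $\widetilde\psi_n(x,t):=\psi_n(x+\gamma_n,t)$, which inherits all the bounds above and satisfies $\int_{|x|\le R_0}|\widetilde\psi_n(0)|^{p+1}\,dx\ge\eta$.

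\smallskip
Third, I extract the limit profile. From $2i\partial_t\widetilde\psi_n=-\Delta\widetilde\psi_n-|\widetilde\psi_n|^{p-1}\widetilde\psi_n$ and the uniform $H^1_{loc}$ bound, $\partial_t\widetilde\psi_n$ is bounded in $L^\infty([0,T];H^{-1}_{loc}+L^{(p+1)/p}_{loc})$, so the Aubin--Lions lemma with a diagonal extraction produces a subsequence satisfying $\widetilde\psi_n\to\psi_\infty$ strongly in $L^\infty([0,T];L^q_{loc})$ for every $q\in[1,2^*)$ and $T>0$, and $\nabla\widetilde\psi_n\rightharpoonup\nabla\psi_\infty$ weakly$^*$ in $L^\infty([0,T];L^2_{loc})$. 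Strong $L^{p+1}_{loc}$-convergence allows passage to the limit in $|\cdot|^{p-1}(\cdot)$, so $\psi_\infty$ solves (\ref{08/05/13/8:50}) in the stated distributional sense and lies in $L^\infty([0,\infty);\dot H^1\cap L^{p+1})$ by weak-$*$ lower semicontinuity; nontriviality follows from the local lower bound at $t=0$. Finally, for (\ref{10/03/10/18:47}), the change of variables $x=\lambda_n(w+\gamma_n)$ combined with $|\psi_n(z,0)|^{\frac{d}{2}(p-1)}=\lambda_n^d|\psi(\lambda_n z,t_n)|^{\frac{d}{2}(p-1)}$ gives
\[
\int_{|x-y_n|\le \lambda_n R}|\psi(x,t_n)|^{\frac{d}{2}(p-1)}\,dx=\int_{|w|\le R}|\widetilde\psi_n(w,0)|^{\frac{d}{2}(p-1)}\,dw,
\]
and since $\tfrac{d}{2}(p-1)<2^*$ in the present range, Rellich--Kondrachov upgrades the convergence to strong $L^{\frac{d}{2}(p-1)}_{loc}$, yielding the claim for $R$ sufficiently large.

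\smallskip
The main obstacle is the non-vanishing step: the sequence is bounded only in the scale-invariant pair $\dot H^1\cap L^{\frac{d}{2}(p-1)}$, not in $H^1$ (indeed $\|\psi_n(0)\|_{L^2}\to\infty$ because the $L^2$-norm is supercritical for the scaling), so the standard Lions lemma does not apply directly. One handles this by covering $\mathbb{R}^d$ with balls of unit radius having bounded overlap and applying the critical Gagliardo--Nirenberg inequality on each ball to convert local $L^{p+1}$-vanishing into global vanishing of $\|\psi_n(0)\|_{L^{p+1}}$, contradicting its constancy equal to $1$. This is precisely where hypothesis (\ref{10/01/26/15:36}) is essential.
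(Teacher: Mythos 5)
Your proposal is correct and follows the same overall strategy as the paper: rescale, extract uniform bounds from the energy scaling $\mathcal{H}(\psi_{n}(t))=\lambda_{n}^{\frac{2(p+1)}{p-1}-d}\mathcal{H}(\psi_{0})\to 0$ together with the critical-norm hypothesis (\ref{10/01/26/15:36}), translate to prevent escape to infinity, use the equation to get compactness locally in space uniformly in time, pass to a nontrivial limiting solution, and conclude (\ref{10/03/10/18:47}) by the change of variables. The two places where you deviate are lemma-level substitutions rather than a different route. For the translation/non-vanishing step the paper invokes its Appendix C devices (the time-dependent Fr\"ohlich--Lieb--Loss lemma plus Lieb's translation lemma, which need only the gradient bound and a superlevel-set measure bound, applied with the triple of norms $L^{\frac{d}{2}(p-1)}$, $L^{p+1}$ and a higher $L^{\gamma}$ obtained by Gagliardo--Nirenberg), whereas you work at the single time $t=0$ with a bounded-overlap covering and the critical Gagliardo--Nirenberg inequality; this is workable, but as stated it needs one adjustment: the ``small'' factor you pull out on each unit ball should be the local $L^{\frac{d}{2}(p-1)}$ norm (which local $L^{p+1}$ smallness dominates by H\"older on a unit ball), and the remaining sum closes precisely because $\frac{p+1}{\frac{d}{2}(p-1)}>1$ in the $H^{1}$-subcritical range, so that $\sum_{i}\|u\|_{L^{\frac{d}{2}(p-1)}(B_{i})}^{p+1}$ is controlled by the sup of the local norms times $\|u\|_{L^{\frac{d}{2}(p-1)}}^{\frac{d}{2}(p-1)}$; a naive summation of local $H^{1}$ norms would reintroduce the unbounded $L^{2}$ quantity. (Alternatively you could simply quote the paper's Lemmata in Appendix C, which are designed exactly to bypass the missing $L^{2}$ bound.) For the compactness-in-time step you invoke Aubin--Lions/Simon, while the paper proves the $C([0,T];L^{2}_{loc})$ equicontinuity by hand from the equation and then applies Ascoli--Arzel\`a; these are interchangeable. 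A small bonus of your localization at $t=0$ (where $\|\psi_{n}(0)\|_{L^{p+1}}=1$ exactly) is that it yields $\psi_{\infty}(0)\neq 0$, whereas the paper's time-slab concentration only guarantees nontriviality of $\psi_{\infty}$ at some time in $[0,T]$, which is all the stated conclusion requires.
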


\begin{remark}\label{10/04/21/21:40}
The $L^{\frac{d(p-1)}{2}}(\mathbb{R}^{d})$-norm is invariant under the scaling leaving the equation (\ref{08/05/13/8:50}) invariant: Precisely, when $\psi$ is a solution to (\ref{08/05/13/8:50}), putting  
\begin{equation}\label{10/04/22/11:13}
\psi_{\lambda}(x,t):=
\lambda^{\frac{d(p-1)}{2}}\psi(\lambda x, \lambda^{2}t)
,\quad 
\lambda>0, 
\end{equation}
we see that $\psi_{\lambda}$ solves (\ref{08/05/13/8:50}) and satisfies that 
\begin{equation}\label{10/04/21/12:42}
\left\| \psi_{\lambda}(t) \right\|_{L^{\frac{d}{2}(p-1)}}
=
\left\| \psi(\lambda t) \right\|_{L^{\frac{d}{2}(p-1)}}
\quad 
\mbox{for all $t \in (-\frac{T_{\max}^{-}}{\lambda}, \ \frac{T_{\max}^{+}}{\lambda})$}
.
\end{equation}
\end{remark}

Proposition \ref{08/11/02/22:52} tells us that the $L^{\frac{d(p-1)}{2}}(\mathbb{R}^{d})$-norm of a singular solution concentrates at some point under the assumption (\ref{10/01/26/15:36}). In general, it is difficult to check whether the assumption (\ref{10/01/26/15:36}) holds (cf. Merle and Rapha\"el \cite{Merle-Raphael}). 
\par 
Without the assumption (\ref{10/01/26/15:36}), we have the following:

\begin{proposition}\label{10/01/26/14:49}
Under the same assumptions except (\ref{10/01/26/15:36}), definitions and notation in Proposition \ref{08/11/02/22:52}, we define the ``renormalized'' functions $\Phi_{n}^{RN}$ by 
\begin{equation}\label{10/03/31/17:07}
\Phi_{n}^{RN}(x,t)=\psi_{n}(x,t)-e^{\frac{i}{2}t\Delta}\psi_{n}(x,0),
\quad n\in \mathbb{N}.
\end{equation}
Then, for any $T>0$, $\{\Phi_{n}^{RN}\}_{n\in \mathbb{N}}$ is a uniformly bounded  sequence in $C([0,T];H^{1}(\mathbb{R}^{d}))$, and satisfies the following alternatives {\rm (i)} and {\rm (ii)}: 
\\
{\rm (i)} If   
\begin{equation}\label{10/01/25/18:38}
\lim_{n\to \infty}\sup_{t\in [0,T]}\left\| \Phi_{n}^{RN}(t)\right\|_{L^{\frac{d}{2}(p-1)}}=0,
\end{equation}
then 
\begin{equation}\label{08/11/20/17:56}
\lim_{n\to \infty}\sup_{t \in \left[t_{n}-\lambda_{n}^{2}T,
, t_{n} \right]}
\left\|\psi(t) -e^{\frac{i}{2}(t-t_{n})\Delta}\psi(t_{n}) \right\|_{L^{\frac{d}{2}(p-1)}}=0.
\end{equation}
\vspace{6pt}
{\rm (ii)} If     
\begin{equation}\label{10/01/25/18:39}
\limsup_{n\to \infty}\sup_{t\in [0,T]}\left\| \Phi_{n}^{RN}(t)\right\|_{L^{\frac{d}{2}(p-1)}}>0,
\end{equation}
then there exists a subsequence of $\{\Phi_{n}^{RN}\}$ (still denoted by the same symbol) with the following properties: There exist a nontrivial function 
 $\Phi \in L^{\infty}([0,\infty);H^{1}(\mathbb{R}^{d}))$ and a sequence $\{y_{n}\}$ in $\mathbb{R}^{d}$ such that, putting  
$\widetilde{\Phi}^{RN}_{n}(x,t)=\Phi^{RN}_{n}(x+y_{n},t)$, we have
\begin{equation}\label{10/03/15/19:51}
\lim_{n\to \infty}
\widetilde{\Phi}^{RN}_{n}= \Phi \quad 
\mbox{weakly* in $L^{\infty}([0,T];H^{1}(\mathbb{R}^{d}))$} .
\end{equation}
Here, $\Phi$ solves the following equation 
\begin{equation}\label{10/03/15/19:52}
\displaystyle{2i\frac{\partial \Phi}{\partial t}} +\Delta \Phi =-F,
\end{equation}
where $F$ is the nontrivial function in $L^{\infty}([0,\infty); L^{\frac{p+1}{p}}(\mathbb{R}^{d}))$ given by  
\begin{equation}\label{10/03/15/19:53}
\lim_{n\to \infty}|\psi_{n}|^{p-1}\psi_{n}= F \quad \mbox{weakly* in $\ L^{\infty}([0,T];L^{\frac{p+1}{p}}(\mathbb{R}^{d}))$}.
\end{equation}
Furthermore, for any $\varepsilon>0$, there exists $R>0$ such that 
\begin{equation}\label{08/11/20/17:57}
\lim_{n\to \infty} \int_{|x-y_{n}|\le \lambda_{n}R}
\left| \psi(x,t_{n}-\lambda_{n}^{2}T) - e^{-\frac{i}{2}\lambda_{n}^{2}T \Delta}\psi(x, t_{n})\right|^{\frac{d}{2}(p-1)}\!\! dx 
\ge (1-\varepsilon)\left\| \Phi(T) \right\|_{L^{\frac{d}{2}(p-1)}}^{\frac{d}{2}(p-1)}.
\end{equation}
\end{proposition}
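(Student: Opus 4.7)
The plan hinges on the scale-invariance of the $L^{d(p-1)/2}$-norm (Remark \ref{10/04/21/21:40}) and a concentration-compactness extraction. First, a chain-rule calculation (using that complex conjugation together with time reversal is a symmetry of (\ref{08/05/13/8:50})) shows each $\psi_n$ solves the same NLS on its time interval, normalized so that $\|\psi_n(\cdot,0)\|_{L^{p+1}}=1$; by the maximality property (\ref{10/01/25/19:06}) this gives $\|\psi_n(\cdot,t)\|_{L^{p+1}}\le 1$ for $t\in[0,t_n/\lambda_n^2]$. Since $\Phi_n^{RN}(0)=0$, the Duhamel representation
\[
\Phi_n^{RN}(t)=\tfrac{i}{2}\int_0^t e^{(i/2)(t-s)\Delta}|\psi_n(s)|^{p-1}\psi_n(s)\,ds
\]
together with Strichartz estimates applied on the (short, rescaled) interval $[0,T]$ delivers the uniform bound in $C([0,T];H^1)$; note the $L^2$-bound must be obtained through cancellation since $\|\psi_n(\cdot,0)\|_{L^2}$ itself diverges. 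The equation $2i\partial_t\Phi_n^{RN}+\Delta\Phi_n^{RN}=-|\psi_n|^{p-1}\psi_n$ combined with $L^{(p+1)/p}\hookrightarrow H^{-1}$ (by duality from $H^1\hookrightarrow L^{p+1}$) then controls $\partial_t\Phi_n^{RN}$ uniformly in $L^\infty([0,T];H^{-1})$. A direct change of variables exploiting the critical scaling gives the identity $\|\Phi_n^{RN}(\cdot,t)\|_{L^{d(p-1)/2}}=\|\psi(\cdot,t_n-\lambda_n^2t)-e^{-(i/2)\lambda_n^2t\Delta}\psi(\cdot,t_n)\|_{L^{d(p-1)/2}}$, from which alternative (i) follows immediately by substituting $s=t_n-\lambda_n^2t$.

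Under (\ref{10/01/25/18:39}), pass to a subsequence and pick $t_n^*\in[0,T]$ with $\|\Phi_n^{RN}(t_n^*)\|_{L^{d(p-1)/2}}\ge\delta>0$. Since $d(p-1)/2\in(2,2^*)$, Lions/Lieb-type concentration-compactness applied to the $H^1$-bounded sequence $\{\Phi_n^{RN}(t_n^*)\}$ produces $y_n\in\mathbb{R}^d$ and a further subsequence along which $\Phi_n^{RN}(\cdot+y_n,t_n^*)\rightharpoonup\phi^*\not\equiv 0$ weakly in $H^1$. Setting $\widetilde\Phi_n^{RN}(x,t):=\Phi_n^{RN}(x+y_n,t)$, weak-$*$ compactness in $L^\infty([0,T];H^1)$ combined with Aubin-Lions compactness (from the $H^{-1}$-bound on $\partial_t$ and the compact inclusion $H^1(B_R)\hookrightarrow L^2(B_R)$) yields, after extraction, $\widetilde\Phi_n^{RN}\rightharpoonup^*\Phi$ in $L^\infty([0,T];H^1)$, strong convergence in $C([0,T];L^2_{\mathrm{loc}})$, and hence $\widetilde\Phi_n^{RN}(t)\rightharpoonup\Phi(t)$ in $H^1$ pointwise in $t$. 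Lipschitz-in-$H^{-1}$ continuity of $\widetilde\Phi_n^{RN}$ together with $t_n^*\to t^*$ (along a subsequence by compactness of $[0,T]$) transfers the nontriviality $\phi^*\not\equiv 0$ to $\Phi(t^*)$, so $\Phi\not\equiv 0$; a diagonal extraction over $T=1,2,\ldots$ extends $\Phi$ to $L^\infty([0,\infty);H^1)$.

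For the limit equation, the bounded sequence $\{|\widetilde\psi_n|^{p-1}\widetilde\psi_n\}$ (with $\widetilde\psi_n(x,t):=\psi_n(x+y_n,t)$) in $L^\infty([0,T];L^{(p+1)/p})$ has, after extraction, a weak-$*$ limit $F$; passing to the limit in the translated distributional PDE gives (\ref{10/03/15/19:52})--(\ref{10/03/15/19:53}). Nontriviality of $F$ follows because $F\equiv 0$ together with $\Phi(0)=0$ (the latter from $\widetilde\Phi_n^{RN}(\cdot,0)\equiv 0$ and the $L^2_{\mathrm{loc}}$ convergence) would force $\Phi\equiv 0$, contradicting the previous paragraph. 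Finally, the concentration estimate (\ref{08/11/20/17:57}) is obtained by inverting the scaling to write its integral as $\|\widetilde\Phi_n^{RN}(T)\|_{L^{d(p-1)/2}(B_R)}^{d(p-1)/2}$ (reinterpreting the $y_n$ appearing in (\ref{08/11/20/17:57}) as $\lambda_n$ times the scaled translation, as in (\ref{10/03/10/18:47})), then applying the Rellich--Kondrachov compact embedding $H^1(B_R)\hookrightarrow L^{d(p-1)/2}(B_R)$ (valid since $d(p-1)/2<2^*$) to upgrade the weak $H^1$-convergence at $t=T$ to strong $L^{d(p-1)/2}(B_R)$-convergence, and choosing $R$ large enough that $\|\Phi(T)\|_{L^{d(p-1)/2}(\mathbb{R}^d\setminus B_R)}^{d(p-1)/2}\le \varepsilon\|\Phi(T)\|_{L^{d(p-1)/2}}^{d(p-1)/2}$. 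The most delicate step will be the uniform $H^1$-boundedness in paragraph one: since $\|\psi_n(\cdot,0)\|_{L^2}$ diverges with $n$, the $L^2$-part of $\Phi_n^{RN}$ can only be controlled through the cancellation between $\psi_n$ and its free evolution, which must be quantified by Strichartz estimates with an admissible pair centered on $L^{p+1}$, using the small rescaled-time factor $\lambda_n^2T$ to compensate the divergent scaling weights.
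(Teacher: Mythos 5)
Most of your outline coincides with the paper's actual proof, but the first assertion of the proposition --- the uniform bound of $\{\Phi_{n}^{RN}\}$ in $C([0,T];H^{1}(\mathbb{R}^{d}))$ --- is exactly where your sketch has a genuine gap, and your closing remark mislocates the difficulty. The $L^{2}$-part is the easy part: by the Duhamel formula and the dual Strichartz estimate for the admissible pair $\bigl(p+1,\tfrac{4(p+1)}{d(p-1)}\bigr)$ one gets $\|\Phi_{n}^{RN}\|_{L^{\infty}([0,T];L^{2})}\lesssim T^{1-\frac{d(p-1)}{4(p+1)}}\sup_{t}\|\psi_{n}(t)\|_{L^{p+1}}^{p}\le T^{1-\frac{d(p-1)}{4(p+1)}}$; no cancellation beyond what the Duhamel representation already encodes is needed (the divergent free part never enters), and there is no ``small rescaled-time factor $\lambda_{n}^{2}T$'' to exploit --- in the rescaled variables the interval is $[0,T]$, and the constant genuinely grows with $T$.

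The real issue is the gradient bound. To estimate $\|\nabla\Phi_{n}^{RN}\|_{L^{\infty}([0,T];L^{2})}$ by Strichartz you must place $|\psi_{n}|^{p-1}\nabla\psi_{n}$ in a dual Strichartz space, while the only uniform fixed-time bounds at your disposal are $\|\psi_{n}(t)\|_{L^{p+1}}\le 1$ and $\|\nabla\psi_{n}(t)\|_{L^{2}}\le 1$ (recall $\|\psi_{n}(t)\|_{L^{2}}=\lambda_{n}^{-\frac{d(p-1)-4}{2(p-1)}}\|\psi_{0}\|_{L^{2}}\to\infty$, so there is no uniform $H^{1}$-control of $\psi_{n}$). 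A fixed-time H\"older with $\nabla\psi_{n}\in L^{2}_{x}$ forces $|\psi_{n}|^{p-1}$ into $L^{m}_{x}$ with $m>d$ when $d\ge 3$, i.e.\ $\psi_{n}$ into Lebesgue spaces that can lie above $2^{*}$ and are not controlled; placing $\nabla\psi_{n}$ in $L^{p+1}_{x}$ instead requires the space-time bound $\sup_{n}\|\nabla\psi_{n}\|_{L^{4(p+1)/(d(p-1))}([0,T];L^{p+1})}<\infty$, which is not an a priori bound but must itself be proved. The paper establishes it in (\ref{08/11/19/23:00}) by running Strichartz on the equation for $\nabla\psi_{n}$ with an auxiliary admissible pair $(q,r)$, $p+1<q<2^{*}$, interpolating the unknown norm between $L^{\infty}L^{2}$ and $L^{r}L^{q}$, and closing the resulting self-referential inequality with Young's inequality --- which works precisely because the exponent $\frac{q(p-1)}{(q-2)(p+1)}$ is less than $1$ for $q>p+1$. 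Your proposal offers no substitute for this bootstrap, so the assertion that ``Strichartz delivers the uniform bound in $C([0,T];H^{1})$'' does not go through as written. Once that estimate is supplied, the remainder of your argument is sound and essentially reproduces the paper's proof: your Aubin--Lions step is equivalent to the paper's equicontinuity estimate plus Ascoli--Arzel\`a, and the weak* extraction of $F$, the ``$F$ trivial implies $\Phi$ trivial'' argument, and the local strong convergence at $t=T$ for (\ref{08/11/20/17:57}) all match.
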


If the case (i) of Proposition \ref{10/01/26/14:49} occurs, then we may say that the dynamics of the solution is composed of the free evolution and the dilation (\ref{10/04/22/11:13}). On the other hand, in (ii),  concentration of $L^{\frac{d(p-1)}{2}}$-mass occurs further. We remark that the left-hand side of (\ref{08/11/20/17:57}) is finite.
\\
\par 
Here, we discuss some relations between the previous works and our results: 
\\
{\bf Notes and Comments}. 
\begin{enumerate}
\item 
Our analysis in $PW_{+}$ is inspired by the previous work by Duyckaerts, Holmer and Roudenko \cite{D-H-R, Holmer-Roudenko} (also Kenig and Merle \cite{Kenig-Merle}). They considered a typical nonlinear Schr\"odinger equation, the equation (\ref{08/05/13/8:50}) with $d=p=3$, and proved, in \cite{D-H-R}, that: if $\psi_{0} \in H^{1}(\mathbb{R}^{3})$ satisfies that  
\begin{equation}\label{10/01/31/13:08}
\mathcal{M}(\psi_{0})\mathcal{H}(\psi_{0})
< \mathcal{M}(Q)\mathcal{H}(Q) ,
\quad 
\left\| \psi_{0} \right\|_{L^{2}} \left\| \nabla \psi_{0} \right\|_{L^{2}} 
< \left\| Q \right\|_{L^{2}} \left\| \nabla Q \right\|_{L^{2}},
\end{equation}
then the corresponding solution exists globally in time and has asymptotic states at $\pm \infty$, where $Q$ denotes the ground state of the equation (\ref{08/05/13/11:22}) with $\omega=1$. 
 In our terminology, we see that the condition (\ref{10/01/31/13:08}) is equivalent to that  $\psi_{0} \in PW_{+}\cup\{0\}$ via the variational problem for $N_{2}$ (see (\ref{08/07/02/23:24})). In this paper, we intensively study 
 the scattering problem on $PW_{+}$, so that we have Theorem \ref{08/05/26/11:53}, which is an extension of the result by Duycaerts et al \cite{D-H-R, Holmer-Roudenko} to all spatial dimensions $d\ge 1$ and $L^{2}$-supercritical and $H^{1}$-subcritical powers $2+\frac{4}{d}<p+1< 2^{*}$. Furthermore,  we establish the so-called asymptotic completeness: the wave operators $W_{\pm}$ exists on $\Omega$ and they are homeomorphisms from $\Omega$ to $PW_{+}$. \\
For the nonlinear Klein-Gordon equation, the corresponding result to Theorem \ref{08/05/26/11:53} was obtained by Ibrahim, Masmoudi and Nakanishi \cite{IMN}.
 \item 
In order to prove $PW_{+}$ being a set of scattering states, we basically 
 employ the argument of Kenig and Merle \cite{Kenig-Merle}. In their argument, the Bahouri-Gerard type compactness \cite{Bahouri-Gerard} plays an important role: Duyckaerts et al \cite{D-H-R} also used such a compactness (the profile decomposition due to Keraani \cite{Keraani}). However, we employ the classical compactness device due to Brezis and Lieb \cite{Brezis-Lieb} instead of that due to Bahouri and Gerard; The Brezis-Lieb type compactness device is also used to prove the existence of the ground states (see Section \ref{08/05/13/15:57}), and to investigate the blowup solutions (see the proof of Proposition \ref{10/01/26/14:49} in Section \ref{08/07/03/0:40}).  As long as we consider the $H^{1}$-solutions, this classical compactness devise seems to be enough for our analysis. 
\item
The decomposition scheme in Sections \ref{09/05/05/10:03} (also the profile decomposition due to Bahouri-Gerard \cite{Bahouri-Gerard}, Keraani \cite{Keraani}) seems to be a kind of perturbation methods employed in quantum physics like a ``Born type approximation scheme''.
 
\item
In the course of the proof of Theorem \ref{08/05/26/11:53}, we encounter a ``fake soliton'' (critical element in the terminology of Kenig-Merle \cite{Kenig-Merle}). Then, we make a slightly different approach from Duyckaerts et al \cite{D-H-R} to trace its motion (for details, see Section \ref{09/05/06/9:13}). Moreover, our choice of function spaces is different from theirs \cite{D-H-R, Holmer-Roudenko} (see Section \ref{08/10/07/9:01}). We are choosing our function spaces so that the generalized inhomogeneous Strichartz estimates due to Foshci \cite{Foschi} work well there. 
\item 
In \cite{Holmer-Roudenko} (also in \cite{Holmer-Roudenko2}), Holmer and Roudenko also considered the equation (\ref{08/05/13/8:50}) with $d=p=3$ and 
proved that if $\psi_{0} \in H^{1}(\mathbb{R}^{3})$ satisfies that  
\begin{align}
\label{10/04/05/15:37}
&\mbox{$\psi_{0}$ is radially symmetric},
\\[6pt]
\label{10/01/31/13:09}
&\mathcal{M}(\psi_{0})\mathcal{H}(\psi_{0})
< \mathcal{M}(Q)\mathcal{H}(Q) ,
\quad 
\left\|\psi_{0} \right\|_{L^{2}} \left\| \nabla \psi_{0} \right\|_{L^{2}} 
> \left\| Q \right\|_{L^{2}} \left\| \nabla Q \right\|_{L^{2}}
,
\end{align}
then the corresponding solution blows up in a finite time. 
In our terminology, we see that the condition (\ref{10/01/31/13:09}) is equivalent to $\psi_{0} \in PW_{-}$ via the variational problem for $N_{2}$. Hence, Theorem \ref{08/06/12/9:48}, together with Theorem \ref{08/04/21/9:28}, is an extension of their result, in particular, to all spatial dimensions $d\ge 1$ and powers $2+\frac{4}{d}<p+1 < 2^{*}$. 
\item 
Our $PW_{+}$ and $PW_{-}$ are naturally introduced by the potential well $PW$ by appealing to the variational structure of the ground states. We note again that the functional $\mathcal{K}$ divides the $PW$ into $PW_{+}$ and $PW_{-}$. 
Our potential well $PW$ seems new. One may find a similarity between $PW$ and the set of initial data given in Theorem 4.1 in Begout \cite{Begout}.  
However, the relevance is not clear.
\item
Stubbe \cite{Stubbe} already introduced the condition (\ref{10/01/31/13:08}) and proved the global existence of the solutions with initial data satisfying it. 
  He also conjectured that the condition is sharp in the sense that there exists an initial datum such that it does not satisfy the condition (\ref{10/01/31/13:08}) and leads to a solution blowing up in a finite time. 
Our result concerning $PW_{-}$ gives an affirmative answer to his conjecture.   \end{enumerate}
\par 
This paper is organized as follows. In Section \ref{08/07/02/23:51}, we discuss  properties of the potential well $PW$. In Section \ref{08/08/05/14:29}, we introduce function spaces in which Strichartz type estimates work well. We also give a small date theory and a long time perturbation theory. Theorem \ref{09/05/18/10:43} is proved here (see Section \ref{09/05/30/15:49}). In Section \ref{08/10/03/15:11}, we give the proofs of Theorem \ref{08/05/26/11:53} and Corollary \ref{09/12/23/22:22}. Section \ref{08/07/03/0:40} is devoted to the proofs of Theorem \ref{08/06/12/9:48}, Theorem \ref{08/04/21/9:28}, and Propositions \ref{08/11/02/22:52} and \ref{10/01/26/14:49}. In Appendices  \ref{08/10/19/14:57}, \ref{08/10/03/15:12}, \ref{08/10/07/9:00}, \ref{09/03/06/16:43} and \ref{08/10/07/9:02} are devoted to preliminaries and auxiliary results. Finally, in Section \ref{08/05/13/15:57}, we give the proofs of Propositions \ref{08/06/16/15:24} and \ref{08/05/13/15:13}. 
\\
\\
{\bf Notation}. We summarize the notation used in this paper. 
\par 
We keep the letters $d$ and $p$ to denote the spatial dimension and the power of nonlinearity of the equation (\ref{08/05/13/8:50}), respectively. 
\par 
$\mathbb{N}$ denotes the set of natural numbers, i.e., $\mathbb{N}=\{1,2,3,\ldots\}$. 
\par 
$I_{\max}$ denotes the maximal existence interval of the considering solution, which has the form 
\[
I_{\max}=(-T_{\max}^{-}, T_{\max}^{+}),
\]
where $T_{\max}^{+}>0$ is the maximal existence time for the future, and $T_{\max}^{-}>0$ is the one for the past.
\par  
Functionals concerned with conservation laws for the equation  (\ref{08/05/13/8:50}) are: the mass
\[
\mathcal{M}(f):=\left\| f \right\|_{L^{2}}^{2}
\qquad 
\mbox{(see (\ref{08/05/13/8:59}))}, 
\]
the Hamiltonian 
\[
\mathcal{H}(f):=\left\| \nabla f \right\|_{L^{2}}^{2}-\frac{2}{p+1}\left\| f \right\|_{L^{p+1}}^{p+1}
\qquad 
\mbox{(see (\ref{08/05/13/9:03}))},
\]
and the momentum 
\[
\mathcal{P}(f)=:\Im \int_{\mathbb{R}^{d}}\nabla f(x) \overline{f(x)}\,dx
\qquad 
\mbox{(see (\ref{08/10/20/4:37}))}.
\]
We also use the functional   
\[
\mathcal{K}(f):=\left\| \nabla f \right\|_{L^{2}}^{2}-\frac{d(p-1)}{2(p+1)}\left\| f \right\|_{L^{p+1}}^{p+1}
\qquad 
\mbox{(see (\ref{10/02/13/11:38}))}.
\]
The symbol $\mathcal{K}$ might stand for ``Kamiltonian'' (?).
\par 
 Functionals concerned with variational problems are the followings: 
\[
\|f\|_{\widetilde{H}^{1}}^{2}:=\frac{d(p-1)-4}{d(p-1)}\|\nabla f \|_{L^{2}}^{2}+\|f\|_{L^{2}}^{2}
\qquad 
\mbox{(see (\ref{10/02/19/15:27}))},
\]
\[
\mathcal{N}_{2}(f):=\left\| f \right\|_{L^{2}}^{p+1-\frac{d}{2}(p-1)}
\left\| \nabla f \right\|_{L^{2}}^{\frac{d}{2}(p-1)-2}
\qquad 
\mbox{(see (\ref{10/02/19/15:28}))},
\]
\[
\mathcal{I}(f):=\frac{\|f\|_{L^{2}}^{p+1-\frac{d}{2}(p-1)}\|\nabla f\|_{L^{2}}^{\frac{d}{2}(p-1)}}{\|f\|_{L^{p+1}}^{p+1}}
\qquad 
\mbox{(see (\ref{10/02/19/15:29})))}.
\]
Variational values concerned with these functionals are  
\[
N_{1}:=\inf{ \left\{ \|f\|_{\widetilde{H}^{1}}^{2}
\bigm| f \in H^{1}(\mathbb{R}^{d})\setminus \{0\},\ \mathcal{K}(f)\le 0 
\right\} }
\qquad 
\mbox{(see (\ref{08/07/02/23:23}))},
\]
\[
N_{2}:=\inf{\left\{ \mathcal{N}_{2}(f) \bigm| f \in H^{1}(\mathbb{R}^{d})\setminus \{0\}, \ \mathcal{K}(f)\le 0
\right\}}
\qquad 
\mbox{(see (\ref{08/07/02/23:24}))},
\]
\[
N_{3}:=\inf{\Big\{ \mathcal{I}(f) \bigm|  f \in H^{1}(\mathbb{R}^{d})\setminus 
\{0\} \Big\}}
\qquad 
\mbox{(see (\ref{08/07/02/23:25}))}.
\]
\indent 
We define our ``potential well'' by 
\[
PW:=\left\{ f \in H^{1}\setminus \{0\} \bigm| \mathcal{H}(f) <\mathcal{B}(f) \right\}
\qquad 
\mbox{(see (\ref{09/05/18/22:03}))},
\]
where 
\[
\mathcal{B}(f):=\frac{d(p-1)-4}{d(p-1)}\left( \frac{N_{2}}{\|f\|_{L^{2}}^{p+1-\frac{d}{2}(p-1)}}\right)^{\frac{4}{d(p-1)-4}}
\qquad 
\mbox{(see (\ref{09/05/18/22:02}))}.
\]
We put 
\[
\varepsilon_{0}:=\mathcal{B}(\psi_{0})-\mathcal{H}(\psi_{0}).
\]
We divide the set $PW$ into two disjoint components:
\[
PW_{+}:=\left\{ f \in PW \bigm| \mathcal{K}(f) >0 \right\}
\qquad 
\mbox{(see (\ref{10/02/13/14:44}))},
\]
\[
PW_{-}:=\left\{ f \in PW \bigm| \mathcal{K}(f) <0 \right\}
\qquad 
\mbox{(see (\ref{10/02/13/14:45}))}.
\]
We can rewrite the set $PW_{+}$ in the form 
\[
PW_{+}=\left\{ f \in H^{1}(\mathbb{R}^{d})\, \biggm| \, \mathcal{K}(f)>0,
\ 
\widetilde{\mathcal{N}}_{2}(f)<\widetilde{N}_{2}\right\},
\]
where   
\[
\widetilde{\mathcal{N}}_{2}(f):=\left\| f \right\|_{L^{2}}^{p+1-\frac{d}{2}(p-1)}
\sqrt{\mathcal{H}(f)}^{\frac{d}{2}(p-1)-2}
\qquad 
\mbox{(see (\ref{09/04/29/14:30}))}
\]
and 
\[
\widetilde{N}_{2}:=\sqrt{\frac{d(p-1)-4}{d(p-1)}}^{\frac{d}{2}(p-1)-2}N_{2}
\qquad
\mbox{(see (\ref{09/04/29/14:31}))}.
\]
We need a subset $\Omega$ of $PW_{+}$ below to consider the wave operators:
\[
\Omega:=\left\{ f \in H^{1}(\mathbb{R}^{d})\setminus \{0\} \, \biggm| \, \mathcal{N}_{2}(f)<\widetilde{N}_{2}\right\}
\qquad 
\mbox{(see (\ref{10/06/13/11:42}))}
.
\]
\indent  
$s_{p}$ stands for the critical regularity of our equation (\ref{08/05/13/8:50}), i.e.,   
\[
s_{p}:=\frac{d}{2}-\frac{2}{p-1}
\qquad 
\mbox{(see (\ref{09/05/13/15:00}))}.
\]
\indent 
We will fix a number $q_{1} \in (p+1,2^{*})$ in Sections \ref{08/08/05/14:29} and  \ref{08/10/03/15:11}.  The number $r_{0}$ is chosen for the pair $(q_{1},r_{0})$ being admissible, i.e., 
\[
\frac{1}{r_{0}}
:=
\frac{d}{2}\left( \frac{1}{2}-\frac{1}{q_{1}}\right).
\]
Furthermore, $r_{1}$ and $\widetilde{r}_{1}$ are defined respectively by  
\[
\frac{1}{r_{1}}
:=
\frac{d}{2}\left( \frac{1}{2}-\frac{1}{q_{1}}-\frac{s_{p}}{d}\right),
\]
\[
\frac{1}{\widetilde{r}_{1}}
:=
\frac{d}{2}\left( \frac{1}{2}-\frac{1}{q_{1}}+\frac{s_{p}}{d} \right).
\]
A pair $(q_{2},r_{2})$ is defined by 
\[
\frac{1}{q_{2}}:=
\frac{1}{p-1}\left( 1-\frac{2}{q_{1}}\right),
\quad 
\frac{1}{r_{2}}
:=
\frac{d}{2}\left( \frac{1}{2}-\frac{1}{q_{2}}-\frac{s_{p}}{d}\right).
\]
\indent 
For an interval $I$, we define the Strichartz type space $X(I)$ by 
\[
X(I):=L^{r_{1}}(I;L^{q_{1}}(\mathbb{R}^{d}))\cap L^{r_{2}}(I; L^{q_{2}}(\mathbb{R}^{d}))
\qquad
\mbox{(see (\ref{08/09/01/10:02}))}.
\]
Besides, we define the usual Strichartz space $S(I)$ by 
\[
S(I):=L^{\infty}(I; L^{2}(\mathbb{R}^{d})) \cap L^{r_{0}}(I; L^{q_{1}}(\mathbb{R}^{d}))
\qquad 
\mbox{(see (\ref{10/02/19/22:40}))}.
\]
\indent 
In order to show that the solutions starting from $PW_{+}$ have asymptotic states, we will introduce a set   
\[
PW_{+}(\delta):=
\left\{ 
f \in PW_{+} \, \big| \,  \widetilde{\mathcal{N}}_{2}(f)
< \delta 
\right\},
\quad \delta>0
\qquad
\mbox{(see (\ref{10/02/19/23:05}))}.
\]
We will also consider a variational value  
\[
\begin{split}
\widetilde{N}_{c}&:=
\sup{\left\{ \delta >0 \bigm| 
\mbox{$\forall \psi_{0} \in PW_{+}(\delta)$, $\|\psi\|_{X(\mathbb{R})}< \infty$} \right\}}
\\
&=\inf{\left\{ \delta >0 \bigm| \mbox{ $\exists \psi_{0} \in PW_{+}(\delta)$, $\|\psi\|_{X(\mathbb{R})}= \infty$} \right\}}
\qquad 
\mbox{(see (\ref{08/09/02/18:06}))},
\end{split}
\]
where $\psi$ denotes the solution to (\ref{08/05/13/8:50}) with $\psi(0)=\psi_{0}$.
\par  
If $(A_{0},\|\cdot \|_{A_{0}})$ and $(A_{1},\|\cdot \|_{A_{1}})$ are ``compatible''  normed vector spaces (see \cite{Bergh}, p.24.), then $\|\cdot \|_{A_{0}\cap A_{1}}$ denotes the norm of their intersection $X\cap Y$, i.e.,   
\[
\left\| f \right\|_{A_{0}\cap A_{1}}:=\max\{ \left\| f \right\|_{A_{0}}, \left\| f \right\|_{A_{1}}\}\quad \mbox{for all $f \in A_{0}\cap A_{1}$}.
\]
The symbol $(\cdot, \cdot)$ denotes the inner product of $L^{2}(\mathbb{R}^{d})=L^{2}(\mathbb{R}^{d};\mathbb{C})$, i.e., 
\[
(f,g):=\int_{\mathbb{R}^{d}}f(x)\overline{g(x)}\,dx,  
\quad f,g \in L^{2}(\mathbb{R}^{d}).
\]
\indent 
$C_{c}^{\infty}(\mathbb{R}^{d})$ denotes the set of infinitely differentiable functions from $\mathbb{R}^{d} \to \mathbb{C}$ with compact supports. 
\par 
Using the Fourier transformation $\mathcal{F}$, we define differential operators  $|\nabla|^{s}$, $(-\Delta)^{\frac{s}{2}}$ and $(1-\Delta)^{\frac{s}{2}}$, for $s \in \mathbb{R}$, by   
\[
|\nabla|^{s}f=(-\Delta)^{\frac{s}{2}}f
:=
\mathcal{F}^{-1}[|\xi|^{s}\mathcal{F}[f]],
\qquad 
(1-\Delta)^{\frac{s}{2}}f:=\mathcal{F}^{-1}[(1+|\xi|^{2})^{\frac{s}{2}}\mathcal{F}[f]].
\]

\section{Potential well $\boldsymbol{PW}$}\label{08/07/02/23:51}
In this section, we shall discuss fundamental properties of the sets $PW$, $PW_{-}$ and $PW_{+}$. In particular, we will prove that these sets are invariant under the flow defined by the equation (\ref{08/05/13/8:50}) (see Propositions \ref{09/06/21/18:53}, \ref{09/06/21/19:28} and \ref{08/05/26/10:57}). Moreover, we  prove  Theorem \ref{08/10/20/5:21} here.
\par 
We begin with the following fact:
\begin{lemma}
\label{09/06/21/15:50}
The set $PW$ does not contain any function $f$ with $\mathcal{K}(f)=0$, i.e.,   \begin{equation}\label{10/02/16/20:47}
\left\{ f \in H^{1}(\mathbb{R}^{d}) \bigm| \mathcal{K}(f)=0 \right\}
\cap PW =\emptyset ,
\end{equation}
so that 
\begin{equation}\label{10/02/16/20:48}
PW=PW_{+}\cup PW_{-}, \qquad PW_{+}\cap PW_{-}=\emptyset.
\end{equation}
\end{lemma}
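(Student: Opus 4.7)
The plan is to show that any $f\in H^1(\mathbb{R}^d)\setminus\{0\}$ with $\mathcal{K}(f)=0$ satisfies the \emph{reverse} inequality $\mathcal{H}(f)\ge \mathcal{B}(f)$, so that $f$ cannot lie in $PW$. Once (\ref{10/02/16/20:47}) is established, (\ref{10/02/16/20:48}) is immediate from the very definitions (\ref{10/02/13/14:44}) and (\ref{10/02/13/14:45}), since $PW_{+}$ and $PW_{-}$ are disjoint by the sign of $\mathcal{K}$, and together with $\{\mathcal{K}=0\}\cap PW=\emptyset$ they exhaust $PW$.

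For the first step, assume $\mathcal{K}(f)=0$. Substituting $\|\nabla f\|_{L^{2}}^{2}=\frac{d(p-1)}{2(p+1)}\|f\|_{L^{p+1}}^{p+1}$ (the content of $\mathcal{K}(f)=0$) into the definition of $\mathcal{H}$ gives, after a short algebraic simplification,
\[
\mathcal{H}(f)=\frac{d(p-1)-4}{d(p-1)}\|\nabla f\|_{L^{2}}^{2}.
\]
The same identity can be read off directly from (\ref{08/05/13/9:16}).

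Second, since $\mathcal{K}(f)\le 0$ and $f\neq 0$, $f$ is admissible for the variational problem (\ref{08/07/02/23:24}), hence $\mathcal{N}_{2}(f)\ge N_{2}$, i.e.
\[
\|f\|_{L^{2}}^{p+1-\frac{d}{2}(p-1)}\|\nabla f\|_{L^{2}}^{\frac{d(p-1)-4}{2}}\ge N_{2}.
\]
Because $p>1+\frac{4}{d}$ the exponent $\frac{d(p-1)-4}{2}$ is strictly positive; raising to the power $\frac{4}{d(p-1)-4}$ (and using that this exponent is the reciprocal of $\frac{d(p-1)-4}{2}$ times $2$) yields
\[
\|\nabla f\|_{L^{2}}^{2}\ge \left(\frac{N_{2}}{\|f\|_{L^{2}}^{p+1-\frac{d}{2}(p-1)}}\right)^{\frac{4}{d(p-1)-4}}.
\]
Multiplying by the positive factor $\frac{d(p-1)-4}{d(p-1)}$ and recognizing the right-hand side as $\mathcal{B}(f)$ (see (\ref{09/05/18/22:02})), one obtains $\mathcal{H}(f)\ge \mathcal{B}(f)$, contradicting membership in $PW$. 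Hence (\ref{10/02/16/20:47}) holds, and (\ref{10/02/16/20:48}) follows at once.

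There is no real obstacle here: the argument is purely variational/algebraic, and the only thing to check carefully is that the exponents match, namely that $\frac{d(p-1)-4}{2}$ and $\frac{4}{d(p-1)-4}$ are reciprocals up to the factor $2$ built into $\mathcal{B}$. The $L^{2}$-supercritical assumption $p>1+\frac{4}{d}$ is what makes these exponents positive and the manipulation reversible.
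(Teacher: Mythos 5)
Your proof is correct and follows essentially the same route as the paper's: both use $\mathcal{K}(f)=0$ to write $\mathcal{H}(f)=\frac{d(p-1)-4}{d(p-1)}\|\nabla f\|_{L^{2}}^{2}$ and then invoke admissibility in the variational problem for $N_{2}$ to get $\mathcal{N}_{2}(f)\ge N_{2}$. The only (cosmetic) difference is that you convert this directly into $\mathcal{H}(f)\ge\mathcal{B}(f)$ from the definition of $PW$, whereas the paper phrases the same inequality as $\widetilde{\mathcal{N}}_{2}(f)\ge\widetilde{N}_{2}$ and appeals to the equivalence (\ref{08/06/15/14:38}).
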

\begin{proof}[Proof of Lemma \ref{09/06/21/15:50}]
Let $f$ be a function in $H^{1}(\mathbb{R}^{d})$ with $\mathcal{K}(f)=0$. Then,  the condition $\mathcal{K}(f)=0$ leads to that 
\begin{equation}\label{09/12/15/8:28}
\mathcal{H}(f)=\frac{d(p-1)-4}{d(p-1)}\left\| \nabla f \right\|_{L^{2}}^{2}.
\end{equation}
Using (\ref{09/12/15/8:28}) and the definitions of $N_{2}$ (see (\ref{08/07/02/23:24})) and $\widetilde{N}_{2}$  (see (\ref{09/04/29/14:31})), we obtain that 
\begin{equation}\label{10/01/06/18:01}
\begin{split}
\widetilde{\mathcal{N}}_{2}(f)&=
\sqrt{ \frac{d(p-1)-4}{d(p-1)}
}^{\frac{d}{2}(p-1)-2}
\mathcal{N}_{2}(f) 
\\
& \ge 
\sqrt{ \frac{d(p-1)-4}{d(p-1)} }^{\frac{d}{2}(p-1)-2}N_{2}=\widetilde{N}_{2}.
\end{split}
\end{equation}
Hence, it follows from (\ref{08/06/15/14:38}) that $f \not \in PW$. 
\par 
We immediately obtain (\ref{10/02/16/20:48}) from (\ref{10/02/16/20:47}) and the definitions of $PW_{+}$ and $PW_{-}$ (see (\ref{10/02/13/14:44}) and (\ref{10/02/13/14:45})).   
\end{proof}
In the next lemma, we consider a path constructed from the ground state:  
\begin{lemma}\label{08/06/14/23:37}
Let $Q_{\omega}$ be the ground state of the equation (\ref{08/05/13/11:22}) for  $w>0$. We consider a path $\Gamma_{\omega} \colon [0,\infty) \to H^{1}(\mathbb{R}^{d})$ given by $\Gamma_{\omega}(s)=sQ_{\omega}$ for $s\ge 0$. Then, $\Gamma_{\omega}$ is continuous and satisfies that 
\begin{align}\label{10/02/16/20:56}
& \Gamma_{\omega}(s)\in PW_{+} 
\quad 
\mbox{for all $s \in (0,1)$},
\\[6pt]
\label{10/02/16/21:03}
&\Gamma_{\omega}(1)=Q_{\omega} \notin PW=PW_{+}\cup PW_{-},
\\[6pt]
\label{10/02/16/20:58}
&\Gamma_{\omega}(s) \in \left\{f \in H^{1}(\mathbb{R}^{d}) \bigm| \mathcal{H}(f) > 0\right\} \cap PW_{-}
\quad 
\mbox{for all  $s \in \left(1, \ \left\{ \frac{d(p-1)}{4}\right\}^{\frac{1}{p-1}}\right)$},
\\[6pt]
\label{10/02/16/20:57}
&\Gamma_{\omega}(s) 
 \in \left\{f \in H^{1}(\mathbb{R}^{d})\setminus \{0\} \bigm| \mathcal{H}(f) \le 0\right\} \subset PW_{-}
\quad
\mbox{for all $s \in \left[ \left\{ \frac{d(p-1)}{4}\right\}^{\frac{1}{p-1}}, \ \infty\right)$}.
\end{align}
In particular, $PW_{-}\neq \emptyset$ and $PW_{+} \neq \emptyset$.
\end{lemma}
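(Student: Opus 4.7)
Continuity of $\Gamma_\omega$ is immediate since $s\mapsto sQ_\omega$ is an affine map into $H^1$. The whole argument is then one-variable calculus based on the identity $\mathcal{K}(Q_\omega)=0$ (equivalently $\|\nabla Q_\omega\|_{L^2}^2=\frac{d(p-1)}{2(p+1)}\|Q_\omega\|_{L^{p+1}}^{p+1}$ from Proposition~\ref{08/05/13/15:13}), from which I will read off the signs of $\mathcal{K}$, $\mathcal{H}$ and the value of $\widetilde{\mathcal{N}}_2$ along the ray $sQ_\omega$.

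First I would insert $sQ_\omega$ into $\mathcal{K}$ and $\mathcal{H}$ and use the Pohozaev-type relation above to simplify the scalar factors. This should give
\[
\mathcal{K}(sQ_\omega)=s^2(1-s^{p-1})\|\nabla Q_\omega\|_{L^2}^2,
\qquad
\mathcal{H}(sQ_\omega)=s^2\Bigl(1-\tfrac{4s^{p-1}}{d(p-1)}\Bigr)\|\nabla Q_\omega\|_{L^2}^2.
\]
From the first identity $\mathcal{K}(sQ_\omega)>0$ for $s\in(0,1)$, $=0$ at $s=1$, and $<0$ for $s>1$. From the second, $\mathcal{H}(sQ_\omega)>0$ for $s<s_*:=\{d(p-1)/4\}^{1/(p-1)}$, $=0$ at $s=s_*$, and $<0$ for $s>s_*$; note $s_*>1$ since $p>1+\tfrac{4}{d}$.

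Next, using the equivalent characterization (\ref{09/12/16/17:07}) of $PW_+$, I would evaluate $\widetilde{\mathcal{N}}_2(sQ_\omega)$ for $s\in(0,s_*)$. A direct computation, again using the Pohozaev relation and (\ref{09/07/14/8:24}), yields
\[
\frac{\widetilde{\mathcal{N}}_2(sQ_\omega)}{\widetilde N_2}
=s^{p-1}\left(\frac{1-\tfrac{4s^{p-1}}{d(p-1)}}{\tfrac{d(p-1)-4}{d(p-1)}}\right)^{\!(d(p-1)-4)/4}.
\]
Setting $u=s^{p-1}$ and $\phi(u)=u\bigl(1-\tfrac{4u}{d(p-1)}\bigr)^{(d(p-1)-4)/4}$, a short computation shows $\phi'(u)=\bigl(1-\tfrac{4u}{d(p-1)}\bigr)^{(d(p-1)-8)/4}(1-u)$, so $\phi$ attains its strict maximum on $[0,d(p-1)/4]$ at $u=1$. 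Hence $\widetilde{\mathcal{N}}_2(sQ_\omega)<\widetilde N_2$ for every $s\in(0,s_*)\setminus\{1\}$.

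I would then assemble the pieces. For $s\in(0,1)$: $\mathcal{K}>0$ and $\widetilde{\mathcal{N}}_2<\widetilde N_2$, so $\Gamma_\omega(s)\in PW_+$ by (\ref{09/12/16/17:07}). At $s=1$: $\mathcal{K}(Q_\omega)=0$, so Lemma~\ref{09/06/21/15:50} gives $Q_\omega\notin PW$. For $s\in(1,s_*)$: $\mathcal{K}<0$, $\mathcal{H}>0$ and $\widetilde{\mathcal{N}}_2<\widetilde N_2$, so (\ref{08/06/15/14:38}) gives $\Gamma_\omega(s)\in PW$, hence $\Gamma_\omega(s)\in PW_-\cap\{\mathcal{H}>0\}$. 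For $s\ge s_*$: $\mathcal{H}(sQ_\omega)\le 0$, so $\mathcal{H}(sQ_\omega)<\mathcal{B}(sQ_\omega)$ trivially (since $\mathcal{B}>0$), giving $sQ_\omega\in PW$; moreover the identity (\ref{08/05/13/9:16}) together with $p>1+\tfrac{4}{d}$ yields $\mathcal{K}(sQ_\omega)\le\mathcal{H}(sQ_\omega)-\text{(strictly positive)}<0$, so $sQ_\omega\in PW_-$, proving the inclusion $\{f\ne 0\colon\mathcal{H}(f)\le 0\}\subset PW_-$. The nonemptiness of $PW_\pm$ follows from any choice of $s$ in the corresponding interval. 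The only even mildly technical step is the monotonicity of $\phi$, which is a one-line derivative calculation.
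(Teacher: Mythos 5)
Your proposal is correct and follows essentially the same route as the paper: plug $sQ_\omega$ into $\mathcal{K}$, $\mathcal{H}$ and $\widetilde{\mathcal{N}}_2$ using the Pohozaev relation $\mathcal{K}(Q_\omega)=0$, show $\widetilde{\mathcal{N}}_2(sQ_\omega)<\widetilde N_2$ for $s\in(0,s_*)\setminus\{1\}$, and handle $s\ge s_*$ via $\{f\neq 0:\mathcal{H}(f)\le 0\}\subset PW_-$. Your explicit derivative computation for $\phi(u)$ simply makes precise the maximality at $s=1$ that the paper asserts directly from its formula (\ref{10/03/15/21:11}), so no substantive difference or gap.
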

\begin{proof}[Proof of Lemma \ref{08/06/14/23:37}] The continuity of $\Gamma_{\omega}\colon (0,\infty)\to H^{1}(\mathbb{R}^{d})$ is obvious from its definition. We shall prove the properties (\ref{10/02/16/20:56})--(\ref{10/02/16/20:57}). As stated in (\ref{10/03/27/10:23}) (see also (\ref{09/12/16/20:17})), the ground state $Q_{\omega}$ satisfies that $\mathcal{K}(Q_{\omega})=0$, which immediately yields that  
\begin{equation}\label{10/03/27/10:02}
\left\| \nabla Q_{\omega} \right\|_{L^{2}}^{2}
=
\frac{d(p-1)}{2(p+1)}\left\| Q_{\omega} \right\|_{L^{p+1}}^{p+1}.
\end{equation}
Using (\ref{10/03/27/10:02}), we obtain that  
\begin{equation}\label{09/09/14/0:33}
\mathcal{H}(\Gamma_{\omega}(s))=s^{2}\left( 1-\frac{4}{d(p-1)}s^{p-1} \right)\|\nabla Q_{\omega} \|_{L^{2}}^{2}.
\end{equation}
This formula gives us that 
\begin{align}
\label{10/03/15/21:47}
&\mathcal{H}(\Gamma_{\omega}(s))>0 
\quad 
\mbox{for all  $s \in \left(0, \ \left\{ \frac{d(p-1)}{4}\right\}^{\frac{1}{p-1}}\right)$},
\\[6pt]
\label{10/03/15/21:35}
&\mathcal{H}(\Gamma_{\omega}(s)) \le 0
\quad 
\mbox{for all $s \in \left[ \left\{ \frac{d(p-1)}{4}\right\}^{\frac{1}{p-1}}, \ \infty\right)$}.
\end{align}
Similarly, we can verify that  
\begin{align}\label{10/03/15/21:20}
&\mathcal{K}(\Gamma_{\omega}(s))>0 \quad \mbox{for all $s \in (0,1)$},
\\[6pt]
\label{10/03/15/21:26}
&\mathcal{K}(\Gamma_{\omega}(1))=\mathcal{K}(Q_{\omega})=0, 
\\[6pt]
\label{10/03/15/21:27}
&\mathcal{K}(\Gamma_{\omega}(s))<0 \quad \mbox{for all $s\in (1,\infty)$}. 
\end{align}
Now, it follows from (\ref{09/09/14/0:33}) and (\ref{10/03/27/10:23}) that 
\begin{equation}\label{10/03/15/21:11}
\widetilde{\mathcal{N}}_{2}(\Gamma_{\omega}(s))=s^{p-1}\left( 1-\frac{4}{d(p-1)}s^{p-1} \right)^{\frac{d}{4}(p-1)-1}
\hspace{-7pt}
N_{2}
\qquad 
\mbox{for all $s \in \left(0,\  \left\{ \frac{d(p-1)}{4}\right\}^{\frac{1}{p-1}}\right)$}.
\end{equation}
This relation (\ref{10/03/15/21:11}) shows that 
\begin{equation}\label{10/03/15/21:22}
\widetilde{\mathcal{N}}_{2}(\Gamma_{\omega}(s))< \widetilde{\mathcal{N}}_{2}(\Gamma_{\omega}(1))=\widetilde{N}_{2} 
\qquad 
\mbox{for all $s \in \left(0,\  \left\{ \frac{d(p-1)}{4}\right\}^{\frac{1}{p-1}}\right)\setminus\{1\}$}.
\end{equation} 
Hence, (\ref{10/03/15/21:47}) and (\ref{10/03/15/21:22}), with the help of (\ref{08/06/15/14:38}), lead us to the conclusion that 
\begin{equation}\label{10/03/15/22:13}
\Gamma_{\omega}(s) \in PW
\qquad 
\mbox{for all $s \in \left(0,\  \left\{ \frac{d(p-1)}{4}\right\}^{\frac{1}{p-1}}\right) \setminus \{1\}$}
.
\end{equation}
Then, (\ref{10/02/16/20:56}) follows from (\ref{10/03/15/21:20}). 
Moreover, (\ref{10/02/16/20:58}) follows from  (\ref{10/03/15/21:47}) and (\ref{10/03/15/21:27}). The second claim (\ref{10/02/16/21:03}) is a direct consequence of (\ref{10/03/15/21:26}) and Lemma \ref{09/06/21/15:50}. 
\par 
It remains to prove (\ref{10/02/16/20:57}). Since $\mathcal{B}(f)\ge 0$ and $\mathcal{K}(f)<\mathcal{H}(f)$ for all $f \in H^{1}(\mathbb{R}^{d})\setminus \{0\}$, we have a relation
\begin{equation}\label{10/03/15/21:34}
\left\{f \in H^{1}(\mathbb{R}^{d})\setminus \{0\} \bigm| \mathcal{H}(f) \le 0\right\} \subset PW_{-}.
\end{equation}
Then, (\ref{10/02/16/20:57}) immediately follows from (\ref{10/03/15/21:35}) and this relation (\ref{10/03/15/21:34}).
\end{proof}

Now, we are in a position to prove Theorem \ref{08/10/20/5:21}. 

\begin{proof}[Proof of Theorem \ref{08/10/20/5:21}]
We consider the path $\Gamma_{\omega}$ given in Lemma \ref{08/06/14/23:37}. 
Then, (\ref{10/02/16/20:56}) and the continuity of $\Gamma_{\omega}$ yield that\begin{equation}\label{10/03/28/9:20}
\Gamma_{\omega}(s) \in PW_{+} 
\quad 
\mbox{for all $s \in (0,1)$}, 
\qquad 
\lim_{s\uparrow 1}\Gamma_{\omega}(s) =\Gamma_{\omega}(1)=Q_{\omega}
\quad 
\mbox{strongly in $H^{1}(\mathbb{R}^{d})$}.
\end{equation} 
Hence, we have $Q_{\omega} \in \overline{PW_{+}}$. Similarly, (\ref{10/02/16/20:58}) and the continuity of $\Gamma_{\omega}$ show that $Q_{\omega} \in \overline{PW_{-}}$. 
\end{proof}

We find from the following lemma that $PW_{+}$ has a ``foliate structure''.

\begin{lemma}\label{08/09/01/23:45}
For all $\eta \in (0,\widetilde{N}_{2})$ and $\alpha>0$, there exists $f \in PW_{+}$ such that 
 $\widetilde{\mathcal{N}}_{2}(f)=\eta$
and 
 $\left\| f \right\|_{L^{2}}=\alpha$. 
\end{lemma}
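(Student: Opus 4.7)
The plan is to combine the one-parameter path $\{sQ_{\omega}\}_{s\in(0,1)}\subset PW_{+}$ from Lemma \ref{08/06/14/23:37} with the natural scaling $f\mapsto f_{\lambda}:=\lambda^{2/(p-1)}f(\lambda\,\cdot)$ that the paper already exploits in (\ref{09/12/21/10:33}). The first motion tunes the invariant $\widetilde{\mathcal{N}}_{2}$, and the second motion tunes the $L^{2}$-norm without disturbing $\widetilde{\mathcal{N}}_{2}$ or the signs of $\mathcal{K}$ and $\mathcal{H}$.

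First, fix $\omega=1$ and consider $\Gamma(s)=sQ$ for $s\in[0,1]$. By (\ref{10/03/15/21:11}) we have
\[
\widetilde{\mathcal{N}}_{2}(\Gamma(s))=s^{p-1}\Bigl(1-\tfrac{4}{d(p-1)}s^{p-1}\Bigr)^{\frac{d}{4}(p-1)-1}N_{2},
\]
which is continuous on $[0,1]$ with $\widetilde{\mathcal{N}}_{2}(\Gamma(0))=0$ and $\widetilde{\mathcal{N}}_{2}(\Gamma(1))=\widetilde{N}_{2}$. Hence, given $\eta\in(0,\widetilde{N}_{2})$, the intermediate value theorem furnishes $s_{*}\in(0,1)$ with $\widetilde{\mathcal{N}}_{2}(s_{*}Q)=\eta$, and Lemma \ref{08/06/14/23:37} ensures $f_{0}:=s_{*}Q\in PW_{+}$, in particular $\mathcal{K}(f_{0})>0$ and $\mathcal{H}(f_{0})>0$.

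Next, I would compute the scaling exponents. A routine substitution gives
\[
\|f_{\lambda}\|_{L^{2}}^{2}=\lambda^{\frac{4}{p-1}-d}\|f\|_{L^{2}}^{2},\qquad \mathcal{H}(f_{\lambda})=\lambda^{\frac{2(p+1)}{p-1}-d}\mathcal{H}(f),\qquad \mathcal{K}(f_{\lambda})=\lambda^{\frac{2(p+1)}{p-1}-d}\mathcal{K}(f).
\]
Substituting these into the definition (\ref{09/04/29/14:30}) and using the algebraic identity
\[
\Bigl(\tfrac{4}{p-1}-d\Bigr)\Bigl(p+1-\tfrac{d(p-1)}{2}\Bigr)+\Bigl(\tfrac{2(p+1)}{p-1}-d\Bigr)\Bigl(\tfrac{d(p-1)}{2}-2\Bigr)=0
\]
(which is the same cancellation that makes $\mathcal{N}_{2}$ scale invariant in (\ref{09/12/21/10:33})), I get $\widetilde{\mathcal{N}}_{2}(f_{\lambda})=\widetilde{\mathcal{N}}_{2}(f)$ for every $\lambda>0$. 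Since we are in the $L^{2}$-supercritical regime $p>1+\tfrac{4}{d}$, the exponent $\tfrac{4}{p-1}-d$ is nonzero (in fact negative), so the map $\lambda\mapsto\|f_{0,\lambda}\|_{L^{2}}$ is a homeomorphism of $(0,\infty)$ onto itself. Choose the unique $\lambda_{*}>0$ with $\|(f_{0})_{\lambda_{*}}\|_{L^{2}}=\alpha$ and set $f:=(f_{0})_{\lambda_{*}}$.

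Finally, I would verify $f\in PW_{+}$: by construction $\|f\|_{L^{2}}=\alpha$ and $\widetilde{\mathcal{N}}_{2}(f)=\widetilde{\mathcal{N}}_{2}(f_{0})=\eta<\widetilde{N}_{2}$; moreover $\mathcal{H}(f)$ and $\mathcal{K}(f)$ retain the positive signs inherited from $f_{0}\in PW_{+}$ because of the $\lambda^{2(p+1)/(p-1)-d}>0$ factors. The characterization (\ref{08/06/15/14:38}) (equivalently (\ref{09/12/16/17:07})) then gives $f\in PW_{+}$, completing the proof. There is no real obstacle here; the only step requiring care is the scale-invariance computation for $\widetilde{\mathcal{N}}_{2}$, and this is just the same exponent cancellation that underlies the scale-invariance of $\mathcal{N}_{2}$ recorded in (\ref{09/12/21/10:33}).
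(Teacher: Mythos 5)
Your proposal is correct and follows essentially the same route as the paper: tune $\widetilde{\mathcal{N}}_{2}$ along the path $s\mapsto sQ$ via the explicit formula (\ref{10/03/15/21:11}) and the intermediate value theorem, then adjust the $L^{2}$-norm with the scaling $f_{\lambda}=\lambda^{2/(p-1)}f(\lambda\cdot)$, which leaves $\widetilde{\mathcal{N}}_{2}$ and membership in $PW_{+}$ unchanged. The only difference is that you write out the exponent cancellation and sign preservation explicitly, which the paper simply records in (\ref{10/03/27/16:10}) as easy to verify.
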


\begin{proof}[Proof of Lemma \ref{08/09/01/23:45}]
We construct a desired function from the continuous path $\Gamma_{\omega}\colon [0,\infty) \to H^{1}(\mathbb{R}^{d})$ ($\omega>0$) given in Lemma \ref{08/06/14/23:37}. Let us remind you that    
\begin{align}
\label{10/03/27/15:09}
&\Gamma_{\omega}(s) \in PW_{+} 
\quad 
\mbox{for all $s \in (0,1)$},
\\[6pt]
&
\label{10/03/27/15:19}
\widetilde{\mathcal{N}}_{2}(\Gamma_{\omega}(s))=s^{p-1}\left( 1-\frac{4}{d(p-1)}s^{p-1} \right)^{\frac{d}{4}(p-1)-1}
N_{2}
\quad 
\mbox{for all $s \in (0, 1)$},
\\[6pt]
&
\label{10/03/27/15:37}
\widetilde{\mathcal{N}}_{2}(\Gamma_{\omega}(0))=0,
\qquad 
\widetilde{\mathcal{N}}_{2}(\Gamma_{\omega}(1))=\widetilde{N}_{2}
\end{align}
(see (\ref{10/03/15/21:11}) for (\ref{10/03/27/15:19})). Using (\ref{10/03/27/15:19}), we find  that $\widetilde{\mathcal{N}}_{2}(\Gamma_{\omega}(s))$ is continuous and monotone increasing with respect to $s$ on $[0,1]$. Hence, the intermediate value theorem, together with (\ref{10/03/27/15:37}), shows that for any $\eta \in (0, \widetilde{N}_{2})$, there exists $s_{\eta} \in (0,1)$ such that $\widetilde{\mathcal{N}}_{2}(\Gamma_{\omega}(s_{\eta}))=\eta$. Moreover, (\ref{10/03/27/15:09}) gives us that $\Gamma_{\omega}(s_{\eta}) \in PW_{+}$. We put $f=\Gamma_{\omega}(s_{\eta})$ and consider the scaled function $f_{\lambda}:=\lambda^{\frac{2}{p-1}}f(\lambda \cdot)$ for $\lambda>0$. Then, it is easy to see that  
\begin{equation}
\label{10/03/27/16:10}
f_{\lambda} \in PW_{+},
\quad 
\widetilde{\mathcal{N}}_{2}(f_{\lambda})=\widetilde{\mathcal{N}}_{2}(f)=
\eta,
\quad 
\left\| f_{\lambda} \right\|_{L^{2}}=\lambda^{-\frac{d(p-1)-4}{2(p-1)}}\left\| f\right\|_{L^{2}}
\quad 
\mbox{for all $\lambda>0$}.
\end{equation}
Hence, for any $\alpha>0$, $f_{\lambda}$ with $\lambda =\left( \frac{ \left\| f \right\|_{L^{2}}}{\alpha}\right)^{\frac{2(p-1)}{d(p-1)-4}}$ is what we want.
\end{proof}

Finally, we give the invariance results of the sets $PW$, $PW_{+}$ and $PW_{-}$  under the flow defined by the equation (\ref{08/05/13/8:50}): 

\begin{lemma}[Invariance of $PW$]\label{09/06/21/18:53}
Let $\psi_{0} \in PW$ and $\psi$ be the corresponding solution to 
the equation (\ref{08/05/13/8:50}). Then, we have that 
\[
\psi(t) \in PW 
\quad 
\mbox{for all $t \in I_{\max}$}.
\]
\end{lemma}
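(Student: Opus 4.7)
The claim is essentially a direct consequence of the conservation laws, once one observes that the functional $\mathcal{B}$ depends on $f$ only through the $L^{2}$-norm. My plan is as follows.

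First, I would record that for $\psi_{0} \in PW$ we have $\psi_{0} \in H^{1}(\mathbb{R}^{d}) \setminus \{0\}$, hence $\|\psi_{0}\|_{L^{2}} > 0$. Mass conservation (\ref{08/05/13/8:59}) gives $\|\psi(t)\|_{L^{2}} = \|\psi_{0}\|_{L^{2}} > 0$ for all $t \in I_{\max}$, so in particular $\psi(t) \neq 0$ throughout the maximal interval. Thus membership of the ambient space $H^{1}(\mathbb{R}^{d}) \setminus \{0\}$ is preserved.

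Next, I would inspect the definition (\ref{09/05/18/22:02}) of $\mathcal{B}$ and observe that it is a function of $\|f\|_{L^{2}}$ alone (the constants $N_{2}$, $d$, $p$ being fixed). Combined with mass conservation, this yields
\begin{equation*}
\mathcal{B}(\psi(t)) = \mathcal{B}(\psi_{0}) \quad \text{for all } t \in I_{\max}.
\end{equation*}
Simultaneously, Hamiltonian conservation (\ref{08/05/13/9:03}) gives $\mathcal{H}(\psi(t)) = \mathcal{H}(\psi_{0})$.

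Combining these two identities with the hypothesis $\mathcal{H}(\psi_{0}) < \mathcal{B}(\psi_{0})$, I obtain
\begin{equation*}
\mathcal{H}(\psi(t)) - \mathcal{B}(\psi(t)) = \mathcal{H}(\psi_{0}) - \mathcal{B}(\psi_{0}) < 0,
\end{equation*}
and hence by the definition (\ref{09/05/18/22:03}) of the potential well, $\psi(t) \in PW$ for every $t \in I_{\max}$. There is no genuine obstacle: the entire argument reduces to the observation that $\mathcal{B}$ is a function of mass, which is conserved along with the Hamiltonian; the dynamical content enters only through these two conservation laws. The corresponding refinements for $PW_{+}$ and $PW_{-}$ will require an additional continuity-in-time argument together with Lemma \ref{09/06/21/15:50} to exclude the boundary $\{\mathcal{K}=0\}$, but for $PW$ itself no such argument is needed.
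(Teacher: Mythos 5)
Your proof is correct and is exactly the paper's argument: the paper also disposes of this lemma by noting it follows immediately from the mass and energy conservation laws, which you merely spell out (that $\mathcal{B}$ depends only on the conserved $L^{2}$-norm, that $\mathcal{H}$ is conserved, and that mass conservation keeps $\psi(t)\neq 0$). No gap.
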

\begin{proof}[Proof of Lemma \ref{09/06/21/18:53}]
This lemma immediately follows from the mass and energy conservation laws (\ref{08/05/13/8:59}) and (\ref{08/05/13/9:03}). 
\end{proof}

\begin{proposition}[Invariance of $PW_{+}$]
\label{09/06/21/19:28}
Let $\psi_{0} \in PW_{+}$ and $\psi$ be the corresponding solution to the equation (\ref{08/05/13/8:50}). Then, $\psi$ exists globally in time and satisfies the followings:
\begin{equation}\label{10/01/06/21:29}
 \psi(t) \in PW_{+}
\quad  
\mbox{for all  $t \in \mathbb{R}$},
\end{equation}
\begin{equation}\label{09/06/21/21:58}
\left\| \nabla \psi (t) \right\|_{L^{2}}^{2}
< \frac{d(p-1)}{d(p-1)-4}\mathcal{H}(\psi_{0}) 
\quad 
\mbox{for all  $t \in \mathbb{R}$},  
\end{equation}
\begin{equation}\label{09/06/21/19:30}
\mathcal{K}(\psi(t)) > 
\left( 1-\frac{\widetilde{N}_{2}(\psi_{0})}{\widetilde{N}_{2}}\right)
\mathcal{H}(\psi_{0})
\quad 
\mbox{for all  $t \in \mathbb{R}$}.
\end{equation}
\end{proposition}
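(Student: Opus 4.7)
The plan is to deduce the three claims in order, bootstrapping off the already-established invariance of $PW$. For the invariance of $PW_{+}$, I observe that by Lemma~\ref{09/06/21/18:53} the solution stays in $PW$ for all $t\in I_{\max}$; by Lemma~\ref{09/06/21/15:50}, $PW$ does not meet $\{\mathcal{K}=0\}$ and is the disjoint union of $PW_{+}$ and $PW_{-}$. Since the flow is continuous into $H^{1}(\mathbb{R}^{d})$ and $\mathcal{K}\colon H^{1}\to\mathbb{R}$ is continuous, $t\mapsto \mathcal{K}(\psi(t))$ is continuous on $I_{\max}$ and never vanishes. Because $\mathcal{K}(\psi_{0})>0$, the intermediate value theorem yields $\mathcal{K}(\psi(t))>0$ throughout $I_{\max}$, i.e.\ $\psi(t)\in PW_{+}$.

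For the gradient bound (\ref{09/06/21/21:58}), I use the purely algebraic identity
\[
\frac{d(p-1)-4}{d(p-1)}\,\|\nabla f\|_{L^{2}}^{2}\;=\;\mathcal{H}(f)-\frac{4}{d(p-1)}\mathcal{K}(f),
\]
which one reads off directly from the definitions of $\mathcal{H}$ and $\mathcal{K}$. Applied to $f=\psi(t)$, and combined with $\mathcal{K}(\psi(t))>0$ together with energy conservation $\mathcal{H}(\psi(t))=\mathcal{H}(\psi_{0})$, this gives (\ref{09/06/21/21:58}) at once. The resulting uniform bound on $\|\nabla \psi(t)\|_{L^{2}}$ on $I_{\max}$, via the blowup alternative (\ref{10/01/27/11:26}), forces $I_{\max}=\mathbb{R}$, hence global existence.

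The delicate point is the lower bound (\ref{09/06/21/19:30}). Starting from the sharp Gagliardo--Nirenberg inequality (\ref{08/05/13/15:45}) and the identity $N_{3}=\frac{d(p-1)}{2(p+1)}N_{2}$ of Proposition~\ref{08/06/16/15:24}, I can rewrite
\[
\frac{d(p-1)}{2(p+1)}\|f\|_{L^{p+1}}^{p+1}\;\le\;\frac{1}{N_{2}}\,\mathcal{N}_{2}(f)\,\|\nabla f\|_{L^{2}}^{2}.
\]
Next, the identity of the previous paragraph, rearranged on $\{\mathcal{K}\ge 0\}$, reads $\|\nabla f\|_{L^{2}}^{2}\le \frac{d(p-1)}{d(p-1)-4}\mathcal{H}(f)$; raising this to the power $\frac{d(p-1)}{4}-1$ and comparing with the definitions of $\mathcal{N}_{2}$, $\widetilde{\mathcal{N}}_{2}$, $N_{2}$, $\widetilde{N}_{2}$ shows the equivalent inequality $\mathcal{N}_{2}(f)/N_{2}\le \widetilde{\mathcal{N}}_{2}(f)/\widetilde{N}_{2}$ on $PW_{+}$. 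Substituting this into the previous display gives
\[
\mathcal{K}(f)\;\ge\;\|\nabla f\|_{L^{2}}^{2}\Bigl(1-\frac{\widetilde{\mathcal{N}}_{2}(f)}{\widetilde{N}_{2}}\Bigr).
\]
The factor $1-\widetilde{\mathcal{N}}_{2}(f)/\widetilde{N}_{2}$ is strictly positive on $PW_{+}$ by the characterization (\ref{09/12/16/17:07}), so combining with the elementary bound $\|\nabla f\|_{L^{2}}^{2}\ge \mathcal{H}(f)$ and the conservation $\widetilde{\mathcal{N}}_{2}(\psi(t))=\widetilde{\mathcal{N}}_{2}(\psi_{0})$ (immediate from mass and energy conservation, since $\widetilde{\mathcal{N}}_{2}$ depends only on $\|f\|_{L^{2}}$ and $\mathcal{H}(f)$) yields (\ref{09/06/21/19:30}).

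The main obstacle is the bookkeeping in the last paragraph: checking that the exponents $p+1-\frac{d}{2}(p-1)$ and $\frac{d}{2}(p-1)-2$ conspire so that the sharp GN estimate, once dressed by the $\mathcal{K}\ge 0$ bound on $\|\nabla f\|_{L^{2}}^{2}/\mathcal{H}(f)$, converts $\mathcal{N}_{2}/N_{2}$ precisely into $\widetilde{\mathcal{N}}_{2}/\widetilde{N}_{2}$. The first two claims are then essentially routine consequences of the potential-well framework and conservation of mass and energy.
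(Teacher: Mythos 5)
Your proof is correct and follows essentially the same route as the paper's: invariance of $PW_{+}$ via the invariance of $PW$ together with Lemma \ref{09/06/21/15:50} and continuity of $t\mapsto\mathcal{K}(\psi(t))$, the gradient bound from the algebraic identity linking $\mathcal{H}$, $\mathcal{K}$ and $\|\nabla\cdot\|_{L^{2}}^{2}$ plus energy conservation, and the lower bound on $\mathcal{K}$ from the sharp Gagliardo--Nirenberg inequality, the relation $N_{3}=\frac{d(p-1)}{2(p+1)}N_{2}$, and the conversion of $\mathcal{N}_{2}/N_{2}$ into $\widetilde{\mathcal{N}}_{2}/\widetilde{N}_{2}$, exactly as in the paper. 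The only point to tighten is the strictness in (\ref{09/06/21/19:30}): since $\psi(t)\neq 0$ one has $\|\nabla\psi(t)\|_{L^{2}}^{2}>\mathcal{H}(\psi(t))=\mathcal{H}(\psi_{0})$ strictly (equivalently, use the strict form of your rearranged identity, valid because $\mathcal{K}(\psi(t))>0$), which, combined with the positivity of the prefactor $1-\widetilde{\mathcal{N}}_{2}(\psi_{0})/\widetilde{N}_{2}$, upgrades your final ``$\ge$'' to the claimed ``$>$''.
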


\begin{proof}[Proof of Proposition \ref{09/06/21/19:28}] 
We first prove the invariance of $PW_{+}$ under the flow defined by (\ref{08/05/13/8:50}). With the help of Lemma \ref{09/06/21/18:53}, it suffices to show that 
\begin{equation}\label{09/08/15/18:45}
\mathcal{K}(\psi(t))>0  
\quad 
\mbox{for all $t \in I_{\max}$}.
\end{equation}
Supposing the contrary that (\ref{09/08/15/18:45}) fails, we can take $t_{0}\in I_{\max}$ such that 
\begin{equation}\label{10/03/27/16:44}
\mathcal{K}(\psi(t_{0}))= 0.
\end{equation}
Then, (\ref{10/03/27/16:44}) and the energy conservation law (\ref{08/05/13/9:03}) yield that 
\begin{equation}\label{10/03/27/17:02}
\begin{split}
0&=\mathcal{K}(\psi(t_{0}))
\\[6pt]
&=\mathcal{H}(\psi(t_{0}))
- \frac{2}{p+1}\left\{ \frac{d}{4}(p-1)-1 \right\}\|\psi(t_{0})\|_{L^{p+1}}^{p+1}
\\[6pt]
&=
\mathcal{H}(\psi_{0}) - \frac{2}{p+1}\left\{ \frac{d}{4}(p-1)-1 \right\}
\frac{2(p+1)}{d(p-1)} \|\nabla \psi(t_{0})\|_{L^{2}}^{2}.
\end{split}
\end{equation}
Since $\psi_{0}\in PW_{+}\subset PW$, we have $\mathcal{H}(\psi_{0})<\mathcal{B}(\psi_{0})$. This inequality and (\ref{10/03/27/17:02}) lead us to that  
\begin{equation}\label{10/03/27/17:07}
0 < \mathcal{B}(\psi_{0})-\frac{d(p-1)-4}{d(p-1)} \|\nabla \psi(t_{0})\|_{L^{2}}^{2},
\end{equation}
which is  equivalent to   
\begin{equation}\label{10/03/27/17:18}
\|\nabla \psi(t_{0})\|_{L^{2}}^{2}
< 
\frac{d(p-1)}{d(p-1)-4}\mathcal{B}(\psi_{0})
=
\left( \frac{ N_{2}}{\|\psi(t_{0})\|_{L^{2}}^{p+1-\frac{d}{2}(p-1)}}\right)^{\frac{4}{d(p-1)-4}}
.
\end{equation}
Dividing the both sides of (\ref{10/03/27/17:18}) by $\|\nabla \psi(t_{0})\|_{L^{2}}^{2}$, 
we obtain that 
\begin{equation}\label{10/03/27/17:24}
1< \left( \frac{N_{2}}{\mathcal{N}_{2}(\psi(t_{0}))}\right)^{\frac{4}{d(p-1)-4}}.
\end{equation}
On the other hand, (\ref{10/03/27/16:44}), together with the definition of $N_{2}$, leads us to that $\mathcal{N}_{2}(\psi(t_{0}))\ge N_{2}$, so that  
\begin{equation}\label{10/03/27/17:26}
\left( \frac{N_{2}}{\mathcal{N}_{2}(\psi(t_{0}))}\right)^{\frac{4}{d(p-1)-4}}
\le 1.
\end{equation}
This inequality (\ref{10/03/27/17:26}) contradicts (\ref{10/03/27/17:24}). Thus, (\ref{09/08/15/18:45}) must hold.
\par 
Once we obtain (\ref{09/08/15/18:45}), we can easily obtain the following uniform bound:   
\begin{equation}\label{10/01/06/23:07}
\left\| \nabla \psi (t) \right\|_{L^{2}}^{2}
< \frac{d(p-1)}{d(p-1)-4}\mathcal{H}(\psi_{0}) 
\quad 
\mbox{for all $t \in I_{\max}$}.
\end{equation}
Indeed, it follows from the energy conservation law (\ref{08/05/13/9:03}) and (\ref{09/08/15/18:45}) that 
\begin{equation}\label{10/03/28/10:15}
\begin{split}
\mathcal{H}(\psi_{0})&=\mathcal{H}(\psi(t))=
\left\| \nabla \psi(t)\right\|_{L^{2}}^{2}-
\frac{2}{p+1}\left\| \psi(t) \right\|_{L^{p+1}}^{p+1}
\\[6pt]
&>\frac{d(p-1)-4}{d(p-1)}\left\| \nabla \psi(t)\right\|_{L^{2}}^{2}
\quad 
\mbox{for all $t \in I_{\max}$}.
\end{split}
\end{equation}

The estimate (\ref{10/01/06/23:07}), together with the sufficient condition for the blowup (\ref{10/01/27/11:26}), leads us to that $I_{\max}=\mathbb{R}$. Hence,  (\ref{10/01/06/21:29}) and (\ref{09/06/21/21:58}) follow from (\ref{09/08/15/18:45}) and (\ref{10/01/06/23:07}), respectively.
\par   
It remains to prove (\ref{09/06/21/19:30}). The Gagliardo-Nirenberg inequality (\ref{08/05/13/15:45}) gives us that 
\begin{equation}\label{10/03/28/10:21}
\begin{split}
\mathcal{K}(\psi(t))
&
=\left\| \nabla \psi(t) \right\|_{L^{2}}^{2}
-\frac{d(p-1)}{2(p+1)}\left\| \psi(t) \right\|_{L^{p+1}}^{p+1}
\\[6pt]
&\ge \left\| \nabla \psi(t) \right\|_{L^{2}}^{2}
-\frac{d(p-1)}{2(p+1)}\frac{1}{N_{3}}\mathcal{N}_{2}(\psi(t))
\left\| \nabla \psi(t)\right\|_{L^{2}}^{2}.
\end{split}
\end{equation}
Moreover, this inequality (\ref{10/03/28/10:21}) and the relation $N_{3}=\frac{d(p-1)}{2(p+1)}N_{2}$ (see (\ref{08/05/13/11:25})) yield that  
\begin{equation}\label{08/12/16/18:09}
\begin{split}
\mathcal{K}(\psi(t))
&
\ge \left\| \nabla \psi(t) \right\|_{L^{2}}^{2}
-\frac{\mathcal{N}_{2}(\psi(t))}{N_{2}}
\left\| \nabla \psi(t) \right\|_{L^{2}}^{2}
.
\end{split}
\end{equation}
Here, it follows from (\ref{09/06/21/21:58}) that   
\begin{equation}
\label{09/08/15/19:08}
\begin{split}
\mathcal{N}_{2}(\psi(t))&<
\sqrt{\frac{d(p-1)}{d(p-1)-4}}^{\frac{d}{2}(p-1)-2}\hspace{-1cm}
\left\|\psi_{0} \right\|_{L^{2}}^{p+1-\frac{d}{2}(p-1)}
\sqrt{\mathcal{H}(\psi_{0})}^{\frac{d}{2}(p-1)-2}
\\
&= \sqrt{\frac{d(p-1)}{d(p-1)-4}}^{\frac{d}{2}(p-1)-2}\widetilde{\mathcal{N}}_{2}(\psi_{0}).
\end{split}
\end{equation} 
Hence, combining (\ref{08/12/16/18:09}), (\ref{09/08/15/19:08}) and $\mathcal{H}(\psi(t))>\mathcal{K}(\psi(t))>0$, we obtain that\begin{equation}\label{10/03/10:27}
\begin{split}
\mathcal{K}(\psi(t))&> 
\left\| \nabla \psi(t) \right\|_{L^{2}}^{2}
-
\sqrt{\frac{d(p-1)}{d(p-1)-4}}^{\frac{d}{2}(p-1)-2}
\frac{\mathcal{\widetilde{\mathcal{N}}}_{2}(\psi_{0})}{N_{2}}
\left\| \nabla \psi(t) \right\|_{L^{2}}^{2}
\\[6pt]
&
=
\left( 1-\frac{\widetilde{\mathcal{N}}_{2}(\psi_{0})}{\widetilde{N}_{2}}\right)
\left\| \nabla \psi(t)\right\|_{L^{2}}^{2}
\\[6pt]
&\ge 
\left( 1-\frac{\widetilde{\mathcal{N}}_{2}(\psi_{0})}{\widetilde{N}_{2}}\right) \mathcal{H}(\psi(t))
=
\left( 1-\frac{\widetilde{\mathcal{N}}_{2}(\psi_{0})}{\widetilde{N}_{2}}\right) \mathcal{H}(\psi_{0})
.
\end{split}
\end{equation}
\end{proof}

\begin{proposition}[Invariance of $PW_{-}$]\label{08/05/26/10:57}
Let $\psi_{0} \in PW_{-}$ and $\psi$ be the corresponding solution to the equation (\ref{08/05/13/8:50}). Then, we have 
\begin{equation}\label{09/06/21/18:52}
\psi(t) \in PW{-},
\quad 
\mathcal{K}(\psi(t)) < -\varepsilon_{0} 
\qquad 
\mbox{for all $t \in I_{\max}$},
\end{equation}
where $\varepsilon_{0}=\mathcal{B}(\psi_{0})-\mathcal{H}(\psi_{0})>0$. 
\end{proposition}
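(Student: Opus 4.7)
The plan is to follow the pattern of Proposition \ref{09/06/21/19:28} (invariance of $PW_+$), combining Lemma \ref{09/06/21/18:53} with Lemma \ref{09/06/21/15:50}. First, Lemma \ref{09/06/21/18:53} keeps $\psi(t)$ in $PW$ throughout $I_{\max}$, so mass and energy conservation give $\mathcal{H}(\psi(t))=\mathcal{H}(\psi_0)<\mathcal{B}(\psi_0)=\mathcal{B}(\psi(t))$ for all $t\in I_{\max}$. Next, I would preserve the sign of $\mathcal{K}$ along the flow: since $t\mapsto\mathcal{K}(\psi(t))$ is continuous (from $\psi\in C(I_{\max};H^1)$ and $H^1\hookrightarrow L^{p+1}$), if $\mathcal{K}(\psi(t_1))\ge 0$ held for some $t_1\in I_{\max}$, the intermediate value theorem together with $\mathcal{K}(\psi_0)<0$ would produce some $t_0$ with $\mathcal{K}(\psi(t_0))=0$, contradicting Lemma \ref{09/06/21/15:50} applied to $\psi(t_0)\in PW$. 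Hence $\mathcal{K}(\psi(t))<0$, i.e., $\psi(t)\in PW_-$, for all $t\in I_{\max}$.

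For the quantitative bound, the strict inequality $\mathcal{K}(\psi(t))<0$ and the definition of $N_2$ (see (\ref{08/07/02/23:24})) yield $\mathcal{N}_2(\psi(t))\ge N_2$; together with mass conservation this converts, by raising to the power $\frac{4}{d(p-1)-4}$, into
\[
\|\nabla\psi(t)\|_{L^2}^{2}\;\ge\;\left(\frac{N_2}{\|\psi_0\|_{L^2}^{p+1-\frac{d}{2}(p-1)}}\right)^{\!\frac{4}{d(p-1)-4}}=\frac{d(p-1)}{d(p-1)-4}\,\mathcal{B}(\psi_0).
\]
Combining this with the algebraic identity
\[
\mathcal{K}(f)=\frac{d(p-1)}{4}\mathcal{H}(f)-\frac{d(p-1)-4}{4}\|\nabla f\|_{L^2}^{2},
\]
obtained by eliminating $\|f\|_{L^{p+1}}^{p+1}$ from the definitions of $\mathcal{H}$ and $\mathcal{K}$, and using energy conservation, I expect to conclude
\[
\mathcal{K}(\psi(t))\;\le\;\frac{d(p-1)}{4}\bigl(\mathcal{H}(\psi_0)-\mathcal{B}(\psi_0)\bigr)=-\frac{d(p-1)}{4}\varepsilon_0\;<\;-\varepsilon_0,
\]
the final strict inequality coming from $d(p-1)/4>1$, which is exactly the $L^2$-supercritical hypothesis $p+1>2+\frac{4}{d}$.

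The argument is essentially routine bookkeeping with the three functionals $\mathcal{K},\mathcal{H},\mathcal{B}$ together with a single continuity-plus-IVT step, and I do not foresee a genuine obstacle. The only point requiring care is that the conversion $\mathcal{N}_2(\psi(t))\ge N_2\Longrightarrow \|\nabla\psi(t)\|_{L^2}^{2}\ge\frac{d(p-1)}{d(p-1)-4}\mathcal{B}(\psi_0)$ relies on mass conservation (to keep $\mathcal{B}$ constant in time), and that the hypothesis $\psi_0\in PW$ (i.e. $\mathcal{H}(\psi_0)<\mathcal{B}(\psi_0)$) is precisely what opens the positive gap $\varepsilon_0$ in the final estimate.
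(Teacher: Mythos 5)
Your proposal is correct. Every step checks out: Lemma \ref{09/06/21/18:53} (via mass/energy conservation) keeps $\psi(t)\in PW\subset H^{1}\setminus\{0\}$, so combined with continuity of $t\mapsto\mathcal{K}(\psi(t))$, the intermediate value theorem, and Lemma \ref{09/06/21/15:50} you get $\mathcal{K}(\psi(t))<0$ on $I_{\max}$; then $\mathcal{K}(\psi(t))<0$ and the definition (\ref{08/07/02/23:24}) of $N_{2}$ give $\mathcal{N}_{2}(\psi(t))\ge N_{2}$, which with mass conservation and the definition (\ref{09/05/18/22:02}) of $\mathcal{B}$ yields $\|\nabla\psi(t)\|_{L^{2}}^{2}\ge\frac{d(p-1)}{d(p-1)-4}\mathcal{B}(\psi_{0})$, and your identity $\mathcal{K}=\frac{d(p-1)}{4}\mathcal{H}-\frac{d(p-1)-4}{4}\|\nabla\cdot\|_{L^{2}}^{2}$ plus energy conservation closes the estimate, with strictness coming from $\varepsilon_{0}>0$ and $\frac{d(p-1)}{4}>1$.

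Your route differs from the paper's in organization rather than in ingredients. The paper never proves the lower bound on $\|\nabla\psi(t)\|_{L^{2}}^{2}$ directly; instead it runs two contradiction arguments at the threshold $-\varepsilon_{0}$: first it assumes $\mathcal{K}(\psi_{0})\ge-\varepsilon_{0}$ and, using $\mathcal{K}(\psi_{0})<0$, deduces $\mathcal{N}_{2}(\psi_{0})<N_{2}$, contradicting the definition of $N_{2}$; then it propagates the bound in time by continuity, picking $t_{1}$ with $\mathcal{K}(\psi(t_{1}))=-\varepsilon_{0}$ and repeating the same algebra with the conservation laws. Your proof is essentially the contrapositive of that computation: you first secure the sign of $\mathcal{K}$ (invoking Lemma \ref{09/06/21/15:50}, which the paper does not use in this proof, though it is the same mechanism as in the proof of Proposition \ref{09/06/21/19:28}), and then estimate $\mathcal{K}(\psi(t))$ directly. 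What this buys you is a cleaner one-pass argument and in fact a sharper conclusion, $\mathcal{K}(\psi(t))\le-\frac{d(p-1)}{4}\varepsilon_{0}$, which strictly improves the stated bound $\mathcal{K}(\psi(t))<-\varepsilon_{0}$ precisely because of the $L^{2}$-supercriticality $d(p-1)>4$; the paper's version, by contrast, stays uniform in style with its threshold-contradiction arguments elsewhere but only delivers the constant $1$ in front of $\varepsilon_{0}$.
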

\begin{proof}[Proof of Proposition \ref{08/05/26/10:57}] 
By the virtue of Lemma \ref{09/06/21/18:53}, it suffices to show that 
\begin{equation}\label{10/03/28/10:48}
\mathcal{K}(\psi(t)) < -\varepsilon_{0} 
\qquad 
\mbox{for all $t \in I_{\max}$}.
\end{equation}
We first prove that 
\begin{equation}\label{10/03/28/11:09}
\mathcal{K}(\psi_{0})<-\varepsilon_{0}.
\end{equation}
Supposing the contrary that $-\varepsilon_{0}\le \mathcal{K}(\psi_{0})$, 
 we have 
\begin{equation}\label{10/03/28/10:54}
\begin{split}
0&\le \mathcal{K}(\psi_{0})+\varepsilon_{0}
\\[6pt]
&=\mathcal{H}(\psi_{0})+\varepsilon_{0}
- \frac{2}{p+1}\left\{ \frac{d}{4}(p-1)-1 \right\}\|\psi_{0}\|_{L^{p+1}}^{p+1}
\\[6pt]
&=
\mathcal{B}(\psi_{0})
- 
\frac{2}{p+1}\left\{ \frac{d}{4}(p-1)-1 \right\}\|\psi_{0}\|_{L^{p+1}}^{p+1}
.
\end{split}
\end{equation}
Moreover, it follows from (\ref{10/03/28/10:54}) and $\mathcal{K}(\psi_{0})<0$ that 
\begin{equation}\label{10/03/28/10:55}
\begin{split}
0&
< \mathcal{B}(\psi_{0}) - \frac{2}{p+1}\left\{ \frac{d}{4}(p-1)-1 \right\}
\frac{2(p+1)}{d(p-1)}  \|\nabla \psi_{0}\|_{L^{2}}^{2}
\\[6pt]
&=
\mathcal{B}(\psi_{0})-\frac{d(p-1)-4}{d(p-1)} \|\nabla \psi_{0}\|_{L^{2}}^{2}
,
\end{split}
\end{equation}
so that   
\begin{equation}\label{10/03/28/11:02}
\|\nabla \psi_{0}\|_{L^{2}}^{2}
< \frac{d(p-1)}{d(p-1)-4}\mathcal{B}(\psi_{0})
= \left( \frac{N_{2}}{\|\psi_{0}\|_{L^{2}}^{p+1-\frac{d}{2}(p-1)}}\right)^{\frac{4}{d(p-1)-4}}
.
\end{equation}
Dividing the both sides of (\ref{10/03/28/11:02}) by $\|\nabla \psi_{0}\|_{L^{2}}^{2}$, we obtain that 
\begin{equation}\label{10/03/28/11:05}
1< \left( \frac{ N_{2}}{\mathcal{N}_{2}(\psi_{0})}\right)^{\frac{4}{d(p-1)-4}}
.
\end{equation}
On the other hand, since $\mathcal{K}(\psi_{0})<0$, the definition of $N_{2}$ (see (\ref{08/07/02/23:24})) implies that 
\begin{equation}\label{10/03/28/11:06}
\left( \frac{ N_{2}}{\mathcal{N}_{2}(\psi_{0})}\right)^{\frac{4}{d(p-1)-4}}
\le 1 ,
\end{equation}
which contradicts (\ref{10/03/28/11:05}). Hence, we have proved (\ref{10/03/28/11:09}). 
\par 
Next, we prove (\ref{10/03/28/10:48}). Suppose the contrary that (\ref{10/03/28/10:48}) fails. Then, it follows from (\ref{10/03/28/11:09}) and $\psi \in C(I_{\max};H^{1}(\mathbb{R}^{d}))$  
 that there exists $t_{1}\in I_{\max} \setminus \{0\}$ such that $-\varepsilon_{0}= \mathcal{K}(\psi(t_{1}))$. This relation and the energy conservation law (\ref{08/05/13/9:03}) lead us to that  
\begin{equation}\label{10/03/28/11:17}
\begin{split}
0&= \mathcal{K}(\psi(t_{1}))+\varepsilon_{0}
\\[6pt]
&=\mathcal{H}(\psi_{0})+\varepsilon_{0}
- \frac{2}{p+1}\left\{ \frac{d}{4}(p-1)-1 \right\}\|\psi(t_{1})\|_{L^{p+1}}^{p+1}
\\[6pt]
&= \mathcal{B}(\psi_{0}) - \frac{2}{p+1}\left\{ \frac{d}{4}(p-1)-1 \right\}
\frac{2(p+1)}{d(p-1)}  \left( \|\nabla \psi(t_{1})\|_{L^{2}}^{2}+\varepsilon_{0}\right)
\\[6pt]
&< \mathcal{B}(\psi_{0})-\frac{d(p-1)-4}{d(p-1)} \|\nabla \psi(t_{1})\|_{L^{2}}^{2}
.
\end{split}
\end{equation}
Hence, we have that  
\begin{equation}\label{10/03/28/11:20}
\begin{split}
\|\nabla \psi(t_{1})\|_{L^{2}}^{2}
&
<
\frac{d(p-1)}{d(p-1)-4}\mathcal{B}(\psi_{0})
=\left( \frac{N_{2}}{\|\psi_{0}\|_{L^{2}}^{p+1-\frac{d}{2}(p-1)}}\right)^{\frac{4}{d(p-1)-4}}
\\[6pt]
&=
\left( \frac{N_{2}}{\|\psi(t_{1})\|_{L^{2}}^{p+1-\frac{d}{2}(p-1)}}\right)^{\frac{4}{d(p-1)-4}}
,
\end{split}
\end{equation}
where we have used the mass conservation law (\ref{08/05/13/8:59}) to derive the last equality.  Then, an argument similar to the above yields a contradiction: Thus, we completed the proof.   
\end{proof}

\section{Strichartz type estimate and scattering}
\label{08/08/05/14:29}

In this section, we introduce a certain space-time function space in addition to the usual Strichartz spaces, which enable us to control long-time behavior of solutions. Using this function space, we prepare two important propositions: Proposition \ref{08/08/22/20:59} in Section \ref{09/05/30/15:49} (small data theory) and Proposition \ref{08/08/05/14:30} in Section \ref{09/05/30/21:16} (long time perturbation theory). The former is used to avoid the vanishing and the latter to avoid the dichotomy in our concentration compactness like argument in Section \ref{09/05/05/10:03}. In the end of this section, we show the existence of the wave operators on $PW_{+}$.

\subsection{Auxiliary function space $X$}\label{08/10/07/9:01}
In order to prove the scattering result ((\ref{10/01/26/20:32}) in Theorem \ref{08/05/26/11:53}), we need to handle the inhomogeneous term of the integral equation associated with (\ref{08/05/13/8:50}) in a suitable function space. Therefore, we will prepare a function space $X(I)$, $I\subset \mathbb{R}$, in which Strichartz type estimate works well. 
\par 
Our equation (\ref{08/05/13/8:50}) is invariant under the scaling 
\begin{equation}\label{10/03/29/10:21}
\psi(x,t) \mapsto \psi_{\lambda}(x,t):= \lambda^{\frac{2}{p-1}} \psi(\lambda x, \lambda^{2}t),
\end{equation}
which determines a critical regularity 
\begin{equation}\label{09/05/13/15:00}
s_{p}:=\frac{d}{2}-\frac{2}{p-1}.
\end{equation}
The condition (\ref{09/05/13/15:03}) implies that $0<s_{p}<1$.
\par 
Throughout this paper, we fix a number $q_{1}$ with $p+1<q_{1}<2^{*}$. Then, we define indices $r_{0}$, $r_{1}$ and $\widetilde{r}_{1}$ by 
\begin{align}
\label{09/09/23/18:00}
&\frac{1}{r_{0}}:=\frac{d}{2}\left( \frac{1}{2}-\frac{1}{q_{1}} \right), 
\\[6pt]
\label{08/09/04/10:16}
&\frac{1}{r_{1}}:=\frac{d}{2}\left( \frac{1}{2}-\frac{1}{q_{1}}-\frac{s_{p}}{d}\right),
\\[6pt]
\label{09/09/23/18:02}
&\frac{1}{\widetilde{r}_{1}}:=\frac{d}{2}\left( \frac{1}{2}-\frac{1}{q_{1}}+\frac{s_{p}}{d}\right).
\end{align} 
Here, the pair $(q_{1},r_{0})$ is admissible. Besides these indices, we define a pair $(q_{2},r_{2})$ by   
\begin{equation}\label{09/12/05/16:15}
\frac{p-1}{q_{2}}=1-\frac{2}{q_{1}},
\qquad 
\frac{1}{r_{2}}:=\frac{d}{2}\left( \frac{1}{2}-\frac{1}{q_{2}}-\frac{s_{p}}{d} \right).
\end{equation} 
\begin{figure}[ht]
\begin{center}
\input{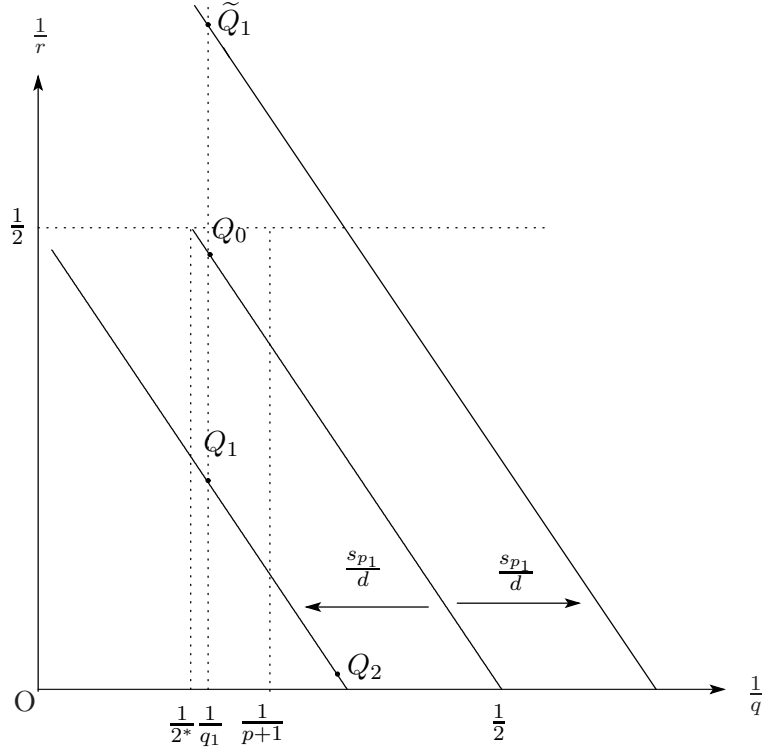}
\end{center}
\caption{
Strichartz type estimates: 
$Q_{0}: (\frac{1}{q_{1}},\frac{1}{r_{0}})$,
$Q_{1}: (\frac{1}{q_{1}},\frac{1}{r_{1}})$, 
$Q_{2}: (\frac{1}{q_{2}},\frac{1}{r_{2}})$,
$\widetilde{Q}_{1}: (\frac{1}{q_{1}},\frac{1}{\widetilde{r}_{1}})$.
}
\end{figure}
\vspace{12pt}
\\
It is worth while noting that the Sobolev embedding and the Strichartz estimate 
 lead us to the following estimate: For any pair $(q,r)$ satisfying
\begin{equation}\label{10/03/29/10:52} 
\frac{d}{2}(p-1) \le q < 2^{*},
\qquad 
\frac{1}{r}=\frac{d}{2}\left( \frac{1}{2}-\frac{1}{q}-\frac{s_{p}}{d}\right),
\end{equation}
we have 
\begin{equation}\label{08/10/25/23:16}
\left\|e^{\frac{i}{2}t\Delta}f \right\|_{L^{r}(I;L^{q})} 
\lesssim \left\|(-\Delta)^{\frac{s_{p}}{2}}f\right\|_{L^{2}}
\quad 
\mbox{for all $f \in \dot{H}^{s_{p}}(\mathbb{R}^{d})$ and interval $I$}, 
\end{equation}
where the implicit constant depends only on $d$, $p$ and $q$. 
The pairs $(q_{1},r_{1})$ and $(q_{2},r_{2})$ satisfy the condition (\ref{10/03/29/10:52}), so that the estimate (\ref{08/10/25/23:16}) is valid for these pairs. 
\par 
Now, for any interval $I$, we put 
\begin{align}
\label{08/09/01/10:02}
X(I)&= L^{r_{1}}(I;L^{q_{1}})\cap L^{r_{2}}(I; L^{q_{2}}),
\\[6pt]
\label{10/02/19/22:40}
S(I)&=L^{\infty}(I; L^{2}) \cap L^{r_{0}}(I; L^{q_{1}}).
\end{align}
We find that Strichartz type estimates work well in the space $X(I)$: 
\begin{lemma}\label{08/07/31/12:02}
Assume that  $d\ge 1$ and $2+\frac{4}{d}< p+1 <2^{*}$. Let $t_{0} \in \mathbb{R}$ and $I$ be an interval whose closure contains $t_{0}$. Then, we have 
\begin{align}
\label{10/03/29/11:06}
\left\| \int_{t_{0}}^{t}e^{i(t-t')\Delta} v(t')\,dt' \right\|_{X(I)}&\lesssim 
\left\|v  \right\|_{L^{\widetilde{r}_{1}'}(I;L^{q_{1}'})},
\\[6pt]
\label{10/03/29/11:07}
\left\| \int_{t_{0}}^{t}e^{i(t-t')\Delta} \left( v_{1}v_{2} \right)(t')\,dt' \right\|_{X(I)}
&\lesssim 
\left\| v_{1} \right\|_{L^{r_{1}}(I;L^{q_{1}})}
\left\| v_{2} \right\|_{L^{\frac{r_{2}}{p-1}}(I;L^{\frac{q_{2}}{p-1}})}
,
\end{align}
where the implicit constants depend only on $d$, $p$ and $q_{1}$. 
\end{lemma}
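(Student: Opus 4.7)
The plan is to derive both inequalities from the generalized inhomogeneous Strichartz estimates of Foschi \cite{Foschi}, which allow non-admissible pairs provided a suitable scaling identity holds. The three pairs $(q_{1},r_{1})$, $(q_{2},r_{2})$, $(q_{1},\widetilde r_{1})$ are arranged so that the output pairs $(q_{j},r_{j})$ carry a derivative loss $s_{p}$, while the dual input pair $(q_{1},\widetilde r_{1})$ carries a derivative gain $s_{p}$. A direct computation from (\ref{08/09/04/10:16}), (\ref{09/09/23/18:02}), (\ref{09/12/05/16:15}) yields the Foschi scaling relations
\[
\frac{1}{r_{1}}+\frac{1}{\widetilde r_{1}}=\frac{d}{2}\!\left(1-\frac{2}{q_{1}}\right), \qquad \frac{1}{r_{2}}+\frac{1}{\widetilde r_{1}}=\frac{d}{2}\!\left(1-\frac{1}{q_{1}}-\frac{1}{q_{2}}\right),
\]
where the second identity also uses $(p-1)/q_{2}=1-2/q_{1}$. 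Acceptability of the pairs in Foschi's sense is guaranteed by $0<s_{p}<1$, which is exactly the $L^{2}$-supercritical/$H^{1}$-subcritical hypothesis (\ref{09/05/13/15:03}); the strict inequality $p+1<q_{1}<2^{*}$ keeps us off all endpoint cases.

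First, apply Foschi's inhomogeneous Strichartz estimate to the two pairings above to obtain
\[
\left\|\int_{t_{0}}^{t}e^{i(t-t')\Delta}v(t')\,dt'\right\|_{L^{r_{j}}(I;L^{q_{j}})}\lesssim \|v\|_{L^{\widetilde r_{1}'}(I;L^{q_{1}'})},\qquad j=1,2.
\]
Taking the maximum and invoking the definition (\ref{08/09/01/10:02}) of $X(I)$ yields (\ref{10/03/29/11:06}) immediately.

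For (\ref{10/03/29/11:07}), I reduce the product $v_{1}v_{2}$ to the input space of (\ref{10/03/29/11:06}) by a space-time H\"older estimate. From $(p-1)/q_{2}=1-2/q_{1}$ we get $\tfrac{1}{q_{1}}+\tfrac{p-1}{q_{2}}=1-\tfrac{1}{q_{1}}=\tfrac{1}{q_{1}'}$, so H\"older in $x$ gives $\|(v_{1}v_{2})(t)\|_{L^{q_{1}'}}\le \|v_{1}(t)\|_{L^{q_{1}}}\|v_{2}(t)\|_{L^{q_{2}/(p-1)}}$. The key temporal identity
\[
\frac{1}{r_{1}}+\frac{p-1}{r_{2}}=\frac{1}{\widetilde r_{1}'}
\]
is a short algebraic computation using $s_{p}=d/2-2/(p-1)$: both sides reduce to $1-\tfrac{d}{2}+\tfrac{d}{2q_{1}}+\tfrac{1}{p-1}$. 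H\"older in $t$ then gives $\|v_{1}v_{2}\|_{L^{\widetilde r_{1}'}(I;L^{q_{1}'})}\le \|v_{1}\|_{L^{r_{1}}(I;L^{q_{1}})}\|v_{2}\|_{L^{r_{2}/(p-1)}(I;L^{q_{2}/(p-1)})}$, and combining with (\ref{10/03/29/11:06}) produces (\ref{10/03/29/11:07}).

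The substantive difficulty is not the arithmetic (routine once the definitions are in place) but the application of Foschi's theorem itself: because the output pairs $(q_{j},r_{j})$ are genuinely non-admissible, the classical Strichartz machinery is insufficient and one must invoke the refined inhomogeneous estimates off the admissibility diagonal, verifying in particular that our indices satisfy Foschi's acceptability conditions and avoid his excluded endpoint cases. As the authors note, the function spaces $X(I)$ were chosen precisely so that this step goes through.
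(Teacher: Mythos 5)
Your proposal is correct and follows essentially the same route as the paper: the paper obtains (\ref{10/03/29/11:06}) by citing Foschi's generalized inhomogeneous Strichartz estimate (Theorem 1.4 in \cite{Foschi}) and derives (\ref{10/03/29/11:07}) from it via the H\"older inequality, exactly as you do. Your explicit verification of the scaling identities and of the exponent identity $\frac{1}{r_{1}}+\frac{p-1}{r_{2}}=\frac{1}{\widetilde r_{1}'}$ is accurate and in fact supplies more detail than the paper's two-line proof.
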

The estimate (\ref{10/03/29/11:06}) in Lemma \ref{08/07/31/12:02} is due to Foschi (see \cite{Foschi}, Theorem 1.4). The estimate (\ref{10/03/29/11:07}) is an immediate consequence of (\ref{10/03/29/11:06}) and the H\"older inequality.
\\
\par 
The following lemma is frequently used in the next section (Section \ref{08/10/03/15:11}); It is fundamental and easily obtained from the H\"older inequality and the chain rule:    
\begin{lemma}
\label{08/08/18/15:41}
Assume that $d \ge 1$ and $2+\frac{4}{d}< p+1 < 2^{*}$. Let $t_{0} \in \mathbb{R}$ and $I$ be an interval whose closure contains $t_{0}$. Then, we have
\begin{align}
\label{08/08/18/15:43}
&\left\||v|^{p-1}v  \right\|_{L^{r_{0}'}(I;
L^{q_{1}'})}
\le 
\left\|v \right\|_{L^{r_{0}}(I;L^{q_{1}})}
\left\| v \right\|_{L^{r_{2}}(I;L^{q_{2}})}^{p-1},
\\[6pt]
\label{10/03/29/11:14}
&\left\|\nabla \left( |v|^{p-1}v \right) \right\|_{L^{r_{0}'}(I;
L^{q_{1}'})}
\lesssim 
\left\|\nabla v \right\|_{L^{r_{0}}(I;L^{q_{1}})}
\left\| v \right\|_{L^{r_{2}}(I;L^{q_{2}})}^{p-1}
, 
\end{align}
where the implicit constant depends only on $d$, $p$ and $q_{1}$.
\end{lemma}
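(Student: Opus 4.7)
The plan is to verify the two estimates by direct application of H\"older's inequality in both space and time, combined with a chain-rule-style pointwise bound for the gradient version.

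First I would check the H\"older compatibility of the spatial exponents. Using the definition $\frac{p-1}{q_{2}}=1-\frac{2}{q_{1}}$ (see (\ref{09/12/05/16:15})), we have
\[
\frac{1}{q_{1}}+\frac{p-1}{q_{2}}=\frac{1}{q_{1}}+1-\frac{2}{q_{1}}=1-\frac{1}{q_{1}}=\frac{1}{q_{1}'},
\]
so $\||v|^{p-1}v\|_{L^{q_{1}'}}\le \|v\|_{L^{q_{1}}}\|v\|_{L^{q_{2}}}^{p-1}$ pointwise in $t$.

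Next I would check compatibility of the temporal exponents, which is the only computation with any content. Using $\frac{1}{r_{0}}=\frac{d}{2}(\frac{1}{2}-\frac{1}{q_{1}})$ and $\frac{1}{r_{2}}=\frac{d}{2}(\frac{1}{2}-\frac{1}{q_{2}}-\frac{s_{p}}{d})$, and noting that $s_{p}=\frac{d}{2}-\frac{2}{p-1}$ gives $\frac{1}{2}-\frac{s_{p}}{d}=\frac{2}{d(p-1)}$, one computes
\[
\frac{1}{r_{0}}+\frac{p-1}{r_{2}}=\frac{d}{2}\!\left(\frac{1}{2}-\frac{1}{q_{1}}\right)+\frac{d}{2}\!\left(\frac{2}{d}-(p-1)\cdot\frac{1}{q_{2}}\right)=\frac{d}{2}\!\left(\frac{1}{2}-\frac{1}{q_{1}}\right)+1-\frac{d}{2}\!\left(1-\frac{2}{q_{1}}\right),
\]
which simplifies to $1-\frac{d}{2}(\frac{1}{2}-\frac{1}{q_{1}})=1-\frac{1}{r_{0}}=\frac{1}{r_{0}'}$. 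Thus temporal H\"older yields (\ref{08/08/18/15:43}).

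For (\ref{10/03/29/11:14}) I would invoke the pointwise bound $|\nabla(|v|^{p-1}v)|\lesssim |v|^{p-1}|\nabla v|$, valid for $p>1$ (here $p>1+\frac{4}{d}>1$; the implicit constant depends only on $p$, hence only on $d$, $p$, $q_{1}$ in the end). Applying this pointwise and then H\"older with exactly the same exponent calculations as above gives the claimed estimate with $\|\nabla v\|_{L^{r_{0}}(I;L^{q_{1}})}$ in place of $\|v\|_{L^{r_{0}}(I;L^{q_{1}})}$.

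There is no real obstacle: the only risk is a bookkeeping slip in the exponent verification, and the chain-rule bound for $\nabla(|v|^{p-1}v)$ on $\mathbb{R}^{d}$-valued arguments is standard. Hence the lemma follows by H\"older's inequality once the exponents are matched as above.
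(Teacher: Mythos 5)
Your proof is correct and follows exactly the route the paper intends: the paper gives no detailed argument for this lemma, stating only that it "is fundamental and easily obtained from the H\"older inequality and the chain rule," which is precisely your space–time H\"older argument (the exponent identities $\frac{1}{q_{1}'}=\frac{1}{q_{1}}+\frac{p-1}{q_{2}}$ and $\frac{1}{r_{0}'}=\frac{1}{r_{0}}+\frac{p-1}{r_{2}}$ check out) together with the pointwise chain-rule bound $|\nabla(|v|^{p-1}v)|\lesssim |v|^{p-1}|\nabla v|$.
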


We also need the following interpolation estimate in the next section (Section \ref{09/05/05/10:03}): 
\begin{lemma}\label{09/04/29/15:57}
For $j \in \{1,2\}$, there exist a constant $\theta_{j} \in (0,1)$ such that 
\[
\left\|e^{\frac{i}{2}t\Delta}f  \right\|_{L^{r_{j}}(I;L^{q_{j}})}
\lesssim
 \left\| e^{\frac{i}{2}t\Delta}f  \right\|_{L^{\infty}(I;L^{\frac{d}{2}(p-1)})}^{1-\theta_{j}}
\left\| (-\Delta)^{\frac{s_{p}}{2}} f \right\|_{L^{2}}^{\theta_{j}}
\quad 
\mbox{for all $f \in \dot{H}^{s_{p}}(\mathbb{R}^{d})$}, 
\]
where the implicit constant depends only on $d$, $p$ and $q_{1}$. 
\end{lemma}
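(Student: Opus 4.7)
The plan is to interpolate the mixed norm $L^{r_j}(I; L^{q_j})$ between the ``endpoint'' $L^\infty(I; L^{d(p-1)/2})$, which corresponds to the point $q = \tfrac{d(p-1)}{2}$, $r = \infty$ on the Strichartz line in (\ref{10/03/29/10:52}), and an auxiliary admissible mixed norm $L^{r^*}(I; L^{q^*})$ sitting on the same line; the second factor will then be controlled by the already-known estimate (\ref{08/10/25/23:16}).

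First I would check that $\tfrac{d(p-1)}{2} < q_j < 2^*$ for $j \in \{1,2\}$, which is what guarantees there is room to interpolate nontrivially. For $j=1$ this is immediate from $p+1 < q_1 < 2^*$ together with the elementary consequence of $p+1 < 2^*$ that $p+1 > \tfrac{d(p-1)}{2}$. For $j=2$, a short manipulation of (\ref{09/12/05/16:15}) using $q_1 > p+1$ and $q_1 < 2^*$ yields the analogous bound $\tfrac{d(p-1)}{2} < q_2 < 2^*$; the upper bound here is the place where the strict inequality $p+1 < 2^*$ is genuinely needed.

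Next, pick any auxiliary exponent $q^* \in (q_j, 2^*)$ and let $r^*$ be its partner on the Strichartz line (\ref{10/03/29/10:52}). Since the three points $\bigl(\tfrac{2}{d(p-1)}, 0\bigr)$, $(1/q_j, 1/r_j)$, $(1/q^*, 1/r^*)$ are collinear with the middle one strictly between the others, setting $\theta_j := r^*/r_j \in (0,1)$ produces simultaneously the two identities
\[
\tfrac{1}{q_j} = \tfrac{2(1-\theta_j)}{d(p-1)} + \tfrac{\theta_j}{q^*}, \qquad \tfrac{1}{r_j} = \tfrac{\theta_j}{r^*}.
\]
Applying H\"older's inequality in $x$ at each fixed $t$, then H\"older in $t$ with exponent pair $(\infty, r^*/\theta_j)$, yields
\[
\bigl\|e^{\tfrac{i}{2}t\Delta}f \bigr\|_{L^{r_j}(I; L^{q_j})} \le \bigl\|e^{\tfrac{i}{2}t\Delta}f\bigr\|_{L^\infty(I; L^{d(p-1)/2})}^{1-\theta_j} \bigl\|e^{\tfrac{i}{2}t\Delta}f\bigr\|_{L^{r^*}(I; L^{q^*})}^{\theta_j}.
\]
Since $(q^*, r^*)$ satisfies (\ref{10/03/29/10:52}), the estimate (\ref{08/10/25/23:16}) bounds the last factor by $\|(-\Delta)^{s_p/2}f\|_{L^2}^{\theta_j}$, which gives the claim.

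The only step that carries any real structural information is the algebraic verification that $q_2 < 2^*$; everything else reduces to a routine two-step H\"older interpolation combined with Strichartz.
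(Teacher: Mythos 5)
Your proof is correct and takes essentially the same route as the paper: H\"older interpolation of the $L^{q_j}_x$ norm between $L^{\frac{d}{2}(p-1)}_x$ and an auxiliary exponent on the Strichartz line (\ref{10/03/29/10:52}), then H\"older in time, then the estimate (\ref{08/10/25/23:16}) for the auxiliary space-time norm. The only cosmetic differences are that the paper fixes one auxiliary exponent $q\in(q_1,2^{*})$ serving both $j=1,2$ and records the explicit value $\theta_j=\frac{q}{q_j}\frac{2q_j-d(p-1)}{2q-d(p-1)}$, whereas you take $q^{*}\in(q_j,2^{*})$ and spell out the check $\frac{d}{2}(p-1)<q_j<2^{*}$, which the paper leaves implicit.
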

\begin{proof}[Proof of Lemma \ref{09/04/29/15:57}]
Fix a pair $(q,r)$ satisfying (\ref{10/03/29/10:52}) and $q_{1}<q <2^{*}$. Applying the H\"older inequality first and (\ref{08/10/25/23:16}) afterword, 
 we obtain that   
\begin{equation}\label{10/03/29/11:26}
\begin{split}
\left\|e^{\frac{i}{2}t\Delta}f  \right\|_{L^{r_{j}}(I;L^{q_{j}})}
&\le  \left\| e^{\frac{i}{2}t\Delta}f  \right\|_{L^{\infty}(I;L^{\frac{d}{2}(p-1)})}^{1-\theta_{j}}
\left\| e^{\frac{i}{2}t\Delta}f \right\|_{L^{r}(I;L^{q})}^{\theta_{j}}
\\[6pt]
&\lesssim  
\left\| e^{\frac{i}{2}t\Delta}f  \right\|_{L^{\infty}(I;L^{\frac{d}{2}(p-1)})}^{1-\theta_{j}}
\left\| (-\Delta)^{\frac{s_{p}}{2}} f \right\|_{L^{2}}^{\theta_{j}},
\quad 
j=1,2
\end{split}
\end{equation}
where 
\[
\theta_{j}:=\frac{q}{q_{j}}\frac{2q_{j}-d(p-1)}{2q-d(p-1)},
\]
and the implicit constant depends only on $d$, $p$ and $q_{1}$ (we may ignore the dependence of $q$). 
\end{proof}

At the end of this subsection, we record a decay estimate for the free solution:
\begin{lemma}\label{10/04/08/9:20}
Assume that $d\ge 1$. Then, we have 
\begin{equation}\label{10/04/08/9:22}
\lim_{t\to \pm \infty}
\left\| e^{\frac{i}{2}t\Delta}f \right\|_{L^{q}}=0
\quad 
\mbox{for all $q \in (2,2^{*})$ and $f \in H^{1}(\mathbb{R}^{d})$}.
\end{equation}
\end{lemma}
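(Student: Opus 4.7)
The plan is to combine the standard dispersive decay estimate with a density argument. The key point is that the dispersive estimate gives decay for data in $L^{q'}$ (with $q'<2$), while the hypothesis only places $f$ in $H^{1}(\mathbb{R}^{d})$, so one has to pass through a dense subclass.

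First I would recall the $L^{q'}\!\to\! L^{q}$ dispersive estimate obtained by Riesz--Thorin interpolation between the unitarity of the free evolution on $L^{2}$ and the $L^{1}\!\to\! L^{\infty}$ pointwise bound
$\|e^{\frac{i}{2}t\Delta}g\|_{L^{\infty}}\lesssim |t|^{-d/2}\|g\|_{L^{1}}$: for every $q\in[2,\infty]$,
\begin{equation*}
\bigl\|e^{\frac{i}{2}t\Delta}g\bigr\|_{L^{q}}\;\lesssim\;|t|^{-d\left(\frac{1}{2}-\frac{1}{q}\right)}\|g\|_{L^{q'}},\qquad g\in L^{q'}(\mathbb{R}^{d}),\ t\neq 0.
\end{equation*}
Because $q\in(2,2^{*})$, the exponent $d(\tfrac12-\tfrac1q)$ is strictly positive, so the right-hand side tends to $0$ as $|t|\to\infty$ whenever $g\in L^{q'}$. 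In particular, since Schwartz functions embed into every $L^{q'}$, the conclusion \eqref{10/04/08/9:22} is immediate for $f\in\mathcal{S}(\mathbb{R}^{d})$.

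Second, I would extend to arbitrary $f\in H^{1}(\mathbb{R}^{d})$ by approximation. Pick a sequence $f_{n}\in\mathcal{S}(\mathbb{R}^{d})$ with $f_{n}\to f$ in $H^{1}(\mathbb{R}^{d})$; this is possible because $\mathcal{S}(\mathbb{R}^{d})$ is dense in $H^{1}(\mathbb{R}^{d})$. For $q\in(2,2^{*})$ the Sobolev embedding $H^{1}(\mathbb{R}^{d})\hookrightarrow L^{q}(\mathbb{R}^{d})$ holds in every dimension $d\ge1$, and $e^{\frac{i}{2}t\Delta}$ is an isometry of $H^{1}(\mathbb{R}^{d})$, hence
\begin{equation*}
\bigl\|e^{\frac{i}{2}t\Delta}(f-f_{n})\bigr\|_{L^{q}}\;\lesssim\;\bigl\|e^{\frac{i}{2}t\Delta}(f-f_{n})\bigr\|_{H^{1}}\;=\;\|f-f_{n}\|_{H^{1}},
\end{equation*}
uniformly in $t$. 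Given $\varepsilon>0$, first choose $n$ so that this error is less than $\varepsilon/2$, and then, since $f_{n}\in\mathcal{S}(\mathbb{R}^{d})\subset L^{q'}(\mathbb{R}^{d})$, use the dispersive estimate above to select $T$ so large that $\|e^{\frac{i}{2}t\Delta}f_{n}\|_{L^{q}}<\varepsilon/2$ for $|t|\ge T$. The triangle inequality then gives $\|e^{\frac{i}{2}t\Delta}f\|_{L^{q}}<\varepsilon$ for $|t|\ge T$, which is the claim.

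There is no real obstacle beyond organizing the density argument: the dispersive estimate on a dense subset together with the uniform $H^{1}$-control on the tail is standard. The only point requiring slight care is that Sobolev embedding and the dispersive estimate cover different regimes of the conjugate exponent, which is precisely why a Schwartz (or $L^{1}\cap H^{1}$) approximation, rather than a direct interpolation at the level of $H^{1}$, is used.
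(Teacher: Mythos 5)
Your argument is correct and follows exactly the route the paper indicates: the $L^{q'}\to L^{q}$ dispersive estimate for the free propagator on Schwartz data, combined with density of $\mathcal{S}(\mathbb{R}^{d})$ in $H^{1}(\mathbb{R}^{d})$ and the uniform-in-$t$ control via the Sobolev embedding and the $H^{1}$-isometry of $e^{\frac{i}{2}t\Delta}$. The paper merely states this in one line, and your write-up is a correct expansion of that same proof.
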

\begin{proof}[Proof of Lemma \ref{10/04/08/9:22}]
This lemma is easily follows from the $L^{p}$-$L^{q}$-estimate for the free solution and the density of the Schwartz space $\mathcal{S}(\mathbb{R}^{d})$ in $H^{1}(\mathbb{R}^{d})$.
\end{proof}

\subsection{Sufficient conditions for scattering}
\label{09/05/30/15:49}
We shall give two sufficient conditions for solutions to have asymptotic states in the energy space $H^{1}(\mathbb{R}^{d})$. One of them is the small data theory (see Proposition \ref{08/08/22/20:59} and Remark \ref{10/03/29/12:24}). We also give the proof of Proposition \ref{09/05/18/10:43} here. 
\par  
We begin with the following proposition: 
\begin{proposition}[Scattering in the energy space]
\label{08/08/18/16:51}
Assume that $d\ge 1$ and $2+\frac{4}{d}<p+1<2^{*}$. Let $\psi$ be a solution to the equation (\ref{08/05/13/8:50}).
\\
{\rm (i)} Suppose that $\psi$ exists on $[0,\infty)$ and  satisfies that 
\begin{equation}\label{10/03/29/12:27}
\left\| \psi \right\|_{X([0,\infty))}< \infty,
\qquad 
\left\|\psi \right\|_{L^{\infty}([0,\infty);H^{1})}< \infty.
\end{equation}
Then, there exists a unique asymptotic state $\phi_{+} \in H^{1}(\mathbb{R}^{d})$ such that 
\begin{equation}\label{10/03/29/12:28}
\lim_{t \to \infty}\left\| \psi(t)-e^{\frac{i}{2}t\Delta}\phi_{+}\right\|_{H^{1}}
=0.
\end{equation}
{\rm (ii)} Suppose that $\psi$ exists on $(-\infty, 0]$ and  satisfies that 
\begin{equation}\label{10/03/29/12:29}
\left\| \psi \right\|_{X((-\infty,0])}< \infty,
\qquad
\left\|\psi \right\|_{L^{\infty}((-\infty,0];H^{1})}< \infty.
\end{equation} 
Then, there exists a unique asymptotic state $\phi_{-} \in H^{1}(\mathbb{R}^{d})$ such that 
\begin{equation}\label{10/03/29/12:30}
\lim_{t \to -\infty}\left\| \psi(t)-e^{\frac{i}{2}t\Delta}\phi_{-}\right\|_{H^{1}}
=0.
\end{equation}
\end{proposition}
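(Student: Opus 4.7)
The key idea is that $\phi_+ := \lim_{t\to\infty} e^{-\frac{i}{2}t\Delta}\psi(t)$ should exist in $H^1(\mathbb{R}^d)$, which by the Duhamel representation associated with (\ref{08/05/13/8:50}) amounts to showing that $\int_0^\infty e^{-\frac{i}{2}s\Delta}(|\psi|^{p-1}\psi)(s)\,ds$ converges in $H^1$. I would establish this via a Cauchy criterion, reducing everything to a standard two-step Strichartz bootstrap.

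The first step is to promote the assumption $\|\psi\|_{X([0,\infty))} + \|\psi\|_{L^\infty([0,\infty);H^1)}<\infty$ to the global $H^1$-level Strichartz bound $\|\psi\|_{S([0,\infty))} + \|\nabla\psi\|_{S([0,\infty))}<\infty$. Since $\|\psi\|_{L^{r_2}([0,\infty);L^{q_2})} \leq \|\psi\|_{X([0,\infty))}<\infty$, I would partition $[0,\infty)$ into finitely many intervals $I_j = [t_j,t_{j+1}]$ on each of which $\|\psi\|_{L^{r_2}(I_j;L^{q_2})}$ is as small as desired. On each $I_j$, applying the Strichartz estimate to the Duhamel representation with initial time $t_j$ together with Lemma \ref{08/08/18/15:41} yields
\[
\|\psi\|_{S(I_j)} + \|\nabla\psi\|_{S(I_j)}
\lesssim \|\psi(t_j)\|_{H^1} + \bigl(\|\psi\|_{S(I_j)} + \|\nabla\psi\|_{S(I_j)}\bigr)\|\psi\|_{L^{r_2}(I_j;L^{q_2})}^{p-1},
\]
and the smallness of the last factor permits absorption. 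Since $\|\psi(t_j)\|_{H^1}\leq \|\psi\|_{L^\infty H^1}$ is uniformly bounded, summing over the finite partition gives the desired global bound.

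For the second step, for $t_2>t_1\geq 0$ the unitarity of $e^{-\frac{i}{2}s\Delta}$ on $H^1$ combined with the dual $L^2$-Strichartz estimate (applied both to $\psi$ and to $\nabla\psi$) gives
\[
\|e^{-\frac{i}{2}t_2\Delta}\psi(t_2)-e^{-\frac{i}{2}t_1\Delta}\psi(t_1)\|_{H^1}
\lesssim \bigl\||\psi|^{p-1}\psi\bigr\|_{L^{r_0'}([t_1,t_2];L^{q_1'})} + \bigl\|\nabla(|\psi|^{p-1}\psi)\bigr\|_{L^{r_0'}([t_1,t_2];L^{q_1'})}.
\]
By Lemma \ref{08/08/18/15:41}, the right-hand side is controlled by $\bigl(\|\psi\|_{S([t_1,t_2])}+\|\nabla\psi\|_{S([t_1,t_2])}\bigr)\|\psi\|_{L^{r_2}([t_1,\infty);L^{q_2})}^{p-1}$, whose second factor vanishes as $t_1\to\infty$ since $\|\psi\|_{L^{r_2}([0,\infty);L^{q_2})}$ is finite. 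Therefore $\{e^{-\frac{i}{2}t\Delta}\psi(t)\}_{t\geq 0}$ is Cauchy in $H^1$, and its limit $\phi_+$ provides the desired asymptotic state. Uniqueness is immediate from the fact that $e^{\frac{i}{2}t\Delta}$ is an isometry on $H^1$, and part (ii) follows identically by time reversal.

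The main technical point is the first step: without the $L^\infty H^1$ hypothesis, the $X$-norm alone (which sits at the scaling-critical level) would not suffice to control the $L^\infty L^2$ part of the Strichartz norm, and the bootstrap on the $\nabla\psi$ piece would fail. It is precisely the uniform $H^1$ bound on $\psi(t_j)$ at each sub-interval endpoint that lets the local smallness of $\|\psi\|_{L^{r_2}L^{q_2}}$ close a global estimate.
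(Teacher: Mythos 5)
Your argument is correct and follows essentially the same two-step route as the paper: the paper first proves (Lemma~\ref{08/08/18/15:53}) that the hypotheses (\ref{10/03/29/12:27}) upgrade to $\left\|(1-\Delta)^{\frac{1}{2}}\psi\right\|_{S([0,\infty))}<\infty$, and then runs precisely your Cauchy-criterion estimate on $e^{-\frac{i}{2}t\Delta}\psi(t)$ via the Duhamel tail together with Lemma~\ref{08/08/18/15:41} (see (\ref{09/12/14/14:32})), concluding by completeness of $H^{1}$ and the same uniqueness argument. The only divergence is inside the first step: you close the bootstrap by partitioning $[0,\infty)$ into finitely many intervals on which $\left\|\psi\right\|_{L^{r_{2}}(I_{j};L^{q_{2}})}$ is small (legitimate, since $r_{2}<\infty$), whereas the paper avoids any splitting by interpolating $\left\|(1-\Delta)^{\frac{1}{2}}\psi\right\|_{L^{r_{0}}(J;L^{q_{1}})}$ between $\left\|\psi\right\|_{L^{\infty}(J;H^{1})}$ and a higher admissible norm $\left\|(1-\Delta)^{\frac{1}{2}}\psi\right\|_{L^{r}(J;L^{q})}$ with exponent $\theta\in(0,1)$ and absorbing by Young's inequality, which works with the full, possibly large, $X$-norm. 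Both closures are standard and buy the same conclusion; the one point you should make explicit (as the paper does implicitly through its exhaustion over subintervals $J$ with a bound independent of $J$) is that on the unbounded last interval the quantity being absorbed must be known finite a priori, so you should work on $[t_{N},T]$, obtain a bound uniform in $T$, and let $T\to\infty$.
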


For the proof of Proposition \ref{08/08/18/16:51}, we need the following lemma. 
\begin{lemma}
\label{08/08/18/15:53}
Assume that $d\ge 1$ and $2+\frac{4}{d}<p+1<2^{*}$. Let $I$ be an interval and  $\psi$ be a solution to the equation (\ref{08/05/13/8:50}) on $I$. Suppose that \begin{equation}\label{10/03/29/18:06} 
\|\psi\|_{X(I)}<\infty, 
\quad 
\|\psi\|_{L^{\infty}(I;H^{1})}<\infty.
\end{equation}
Then, we have that 
\[
\left\|(1-\Delta)^{\frac{1}{2}}\psi \right\|_{S(I)}<\infty
.
\]
\end{lemma}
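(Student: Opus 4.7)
The plan is a standard partition-plus-Strichartz argument. Since $\psi \in X(I) \subset L^{r_{2}}(I;L^{q_{2}})$, I can split $I$ into finitely many consecutive subintervals $I_{1},\ldots,I_{N}$, $I_{k}=[\tau_{k},\tau_{k+1}]$, such that
\[
\|\psi\|_{L^{r_{2}}(I_{k};L^{q_{2}})}<\eta
\qquad (k=1,\ldots,N),
\]
where $\eta>0$ will be chosen small; the number $N$ is controlled by $\eta^{-r_{2}}\|\psi\|_{L^{r_{2}}(I;L^{q_{2}})}^{r_{2}}$. On each piece $I_{k}$ I use Duhamel's formula with initial time $\tau_{k}$, apply $(1-\Delta)^{1/2}$, and invoke the standard (admissible-admissible) Strichartz inequality for the pair $(q_{1},r_{0})$ to get
\[
\|(1-\Delta)^{\frac{1}{2}}\psi\|_{S(I_{k})}
\le
C\|\psi(\tau_{k})\|_{H^{1}}
+
C\bigl\|(1-\Delta)^{\frac{1}{2}}\bigl(|\psi|^{p-1}\psi\bigr)\bigr\|_{L^{r_{0}'}(I_{k};L^{q_{1}'})}.
\]

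The nonlinear term is then controlled by Lemma \ref{08/08/18/15:41}: combining (\ref{08/08/18/15:43}) and (\ref{10/03/29/11:14}) (applied also to the constant part of $(1-\Delta)^{1/2}$, which is trivial) yields
\[
\bigl\|(1-\Delta)^{\frac{1}{2}}\bigl(|\psi|^{p-1}\psi\bigr)\bigr\|_{L^{r_{0}'}(I_{k};L^{q_{1}'})}
\lesssim
\bigl\|(1-\Delta)^{\frac{1}{2}}\psi\bigr\|_{L^{r_{0}}(I_{k};L^{q_{1}})}
\,\|\psi\|_{L^{r_{2}}(I_{k};L^{q_{2}})}^{p-1}
\le
C'\eta^{p-1}\,\|(1-\Delta)^{\frac{1}{2}}\psi\|_{S(I_{k})}.
\]
Choosing $\eta$ so small that $CC'\eta^{p-1}\le \frac{1}{2}$, I absorb the last term into the left-hand side and obtain
\[
\|(1-\Delta)^{\frac{1}{2}}\psi\|_{S(I_{k})}
\le 2C\,\|\psi\|_{L^{\infty}(I;H^{1})},
\]
provided the left-hand side is known a priori to be finite. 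Summing the $N$ bounds (the $L^{r_{0}}L^{q_{1}}$ pieces via additivity of $L^{r_{0}}$, the $L^{\infty}L^{2}$ piece via the taking of a maximum) then gives $\|(1-\Delta)^{1/2}\psi\|_{S(I)}<\infty$, which is the desired conclusion.

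The main obstacle is the a priori finiteness needed to justify the absorbing step. I intend to handle it by a standard continuity/bootstrap argument: on each $I_{k}$ set $g(T):=\|(1-\Delta)^{1/2}\psi\|_{S([\tau_{k},T])}$ for $T\in I_{k}$. By local well-posedness in the Strichartz spaces (based at $\tau_{k}$, using that $\psi(\tau_{k})\in H^{1}$), $g$ is continuous in $T$ and finite for $T$ close to $\tau_{k}$. Applying the inequality above on $[\tau_{k},T]$ in place of $I_{k}$ shows that $g(T)$ cannot cross the threshold $2C\|\psi\|_{L^{\infty}(I;H^{1})}$, hence remains bounded on all of $I_{k}$. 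Alternatively, one may subdivide each $I_{k}$ further using local well-posedness so that finiteness on each small sub-piece is immediate. Once the bootstrap is in place, the remaining steps are routine manipulations with the Strichartz machinery already recorded in Lemmas \ref{08/07/31/12:02} and \ref{08/08/18/15:41}.
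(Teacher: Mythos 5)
Your argument is correct, but it runs on a different engine than the paper's. You subdivide $I$ into finitely many pieces on which $\|\psi\|_{L^{r_{2}}(I_{k};L^{q_{2}})}<\eta$ (possible since $r_{2}<\infty$), apply Strichartz and Lemma \ref{08/08/18/15:41} on each piece, and absorb using the smallness $\eta^{p-1}$, handling a priori finiteness by local well-posedness plus continuity; this is the standard perturbative pattern that the paper itself uses elsewhere (e.g.\ in Proposition \ref{08/08/05/14:30}). The paper's proof of this lemma avoids any subdivision: it fixes an auxiliary admissible pair $(q,r)$ with $q_{1}<q<2^{*}$, estimates on an arbitrary subinterval $J$ on which the quantity is finite, and interpolates in space, $\left\|(1-\Delta)^{\frac{1}{2}}\psi\right\|_{L^{r_{0}}(J;L^{q_{1}})}\le \|\psi\|_{L^{\infty}(J;H^{1})}^{1-\theta}\left\|(1-\Delta)^{\frac{1}{2}}\psi\right\|_{L^{r}(J;L^{q})}^{\theta}$ with $\theta=\frac{q(q_{1}-2)}{q_{1}(q-2)}\in(0,1)$, so that the unknown norm enters only to a power $\theta<1$ and Young's inequality absorbs it against the full (not small) factor $\|\psi\|_{X(I)}^{p-1}$; since the resulting bound is independent of $J$, exhausting $I$ finishes the proof, and one recovers the $L^{r_{0}}L^{q_{1}}$ component afterwards by Hölder against $\|\psi\|_{L^{\infty}(I;H^{1})}$. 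The paper's route buys an explicit bound $\lesssim \|\psi\|_{L^{\infty}(I;H^{1})}\bigl(1+\|\psi\|_{X(I)}^{\frac{p-1}{1-\theta}}\bigr)$ with no interval counting; yours buys a more routine argument at the cost of the partition and the bootstrap. One presentational nit: once finiteness of $g(T)$ is known, the inequality $g(T)\le C\|\psi(\tau_{k})\|_{H^{1}}+\frac{1}{2}g(T)$ already yields the bound directly, so no "threshold crossing" continuity argument is really needed (and if you do phrase it that way, you need strict inequality below the threshold); also note that the $L^{\infty}(I;L^{2})$ component of $\left\|(1-\Delta)^{\frac{1}{2}}\psi\right\|_{S(I)}$ is finite for free from the hypothesis $\|\psi\|_{L^{\infty}(I;H^{1})}<\infty$, so only the $L^{r_{0}}L^{q_{1}}$ piece ever needed the bootstrap.
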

\begin{proof}[Proof of Lemma \ref{08/08/18/15:53}] 
For the desired result,  it suffices to show that 
\begin{equation}\label{10/03/29/18:09}
\left\|(1-\Delta)^{\frac{1}{2}} \psi \right\|_{L^{r}(I;L^{q})}< \infty
\end{equation}
for all pair $(q,r)$ with $q_{1}<q<2^{*}$ and $\frac{1}{r}=\frac{d}{2}\left( \frac{1}{2}-\frac{1}{q}\right)$ ($(q,r)$ is an admissible pair).
Indeed, the H\"older inequality, together with (\ref{10/03/29/18:06}) and (\ref{10/03/29/18:09}), gives us that 
\begin{equation}\label{10/03/29/18:45}
\left\|(1-\Delta)^{\frac{1}{2}} \psi \right\|_{L^{r_{0}}(I;L^{q_{1}})}
\le 
\left\| \psi \right\|_{L^{\infty}(I;H^{1})}^{1-\frac{q(q_{1}-2)}{q_{1}(q-2)}}
\left\|(1-\Delta)^{\frac{1}{2}} \psi \right\|_{L^{r}(I;L^{q})}^{\frac{q(q_{1}-2)}{q_{1}(q-2)}}
<\infty.
\end{equation}
We shall prove (\ref{10/03/29/18:09}). Let $J$ be a subinterval of $I$ with the property that 
\begin{equation}
\left\| (1-\Delta)^{\frac{1}{2}}\psi \right\|_{L^{r}(J;L^{q})}<\infty. 
\end{equation}
Then, applying the Strichartz estimate to the integral equation associated with $(\ref{08/05/13/8:50})$, we obtain that 
\begin{equation}\label{09/09/24/18:21}
\left\| (1-\Delta)^{\frac{1}{2}}\psi \right\|_{L^{r}(J;L^{q})}
\lesssim 
\left\| \psi(t_{0}) \right\|_{H^{1}}
+
\left\|(1-\Delta)^{\frac{1}{2}} \left( |\psi|^{p-1}\psi \right) \right\|_{L^{r_{0}'}(J;L^{q_{1}'})},
\end{equation}
where the implicit constant depends only on $d$, $p$, $q_{1}$ and $q$. Moreover, it follows from Lemma \ref{08/08/18/15:41} and the H\"older inequality that    \begin{equation}\label{10/03/29/17:55}
\begin{split}
\left\| (1-\Delta)^{\frac{1}{2}}\psi \right\|_{L^{r}(J;L^{q})}
&\lesssim 
\left\| \psi(t_{0}) \right\|_{H^{1}}
+
\left\| (1-\Delta)^{\frac{1}{2}} \psi \right\|_{L^{r_{0}}(J;L^{q_{1}})}\left\| \psi \right\|_{L^{r_{2}}(J;L^{q_{2}})}^{p-1}
\\[6pt]
&\le 
\left\|\psi \right\|_{L^{\infty}(I;H^{1})}
+
\left\|\psi \right\|_{L^{\infty}(J;H^{1})}^{1-\theta}
\left\| (1-\Delta)^{\frac{1}{2}} \psi \right\|_{L^{r}(J;L^{q})}^{\theta}
\left\| \psi \right\|_{X(J)}^{p-1},
\end{split}
\end{equation}
where 
\[
\theta :=\frac{q(q_{1}-2)}{q_{1}(q-2)} \in (0,1)
\]
and the implicit constant depends only on $d$, $p$, $q_{1}$ and $q$. 
This estimate (\ref{10/03/29/17:55}) and the Young inequality show that 
\begin{equation}\label{10/03/29/18:24}
\left\| (1-\Delta)^{\frac{1}{2}}\psi \right\|_{L^{r}(J;L^{q})}
\lesssim 
\left\|\psi \right\|_{L^{\infty}(I;H^{1})}
+
\left\|\psi \right\|_{L^{\infty}(I;H^{1})}
\left\| \psi \right\|_{X(I)}^{\frac{p-1}{1-\theta}},
\end{equation}
where the implicit constant depends only on $d$, $p$, $q_{1}$ and $q$.
Since the right-hand side of (\ref{10/03/29/18:24}) is independent of $J$, the condition (\ref{10/03/29/18:06}) leads us to (\ref{10/03/29/18:09}).  
\end{proof} 

Now, we give the proof of Proposition \ref{08/08/18/16:51}. 
\begin{proof}[Proof of Proposition \ref{08/08/18/16:51}] 
Since the proofs of (i) and (ii) are very similar, we consider (i) only. A starting point is the following formula derived from the integral equation associated to (\ref{08/05/13/8:50}):\begin{equation}\label{10/03/29/21:57}
e^{-\frac{i}{2}t\Delta }\psi(t)-e^{-\frac{i}{2}s \Delta}\psi(s)
=
\frac{i}{2}\int_{s}^{t}e^{-\frac{i}{2}t'\Delta}\left(|\psi|^{p-1}\psi \right)(t')dt'
\quad 
\mbox{for all $s,t \in [0,\infty)$}.
\end{equation}
Applying the Strichartz estimate and Lemma \ref{08/08/18/15:41} to this formula,  we obtain that 
\begin{equation}\label{09/12/14/14:32}
\begin{split}
\left\| e^{-\frac{i}{2}t\Delta }\psi(t)-e^{-\frac{i}{2}s\Delta}\psi(s) \right\|_{H^{1}}
&\le 
\sup_{s'\in [s,t]}
\left\| \int_{s}^{s'}e^{-\frac{i}{2}t'\Delta}\left(|\psi|^{p-1}\psi \right)(t')dt' \right\|_{H^{1}}
\\[6pt]
& \lesssim 
\left\| (1-\Delta)^{\frac{1}{2}}\left( |\psi|^{p-1}\psi \right) 
 \right\|_{L^{r_{0}'}([s,t];L^{q_{1}'})}
\\[6pt]
&\lesssim 
\left\|(1-\Delta)^{\frac{1}{2}} \psi \right\|_{S([0,\infty))}
\left\|\psi \right\|_{X([s,t])}^{p-1},
\end{split}
\end{equation}
where the implicit constant depends only on $d$, $p$ and $q_{1}$. Then, it follows from Lemma \ref{08/08/18/15:53} and the condition (\ref{10/03/29/12:27})  
that the right-hand side of (\ref{09/12/14/14:32}) vanishes as $s,t \to \infty$, so that the completeness of $H^{1}(\mathbb{R}^{d})$ leads us to that there exists $\phi_{+} \in H^{1}(\mathbb{R}^{d})$ such that 
\begin{equation}\label{09/12/14/14:43}
\lim_{t\to \infty}\left\| \psi(t)- e^{\frac{i}{2}t\Delta }\phi_{+} \right\|_{H^{1}}
=
\lim_{t\to \infty}
\left\| e^{-\frac{i}{2}t\Delta }\psi(t)-\phi_{+} \right\|_{H^{1}}
= 0.
\end{equation}
To complete the proof, we shall show the uniqueness of a function $\phi_{+}$ satisfying (\ref{09/12/14/14:43}). Let $\phi_{+}$ and $\phi_{+}'$ be functions in $H^{1}(\mathbb{R}^{d})$ satisfying (\ref{09/12/14/14:43}). Then, we easily verify that   
\begin{equation}\label{10/03/29/22:44}
\left\| \phi_{+} -\phi_{+}' \right\|_{H^{1}}
\le 
\left\| e^{-\frac{i}{2}t\Delta }\psi(t)- \phi_{+} \right\|_{H^{1}}
+
\left\| e^{-\frac{i}{2}t\Delta }\psi(t)- \phi_{+}' \right\|_{H^{1}}
\quad 
\mbox{for all $t>0$}.
\end{equation}
Hence, taking $t\to \infty$, we have by (\ref{09/12/14/14:43}) that $\phi_{+}=\phi_{+}'$ in $H^{1}(\mathbb{R}^{d})$. 
\end{proof}

The following proposition gives us another sufficient condition for the 
 boundedness of $X(I)$ and $S(I)$-norms. 
\begin{proposition}[Small data theory]
\label{08/08/22/20:59}
Assume that $d\ge 1$ and $2+\frac{4}{d}<p+1<2^{*}$. Let $t_{0} \in \mathbb{R}$ and $I$ be an interval whose closure contains $t_{0}$. Then, there exists a positive constant $\delta$, depending only on $d$, $p$ and $q_{1}$,  with the following property: for any $\psi_{0} \in H^{1}(\mathbb{R}^{d})$ satisfying that 
\begin{equation}
\left\| e^{\frac{i}{2}(t-t_{0})\Delta}\psi_{0}\right\|_{X(I)}\le \delta, 
\end{equation}
there exists a unique solution $\psi \in C(I;H^{1}(\mathbb{R}^{d}))$ to the equation (\ref{08/05/13/8:50}) with  $\psi(t_{0})=\psi_{0}$ such that    
\begin{equation}\label{10/03/30/9:44}
\left\| \psi \right\|_{X(I)}< 2
\left\| e^{\frac{i}{2}(t-t_{0})\Delta} \psi_{0} \right\|_{X(I)},
\quad
\left\| (1-\Delta)^{\frac{1}{2}} \psi \right\|_{S(I)}\lesssim 
 \left\| \psi_{0}\right\|_{H^{1}},
\end{equation}
where the implicit constant depends only on $d$, $p$ and $q_{1}$. 
\end{proposition}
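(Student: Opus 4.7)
The plan is to run a Banach fixed point argument for the Duhamel map
\[
\Phi(\psi)(t):=e^{\frac{i}{2}(t-t_{0})\Delta}\psi_{0}+\frac{i}{2}\int_{t_{0}}^{t}e^{\frac{i}{2}(t-t')\Delta}\bigl(|\psi|^{p-1}\psi\bigr)(t')\,dt'
\]
on a complete metric space that simultaneously encodes the $X(I)$-smallness and the $H^{1}$-level Strichartz bound required in \eqref{10/03/30/9:44}. Setting $\eta:=\|e^{\frac{i}{2}(t-t_{0})\Delta}\psi_{0}\|_{X(I)}$ and letting $C_{0}$ denote the Strichartz constant in $\|(1-\Delta)^{\frac{1}{2}}e^{\frac{i}{2}(t-t_{0})\Delta}\psi_{0}\|_{S(I)}\le C_{0}\|\psi_{0}\|_{H^{1}}$, I would work on
\[
E:=\Bigl\{\psi\in X(I):\|\psi\|_{X(I)}\le 2\eta,\ \|(1-\Delta)^{\frac{1}{2}}\psi\|_{S(I)}\le 2C_{0}\|\psi_{0}\|_{H^{1}}\Bigr\},
\]
equipped with the distance $d(\psi_{1},\psi_{2}):=\|\psi_{1}-\psi_{2}\|_{X(I)}$; this is complete.

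The two stability estimates come directly from the machinery set up in Section \ref{08/10/07/9:01}. Lemma \ref{08/07/31/12:02} applied with $v_{1}=\psi$ and $v_{2}=|\psi|^{p-1}$ (so that $\|v_{2}\|_{L^{r_{2}/(p-1)}(I;L^{q_{2}/(p-1)})}=\|\psi\|_{L^{r_{2}}(I;L^{q_{2}})}^{p-1}$) yields
\[
\|\Phi(\psi)\|_{X(I)}\le \eta+C_{1}\|\psi\|_{X(I)}^{p},
\]
while the classical inhomogeneous Strichartz estimate combined with Lemma \ref{08/08/18/15:41} gives
\[
\|(1-\Delta)^{\frac{1}{2}}\Phi(\psi)\|_{S(I)}\le C_{0}\|\psi_{0}\|_{H^{1}}+C_{2}\|(1-\Delta)^{\frac{1}{2}}\psi\|_{S(I)}\|\psi\|_{X(I)}^{p-1}.
\]
Choosing $\delta$ so small that $C_{1}2^{p}\delta^{p-1}<1$ and $C_{2}2^{p-1}\delta^{p-1}<1/2$ forces $\Phi(E)\subset E$ with the strict $X(I)$-inequality $\|\Phi(\psi)\|_{X(I)}<2\eta$. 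For the contraction, I would apply Lemma \ref{08/07/31/12:02} to the difference $|\psi_{1}|^{p-1}\psi_{1}-|\psi_{2}|^{p-1}\psi_{2}$ via the pointwise bound $\bigl||a|^{p-1}a-|b|^{p-1}b\bigr|\lesssim(|a|^{p-1}+|b|^{p-1})|a-b|$ and H\"older's inequality, obtaining
\[
\|\Phi(\psi_{1})-\Phi(\psi_{2})\|_{X(I)}\lesssim\bigl(\|\psi_{1}\|_{X(I)}^{p-1}+\|\psi_{2}\|_{X(I)}^{p-1}\bigr)\|\psi_{1}-\psi_{2}\|_{X(I)}\lesssim \eta^{p-1}\|\psi_{1}-\psi_{2}\|_{X(I)},
\]
which is a strict contraction on $(E,d)$ once $\delta$ is small enough.

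Banach's theorem then produces a unique fixed point $\psi\in E$ solving the integral equation with $\psi(t_{0})=\psi_{0}$, and both bounds in \eqref{10/03/30/9:44} are built into $E$. Continuity $\psi\in C(I;H^{1})$ follows from $(1-\Delta)^{\frac{1}{2}}\psi\in S(I)\subset L^{\infty}(I;L^{2})$ together with absolute continuity of the Duhamel integral in $H^{1}$, itself a consequence of the Sobolev-Strichartz bound on $|\psi|^{p-1}\psi$ derived above. Uniqueness in the natural class $C(I;H^{1})\cap X(I)$ follows by iterating the contraction on short subintervals. The only technical point—rather than a genuine obstacle—is the decoupling of the two constraints defining $E$: the $X(I)$-smallness hypothesis is not tied to $\|\psi_{0}\|_{H^{1}}$, which may be arbitrarily large, so one must verify that the nonlinearity contributes the small factor $\eta^{p-1}$ to \emph{both} estimates uniformly in $\|\psi_{0}\|_{H^{1}}$; this is precisely what Lemmas \ref{08/07/31/12:02} and \ref{08/08/18/15:41} deliver, since the nonlinear term is always multiplied by a power of the $X(I)$-norm.
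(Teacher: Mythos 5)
Your proposal is correct and follows essentially the same route as the paper's proof: a contraction mapping argument on the ball defined by the two constraints in (\ref{10/03/30/9:44}), with Lemma \ref{08/07/31/12:02} providing the $X(I)$-bound and the contraction, and the Strichartz estimate together with Lemma \ref{08/08/18/15:41} providing the $H^{1}$-level $S(I)$-bound. The only cosmetic differences are that the paper contracts with respect to the combined $X(I)\cap S(I)$ distance and cites Kato's results for continuity in $H^{1}$ and unconditional uniqueness, whereas you contract in $X(I)$ alone (which requires the same routine lower-semicontinuity check to see that your set $E$ is complete) and sketch the continuity and uniqueness arguments directly.
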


\begin{remark}\label{10/03/29/12:24}
Proposition \ref{08/08/22/20:59}, with the help of (\ref{08/10/25/23:16}) and 
 Proposition \ref{08/08/18/16:51}, shows a small data scattering, i.e., there exists $\varepsilon>0$ such that for any $\psi_{0} \in H^{1}(\mathbb{R}^{d})$ with $\left\| \psi_{0} \right\|_{H^{1}}<\varepsilon$, the corresponding solution has an asymptotic state in $H^{1}(\mathbb{R}^{d})$. For: It follows from (\ref{08/10/25/23:16}) that there exists $\varepsilon>0$ such that if $\| \psi_{0} \|_{H^{1}}<\varepsilon$, then $\left\| e^{\frac{i}{2}(t-t_{0})\Delta}\psi_{0}\right\|_{X(\mathbb{R})}<\delta$ for $\delta$ found in Proposition \ref{08/08/22/20:59}. This fact, together with Proposition \ref{08/08/18/16:51}, yields the desired result.
\end{remark}

\begin{proof}[Proof of Proposition \ref{08/08/22/20:59}]
We prove this lemma by the standard contraction mapping principle. 
\par 
It follows from the Strichartz estimate that there exists a constant $C_{0}>0$, depending only on $d$, $p$ and $q_{1}$, such that 
\begin{equation}\label{09/10/08/17:18}
\left\| e^{\frac{i}{2}t\Delta} f \right\|_{S(I)}
\le 
C_{0}\left\| f \right\|_{L^{2}}
\quad 
\mbox{for all $f \in L^{2}(\mathbb{R}^{d})$}.
\end{equation}
Using this constant, we define a set $Y(I)$ and a metric $\rho$ there by 
\begin{equation}\label{10/03/30/10:09}
Y(I):=\left\{ u \in C(I;H^{1}(\mathbb{R}^{d})) 
\left| 
\begin{array}{c}
\left\| u \right\|_{X(I)}
\le 
2\left\| e^{\frac{i}{2}(t-t_{0})\Delta}\psi_{0} \right\|_{X(I)}, 
\\[6pt] 
\left\| (1-\Delta)^{\frac{1}{2}} u  \right\|_{S(I)}\le 2C_{0}\left\| \psi_{0}\right\|_{H^{1}}
\end{array} 
\right.
\right\},
\end{equation}
and 
\begin{equation}
\rho(u,v)=\left\| u-v \right\|_{X(I)\cap S(I)},
\quad 
u,v \in Y(I).
\end{equation}
We can verify that $(Y(I),\rho)$ is a complete metric space. 
Moreover, we define  a map $\mathcal{T}$ on this space by  
\begin{equation}
\mathcal{T}(u):=e^{\frac{i}{2}(t-t_{0})\Delta}\psi_{0}+\frac{i}{2}\int_{t_{0}}^{t}e^{\frac{i}{2}(t-t')\Delta}|u(t')|^{p-1}u(t')\,dt',
\quad 
u \in Y(I).
\end{equation}
Now, let $\delta>0$ be a constant to be specified later, and suppose that  
\begin{equation}\label{09/10/08/17:17}
\left\|e^{\frac{i}{2}(t-t_{0})\Delta}\psi_{0} \right\|_{X(I)} 
< 
\delta,
\end{equation}
so that 
\begin{equation}\label{09/12/15/11:23}
\left\|u  \right\|_{X(I)}
\le 
2\delta 
\quad 
\mbox{for all $u \in Y(I)$}. 
\end{equation}
We shall show that $\mathcal{T}$ maps $Y(I)$ into itself for sufficiently small $\delta>0$. Take any $u \in Y(I)$. Then,  we have $\mathcal{T}(u) \in C(I;H^{1}(\mathbb{R}^{d}))$ (see \cite{Kato2}). Moreover, Lemma \ref{08/07/31/12:02} and (\ref{09/12/15/11:23}) yield that 
\begin{equation}\label{08/09/01/11:29}
\begin{split}
\left\|\mathcal{T}(u) \right\|_{X(I)}
&\le   
\left\|e^{\frac{i}{2}(t-t_{0})\Delta} \psi_{0} \right\|_{X(I)}
+ C_{1} \left\|u  \right\|_{X(I)}^{p}
\\[6pt]
&\le 
\left\|e^{\frac{i}{2}(t-t_{0})\Delta} \psi_{0} \right\|_{X(I)}
+2^{p}C_{1}\delta^{p-1}
\left\|e^{\frac{i}{2}(t-t_{0})\Delta} \psi_{0} \right\|_{X(I)}
\end{split}
\end{equation}
for some constant $C_{1}>0$ depending only on $d$, $p$ and $q_{1}$. Hence, if we take $\delta$ so small that 
\begin{equation}\label{10/03/30/10:49}
2^{p}C_{1}\delta^{p-1}\le 1,
\end{equation}
then 
\begin{equation}\label{10/03/30/11:05}
\left\|\mathcal{T}(u) \right\|_{X(I)}\le 2\left\|e^{\frac{i}{2}(t-t_{0})\Delta} \psi_{0} \right\|_{X(I)}.
\end{equation}
On the other hand, it follows from the Strichartz estimate and Lemma \ref{08/08/18/15:41} that 
\begin{equation}\label{10/03/30/10:54}
\begin{split}
\left\|(1-\Delta)^{\frac{1}{2}}\mathcal{T}(u) \right\|_{S(I)}
&\le C_{0}\left\| \psi_{0} \right\|_{H^{1}}   
+
C_{2} \left\| (1-\Delta)^{\frac{1}{2}} u \right\|_{S(I)}
\left\|u  \right\|_{X(I)}^{p-1}
\\[6pt]
&\le 
C_{0}\left\| \psi_{0} \right\|_{H^{1}}
+ 
2^{p}C_{2}  C_{0} \delta^{p-1} \left\| \psi_{0} \right\|_{H^{1}}
\end{split}
\end{equation}
for some constant  $C_{2}>0$ depending only on $d$, $p$ and $q_{1}$. If $\delta$ satisfies that 
\begin{equation}\label{10/03/30/10:58}
2^{p}C_{2} \delta^{p-1} \le 1, 
\end{equation}
then we have from (\ref{10/03/30/10:54}) that 
\begin{equation}\label{10/03/30/10:59}
\left\| (1-\Delta)^{\frac{1}{2}} \mathcal{T}(u) \right\|_{S(I)}
\le 
2C_{0}\left\|\psi_{0} \right\|_{H^{1}}.
\end{equation}
Thus, if we take $\delta$ satisfying (\ref{10/03/30/10:49}) and (\ref{10/03/30/10:58}), then $\mathcal{T}$ becomes a self-map on $Y(I)$.  
\par 
Next, we shall show that $\mathcal{T}$ is a contraction map on $(Y(I),\rho)$ for sufficiently small $\delta$. Lemma \ref{08/07/31/12:02}, together with (\ref{09/09/27/21:10}) and (\ref{09/12/15/11:23}), gives us that  
\begin{equation}\label{10/03/30/11:28}
\begin{split}
\left\|\mathcal{T}(u) -\mathcal{T}(v)\right\|_{X(I)}
& \le  
\left\| \int_{t_{0}}^{t} e^{\frac{i}{2}(t-t')\Delta} 
\left( |u|^{p-1}u -|v|^{p-1}v \right)(t')dt' \right\|_{X(I)}
\\[6pt]
&\le   
C_{3} \left( \left\|u  \right\|_{X(I)}^{p-1}+ \left\| v \right\|_{X(I)}^{p-1}\right)\left\| u-v \right\|_{X(I)}
\\[6pt]
&\le 
2^{p}C_{3} \delta^{p-1}\left\| u-v \right\|_{X(I)} 
\quad 
\mbox{for all $u,v \in Y(I)$},
\end{split}
\end{equation}
where $C_{3}$ is some positive constant depending only on $d$, $p$ and $q_{1}$.  Similarly, we have by Lemma \ref{08/08/18/15:41} that   
\begin{equation}\label{10/03/30/11:39}
\left\|\mathcal{T}(u) -\mathcal{T}(v)\right\|_{S(I)}
\le 2^{p}C_{4} \delta ^{p-1} \left\| u-v \right\|_{S(I)}
\end{equation}
for some constant $C_{4}>0$ depending only on $d$, $p$ and $q_{1}$. 
Hence, we find that $\mathcal{T}$ becomes a contraction map in $(Y(I),\rho)$ for  sufficiently small $\delta$. Then, the contraction mapping principle shows that there exists a solution $\psi \in Y(I)$ to the equation (\ref{08/05/13/8:50}) with $\psi(t_{0})=\psi_{0}$, which, 
 together with the uniqueness of solutions in $C(I;H^{1}(\mathbb{R}^{d}))$ (see \cite{Kato1995}), completes the proof. 
\end{proof}

At the end of this subsection, we give the proof of Proposition \ref{09/05/18/10:43}. 
\begin{proof}[Proof of Theorem \ref{09/05/18/10:43}]
Let $\psi$ be a global solution to (\ref{08/05/13/8:50}) with an initial datum $\psi_{0} \in H^{1}(\mathbb{R}^{d})$ at $t=0$.
\\
{\it We shall prove that (i) implies (ii)}: Suppose that (i) holds. 
 We first show the uniform boundedness:
\begin{equation}\label{10/04/29/15:46}
\sup_{t\in [0,\infty)}\left\| \psi(t)\right\|_{H^{1}}<\infty.
\end{equation}
The condition (i) yields that 
\begin{equation}\label{10/04/29/15:37}
\sup_{t\ge T} \left\| \psi(t)\right\|_{L^{p+1}}\le 1
\quad 
\mbox{for some $T>0$}.
\end{equation}
Combining this with the energy conservation law (\ref{08/05/13/9:03}), we obtain that  
\begin{equation}\label{10/04/29/15:32}
\left\| \nabla \psi(t) \right\|_{L^{2}}^{2}
=
\mathcal{H}(\psi(t))-\left\| \psi(t)\right\|_{L^{p+1}}^{p+1}
\le 
\mathcal{H}(\psi_{0})+1 
\quad 
\mbox{for all $t\ge T$}.
\end{equation}
Hence, this estimate and the mass conservation law (\ref{08/05/13/8:59}) give 
 (\ref{10/04/29/15:46}). 
\par 
Now, we shall prove (ii). We employ the H\"older inequality and the Sobolev embedding to obtain that 
\begin{equation}\label{10/05/31/8:56}
\left\| \psi(t) \right\|_{L^{q}}
\lesssim 
\left\| \psi(t)\right\|_{L^{p+1}}^{1-\theta}
\left\| \psi(t) \right\|_{H^{1}}^{\theta}
\quad 
\mbox{for all $t \in [0,\infty)$ and $q \in (p+1,2^{*})$},
\end{equation}
where $\theta$ is some constant in $(0,1)$, and the implicit constant is independent of $t$. 
Hence, the condition (i), together with (\ref{10/04/29/15:46}), shows that 
\begin{equation}\label{10/04/29/15:48}
\lim_{t\to \infty}\left\| \psi(t) \right\|_{L^{q}}=0
\quad 
\mbox{for all $q \in (p+1,2^{*})$}.
\end{equation}
On the other hand, we have by the H\"older inequality and the mass conservation law (\ref{08/05/13/8:59}) that 
\begin{equation}\label{10/04/29/15:57}
\lim_{t\to \infty}
\left\| \psi(t)\right\|_{L^{q}}
\le 
\left\| \psi_{0} \right\|_{L^{2}}^{1-\frac{(q-2)(p+1)}{q(p-1)}}
\lim_{t\to \infty}
\left\| \psi(t) \right\|_{L^{p+1}}^{\frac{(q-2)(p+1)}{q(p-1)}}
=0
\quad
\mbox{for all $q \in (2,p+1)$}.
\end{equation}
\\
\noindent 
{\it We shall prove (ii) implies (iii)}: We introduce a number $r_{2,0}$ such that $(q_{2}, r_{2,0})$ is admissible, i.e.,
\begin{equation}\label{10/02/24/16:35}
\frac{1}{r_{2,0}}=\frac{d}{2}\left( \frac{1}{2}-\frac{1}{q_{2}}\right),
\end{equation}
where $q_{2}$ is the number defined in (\ref{09/12/05/16:15}). 
 Then, we have that 
\begin{equation}\label{10/02/24/16:37}
2< r_{2,0} < r_{2}<\infty
\end{equation}
for $r_{2}$ defined in (\ref{09/12/05/16:15}). Moreover, we define a number $q_{e}$ by $q_{e}=2^{*}$ if $d\ge 3$ and $q_{e}=
 2q_{1}(>q_{2})$ if $d=1,2$, and a space $\bar{S}(I)$ by  
\begin{equation}\label{10/02/24/17:02}
\bar{S}(I) = \bigcap_{{(q,r):admissible }\atop {2\le q\le q_{e}}}L^{r}(I;L^{q})
\quad 
\mbox{ for an interval $I$}.
\end{equation}
Then, it follows from the Strichartz estimate and Lemma \ref{08/08/18/15:41} 
 that 
\begin{equation}\label{10/02/24/16:42}
\begin{split}
&\left\|(1-\Delta)^{\frac{1}{2}}\psi \right\|_{\bar{S}([t_{0},t_{1}))}
\\[6pt]
&\lesssim 
\left\| \psi(t_{0}) \right\|_{H^{1}}
+
\left\| (1-\Delta)^{\frac{1}{2}}|\psi|^{p-1}\psi \right\|_{L^{r_{0}'}([t_{0},t_{1});L^{q_{1}'})}
\\[6pt]
&\le 
\left\|\psi \right\|_{L^{\infty}([0,\infty);H^{1})}
+
\left\|(1-\Delta)^{\frac{1}{2}}\psi  \right\|_{L^{r_{0}}([t_{0},t_{1});L^{q_{1}})}
\left\|\psi  \right\|_{L^{r_{2}}([t_{0},t_{1});L^{q_{2}})}^{p-1}
\\[6pt]
&\le 
\left\|\psi \right\|_{L^{\infty}([0,\infty);H^{1})}
+
\sup_{t\ge t_{0}}\left\| \psi(t)\right\|_{L^{q_{2}}}^{\frac{p-1}{r_{2}}(r_{2}-r_{2,0})}
\left\|(1-\Delta)^{\frac{1}{2}}\psi  \right\|_{L^{r_{0}}([t_{0},t_{1});L^{q_{1}})}
\left\|\psi  \right\|_{L^{r_{2,0}}([t_{0},t_{1});L^{q_{2}})}^{\frac{p-1}{r_{2}}r_{2,0}}
\\[6pt]
&\le 
\left\|\psi \right\|_{L^{\infty}([0,\infty);H^{1})}
+
\sup_{t\ge t_{0}}\left\| \psi(t)\right\|_{L^{q_{2}}}^{\frac{p-1}{r_{2}}(r_{2}-r_{2,0})}
\left\|(1-\Delta)^{\frac{1}{2}} \psi  \right\|_{\bar{S}([t_{0},t_{1});L^{q_{1}})}^{1+\frac{p-1}{r_{2}}r_{2,0}},
\end{split}
\end{equation}
where the implicit constant is independent of $t_{0}$ and $t_{1}$. 
Note here that since the condition (ii) includes (i), we have that 
 $\|\psi\|_{L^{\infty}([0,\infty);H^{1})}<\infty$ (see (\ref{10/04/29/15:46}) above). Hence, the estimate (\ref{10/02/24/16:42}), together with the condition (ii), shows that
\begin{equation}\label{10/02/24/17:15}
\left\| (1-\Delta)^{\frac{1}{2}}\psi \right\|_{\bar{S}([t_{0},+\infty))}< \infty\quad 
\mbox{for all sufficiently large $t_{0}>0$},
\end{equation}
so that (iii) holds. 
\\
\\
{\it We shall prove that (iii) implies (iv)}: The estimate (\ref{08/10/25/23:16}) immediately 
 gives us the desired result.
\\
\\
{\it We shall prove that (iv) implies (v)}: This follows from Proposition \ref{08/08/18/16:51}. 
\\
\\
{\it We shall prove that (v) implies (i)}: We define $r$ by  
\begin{equation}\label{10/03/30/11:57}
\frac{1}{r}=\frac{d}{2} \left( \frac{1}{2}-\frac{1}{p+1}\right),
\end{equation}
so that $(p+1, r)$ is an admissible pair. Then, the Strichartz estimate
yields that 
\begin{equation}\label{10/03/30/11:59}
\left\|e^{\frac{i}{2}t\Delta}\phi_{+} \right\|_{L^{r}(\mathbb{R};L^{p+1})} 
\lesssim 
\left\| \phi_{+} \right\|_{L^{2}},
\end{equation}
where the implicit constant depends only on $d$ and $p$. Hence, 
 we have that  
\begin{equation}\label{08/11/16/15:23}
\liminf_{t\to \infty} 
\left\| e^{\frac{i}{2}t\Delta}\phi_{+} \right\|_{L^{p+1}}
=0.
\end{equation}
Moreover, it follows from the existence of an asymptotic state $\phi_{+}$ and (\ref{08/11/16/15:23})  that 
\begin{equation}\label{10/03/30/12:13}
\liminf_{t\to \infty}\left\| \psi (t) \right\|_{L^{p+1}}
\le 
\lim_{t\to \infty} \left\| \psi(t) -e^{\frac{i}{2}t\Delta}\phi_{+} \right\|_{L^{p+1}}
+ 
\liminf_{t\to \infty}\left\| e^{\frac{i}{2}t\Delta}\phi_{+}\right\|_{L^{p+1}}
=0.
\end{equation}
Using the existence of an asymptotic state again, we also obtain that
\begin{equation}\label{09/04/30/11:54}
\lim_{t\to \infty}\left\| \nabla \psi(t) \right\|_{L^{2}}
=
\lim_{t\to \infty}\left\| \nabla e^{-\frac{i}{2}t\Delta} \psi(t) 
\right\|_{L^{2}}
=\left\| \nabla \phi_{+}\right\|_{L^{2}}.
\end{equation}
This formula (\ref{09/04/30/11:54}) and the energy conservation law (\ref{08/05/13/9:03}) lead us to that    
\begin{equation}\label{10/03/30/12:09}
\lim_{t\to \infty}\left\| \psi(t) \right\|_{L^{p+1}}^{p+1}
=
-\frac{p+1}{2} \mathcal{H}(\psi(0))
+\frac{p+1}{2} \left\|\nabla \phi_{+} \right\|_{L^{2}}^{2}.
\end{equation}
In particular, $\displaystyle{\lim_{t\to \infty}\left\| \psi(t) \right\|_{L^{p+1}}^{p+1}}$ exists. Hence, the desired result (i) follows from (\ref{10/03/30/12:13}). 
\end{proof}

\subsection{Long time perturbation theory}
\label{09/05/30/21:16}
We will employ the so-called ``Born type approximation'' to prove that the solutions starting from $PW_{+}$ have asymptotic states (see Section \ref{08/10/03/15:11}). The following proposition plays a crucial role there. 

\begin{proposition}[Long time perturbation theory]
\label{08/08/05/14:30}
Assume that $d\ge 1$ and $2+\frac{4}{d}<p+1<2^{*}$. Then, for any $A>1$, there exists $\varepsilon>0$, depending only on $A$, $d$, $p$ and $q_{1}$, such that 
 the following holds: Let $I$ be an interval, and $u$ be a function in $C(I; H^{1}(\mathbb{R}^{d})) $ such that 
\begin{align}
\label{10/04/02/18:14}
&\left\| u \right\|_{X(I)}\le A,
\\[6pt]
\label{10/04/02/18:21}
&\left\| 2i\partial_{t}u+\Delta u+|u|^{p-1}u \right\|_{L^{\widetilde{r}_{1}'}(I;L^{q_{1}'})}
\le \varepsilon.
\end{align}
If $\psi\in C(\mathbb{R};H^{1})$ is a global solution to the equation (\ref{08/05/13/8:50}) and satisfies that 
\begin{equation}
\label{09/09/15/18:03}
\left\| e^{\frac{i}{2}(t-t_{1})\Delta} (\psi(t_{1})-u(t_{1}))\right\|_{X(I)} \le\varepsilon 
\quad 
\mbox{for some $t_{1} \in I$},
\end{equation}
then we have   
\begin{equation}\label{10/04/06/14:41}
\left\| \psi \right\|_{X(I)} \lesssim 1,
\end{equation}
where the implicit constant depends only on $d$, $p$, $q_{1}$ and $A$. 
\end{proposition}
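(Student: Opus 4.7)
The strategy is a standard long-time perturbation / bootstrap argument: decompose $I$ into finitely many subintervals on which the approximate solution $u$ has small $X$-norm, and iterate a short-time estimate on $w := \psi - u$ across them. The crucial input will be the Strichartz-type estimates of Lemma \ref{08/07/31/12:02}, especially the bilinear form (\ref{10/03/29/11:07}), which is tailored to the spaces $X(I)$ and to the dual exponent $(\widetilde{r}_{1}',q_{1}')$ appearing in the hypothesis on the error.

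First I would write down the equation satisfied by $w=\psi-u$, namely
\[
2i\partial_{t}w + \Delta w = -\bigl(|\psi|^{p-1}\psi - |u|^{p-1}u\bigr) - e,
\qquad
e := 2i\partial_{t}u + \Delta u + |u|^{p-1}u,
\]
and its Duhamel form anchored at any time $s\in I$:
\[
w(t) = e^{\frac{i}{2}(t-s)\Delta}w(s) + \tfrac{i}{2}\!\int_{s}^{t}\!e^{\frac{i}{2}(t-t')\Delta}\bigl[(|\psi|^{p-1}\psi-|u|^{p-1}u)(t') + e(t')\bigr]\,dt'.
\]
Using the pointwise bound $\bigl||\psi|^{p-1}\psi - |u|^{p-1}u\bigr|\lesssim (|u|^{p-1}+|w|^{p-1})|w|$, together with (\ref{10/03/29/11:06}) applied to the error term $e$ and (\ref{10/03/29/11:07}) applied to the two nonlinear difference pieces (with $v_{1}=w$ and $v_{2}$ equal to $|u|^{p-1}$ or $|w|^{p-1}$, noting that $\||v|^{p-1}\|_{L^{r_{2}/(p-1)}L^{q_{2}/(p-1)}} = \|v\|_{L^{r_{2}}L^{q_{2}}}^{p-1}$), I get, on any subinterval $J\subset I$ containing $s$,
\[
\|w\|_{X(J)} \le \bigl\|e^{\frac{i}{2}(t-s)\Delta}w(s)\bigr\|_{X(J)} + C_{0}\bigl(\|u\|_{X(J)}^{p-1} + \|w\|_{X(J)}^{p-1}\bigr)\|w\|_{X(J)} + C_{0}\|e\|_{L^{\widetilde{r}_{1}'}(J;L^{q_{1}'})},
\]
where $C_{0}=C_{0}(d,p,q_{1})$.

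Next, choose $\eta>0$ so small that $C_{0}\eta^{p-1}\le 1/4$, and partition $I$ into $N=N(A,\eta)$ consecutive subintervals $I_{1},\ldots,I_{N}$ (with $I_{j}$ having a common endpoint $\tau_{j}$) such that $\|u\|_{X(I_{j})}\le \eta$ for every $j$; since $\|u\|_{X(I)}\le A$, the integer $N$ is controlled in terms of $A$ and $\eta$ only. Set
\[
\varepsilon_{j} := \bigl\|e^{\frac{i}{2}(t-\tau_{j-1})\Delta}w(\tau_{j-1})\bigr\|_{X(I_{j})}.
\]
On $I_{j}$ I run a continuity/bootstrap argument: assuming provisionally that $\|w\|_{X(I_{j})}\le 2\eta$ and that $C_{0}(2\eta)^{p-1}\le 1/4$ (which is ensured by shrinking $\eta$), the displayed inequality absorbs the nonlinear terms into the left-hand side and yields
\[
\|w\|_{X(I_{j})} \le 2\varepsilon_{j} + 2C_{0}\varepsilon.
\]
Passing from $\tau_{j-1}$ to $\tau_{j}$ in the free evolution, Lemma \ref{08/07/31/12:02} gives a linear recursion of the form
\[
\varepsilon_{j+1} \le \varepsilon_{j} + C_{1}\bigl(\|u\|_{X(I_{j})}^{p-1} + \|w\|_{X(I_{j})}^{p-1}\bigr)\|w\|_{X(I_{j})} + C_{1}\varepsilon
\le \varepsilon_{j} + C_{2}(\varepsilon_{j}+\varepsilon)
\]
for some $C_{1},C_{2}=C(d,p,q_{1})$. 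Iterating this $N$ times and recalling $\varepsilon_{1}\le\varepsilon$ (the hypothesis with $t_{1}$, after replacing the anchor by the left endpoint of $I_{1}$, which only costs a factor absorbed into $C_{2}$), I obtain
\[
\max_{1\le j\le N}\varepsilon_{j} \le (1+C_{2})^{N}\varepsilon.
\]
Choosing $\varepsilon = \varepsilon(A,d,p,q_{1})$ so small that $(1+C_{2})^{N}\varepsilon < \eta$ makes the bootstrap hypothesis $\|w\|_{X(I_{j})}\le 2\eta$ self-consistent on every subinterval, and summing the subinterval bounds gives $\|w\|_{X(I)}\lesssim N\eta$, hence $\|\psi\|_{X(I)}\le \|u\|_{X(I)} + \|w\|_{X(I)}\lesssim A$.

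The main obstacle I anticipate is \textbf{not} the individual short-time estimate, which is routine, but rather keeping the dependence of $\varepsilon$ clean: because the iteration produces an amplification factor $(1+C_{2})^{N}$ with $N$ depending only on $A$, one must verify that the partition of $I$ into pieces where $\|u\|_{X(I_{j})}\le\eta$ can be done with $N$ controlled purely by $A$ and $\eta$ (hence by $A$, $d$, $p$, $q_{1}$). This is where the specific structure $X(I)=L^{r_{1}}L^{q_{1}}\cap L^{r_{2}}L^{q_{2}}$ matters: both $r_{1}$ and $r_{2}$ are finite, so absolute continuity of the norms $\|u\|_{L^{r_{j}}L^{q_{j}}}^{r_{j}}$ in the time variable produces such a partition with $N\lesssim (A/\eta)^{\max(r_{1},r_{2})}$. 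A second, minor subtlety is that the hypothesis anchors the linear approximation at an arbitrary $t_{1}\in I$; this is handled by splitting $I$ into its parts to the right and to the left of $t_{1}$ and running the iteration in each direction separately, which is legitimate since all Strichartz estimates used are symmetric in time.
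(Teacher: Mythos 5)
Your proposal follows essentially the same route as the paper's proof: partition $I$ into $N=N(A,\eta)$ subintervals on which $\|u\|_{X}$ is small, write the Duhamel formula for $w=\psi-u$, apply the Strichartz-type estimates of Lemma \ref{08/07/31/12:02} together with a continuity argument on each piece, and iterate across the pieces, choosing $\varepsilon$ exponentially small in $N$. The only adjustment needed is in your recursion for $\varepsilon_{j}$: it should be defined as $\bigl\|e^{\frac{i}{2}(t-\tau_{j-1})\Delta}w(\tau_{j-1})\bigr\|_{X(I)}$ (the norm over all of $I$, as in the paper), not over $I_{j}$ alone, since the free evolution of $w(\tau_{j-1})$ restricted to $I_{j+1}$ is not controlled by its restriction to $I_{j}$; with that one-line change your induction closes exactly as written.
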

\begin{remark}
We can find from the proof below that Proposition \ref{08/08/05/14:30} still holds valid if we replace $\psi$ with a function $\widetilde{\psi} \in C(\mathbb{R};H^{1}(\mathbb{R}^{d}))$ satisfying that\begin{equation}\label{10/04/02/18:26}
\left\| 
2i\partial_{t}\widetilde{\psi}+\Delta \widetilde{\psi}
+\big|\widetilde{\psi}\big|^{p-1}\widetilde{\psi} 
\right\|_{L^{\widetilde{r}_{1}'}(I;L^{q_{1}'})}\le \varepsilon.
\end{equation}
\end{remark}
\begin{proof}[Proof of Proposition \ref{08/08/05/14:30}]
Let $u$ be a function in $C(I;H^{1}(\mathbb{R}^{d})$ satisfying (\ref{10/04/02/18:14}) and (\ref{10/04/02/18:21}) for $\varepsilon>0$ to be chosen later. Moreover, let $\psi$ be a global solution to the equation (\ref{08/05/13/8:50}) satisfying (\ref{09/09/15/18:03}). 
\par 
For simplicity, we suppose that $I=[t_{1},\infty)$ in what follows. The other cases are treated in a way similar to this case. 
\par 
We have from the condition (\ref{10/04/02/18:14}) that: for any $\delta >0$,  there exist disjoint intervals $I_{1},\ldots, I_{N}$, with a form $I_{j}=[t_{j},t_{j+1})$ ($t_{N+1}=\infty$), such that 
\begin{equation}\label{10/04/02/13:33}
I=\bigcup_{j=1}^{N}I_{j},
\end{equation}
and 
\begin{equation}\label{10/04/03/13:13}
\left\| u \right\|_{X(I_{j})}\le \delta 
\quad 
\mbox{for all $j=1,\ldots, N$},
\end{equation}
where $N$ is some number depending only on $\delta$, $A$, $d$, $p$ and $q_{1}$. \par   
 We put 
\begin{equation}\label{10/04/06/18:15}
w:=\psi-u,
\qquad 
e:=2i\partial_{t}u+\Delta u+|u|^{p-1}u,
\end{equation}
Then, $w$ satisfies that  
\begin{align}
\label{08/08/05/15:12}
&2i\partial_{t}w + \Delta w + |w+u|^{p-1}(w+u)-|u|^{p-1}u+e=0,
\\[6pt]
\label{08/08/06/2:37}
&\left\|e^{\frac{i}{2}(t-t_{1})\Delta} w(t_{1}) \right\|_{X(I)}
\le \varepsilon.
\end{align}
We consider the integral equations associated with (\ref{08/08/05/15:12}):
\begin{equation}\label{08/08/05/15:52}
w(t)=e^{\frac{i}{2}(t-t_{j})\Delta}w(t_{j})
 +\frac{i}{2}\int_{t_{j}}^{t}e^{\frac{i}{2}(t-t')\Delta}W(t')\,dt'
,
\quad 
j=1,\ldots, N,
\end{equation}
where 
\begin{equation}\label{10/04/03/14:00}
W:= |w+u|^{p-1}(w+u)-|u|^{p-1}u-e.
\end{equation}
It follows from the inhomogeneous Strichartz estimate (\ref{10/03/29/11:06}), 
the elementary inequality (\ref{09/09/27/21:10}) and the H\"older inequality 
that
\begin{equation}\label{08/08/16:30}
\begin{split}
&\left\|\int_{t_{j}}^{t}e^{\frac{i}{2}(t-t')\Delta}W(t')\,dt'  
\right\|_{X([t_{j}, t_{j}'))} 
\\[6pt]
&\le 
\mathscr{C}
\left\{
\left\| w \right\|_{L^{r_{1}}([t_{j}, t_{j}');L^{q_{1}})}
\left\|u \right\|_{L^{r_{2}}([t_{j}, t_{j}');L^{q_{2}})}^{p-1}
+ 
\left\| w \right\|_{L^{r_{1}}([t_{j}, t_{j}');L_{x}^{q_{1}})}
\left\|w \right\|_{L^{r_{2}}([t_{j}, t_{j}');L^{q_{2}})}^{p-1}
\right.
\\[6pt]
&\qquad \qquad \qquad 
+
\left.
\left\| e\right\|_{L^{\widetilde{r}_{1}'}([t_{j}, t_{j}');L^{q_{1}'})}
\right\}
\qquad 
\mbox{for all $j=1,\ldots, N$ and $t_{j}' \in I_{j}$}
,
\end{split}
\end{equation}
where $\mathscr{C}$ is some constant depending only on $d$, $p$ and $q_{1}$. 
Hence, combining this estimate with (\ref{10/04/02/18:21}) and (\ref{10/04/03/13:13}), we obtain that 
\begin{equation}
\label{08/08/05/17:32}
\begin{split}
\left\| w \right\|_{X([t_{j}, t_{j}'))} 
 \le 
\left\|e^{\frac{i}{2}(t-t_{j})\Delta} w(t_{j}) \right\|_{X(I_{j})}
+
\mathscr{C}
\left\{
\delta^{p-1} \left\| w \right\|_{X([t_{j}, t_{j}'))}
+ \left\|w \right\|_{X([t_{j}, t_{j}'))}^{p}
+\varepsilon
\right\}
&
\\[6pt]
\mbox{for all $j=1,\ldots, N$ and $t_{j}' \in I_{j}$}.
&
\end{split}
\end{equation}
Now, we fix a constant $\delta$ such that 
\begin{equation}
\label{08/08/05/17:39}
\delta < \left( \frac{1}{4(1+2\mathscr{C})}\right)^{\frac{1}{p-1}},
\end{equation}
so that the number $N$ is also fixed. Then, we shall show that 
\begin{align}
\label{08/08/06/16:52}
\left\| e^{\frac{i}{2}(t-t_{j})\Delta}w(t_{j}) \right\|_{X(I)}
&\le 
(1 + 2^{j}\mathscr{C}) \varepsilon
\qquad 
\mbox{for all $j=1,\ldots, N$}, 
\\[6pt]
\label{09/02/13/20:16}
\left\| w \right\|_{X(I_{j})}
&\le 
\left( 1 +2^{j+1} \mathscr{C} \right)\varepsilon 
\qquad
\mbox{for all $j=1,\ldots, N$},
\end{align}
provided that  
\begin{equation}\label{08/08/06/16:48}
\varepsilon  
< 
\left( \frac{1}{4(1+2^{N+1} \mathscr{C})^{p}} \right)^{\frac{1}{p-1}}.
\end{equation}

We first consider the case $j=1$. The estimate (\ref{08/08/06/16:52}) for $j=1$ obviously follows from (\ref{08/08/06/2:37}). We put 
\begin{equation}\label{10/04/03/19:15}
\bar{t}_{1}
=
\sup \left\{ t_{1}\le t \le t_{2} \bigm| \left\|
w \right\|_{X([t_{1},t])}\le (1+4\mathscr{C}) \varepsilon \right\}.
\end{equation}
Then, the estimate (\ref{08/08/05/17:32}), together with (\ref{08/08/06/2:37}), shows that $\bar{t}_{1}>t_{1}$. Supposing the contrary that $\bar{t}_{1}<t_{2}$, we have from the continuity of $w$ that 
\begin{equation}\label{10/04/03/16:39} 
\left\| w \right\|_{X([t_{1},\bar{t}_{1}])} 
=(1+4\mathscr{C})\varepsilon.
\end{equation} 
However, (\ref{08/08/05/17:32}), together with (\ref{08/08/06/16:52}) with $j=1$ and (\ref{10/04/03/16:39}), leads us to that 
\begin{equation}\label{10/04/03/16:41}
\begin{split}
\left\| w \right\|_{X([t_{1},\bar{t}_{1}])} 
&\le 
\left(1+2\mathscr{C}\right) \varepsilon
+ \mathscr{C} 
\left\{  
\delta^{p-1}(1+4\mathscr{C})\varepsilon 
+
(1+4\mathscr{C})^{p}\varepsilon^{p}
+
\varepsilon
\right\}
\\[6pt]
&
<\varepsilon+4\mathscr{C}\varepsilon.
\end{split}
\end{equation}
This contradicts (\ref{10/04/03/16:39}). Thus, it must hold that $\bar{t}_{1}=t_{2}$ and 
 therefore (\ref{09/02/13/20:16}) holds for $j=1$.
\par 
We shall prove the general case $j=n$ ($2\le n\le N$), provided that both (\ref{08/08/06/16:52}) and (\ref{09/02/13/20:16}) hold  for all $j=1, \ldots, n-1$. 
\par 
Multiplying the integral equation (\ref{08/08/05/15:52}) with $j=n$ and $t=t_{n}$ by $e^{\frac{i}{2}(t-t_{n})\Delta}$, we obtain that      
\begin{equation}\label{08/08/05/18:09}
e^{\frac{i}{2}(t-t_{n})\Delta}w(t_{n})
=
e^{\frac{i}{2}(t-t_{n-1})\Delta}w(t_{n-1})
+\frac{i}{2}\int_{t_{n-1}}^{t_{n}}e^{\frac{i}{2}(t-t')\Delta}\chi_{I_{n-1}}(t')
W (t')\,dt',
\end{equation}
where $\chi_{I_{n-1}}$ is the characteristic function on $I_{n-1}$. 
 Then, the same estimate as (\ref{08/08/05/17:32}) yields that 
\begin{equation}\label{08/08/05/18:14}
\begin{split}
\left\| e^{\frac{i}{2}(t-t_{n})\Delta}w(t_{n}) \right\|_{X(I)}
&\le 
\left\|e^{\frac{i}{2}(t-t_{n-1})\Delta}w(t_{n-1})  \right\|_{X(I)}
\\[6pt]
&\qquad 
+
\mathscr{C}
\left\{
\delta^{p-1} \left\| w \right\|_{X(I_{n-1})}
+\left\| w \right\|_{X(I_{n-1})}^{p}
+\varepsilon
\right\}
\end{split}
\end{equation}
for the same constant $\mathscr{C}$ found in (\ref{08/08/05/17:32}). This estimate, together with  the inductive hypothesis, gives us that  
\begin{equation}\label{10/04/03/20:15}
\begin{split}
\left\| e^{\frac{i}{2}(t-t_{n})\Delta}w(t_{n}) \right\|_{X(I)}
&< 
(1 +2^{n-1}\mathscr{C}) \varepsilon 
+
\mathscr{C}
\left\{ 
2^{n-3}\varepsilon + \frac{1}{4}\varepsilon + \varepsilon
\right\}
\\[6pt]
&\le 
(1 + 2^{n}\mathscr{C})\varepsilon,
\end{split}
\end{equation}
which shows that (\ref{08/08/06/16:52}) holds for $j =n$. 
\par 
Next, we consider (\ref{09/02/13/20:16}) for $j=n$. As well as the case $j=1$, we put  
\begin{equation}\label{10/04/04/16:42}
\bar{t}_{n}
=
\sup \left\{ t_{n}\le t \le t_{n+1} \bigm| \left\|
w \right\|_{X([t_{n},t])}\le (1+2^{n+1}\mathscr{C}) \varepsilon \right\}.
\end{equation}
The estimate (\ref{08/08/05/17:32}), together with (\ref{10/04/03/20:15}), shows that 
$t_{n}<\bar{t}_{n}$. We suppose the contrary that $\bar{t}_{n}<t_{n+1}$, 
 so that   
\begin{equation}\label{10/04/03/20:34}
\left\| w \right\|_{X([t_{n},\bar{t}_{n}])}
=
\left( 1 +2^{n+1} \mathscr{C} \right) \varepsilon .
\end{equation} 
Then, combining (\ref{08/08/05/17:32}) with (\ref{10/04/03/20:15}) and (\ref{10/04/03/20:34}), we obtain that 
\begin{equation}\label{10/04/03/20:37}
\left\| w \right\|_{X([t_{n},\bar{t}_{n}])} 
< \left( 1+ 2^{n} \mathscr{C} \right) \varepsilon
+\mathscr{C} 
\left\{ 
2^{n-2} \varepsilon 
+ 
\frac{1}{4}\varepsilon
+
\varepsilon 
\right\}
 \le 
\left( 1+ 2^{n+1}\mathscr{C} \right) \varepsilon,
\end{equation}
which contradicts (\ref{10/04/03/20:34}). Hence, we have proved 
 (\ref{08/08/06/16:52}) and (\ref{09/02/13/20:16}). 
\par 
Now, it follows from (\ref{09/02/13/20:16}) and (\ref{08/08/06/16:48}) 
that 
\begin{equation}\label{10/04/03/21:43}
\left\| w \right\|_{L^{r_{j}}(I;L^{q_{j}})}^{r_{j}}
= 
\sum_{n=1}^{N}\left\| w \right\|_{L^{r_{j}}(I_{n};L^{q_{j}})}^{r_{j}}
\\[6pt]
\le 
\sum_{n=1}^{N}\left( 1 +2^{N+1} \mathscr{C}\right)^{r_{j}} \varepsilon^{r_{j}}
\le 
\frac{N}{4^{r_{j}}}
\le N^{r_{j}}
\quad 
\mbox{for $j=1,2$}.
\end{equation}
Hence, we have from (\ref{10/04/02/18:14}) and (\ref{10/04/03/21:43}) that 
\begin{equation}\label{10/04/03/21:50}
\left\|\psi \right\|_{X(I)}
\le \left\|u \right\|_{X(I)}
+
\left\|w \right\|_{X(I)}
\le A+N,
\end{equation}
which completes the proof. 
\end{proof}

\subsection{Wave operators}
\label{09/05/30/21:25}
The following proposition tells us that the wave operators are well-defined on $\Omega$.
\begin{proposition}[Existence of wave operators]
\label{09/01/12/16:36}
Assume $d \ge 1$ and $2+\frac{4}{d}\le p+1 <2^{*}$. 
\\
{\rm (i)} For any $\phi_{+} \in \Omega$, there exists a unique $\psi_{0} \in PW_{+}$ such that the corresponding solution $\psi$ to the equation (\ref{08/05/13/8:50}) with $\psi(0)=\psi_{0}$ exists globally in time and satisfies the followings: \begin{align}
\label{09/10/08/18:42}
&\psi \in X([0,+\infty)), 
\\[6pt]
\label{09/10/08/18:43}
&\lim_{t\to +\infty}\left\|\psi(t)-e^{\frac{i}{2}t\Delta}\phi_{+} \right\|_{H^{1}}=0,  
\\[6pt]
\label{09/10/08/18:44}
&\mathcal{H}(\psi(t))=\left\|\nabla \phi_{+ } \right\|_{L^{2}}^{2}
\quad 
\mbox{for all $t \in \mathbb{R}$}.
\end{align}
Furthermore, if $\left\| \phi_{+} \right\|_{H^{1}}$ is sufficiently small, then we have 
\begin{equation}\label{09/10/08/18:45}
\left\| \psi \right\|_{X(\mathbb{R})}\lesssim \left\| \phi_{+} \right\|_{H^{1}},\end{equation}
where the implicit constant depends only on $d$, $p$ and $q_{1}$.
\\
The map defined by $\phi_{+} \mapsto \psi_{0}$ is continuous from $\Omega$ into  $PW_{+}$ in the $H^{1}(\mathbb{R}^{d})$-topology. 
\\[6pt]
{\rm (ii)} For any $\phi_{-} \in \Omega$, there exists a unique $\psi_{0} \in PW_{+}$ such that the corresponding solution $\psi$ to the equation (\ref{08/05/13/8:50}) with $\psi(0)=\psi_{0}$ exists globally in time and satisfies that 
\begin{align}
\label{09/10/08/18:46}
&\psi \in X((-\infty,0]), 
\\[6pt]
\label{09/10/08/18:47}
&\lim_{t\to -\infty}\left\|\psi(t)-e^{\frac{i}{2}t\Delta}\phi_{-} \right\|_{H^{1}}=0, 
\\[6pt]
\label{09/10/08/18:48}
&\mathcal{H}(\psi(t))=\left\|\nabla \phi_{-} \right\|_{L^{2}}^{2}
\quad 
\mbox{for all $t \in \mathbb{R}$}.
\end{align}
Furthermore, if $\left\| \phi_{-} \right\|_{H^{1}}$ is sufficiently small, then we have 
\begin{equation}\label{09/10/08/18:49}
\left\| \psi \right\|_{X(\mathbb{R})}\lesssim \left\| \phi_{-} \right\|_{H^{1}},\end{equation}
where the implicit constant depends only on $d$, $p$ and $q_{1}$.
\\
The map defined by $\phi_{-} \mapsto \psi_{0}$ is continuous from $\Omega$ into $PW_{+}$ in the $H^{1}(\mathbb{R}^{d})$-topology. 
\end{proposition}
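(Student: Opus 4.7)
The plan for part (i) is the classical construction of wave operators by solving the final-value problem backward from $t=+\infty$, then extending the solution backward by the local Cauchy theory and identifying $\psi(0)$ as an element of $PW_{+}$ via invariance; part (ii) is symmetric.

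First I would fix $\phi_{+}\in\Omega$ and observe, via the Strichartz-type estimate (\ref{08/10/25/23:16}), that $\|e^{\frac{i}{2}t\Delta}\phi_{+}\|_{X(\mathbb{R})}<\infty$; in particular $\|e^{\frac{i}{2}t\Delta}\phi_{+}\|_{X([T,\infty))}\to 0$ as $T\to\infty$. For sufficiently large $T$, I would set up a Picard iteration, based on the Strichartz estimates of Lemma \ref{08/07/31/12:02} and the nonlinear H\"older estimates of Lemma \ref{08/08/18/15:41}, for the map
\[
\Phi(u)(t):=e^{\frac{i}{2}t\Delta}\phi_{+}-\frac{i}{2}\int_{t}^{\infty}e^{\frac{i}{2}(t-t')\Delta}|u(t')|^{p-1}u(t')\,dt'
\]
on a small closed ball in $X([T,\infty))$ with the additional bound $\|(1-\Delta)^{\frac{1}{2}}u\|_{S([T,\infty))}\le 2C_{0}\|\phi_{+}\|_{H^{1}}$, exactly as in the proof of Proposition \ref{08/08/22/20:59}. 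This produces a unique solution $\psi\in C([T,\infty);H^{1}(\mathbb{R}^{d}))$ of (\ref{08/05/13/8:50}) on $[T,\infty)$, and the same estimates applied to $e^{-\frac{i}{2}t\Delta}\psi(t)-\phi_{+}$ yield
\[
\lim_{t\to\infty}\big\|\psi(t)-e^{\frac{i}{2}t\Delta}\phi_{+}\big\|_{H^{1}}=0.
\]

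Next I would extend $\psi$ backward by the local Cauchy theory and show the extension is global with $\psi(0)\in PW_{+}$. Since $\psi(t)\to e^{\frac{i}{2}t\Delta}\phi_{+}$ in $H^{1}$, mass and $\dot H^{1}$-conservation of $e^{\frac{i}{2}t\Delta}$ give $\|\psi(t)\|_{L^{2}}\to\|\phi_{+}\|_{L^{2}}$ and $\|\nabla\psi(t)\|_{L^{2}}\to\|\nabla\phi_{+}\|_{L^{2}}$, while Lemma \ref{10/04/08/9:20} gives $\|\psi(t)\|_{L^{p+1}}\to 0$. Hence $\mathcal{H}(\psi(t))\to\|\nabla\phi_{+}\|_{L^{2}}^{2}$, which combined with conservation of $\mathcal{H}$ yields (\ref{09/10/08/18:44}); moreover, for all sufficiently large $t_{0}$,
\[
\widetilde{\mathcal{N}}_{2}(\psi(t_{0}))\longrightarrow\mathcal{N}_{2}(\phi_{+})<\widetilde{N}_{2},\qquad \mathcal{K}(\psi(t_{0}))\longrightarrow\|\nabla\phi_{+}\|_{L^{2}}^{2}>0,
\]
so $\psi(t_{0})\in PW_{+}$. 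The invariance Proposition \ref{09/06/21/19:28} then propagates this to every time and forces $\psi$ to be global, so $\psi_{0}:=\psi(0)\in PW_{+}$. Uniqueness of $\psi_{0}$ with property (\ref{09/10/08/18:43}) follows from the uniqueness in the contraction on $[T,\infty)$ together with uniqueness in the backward Cauchy problem.

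Finally, the small-data bound (\ref{09/10/08/18:45}) is obtained by running the same contraction on $\mathbb{R}$ rather than on $[T,\infty)$: if $\|\phi_{+}\|_{H^{1}}$ is small enough, then (\ref{08/10/25/23:16}) makes $\|e^{\frac{i}{2}t\Delta}\phi_{+}\|_{X(\mathbb{R})}$ fall below the threshold $\delta$ of Proposition \ref{08/08/22/20:59}, and the argument closes on all of $\mathbb{R}$, yielding $\|\psi\|_{X(\mathbb{R})}\lesssim\|\phi_{+}\|_{H^{1}}$. Continuity of $\phi_{+}\mapsto\psi_{0}$ from $\Omega$ to $PW_{+}$ is handled by the long time perturbation theorem (Proposition \ref{08/08/05/14:30}): for $\phi_{+}^{(n)}\to\phi_{+}$ in $H^{1}$ the corresponding solutions are uniformly close to $e^{\frac{i}{2}t\Delta}\phi_{+}$ on $[T,\infty)$, and Proposition \ref{08/08/05/14:30} transports this closeness backward to $t=0$. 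I expect the main obstacle to be the simultaneous verification of $\widetilde{\mathcal{N}}_{2}(\psi(t_{0}))<\widetilde{N}_{2}$ and $\mathcal{K}(\psi(t_{0}))>0$ at a finite time $t_{0}$: this is the step that unlocks the invariance Proposition \ref{09/06/21/19:28} and hence upgrades the local backward solution to a genuinely global one with $\psi_{0}\in PW_{+}$, which is the content of the proposition beyond mere existence of a scattering solution on $[T,\infty)$.
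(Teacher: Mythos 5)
Your construction is essentially the paper's own proof: the same final-value integral equation solved by contraction on $X([T,\infty))$ with the auxiliary $S$-bound as in Proposition \ref{08/08/22/20:59}, the same identification of the fixed point as a solution of (\ref{08/05/13/8:50}), the same use of Lemma \ref{10/04/08/9:20} to pass to the limits $\mathcal{K}(\psi(t))\to\|\nabla\phi_{+}\|_{L^{2}}^{2}>0$ and $\widetilde{\mathcal{N}}_{2}(\psi(t))\to\mathcal{N}_{2}(\phi_{+})<\widetilde{N}_{2}$ so that $\psi(t_{0})\in PW_{+}$ for large $t_{0}$, and the same appeal to Proposition \ref{09/06/21/19:28} to obtain global existence, $\psi_{0}\in PW_{+}$, (\ref{09/10/08/18:44}), the small-data bound (\ref{09/10/08/18:45}), and uniqueness.

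The one place where you deviate is the continuity of $\phi_{+}\mapsto\psi_{0}$, and there your argument has a gap as stated: Proposition \ref{08/08/05/14:30} concludes only the a priori bound $\|\psi\|_{X(I)}\lesssim 1$; it does not give any estimate on the $H^{1}$-difference of two nearby solutions, so it cannot by itself ``transport closeness backward to $t=0$.'' What is needed (and what the paper does) is a direct stability estimate: write the Duhamel formulas for $\psi_{n}$ and $\psi$ anchored at $+\infty$, use the smallness of the $X$-tails on $[T_{\delta},\infty)$ to close difference estimates first in $S([T_{\delta},\infty))\cap X([T_{\delta},\infty))$ and then, via the integral equations for $\partial_{j}\psi_{n}$ and $\partial_{j}\psi$, in $L^{\infty}([T_{\delta},\infty);H^{1})$; only after obtaining $\|\psi_{n}(T_{\delta})-\psi(T_{\delta})\|_{H^{1}}\to 0$ does one invoke continuous dependence of the Cauchy problem on the compact interval $[0,T_{\delta}]$ to conclude $\|\psi_{n}(0)-\psi(0)\|_{H^{1}}\to 0$. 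Your plan would be complete if you replace the appeal to the long-time perturbation theorem by such a quantitative difference estimate (or prove a strengthened perturbation lemma that controls the $H^{1}$-difference, which the paper's Proposition \ref{08/08/05/14:30} does not).
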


\begin{proof}[Proof of Proposition \ref{09/01/12/16:36}]  Since the proofs of (i) and (ii) are very similar, we prove (i) only. We look for a solution to the following integral equation in $X([0,\infty))$ for all $\phi_{+} \in \Omega$:  
\begin{equation}\label{09/01/12/16:43}
\psi(t)=e^{\frac{i}{2}t\Delta}\phi_{+}-\frac{i}{2}\int_{t}^{+\infty}e^{\frac{i}{2}(t-t')\Delta}\left\{ |\psi(t')|^{p-1}\psi(t')\right\}dt'.
\end{equation}
We first note that the estimate (\ref{08/10/25/23:16}) shows that: For any $\delta>0$, there exists 
 $T_{\delta}>0$ such that 
\begin{equation}\label{09/10/08/18:32}
\left\| e^{\frac{i}{2}t\Delta} \phi_{+} \right\|_{X([T_{\delta},+\infty))}
<
\delta.
\end{equation}
Moreover, it follows from the Strichartz estimate that: There exists $C_{0}>0$, depending only on $d$, $p$ and $q_{1}$, such that 
\begin{equation}\label{09/10/08/18:33}
\left\|(1-\Delta)^{\frac{1}{2}} e^{\frac{i}{2}t\Delta}\phi_{+} \right\|_{S(\mathbb{R})}
\le C_{0}\left\| \phi_{+} \right\|_{H^{1}}.
\end{equation}
Using these constants $T_{\delta}$ and $C_{0}$, we define a set $Y_{\delta}[\phi_{+}]$ by 
\begin{equation}\label{10/04/06/23:34}
Y_{\delta}[\phi_{+}]:=
\left\{ u \in C(I_{\delta};H^{1}(\mathbb{R}^{d})) \left| 
\begin{array}{l} 
\left\| u  \right\|_{X(I_{\delta})}\le 2\left\| e^{\frac{i}{2}t\Delta}\phi_{+} 
\right\|_{X(I_{\delta})},
\\ 
\left\|(1-\Delta)^{\frac{1}{2}}u \right\|_{S(I_{\delta})}\le 2C_{0}\left\|\phi_{+} \right\|_{H^{1}}
\end{array}
\right. 
\right\},
\end{equation}
where $I_{\delta}=[T_{\delta},\infty)$. We can verify that $Y_{\delta}[\phi_{+}]$ becomes a complete metric space with a metric $\rho$ defined by 
\begin{equation}\label{10/04/06/23:37}
\rho(u,v):=\max
\left\{ 
\left\| u-v \right\|_{X(I_{\delta})},
\  
\left\| u-v \right\|_{S(I_{\delta})}
\right\}
\quad 
\mbox{for all $u,v \in Y_{\delta}[\phi_{+}]$}.
\end{equation}
Then, an argument similar to the proof of Proposition \ref{08/08/22/20:59} (small data theory) leads us to that: If $\delta>0$ is sufficiently small, then there exists a unique solution $\psi \in Y_{\delta}[\phi_{+}]$ to the integral equation (\ref{09/01/12/16:43}). In particular, we find that such a solution exists globally in time and satisfies (\ref{09/10/08/18:49}), provided that $\|\phi_{+}\|_{H^{1}}$ is sufficiently small. 
\par 
We shall show that the function $\psi$ obtained above becomes a solution to the equation (\ref{08/05/13/8:50}). Multiplying the both sides of (\ref{09/01/12/16:43}) by the free operator, we have   
\begin{equation}\label{09/10/17/16:33}
e^{\frac{i}{2}(t-T)\Delta}\psi(T)=e^{\frac{i}{2}t\Delta}\phi_{+}
-\frac{i}{2}\int_{T}^{+\infty}e^{\frac{i}{2}(t-t')\Delta}\left\{ |\psi(t')|^{p-1}\psi(t') \right\}dt'
\quad
\mbox{for all $t,T \in I_{\delta}$}.
\end{equation}
Subtraction (\ref{09/10/17/16:33}) from (\ref{09/01/12/16:43}) yields further that  
\begin{equation}\label{09/10/17/16:35}
\psi(t)=e^{\frac{i}{2}(t-T)\Delta}\psi(T)+\frac{i}{2}\int_{T}^{t}e^{\frac{i}{2}(t-t')\Delta}\left\{ |\psi(t')|^{p-1}\psi(t') \right\}dt'
\quad 
\mbox{for all $t, T \in I_{\delta}$}.
\end{equation} 
Hence, we find by this formula (\ref{09/10/17/16:35}) that $\psi$ is a solution to the equation (\ref{08/05/13/8:50}) on $I_{\delta}$. 
\par 
Next, we shall extend the solution $\psi$ to the whole interval $\mathbb{R}$. 
 In view of Proposition \ref{09/06/21/19:28}, it suffices to show that $\psi(t) \in PW_{+}$ for some $t \in I_{\delta}$. 
Applying the usual inhomogeneous Strichartz estimate to (\ref{09/01/12/16:43}), and then applying Lemma \ref{08/08/18/15:41}, 
 we obtain that    
\begin{equation}\label{09/01/13/9:54}
\lim_{t\to +\infty}\left\| \psi(t)-e^{\frac{i}{2}t\Delta}  \phi_{+} \right\|_{H^{1}}
\lesssim 
\lim_{t\to +\infty}\left\|(1-\Delta)^{\frac{1}{2}} \psi \right\|_{S([t,\infty))}\left\| \psi \right\|_{X([t,\infty))}^{p-1}
=0.
\end{equation}
Using this estimate (\ref{09/01/13/9:54}) and Lemma \ref{10/04/08/9:20}, we conclude that 
\begin{equation}\label{10/04/07/22:54}
\begin{split}
\lim_{t\to +\infty}\mathcal{K}(\psi(t))
&=
\lim_{t\to +\infty}
\mathcal{K}(e^{\frac{i}{2}t\Delta}\phi_{+})
\\[6pt]
&=\lim_{t\to +\infty}
\left\{ 
\mathcal{K}(\phi_{+})
-
\frac{d(p-1)}{2(p+1)}\left\| e^{\frac{i}{2}t\Delta}\phi_{+}\right\|_{L^{p+1}}^{p+1}
+
\frac{d(p-1)}{2(p+1)}\left\| \phi_{+} \right\|_{L^{p+1}}^{p+1}
\right\}
\\[6pt]
&=\mathcal{K}(\phi_{+})
+
\frac{d(p-1)}{2(p+1)}\left\| \phi_{+} \right\|_{L^{p+1}}^{p+1}
>0
.
\end{split}
\end{equation}
Hence, it holds that  
\begin{equation}\label{09/10/17/17:06}
0< \mathcal{K}(\psi(t)) <\mathcal{H}(\psi(t))
\quad 
\mbox{for all sufficiently large $t >T_{\delta}$}.
\end{equation}
Moreover, using (\ref{09/01/13/9:54}) and Lemma \ref{10/04/08/9:20} again, and employing the condition $\phi_{+}\in \Omega$, we obtain that 
\begin{equation}\label{10/04/08/10:59}
\begin{split}
&\lim_{t\to +\infty}\widetilde{\mathcal{N}}_{2}(\psi(t))
=
\lim_{t\to +\infty}
\widetilde{\mathcal{N}}_{2}(e^{\frac{i}{2}t\Delta}\phi_{+})
\\[6pt]
&=
\lim_{t\to +\infty}
\left\|\phi_{+} \right\|_{L^{2}}^{p+1-\frac{d}{2}(p-1)}
\sqrt{
\left\| \nabla \phi_{+} \right\|_{L^{2}}^{2}-\frac{2}{p+1}\left\| e^{\frac{i}{2}t\Delta}\phi_{+}\right\|_{L^{p+1}}^{p+1}
}^{\frac{d}{2}(p-1)-2}
\\[6pt]
&=
\left\|\phi_{+} \right\|_{L^{2}}^{p+1-\frac{d}{2}(p-1)}
\left\| \nabla \phi_{+} \right\|_{L^{2}}^{\frac{d}{2}(p-1)-2}
=\mathcal{N}_{2}(\phi_{+})<\widetilde{N}_{2}.
\end{split}
\end{equation}
This estimate, together with (\ref{08/06/15/14:38}), shows that 
\begin{equation}\label{09/10/17/17:07}
\psi(t) \in PW_{+}
\quad 
\mbox{for all sufficiently large $t >T_{\delta}$.} 
\end{equation}
Thus, we have proved the global existence of $\psi$. Since $\psi \in X([T,+\infty))$ for sufficiently large $T>0$, we also have $\psi \in X([0,+\infty))$. 
\par 
Now, we put $\psi_{0}=\psi(0)$. Then, $\psi_{0}$ is what we want. 
Indeed, the desired properties (\ref{09/10/08/18:42}) and (\ref{09/10/08/18:43}) have been obtained. Moreover, (\ref{09/10/08/18:43}) and Lemma \ref{10/04/08/9:20} immediately give (\ref{09/10/08/18:44}).
\par 
We shall prove the uniqueness here. Let $\psi_{0}$ and $\widetilde{\psi}_{0}$ be functions in $PW_{+}$ such that the solutions $\psi$ and $\widetilde{\psi}$ to the equation (\ref{08/05/13/8:50}) with $\psi(0)=\psi_{0}$ and $\widetilde{\psi}(0)=\widetilde{\psi}_{0}$ satisfy that 
\begin{equation}\label{10/04/10/22:19}
\lim_{t\to \infty}\left\| \psi(t)-e^{\frac{i}{2}t\Delta}\phi_{+}\right\|_{H^{1}}=
\lim_{t\to \infty}\left\|\widetilde{\psi}(t)-e^{\frac{i}{2}t\Delta}\phi_{+}\right\|_{H^{1}}
=0.
\end{equation}
Using (\ref{10/04/10/22:19}), we find  that 
\begin{equation}\label{10/04/10/22:33}
\lim_{t\to \infty}
\left\| \psi(t)-\widetilde{\psi}(t)\right\|_{H^{1}}
=
0.
\end{equation}
Then, supposing the contrary that $\psi_{0}\neq \widetilde{\psi}_{0}$, we have by the standard uniqueness result (see \cite{Kato1995}) that $\psi(t)\neq \widetilde{\psi}(t)$ for all $t \in \mathbb{R}$, which contradicts (\ref{10/04/10/22:33}): Thus, $\psi_{0}= \widetilde{\psi}_{0}$.
\par 
Finally, we prove the continuity of the map defined by $\phi_{+}\in \Omega \mapsto \psi_{0}\in PW_{+}$. We use $W_{+}$ to denote this map. Let $\phi_{+} \in \Omega$, and let 
 $\{ \phi_{+,n}\}_{n\in \mathbb{N}}$ be a sequence in $\Omega$ satisfying that 
\begin{equation}\label{10/04/08/13:56}
\lim_{n\to\infty}
\left\|
\phi_{+,n}- \phi_{+}
\right\|_{H^{1}}
=0.
\end{equation}
The estimate (\ref{08/10/25/23:16}), together with (\ref{10/04/08/13:56}), shows that  
\begin{equation}\label{10/04/08/22:24}
\lim_{n\to \infty}\left\| e^{\frac{i}{2}t\Delta} \phi_{+,n} 
-
e^{\frac{i}{2}t\Delta} \phi_{+}
\right\|_{X(\mathbb{R})}
=0.
\end{equation}
Let $\psi$ and $\psi_{n}$ be the solutions to (\ref{08/05/13/8:50}) with $\psi(0)=W_{+}\phi_{+}$ and $\psi_{n}(0)=W_{+}\phi_{+,n}$, respectively. Then, $\psi$ and $\psi_{n}$ exist globally in time (see Proposition \ref{09/06/21/19:28}) and have the following properties as proved above:   
\begin{align}
\label{10/04/09/12:34}
&\psi(t), \psi_{n}(t) \in PW_{+} 
\quad
\mbox{for all $t \in \mathbb{R}$},
\\[6pt]
\label{10/04/14/17:38}
&\psi, \psi_{n} \in X([0,\infty))
, 
\\[6pt]
&
\label{10/04/14/16:58}
\lim_{t\to +\infty}
\left\| \psi(t)-e^{\frac{i}{2}t\Delta}\phi_{+}\right\|_{H^{1}}
=
\lim_{t\to +\infty}
\left\| \psi_{n}(t)-e^{\frac{i}{2}t\Delta}\phi_{+,n}\right\|_{H^{1}}
=0,
\\[6pt]
&
\label{10/04/14/18:04}
\mathcal{H}(\psi(t))=\left\| \nabla \phi_{+} \right\|_{L^{2}}^{2},
\quad 
\mathcal{H}(\psi_{n}(t))=\left\| \nabla \phi_{n,+} \right\|_{L^{2}}^{2}
\quad 
\mbox{for all $t \in \mathbb{R}$}.
\end{align}
Moreover, we find by (\ref{10/04/14/16:58}) that 
$\psi$ and $\psi_{n}$ satisfy that  
\begin{align}
\label{10/04/14/17:28}
&\psi(t)
=
e^{\frac{i}{2}t\Delta}\phi_{+}
-\frac{i}{2}\int_{t}^{+\infty}
\!\!
e^{\frac{i}{2}(t-t')\Delta}\left\{ |\psi(t')|^{p-1}\psi(t')\right\}dt'
,
\\[6pt]
\label{10/04/09/11:20}
&\psi_{n}(t)
=
e^{\frac{i}{2}t\Delta}\phi_{+,n}
-\frac{i}{2}\int_{t}^{+\infty}
\!\!
e^{\frac{i}{2}(t-t')\Delta}\left\{ |\psi_{n}(t')|^{p-1}\psi_{n}(t')\right\}dt'
.
\end{align}
Note here that it follows from  (\ref{10/04/08/13:56}) and (\ref{10/04/08/22:24}) that there exists a number $n_{0} \in \mathbb{N}$ such that 
\begin{align}
\label{09/09/13/22:06}
&\left\|e^{\frac{i}{2}t\Delta} \phi_{+,n} \right\|_{X(I)} 
\le 2\left\| e^{\frac{i}{2}t\Delta}\phi_{+}\right\|_{X(I)}
\quad 
\mbox{for all interval $I$ and $n\ge n_{0}$},
\\[6pt] 
\label{10/04/08/22:41}
&\left\|\phi_{+,n} \right\|_{H^{1}}
\le 
2 \left\| \phi_{+} \right\|_{H^{1}}
\quad 
\mbox{for all $n\ge n_{0}$}.
\end{align}
We find by (\ref{09/09/13/22:06}) that: For any $\delta>0$, 
 there exists  $T_{\delta}>0$, independent of $n$, such that 
\begin{equation}
\label{09/09/13/22:14}
\left\| e^{\frac{i}{2}t\Delta}\phi_{+} \right\|_{X([T_{\delta},\infty))}
<\delta,
\qquad
\left\|e^{\frac{i}{2}}t\Delta \phi_{+,n} \right\|_{X([T_{\delta},\infty))} \le \delta
\quad 
\mbox{for all $n\ge n_{0}$}.
\end{equation}
Using (\ref{09/09/13/22:14}) and the formulas (\ref{10/04/14/17:28}) and (\ref{10/04/09/11:20}), we also find that: There exists a constant $\delta_{0}>0$ with the following property: for any $\delta \in (0,\delta_{0}]$, there exists $T_{\delta}>0$ such 
 that 
\begin{equation}\label{10/04/14/17:53}
\left\| \psi \right\|_{X([T_{\delta},\infty))}
<2\delta,
\qquad
\left\|\psi_{n} \right\|_{X([T_{\delta},\infty))} \le 2\delta
\quad 
\mbox{for all $n\ge n_{0}$}.
\end{equation}

Now, we consider an estimate for the difference $\psi_{n}-\psi$. The formulas (\ref{10/04/14/17:28}) and (\ref{10/04/09/11:20}) shows that 
\begin{equation}\label{10/05/19/12:05}
\psi_{n}(t)-\psi(t)
=
e^{\frac{i}{2}t\Delta}\left( \phi_{+,n}-\phi_{+} \right)
-\frac{i}{2}\int_{t}^{+\infty}e^{\frac{i}{2}(t-t')\Delta}
\left\{ |\psi_{n}|^{p-1}\psi_{n}-|\psi|^{p-1}\psi \right\}
(t')\,dt'.
\end{equation}
Applying the Strichartz estimate to (\ref{10/05/19/12:05}), and using (\ref{09/09/27/21:10}) and (\ref{10/04/14/17:53}), we obtain that 
\begin{equation}\label{10/04/09/11:18}
\begin{split}
&\left\| \psi_{n}-\psi \right\|_{S([T_{\delta},\infty))}
\\[6pt]
&\lesssim 
\left\| \phi_{+,n}-\phi_{+} \right\|_{L^{2}}
+ \left( \left\| \psi_{n} \right\|_{X([T_{\delta},\infty))}^{p-1}+
\left\| \psi \right\|_{X([T_{\delta},\infty))}^{p-1} \right) 
\left\| \psi_{n}-\psi \right\|_{S([T_{\delta},\infty))}
\\[6pt]
&
\le \left\| \phi_{+,n}-\phi_{+} \right\|_{L^{2}}
+ 2^{p}\delta^{p-1}
\left\| \psi_{n}-\psi \right\|_{S([T_{\delta},\infty))}
\quad 
\mbox{for all $n\ge n_{0}$},
\end{split}
\end{equation}
where the implicit constant depends only on $d$, $p$ and $q_{1}$. 
This estimate, together with (\ref{10/04/08/13:56}), shows that   
\begin{equation}\label{10/05/20/6:11}
\lim_{n\to \infty}
\left\| \psi_{n}-\psi \right\|_{S([T_{\delta},\infty))}
=0 
\quad 
\mbox{for all sufficiently small $\delta>0$}.
\end{equation} 
Similarly, we can obtain that 
\begin{equation}\label{10/07/27/14:14}
\lim_{n\to \infty}
\left\| \psi_{n}-\psi \right\|_{X([T_{\delta},\infty))}
=0 
\quad 
\mbox{for all sufficiently small $\delta>0$}.
\end{equation}
Moreover, considering the integral equations of $\partial_{j}\psi$ and $\partial_{j}\psi_{n}$ for $1\le j\le d$, we obtain by the Strichartz estimate that 
\begin{equation}\label{10/05/20/5:55}
\begin{split}
\left\| \partial_{j}\psi_{n}-\partial_{j} \psi \right\|_{S([T_{\delta}, \infty))}
&\lesssim 
\left\| \partial_{j}\phi_{+,n}-\partial_{j}\phi_{+}
\right\|_{L^{2}}
+
\left\| 
|\psi_{n}|^{p-1}(\partial_{j}\psi_{n}-\partial_{j}\psi)
\right\|_{L^{r_{0}'}([T_{\delta},\infty);L^{q_{1}'})}
\\[6pt]
&\quad + 
\left\| 
\left( |\psi_{n}|^{p-1}-|\psi|^{p-1} \right) \partial_{j}\psi 
\right\|_{L^{r_{0}'}([T_{\delta},\infty);L^{q_{1}'})}
\\[6pt]
&\quad +
\left\| 
|\psi_{n}|^{p-3}\psi_{n}^{2}\left(\partial_{j}\overline{\psi_{n}}
-\partial_{j}\overline{\psi}\right)
\right\|_{L^{r_{0}'}([T_{\delta},\infty);L^{q_{1}'})}
\\[6pt]
&\quad +
\left\| 
\left( 
|\psi_{n}|^{p-3}\psi_{n}^{2}
-|\psi|^{p-3}\psi^{2}
\right) \partial_{j}\overline{\psi}
\right\|_{L^{r_{0}'}([T_{\delta},\infty);L^{q_{1}'})}
\\[6pt]
&\lesssim 
\left\| \partial_{j}\phi_{+,n}-\partial_{j}\phi_{+}
\right\|_{L^{2}}
+
\left\| \psi_{n} \right\|_{X([T_{\delta},\infty)}^{p-1}
\left\| \partial_{j}\psi_{n}-\partial_{j}\psi \right\|
_{S([T_{\delta},\infty))}
\\[6pt]
&\quad + 
\left\| |\psi_{n}|^{p-1}-|\psi|^{p-1} 
\right\|_{L^{\frac{r_{2}}{p-1}}([T_{\delta},\infty);L^{\frac{q_{2}}{p-1}})} 
\left\| \partial_{j}\psi \right\|_{S([T_{\delta},\infty))}
\\[6pt]
&\quad 
+
\left\| \psi_{n} \right\|_{X([T_{\delta},\infty))}^{p-1}
\left\| \partial_{j}\psi_{n}
-\partial_{j}\psi\right\|_{S([T_{\delta},\infty))}
\\[6pt]
&\quad +
\left\| 
|\psi_{n}|^{p-3}\psi_{n}^{2}
-|\psi|^{p-3}\psi^{2}
\right\|_{L^{\frac{r_{2}}{p-1}}([T_{\delta},\infty);L^{\frac{q_{2}}{p-1}})}
\left\| \partial_{j}\psi 
\right\|_{S([T_{\delta},\infty))}, 
\end{split}
\end{equation}
where the implicit constant depends only on $d$, $p$ and $q_{1}$. 
Note here that, Lemma \ref{08/08/18/15:53}, together with (\ref{10/04/14/17:38}), gives us that $\|(1-\Delta)^{\frac{1}{2}}\psi\|_{S([0,\infty))}<\infty$. We also note that (\ref{10/07/27/14:14}) implies that 
\begin{equation}\label{10/05/20/7:01}
\begin{split}
&\lim_{n\to \infty}
\left\| |\psi_{n}|^{p-1}-|\psi|^{p-1} 
\right\|_{L^{\frac{r_{2}}{p-1}}([T_{\delta},\infty);L^{\frac{q_{2}}{p-1}})} 
\\[6pt]
&=
\lim_{n\to \infty}
\left\| 
|\psi_{n}|^{p-3}\psi_{n}^{2}
-|\psi|^{p-3}\psi^{2}
\right\|_{L^{\frac{r_{2}}{p-1}}([T_{\delta},\infty);L^{\frac{q_{2}}{p-1}})}
=0
\quad 
\mbox{for all sufficiently small $\delta>0$}.
\end{split}
\end{equation}
Hence, we have by (\ref{10/05/20/5:55})  that  
\begin{equation}\label{10/05/20/6:25}
\lim_{n\to \infty}\left\|\partial_{j} \psi_{n}-\partial_{j} \psi \right\|_{S([T_{\delta},\infty))}
=0
\quad 
\mbox{for all sufficiently small $\delta>0$}.
\end{equation} 
Thus, we obtain from (\ref{10/05/20/6:11}) and (\ref{10/05/20/6:25}) that 
\begin{equation}\label{10/04/09/12:36}
\lim_{n\to \infty}
\left\| \psi_{n}-\psi \right\|_{L^{\infty}([T_{\delta},\infty);H^{1})}
=0 
\quad 
\mbox{for all sufficiently small  $\delta>0$},
\end{equation} 
so that it follows from the continuous dependence of solutions on initial data   that 
\begin{equation}\label{10/04/10/16:04}
\lim_{n\to \infty}\left\| W_{+}\phi_{+,n}-W_{+}\phi_{+} \right\|_{H^{1}}
= 
\lim_{n\to \infty}
\left\| \psi_{n}(0)-\psi(0) \right\|_{H^{1}}
=0,
\end{equation}
which completes the proof. 
\end{proof}

\section{Analysis on $\boldsymbol{PW_{+}}$}
\label{08/10/03/15:11}
Our aim here is to prove Theorem \ref{08/05/26/11:53}. Obviously, Proposition \ref{09/06/21/19:28} provides (\ref{08/12/16/10:09}) and (\ref{08/09/03/17:03}). Therefore, it remains to prove the asymptotic completeness in $PW_{+}$.  
\par  
In order to prove the existence of asymptotic states for a solution $\psi$, it suffices to show that $\|\psi\|_{X(\mathbb{R})}<\infty$ by virtue of Proposition \ref{08/08/18/16:51} and Proposition \ref{09/06/21/19:28}. To this end, we introduce a subset of $PW_{+}$: 
\begin{equation}\label{10/02/19/23:05}
PW_{+}(\delta):=
\left\{ 
f \in PW_{+} \Biggm| \widetilde{\mathcal{N}}_{2}(f):=\left\| f \right\|_{L^{2}}^{p+1-\frac{d}{2}(p-1)}\sqrt{\mathcal{H}(f)}^{\frac{d}{2}(p-1)-2}
< \delta 
\right\},
\quad 
\delta>0. 
\end{equation}
Moreover, we define a number $N_{c}$ by  
\begin{equation}\label{08/09/02/18:06}
\begin{split}
\widetilde{N}_{c}
&:=
\sup{
\left\{ \delta >0 \bigm| 
\|\psi\|_{X(\mathbb{R})}< \infty
\quad 
\mbox{for all $\psi_{0} \in PW_{+}(\delta)$} 
\right\}
}
\\[6pt]
&=
\inf{\left\{ \delta >0 \bigm| 
\|\psi\|_{X(\mathbb{R})}= \infty
\quad 
\mbox{for some $\psi_{0} \in PW_{+}(\delta)$}
\right\}},
\end{split}
\end{equation}
where $\psi$ denotes the solution to (\ref{08/05/13/8:50}) with $\psi(0)=\psi_{0}$. 
\par 
Since we have
\begin{equation}\label{09/05/05/8:52}
PW_{+}=PW_{+}(\widetilde{N}_{2}),
\end{equation}
our task is to prove  $\widetilde{N}_{c}=\widetilde{N}_{2}$. 
\par 
It is worth while  noting here that $\widetilde{N}_{c}>0$: For, it follows from  (\ref{08/10/25/23:16}), the interpolation estimate and Proposition \ref{09/06/21/19:28} that \begin{equation}\label{08/07/22/17:36}
\begin{split}
\left\|e^{\frac{i}{2}t\Delta}\psi_{0} \right\|_{X(\mathbb{R})}^{p-1}
&\lesssim 
\|(-\Delta)^{\frac{s_{p}}{2}} \psi_{0}\|_{L^{2}}^{p-1}
\\[6pt]
&\le 
\|\psi_{0}\|_{L^{2}}^{p+1-\frac{d}{2}(p-1)}
\|\nabla \psi_{0}\|_{L^{2}}^{\frac{d}{2}(p-1)-2}
\\
& < \|\psi_{0} \|_{L^{2}}^{p+1-\frac{d}{2}(p-1)}
    \sqrt{\frac{d(p-1)}{d(p-1)-4} \mathcal{H}(\psi_{0})}^{\frac{d}{2}(p-1)-2}\\
&=\sqrt{\frac{d(p-1)}{d(p-1)-4}}^{\frac{d}{2}(p-1)-2}\widetilde{\mathcal{N}}_{2}(\psi_{0})
\\
&< 
\sqrt{\frac{d(p-1)}{d(p-1)-4}}^{\frac{d}{2}(p-1)-2} 
\hspace{-2pt}
\delta
\qquad 
\mbox{for all $\delta>0$ and $\psi_{0} \in PW_{+}(\delta)$},
\end{split}
\end{equation}
where the implicit constant depends only on $d$, $p$ and $q_{1}$. 
 This estimate shows that there exists $\delta_{0}>0$, depending only on $d$, $p$ and  $q_{1}$, such that 
\begin{equation}\label{10/04/04/23:01}
\left\| e^{\frac{i}{2}t\Delta} \psi_{0} \right\|_{X(\mathbb{R})}
<
\delta_{Pr.\ref{08/08/22/20:59}}
\quad 
\mbox{for all $\psi_{0} \in PW_{+}(\delta_{0})$},
\end{equation}
where $\delta_{Pr.\ref{08/08/22/20:59}}$ is a constant found in Proposition \ref{08/08/22/20:59}. Hence, we have by the small data theory (Proposition \ref{08/08/22/20:59}) that $\widetilde{N}_{2}\ge \delta_{0}>0$.

\subsection{One soliton vs. Virial identity}\label{09/05/06/9:13}
In this section, we present our strategy to prove $\widetilde{N}_{c}=\widetilde{N}_{2}$. We suppose the contrary that $\widetilde{N}_{c}<\widetilde{N}_{2}$. In this undesired situation, we can find a one-soliton-like solution to our equation (\ref{08/05/13/8:50}) in $PW_{+}$ (see Proposition \ref{08/10/16/21:12} below). Then, the soliton-like behavior contradicts the one described by the generalized virial identity (Lemma \ref{08/10/23/18:45}), so that we conclude that $\widetilde{N}_{c}=\widetilde{N}_{2}$. At the end of this Section \ref{09/05/06/9:13}, we actually show this, provided that such a soliton-like solution exists. 
\par 
The construction of a soliton-like solution is rather long. We divide it into two parts; In Section \ref{09/05/05/10:03}, one finds a candidate for the soliton, and  in Section \ref{08/09/25/17:01}, one sees that the candidate actually behaves like a one-soliton-like solution.  
\par 
We here briefly explain how to find a candidate for the one-soliton-like solution. If $\widetilde{N}_{c}<\widetilde{N}_{2}$, then we can take a sequence $\{\psi_{n}\}$ of solutions to (\ref{08/05/13/8:50}) with the property that 
\begin{equation}\label{10/04/05/17:37}
\psi_{n}(t) \in PW_{+} 
\  \mbox{for all $t\in \mathbb{R}$},
\quad 
\left\| \psi_{n} \right\|_{X(\mathbb{R})}=\infty,
\quad 
\lim_{n\to \infty}\widetilde{\mathcal{N}}_{2}(\psi_{n}(0))=\widetilde{N}_{c}
.
\end{equation}
 We consider the integral equation for $\psi_{n}$: 
\begin{equation}\label{10/04/05/17:35}
\psi_{n}(t)=e^{\frac{i}{2}t\Delta}f_{n}+\frac{i}{2}\int_{0}^{t}
e^{\frac{i}{2}(t-t')\Delta}\left\{ |\psi_{n}(t')|^{p-1}\psi_{n}(t')\right\}dt',
\end{equation}
where we put $f_{n}=\psi_{n}(0)$. We first observe that the linear part of this integral equation possibly behaves like as follows\footnote{$e^{-\eta_{n}^{l}\cdot \nabla}$ denotes the space-translation by $-\eta_{n}$. We may expect the number of summands $f^{l}$ is finite.}:  
\begin{equation}\label{09/07/15/20:35}
e^{\frac{i}{2}t\Delta}f_{n}(x) \sim \sum_{l\ge 1}e^{\frac{i}{2}(t-\tau_{n}^{l})\Delta}e^{-\eta_{n}^{l}\cdot \nabla}f^{l}(x)
\end{equation}
for some nontrivial functions $f^{l}\in PW_{+}$,  $\tau_{n}^{l} \in \mathbb{R}$ and $\eta_{n}^{l} \in \mathbb{R}^{d}$. Of course, this is not a good approximation to $\psi_{n}$. So, putting $\displaystyle{\tau_{\infty}^{l}=\lim_{n\to \infty}\tau_{n}^{l}}$ (possibly $\tau_{\infty}^{l}=\pm \infty$), we solve our equation (\ref{08/05/13/8:50}) with the initial datum $e^{-\frac{i}{2}\tau_{\infty}^{l}\Delta}f^{l}$ at $t=-\tau_{\infty}^{l}$:
\begin{equation}\label{09/09/14/15:32}
\psi^{l}(t)=
e^{\frac{i}{2}(t+\tau_{\infty}^{l})\Delta}e^{-\frac{i}{2}\tau_{\infty}^{l}\Delta}
 f^{l}
+\frac{i}{2}\int_{-\tau_{\infty}^{l}}^{t}e^{\frac{i}{2}(t-t')\Delta}\left\{ |\psi^{l}(t')|^{p-1}\psi^{l}(t')\right\}dt'.
\end{equation}
Here, in case of $\tau_{\infty}^{l}= \pm \infty$, we are regarding this as the final value problem:
\begin{equation}\label{09/09/14/15:33}
e^{-\frac{i}{2}t\Delta}\psi^{l}(t)=
f^{l}
+\frac{i}{2}\int_{\mp \infty}^{t}e^{-\frac{i}{2}t'\Delta}\left\{ |\psi^{l}(t')|^{p-1}\psi^{l}(t')\right\}dt'.
\end{equation}
Then, instead of (\ref{09/07/15/20:35}), we consider the superposition of these solutions with the space-time translations: 
\begin{equation}\label{09/09/14/17:16}
\psi_{n}^{app}(x,t) :=
\sum_{l\ge 1} (e^{-\tau_{n}^{l} \frac{\partial }{\partial t} -\eta_{n}^{l}\cdot \nabla} \psi^{l})(x,t)
=
\sum_{l\ge 1} \psi^{l}(x-\eta_{n}^{l},t-\tau_{n}^{l})
.
\end{equation}
By Lemma \ref{08/08/23/16:32} below, we will see that this formal object $\psi_{n}^{app}$ is an ``almost''  solution to our equation (\ref{08/05/13/8:50}) with the initial datum $\displaystyle{\sum_{l\ge 1}e^{-\frac{i}{2}\tau_{n}^{l}\Delta}e^{-\eta_{n}^{l}\cdot \nabla}f^{l}}$, and supposed to be a good approximation to $\psi_{n}$. In other words, a kind of superposition principle holds valid in an asymptotic sense as $n\to \infty$. By virtue of the long time perturbation theory (Proposition \ref{08/08/05/14:30}), the sum in $\psi_{n}^{app}$ consists of a finite number of solutions. Actually, as a consequence of the minimizing property of the sequence $\{\psi_{n}\}$ (\ref{10/04/05/17:37}), the summand is just one: put $\Psi:=\psi^{1}$. Then, it turns out that $\Psi$ is a one-soliton-like solution which we are looking for. In fact, we can prove:

\begin{proposition}[One-soliton-like solution in $PW_{+}$]
\label{08/10/16/21:12}
Suppose that $\widetilde{N}_{c}< \widetilde{N}_{2}$. Then, there exists a global solution $\Psi \in C(\mathbb{R}; H^{1}(\mathbb{R}^{d}))$ to the equation (\ref{08/05/13/8:50}) with the following properties: $\{\Psi(t)\}_{t\in \mathbb{R}}$ is;  
\\
{\rm (i)} a minimizer such that  
\begin{equation}\label{08/10/20/2:27}
\left\| \Psi \right\|_{X(\mathbb{R})}=\infty,
\qquad 
\widetilde{\mathcal{N}}_{2}(\Psi(t))=\widetilde{N}_{c}, 
\quad 
\Psi(t) \in PW_{+}
\quad 
\mbox{for all $t \in \mathbb{R}$}, 
\end{equation} 
{\rm (ii)} uniformly bounded in $H^{1}(\mathbb{R}^{d})$ with  
\begin{equation}\label{08/10/18/22:17}
\left\| \Psi(t)\right\|_{L^{2}}=\left\| \Psi(0) \right\|_{L^{2}}=1 
\quad 
\mbox{for all $t \in \mathbb{R}$}, 
\end{equation}
and 
\begin{equation}\label{10/04/05/18:07} 
\sup_{t \in \mathbb{R}} \left\|\nabla \Psi (t)\right\|_{L^{2}}\le 
N_{c}^{\frac{1}{\frac{d}{2}(p-1)-2}},
\end{equation}
{\rm (iii)} a family of functions with zero momentum, i.e.,    
\begin{equation}\label{08/08/18/17:42}
\Im \int_{\mathbb{R}^{d}}\overline{\Psi}(x,t)\nabla \Psi(x,t)\,dx =0
\quad 
\mbox{for all $t \in \mathbb{R}$},  
\end{equation}
{\rm (iv)} tight in $H^{1}(\mathbb{R}^{d})$ in the following sense: For any $\varepsilon>0$, there exists $R_{\varepsilon}>0$ and a continuous path $\gamma_{\varepsilon} \in C([0,\infty);\mathbb{R}^{d})$ with $\gamma_{\varepsilon}(0)=0$ such that 
\begin{equation}\label{08/10/18/22:46}
\int_{|x-\gamma_{\varepsilon}(t)|<R_{\varepsilon}}\left| \Psi(x,t)\right|^{2}
 \,dx > 1-\varepsilon 
\quad 
\mbox{for all $t \in [0,\infty)$},
\end{equation}
and 
\begin{equation}\label{08/11/01/15:41}
\int_{|x-\gamma_{\varepsilon}(t)|<R_{\varepsilon}}\left| \nabla \Psi(x,t)\right|^{2}
 \,dx > \left\| \nabla \Psi(t) \right\|_{L^{2}}^{2}-\varepsilon
\quad
\mbox{for all $t \in [0,\infty)$}. 
\end{equation}
\end{proposition}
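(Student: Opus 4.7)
The plan is to construct $\Psi$ as a ``critical element'' by concentration compactness, following the Brezis--Lieb/Kenig--Merle philosophy sketched above. Since $\widetilde{N}_{c}<\widetilde{N}_{2}$, by the definition (\ref{08/09/02/18:06}) I can choose a sequence $\{\psi_{n}\}$ of global solutions to (\ref{08/05/13/8:50}) with $\psi_{n}(t)\in PW_{+}$ for all $t$, $\|\psi_{n}\|_{X(\mathbb{R})}=\infty$, and $\widetilde{\mathcal{N}}_{2}(\psi_{n}(0))\to \widetilde{N}_{c}$. By Proposition \ref{09/06/21/19:28} the sequence $\{f_{n}:=\psi_{n}(0)\}$ is bounded in $H^{1}(\mathbb{R}^{d})$, and by the scaling $f\mapsto \lambda^{2/(p-1)}f(\lambda\,\cdot)$ combined with a Galilean boost $f\mapsto e^{iv\cdot x}f$ I can arrange, without altering $\widetilde{\mathcal{N}}_{2}$ or the scattering property, that $\|f_{n}\|_{L^{2}}=1$ and $\mathcal{P}(f_{n})=0$ (adjusting the gradient bound accordingly).

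The core step is to apply the Brezis--Lieb type profile decomposition (referred to in the announced Section \ref{09/05/05/10:03}) to extract, after a subsequence, profiles $f^{l}\in H^{1}\setminus\{0\}$ together with parameters $(\tau_{n}^{l},\eta_{n}^{l})\in\mathbb{R}\times\mathbb{R}^{d}$ such that, formally,
\[
f_{n}=\sum_{l\ge 1}e^{-\frac{i}{2}\tau_{n}^{l}\Delta}e^{-\eta_{n}^{l}\cdot\nabla}f^{l}+r_{n},
\]
with orthogonality in $\widetilde{\mathcal{N}}_{2}$ (so $\sum_{l}\widetilde{\mathcal{N}}_{2}(f^{l})\le \widetilde{N}_{c}+o(1)$) and $\|e^{\frac{i}{2}t\Delta}r_{n}\|_{X(\mathbb{R})}\to 0$. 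For each $l$ I solve (\ref{08/05/13/8:50}) nonlinearly from the asymptotic data, getting $\psi^{l}$ as in (\ref{09/09/14/15:32})--(\ref{09/09/14/15:33}); each profile satisfies $f^{l}\in PW_{+}$ and $\widetilde{\mathcal{N}}_{2}(f^{l})\le\widetilde{N}_{c}$, so by the very definition of $\widetilde{N}_{c}$ each associated nonlinear profile $\psi^{l}$ has finite $X$-norm unless $\widetilde{\mathcal{N}}_{2}(f^{l})=\widetilde{N}_{c}$. Feeding the superposition $\psi_{n}^{app}$ of (\ref{09/09/14/17:16}) into the long time perturbation result (Proposition \ref{08/08/05/14:30}) yields $\|\psi_{n}\|_{X(\mathbb{R})}<\infty$ for large $n$, contradicting the choice of $\{\psi_{n}\}$ unless exactly one profile survives and the remainder is trivial: $f_{n}=e^{-\frac{i}{2}\tau_{n}^{1}\Delta}e^{-\eta_{n}^{1}\cdot\nabla}f^{1}+r_{n}$ with $\widetilde{\mathcal{N}}_{2}(f^{1})=\widetilde{N}_{c}$ and $r_{n}\to 0$ in $H^{1}$. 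Setting $\Psi:=\psi^{1}$ (after shifting time so that the initial datum lives at $t=0$) gives a solution with $\widetilde{\mathcal{N}}_{2}(\Psi(t))\equiv\widetilde{N}_{c}$ (by the conservation laws and the $L^{2}$- and $\mathcal{H}$-invariance) and $\|\Psi\|_{X(\mathbb{R})}=\infty$; and properties (i), (ii), the normalization $\|\Psi(t)\|_{L^{2}}=1$, the kinetic bound (\ref{10/04/05/18:07}) via (\ref{09/06/21/21:58}), and the zero-momentum property (iii) follow from the prepared normalizations on $\{f_{n}\}$ (both $\|\cdot\|_{L^{2}}$ and $\mathcal{P}$ are conserved).

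It remains to establish the tightness (iv). The idea is to reapply the same minimizing-sequence argument to any time-sequence $\{\Psi(t_{n})\}$: if, for some $\varepsilon>0$, no continuous center path $\gamma_\varepsilon$ and radius $R_\varepsilon$ existed satisfying (\ref{08/10/18/22:46}), then I could extract $\{t_{n}\}$ for which $\{\Psi(t_{n})\}$ splits nontrivially in the profile decomposition, producing at least two profiles each with $\widetilde{\mathcal{N}}_{2}(f^{l})<\widetilde{N}_{c}$; by the critical-level argument each such nonlinear profile is in $X(\mathbb{R})$, and combining them via Proposition \ref{08/08/05/14:30} contradicts $\|\Psi\|_{X(\mathbb{R})}=\infty$. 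Thus the $L^{2}$-mass concentrates around a point $\gamma(t_n)$, and a standard diagonal/continuity argument interpolates $\gamma$ into a continuous path; the same reasoning applied with $\nabla\Psi$ (using that $\|\nabla\Psi(t)\|_{L^{2}}$ is itself controlled via $\mathcal{H}$ and the $L^{p+1}$-mass concentration) yields (\ref{08/11/01/15:41}). The main obstacle I expect is precisely the bookkeeping of the profile decomposition: ensuring the $\widetilde{\mathcal{N}}_{2}$-orthogonality, verifying that each profile sits in $PW_{+}$ so that the subcritical profiles really do scatter, and handling the case $|\tau_{n}^{l}|\to\infty$ where the nonlinear profiles are defined through wave operators (Proposition \ref{09/01/12/16:36}). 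Once this orthogonality machinery is in place, the forcing of a single profile and the tightness (iv) come out cleanly by the perturbation-theoretic pigeonhole.
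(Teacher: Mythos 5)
Your outline for (i)--(ii) is essentially the paper's: the critical element with $\|\Psi\|_{X(\mathbb{R})}=\infty$, $\widetilde{\mathcal{N}}_{2}(\Psi(t))=\widetilde{N}_{c}$, $\|\Psi(t)\|_{L^{2}}=1$ and the gradient bound is exactly what Lemma \ref{08/08/19/23:07} produces from a minimizing sequence via the Brezis--Lieb compactness and Proposition \ref{08/08/05/14:30}. For (iii), however, your normalization step contains a factual error: a Galilean boost that kills a nonzero momentum strictly decreases $\mathcal{H}$, hence $\widetilde{\mathcal{N}}_{2}$, so it is not true that it leaves $\widetilde{\mathcal{N}}_{2}$ unchanged. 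The boosted sequence is still minimizing (infinite $X$-norm forces $\widetilde{\mathcal{N}}_{2}\ge\widetilde{N}_{c}$), but you would then also have to justify that zero momentum survives the passage to the profile limit, which requires the strong $L^{2}$ convergence (\ref{09/07/27/1:49}) -- itself a consequence of minimality, not of the decomposition alone. The paper sidesteps all of this by proving (iii) a posteriori: if $\mathcal{P}(\Psi)\neq 0$, the boosted solution $\Psi_{\xi_{0}}$ still has infinite $X$-norm but $\widetilde{\mathcal{N}}_{2}(\Psi_{\xi_{0}}(t))<\widetilde{N}_{c}$, contradicting the definition of $\widetilde{N}_{c}$.

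The genuine gap is in (iv). Your dichotomy ``no continuous concentration path $\Rightarrow$ at least two nontrivial profiles along some $t_{n}$'' does not hold: failure of tightness can just as well be caused by mass leaking into the remainder of the decomposition, and that remainder is small only in the scattering-type norms ($L^{\infty}_{t}L^{\frac{d}{2}(p-1)}\cap X$), not in $L^{2}$ or $H^{1}$; a single profile plus a non-vanishing $L^{2}$ tail produces no contradiction through Proposition \ref{08/08/05/14:30}. What actually closes the argument is criticality of both mass and energy: one applies Lemma \ref{08/08/19/23:07} to $\Psi(\cdot+t_{n})$ with $t_{n}$ chosen to realize the concentration level $A$ of (\ref{09/07/28/19:58}), rules out $\tau_{\infty}=\pm\infty$ by small-data theory on half-lines (using $\|\Psi\|_{X([0,\infty))}=\infty$), and then observes that if either the mass or the Hamiltonian of the limit profile failed to saturate, $\widetilde{\mathcal{N}}_{2}$ of the limit would drop strictly below $\widetilde{N}_{c}$ and the limiting solution would scatter -- contradiction. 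This upgrades weak convergence to strong $H^{1}$ convergence of $\Psi(t_{n})$ modulo translations and yields $A=1$. Only then does one get a \emph{continuous} path with $\gamma_{\varepsilon}(0)=0$, via the concentration-function result Proposition \ref{08/10/03/9:46}, a point your sketch does not address at all; and the gradient tightness (\ref{08/11/01/15:41}) is not ``the same reasoning with $\nabla\Psi$'' but a separate contradiction argument along a second sequence of times, again using the compactness machinery. As written, your proof of (iv) would not go through.
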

We will give the proof of Proposition \ref{08/10/16/21:12} in Sections \ref{09/05/05/10:03} and \ref{08/09/25/17:01} later, as mentioned before. The properties (i), (ii) and (iii) are easy matters. The most important part is the tightness (iv). To prove the tightness, we introduce a concentrate function for $\Psi$ (see (\ref{09/07/28/19:58}) below) and consider a sequence  $\{\Psi(\cdot+t_{n})\}_{n\in \mathbb{N}}$ for an appropriate sequence $\{t_{n}\}$ with $t_{n} \to +\infty$. Analogous arguments with  finding $\Psi$ work on $\{\Psi(\cdot +t_{n})\}_{n\in \mathbb{N}}$ as well, so that we can show the tightness. Once we get the tightness, Lemma \ref{08/10/03/9:46} immediately gives us the continuous path $\gamma_{\varepsilon}$ described in (iv) of Proposition \ref{08/10/16/21:12}. In order to prove $\widetilde{N}_{c}=\widetilde{N}_{2}$, we need to know more subtle     behavior of the path however. For a sufficiently long time, we can take $\gamma_{\varepsilon}$ as the almost center of mass, say $\gamma_{\varepsilon}^{ac}$:

\begin{lemma}[Almost center of mass]
\label{08/09/04/12:16}
Let $\Psi$ be a global solution to the equation (\ref{08/05/13/8:50}) satisfying the properties (\ref{08/10/20/2:27}), (\ref{08/10/18/22:17}), (\ref{10/04/05/18:07}), (\ref{08/08/18/17:42}), and (\ref{08/10/18/22:46}), (\ref{08/11/01/15:41}). Let $R_{\varepsilon}$ be a radius found in {\rm (iv)} of Proposition \ref{08/10/16/21:12} for $\varepsilon>0$. We define an ``almost center of mass'' by
\begin{equation}\label{08/10/19/16:06}
\gamma_{\varepsilon,R}^{ac}(t)
:=(\vec{w}_{20R}, |\Psi(t)|^{2})
\quad  
\mbox{for all $\varepsilon \in (0,\frac{1}{100})$ and $R>R_{\varepsilon}$},
\end{equation}
where $\vec{w}_{R}$ is the function defined by (\ref{10/02/27/22:18}). Then, we have: 
\begin{equation}\label{09/09/26/17:21}
\gamma_{\varepsilon,R}^{ac} \in C^{1}([0,\infty) ;\mathbb{R}^{d}),
\end{equation}
and there exists a constant $\alpha>0$, depending only on $d$ and $p$, such 
that 
\begin{align}
\label{08/09/04/16:51}
&\left|\gamma_{\varepsilon,R}^{ac}(t)\right| \le 
20R
\quad 
\mbox{for all $t \in \left[0,\alpha \frac{R}{ \sqrt{\varepsilon}} \right]$
},
\\[6pt]
\label{08/10/19/16:25}
&\int_{|x-\gamma_{\varepsilon,R}^{ac}(t)|\le 4R}\left| \Psi(x,t)\right|^{2}+\left| \nabla \Psi(x,t)\right|^{2}\,dx \ge \left\| \Psi(t) \right\|_{H^{1}}^{2}-\varepsilon 
\quad 
\mbox{for all 
$t \in 
\left[
0,
\alpha \frac{R}{ \sqrt{\varepsilon}} 
\right]$
}.
\end{align} 
\end{lemma}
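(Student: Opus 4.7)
The plan is to exploit the zero-momentum condition and the tightness of $\{\Psi(t)\}$, together with the conservation of mass, to turn $\gamma_{\varepsilon,R}^{ac}$ into a Lipschitz curve whose velocity decays like $\sqrt{\varepsilon}$ so long as $\gamma_{\varepsilon,R}^{ac}(t)$ remains inside the ball $\{|x|\le 20R\}$.

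First, from the equation (\ref{08/05/13/8:50}) one derives the continuity identity $\partial_{t}|\Psi|^{2}=-\nabla\cdot \Im(\overline{\Psi}\nabla\Psi)$. Recalling that $\vec{w}_{20R}$ is a bounded smooth vector field which coincides with $x$ on $\{|x|\le 20R\}$ and satisfies $|\vec{w}_{20R}(x)|\lesssim R$ everywhere, and that $|\Psi|^{2}\in C(\mathbb{R};L^{1})$, integration by parts yields
\[
\frac{d}{dt}\gamma_{\varepsilon,R}^{ac}(t)=\int_{\mathbb{R}^{d}}D\vec{w}_{20R}(x)\cdot \Im(\overline{\Psi}\nabla\Psi)(x,t)\,dx,
\]
which gives (\ref{09/09/26/17:21}). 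Subtracting the identity matrix and using (\ref{08/08/18/17:42}), I rewrite this as an integral of $(D\vec{w}_{20R}-I)\cdot \Im(\overline{\Psi}\nabla\Psi)$, whose integrand is supported in $\{|x|>20R\}$; Cauchy--Schwarz then supplies the crucial pointwise bound
\[
\Bigl|\tfrac{d}{dt}\gamma_{\varepsilon,R}^{ac}(t)\Bigr|\lesssim \left\|\Psi(t)\right\|_{L^{2}(|x|>20R)}\left\|\nabla\Psi(t)\right\|_{L^{2}}.
\]

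Second, I compare $\gamma_{\varepsilon,R}^{ac}$ with the tightness path $\gamma_{\varepsilon}$. Splitting $\int \vec{w}_{20R}|\Psi|^{2}\,dx$ into $B(\gamma_{\varepsilon}(t),R_{\varepsilon})$ and its complement, and invoking (\ref{08/10/18/22:46}), (\ref{08/10/18/22:17}) and $\|\vec{w}_{20R}\|_{L^{\infty}}\lesssim R$, I obtain
\[
\bigl|\gamma_{\varepsilon,R}^{ac}(t)-\gamma_{\varepsilon}(t)\bigr|\lesssim R_{\varepsilon}+R\varepsilon
\]
whenever $B(\gamma_{\varepsilon}(t),R_{\varepsilon})\subset \{|x|\le 20R\}$. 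In particular, since $\gamma_{\varepsilon}(0)=0$, this forces $|\gamma_{\varepsilon,R}^{ac}(0)|\le R$ once $R$ is large enough relative to $R_{\varepsilon}$ and $\varepsilon<\frac{1}{100}$.

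Next I close the estimate by a continuity argument. Let $T^{*}$ be the supremum of $t\ge 0$ with $|\gamma_{\varepsilon,R}^{ac}(s)|\le 20R$ throughout $[0,t]$; by continuity $T^{*}>0$. For $t<T^{*}$ the comparison forces $|\gamma_{\varepsilon}(t)|\le 20R+R_{\varepsilon}+R\varepsilon\le 19R$ (for $R$ large enough relative to $R_{\varepsilon}$), so $B(\gamma_{\varepsilon}(t),R_{\varepsilon})\subset\{|x|\le 20R\}$, and (\ref{08/10/18/22:46}) yields $\|\Psi(t)\|_{L^{2}(|x|>20R)}^{2}<\varepsilon$. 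Combining with the uniform $H^{1}$-bound (\ref{10/04/05/18:07}) and step one gives $|\frac{d}{dt}\gamma_{\varepsilon,R}^{ac}(t)|\le C\sqrt{\varepsilon}$ on $[0,T^{*})$, so integration yields $|\gamma_{\varepsilon,R}^{ac}(t)|\le R+C\sqrt{\varepsilon}\,t$. Choosing $\alpha$ small enough, this stays below $20R$ for $t\le\alpha R/\sqrt{\varepsilon}$, whence $T^{*}\ge\alpha R/\sqrt{\varepsilon}$, proving (\ref{08/09/04/16:51}).

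Finally, (\ref{08/10/19/16:25}) follows by the same comparison: the $H^{1}$-tightness (\ref{08/11/01/15:41}) localizes most of $\|\Psi(t)\|_{H^{1}}^{2}$ into $B(\gamma_{\varepsilon}(t),R_{\varepsilon})$, and the comparison places that ball inside $B(\gamma_{\varepsilon,R}^{ac}(t),R_{\varepsilon}+c(R_{\varepsilon}+R\varepsilon))\subset B(\gamma_{\varepsilon,R}^{ac}(t),4R)$ once the constants are arranged so that the right-hand side stays below $4R$. The main technical delicacy is the simultaneous calibration of the constants linking $R_{\varepsilon}$, $R$ and $\varepsilon$ so that all absorptions close uniformly; I anticipate no conceptual obstacle beyond careful bookkeeping, the substantive input being already packaged in (\ref{08/10/18/22:46})--(\ref{08/11/01/15:41}) and (\ref{08/08/18/17:42}).
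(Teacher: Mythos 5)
Your overall strategy (derivative formula for $\gamma^{ac}_{\varepsilon,R}$ plus zero momentum, comparison with the tightness path $\gamma_{\varepsilon}$, then a continuity argument giving a lower bound on the good time interval) is the same as the paper's, and your variant of the speed bound (mass tail times full gradient, using (\ref{08/10/18/22:46}) and (\ref{10/04/05/18:07}) instead of the gradient tail (\ref{08/11/01/15:41})) is fine. The genuine gap is in your continuity step: you define $T^{*}$ by the condition $|\gamma^{ac}_{\varepsilon,R}(s)|\le 20R$ and then claim that, for $t<T^{*}$, ``the comparison forces $|\gamma_{\varepsilon}(t)|\le 20R+R_{\varepsilon}+R\varepsilon\le 19R$.'' This is circular (and the displayed chain is numerically false): your comparison estimate $|\gamma^{ac}_{\varepsilon,R}(t)-\gamma_{\varepsilon}(t)|\lesssim R_{\varepsilon}+R\varepsilon$ is only valid \emph{when} $B_{R_{\varepsilon}}(\gamma_{\varepsilon}(t))\subset\{|x|\le 20R\}$, i.e.\ it presupposes exactly the bound on $\gamma_{\varepsilon}(t)$ you are trying to deduce. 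A bound on $\gamma^{ac}_{\varepsilon,R}$ alone carries no such information: since $\|\vec{w}_{20R}\|_{L^{\infty}}\le 40R$ by (\ref{08/10/23/18:42}) and $\|\Psi(t)\|_{L^{2}}=1$, one always has $|\gamma^{ac}_{\varepsilon,R}(t)|\le 40R$, and if the mass bulk has drifted far beyond $B_{40R}(0)$ then $\vec{w}_{20R}$ vanishes on it, so $\gamma^{ac}_{\varepsilon,R}(t)\approx 0$ satisfies your stopping condition while $\gamma_{\varepsilon}(t)$ is far away and $\|\Psi(t)\|_{L^{2}(|x|>20R)}$ is close to $1$; in that scenario your speed bound $\lesssim\sqrt{\varepsilon}$ fails, so the bootstrap does not close as written.

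The repair is to anchor the stopping time on $\gamma_{\varepsilon}$ itself, which is what the paper does: set $t_{\varepsilon}=\inf\{t\ge 0 : |\gamma_{\varepsilon}(t)|=10R\}$ (positive since $\gamma_{\varepsilon}$ is continuous with $\gamma_{\varepsilon}(0)=0$); on $[0,t_{\varepsilon}]$ the comparison and the speed bound hold unconditionally, and the lower bound $t_{\varepsilon}\ge \alpha R/\sqrt{\varepsilon}$ follows by integrating $|\frac{d}{dt}\gamma^{ac}_{\varepsilon,R}|\lesssim\sqrt{\varepsilon}$ and evaluating the comparison at $t=0$ and $t=t_{\varepsilon}$ (giving $|\gamma^{ac}_{\varepsilon,R}(t_{\varepsilon})|\ge 8R$, $|\gamma^{ac}_{\varepsilon,R}(0)|\le 2R$). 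A secondary point: your calibration ($|\gamma^{ac}_{\varepsilon,R}(0)|\le R$ ``once $R$ is large enough relative to $R_{\varepsilon}$'') imposes a condition stronger than the stated hypothesis $R>R_{\varepsilon}$; this is avoidable by splitting the comparison integral at radius $R$ rather than $R_{\varepsilon}$, which yields $|\gamma^{ac}_{\varepsilon,R}(t)-\gamma_{\varepsilon}(t)|\le R+50R\varepsilon<2R$ for every $R>R_{\varepsilon}$ and $\varepsilon<\tfrac{1}{100}$, exactly as in the paper.
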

\begin{remark}\label{10/04/11/22:59}
In the proof below, we find that the following estimate holds (see (\ref{08/10/19/18:10})):  
\begin{equation*}\label{08/10/19/16:24}
\left| \frac{d\gamma_{\varepsilon,R}^{ac}}{dt}(t)\right|
\lesssim \sqrt{\varepsilon } 
\quad 
\mbox{for all 
$t \in \left[0, 
\alpha \frac{ R}{\sqrt{  \varepsilon } }
\right]$}
,
\end{equation*}
where the implicit constant depends only on $d$ and $q$.
\end{remark}

\begin{proof}[Proof of Lemma \ref{08/09/04/12:16}]
We have from Proposition B.1 in \cite{Nawa8} that  
\begin{equation}\label{08/10/19/16:54}
\gamma_{\varepsilon,R}^{ac}(t)
=(\vec{w}_{20R},|\Psi(0)|^{2})
+
\left( 
2\Im \int_{0}^{t}
\!\!
\int_{\mathbb{R}^{d}}
\nabla \vec{w}_{20R}^{j}(x)\!\cdot \!\nabla \Psi(x,s)
\overline{\Psi(x,s)}\,dx\,ds
\right)_{j=1,\ldots, d}.
\end{equation}
This formula, with the help of (\ref{08/10/18/22:17}), (\ref{10/04/05/18:07}) and (\ref{08/10/23/18:35}), immediately shows (\ref{09/09/26/17:21}): $\gamma_{\varepsilon,R}^{ac}\in C^{1}([0,\infty);\mathbb{R}^{d})$. 
\par 
Next, we shall prove the properties (\ref{08/09/04/16:51}) and (\ref{08/10/19/16:25}). Let $\gamma_{\varepsilon}$ be a path found in Proposition \ref{08/10/16/21:12}, and $t_{\varepsilon}$ be the first time such that the size of $\gamma_{\varepsilon}$ reaches $10R$, i.e.,  
\begin{equation}\label{10/04/05/18:42}
t_{\varepsilon} := \inf\left\{ t\ge 0 \bigm| \left| \gamma_{\varepsilon}(t) \right|=10R \right\}.
\end{equation}
Since $\gamma_{\varepsilon}\in C([0,\infty);\mathbb{R}^{d})$ with $\gamma_{\varepsilon}(0)=0$, we have that $t_{\varepsilon}>0$ and  
\begin{equation}\label{08/10/19/22:02}
\left| \gamma_{\varepsilon}(t) \right| \le 10R
 \quad 
\mbox{for all $t \in [0,t_{\varepsilon}]$}.
\end{equation} 
We claim that 
\begin{equation}\label{08/10/19/23:22}
|\gamma_{\varepsilon,R}^{ac}(t) -\gamma_{\varepsilon}(t)|< 2R
\quad 
\mbox{for all $t \in [0,t_{\varepsilon}]$}.
\end{equation} 
It follows from the property (\ref{08/10/18/22:17}) that 
\begin{equation}\label{08/10/19/22:46}
\begin{split}
\left| \gamma_{\varepsilon,R}^{ac}(t) -\gamma_{\varepsilon}(t) \right| 
&=
\left| (\vec{w}_{20R}, |\Psi(t)|^{2} )- \gamma_{\varepsilon}(t)\|\Psi(t)\|_{L^{2}}^{2} \right| 
\\[6pt]
&=
\left| 
\int_{\mathbb{R}^{d}}
\left\{ 
\vec{w}_{20R}(x) - \gamma_{\varepsilon}(t)
\right\}\left| \Psi(x,t)\right|^{2}\,dx
\right|
\\[6pt]
&\le 
\int_{|x-\gamma_{\varepsilon}(t)|\le R}
\left| 
x - \gamma_{\varepsilon}(t)
\right| 
\left| \Psi(x,t)\right|^{2}\,dx 
\\[6pt]
& \qquad \qquad 
+
\int_{|x-\gamma_{\varepsilon}(t)|\ge R}
\left| 
\vec{w}_{20R} - \gamma_{\varepsilon}(t)
\right|
\left| \Psi(x,t) \right|^{2}\,dx.
\end{split}
\end{equation}
Moreover, applying (\ref{08/10/19/22:02}) and (\ref{08/10/23/18:42}) to the second term on the right-hand side above, we obtain that 
\begin{equation}\label{10/04/11/14:39}
\left| \gamma_{\varepsilon, R}^{ac}(t) 
-\gamma_{\varepsilon}(t) 
\right|
\le 
R \|\Psi(t)\|_{L^{2}}^{2} 
+ 
50R \int_{ |x-\gamma_{\varepsilon}(t)|\ge R}
 \left| \Psi(x,t) \right|^{2}
\,dx
\quad
\mbox{for all $t \in [0,t_{\varepsilon}]$}.
\end{equation}
Hence, this inequality (\ref{10/04/11/14:39}), together with (\ref{08/10/18/22:17}) and the tightness (\ref{08/10/18/22:46}), yields that    
\begin{equation}\label{10/04/11/14:43}
\left| \gamma_{\varepsilon,R}^{ac}(t) -\gamma_{\varepsilon}(t) \right|
\le R+50R \varepsilon < 2R
\quad 
\mbox{for all $\varepsilon < \frac{1}{100}$ and $t \in [0,t_{\varepsilon}]$}.
\end{equation}

Now, we have by (\ref{08/10/19/22:02}) and (\ref{08/10/19/23:22}) that 
\begin{equation}\label{09/09/26/18:10}
\left| \gamma_{\varepsilon,R}^{ac}(t)\right| \le 12R
\quad 
\mbox{for all $t \in [0,t_{\varepsilon}]$}.
\end{equation} 
Moreover, (\ref{08/10/19/23:22}) also gives us that 
\begin{equation}\label{10/04/11/15:16}
B_{R}(\gamma_{\varepsilon}(t)) 
\subset 
B_{4R}(\gamma_{\varepsilon}^{ac}(t))
\quad 
\mbox{for all $t \in [0,t_{\varepsilon}]$},
\end{equation}
so that the tightness of $\{\Psi(t)\}$ in $H^{1}(\mathbb{R}^{d})$ (see (\ref{08/10/18/22:46}) and (\ref{08/11/01/15:41})) gives us that  
\begin{equation}\label{08/10/19/22:50}
\int_{|x-\gamma_{\varepsilon,R}^{ac}(t)|\le 4R} \left| \Psi(x,t) \right|^{2}
+
\left| \nabla \Psi(t) \right|^{2} \,dx 
\ge 
\left\| \Psi(t)\right\|_{H^{1}}^{2}-\varepsilon
\quad 
\mbox{for all $t \in [0,t_{\varepsilon}]$}.
\end{equation}
Therefore, for the desired results (\ref{08/09/04/16:51}) and (\ref{08/10/19/16:25}), it suffices to show that there exists a constant $\alpha>0$, depending only on $d$ and $p$, such that  
\begin{equation}\label{09/09/26/18:21}
\alpha \frac{R}{\sqrt{\varepsilon}}\le t_{\varepsilon}.
\end{equation}
To this end, we prove that 
\begin{equation}\label{08/10/19/18:10}
\left| \frac{d \gamma_{\varepsilon,R}^{ac}}{dt}(t)\right|
\lesssim \sqrt{\varepsilon } 
\quad 
\mbox{for all $t \in [0, t_{\varepsilon}]$},
\end{equation}
where the implicit constant depends only on $d$ and $p$. 
\par 
Before proving (\ref{08/10/19/18:10}), we describe how it yields (\ref{09/09/26/18:21}). We easily verify by (\ref{08/10/19/18:10}) that  
\begin{equation}\label{10/04/11/15:36}
|\gamma_{\varepsilon,R}^{ac}(t_{\varepsilon})|-|\gamma_{\varepsilon,R}^{ac}(0)|
 \le 
\int_{0}^{t_{\varepsilon}}\left| \frac{d\gamma_{\varepsilon,R}^{ac}}{dt}(t)\right|\,dt
\lesssim \sqrt{\varepsilon }t_{\varepsilon},
\end{equation}
where the implicit constant depends only on $d$ and $p$. Then, it follows from (\ref{08/10/19/23:22}) and $\gamma_{\varepsilon}(0)=0$ that   
\begin{equation}\label{10/04/11/15:38}
\begin{split}
\sqrt{\varepsilon} t_{\varepsilon} 
&\gtrsim  
|\gamma_{\varepsilon,R}^{ac}(t_{\varepsilon})|-2R
\\[6pt]
& \ge 
|\gamma_{\varepsilon}(t_{\varepsilon})|-|\gamma_{\varepsilon,R}^{ac}(t_{\varepsilon})-\gamma_{\varepsilon}(t_{\varepsilon})|-2R
\\[6pt]
& \ge 10R -2R -2R=6R,
\end{split}
\end{equation}
which gives (\ref{09/09/26/18:21}).
\par 
Finally, we prove (\ref{08/10/19/18:10}). Using the formula (\ref{08/10/19/16:54}) and the property (\ref{08/08/18/17:42}), we obtain that 
\begin{equation}\label{10/04/11/15:41}
\begin{split}
\left| \frac{d \gamma_{\varepsilon,R}^{ac}}{dt}(t)\right|^{2}
&=\sum_{j=1}^{d}
\left| 2\Im (\nabla \vec{w}_{20R}^{j}\cdot\nabla \Psi(t),\Psi(t)) \right|^{2}
\\[6pt]
&\le 4\sum_{j=1}^{d}
\left\|\nabla \vec{w}_{20R}^{j} \right\|_{L^{\infty}}^{2}
\left\|\Psi(t)\right\|_{L^{2}}^{2}
 \int_{|x|\ge 20R} \left|\nabla \Psi(x,t)\right|^{2}\,dx
\quad 
\mbox{for all $t\ge 0$}.
\end{split}
\end{equation}
Applying (\ref{08/10/18/22:17})  and (\ref{08/10/23/18:35}) to the right-hand side above, we further obtain that  
\begin{equation}\label{09/09/26/18:55}
\left| \frac{d \gamma_{\varepsilon,R}^{ac}}{dt}(t)\right|^{2}
\lesssim  \int_{|x|\ge 20R}  
\left| \nabla \Psi(x,t)\right|^{2}\,dx
\quad 
\mbox{for all $t\ge 0$}, 
\end{equation}
where the implicit constant depends only on $d$ and $p$. Since the estimate (\ref{08/10/19/22:02}) shows that 
\begin{equation}\label{10/04/11/15:48}
B_{R}(\gamma_{\varepsilon}(t)) 
\subset B_{20R}(0)
\quad 
\mbox{for all $t \in [0,t_{\varepsilon}]$},
\end{equation}
the estimate (\ref{09/09/26/18:55}), together with the tightness (\ref{08/11/01/15:41}), leads to (\ref{08/10/19/18:10}). 
\end{proof} 

Lemma \ref{08/09/04/12:16} implies that $\Psi$ found in Proposition \ref{08/10/16/21:12} is in a bound motion, rather, a 
 standing wave. On the other hand, the generalized virial identity ((\ref{08/03/29/19:05}) in Lemma \ref{08/10/23/18:45}) suggests that $\Psi$ is in a scattering motion. As we already mentioned at the beginning of this Section \ref{09/05/06/9:13}, these two facts contradict each other; Thus, we see that $\widetilde{N}_{2}=\widetilde{N}_{c}$. Here, we show this point precisely: 
\par 
The generalized virial identity (\ref{08/03/29/19:05}), together with 
(\ref{08/12/29/14:16}) and (\ref{08/04/20/12:50}), yields that 
\begin{equation}\label{10/04/11/16:13}
\begin{split}
&(W_{R},|\Psi(t)|^{2}) 
\\[6pt]
&\ge 
(W_{R},|\Psi(0)|^{2})+2t\Im{(\vec{w}_{R}\cdot \nabla \Psi(0),\Psi(0))}
+2\int_{0}^{t}\int_{0}^{t'}\mathcal{K}(\Psi(t''))\,dt''dt'
\\[6pt]
&\qquad 
-2 \int_{0}^{t}\int_{0}^{t'} \int_{|x|\ge R}
\rho_{1}(x)|\nabla \Psi(x,t'')|^{2} + \rho_{2}(x) \left|\frac{x}{|x|}\cdot \nabla \Psi(x,t'')\right|^{2}dx dt''dt'
\\[6pt]
&\qquad 
-\frac{1}{2}\int_{0}^{t}\int_{0}^{t'}
\left\| \Delta ({\rm div}\, \vec{w}_{R})
\right\|_{L^{\infty}} \left\|\Psi(t'')\right\|_{L^{2}}^{2} dt''dt'
\qquad 
\mbox{for all $R>0$}.
\end{split}
\end{equation}
Applying the estimates (\ref{08/10/18/22:17}), (\ref{08/04/20/16:54}) and (\ref{08/12/29/12:53}) to the right-hand side above, we obtain that 
\begin{equation}\label{10/04/11/16:31}
\begin{split}
&(W_{R},|\Psi(t)|^{2}) 
\\[6pt]
&\ge 
(W_{R},|\Psi(0)|^{2})+2t\Im{(\vec{w}_{R}\cdot \nabla \Psi(0),\Psi(0))}
+2\int_{0}^{t}\int_{0}^{t'}\mathcal{K}(\Psi(t''))\,dt''dt'
\\[6pt]
& \quad 
-2 \left( K_{1}+K_{2} \right) 
\int_{0}^{t}\int_{0}^{t'} \int_{|x|\ge R}
|\nabla \Psi(x,t'')|^{2} \,dx dt''dt'
-\frac{10d^{2}K}{R^{2}}
t^{2}
\quad 
\mbox{for all $R>0$},
\end{split}
\end{equation}
where $K$ is the constant defined in (\ref{10/02/27/22:14}), and $K_{1}$ and $K_{2}$ are the constants found in Lemma \ref{08/04/20/0:27}. 
Moreover, it follows from the estimate (\ref{09/06/21/19:30}) in Proposition \ref{09/06/21/19:28} that 
\begin{equation}\label{08/09/04/12:09}
\begin{split}
&(W_{R},|\Psi(t)|^{2}) 
\\[6pt] 
&\ge (W_{R},|\Psi(0)|^{2})+2t\Im{(\vec{w}_{R}\cdot \nabla \Psi(0),\Psi(0))}
+t^{2}\omega_{0}\mathcal{H}(\Psi(0))
\\[6pt]
& \quad 
-2(K_{1}+K_{2}) \int_{0}^{t}\int_{0}^{t'} 
\int_{|x|\ge R}
|\nabla \Psi(x,t'')|^{2} \, dx dt''dt'
-\frac{10d^{2}K}{R^{2}}t^{2}
\quad 
\mbox{for all $R>0$},
\end{split}
\end{equation}
where we put
\begin{equation}\label{10/04/08/23:11}
\omega_{0} :=1-\frac{\widetilde{\mathcal{N}}_{2}(\Psi(0))}{\widetilde{N}_{2}}.
\end{equation}
Here, we have by Lemma \ref{08/09/04/12:16} that: For any $\varepsilon \in (0,\frac{1}{100})$, there exists $R_{\varepsilon}>0$ with the following property:
 for any $R\ge R_{\varepsilon}$, there exists $\gamma_{\varepsilon,R}^{ac} \in C^{1}([0,\infty);\mathbb{R}^{d})$ such that
\begin{align}\label{09/03/05/18:10}
&\left| \gamma_{\varepsilon,R}^{ac}(t) \right| \le 20R
\quad 
\mbox{for all $t \in \bigm[0,\, \alpha\frac{R}{\sqrt{\varepsilon }} \bigm]$},
\\[6pt]
\label{08/10/20/2:59}
&\int_{|x-\gamma_{\varepsilon,R}^{ac}(t)|\ge 4R} 
|\nabla \Psi(x,t)|^{2}dx
< \varepsilon 
\quad 
\mbox{for all $t \in \bigm[0,\, \alpha\frac{R}{\sqrt{\varepsilon }} \bigm]$},
\end{align}
where $\alpha$ is some constant depending only on $d$ and $p$. 
\par 
We employ (\ref{09/03/05/18:10}) to obtain that 
\begin{equation}\label{10/04/11/18:04}
|x-\gamma_{\varepsilon,R}^{ac}(t)|\ge 4R
\quad 
\mbox{for all $R\ge R_{\varepsilon}$, $t \in \bigm[0,\, \alpha\frac{R}{\sqrt{\varepsilon }}\bigm]$ and $x \in \mathbb{R}^{d}$ with $|x|\ge 24R$}.
\end{equation} 
Hence, (\ref{08/09/04/12:09}), together with the tightness (\ref{08/10/20/2:59}), leads to 
that 
\begin{equation}\label{10/04/11/21:21}
\begin{split}
(W_{50R},|\Psi(t)|^{2}) 
&\ge (W_{50R},|\Psi(0)|^{2})+2t\Im{(\vec{w}_{50R}\cdot \nabla \Psi(0),\Psi(0))}
+t^{2}\omega_{0}\mathcal{H}(\Psi(0))
\\[6pt]
& \quad 
-(K_{1}+K_{2})t^{2}\varepsilon 
-\frac{10d^{2}K}{(50R)^{2}}t^{2}
\quad 
\mbox{for all $R\ge R_{\varepsilon}$ and $t \in \bigm[0,\, \alpha\frac{R}{\sqrt{\varepsilon }} \bigm]$
}.
\end{split}
\end{equation}
We choose $\varepsilon$ so small that 
\begin{equation}
\label{10/04/11/21:29}
0<
\varepsilon < \min 
\left\{ \frac{1}{100},
\ 
\frac{\omega_{0}}{4(K_{1}+K_{2})}\mathcal{H}(\Psi(0))
\right\},
\end{equation}
and $R$ so large that 
\begin{equation}\label{08/12/30/14:42}
R 
\ge   
\max\left\{ R_{\varepsilon}, 
\ 
\frac{d\sqrt{K}}{\sqrt{\omega_{0}\mathcal{H}(\Psi(0))}}
\right\}.
\end{equation}
Then, it follows from (\ref{10/04/11/21:21}) that 
\begin{equation}\label{08/09/04/13:45}
\begin{split}
(W_{50R},|\Psi(t)|^{2})
\ge 
(W_{50R},|\Psi(0)|^{2})+2t\Im{(\vec{w}_{50R}\cdot \nabla \Psi(0),\Psi(0))}
+
\frac{t^{2}}{2}\omega_{0}\mathcal{H}(\Psi(0))&
\\[6pt]
\mbox{for all $t \in \bigm[0,\, \alpha\frac{R}{\sqrt{\varepsilon }} \bigm]$}&.
\end{split}
\end{equation}
Dividing the both sides of (\ref{08/09/04/13:45}) by $t^{2}$ and applying the estimates (\ref{08/10/18/22:17}) and (\ref{08/12/29/12:48}), we obtain that  
\begin{equation}\label{08/12/30/14:28}
\begin{split}
\frac{8(50R)^{2}}{t^{2}}
\ge 
\frac{1}{t^{2}}(W_{50R},|\Psi(0)|^{2})
+
\frac{2}{t}\Im{(\vec{w}_{50R}\cdot \nabla \Psi(0),\Psi(0))}
+
\frac{\omega_{0}}{2}\mathcal{H}(\Psi(0))&
\\[6pt]
\mbox{for all $t \in \bigm[0,\, \alpha\frac{R}{\sqrt{\varepsilon }} \bigm]$}&.
\end{split}
\end{equation}
In particular, when $t= \alpha \frac{R}{\sqrt{\varepsilon}}$, 
we have  
by (\ref{08/10/18/22:17}), (\ref{08/10/23/18:42}) and (\ref{08/12/29/12:48}) that 
\begin{equation}\label{08/09/04/17:06}
\begin{split}
\frac{8(50)^{2}\varepsilon}{\alpha^{2}}
&\ge   
\frac{\varepsilon}{\alpha^{2}R^{2}}
(W_{50R},|\Psi(0)|^{2})
+
\frac{2\sqrt{\varepsilon}}{\alpha R}
\Im{ (\vec{w}_{50R}\cdot \nabla \Psi(0),\Psi(0))}
+
\frac{\omega_{0}}{2}\mathcal{H}(\Psi(0))
\\[6pt]
&\ge 
-\frac{8(50)^{2}\varepsilon}{\alpha^{2}}
-\frac{200 \sqrt{\varepsilon}}{\alpha }
\left\| \nabla \Psi(0) \right\|_{L^{2}}
+\frac{\omega_{0}}{2} \mathcal{H}(\Psi(0)),
\end{split}
\end{equation}
so that  
\begin{equation}\label{10/04/11/23:32}
\frac{8(50)^{2}\varepsilon}{\alpha^{2}}
+
\frac{8(50)^{2}\varepsilon}{\alpha^{2}}
+\frac{200 \sqrt{\varepsilon}}{\alpha }
\left\| \nabla \Psi(0) \right\|_{L^{2}}
\ge 
\frac{\omega_{0}}{2} \mathcal{H}(\Psi(0)).
\end{equation}
However, taking $\varepsilon \to 0$ in (\ref{10/04/11/23:32}), we obtain a contradiction. This absurd conclusion comes from the existence of one-soliton-like solution $\Psi$ (see Proposition \ref{08/10/16/21:12}). Thus, it must hold that $\widetilde{N}_{c}=\widetilde{N}_{2}$, provided that Proposition \ref{08/09/04/12:16} is valid.

\subsection{Solving the variational problem for $\widetilde{N}_{c}$}
\label{09/05/05/10:03}
In this section, we construct a candidate for the one-soliton-like solution, considering the variational problem for $\widetilde{N}_{c}$. 
\par 
Supposing that $\widetilde{N}_{c}<\widetilde{N}_{2}$, we can take a minimizing sequence $\{\delta_{n}\}_{n \in \mathbb{N}}$ such that 
\begin{equation}\label{10/04/15/19:55}
\widetilde{N}_{c}<\delta_{n}<\widetilde{N}_{2}
\quad 
\mbox{for all $n \in \mathbb{N}$},
\qquad 
\lim_{n\to \infty}\delta_{n}=\widetilde{N}_{c}.
\end{equation}
Moreover, Lemma \ref{08/09/01/23:45} enables us to take a sequence 
$\{\psi_{0,n}\}_{n\in \mathbb{N}}$ in $PW_{+}$ such that  
\begin{align}\label{09/05/05/10:26}
&\left\|\psi_{0,n}\right\|_{L^{2}}=1
\quad 
\mbox{for all $n \in \mathbb{N}$},
\\[6pt]
\label{09/05/05/10:25}
&\widetilde{N}_{c} < \widetilde{\mathcal{N}}_{2}(\psi_{0,n}) <\delta_{n}
\quad
\mbox{for all $n\in \mathbb{N}$}.
\end{align}
Note that (\ref{09/05/05/10:25}), together with (\ref{10/04/15/19:55}), leads to that  
\begin{equation}\label{10/04/15/20:14}
\lim_{n\to \infty}\widetilde{\mathcal{N}}_{2}(\psi_{0,n})=\widetilde{N}_{c}.
\end{equation}
Let $\psi_{n}$ be the solution to (\ref{08/05/13/8:50}) with $\psi_{n}(0)=\psi_{0,n}$. Then, (\ref{09/05/05/10:25}), together with the definition of $\widetilde{N}_{c}$ (see (\ref{08/09/02/18:06})), implies that  
\begin{equation}\label{09/05/05/10:36}
\left\|\psi_{n} \right\|_{X(\mathbb{R})}=\infty 
\quad 
\mbox{for all $n \in \mathbb{N}$}.
\end{equation}
We also find that  
\begin{equation}\label{10/04/15/20:26}
\limsup_{n\to \infty}\left\| e^{\frac{i}{2}t\Delta}\psi_{0,n} \right\|_{L^{\infty}(\mathbb{R};L^{\frac{d}{2}(p-1)})}>0.
\end{equation}
Indeed, if (\ref{10/04/15/20:26}) fails, then the small data theory (Proposition \ref{08/08/22/20:59}) concludes that 
\[
\left\| \psi_{n} \right\|_{X(\mathbb{R})}< \infty
\quad 
\mbox{for sufficiently large $n\in \mathbb{N}$},
\]
which contradicts (\ref{09/05/05/10:36}). 
\par 
The following lemma gives us a candidate for the one-soliton-like solution in 
 Proposition \ref{08/10/16/21:12}. 
 
\begin{lemma}
\label{08/08/19/23:07}
Assume that $d\ge 1$ and $2+\frac{4}{d}< p+1 <2^{*}$. Suppose that $\widetilde{N}_{c}<\widetilde{N}_{2}$. Let $\{ \psi_{n} \}$ be a sequence of global solutions to the equation (\ref{08/05/13/8:50}) in $C(\mathbb{R};H^{1}(\mathbb{R}^{d}))$ such that   
\begin{align}
\label{09/05/03/12:29}
&\psi_{n}(t) \in PW_{+}
\quad 
\mbox{for all $t \in \mathbb{R}$ and $n\in \mathbb{N}$},
\\[6pt]
\label{10/05/13/10:06}
&\left\|\psi_{n}(t) \right\|_{L^{2}}=1
\quad 
\mbox{for all $n \in \mathbb{N}$ and $t \in \mathbb{R}$}, 
\\[6pt]
\label{09/02/02/23:40}
&\sup_{n \in \mathbb{N}}\left\| \psi_{n}(0) \right\|_{H^{1}}<\infty
, 
\\[6pt]
\label{08/08/26/15:56}
&\lim_{n\to \infty}\widetilde{\mathcal{N}}_{2}(\psi_{n}(t))= 
 \widetilde{N}_{c}
 \quad 
 \mbox{for all $t \in \mathbb{R}$}, 
\\[6pt]
\label{09/02/17/14:53}
&\left\|\psi_{n} \right\|_{X(\mathbb{R})}=\infty 
\quad 
\mbox{for all $n\in \mathbb{N}$}.
\end{align}
Furthermore, we suppose that 
\begin{equation}\label{08/08/21/17:39}
\limsup_{n\to \infty}\left\| e^{\frac{i}{2}t\Delta}\psi_{n}(0) \right\|_{L^{\infty}(\mathbb{R};L^{\frac{d}{2}(p-1)})}>0.
\end{equation}
Then, there exists a subsequence of $\{\psi_{n}\}$ (still denoted by the same symbol), which satisfies the following property: There exist 
\\
{\rm (i)} a nontrivial global solution $\Psi \in C(\mathbb{R};H^{1}(\mathbb{R}^{d}))$ to the equation (\ref{08/05/13/8:50}) with 
\begin{align}
\label{09/05/02/10:11}
&\left\|\Psi \right\|_{X(\mathbb{R})}=\infty ,
\\[6pt]
\label{09/05/02/10:12}
&\Psi(t) \in PW_{+} 
\quad 
\mbox{for all $t \in \mathbb{R}$},  
\\[6pt]
\label{10/05/13/10:09}
&\left\| \Psi(t) \right\|_{L^{2}}=1
\quad 
\mbox{for all $t \in \mathbb{R}$}, 
\\[6pt]
\label{09/05/02/10:13}
&\widetilde{\mathcal{N}}_{2}(\Psi(t))=\widetilde{N}_{c}
\quad 
\mbox{for all $t \in \mathbb{R}$}, 
\end{align}
and 
\\
{\rm (ii)}  a nontrivial  function $f \in PW_{+}$, a sequence $\{\tau_{n}\}$ in $\mathbb{R}$ with $\displaystyle{\lim_{n\to \infty}\tau_{n}= \tau_{\infty}}$ for some $\tau_{\infty} \in \mathbb{R}\cup \{\pm \infty\}$, and a sequence $\{\eta_{n}\}$ in $\mathbb{R}^{d}$ such that  
\begin{align}
\label{09/05/02/10:10}
& \lim_{n\to \infty}e^{\frac{i}{2}\tau_{n}\Delta}e^{\eta_{n}\cdot \nabla }\psi_{n}(0) =  f \quad \mbox{ weakly in $H^{1}(\mathbb{R}^{d})$, and a.e. in $\mathbb{R}^{d}$},
\\[6pt]
\label{09/07/27/1:49}
&\lim_{n\to \infty}\left\| e^{\frac{i}{2}\tau_{n}\Delta}
e^{\eta_{n}\cdot \nabla }\psi_{n}(0) -  f \right\|_{L^{2}(\mathbb{R}^{d})}=0,
\\[6pt]
\label{09/05/02/10:08}
&\lim_{n\to \infty}\left\| e^{\frac{i}{2}(t+\tau_{n})\Delta} 
e^{\eta_{n}\cdot \nabla } \psi_{n}(0)-e^{\frac{i}{2}t\Delta} f 
\right\|_{L^{\infty}(\mathbb{R};L^{\frac{d}{2}(p-1)})\cap X(\mathbb{R})}=0,
\\[6pt]
\label{09/05/02/10:09}
&\lim_{n\to \infty}\left\|\Psi(-\tau_{n})-e^{-\frac{i}{2}\tau_{n}\Delta}f \right\|_{H^{1}}=0 ,
\\[6pt]
\label{09/05/02/10:07}
& \lim_{n\to \infty} \left\|e^{\frac{i}{2}t\Delta}\psi_{n}(0)
 -e^{\frac{i}{2}t\Delta}\left(e^{-\tau_{n}\frac{\partial }{\partial t}-\eta_{n}\cdot \nabla}\Psi\right) (0) 
\right\|_{X(\mathbb{R})}=0.
\end{align}
Especially, we have
\begin{equation}\label{10/04/15/20:43}
\left\| f \right\|_{L^{2}}=\left\| \Psi(t) \right\|_{L^{2}}
\quad 
\mbox{for all $t \in \mathbb{R}$}, 
\qquad 
\left\|\nabla f  \right\|_{L^{2}}=\lim_{n\to \infty}\left\| \nabla \Psi(-\tau_{n})\right\|_{L^{2}}.
\end{equation}
\end{lemma}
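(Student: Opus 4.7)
The plan is to extract a nontrivial weak $H^{1}$-limit from the sequence $\{\psi_{n}(0)\}$ by a concentration-compactness device, solve the NLS through that limit to produce the candidate $\Psi$, and then combine a Brezis--Lieb orthogonality with the long time perturbation theory (Proposition \ref{08/08/05/14:30}) to show that only one profile survives and that $\Psi$ has infinite $X$-norm.

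By (\ref{09/02/02/23:40}) the sequence $\{\psi_{n}(0)\}$ is bounded in $H^{1}$. The non-vanishing hypothesis (\ref{08/08/21/17:39}) applied to $e^{\frac{i}{2}t\Delta}\psi_{n}(0)$, combined with a standard $L^{\frac{d}{2}(p-1)}$ concentration argument, provides, after extracting a subsequence, $\tau_{n}\to \tau_{\infty}\in \mathbb{R}\cup\{\pm\infty\}$, a sequence $\eta_{n}\in \mathbb{R}^{d}$, and a nontrivial $f \in H^{1}$ such that $e^{\frac{i}{2}\tau_{n}\Delta}e^{\eta_{n}\cdot\nabla}\psi_{n}(0)\rightharpoonup f$ weakly in $H^{1}$ and a.e. Writing
\[
\psi_{n}(0)=e^{-\frac{i}{2}\tau_{n}\Delta}e^{-\eta_{n}\cdot\nabla}f+g_{n},
\]
the Brezis--Lieb lemma gives the additive orthogonalities $\mathcal{M}(\psi_{n}(0))=\mathcal{M}(f)+\mathcal{M}(g_{n})+o(1)$, $\mathcal{H}(\psi_{n}(0))=\mathcal{H}(f)+\mathcal{H}(g_{n})+o(1)$, and the analogue for $\mathcal{K}$ and $\|\cdot\|_{L^{p+1}}^{p+1}$. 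Because $\widetilde{\mathcal{N}}_{2}(\psi_{n}(0))\to \widetilde{N}_{c}<\widetilde{N}_{2}$ and $\mathcal{K}(\psi_{n}(0))>0$, these force (for large $n$) $f\in PW_{+}$ with $\widetilde{\mathcal{N}}_{2}(f)\le \widetilde{N}_{c}$, and $g_{n}\in PW_{+}\cup\{0\}$. I then define $\Psi$ as the $H^{1}$-solution of (\ref{08/05/13/8:50}) through $e^{-\frac{i}{2}\tau_{\infty}\Delta}f$ at $t=-\tau_{\infty}$ when $\tau_{\infty}\in \mathbb{R}$, and as $W_{\mp}f$ (Proposition \ref{09/01/12/16:36}) when $\tau_{\infty}=\pm\infty$; in both cases (\ref{09/05/02/10:09}) holds by construction, and Proposition \ref{09/06/21/19:28} gives the global existence and invariance (\ref{09/05/02/10:12}).

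The heart of the argument is a two-pronged contradiction via Proposition \ref{08/08/05/14:30}. \emph{First prong:} if $\|\Psi\|_{X(\mathbb{R})}<\infty$, take $u_{n}(x,t):=\Psi(x-\eta_{n},t-\tau_{n})$ as an approximate solution for $\psi_{n}$; since $u_{n}$ is an \emph{exact} NLS solution the error term in (\ref{10/04/02/18:21}) vanishes, and the smallness hypothesis (\ref{09/09/15/18:03}) at $t_{1}=0$ reduces to showing $\|e^{\frac{i}{2}t\Delta}(\psi_{n}(0)-u_{n}(0))\|_{X(\mathbb{R})}\to 0$. By (\ref{09/05/02/10:09}) this in turn reduces to $\|e^{\frac{i}{2}t\Delta}g_{n}\|_{X(\mathbb{R})}\to 0$, which follows from $g_{n}\to 0$ in $L^{2}$ via the interpolation $\|g_{n}\|_{\dot{H}^{s_{p}}}\le \|g_{n}\|_{L^{2}}^{1-s_{p}}\|g_{n}\|_{\dot{H}^{1}}^{s_{p}}$ and the Strichartz estimate (\ref{08/10/25/23:16}). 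Conclusion: $\|\psi_{n}\|_{X(\mathbb{R})}<\infty$ for large $n$, contradicting (\ref{09/02/17/14:53}); hence $\|\Psi\|_{X(\mathbb{R})}=\infty$, giving (\ref{09/05/02/10:11}). \emph{Second prong:} if $\|g_{n}\|_{L^{2}}\not\to 0$, then $\mathcal{M}(f)<1$ and $\widetilde{\mathcal{N}}_{2}(f)<\widetilde{N}_{c}$; iterating the concentration extraction on $\{g_{n}\}$ produces a second profile which by orthogonality also has $\widetilde{\mathcal{N}}_{2}<\widetilde{N}_{c}$. By definition of $\widetilde{N}_{c}$ each profile generates a globally scattering solution with finite $X$-norm, and superposing them (using the orthogonality of the centers in $(\tau,\eta)$ and Lemma \ref{08/08/18/15:41} to control the nonlinear cross terms in $L^{\widetilde{r}_{1}'}(\mathbb{R};L^{q_{1}'})$) produces an approximate solution whose error verifies the hypothesis of Proposition \ref{08/08/05/14:30}, again forcing $\|\psi_{n}\|_{X(\mathbb{R})}<\infty$ and contradicting (\ref{09/02/17/14:53}). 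Therefore $g_{n}\to 0$ in $L^{2}$, which is (\ref{09/07/27/1:49}); (\ref{09/05/02/10:08}) and (\ref{09/05/02/10:07}) follow from interpolation plus Strichartz, $\|f\|_{L^{2}}=1$, and mass conservation yields (\ref{10/05/13/10:09}). The definition of $\widetilde{N}_{c}$ forces $\widetilde{\mathcal{N}}_{2}(\Psi(t))\ge \widetilde{N}_{c}$ for every $t$ (time-translate: $\Psi(\cdot+t_{0})\in PW_{+}$ with infinite $X$-norm on $\mathbb{R}$), while the Brezis--Lieb limit gives the reverse inequality, yielding (\ref{09/05/02/10:13}).

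The main obstacle is the second prong: setting up a clean multi-profile decomposition in a function space compatible with $X$ and verifying that the superposition of scattering profiles is a valid approximate solution in the sense of Proposition \ref{08/08/05/14:30}, i.e.\ that the cross-term nonlinear errors are small in $L^{\widetilde{r}_{1}'}(\mathbb{R};L^{q_{1}'})$ thanks to the asymptotic orthogonality of the frequency-time parameters. Once this superposition machinery is in place, the remaining conclusions of the lemma follow mechanically from Brezis--Lieb orthogonality, the conservation laws, and the $PW_{+}$-invariance from Section \ref{08/07/02/23:51}.
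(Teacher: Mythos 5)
Your outline follows essentially the same route as the paper: a Brezis--Lieb based profile extraction, nonlinear profiles obtained by solving the Cauchy problem (or the final-value problem via Proposition \ref{09/01/12/16:36}) at the limiting time shift, a superposed approximate solution whose cross terms are controlled by the orthogonality of the parameters, and the long time perturbation theory (Proposition \ref{08/08/05/14:30}) to force a single profile; the identities (\ref{09/05/02/10:13}) and (\ref{10/04/15/20:43}) then come from the minimizing property exactly as in the paper. One structural remark: as written, your ``first prong'' (showing $\|\Psi\|_{X(\mathbb{R})}=\infty$) invokes $g_{n}\to 0$ in $L^{2}$, which you only establish in the second prong; the argument is not circular, since the second prong is independent of the first, but you must run it in that order. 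Note that the paper orders things differently: it first gets smallness of the \emph{linear evolution of the remainder in $L^{\infty}L^{\frac{d}{2}(p-1)}\cap X$} directly from the termination of the profile extraction ((\ref{09/09/15/16:19}), (\ref{09/09/15/16:20})), proves $N=1$ and $\|\Psi\|_{X(\mathbb{R})}=\infty$, and only afterwards deduces the $L^{2}$ convergence (\ref{09/07/27/1:49}) from minimality, since $N=1$ alone does not force the remainder's mass to vanish.

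The genuine gap is in your second prong, precisely the part you defer as ``the main obstacle.'' Extracting one additional profile and superposing two nonlinear profiles does not verify the hypothesis (\ref{09/09/15/18:03}) of Proposition \ref{08/08/05/14:30}: the initial-data error contains the post-extraction remainder, and after finitely many extractions its free evolution need not be small in $X(\mathbb{R})$. Closing this requires the full iterated decomposition of Lemma \ref{09/09/15/15:12}: the $\ell^{2}$-summability $\sum_{l}\|f^{l}\|_{H^{1}}^{2}<\infty$ forcing the remainder to be small in $L^{\infty}L^{\frac{d}{2}(p-1)}\cap X$ after finitely many steps even when $N=\infty$, a bound on $\|\psi_{n}^{app}\|_{X(\mathbb{R})}$ that is \emph{uniform in the number $L$ of profiles} (the paper's (\ref{09/09/18/14:46}), obtained from the small-data theory for all but finitely many profiles together with Lemma \ref{08/08/23/16:32} for the cross terms), so that the $\varepsilon$ in the perturbation theorem can be chosen independently of $L$. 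You should also treat the sub-case in which $\|g_{n}\|_{L^{2}}\not\to 0$ but $\limsup_{n}\|e^{\frac{i}{2}t\Delta}g_{n}\|_{L^{\infty}(\mathbb{R};L^{\frac{d}{2}(p-1)})}=0$: then no second profile exists, and the contradiction must instead come from the single-profile approximation (the remainder is then $X$-small by Lemma \ref{09/04/29/15:57}, and $\widetilde{\mathcal{N}}_{2}(\Psi(0))<\widetilde{N}_{c}$ gives $\|\Psi\|_{X(\mathbb{R})}<\infty$, whence $\|\psi_{n}\|_{X(\mathbb{R})}<\infty$ against (\ref{09/02/17/14:53})). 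Finally, in the case $\tau_{\infty}=\pm\infty$ the quantity controlling scattering of the profile is not $\widetilde{\mathcal{N}}_{2}(f)$ but $\|f\|_{L^{2}}^{p+1-\frac{d}{2}(p-1)}\|\nabla f\|_{L^{2}}^{\frac{d}{2}(p-1)-2}=\mathcal{N}_{2}(f)$ (cf. the paper's computation (\ref{10/04/25/16:27})); this is a minor correction but needed to make the comparison with $\widetilde{N}_{c}$ legitimate.
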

\begin{remark}
In this Lemma \ref{08/08/19/23:07}, $\Psi$ is actually a solution to the integral equation (\ref{09/09/14/15:32}).
\end{remark}
For the proof of Lemma \ref{08/08/19/23:07},  we prepare the following Lemmata \ref{08/08/21/14:12} and \ref{08/08/23/16:32}. The former is a compactness lemma  and 
 the latter is a key ingredient to prove the superposition principle (\ref{09/09/14/17:16}) as mentioned in Section \ref{09/05/06/9:13}.

\begin{lemma}\label{08/08/21/14:12}
Assume that $d\ge 1$ and $2+\frac{4}{d}<p+1<2^{*}$. Let $\{ f_{n} \}$ be a uniformly bounded sequence in $H^{1}(\mathbb{R}^{d})$. Suppose that   
\begin{equation}\label{09/06/28/16:04}
\limsup_{n\to \infty}\left\| e^{\frac{i}{2}t\Delta}f_{n} \right\|_{L^{\infty}(\mathbb{R};L^{\frac{d}{2}(p-1)})}>0. 
\end{equation}
Then, there exists a subsequence of $\{f_{n}\}$ (still denoted by the same symbol), which satisfies the following property: There exist a nontrivial function $f \in H^{1}(\mathbb{R}^{d})$,  a sequence $\{t_{n}\}$ in $\mathbb{R}$ with $\displaystyle{\lim_{n\to \infty}t_{n}= t_{\infty}}$ for some $t_{\infty}\in \mathbb{R} \cup \{\pm \infty\}$, and a sequence $\{y_{n}\}$ in $\mathbb{R}^{d}$ such that, putting 
$
f_{n}^{*}(x):=
e^{\frac{i}{2}t_{n}\Delta}f_{n}(x+y_{n})
$, 
we have that 
\begin{align}
\label{08/08/21/14:23}
&\lim_{n\to \infty}f_{n}^{*}= f 
\quad  
\mbox{weakly in $H^{1}(\mathbb{R}^{d})$, and strongly in $L_{loc}^{q}(\mathbb{R}^{d})$ for all $q\in [2,2^{*})$},
\\[6pt]
\label{08/08/21/17:25}
&\lim_{n\to \infty} \left\{ \left\| |\nabla|^{s} f_{n} \right\|_{L^{2}}^{2}-
\left\| |\nabla |^{s} (f_{n}^{*}-f) \right\|_{L^{2}}^{2}\right\}
=\left\| |\nabla |^{s}f\right\|_{L^{2}}^{2}
\quad 
\mbox{for all $s \in [0,1]$},
\\[6pt]
\label{08/08/21/14:35}
&\lim_{n\to \infty} \left\{ \left\| f_{n} \right\|_{L^{q}}^{q}
-\left\|e^{-\frac{i}{2}t_{n}\Delta}(f_{n}^{*}-f) \right\|_{L^{q}}^{q}- \left\| e^{-\frac{i}{2}t_{n}\Delta}f \right\|_{L^{q}}^{q}\right\}=0
\quad 
\mbox{for all $q\in [2,2^{*})$},  
\\[6pt]
\label{08/08/21/14:36}
&\lim_{n\to \infty}\left\{ \mathcal{H}\left(f_{n} \right)-\mathcal{H}\left(e^{-\frac{i}{2}t_{n}\Delta}(f_{n}^{*}-f) \right)-\mathcal{H}\left(e^{-\frac{i}{2}t_{n}\Delta}f \right) \right\}=0,
\\[6pt]
\label{10/04/22/13:28}
&\sup_{n\in \mathbb{N}}\left\|f_{n}^{*}-f \right\|_{H^{1}}
\le 
\sup_{n\in \mathbb{N}}\left\|f_{n} \right\|_{H^{1}}<\infty.
\end{align}
\end{lemma}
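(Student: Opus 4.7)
The plan is to reduce the lemma to two classical ingredients: a Lieb-type translation-compactness for the extraction of a nontrivial weak limit, and the Brezis--Lieb lemma for the norm decompositions.

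\textbf{Extraction of the profile.} Hypothesis (\ref{09/06/28/16:04}) gives, along a subsequence, $\alpha > 0$ and times $t_n \in \mathbb{R}$ with $\|g_n\|_{L^{d(p-1)/2}} \ge \alpha$, where $g_n := e^{\frac{i}{2}t_n\Delta}f_n$. Since $e^{\frac{i}{2}t\Delta}$ is an isometry on $L^2$ and on $\dot{H}^1$, $\{g_n\}$ is bounded in $H^1$. I then invoke the Lieb translation-compactness principle: a bounded sequence in $H^1(\mathbb{R}^d)$ whose $L^q$-norm is bounded below for some $q \in (2, 2^*)$ must, after translation and extraction, have a nontrivial $H^1$-weak limit. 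Applied to $\{g_n\}$ with $q = \frac{d(p-1)}{2} \in (2,2^*)$, this produces $y_n \in \mathbb{R}^d$ and a nontrivial $f \in H^1 \setminus \{0\}$ with $f_n^*(x) := g_n(x+y_n)$ satisfying $f_n^* \rightharpoonup f$ in $H^1$, strongly in $L^q_{loc}$ by Rellich--Kondrachov, and, after a further extraction, pointwise a.e., which is (\ref{08/08/21/14:23}). A final diagonal extraction makes $t_n \to t_\infty \in \mathbb{R} \cup \{\pm\infty\}$.

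\textbf{Norm decompositions.} Identity (\ref{08/08/21/17:25}) follows by the Hilbert parallelogram identity on the Fourier side: since $f_n^* \rightharpoonup f$ in $H^1$ means $(1+|\xi|^2)^{1/2}\widehat{f_n^*} \rightharpoonup (1+|\xi|^2)^{1/2}\widehat{f}$ weakly in $L^2$, multiplication by the $L^\infty$-symbol $|\xi|^s(1+|\xi|^2)^{-1/2}$ for $s \in [0,1]$ preserves weak convergence, yielding $|\nabla|^s f_n^* \rightharpoonup |\nabla|^s f$ in $L^2$ and hence $\||\nabla|^s f_n^*\|_{L^2}^2 - \||\nabla|^s(f_n^*-f)\|_{L^2}^2 \to \||\nabla|^s f\|_{L^2}^2$, equivalent to (\ref{08/08/21/17:25}) since $|\nabla|^s$ commutes with $e^{\frac{i}{2}t_n\Delta}$ and translation. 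The uniform bound (\ref{10/04/22/13:28}) is then immediate. For (\ref{08/08/21/14:35}) I apply Brezis--Lieb to $h_n := e^{-\frac{i}{2}t_n\Delta}f_n^* = f_n(\cdot+y_n)$: when $t_\infty \in \mathbb{R}$, continuity of $t \mapsto e^{-\frac{i}{2}t\Delta}f$ in $H^1 \hookrightarrow L^q$ and an a.e.\ extraction give $h_n \to h := e^{-\frac{i}{2}t_\infty\Delta}f$ pointwise, so Brezis--Lieb delivers the decomposition against $h$, and replacing $h$ by $e^{-\frac{i}{2}t_n\Delta}f$ up to $L^q$-error (plus the translation identity $\|h_n\|_{L^q} = \|f_n\|_{L^q}$) gives (\ref{08/08/21/14:35}). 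When $t_\infty = \pm\infty$, Lemma \ref{10/04/08/9:20} forces $\|e^{-\frac{i}{2}t_n\Delta}f\|_{L^q} \to 0$ for $q \in (2, 2^*)$, and the claim reduces to the elementary bound $\big|\|a - b\|_{L^q}^q - \|a\|_{L^q}^q\big| \lesssim \|b\|_{L^q}(\|a\|_{L^q}^{q-1} + \|b\|_{L^q}^{q-1})$; the $q=2$ case is exact by $L^2$-isometry. Finally, (\ref{08/08/21/14:36}) is obtained by combining the $s=1$ case of (\ref{08/08/21/17:25}) (noting $\nabla$ commutes with $e^{-\frac{i}{2}t_n\Delta}$, so the kinetic parts are unchanged by the $e^{-\frac{i}{2}t_n\Delta}$ dressings) with the $q = p+1$ case of (\ref{08/08/21/14:35}).

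\textbf{Main obstacle.} The substantive step is the extraction of the pair $(t_n, y_n)$: the hypothesis controls only the $L^\infty_t L^{d(p-1)/2}_x$ norm of a free Schr\"odinger evolution, a purely integral quantity, and one must convert this into pointwise concentration in $\mathbb{R}^d$. The Lieb translation-compactness lemma is the cleanest tool for this within the Brezis--Lieb framework favored by the authors, avoiding the heavier Bahouri--G\'erard profile machinery.
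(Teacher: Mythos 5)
Your proposal is correct and follows essentially the same route as the paper: the extraction of $(t_n,y_n)$ and the nontrivial weak profile is exactly the Fr\"ohlich--Lieb--Loss level-set bound plus Lieb's translation lemma that the paper packages in its Appendix B (your "Lieb translation-compactness principle"), and the decompositions (\ref{08/08/21/17:25})--(\ref{08/08/21/14:36}) are obtained, as in the paper, from the weak-convergence expansion, the Brezis--Lieb lemma with the case split $t_\infty\in\mathbb{R}$ versus $t_\infty=\pm\infty$ (using Lemma \ref{10/04/08/9:20} and the elementary inequality (\ref{09/09/27/21:09})), and the combination of the $s=1$ and $q=p+1$ cases for the Hamiltonian.
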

\begin{proof}[Proof of Lemma \ref{08/08/21/14:12}]
Put 
\begin{equation}\label{10/04/29/10:58}
A=\frac{1}{2}\limsup_{n\to \infty}\left\| e^{\frac{i}{2}t_{n}\Delta} f_{n} \right\|_{L^{\frac{d}{2}(p-1)}}^{\frac{d}{2}(p-1)}.
\end{equation}
Note that $A>0$ by (\ref{09/06/28/16:04}). 
\par 
We take a subsequence of $\{f_{n}\}$ (still denoted by the same symbol) and a sequence $\{t_{n}\}$ in $\mathbb{R}$ such that 
\begin{equation}\label{08/08/21/14:55}
\inf_{n\in \mathbb{N}}\left\| e^{\frac{i}{2}t_{n}\Delta} f_{n} \right\|_{L^{\frac{d}{2}(p-1)}}^{\frac{d(p-1)}{2}}\ge A.
\end{equation}
Here, extracting some subsequence of $\{t_{n}\}$, we may assume that 
\begin{equation}\label{10/04/24/16:59}
\lim_{n\to \infty} t_{n}=t_{\infty} 
\quad 
\mbox{for some $t_{\infty}\in \mathbb{R}$ 
or 
$t_{\infty}\in \{\pm \infty\}$}.
\end{equation}
In addition to (\ref{08/08/21/14:55}), we have by the Sobolev embedding that   
\begin{equation}
\label{09/06/28/16:10}
\sup_{n\in \mathbb{N}}\left\| e^{\frac{i}{2}t_{n}\Delta}f_{n} \right\|_{L^{p+1}}^{p+1}
\le C_{p}B^{p+1} 
\end{equation}
for some constant $C_{p}$ depending only on $d$ and $p$, where we put 
\begin{equation}\label{10/04/29/10:59}
B=\sup_{n\in \mathbb{N}}\left\| f_{n} \right\|_{H^{1}}<\infty.
\end{equation}
Then, Lemma \ref{08/3/28/20:21} (with $\alpha=2$, $\beta=\frac{d(p-1)}{2}$ and $\gamma=p+1$) shows that 
\begin{equation}\label{10/04/28/21:53}
\begin{split}
&\mathcal{L}^{d}\left( \left[ \left| e^{\frac{i}{2}t_{n}\Delta}f_{n}
 \right| > \eta \right] \right)>
\frac{A}{2}\eta^{\frac{d}{2}(p-1)}
\\[6pt]
& \qquad 
\mbox{for all 
$n \in \mathbb{N}$ 
and 
$0<\eta<\min
\left\{1, \left( \frac{A}{4B^{2}}\right)^{\frac{1}{\frac{d(p-1)}{2}-2}},
\left( \frac{A}{4C_{p}B^{p+1}} \right)^{\frac{1}{p+1-\frac{d(p-1)}{2}}}
\right\}$
}.
\end{split}
\end{equation}
Moreover, Lemma \ref{08/03/28/21:00} implies that: There exists a sequence $\{y_{n}\}$ in $\mathbb{R}^{d}$ such that 
\begin{equation}\label{09/12/07/9:31}
\mathcal{L}^{d}\left( \left[ \left| e^{\frac{i}{2}t_{n}\Delta}f_{n}(\cdot +y_{n}) \right| > \frac{\eta}{2} \right]\cap B_{1}(0) \right)
\gtrsim 
\left( \frac{1+A\eta^{\frac{d}{2}(p-1)+2} }{1+B}\right)^{\frac{2}{2^{\dagger}-2}}
\  
\mbox{for all $n \in \mathbb{N}$}, 
\end{equation}
where $2^{\dagger}=4$ if $d=1,2$ and $2^{\dagger}=2^{*}$ if $d\ge 3$, and the implicit constant depends only on $d$ and $p$. For the sequence $\{y_{n}\}$ found above, we put 
\begin{equation}\label{10/04/29/12:30}
f_{n}^{*}(x)=e^{\frac{i}{2}t_{n}\Delta}f(x+y_{n}).
\end{equation}
Then, $\{f_{n}^{*}\}$ is uniformly bounded in $H^{1}(\mathbb{R}^{d})$ as well as $\{f_{n}\}$. Hence, there exist a subsequence of $\{f_{n}^{*}\}$ (still denoted by the same symbol), a function $f \in H^{1}(\mathbb{R}^{d})$ such that 
\begin{equation}\label{09/12/07/10:03}
\lim_{n\to \infty}f_{n}^{*} =  f 
\quad 
\mbox{weakly in $H^{1}(\mathbb{R}^{d})$}.
\end{equation} 
Here, (\ref{09/12/07/10:03}), with the help of the Sobolev embedding, 
 also yields that 
\begin{equation}\label{10/04/15/21:37}
\lim_{n\to \infty}f_{n}^{*} =  f 
\quad 
\mbox{strongly in $L_{loc}^{q}(\mathbb{R}^{d})$
\quad 
for all $q\in [2, 2^{*})$}.
\end{equation}
Therefore, we have by (\ref{09/12/07/9:31}) that 
\begin{equation}\label{10/04/29/12:38}
\begin{split}
\left\| f \right\|_{L^{q}}^{q} &\ge \left\| f \right\|_{L^{q}(B_{1}(0))}^{q}
=\lim_{n\to \infty}\left\| f_{n}^{*} \right\|_{L^{q}(B_{1}(0))}^{q}
\gtrsim \eta^{q}
\left( \frac{1+A\eta^{\frac{d}{2}(p-1)+2}}{1+B}\right)^{\frac{2}{2^{\dagger}-2}}\\[6pt]
&\quad  
\mbox{for all $q\in [2,2^{*})$
and 
$0<\eta<\min
\left\{1, \left( \frac{A}{4B^{2}}\right)^{\frac{1}{\frac{d(p-1)}{2}-2}},
\left( \frac{A}{4C_{p}B^{p+1}} \right)^{\frac{1}{p+1-\frac{d(p-1)}{2}}}
\right\}$}, 
\end{split}
\end{equation}
where the implicit constant depends only on $d$ and $q$. Thus, $f$ is nontrivial.
\par 
Now, we shall show that the sequence $\{f_{n}^{*}\}$ satisfies the property 
(\ref{08/08/21/17:25}). Indeed, it follows from the weak convergence (\ref{09/12/07/10:03}) that 
\begin{equation}\label{10/04/15/21:50}
\begin{split}
&\left\| |\nabla |^{s} \left(f_{n}^{*} - f \right)\right\|_{L^{2}}^{2}
\\[6pt]
&=
\left\| |\nabla |^{s} f_{n}^{*} \right\|_{L^{2}}^{2}
-
\left\| |\nabla |^{s} f \right\|_{L^{2}}^{2}
-2\Re{
\int_{\mathbb{R}^{d}}|\nabla|^{s}\left( f_{n}^{*}(x)-f(x)\right)
|\nabla |^{s}f(x)\,dx  
}
\\[6pt]
&=\left\| |\nabla |^{s} f_{n} \right\|_{L^{2}}^{2}
-
\left\| |\nabla |^{s} f \right\|_{L^{2}}^{2}+o_{n}(1)
\qquad 
\mbox{for all $s\in [0, 1]$},
\end{split}
\end{equation}
so that (\ref{08/08/21/17:25}) holds.
\par 
Next, we shall show (\ref{08/08/21/14:35}). We first consider the case 
 $\displaystyle{t_{\infty}=\lim_{n\to \infty}t_{n} \in \mathbb{R}}$. Then, we can easily verify that 
\begin{align}
\label{09/12/07/11:14}
& \lim_{n\to \infty}
e^{-\frac{i}{2}t_{n}\Delta}f_{n}^{*}
=
e^{-\frac{i}{2}t_{\infty}\Delta}f 
\quad 
\mbox{weakly in $H^{1}(\mathbb{R}^{d})$, and a.e. in $\mathbb{R}^{d}$},
\\[6pt]
\label{09/02/11/18:06}
&
\lim_{n\to \infty}
e^{-\frac{i}{2}t_{n}\Delta}f 
= 
e^{-\frac{i}{2}t_{\infty}\Delta}f 
\quad \mbox{strongly in $H^{1}(\mathbb{R}^{d})$}.
\end{align}   
Moreover, the triangle inequality gives us that    
\begin{equation}\label{09/02/11/18:08}
\begin{split}
&\left| \left\| f_{n} \right\|_{L^{q}}^{q} -
\left\| e^{-\frac{i}{2}t_{n}\Delta} \left( f_{n}^{*}- f\right)   \right\|_{L^{q}}^{q}
-\left\| e^{-\frac{i}{2}t_{n}\Delta}f \right\|_{L^{q}}^{q}
\right|
\\[6pt]
&\le 
\left|
\left\| e^{-\frac{i}{2}t_{n}\Delta}f_{n}^{*} \right\|_{L^{q}}^{q} -
\left\| e^{-\frac{i}{2}t_{n}\Delta}f_{n}^{*}-e^{\frac{i}{2}t_{\infty}\Delta}f \right\|_{L^{q}}^{q}
-\left\| e^{-\frac{i}{2}t_{\infty}\Delta}f \right\|_{L^{q}}^{q}
\right|
\\[6pt]
&\qquad 
+\left| 
\left\| e^{-\frac{i}{2}t_{n}\Delta} \left( f_{n}^{*}- f \right) \right\|_{L^{q}}^{q}
-
\left\|e^{-\frac{i}{2}t_{n}\Delta} f_{n}^{*}- e^{-\frac{i}{2}t_{\infty}\Delta} f  \right\|_{L^{q}}^{q}
\right|
\\[6pt]
&\qquad 
+\left| 
\left\| e^{-\frac{i}{2}t_{n}\Delta}f \right\|_{L^{q}}^{q}
-
\left\| e^{-\frac{i}{2}t_{\infty}\Delta}f \right\|_{L^{q}}^{q}
\right|
\qquad
\mbox{for all $q\in [2, 2^{*})$}.
\end{split}
\end{equation}
Here, we have by the Sobolev embedding that 
\begin{equation}\label{10/04/25/10:34}
\sup_{n\in \mathbb{N}}
\left\| e^{-\frac{i}{2}t_{n}\Delta} f_{n}^{*}\right\|_{L^{q}}
\lesssim 
\sup_{n\in \mathbb{N}}
\left\|e^{-\frac{i}{2}t_{n}\Delta} f_{n}^{*} \right\|_{H^{1}}
=
\sup_{n\in \mathbb{N}}
\left\| f_{n} \right\|_{H^{1}}<\infty
\quad
\mbox{for all $q\in [2, 2^{*})$}.
\end{equation} 
Therefore,  Lemma \ref{08/03/28/21:21}, together with (\ref{09/12/07/11:14}) and (\ref{10/04/25/10:34}), implies that the first term on the right-hand side 
 of (\ref{09/02/11/18:08}) vanishes as $n\to \infty$. The second term also vanishes as $n\to \infty$. Indeed, it follows from (\ref{09/09/27/21:09}) and (\ref{09/02/11/18:06}) that 
\begin{equation}\label{09/02/11/18:07}
\begin{split}
&\lim_{n\to \infty}
\left| 
\left\| e^{-\frac{i}{2}t_{n}\Delta} \left( f_{n}^{*}- f \right) \right\|_{L^{q}}^{q}
-\left\|e^{-\frac{i}{2}t_{n}\Delta} 
f_{n}^{*}- e^{-\frac{i}{2}t_{\infty}\Delta} f  \right\|_{L^{q}}^{q}
\right|
\\[6pt]
&\lesssim 
\lim_{n\to \infty}\left( \left\| f_{n} \right\|_{H^{1}}^{q-1} + \left\|f  \right\|_{H^{1}}^{q-1}\right)
\left\| e^{-\frac{i}{2}t_{n}\Delta}f -e^{-\frac{i}{2}t_{\infty}\Delta}f \right\|_{L^{q}}
= 0.
\end{split}
\end{equation}
Moreover, (\ref{09/02/11/18:06}) immediately yields that the third term vanishes as $n\to \infty$. Thus, we have proved the property (\ref{08/08/21/14:35}) when $t_{\infty} \in \mathbb{R}$. 
\par 
We next suppose that $t_{\infty}\in \{\pm \infty\}$. In this case, (\ref{09/09/27/21:09}) and Lemma \ref{10/04/08/9:20} yield 
that   
\begin{equation}\label{10/04/08/10:19}
\begin{split}
&\lim_{n\to \infty}\left| 
\left\| f_{n} \right\|_{L^{q}}^{q} - \left\| e^{-\frac{i}{2}t_{n}\Delta} \left(  f_{n}^{*}-f \right) \right\|_{L^{q}}^{q} 
-\left\| e^{-\frac{i}{2}t_{n}\Delta}f \right\|_{L^{q}}^{q}
\right|
\\[10pt]
&=
\lim_{n\to \infty}\left| 
\left\| e^{-\frac{i}{2}t_{n}\Delta} f_{n}^{*} \right\|_{L^{q}}^{q} - \left\| e^{-\frac{i}{2}t_{n}\Delta} \left( f_{n}^{*}-f \right) \right\|_{L^{q}}^{q} 
-\left\| e^{-\frac{i}{2}t_{n}\Delta}f \right\|_{L^{q}}^{q}
\right|
\\[10pt]
&\le 
\lim_{n\to \infty}
\left| 
\left\| e^{-\frac{i}{2}t_{n}\Delta} f_{n}^{*} \right\|_{L^{q}}^{q} - \left\| e^{-\frac{i}{2}t_{n}\Delta} \left( f_{n}^{*}-f \right) \right\|_{L^{q}}^{q} \right|+
\lim_{n\to \infty}
\left\| e^{-\frac{i}{2}t_{n}\Delta}f \right\|_{L^{q}}^{q}
\\[10pt]
&\lesssim 
\lim_{n\to \infty} \left( \left\| f_{n} \right\|_{H^{1}}^{q-1} +\left\| f \right\|_{H^{1}}^{q-1} \right)\left\| e^{-\frac{i}{2}t_{n}\Delta}f \right\|_{L^{q}}
+
\lim_{n\to \infty}
\left\| e^{-\frac{i}{2}t_{n}\Delta}f \right\|_{L^{q}}^{q}
\\[6pt]
&= 0 
\qquad
\mbox{for all $q\in (2,2^{*})$}.
\end{split}
\end{equation}
Since (\ref{08/08/21/14:35}) with $q=2$ follows from (\ref{08/08/21/17:25}), we have proved (\ref{08/08/21/14:35}).
\par 
The property (\ref{08/08/21/14:36}) immediately follows form (\ref{08/08/21/17:25}) and (\ref{08/08/21/14:35}). 
\par
Finally, we shall prove (\ref{10/04/22/13:28}). It follows from (\ref{08/08/21/17:25}) that 
\begin{equation}\label{10/04/24/17:18}
\left\|f_{n}^{*}-f \right\|_{H^{1}} \le 
\left\| f_{n} \right\|_{H^{1}}
\quad 
\mbox{for sufficiently large $n \in \mathbb{N}$}.
\end{equation} 
Hence, extracting further some subsequence of $\{f_{n}^{*}\}$, we obtain (\ref{10/04/22/13:28}).
\end{proof}

\begin{lemma}[Dichotomy]
\label{08/08/23/16:32}
Let $u$ be a function in $X(\mathbb{R})$ and let 
 $\{(\eta_{n}^{1},\tau_{n}^{1})\}$, \ldots, 
$\{ (\eta_{n}^{L},\tau_{n}^{L})\}$ be sequences in $\mathbb{R}^{d}\times \mathbb{R}$ with 
\begin{equation}\label{10/04/18/18:07}
\lim_{n\to \infty}\left(
| \tau_{n}^{k}-\tau_{n}^{l}|
+
|\eta_{n}^{k}-\eta_{n}^{l}|\right)=\infty
\quad 
\mbox{for all $1\le k < l \le L$}.
\end{equation}
Then, putting  
$
u_{n}^{l}(x,t):=u(x-\eta_{n}^{l},t-\tau_{n}^{l})
$,
we have 
\begin{equation}\label{08/08/23/16:35}
\lim_{n\to \infty}\left\| \biggm| 
\sum_{k=1}^{L}u_{n}^{k} \biggm|^{p-1}\sum_{l=1}^{L}u_{n}^{l} 
-\sum_{l=1}^{L}|u_{n}^{l}|^{p-1}u_{n}^{l}
\right\|_{L^{\widetilde{r}_{1}'}(\mathbb{R};L^{q_{1}'})}=0
\end{equation}
and 
\begin{equation}\label{09/12/07/11:54}
\lim_{n\to \infty}\left\| 
\left| u_{n}^{k} \right|^{q_{j}-1} u_{n}^{l}
\right\|_{L^{\frac{r_{j}}{q_{j}}}(\mathbb{R};L^{1})}=0 
\quad 
\mbox{for all $j=1,2$ and $k\neq l$},
\end{equation}
where $q_{j}$ and $r_{j}$ ($j=1,2$) are the exponents introduced in Section 
\ref{08/10/07/9:01}.
\end{lemma}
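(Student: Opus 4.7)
My plan is to prove both claims simultaneously by a density–plus–disjoint–support argument: first reduce the nonlinear claim to purely multiplicative cross–term estimates that are controlled by H\"older, then exploit density of smooth compactly supported functions in $X(\mathbb{R})$ and the fact that translates of a compactly supported function have disjoint space–time supports once the shifts $(\eta_{n}^{l},\tau_{n}^{l})$ separate.

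For the reduction I would use the pointwise inequality
\[
\left|\Big|\sum_{l=1}^{L} z_{l}\Big|^{p-1}\sum_{l=1}^{L} z_{l}-\sum_{l=1}^{L}|z_{l}|^{p-1}z_{l}\right|\lesssim \sum_{1\le k\ne l\le L}|z_{k}|^{p-1}|z_{l}|,\qquad p>1,\ z_{l}\in\mathbb{C},
\]
so that (\ref{08/08/23/16:35}) is implied by $\||u_{n}^{k}|^{p-1}|u_{n}^{l}|\|_{L^{\widetilde{r}_{1}'}(\mathbb{R};L^{q_{1}'})}\to 0$ for each $k\ne l$. A direct check using $s_{p}(p-1)=\tfrac{d(p-1)}{2}-2$ gives the exponent identities $\tfrac{p-1}{q_{2}}+\tfrac{1}{q_{1}}=\tfrac{1}{q_{1}'}$ and $\tfrac{p-1}{r_{2}}+\tfrac{1}{r_{1}}=\tfrac{1}{\widetilde{r}_{1}'}$, so H\"older yields the bound
\[
\||u_{n}^{k}|^{p-1}|u_{n}^{l}|\|_{L^{\widetilde{r}_{1}'}(\mathbb{R};L^{q_{1}'})}\le \|u_{n}^{k}\|_{L^{r_{2}}_{t}L^{q_{2}}_{x}}^{p-1}\|u_{n}^{l}\|_{L^{r_{1}}_{t}L^{q_{1}}_{x}}=\|u\|_{L^{r_{2}}_{t}L^{q_{2}}_{x}}^{p-1}\|u\|_{L^{r_{1}}_{t}L^{q_{1}}_{x}},
\]
uniform in $n$ by translation invariance. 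An analogous H\"older estimate with $\|u_{n}^{k}\|_{L^{r_{j}}_{t}L^{q_{j}}_{x}}^{q_{j}-1}\|u_{n}^{l}\|_{L^{r_{j}}_{t}L^{q_{j}}_{x}}$ handles the cross terms in (\ref{09/12/07/11:54}). Combined with a standard mean-value estimate for $|z|^{p-1}|w|$ (split into the two cases $p\ge 2$ and $1<p<2$), these bounds promote the multilinear maps $(u,v)\mapsto |u|^{p-1}|v|$ and $(u,v)\mapsto |u|^{q_{j}-1}v$ to continuous maps on bounded sets of $X(\mathbb{R})$ with values in the target spaces.

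The second main input is that all of $r_{1},r_{2},q_{1},q_{2}$ lie in $(1,\infty)$ (thanks to $p+1<q_{1}<2^{*}$ and $0<s_{p}<1$), so $C_{c}^{\infty}(\mathbb{R}^{d+1})$ is dense in $X(\mathbb{R})$. I would fix $\varepsilon>0$, choose $v\in C_{c}^{\infty}(\mathbb{R}^{d+1})$ with $\|u-v\|_{X(\mathbb{R})}<\varepsilon$ and $\mathrm{supp}\,v\subset B_{R}(0)\times[-T,T]$, and set $v_{n}^{l}(x,t):=v(x-\eta_{n}^{l},t-\tau_{n}^{l})$. By the continuity of the cross-term maps, replacing every occurrence of $u_{n}^{l}$ by $v_{n}^{l}$ in (\ref{08/08/23/16:35}) and (\ref{09/12/07/11:54}) changes the relevant norms by at most $C\varepsilon$ with $C=C(\|u\|_{X(\mathbb{R})},p,q_{1})$. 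Because $|\tau_{n}^{k}-\tau_{n}^{l}|+|\eta_{n}^{k}-\eta_{n}^{l}|\to\infty$ forces $\max\{|\tau_{n}^{k}-\tau_{n}^{l}|,|\eta_{n}^{k}-\eta_{n}^{l}|\}\to\infty$, for each pair $k\ne l$ and all sufficiently large $n$ either the spatial or the temporal supports of $v_{n}^{k}$ and $v_{n}^{l}$ are disjoint, hence so are their space-time supports. Every cross-term product involving $v_{n}^{k}$ and $v_{n}^{l}$ therefore vanishes identically for large $n$, and sending $n\to\infty$ first and then $\varepsilon\to 0$ yields both conclusions.

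I expect no serious analytic obstacle: the argument is essentially bookkeeping. The only mildly delicate points are verifying the exponent identity $\tfrac{p-1}{r_{2}}+\tfrac{1}{r_{1}}=\tfrac{1}{\widetilde{r}_{1}'}$ (which is exactly what the Foschi-type estimate in Lemma \ref{08/07/31/12:02} demands) and checking that the continuity estimate for the nonlinearity survives the low-regularity range $1<p<2$, where the H\"older bound $||z_{1}|^{p-1}-|z_{2}|^{p-1}|\lesssim |z_{1}-z_{2}|^{p-1}$ has to replace the Lipschitz bound used when $p\ge 2$.
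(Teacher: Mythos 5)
Your argument is correct, and its skeleton is the same as the paper's: reduce (\ref{08/08/23/16:35}) to the pairwise cross terms via the elementary inequality of Lemma \ref{09/03/06/16:46}, control those cross terms by the H\"older estimate with the exponent identities $\frac{p-1}{q_{2}}+\frac{1}{q_{1}}=\frac{1}{q_{1}'}$ and $\frac{p-1}{r_{2}}+\frac{1}{r_{1}}=\frac{1}{\widetilde{r}_{1}'}$ (both of which you verify correctly, and which are exactly what (\ref{10/03/29/11:07}) encodes), and then kill the cross terms by a disjoint-support mechanism. Where you diverge is in how the disjointness is produced: the paper does not approximate $u$ by a fixed compactly supported function, but truncates $u$ itself by the characteristic function $\chi_{n}$ of the space-time ball of radius $r_{n}=\frac12\inf_{k<l}(|\tau_{n}^{k}-\tau_{n}^{l}|+|\eta_{n}^{k}-\eta_{n}^{l}|)$, translated along each sequence; since $r_{n}\to\infty$, dominated convergence gives $\|(1-\chi_{n})u\|_{X(\mathbb{R})}\to 0$, and since $\chi_{n}$ is $\{0,1\}$-valued one has the exact identity $|u_{n}^{k}|^{p-1}-|\chi_{n}^{k}u_{n}^{k}|^{p-1}=|(1-\chi_{n}^{k})u_{n}^{k}|^{p-1}$, so no difference estimate for $z\mapsto|z|^{p-1}$ is ever needed. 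Your density route instead requires the continuity of the product maps on $X(\mathbb{R})$, hence the case split $p\ge 2$ (Lipschitz-type bound) versus $1<p<2$ (the bound $\bigl||a|^{p-1}-|b|^{p-1}\bigr|\le|a-b|^{p-1}$), which you correctly flag and which is relevant here since $p$ can lie in $(1,2)$ when $d\ge 5$; it also needs the $\varepsilon$-then-$n$ limiting order, whereas the $n$-dependent cutoff makes the argument one-shot. Both routes are sound; the paper's buys slightly leaner estimates at the cost of an $n$-dependent truncation, yours buys a cleaner "fixed compact support plus separation of translates" picture at the cost of the extra continuity lemma.
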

\begin{proof}[Proof of Lemma \ref{08/08/23/16:32}] 
Put   
\begin{equation}\label{10/05/11/15:20}
r_{n}=\frac{1}{2}\inf_{1\le k<l\le L}\left(
\left| \tau_{n}^{k}-\tau_{n}^{l} \right|+
\left| \eta_{n}^{k}-\eta_{n}^{l} \right|\right)
\end{equation}
and define functions $\chi_{n}$ and $\chi_{n}^{l}$ on $\mathbb{R}^{d} \times \mathbb{R}$ by 
\begin{equation}\label{10/05/11/15:21}
\chi_{n}(x,t):=\left\{ 
\begin{array}{ccl}
1 &\mbox{if}& |(x,t)|< r_{n},
\\[6pt]
0 &\mbox{if}& |(x,t)|\ge r_{n},
\end{array}
\right.
\qquad 
\chi_{n}^{l}(x,t):=\chi_{n}(x-\eta_{n}^{l},t-\tau_{n}^{l}).
\end{equation}

Now, we shall prove (\ref{08/08/23/16:35}). The triangle inequality and Lemma \ref{09/03/06/16:46} show that 
\begin{equation}\label{10/04/11/1:43}
\begin{split}
&\left\| \biggm| 
\sum_{k=1}^{L}u_{n}^{k} \biggm|^{p-1}\sum_{l=1}^{L}u_{n}^{l} 
-\sum_{l=1}^{L}|u_{n}^{l}|^{p-1}u_{n}^{l}
\right\|_{L^{\widetilde{r}_{1}'}(\mathbb{R};L^{q_{1}'})}
\\[6pt]
&
\le 
\sum_{l=1}^{L} \left\| \biggm| \sum_{k=1}^{L}u_{n}^{k} \biggm|^{p-1} u_{n}^{l} 
-|u_{n}^{l}|^{p-1}u_{n}^{l}
\right\|_{L^{\widetilde{r}_{1}'}(\mathbb{R};L^{q_{1}'})}
\\[6pt]
&\lesssim 
\sum_{l=1}^{L} \sum_{{k=1}\atop {k\neq l}}^{L}
\left\| \left| u_{n}^{k} \right|^{p-1} |u_{n}^{l}| 
\right\|_{L^{\widetilde{r}_{1}'}(\mathbb{R};L^{q_{1}'})},
\end{split}
\end{equation}
where the implicit constant depends only on $p$ and $L$. Hence, for (\ref{08/08/23/16:35}), it suffices to show the following estimate: 
\begin{equation}\label{10/04/12/1:51}
\lim_{n\to \infty}
\left\| \left| u_{n}^{k} \right|^{p-1} |u_{n}^{l}| 
\right\|_{L^{\widetilde{r}_{1}'}(\mathbb{R};L^{q_{1}'})}
=0
\quad 
\mbox{for all $k\neq l$}.
\end{equation}
We begin with the following estimate:  
\begin{equation}\label{09/01/11/23:40}
\begin{split}
\left\| \left| u_{n}^{k} \right|^{p-1} |u_{n}^{l}| 
\right\|_{L^{\widetilde{r}_{1}'}(\mathbb{R};L^{q_{1}'})}
&\le 
\left\| \left| u_{n}^{k} \right|^{p-1} |u_{n}^{l}|
-
\left| \chi_{n}^{k}u_{n}^{k} \right|^{p-1} |\chi_{n}^{l}u_{n}^{l}| 
\right\|_{L^{\widetilde{r}_{1}'}(\mathbb{R};L^{q_{1}'})}
\\[6pt]
&\quad +
\left\| \left| \chi_{n}^{k}u_{n}^{k} \right|^{p-1} |\chi_{n}^{l}u_{n}^{l}| 
\right\|_{L^{\widetilde{r}_{1}'}(\mathbb{R};L^{q_{1}'})}
.
\end{split}
\end{equation}
The H\"older inequality gives an estimate for the first term on the right-hand side of (\ref{09/01/11/23:40}):     
\begin{equation}\label{10/04/12/0:33}
\begin{split}
&\left\| \left| u_{n}^{k} \right|^{p-1} |u_{n}^{l}|
-
\left| \chi_{n}^{k}u_{n}^{k} \right|^{p-1} |\chi_{n}^{l}u_{n}^{l}| 
\right\|_{L^{\widetilde{r}_{1}'}(\mathbb{R};L^{q_{1}'})}
\\[6pt]
&\le 
\left\| 
\left| u_{n}^{k} \right|^{p-1} |u_{n}^{l}| 
-
\left| \chi_{n}^{k}u_{n}^{k} \right|^{p-1} |u_{n}^{l}|
\right\|_{L^{\widetilde{r}_{1}'}(\mathbb{R};L^{q_{1}'})}
\\[6pt]
&\qquad \qquad 
+
\left\| 
\left| \chi_{n}^{k}u_{n}^{k} \right|^{p-1} |u_{n}^{l}| 
-
\left| \chi_{n}^{k}u_{n}^{k} \right|^{p-1} |\chi_{n}^{l}u_{n}^{l}|
\right\|_{L^{\widetilde{r}_{1}'}(\mathbb{R};L^{q_{1}'})}
\\[6pt]
&\lesssim
\left\|(1-\chi_{n}^{k}) u_{n}^{k}
 \right\|_{X(\mathbb{R})}^{p-1}
\left\| u_{n}^{l} \right\|_{X(\mathbb{R})}
+
\left\| u_{n}^{k} \right\|_{X(\mathbb{R})}^{p-1}
\left\|(1-\chi_{n}^{l})u_{n}^{l}\right\|_{X(\mathbb{R})}
\\[6pt]
&=
\left\|(1-\chi_{n}) u
 \right\|_{X(\mathbb{R})}^{p-1}
\left\| u \right\|_{X(\mathbb{R})}
+
\left\| u \right\|_{X(\mathbb{R})}^{p-1}
\left\|(1-\chi_{n})u\right\|_{X(\mathbb{R})}
,
\end{split}
\end{equation}
where the implicit constant depends only on $d$, $p$ and $q_{1}$. 
Note here that 
\begin{align}\label{10/04/12/0:39}
&\lim_{n\to \infty}(1-\chi_{n}(x,t)) u(x,t) 
= 0  \quad \mbox{a.a. $(x,t) \in \mathbb{R}^{d}\times \mathbb{R}$ and all $l=1,\ldots, L$}, 
\\[6pt]
\label{10/04/12/0:40}
&\left| (1-\chi_{n}(x,t)) u(x,t) \right| \le |u(x,t)| \quad 
\mbox{a.a. $(x,t)\in \mathbb{R}^{d}\times \mathbb{R}$ and all $l=1,\ldots, L$},
\end{align}
so that the Lebesgue dominated convergence theorem gives us that 
\begin{equation}\label{09/12/07/18:34}
\lim_{n\to \infty}\left\| (1-\chi_{n})u\right\|_{X(\mathbb{R})}
=0
.
\end{equation}
Hence, the first term on the right-hand side of (\ref{09/01/11/23:40}) vanishes as $n\to \infty$.
\par 
On the other hand, it follows from 
\begin{equation}\label{10/04/12/0:50}
{\rm supp}\, \chi_{n}^{k}\cap {\rm supp}\,\chi_{n}^{l}=\emptyset
\quad 
\mbox{for $k\neq l$}
\end{equation}
that the second term on the right-hand side of (\ref{09/01/11/23:40}) is zero.
 Thus, we have proved (\ref{10/04/12/1:51}). 
\par 
In a way similar to the proof of (\ref{10/04/12/1:51}), we can obtain 
(\ref{09/12/07/11:54}).  
\end{proof}

Now, we are in a position to prove Lemma \ref{08/08/19/23:07}.

\begin{proof}[Proof of Lemma \ref{08/08/19/23:07}]
Put $f_{n}=\psi_{n}(0)$. Since $\psi_{n}(0) \in PW_{+}$, Proposition \ref{09/06/21/19:28} and (\ref{08/08/26/15:56}) give us that   
\begin{equation}\label{09/02/23/17:18}
\begin{split}
\limsup_{n\to \infty}\mathcal{N}_{2}(f_{n})
&\le \limsup_{n\to \infty}
\sqrt{\frac{d(p-1)}{d(p-1)-4}}^{\frac{d}{2}(p-1)-2}\widetilde{\mathcal{N}}_{2}(f_{n})
\\
&= \sqrt{\frac{d(p-1)}{d(p-1)-4}}^{\frac{d}{2}(p-1)-2} \widetilde{N}_{c}
 \\
 &<\sqrt{\frac{d(p-1)}{d(p-1)-4}}^{\frac{d}{2}(p-1)-2} \widetilde{N}_{2}
 =N_{2}.
\end{split}
\end{equation}

Now, we apply Lemma \ref{08/08/21/14:12} to the sequence $\{f_{n}\}$ and obtain that: There exist a subsequence of $\{f_{n}\}$ (still denoted by the same symbol), a nontrivial function $f^{1} \in H^{1}(\mathbb{R}^{d})$, a sequence $\{t_{n}^{1}\}$ in $\mathbb{R}$ with $t_{n}^{1}\to t_{\infty}^{1} \in \mathbb{R}\cup \{\pm \infty\}$, and a sequence $\{y_{n}^{1}\}$ in $\mathbb{R}^{d}$ such that, putting  
\begin{equation}\label{10/05/11/16:15}
f_{n}^{1}(x):=\left( e^{\frac{i}{2}t_{n}^{1}\Delta}e^{y_{n}^{1}\cdot \nabla }f_{n}\right)(x)=e^{\frac{i}{2}t_{n}^{1}\Delta}f_{n}(x+y_{n}^{1}),
\end{equation}
we  have:  
\begin{align}\label{08/08/21/18:10}
&\lim_{n\to \infty}f_{n}^{1}= f^{1} 
\quad 
\mbox{weakly in $H^{1}(\mathbb{R}^{d})$} 
\quad  
\mbox{for all $s  \in [0,1]$},
\\[6pt]
\label{08/08/26/16:39}
&\lim_{n\to \infty}\left\{ \left\| |\nabla|^{s} f_{n} \right\|_{L^{2}}^{2}-\left\| |\nabla|^{s} (f_{n}^{1} -f^{1}) \right\|_{L^{2}}^{2} \right\}
=\left\| |\nabla|^{s} f^{1}\right\|_{L^{2}}^{2}
\quad 
\mbox{for all $s\in [0,1]$},
\\[6pt]
\label{08/08/26/16:40}
&\lim_{n\to \infty}\left\{ \left\| f_{n} \right\|_{L^{q}}^{q}
\!-\left\| e^{-\frac{i}{2}t_{n}^{1}\Delta}(f_{n}^{1} -f^{1})\right\|_{L^{q}}^{q}\!\!
-\left\| e^{-\frac{i}{2}t_{n}^{1}\Delta}f^{1} \right\|_{L^{q}}^{q}\right\}=0
\quad 
\mbox{for all $q \in [2,2^{*})$}, 
\\[6pt]
\label{08/08/21/17:49}
&\lim_{n\to \infty}\left\{ 
\mathcal{H}(f_{n})-\mathcal{H}(e^{-\frac{i}{2}t_{n}^{1}\Delta}
(f_{n}^{1} -f^{1}))-\mathcal{H}(e^{-\frac{i}{2}t_{n}^{1}\Delta}f^{1}) \right\}=0,
\\[6pt]
\label{09/02/23/17:55}
&
\sup_{n\in \mathbb{N}}
\left\| f_{n}^{1} -f^{1} \right\|_{H^{1}} 
<\infty 
.
\end{align}
Besides, it follows from (\ref{09/02/23/17:18}) and (\ref{08/08/26/16:39})  
that
\begin{align}
\label{09/12/05/16:00}
&\mathcal{N}_{2}(f^{1})
\le 
\limsup_{n\to \infty}\mathcal{N}_{2}(f_{n})<N_{2},
\\[6pt]
\label{09/08/03/0:14}
&\limsup_{n\to \infty}\mathcal{N}_{2}(e^{-\frac{i}{2}t_{n}^{1}\Delta}(f_{n}^{1} -f^{1}))
=\limsup_{n\to \infty}\mathcal{N}_{2}(f_{n}^{1} -f^{1})
\le \limsup_{n\to \infty}\mathcal{N}_{2}(f_{n})
<N_{2},
\end{align}
so that we have by the definition of $N_{2}$ (see (\ref{08/07/02/23:24}))
that  
\begin{align}
\label{09/02/23/17:19}
&0< \mathcal{K}(f^{1}) < \mathcal{H}(f^{1}),
\\[6pt]
\label{09/02/22/21:22}
&0< \mathcal{K}(e^{-\frac{i}{2}t_{n}^{1}\Delta}(f_{n}^{1} -f^{1})) <
\mathcal{H}(e^{-\frac{i}{2}t_{n}^{1}\Delta}(f_{n}^{1} -f^{1}))
\quad 
\mbox{for sufficiently large $n \in \mathbb{N}$}.
\end{align}

We shall show that 
\begin{equation}\label{09/05/02/11:03}
\left\{ \begin{array}{ccl} 
e^{-\frac{i}{2}t_{\infty}^{1}\Delta}f^{1} \in PW_{+} &\mbox{if}& t_{\infty}^{1} \in \mathbb{R}, 
\\[10pt]
f^{1} \in \Omega &\mbox{if}& t_{\infty}^{1}=\pm \infty. 
 \end{array} \right.
\end{equation}
Suppose first that $t_{\infty}^{1} \in \mathbb{R}$. Then, the estimate (\ref{09/02/23/17:18}) gives us that  
\begin{equation}\label{10/04/20/18:16}
\mathcal{N}_{2}(e^{-\frac{i}{2}t_{\infty}^{1}\Delta}f^{1})=\mathcal{N}_{2}(f^{1})<N_{2}, 
\end{equation}
which, together with the definition of $N_{2}$, yields that   
\begin{equation}\label{09/05/02/11:39}
0< 
\mathcal{K}(e^{-\frac{i}{2}t_{\infty}^{1}\Delta}f^{1})
<\mathcal{H}(e^{-\frac{i}{2}t_{\infty}^{1}\Delta}f^{1}).
\end{equation}
Moreover, (\ref{08/08/26/16:39}) and (\ref{08/08/21/17:49}), with the help of (\ref{09/02/22/21:22}), show that 
\begin{equation}\label{10/04/20/18:23}
\widetilde{\mathcal{N}}_{2}(e^{-\frac{i}{2}t_{\infty}^{1}\Delta}f^{1})
= 
\lim_{n\to \infty}\widetilde{\mathcal{N}}_{2}(e^{-\frac{i}{2}t_{n}^{1}\Delta}f^{1})
\le 
\lim_{n\to \infty}\widetilde{\mathcal{N}}_{2}(f_{n}).
\end{equation}
Combining (\ref{10/04/20/18:23}) with (\ref{08/08/26/15:56}), we obtain that 
\begin{equation}\label{09/05/02/11:40}
\widetilde{\mathcal{N}}_{2}(e^{-\frac{i}{2}t_{\infty}^{1}\Delta}f^{1})
\le 
\widetilde{N}_{c}<\widetilde{N}_{2}.
\end{equation}
Hence, (\ref{09/05/02/11:39}) and (\ref{09/05/02/11:40}), with the help of the relation (\ref{08/06/15/14:38}), lead to that $e^{-t_{\infty}^{1}\Delta}f^{1} \in PW_{+}$. 
\par 
We next suppose that $t_{\infty}^{1}\in \{\pm \infty\}$. In this case , 
 the formula (\ref{08/08/21/17:49}), together with (\ref{09/02/22/21:22}), 
yields that    
\begin{equation}\label{10/05/14/11:19}
\begin{split}
\left\| \nabla f^{1} \right\|_{L^{2}}^{2}&=
\mathcal{H}(e^{-\frac{i}{2}t_{n}^{1}\Delta}f^{1}) 
+\frac{2}{p+1}\left\| e^{-\frac{i}{2}t_{n}^{1}\Delta}f^{1} \right\|_{L^{p+1}}^{p+1}
\\[6pt]
&\le  
\mathcal{H}(f_{n}) +o_{n}(1)
+\frac{2}{p+1}\left\| e^{-\frac{i}{2}t_{n}^{1}\Delta}f^{1} \right\|_{L^{p+1}}^{p+1},
\end{split}
\end{equation}
so that we have by  Lemma \ref{10/04/08/9:20} that 
\begin{equation}\label{09/02/18/18:02}
\left\| \nabla f^{1} \right\|_{L^{2}}^{2}\le 
\lim_{n\to \infty}\mathcal{H}(f_{n}).
\end{equation}
This estimate and (\ref{08/08/26/16:39}) with $s=0$, together with (\ref{08/08/26/15:56}),  show that  
\begin{equation}\label{10/05/11/16:39}
\mathcal{N}_{2}(f^{1})
\le 
\lim_{n\to \infty} 
\left\|f_{n} \right\|_{L^{2}}^{p+1-\frac{d}{2}(p-1)}
\sqrt{\mathcal{H}(f_{n})}^{\frac{d}{2}(p-1)-2}
= \widetilde{N}_{c} <\widetilde{N}_{2}.
\end{equation}
Hence, we have that $f^{1} \in \Omega$.  
\par 
Now, we suppose that 
\begin{equation}\label{10/05/11/16:49}
\limsup_{n\to \infty} \left\| e^{\frac{i}{2} t \Delta } \left( f_{n}^{1} -f^{1} \right) \right\|_{L^{\infty}(\mathbb{R};L^{\frac{d}{2}(p-1)}) }>0.
\end{equation}
Then, we can apply Lemma \ref{08/08/21/14:12} to the sequence $\{f_{n}^{1}-f^{1}\}$, so that we find that: There exist a subsequence of $\{f_{n}^{1}-f^{1}\}$ (still denoted by the same symbol), a nontrivial function $f^{2} \in H^{1}(\mathbb{R}^{d})$,  a sequence $\{t_{n}^{2}\}$ in $\mathbb{R}$ with $\displaystyle{\lim_{n\to \infty}t_{n}^{2}= t_{\infty}^{2}\in \mathbb{R}\cup \{\pm \infty\}}$ and a sequence $\{y_{n}^{2}\}$ in $\mathbb{R}^{d}$ such that, putting 
\begin{equation}\label{09/09/14/17:44}
f_{n}^{2}:=e^{\frac{i}{2}t_{n}^{2}\Delta}
e^{ y_{n}^{2}\cdot \nabla }
\left( f_{n}^{1}-f^{1} \right),
\end{equation}
we have: 
\begin{align}
\label{09/03/04/16:39}
&\lim_{n\to \infty}f_{n}^{2}= f^{2} 
\quad \mbox{weakly in $H^{1}(\mathbb{R}^{d})$},
\\[6pt]
\label{09/07/31/18:13}
&\lim_{n\to \infty}\left\{ 
\left\| |\nabla|^{s} \left( f_{n}^{1}-f^{1}\right) \right\|_{L^{2}}^{2}
-\left\| |\nabla |^{s}\left( f_{n}^{2}-f^{2} \right) \right\|_{L^{2}}^{2} 
\right\}=\left\||\nabla |^{s} f^{2} \right\|_{L^{2}}^{2}
\quad 
\mbox{for all $s\in [0,1]$},
\\[6pt]
\label{09/07/31/18:14}
&
\lim_{n\to \infty}\left\{ \left\| f_{n}^{1}-f^{1} \right\|_{L^{q}}^{q}
\!
-\left\| e^{-\frac{i}{2}t_{n}^{2}\Delta}(f_{n}^{2} -f^{2})\right\|_{L^{q}}^{q} 
\!\!
-\left\| e^{-\frac{i}{2}t_{n}^{2}\Delta}f^{2} \right\|_{L^{q}}^{q}\right\}=0
\quad
\mbox{for all $q\in [2,2^{*})$}, 
\\[6pt]
\label{09/07/31/18:15}
&
\lim_{n\to \infty}\left\{ 
\mathcal{H}( f_{n}^{1}-f^{1})-\mathcal{H}(e^{-\frac{i}{2}t_{n}^{2}\Delta}
(f_{n}^{2} -f^{2}))-\mathcal{H}(e^{-\frac{i}{2}t_{n}^{2}\Delta}f^{2}) \right\}=0,
\\[6pt]
\label{09/07/31/18:34}
&\sup_{n\in \mathbb{N}}
\left\| |\nabla|^{s} (f_{n}^{2} -f^{2}) \right\|_{L^{2}} < \infty.
\end{align}
Note here that $f_{n}$ is represented in the form
\begin{equation}\label{10/04/25/16:09}
f_{n}
=
e^{-\frac{i}{2}\tau_{n}^{1}\Delta}e^{-\eta_{n}^{1} \cdot \nabla }f^{1}
+e^{-\frac{i}{2}\tau_{n}^{2}\Delta}e^{-\eta_{n}^{2} \cdot \nabla }f^{2}
+ e^{-\frac{i}{2}\tau_{n}^{2}\Delta}e^{-\eta_{n}^{2} \cdot \nabla }\left( f_{n}^{2}-f^{2} \right)
\quad  
\mbox{for all $n\in \mathbb{N}$},   
\end{equation}
where 
\begin{equation}\label{09/09/14/17:41}
\tau_{n}^{1}:=t_{n}^{1},
\quad 
\eta_{n}^{1}:=y_{n}^{1},
\qquad 
\tau_{n}^{2}:=\tau_{n}^{1}+t_{n}^{2},
\quad  
\eta_{n}^{2}:=\eta_{n}^{1}+y_{n}^{2}.
\end{equation}
We also note that the following formulas hold: 
\begin{align}
\label{09/08/01/21:47}
&\lim_{n\to \infty}\left\{ 
\left\| |\nabla|^{s} f_{n} \right\|_{L^{2}}^{2}
-\left\| |\nabla |^{s}\left( f_{n}^{2}-f^{2} \right) \right\|_{L^{2}}^{2} 
\right\}=\sum_{k=1}^{2} \left\||\nabla |^{s} f^{k} \right\|_{L^{2}}^{2}
\quad 
\mbox{for all $s \in [0,1]$}, 
\\[6pt]
\label{09/08/01/21:48}
&\lim_{n\to \infty}\left\{ \left\| f_{n} \right\|_{L^{q}}^{q}
\!-
\left\| e^{-\frac{i}{2}\tau_{n}^{2}\Delta}(f_{n}^{2} -f^{2})\right\|_{L^{q}}^{q}\!\!
-\sum_{k=1}^{2}\left\| e^{-\frac{i}{2}\tau_{n}^{k}\Delta}f^{k} \right\|_{L^{q}}^{q}\right\}=0
\quad 
\mbox{for all $q \in [2,2^{*})$},
\\[6pt]
\label{09/08/01/21:49}
&\lim_{n\to \infty}\left\{ 
\mathcal{H}( f_{n})-\mathcal{H}(e^{-\frac{i}{2}\tau_{n}^{2}\Delta}
(f_{n}^{2} -f^{2}))-\sum_{k=1}^{2}
\mathcal{H}(e^{-\frac{i}{2}\tau_{n}^{k}\Delta}f^{k}) \right\}=0.
\end{align}
Moreover, in a similar way to the proofs of (\ref{09/02/22/21:22}) and (\ref{09/05/02/11:03}), we obtain 
\begin{equation}\label{10/05/11/17:03}
0< \mathcal{K}\left(e^{-\frac{i}{2}\tau_{n}^{2}\Delta}\left( f_{n}^{2}-f^{2} \right) \right) < \mathcal{H}\left(e^{-\frac{i}{2}\tau_{n}^{2}\Delta}\left( f_{n}^{2}-f^{2} \right) \right)
\end{equation}
and 
\begin{equation}\label{10/05/11/17:02}
\left\{ \begin{array}{ccl} 
e^{-\frac{i}{2}\tau_{\infty}^{2}}f^{2} \in PW_{+} &\mbox{if}& \tau_{\infty}^{2} \in \mathbb{R}, 
\\[10pt]
f^{2} \in \Omega &\mbox{if}& \tau_{\infty}^{2}=\pm \infty. 
 \end{array} \right.
\end{equation}

We shall show that 
\begin{equation}\label{09/02/23/16:16}
\lim_{n\to \infty}|\tau_{n}^{2}-\tau_{n}^{1}|+|\eta_{n}^{2}-\eta_{n}^{1}|= 
\infty.
\end{equation}
Supposing that (\ref{09/02/23/16:16}) fails, we can take convergent subsequences of $\{\tau_{n}^{2}-\tau_{n}^{1} \}$ and $\{ \eta_{n}^{2}-\eta_{n}^{1} \}$ . Then, (\ref{08/08/21/18:10}), together with the unitarity of the operators $e^{-\frac{i}{2}(\tau_{n}^{2}-\tau_{n}^{1})\Delta}$ and $e^{-(\eta_{n}^{2}-\eta_{n}^{1})\cdot \nabla}$ on $H^{1}(\mathbb{R}^{d})$, shows that
\begin{equation}\label{10/05/11/17:06}
\lim_{n\to \infty}f_{n}^{2} = 0 
\quad \mbox{weakly in $H^{1}(\mathbb{R}^{d})$},
\end{equation}
which contradicts the fact that $f^{2}$ is nontrivial. Thus, (\ref{09/02/23/16:16}) holds.
\par 
Now, we suppose that 
\begin{equation}\label{10/05/11/17:53}
\limsup_{n\to \infty} \left\| e^{\frac{i}{2} t \Delta } \left( f_{n}^{2} -f^{2} \right) \right\|_{L^{\infty}(\mathbb{R};L^{\frac{d}{2}(p-1)}) }
>0.
\end{equation} 
Then, we can repeat the above procedure; Iterative use of Lemma \ref{08/08/21/14:12} implies the following Lemma:

\begin{lemma}\label{09/09/15/15:12} For some subsequence of $\{f_{n}\}$ (still denoted by the same symbol), there exists
\\  
(i) a family of nontrivial functions in $H^{1}(\mathbb{R}^{d})$, $\{ f^{1}, f^{2}, f^{3}, \ldots \}$ , and \\
(ii) a family of sequences in $\mathbb{R}^{d}\times \mathbb{R}$, $\left\{ \{(\eta_{n}^{1}, \tau_{n}^{1}) \}, \{(\eta_{n}^{2}, \tau_{n}^{2}) \}, \{(\eta_{n}^{3}, \tau_{n}^{3}) \}, \ldots \right\}$  with  
\begin{equation}\label{10/05/11/18:05}
\displaystyle{\lim_{n\to \infty}\tau_{n}^{l}= \tau_{\infty}^{l} \in \mathbb{R} \cup \{\pm \infty \}}
\quad \mbox{for all $l\ge 1$},
\end{equation}      
\begin{equation}\label{09/02/03/0:22}
\left\{ \begin{array}{ccl} e^{-\frac{i}{2}\tau_{\infty}^{l}\Delta}f^{l} \in PW_{+}
&\mbox{if} & 
 \tau_{\infty}^{l}\in \mathbb{R}, 
 \\[6pt] 
 f^{l} \in \Omega &\mbox{if} & \tau_{\infty}^{l}=\pm \infty 
 \end{array} \right. 
 \mbox{for all $l\ge 1$}, 
\end{equation}
and 
\begin{equation}\label{09/02/03/0:50}
\lim_{n\to \infty}|\tau_{n}^{l}-\tau_{n}^{k}|+|\eta_{n}^{l}-\eta_{n}^{k}|= \infty
\quad
\mbox{for all $1\le k <l $}, 
\end{equation}
such that, putting 
\[
\begin{split}
f_{n}^{0}&:=f_{n}, \quad f^{0}:=0, \quad \tau_{n}^{0}:=0, \quad \eta_{n}^{0}:=0,\\[6pt]
f_{n}^{l}&:=e^{\frac{i}{2}(\tau_{n}^{l}-\tau_{n}^{l-1})\Delta}
e^{(\eta_{n}^{l}-\eta_{n}^{l-1})\cdot \nabla}
(f_{n}^{l-1}-f^{l-1}) 
\quad 
\mbox{for $l\ge 1$}, 
\end{split}
\]
we have, for all $l\ge 1$: 
\begin{align}
\label{08/08/21/18:17}
&\lim_{n\to \infty}f_{n}^{l}= f^{l} 
\quad 
\mbox{weakly in $H^{1}(\mathbb{R}^{d})$, and  
 strongly in $L^{q}_{loc}(\mathbb{R}^{d})$ 
 for all $q \in [2,2^{*})$},
\\[6pt]
\label{08/08/24/1:47}
&\lim_{n\to \infty}\left\{ 
\left\| |\nabla|^{s} f_{n}\right\|_{L^{2}}^{2}
-\left\| |\nabla |^{s}\left( f_{n}^{l}-f^{l}\right) \right\|_{L^{2}}^{2} 
\right\}=\sum_{k=1}^{l}\left\||\nabla |^{s} f^{k} \right\|_{L^{2}}^{2}
\quad 
\mbox{for all $s \in [0,1]$},
\\[6pt]
\label{08/08/22/15:03}
&\lim_{n\to \infty}
\left\{ 
\left\| f_{n} \right\|_{L^{q}}^{q}
-\left\| e^{-\frac{i}{2}\tau_{n}^{l}\Delta} \left( f_{n}^{l}-f^{l}\right) \right\|_{L^{q}}^{q} 
-\sum_{k=1}^{l}\left\|e^{-\frac{i}{2}\tau_{n}^{k}\Delta}f^{k} \right\|_{L^{q}}^{q}\right\}=0
\quad 
\mbox{for all $q \in [2,2^{*})$},
\\[6pt]
\label{08/08/21/18:22}
&\lim_{n\to \infty}
\left\{ 
\mathcal{H}(f_{n})-\mathcal{H}(e^{-\frac{i}{2}\tau_{n}^{l}\Delta} \left( f_{n}^{l}-f^{l}\right) )
-\sum_{k=1}^{l}\mathcal{H}(e^{-\frac{i}{2}\tau_{n}^{k}\Delta}f^{k})
\right\}=0,
\\[6pt]
\label{09/03/05/18:36}
&\mathcal{K}(e^{-\frac{i}{2}\tau_{n}^{l}\Delta}\left( f_{n}^{l}-f^{l}\right) )>0.
\end{align}
\noindent 
Furthermore, putting $N:=\#\{ f^{1}, f^{2},f^{3},\ldots \}$, we have the alternatives: if $N$ is finite, then 
\begin{equation}
\label{09/09/15/16:19}
\lim_{n\to \infty}\left\|e^{\frac{i}{2}t\Delta}\left( f_{n}^{N}-f^{N} \right) \right\|_{L^{\infty}(\mathbb{R};L^{\frac{d}{2}(p-1)})\cap X(\mathbb{R})}=0;
\end{equation}
if $N=\infty$, then 
\begin{equation}
\label{09/09/15/16:20}
\lim_{l \to \infty}\lim_{n\to \infty}\left\|e^{\frac{i}{2}t\Delta}\left( f_{n}^{l}-f^{l} \right) \right\|_{L^{\infty}(\mathbb{R};L^{\frac{d}{2}(p-1)})\cap X(\mathbb{R})}=0.
\end{equation}
\end{lemma}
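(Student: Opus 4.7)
The plan is to obtain the lemma by iterating Lemma \ref{08/08/21/14:12} (the single-profile compactness lemma), using the previous remainder as the new input at each step, and then extracting a single diagonal subsequence so that all the stated limits hold simultaneously. The construction of $(f^1,\tau_n^1,\eta_n^1)$ and $(f^2,\tau_n^2,\eta_n^2)$ together with the orthogonality $|\tau_n^2-\tau_n^1|+|\eta_n^2-\eta_n^1|\to\infty$, the properties (\ref{09/05/02/11:03}) and (\ref{10/05/11/17:02}), and the one-step Pythagorean relations (\ref{08/08/26/16:39})--(\ref{08/08/21/17:49}) and (\ref{09/07/31/18:13})--(\ref{09/07/31/18:15}) has already been carried out in the body of the text, so the inductive step at level $l$ proceeds in exactly the same way: if the current remainder $f_n^{l-1}-f^{l-1}$ still satisfies $\limsup_n\|e^{\frac{i}{2}t\Delta}(f_n^{l-1}-f^{l-1})\|_{L^\infty(\mathbb{R};L^{d(p-1)/2})}>0$, apply Lemma \ref{08/08/21/14:12} to obtain a nontrivial profile $f^l$ and parameters $(\tau_n^l-\tau_n^{l-1},\eta_n^l-\eta_n^{l-1})$; otherwise the process stops and we set $N=l-1$ and (\ref{09/09/15/16:19}) is precisely the stopping condition upgraded from $L^\infty L^{d(p-1)/2}$ to the $X(\mathbb{R})$-norm by Lemma \ref{09/04/29/15:57}.

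The Pythagorean identities (\ref{08/08/24/1:47})--(\ref{08/08/21/18:22}) follow by induction on $l$ by concatenating the one-step decompositions, using the unitarity of $e^{-\frac{i}{2}\tau_n\Delta}e^{-\eta_n\cdot\nabla}$ on $\dot{H}^s$ and the refinement (\ref{10/04/22/13:28}) at each stage to keep the remainders uniformly bounded in $H^1$. The statement (\ref{09/03/05/18:36}) (positivity of $\mathcal{K}$ on the remainder) is an inductive consequence of the variational characterization of $N_2$: from (\ref{08/08/24/1:47}) and (\ref{08/08/21/18:22}) one has $\limsup_n\mathcal{N}_2(f_n^l-f^l)\le\limsup_n\mathcal{N}_2(f_n)<N_2$ exactly as in (\ref{09/02/23/17:18}) and (\ref{09/08/03/0:14}), and the membership (\ref{09/02/03/0:22}) of $f^l$ in the appropriate set is proved verbatim as in (\ref{09/05/02/11:03}) and (\ref{10/05/11/17:02}), using (\ref{08/08/24/1:47}) with $s=0,1$, Lemma \ref{10/04/08/9:20} in the unbounded $\tau_\infty^l$ case, and (\ref{08/06/15/14:38}).

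The asymptotic orthogonality (\ref{09/02/03/0:50}) is established by contradiction exactly as in the derivation of (\ref{09/02/23/16:16}): if $|\tau_n^l-\tau_n^k|+|\eta_n^l-\eta_n^k|$ were bounded along a subsequence for some $k<l$, one could extract a convergent subsequence of $(\tau_n^l-\tau_n^k,\eta_n^l-\eta_n^k)$; composing the definitions of $f_n^{k+1},\ldots,f_n^l$ and using the weak convergence $f_n^k\rightharpoonup f^k$ from (\ref{08/08/21/18:17}) at stage $k$ together with unitarity of $e^{-\frac{i}{2}\tau\Delta}e^{-\eta\cdot\nabla}$ on $H^1$, one would force $f_n^l\rightharpoonup 0$, contradicting that $f^l$ is nontrivial. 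Finally, the diagonal subsequence is extracted in the usual Cantor-diagonal manner so that all the limits (\ref{08/08/21/18:17})--(\ref{08/08/21/18:22}) hold for every $l\ge 1$ along one common subsequence of $\{f_n\}$.

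The main obstacle will be the infinite-profile alternative, i.e.\ proving (\ref{09/09/15/16:20}). The key point is that each extracted profile $f^l$ contributes a definite amount of $\dot H^{s_p}$-energy bounded below in terms of the $L^{d(p-1)/2}$-size of the remainder at stage $l-1$ (this is the quantitative content of Lemma \ref{08/08/21/14:12} encoded in the constants $A,B$ in its proof). Combining this with the Pythagorean decomposition (\ref{08/08/24/1:47}) interpolated between $s=0$ and $s=1$ gives $\sum_{k\ge 1}\|(-\Delta)^{s_p/2}f^k\|_{L^2}^2<\infty$, so the $\dot H^{s_p}$-mass of the profiles must go to zero along $k$, which by the same quantitative bound forces $\limsup_n\|e^{\frac{i}{2}t\Delta}(f_n^l-f^l)\|_{L^\infty L^{d(p-1)/2}}\to 0$ as $l\to\infty$. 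The $X(\mathbb{R})$-smallness in (\ref{09/09/15/16:20}) is then inherited from (\ref{09/09/15/16:19}) through Lemma \ref{09/04/29/15:57} applied to the remainder, whose $\dot H^{s_p}$-norms stay uniformly bounded by (\ref{08/08/24/1:47}) and (\ref{09/02/02/23:40}).
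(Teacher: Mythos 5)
Your proposal is correct and follows essentially the same route as the paper: the paper's own proof of this lemma only verifies the alternative (\ref{09/09/15/16:20}) (taking the iterative construction, the orthogonality and the Pythagorean expansions for granted from the preceding discussion, just as you do), and it argues exactly as you propose — a uniform lower bound for each newly extracted profile coming from the quantitative estimate in the proof of Lemma \ref{08/08/21/14:12}, summed against the expansion (\ref{08/08/24/1:47}) to force the $L^{\infty}(\mathbb{R};L^{\frac{d}{2}(p-1)})$-size of the remainders to vanish, and then Lemma \ref{09/04/29/15:57} to upgrade this to the $X(\mathbb{R})$-norm. The only cosmetic difference is that the paper sums the $L^{2}$-masses of the profiles (see (\ref{09/01/08/0:51})--(\ref{08/08/22/15:28})) while you sum their $\dot{H}^{s_{p}}$-norms; both follow from (\ref{08/08/24/1:47}) and give the same contradiction.
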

\begin{proof}[Proof of Lemma \ref{09/09/15/15:12}] 

The last assertion (\ref{09/09/15/16:20}) is nontrivial. We prove this, provided that the other properties are proved. We first show that     
\begin{equation}
\label{09/09/15/16:40}
\lim_{l \to \infty}\lim_{n\to \infty}\left\|e^{\frac{i}{2}t\Delta}\left( f_{n}^{l}-f^{l} \right) \right\|_{L^{\infty}(\mathbb{R};L^{\frac{d}{2}(p-1)})}=0.
\end{equation}
Suppose the contrary that there exists a constant $\varepsilon_{0}>0$ such that \begin{equation}\label{10/01/02/11:25}
\limsup_{l \to \infty}\lim_{n\to \infty}\left\|e^{\frac{i}{2}t\Delta}\left( f_{n}^{l}-f^{l} \right) \right\|_{L^{\infty}(\mathbb{R};L^{\frac{d}{2}(p-1)})}\ge \varepsilon_{0}. 
\end{equation}
Then, we can take an increasing sequence of indices $\{j(L)\}_{L\in \mathbb{N}}$ such that 
\begin{equation}\label{10/01/02/11:29}
\lim_{n\to \infty}
\left\|e^{\frac{i}{2}t\Delta}\left( f_{n}^{j(L)}-f^{j(L)} \right) \right\|_{L^{\infty}(\mathbb{R};L^{\frac{d}{2}(p-1)})}\ge \frac{\varepsilon_{0}}{2}.
\end{equation}
Here, it follows from the construction of the family $\{f^{1},f^{2},f^{3}, \ldots \}$ (see also (\ref{10/04/29/12:38}) in the proof of Lemma \ref{08/08/21/14:12}) that there exists a constant $C(\varepsilon_{0})>0$ depending only on 
 $d$, $p$, $\varepsilon_{0}$ and $\displaystyle{\sup_{n\in \mathbb{N}}\left\| f_{n} \right\|_{L^{2}}}$ such that 
\begin{equation}\label{09/01/08/0:51}
\left\|f^{j(L)+1} \right\|_{L^{2}}\ge C(\varepsilon_{0})
\quad 
\mbox{for all $L \in \mathbb{N}$}.
\end{equation}
Then, the uniform bound (\ref{09/02/02/23:40}) (recall that $f_{n}=\psi_{n}(0)$), (\ref{08/08/24/1:47}) and (\ref{09/01/08/0:51}), yield that  
\begin{equation}
\label{08/08/22/15:28}
\begin{split}
\sup_{n\in \mathbb{N}}\left\| f_{n} \right\|_{L^{2}}^{2} 
&\ge \lim_{n\to \infty}\left\{ 
\left\| f_{n} \right\|_{L^{2}}^{2}
-\left\| f_{n}^{{j(M)+1}}-f^{{j(M)+1}}\right\|_{L^{2}}^{2} 
\right\}
=\sum_{k=1}^{j(M)+1}\left\|f^{k} \right\|_{L^{2}}^{2}
\\[6pt]
&\ge \sum_{L=1}^{M}\left\| f^{j(L)+1}\right\|_{L^{2}}^{2} 
\ge MC(\varepsilon_{0})
\quad
\mbox{for all $M\in \mathbb{N}$}.
\end{split}
\end{equation}
Since $\displaystyle{\sup_{n\in \mathbb{N}}\left\| f_{n} \right\|_{L^{2}}<\infty}$, taking $M \to \infty$ in (\ref{08/08/22/15:28}), we have a contradiction. Thus, (\ref{09/09/15/16:40}) holds. 
\par 
It remains to prove that 
\begin{equation}
\label{10/05/11/18:21}
\lim_{l \to \infty}\lim_{n\to \infty}\left\|e^{\frac{i}{2}t\Delta}\left( f_{n}^{l}-f^{l} \right) \right\|_{X(\mathbb{R})}=0.
\end{equation}
 This estimate follows from Lemma \ref{09/04/29/15:57} and (\ref{09/09/15/16:40}). 
\end{proof}
\noindent 
{\it Proof of Lemma \ref{08/08/19/23:07} (continued)} We are back to the proof of Lemma \ref{08/08/19/23:07}. 
\par 
We begin with proving that that $N=1$ in Lemma \ref{09/09/15/15:12}. To this end, we define an approximate solution $\psi_{n}^{app}$ of $\psi_{n}$: Putting $L=N$ if $N<\infty$;  $L$ sufficiently large number specified later exactly if $N=\infty$, we define \begin{equation}\label{09/07/26/23:03}
\psi_{n}^{app}(x,t) := \sum_{l=1}^{L} \psi^{l}(x-\eta_{n}^{l},t-\tau_{n}^{l} ).
\end{equation}
Here, each $\psi^{l}$ is the solution to (\ref{09/09/14/15:32}) (or (\ref{09/09/14/15:33})) with $f^{l}$ just found in Lemma \ref{09/09/15/15:12}, and 
 each $(\eta_{n}^{l},\tau_{n}^{l})$ is the sequence found in Lemma \ref{09/09/15/15:12}, so that we find that 
\begin{align}
\label{09/02/04/8:41}
&\lim_{n\to \infty}\left\| \psi^{l}(-\tau_{n}^{l})-e^{-\frac{i}{2}\tau_{n}^{l}\Delta}f^{l} \right\|_{H^{1}}=0
\quad 
\mbox{for all $1\le l \le L$}, 
\\[6pt]
\label{10/05/16/12:36}
&
\psi^{l}(t) \in PW_{+} 
\quad 
\mbox{for all $1\le l \le L$ and $t \in \mathbb{R}$}.
\end{align}
We note again that if $\tau_{\infty}^{l}=\pm \infty$, then $\psi^{l}$ is the solution to the final value problem (\ref{09/09/14/15:33}); since $f^{l} \in \Omega$ if $\tau_{\infty}^{l}=\pm\infty$ (see (\ref{09/02/03/0:22})),  we actually obtain the desired solution $\psi^{l}$ by Proposition \ref{09/01/12/16:36}. 
\par 
Now, we shall show that     
\begin{equation}\label{09/02/04/11:57}
\| \psi^{l} \|_{X(\mathbb{R})}<\infty
 \quad 
\mbox{for all $1\le l\le L$}, \quad \mbox{if $N\ge 2$}. 
\end{equation}
For (\ref{09/02/04/11:57}), it suffices to show that $\widetilde{\mathcal{N}}_{2}(\psi^{l}(0)) < \widetilde{N}_{c}$ by the definition of $\widetilde{N}_{c}$ (see (\ref{08/09/02/18:06})). Suppose $N\ge 2$, so that $L\ge 2$. Then, we employ (\ref{08/08/24/1:47}), (\ref{08/08/21/18:22}), (\ref{09/03/05/18:36}) and obtain that  
\begin{equation}\label{10/05/12/1:11}
\begin{split}
\widetilde{\mathcal{N}}_{2}(f_{n})
&=
\sqrt{ 
 \left\| f_{n}^{L}-f^{L} \right\|_{L^{2}}^{2}+
\sum_{k=1}^{L} \left\| f^{k} \right\|_{L^{2}}^{2}
+
o_{n}(1)
}^{p+1-\frac{d}{2}(p-1)}
\\[6pt]
& \qquad \times 
\sqrt{ 
\mathcal{H} \left( e^{-\frac{i}{2}\tau_{n}^{L} \Delta} 
 \left( f_{n}^{L}-f^{L} \right) \right)
+
\sum_{k=1}^{L} \mathcal{H} 
\left( e^{-\frac{i}{2}\tau_{n}^{k}\Delta}f^{k} \right) 
+o_{n}(1) 
}^{\frac{d}{2}(p-1)-2}
\\[6pt]
&\ge   
\sqrt{ 
\sum_{k=1}^{L} \left\| f^{k} \right\|_{L^{2}}^{2}
}^{p+1-\frac{d}{2}(p-1)} 
\sqrt{ 
\sum_{k=1}^{L} \mathcal{H} 
\left( e^{-\frac{i}{2}\tau_{n}^{k}\Delta}f^{k} \right) 
}^{\frac{d}{2}(p-1)-2}                             
+o_{n}(1).
\end{split}
\end{equation}
Since (\ref{09/02/03/0:22}) and Lemma \ref{10/04/08/9:20} imply that  
\begin{equation}\label{10/05/12/0:43}
\mathcal{H}\left(e^{-\frac{i}{2}\tau_{n}^{k}\Delta}f^{k}\right) >0 
\quad
\mbox{for all $k\ge 1$ and sufficiently large $n$},
\end{equation}
and since $\psi^{l}$ is the solution to (\ref{09/09/14/15:32}) or (\ref{09/09/14/15:33}), 
we have from (\ref{08/08/26/15:56}) and (\ref{10/05/12/1:11}) that 
\begin{equation}\label{10/04/25/16:27}
\begin{split}
\widetilde{N}_{c}&=
\lim_{n\to \infty}\widetilde{\mathcal{N}}_{2}(f_{n})
\\[6pt]
&>
\lim_{n\to \infty}\left\| f^{l} \right\|_{L^{2}}^{p+1-\frac{d}{2}(p-1)}
\sqrt{ 
\mathcal{H} 
\left( e^{-\frac{i}{2}\tau_{n}^{l}\Delta}f^{l} \right)  
}^{\frac{d}{2}(p-1)-2} 
\\[6pt]
&=
\lim_{n\to \infty}\left\| \psi^{l}(-\tau_{n}) \right\|_{L^{2}}^{p+1-\frac{d}{2}(p-1)}
\sqrt{ 
\mathcal{H} 
\left( \psi^{l}(-\tau_{n}) \right)  
}^{\frac{d}{2}(p-1)-2} 
\\[6pt]
&=
\lim_{n\to \infty}\left\| \psi^{l}(0) \right\|_{L^{2}}^{p+1-\frac{d}{2}(p-1)}
\sqrt{ 
\mathcal{H} 
\left( \psi^{l}(0) \right)  
}^{\frac{d}{2}(p-1)-2} 
\\[6pt]
&=\widetilde{\mathcal{N}}_{2}(\psi^{l}(0))
\qquad 
\mbox{for all $1\le l\le L$ and sufficiently large $n$}, 
\end{split}
\end{equation}
Thus, (\ref{09/02/04/11:57}) holds. 
\par 
We know by (\ref{09/02/04/11:57}) that 
\begin{equation}\label{10/05/12/1:51}
\sup_{n\in \mathbb{N}}\left\| \psi_{n}^{app} \right\|_{X(\mathbb{R})}<\infty.
\end{equation}
Furthermore, we will see that: When $N=\infty$, there exists $A>0$ with the property that for any $L\in \mathbb{N}$ (the number of components of $\psi_{n}^{app}$, see (\ref{09/07/26/23:03})), there exists $n_{L} \in \mathbb{N}$ such that \begin{equation}\label{09/09/18/14:46}
\sup_{n\ge n_{L}}\left\| \psi_{n}^{app} \right\|_{X(\mathbb{R})}\le A .
\end{equation} 
We shall prove this fact. Recall that $X(\mathbb{R})=L^{r_{1}}(\mathbb{R};L^{q_{1}})\cap L^{r_{2}}(I;L^{q_{2}})$. Lemma \ref{09/03/06/16:46} yields that  
\begin{equation}\label{09/09/21/18:19}
\begin{split}
&\left\| \psi_{n}^{app}(t) \right\|_{L^{q_{j}}}^{q_{j}}
= 
\left\| \sum_{l=1}^{L}\psi^{l}(\cdot -\eta_{n}^{l}, t-\tau_{n}^{l}) \right\|_{L^{q_{j}}}^{q_{j}}
\\[6pt]
&\le 
\sum_{l=1}^{L}\left\|\psi^{l}(t-\tau_{n}^{l}) \right\|_{L^{q_{j}}}^{q_{j}}
\\[6pt]
&\quad +
C_{L}\sum_{l=1}^{L}\sum_{{k=1}\atop {k\neq l}}^{L}
\int_{\mathbb{R}^{d}} \left| \psi^{k}(x-\eta_{n}^{k},t-\tau_{n}^{k}) \right|
\left| \psi^{l}(x-\eta_{n}^{l},t-\tau_{n}^{l})\right|^{q_{j}-1}
\hspace{-0.2cm}dx
\quad  
\mbox{for $j=1,2$}
\end{split}
\end{equation}
for some constant $C_{L}>0$ depending only on $d$, $p$, $q_{1}$ and $L$. 
Therefore, we have by the triangle inequality that 
\begin{equation}\label{09/09/21/18:17}
\begin{split}
&\left\| \psi_{n}^{app} \right\|_{
L^{r_{j}} (\mathbb{R};L^{q_{j}}) }^{q_{j}}
=
\left\| \left\| \psi_{n}^{app} \right\|_{L^{q_{j}}}^{q_{j}} 
\right\|_{L^{\frac{r_{j}}{q_{j}}}(\mathbb{R})}
\\[6pt]
&\le 
\sum_{l=1}^{L}
\left\| \psi^{l}
\right\|_{L^{r_{j}}(\mathbb{R};L^{q_{j}})}^{q_{j}}
\\[6pt]
&\quad  +
C_{L} \sum_{l=1}^{L}\sum_{{k=1}\atop {k\neq l}}^{L}
\left\|
\left| \psi^{k}(\cdot-\eta_{n}^{k},\cdot-\tau_{n}^{k}) \right|
\hspace{-2pt}
\left| \psi^{l}(\cdot-\eta_{n}^{l},\cdot-\tau_{n}^{l})\right|^{q_{j}-1}
\right\|_{L^{\frac{r_{j}}{q_{j}}}(\mathbb{R};L^{1})}
\\[6pt]
&=:I_{j}+II_{j}
\qquad 
\mbox{for $j=1,2$}.
\end{split}
\end{equation}
We first consider the term $I_{j}$. The formula (\ref{08/08/24/1:47}) and the uniform bound (\ref{09/02/02/23:40})  show that 
\begin{equation}\label{09/09/18/14:55}
\sum_{l=1}^{\infty}\left\| f^{l} \right\|_{H^{1}}^{2} < \infty,
\end{equation}
so that  
\begin{equation}\label{09/09/18/14:56}
\lim_{l\to \infty}\left\| f^{l} \right\|_{H^{1}}= 0.
\end{equation}
Hence, it follows from Proposition \ref{08/08/22/20:59} (when $\tau_{\infty}^{l}\in \mathbb{R}$) and  Proposition \ref{09/01/12/16:36} (when $\tau_{n}^{l}=\pm \infty$) that: There exists $l_{0} \in \mathbb{N}$, independent of $L$ (the number of components of $\psi_{n}^{app}$), such that   
\begin{equation}\label{09/09/23/13:43}
\| \psi^{l} \|_{X(\mathbb{R})} 
\lesssim  
\| f^{l} \|_{H^{1}}\le 1
\quad 
\mbox{for all $l \ge l_{0}$}, 
\end{equation}
where the implicit constant depends only on $d$, $p$ and $q_{1}$. 
 Then, (\ref{09/02/04/11:57}), (\ref{09/09/18/14:55}) and (\ref{09/09/23/13:43}), together with the fact $q_{j}> 2$ for $j=1,2$, lead to that      
\begin{equation}\label{09/09/23/14:07}
\begin{split}
I_{j}&= \sum_{l=1}^{l_{0}}
\left\| \psi^{l} \right\|_{L^{r_{j}}(\mathbb{R};L^{q_{j}})}^{q_{j}}
 + 
\sum_{l=l_{0}+1}^{\infty}\left\| \psi^{l} \right\|_{L^{r_{j}}
(\mathbb{R};L^{q_{j}})}^{q_{j}}
\\[6pt]
&\lesssim  
\sum_{l=1}^{l_{0}}\left\| \psi^{l} \right\|_{X(\mathbb{R})}^{q_{j}}
+
\sum_{l=l_{0}+1}^{\infty} 
\left\| f^{l} \right\|_{H^{1}}^{2}
< \infty, 
\end{split}
\end{equation}
where the implicit constant depends only on $d$, $p$ and $q_{1}$. 
\par 
Next, we consider the term $II_{j}$. Lemma \ref{08/08/23/16:32}, together with the condition (\ref{09/02/03/0:50}), implies that: There exists $n_{L} \in \mathbb{N}$ such that 
\begin{equation}\label{10/05/12/2:36}
\left\|\hspace{-2pt}
\left| \psi^{k}(\cdot-\eta_{n}^{k},\cdot-\tau_{n}^{k}) \right|
\hspace{-3pt}
\left| \psi^{l}(\cdot-\eta_{n}^{l},\cdot-\tau_{n}^{l})\right|^{q_{j}-1}
\right\|_{L^{\frac{r_{j}}{q_{j}}}(\mathbb{R};L^{1})}
\hspace{-6pt}
\le \frac{1}{C_{L}L^{2}}
\quad 
\mbox{for all $n \ge n_{L}$ and $k\neq l$}.
\end{equation} 
Hence, we see that  
\begin{equation}\label{09/09/23/14:09}
II_{j} \le 1
\quad 
\mbox{for all $n \ge n_{L}$}.
\end{equation}
Thus, we have proved (\ref{09/09/18/14:46}). 
\par 
We shall show that the case $N\ge 2$ can not occur. Note that $\psi_{n}^{app}$ solves the following equation:  
\begin{equation}\label{09/09/18/15:20}
2i\frac{\partial }{\partial t} \psi_{n}^{app}+\Delta \psi_{n}^{app} +|\psi_{n}^{app}|^{p-1}\psi_{n}^{app} =e_{n},
\end{equation}
where 
\begin{equation}\label{10/05/12/15:40}
e_{n}(x,t):= |\psi_{n}^{app}(x,t)|^{p-1}\psi_{n}^{app}(x,t)
-\sum_{l=1}^{L}\left|\psi^{l}(x-\eta_{n}^{l},t-\tau_{n}^{l})\right|^{p-1}
\!\! \psi^{l}(x-\eta_{n}^{l},t-\tau_{n}^{l}).
\end{equation}
Proposition \ref{08/08/05/14:30} (long time perturbation theory), with the help of (\ref{09/09/18/14:46}), tells us that  there exists $\varepsilon_{1}>0$, independent of $L$ when $N=\infty$, with the following property: If there exists $n\ge n_{L}$  ($n_{L}$ is the number found in (\ref{09/09/18/14:46}) if $N=\infty$, $n_{L}=1$ if $N<\infty$) such that     
\begin{equation}\label{09/09/16/2:10}
\left\| e^{\frac{i}{2}t\Delta}\left( \psi_{n}(0)-\psi_{n}^{app}(0)
\right)\right\|_{X(\mathbb{R})}
\le \varepsilon_{1}
\end{equation}
and 
\begin{equation}\label{09/09/16/2:11}
\left\| e_{n} \right\|_{L^{\widetilde{r}'}(\mathbb{R};L^{q_{1}'})}\le  \varepsilon_{1}, 
\end{equation}
then 
\begin{equation}\label{10/05/12/15:48}
\left\| \psi_{n} \right\|_{X(\mathbb{R})}<\infty.
\end{equation}
In the sequel, we show that if $N\ge 2$, then (\ref{09/09/16/2:10}) and (\ref{09/09/16/2:11}) hold valid for some $L$, which shows $N=1$ since (\ref{10/05/12/15:48}) contradicts (\ref{09/02/17/14:53}). It is worth while noting here that 
\begin{equation}\label{08/08/23/22:06}
f_{n}
=\sum_{l=1}^{L}e^{-\frac{i}{2}\tau_{n}^{l}\Delta}e^{-\eta_{n}^{l}\cdot \nabla }f^{l}+e^{-\frac{i}{2}\tau_{n}^{L}\Delta}e^{-\eta_{n}^{L}\cdot \nabla }\left( f_{n}^{L}-f^{L}\right),
\end{equation}
in other words,  
\begin{equation}\label{09/09/15/17:42}
e^{\frac{i}{2}t\Delta}f_{n}
=\sum_{l=1}^{L}e^{\frac{i}{2}(t-\tau_{n}^{l})\Delta}e^{-\eta_{n}^{l}\cdot \nabla }f^{l}+e^{\frac{i}{2}(t-\tau_{n}^{L})\Delta}e^{-\eta_{n}^{L}\cdot \nabla }\left( f_{n}^{L}-f^{L}\right).
\end{equation}

Invoking Lemma \ref{08/08/23/16:32}, we find that (\ref{09/09/16/2:11}) holds for all sufficiently large $n$.  We next consider (\ref{09/09/16/2:10}). 
 The formula (\ref{09/09/15/17:42}), with the help of (\ref{08/10/25/23:16}), shows that  
\begin{equation}\label{09/09/16/12:26}
\begin{split}
&\left\| e^{\frac{i}{2}t\Delta}
\left( \psi_{n}(0) -\psi_{n}^{app}(0) 
\right)\right\|_{X(\mathbb{R})}
= 
\left\| 
e^{\frac{i}{2}t\Delta}
\left( f_{n}-\sum_{l=1}^{L}e^{-\eta_{n}^{l}\cdot \nabla}\psi^{l}(-\tau_{n}^{l})\right)
\right\|_{X(\mathbb{R})}
\\[6pt]
&\le 
\left\| 
e^{\frac{i}{2}(t-\tau_{n}^{L})\Delta}e^{-\eta_{n}^{L}\cdot \nabla }\left( f_{n}^{L}-f^{L}\right) 
\right\|_{X(\mathbb{R})}
\\[6pt]
&\qquad +
\sum_{l=1}^{L}\left\| e^{\frac{i}{2}t\Delta} 
\left( e^{-\frac{i}{2}\tau_{n}^{l}\Delta}e^{-\eta_{n}^{l}\cdot \nabla }f^{l}
-e^{-\eta_{n}^{l}\cdot \nabla}\psi^{l}(-\tau_{n}^{l}) \right)\right\|_{X(\mathbb{R})}
\\[6pt]
&\le 
\left\| 
e^{\frac{i}{2}t \Delta}\left( f_{n}^{L}-f^{L}\right) 
\right\|_{X(\mathbb{R})}
+ C
\sum_{l=1}^{L}\left\| e^{-\frac{i}{2}\tau_{n}^{l}\Delta}f^{l}
-\psi^{l}(-\tau_{n}^{l}) \right\|_{H^{1}},
\end{split}
\end{equation}
where $C$ is some constant depending only on $d$, $p$ and $q_{1}$.
Here, we have by (\ref{09/09/15/16:19}) and (\ref{09/09/15/16:20}) that 
\begin{equation}\label{09/09/27/22:08}
\begin{split}
&\lim_{n\to \infty}\left\|e^{\frac{i}{2}t\Delta}\left( f_{n}^{L}-f^{L}\right) \right\|_{X(\mathbb{R})}
\le \frac{\varepsilon_{1}}{4}
\\[6pt] 
&\hspace{3cm}
\mbox{for $L=N$ if $N<\infty$, and sufficiently large $L$ if $N=\infty$}.
\end{split}
\end{equation}
Hence, for all $L \in \mathbb{N}$ satisfying (\ref{09/09/27/22:08}), there exists $n_{L,1} \in \mathbb{N}$ such that 
\begin{equation}\label{09/09/27/22:25}
\left\| 
e^{\frac{i}{2}t \Delta}\left( f_{n}^{L}-f^{L}\right) 
\right\|_{X(\mathbb{R})}\le \frac{\varepsilon_{1}}{2}
\quad 
\mbox{for all $n\ge n_{L,1}$}.
\end{equation} 
Moreover, (\ref{09/02/04/8:41}) shows that for all $L \le N$, there exists $n_{L,2}\in \mathbb{N}$  such that  
\begin{equation}\label{09/09/16/2:37}
\left\| 
e^{-\frac{i}{2}\tau_{n}^{l}\Delta}f^{l}
-\psi^{l}(-\tau_{n}^{l})
\right\|_{H^{1}}\le \frac{\varepsilon_{1}}{2CL}
\quad 
\mbox{for all $n\ge n_{L,2}$ and $1\le l \le L$},
\end{equation}
where $C$ is the constant found in (\ref{09/09/16/12:26}). Combining (\ref{09/09/16/12:26}) with (\ref{09/09/27/22:25}) and (\ref{09/09/16/2:37}), we see that 
 for all $L \in \mathbb{N}$ satisfying (\ref{09/09/27/22:08}), there exists $n_{L,3} \in \mathbb{N}$ such that     
\begin{equation}\label{09/02/04/9:03}
\left\| e^{\frac{i}{2}t\Delta}
\left( \psi_{n}(0) -\psi_{n}^{app}(0) 
\right)\right\|_{X(\mathbb{R})} \le  \varepsilon_{1}
\quad 
\mbox{for all $n\ge n_{L,3}$},
\end{equation}
which gives (\ref{09/09/16/2:10}). 
\par 
We have just proved $N=1$, and therefore $L$ should be one; 
\[
\psi_{n}^{app}(x,t)=\psi^{1}(x-\eta_{n}^{1},t-\tau_{n}^{1})=
\left( e^{-\tau_{n}^{1}\frac{\partial}{\partial t}-\eta_{n}^{1}
\cdot \nabla } \psi^{1} \right)(x,t).
\]
Put $\Psi =\psi^{1}$, $f=f^{1}$, $(\gamma_{n},\tau_{n})=(\gamma_{n}^{1},\tau_{n}^{1})$ and $\tau_{\infty}=\tau_{\infty}^{1}$. Then, these are what we want. Indeed, we have already shown that these satisfy the properties (\ref{09/05/02/10:12}), (\ref{09/05/02/10:10}), (\ref{09/05/02/10:08}), (\ref{09/05/02/10:09}); see (\ref{10/05/16/12:36}) for (\ref{09/05/02/10:12}), (\ref{08/08/21/18:17}) for (\ref{09/05/02/10:10}), (\ref{09/09/15/16:19}) for (\ref{09/05/02/10:08}), and (\ref{09/02/04/8:41}) for (\ref{09/05/02/10:09}). Moreover, the property (\ref{10/04/15/20:43}) immediately follows from (\ref{09/05/02/10:09}).
\par 
It remains to prove (\ref{09/05/02/10:11}), (\ref{10/05/13/10:09}), (\ref{09/05/02/10:13}), (\ref{09/07/27/1:49}) and (\ref{09/05/02/10:07}).
\par 
We first prove that $\Psi$ satisfies the property (\ref{09/05/02/10:11}):
$\left\|\Psi \right\|_{X(\mathbb{R})}=\infty$.
Suppose the contrary that $\left\|\Psi \right\|_{X(\mathbb{R})}<\infty$. Then, a quite similar argument above works well, so that we obtain an absurd conclusion 
\begin{equation}\label{10/05/12/16:34}
\left\| \psi_{n} \right\|_{X(\mathbb{R})}<\infty 
\quad 
\mbox{for sufficiently large $n \in \mathbb{N}$}.
\end{equation}
Hence, we have proved (\ref{09/05/02/10:11}). 
\par 
In order to prove (\ref{09/05/02/10:13}) and (\ref{09/07/27/1:49}), we shall show that there exists a subsequence of $\{f_{n}\}$ (still denoted by the same symbol) such that    
\begin{align}
&\label{10/05/12/20:54}
\left\| f \right\|_{L^{2}}
=
\lim_{n\to \infty}\left\| f_{n} \right\|_{L^{2}},
\\[6pt]
&\label{10/05/12/20:53}
\lim_{n\to \infty}
\mathcal{H}\left(e^{-\frac{i}{2}\tau_{n}\Delta}f\right)
=
\lim_{n\to \infty} \mathcal{H}\left(f_{n}\right).
\end{align}
Since $f_{n}^{1}
=
e^{\frac{i}{2}\tau_{n}^{1}\Delta}
e^{\eta_{n}^{1}\cdot \nabla}f_{n}(0)$ and $f=f^{1}$, the weak convergence result (\ref{08/08/21/18:17}), with the help of extraction of some subsequence, leads to that  
\begin{equation}\label{09/10/08/14:09}
\left\| f \right\|_{L^{2}}
\le 
\lim_{n\to \infty}\left\| f_{n} \right\|_{L^{2}}.
\end{equation}
Extracting some subsequence further, we also have by  (\ref{08/08/21/18:22}) and (\ref{09/03/05/18:36}) that 
\begin{equation}\label{09/10/08/14:10}
\lim_{n\to \infty}
\mathcal{H}\left(e^{-\frac{i}{2}\tau_{n}\Delta}f\right)
\le
\lim_{n\to \infty} \mathcal{H}\left(f_{n}\right).
\end{equation}
Here, we employ the mass conservation law (\ref{08/05/13/8:59}) and (\ref{09/05/02/10:09}) (or the formula (\ref{09/02/04/8:41}) with $l=1$) to obtain that 
\begin{equation}\label{09/02/18/18:15}
\left\| \Psi(0) \right\|_{L^{2}}=\lim_{n\to \infty}
\left\| \Psi(-\tau_{n})\right\|_{L^{2}}=\left\| f \right\|_{L^{2}}.
\end{equation}
Moreover, the energy conservation law (\ref{08/05/13/9:03}) and (\ref{09/02/04/8:41}) with $l=1$ give us that 
\begin{equation}\label{09/02/18/18:18}
\mathcal{H}(\Psi(0))=\lim_{n\to \infty}\mathcal{H}(\Psi(-\tau_{n}))
=\lim_{n\to \infty}\mathcal{H}(e^{-\frac{i}{2}\tau_{n}\Delta}f).
\end{equation}
Hence, supposing the contrary that (\ref{10/05/12/20:54}) or (\ref{10/05/12/20:53}) fails, we have by the minimizing property (\ref{08/08/26/15:56})  that  
\begin{equation}\label{10/05/12/18:23}
\begin{split}
\widetilde{\mathcal{N}}_{2}(\Psi(0)) 
&=
\left\| f \right\|_{L^{2}}^{p+1-\frac{d}{2}(p-1)}
\lim_{n\to \infty} \sqrt{\mathcal{H}\left(e^{-\frac{i}{2}\tau_{n}\Delta}f\right)}^{\frac{d}{2}(p-1)-2}
\\[6pt]
&< 
\lim_{n\to \infty}\left\| f_{n} \right\|_{L^{2}}^{p+1-\frac{d}{2}(p-1)}
\lim_{n\to \infty} \sqrt{\mathcal{H}\left(f_{n}\right)}^{\frac{d}{2}(p-1)-2}
\\[6pt]
&= \lim_{n\to \infty}\widetilde{\mathcal{N}}_{2}(f_{n})
=
\widetilde{N}_{c}.
\end{split}
\end{equation}
This estimate, together with the definition of $\widetilde{N}_{c}$ (see (\ref{08/09/02/18:06})), leads to that $\left\|\Psi \right\|_{X(\mathbb{R})}<\infty$, which contradicts $\left\|\Psi \right\|_{X(\mathbb{R})}=\infty$. 
Thus, (\ref{10/05/12/20:54}) and (\ref{10/05/12/20:53}) holds. 
Then, an estimate similar to (\ref{10/05/12/18:23}) 
gives (\ref{09/05/02/10:13}). Moreover, (\ref{09/07/27/1:49}) follows from (\ref{08/08/24/1:47}) with $l=1$, $s=0$ and (\ref{10/05/12/20:54}).  
\par 
The properties (\ref{09/07/27/1:49}) and (\ref{09/05/02/10:09}) (or (\ref{10/04/15/20:43})), together with (\ref{10/05/13/10:06}) and the mass conservation law (\ref{08/05/13/8:59}), yield (\ref{10/05/13/10:09}). Moreover, the formula (\ref{09/05/02/10:07}) follows from the estimate (\ref{09/09/16/12:26}) with $N=1$, (\ref{09/09/15/16:19}) and (\ref{09/02/04/8:41}). 
\end{proof}

\subsection{Proof of Proposition \ref{08/10/16/21:12}}\label{08/09/25/17:01}
We shall prove Proposition \ref{08/10/16/21:12}, showing that the candidate $\Psi$ found in Lemma \ref{08/08/19/23:07} is actually one-soliton-like solution as $t\to +\infty$. 

\begin{proof}[Proof of Proposition \ref{08/10/16/21:12}]
The properties (\ref{08/10/20/2:27}) and (\ref{08/10/18/22:17}) have been obtained in Lemma \ref{08/08/19/23:07}. Moreover, (\ref{09/06/21/21:58}) in Proposition \ref{09/06/21/19:28}, together with (\ref{08/10/20/2:27}) and (\ref{08/10/18/22:17}), yields (\ref{10/04/05/18:07}).
\par 
We shall prove (\ref{08/08/18/17:42}): the momentum of $\Psi$ is zero. Using the Galilei transformation, we set
\begin{equation}\label{10/05/16/21:33}
\Psi_{\xi}(x,t):=e^{\frac{i}{2}(x\xi-t|\xi|^{2})}\Psi(x-\xi t, t), \quad \xi \in \mathbb{R}^{d}.
\end{equation}
It is easy to verify that 
\begin{align}
\label{08/08/31/15:33}
&\left\| \Psi_{\xi}(t)\right\|_{L^{q}}=\left\| \Psi(t)\right\|_{L^{q}} 
\quad 
\mbox{for all $\xi \in \mathbb{R}^{d}$, $q\in [2,2^{*})$ and $t\in \mathbb{R}$},\\[6pt]
\label{08/08/31/15:03}
&\left\| \Psi_{\xi}\right\|_{X(\mathbb{R})}=\left\| \Psi \right\|_{X(\mathbb{R})}=\infty 
\quad 
\mbox{for all $\xi \in \mathbb{R}^{d}$}.
\end{align}
Moreover, a simple calculation, together with the mass and momentum conservation laws (\ref{08/05/13/8:59}) and (\ref{08/10/20/4:37}), shows that 
\begin{equation}\label{10/05/16/21:55}
\begin{split}
&\left\|\nabla \Psi_{\xi}(t) \right\|_{L^{2}}^{2}
=
\left\| i\xi \Psi_{\xi}(t) +e^{\frac{i}{2}(x \xi -t|\xi|^{2})}(\nabla \Psi)(x-\xi t, t)\right\|_{L_{x}^{2}}^{2}
\\[6pt]
&=
\int_{\mathbb{R}^{d}}
|\xi|^{2}|\Psi_{\xi}(x,t)|^{2}\,dx 
+ \int_{\mathbb{R}^{d}}|\nabla \Psi|^{2}(x,t)\,dt 
\\[6pt]
& \qquad \quad 
+ 2\Re \int_{\mathbb{R}^{d}} \overline{i\xi \Psi_{\xi} }(x,t) e^{\frac{i}{2}(x \xi -t|\xi|^{2})}(\nabla \Psi)(x-\xi t, t)\,dx
\\[6pt]
&=|\xi|^{2}\left\| \Psi(t)\right\|_{L^{2}}^{2}+\left\| \nabla \Psi(t) \right\|_{L^{2}}^{2}
+
2\xi\cdot \Im \int_{\mathbb{R}^{d}} \overline{\Psi}(x,t) \nabla \Psi(x,t)\,dx 
\\[6pt]
&=
|\xi|^{2}\left\| \Psi(0)\right\|_{L^{2}}^{2}+\left\| \nabla \Psi(t) \right\|_{L^{2}}^{2}
+
2\xi\cdot \Im \int_{\mathbb{R}^{d}} \overline{\Psi}(x,0) \nabla \Psi(x,0)\,dx
\quad 
\mbox{for all $\xi \in \mathbb{R}^{d}$}.
\end{split}
\end{equation}
This equality (\ref{10/05/16/21:55}), together with the energy conservation law  (\ref{08/05/13/9:03}), yields that 
\begin{equation}\label{10/05/16/22:22}
\begin{split}
&\mathcal{H}(\Psi_{\xi}(t))
=
\left\| \nabla \Psi_{\xi}(t)\right\|_{L^{2}}^{2}
-\frac{2}{p+1}\left\| \Psi_{\xi}(t) \right\|_{L^{p+1}}^{p+1}
\\[6pt]
&=\left\| \nabla \Psi_{\xi}(t)\right\|_{L^{2}}^{2}
-\frac{2}{p+1}\left\| \Psi(t) \right\|_{L^{p+1}}^{p+1}
\\[6pt]
&= 
\mathcal{H}(\Psi(0))
+|\xi|^{2}\left\|\Psi(0) \right\|_{L^{2}}^{2}
+
2\xi\cdot \Im \int_{\mathbb{R}^{d}} \overline{\Psi}(x,0)\nabla \Psi(x,0)\,dx
\quad
\mbox{for all $\xi \in \mathbb{R}^{d}$}.
\end{split}
\end{equation}
Put  
\begin{equation}\label{08/08/31/15:28}
\xi_{0}=-\frac{\Im \int_{\mathbb{R}^{d}} \overline{\Psi}(x,0) \nabla \Psi(x,0)\,dx}{\left\| \Psi(0) \right\|_{L^{2}}}.
\end{equation} 
Then, we have by (\ref{10/05/16/21:55}) and (\ref{10/05/16/22:22}) that 
\begin{align}
\label{08/08/31/15:43}
&\left\| \nabla \Psi_{\xi_{0}} (t)\right\|_{L^{2}}^{2} = \left\|\nabla \Psi(t) \right\|_{L^{2}}^{2}- 
\left( \Im \int_{\mathbb{R}^{d}} \overline{\Psi}(x,0) \nabla \Psi(x,0)\,dx \right)^{2}
\ 
\mbox{for all $t \in \mathbb{R}$},
\\[6pt]
\label{08/08/31/15:31}
&\mathcal{H}(\Psi_{\xi_{0}}(t))
=
\mathcal{H}(\Psi(0))- 
\left( \Im \int_{\mathbb{R}^{d}} \overline{\Psi}(x,0) \nabla \Psi(x,0)\,dx \right)^{2} 
\  
\mbox{for all $t \in \mathbb{R}$}.
\end{align}
Now, we suppose that 
\begin{equation}\label{10/05/16/22:43}
\Im \int_{\mathbb{R}^{d}} \overline{\Psi}(x,t) \nabla \Psi(x,t)\,dx 
=
\Im \int_{\mathbb{R}^{d}} \overline{\Psi}(x,0) \nabla \Psi(x,0)\,dx
\neq 0.
\end{equation}
Then, by (\ref{08/08/31/15:33}) with $q=2$, (\ref{08/08/31/15:43}), (\ref{09/06/21/21:58}) in Proposition \ref{09/06/21/19:28} and (\ref{08/10/20/2:27}), we have that 
\begin{equation}\label{10/08/21/18:04}
\begin{split}
\mathcal{N}_{2}(\Psi_{\xi_{0}}(t))
&< 
\mathcal{N}_{2}(\Psi(t))
\\[6pt]
&\le \sqrt{\frac{d(p-1)}{d(p-1)-4}}^{\frac{d}{2}(p-1)-2}
\widetilde{\mathcal{N}}_{2}(\Psi(t))
= 
\sqrt{\frac{d(p-1)}{d(p-1)-4}}^{\frac{d}{2}(p-1)-2}\widetilde{N}_{c}
\\[6pt]
&<  \sqrt{\frac{d(p-1)}{d(p-1)-4}}^{\frac{d}{2}(p-1)-2}\widetilde{N}_{2}
=N_{2}
\qquad \mbox{for all $t \in \mathbb{R}$}.
\end{split}
\end{equation}
This inequality, together with the definition of $N_{2}$ (see \ref{08/07/02/23:24}), implies that  

\begin{equation}\label{10/05/16/22:47}
\mathcal{K}(\Psi_{\xi_{0}}(t))
>0 
\quad 
\mbox{for all $t \in \mathbb{R}$}.
\end{equation}
Moreover, using (\ref{08/08/31/15:33}) with $q=2$, (\ref{08/08/31/15:31}), and (\ref{08/10/20/2:27}) again, we obtain that  
\begin{equation}\label{10/05/16/22:46}
\widetilde{\mathcal{N}}_{2}(\Psi_{\xi_{0}}(t)) < \widetilde{N}_{c}.
\end{equation}
Hence, it follows from  the definition of $\widetilde{N}_{c}$ that 
$\left\| \Psi_{\xi_{0}}\right\|_{X(\mathbb{R})}< \infty$, which contradicts (\ref{08/08/31/15:03}). Thus, the momentum of $\Psi$ must be zero.\footnote{The proof of zero momentum is quite similar to the one of Proposition 4.1 in \cite{Holmer-Roudenko}. We can find an analogous trick in Appendix D of \cite{Nawa8}.}
\par
Next, we shall prove the tightness of either family $\{\Psi(t)\}_{t\in [0,\infty)}$ or $\{\Psi(t)\}_{t\in (-\infty,0]}$ in $H^{1}(\mathbb{R}^{d})$. Since $\left\|\Psi \right\|_{X(\mathbb{R}))}=\infty$, it holds that   
$\left\|\Psi \right\|_{X([0,\infty))}=\infty$ or $\left\|\Psi \right\|_{X((-\infty,0]))}=\infty$. The time reversibility of our equation (\ref{08/05/13/8:50}) allows us to assume that $\left\|\Psi \right\|_{X([0,\infty))}=\infty$; if not, we consider $\overline{\Psi(x,-t)}$ instead of $\Psi(x,t)$. 
\par 
We introduce the following quantity to employ Proposition \ref{08/10/03/9:46}: 
\begin{equation}
\label{09/07/28/19:58}
A:=\sup_{R>0}\liminf_{t\to \infty}\sup_{y\in \mathbb{R}^{d}} \int_{|x-y|\le R}
\left| \Psi(x,t) \right|^{2} \,dx.
\end{equation}
By the definition of $A$, we can take a sequence $\{t_{n}\}$ in $[0,\infty)$ such that 
\begin{align}
\label{10/05/17/1:11}
&\lim_{n\to \infty}t_{n}=+\infty,
\\[6pt]
\label{08/10/18/15:36}
&\sup_{y\in \mathbb{R}^{d}}\int_{|x-y|\le R} \left| \Psi(x,t_{n}) 
\right|^{2} \,dx
\le 
\left( 1+\frac{1}{n} \right)A
\quad 
\mbox{for all $R>0$ and $n \in \mathbb{N}$}.
\end{align}
Put  
\begin{equation}\label{09/09/16/19:19}
\Psi_{n}(x,t):=\Psi(x,t+t_{n}).
\end{equation}
\noindent 
We investigate the behavior of this $\Psi_{n}$ in order to show $A=1$. Note here that since $\|\Psi(t)\|_{L^{2}}=1$ for all $t \in \mathbb{R}$, we have $A\le 1$, so that $A\ge 1$ implies $A=1$. 
\par 
The sequence $\{\Psi_{n}(0)\}$ satisfies the conditions of Lemma \ref{08/08/19/23:07}:
\begin{align}
\label{09/02/17/16:33}
&\Psi_{n}(t) \in PW_{+}
\quad 
\mbox{for all $t \in \mathbb{R}$}, 
\\[6pt]
\label{10/05/17/22:50}
&\left\| \Psi_{n}(t)\right\|_{L^{2}}=1
\quad 
\mbox{for all $n\in \mathbb{N}$ and $t \in \mathbb{R}$}, 
\\[6pt]
\label{09/02/17/16:34}
&\sup_{n\in \mathbb{N}}\left\| \Psi_{n}(0) \right\|_{H^{1}}<\infty, 
\\[6pt]
\label{09/02/17/16:35}
&\widetilde{\mathcal{N}}_{2}(\Psi_{n}(t))=\widetilde{N}_{c}
\quad 
\mbox{for all $n \in \mathbb{N}$ and $t \in \mathbb{R}$},
\\[6pt]
\label{09/02/17/16:36}
&\left\| \Psi_{n} \right\|_{X(\mathbb{R})}=\infty  
\quad 
\mbox{for all $n \in \mathbb{N}$}, 
\\[6pt]
\label{09/02/17/16:21}
&\limsup_{n\to \infty}\left\| e^{\frac{i}{2}t\Delta} \Psi_{n}(0)\right\|_{ L^{\infty}(\mathbb{R};L^{\frac{d}{2}(p-1)})}>0.
\end{align}
The assertion of (\ref{09/02/17/16:21}) may need to be verified. If (\ref{09/02/17/16:21}) fails, then we have 
\begin{equation}\label{09/09/16/19:24}
\lim_{n\to \infty}\left\| e^{\frac{i}{2}t\Delta}
\Psi(t_{n})\right\|_{L^{\infty}(\mathbb{R};L^{\frac{d}{2}(p-1)})}=0,
\end{equation}
which leads us to the contradictory conclusion $\left\|\Psi \right\|_{X(\mathbb{R})}<\infty$ by the small data theory (Proposition \ref{08/08/22/20:59}). 
\par 
We apply Lemma \ref{08/08/19/23:07} to $\{\Psi_{n}\}$, so that there exists a subsequence of $\{\Psi_{n}\}$ (still denoted by the same symbol) with the following properties: There exists  
\\
(i) a nontrivial global solution $\Psi_{\infty} \in C(\mathbb{R};H^{1}(\mathbb{R}^{d}))$ to the equation (\ref{08/05/13/8:50}) with 
\begin{align}
\label{09/09/17/14:55}
&\left\| \Psi_{\infty} \right\|_{X(\mathbb{R})}=\infty,
\\[6pt]
\label{10/05/31/9:00} 
&\Psi_{\infty}(t) \in PW_{+} 
\quad
\mbox{for all $t \in \mathbb{R}$},
\\[6pt]  
\label{10/05/17/1:29} 
&\widetilde{\mathcal{N}}_{2}(\Psi_{\infty}(t))=\widetilde{N}_{c}
\quad 
\mbox{for all $t \in \mathbb{R}$},
\end{align}
and 
\\
(ii) a nontrivial function $\Phi \in PW_{+}$,  a sequence $\{\tau_{n}\}$ in $\mathbb{R}$ with $\displaystyle{\lim_{n\to \infty}\tau_{n}= 
\tau_{\infty} \in \mathbb{R}\cup \{\pm \infty\}}$,  and a sequence $\{\gamma_{n}\}$ in $\mathbb{R}^{d}$, such that, putting 
\begin{equation}\label{10/05/17/1:31}
\widetilde{\Psi}_{n}:=e^{\frac{i}{2}\tau_{n}\Delta}e^{\gamma_{n}\cdot \nabla}
\Psi_{n},
\quad 
\widetilde{\epsilon}_{n}:=
\Psi_{n}-e^{-\tau_{n}\frac{\partial}{\partial t}-\gamma_{n}\cdot \nabla}
\Psi_{\infty},
\end{equation}
we have:
\begin{align}
\label{09/02/16/15:35}
& \lim_{n\to \infty}\widetilde{\Psi}_{n}(0)=\Phi
\quad 
\mbox{weakly in $H^{1}(\mathbb{R}^{d})$, and a.e. in $\mathbb{R}^{d}$}, 
\\[6pt]
\label{10/05/17/1:47}
&\lim_{n\to \infty}
\left\| \widetilde{\Psi}_{n}(0) - \Phi 
\right\|_{L^{2}}=0,
\\[6pt]
\label{09/02/17/16:44}
&\lim_{n\to \infty}\left\| e^{\frac{i}{2}t\Delta}\left( \widetilde{\Psi}_{n}(0)-\Phi \right)\right\|_{L^{\infty}(\mathbb{R};L^{\frac{d}{2}(p-1)})\cap X(\mathbb{R})}=0,
\\[6pt]
\label{09/02/24/14:28}
&\lim_{n\to \infty}\left\|\Psi_{\infty}(-\tau_{n})-e^{-\frac{i}{2} \tau_{n}\Delta}\Phi \right\|_{H^{1}}=0,
\\[6pt]
\label{08/08/27/13:16}
&\lim_{n\to \infty} \left\|e^{\frac{i}{2}t\Delta}\widetilde{\epsilon}_{n}(0) 
\right\|_{ X(\mathbb{R})}=0.
\end{align}
Note here that the estimate (\ref{08/10/25/23:16}) gives us that 
\begin{equation}\label{09/03/05/17:39}
\lim_{T\to \infty}\left\|e^{\frac{i}{2}t\Delta} \Phi \right\|_{X((-\infty,-T])}
=   
\lim_{T\to \infty}
\left\| e^{\frac{i}{2}t\Delta} \Phi \right\|_{X([T,+\infty))}
=0.
\end{equation}
We claim that $\tau_{\infty} \in \mathbb{R}$. 
We suppose the contrary that $\tau_{\infty}=+ \infty$ or $\tau_{\infty}=-\infty$. If $\tau_{\infty}=+ \infty$, then (\ref{09/02/17/16:44}) and (\ref{09/03/05/17:39}) show that  
\begin{equation}\label{10/05/17/2:00}
\begin{split}
\left\|e^{\frac{i}{2}t\Delta}\Psi_{n}(0) \right\|_{X((-\infty,0])}
&=
\left\| e^{\frac{i}{2}(t+\tau_{n})\Delta}\Psi_{n}(0) \right\|_{X((-\infty,-\tau_{n}])}
\\[6pt]
&\le 
\left\|e^{\frac{i}{2}t\Delta} \Phi \right\|_{X((-\infty,-\tau_{n}])}
+
\left\| e^{\frac{i}{2}t\Delta}\left( \widetilde{\Psi}_{n}(0)-\Phi \right) \right\|_{X((-\infty,-\tau_{n}])}
\\[6pt]
&\to 0
\quad 
\mbox{as $n\to \infty$}.
\end{split}
\end{equation}
Then, the small data theory (Proposition \ref{08/08/22/20:59}) leads us to that
\begin{equation}\label{10/05/17/2:04}
\left\| \Psi \right\|_{X((-\infty,t_{n}])}
=
\left\| \Psi_{n} \right\|_{X((-\infty,0])} \lesssim 1
\quad 
\mbox{for all sufficiently large $n\in \mathbb{N}$},
\end{equation}
where the implicit constant is independent of $n$. Hence, taking $n\to \infty$ in (\ref{10/05/17/2:04}), we obtain that 
\begin{equation}\label{10/05/17/2:07}
\left\| \Psi \right\|_{X(\mathbb{R})}<\infty,
\end{equation}
which contradicts $\left\| \Psi \right\|_{X(\mathbb{R})}=\infty$. Thus, the case $\tau_{\infty}= +\infty$ never happens. 
On the other hand, if $\tau_{\infty}=-\infty$, then we have by (\ref{09/02/17/16:44}) and (\ref{09/03/05/17:39})  that 
\begin{equation}\label{10/05/17/2:11}
\begin{split}
\left\|e^{\frac{i}{2}t\Delta}\Psi_{n}(0) \right\|_{X([0,+\infty))}
&=
\left\| e^{\frac{i}{2}(t+\tau_{n})\Delta}\Psi_{n}(0) \right\|_{X([-\tau_{n},+\infty))}
\\[6pt]
&\le 
\left\|e^{\frac{i}{2}t\Delta} \Phi \right\|_{X([-\tau_{n}, +\infty))}
+
\left\| e^{\frac{i}{2}t\Delta}\left( \widetilde{\Psi}_{n}(0)-\Phi \right) \right\|_{X((-\tau_{n},+\infty))}
\\[6pt]
&\to 0
\quad
\mbox{as $n\to \infty$}.
\end{split}
\end{equation}
Hence, the small data theory (Proposition \ref{08/08/22/20:59}) shows that
\begin{equation}\label{10/05/17/2:15}
\left\| \Psi \right\|_{X([t_{n},+\infty))}
=
\left\| \Psi_{n} \right\|_{X([0,+\infty)}<\infty 
\quad 
\mbox{for all sufficiently large $n$},
\end{equation} 
so that    
\begin{equation}\label{10/05/17/2:16}
\left\| \Psi \right\|_{X([0,+\infty))}<\infty.
\end{equation}
However, (\ref{10/05/17/2:16}) contradicts the our working hypothesis $\left\| \Psi \right\|_{X([0,+\infty))}=\infty$. Thus, we always have $\tau_{\infty} \in \mathbb{R}$. 
\par
We establish the strong convergence in $L^{p+1}(\mathbb{R}^{d})$: 
\begin{equation}\label{09/02/24/13:58}
\lim_{n\to \infty}e^{\gamma_{n}\cdot \nabla }\Psi(t_{n})
=
\lim_{n\to \infty}
e^{-\frac{i}{2}\tau_{n}\Delta}\widetilde{\Psi}_{n}(0)
=
e^{-\frac{i}{2}\tau_{\infty}\Delta}\Phi
\quad 
\mbox{strongly in $L^{p+1}(\mathbb{R}^{d})$}.
\end{equation}
The Gagliardo-Nirenberg inequality shows that 
\begin{equation}\label{10/05/17/22:16}
\begin{split}
&\left\| e^{-\frac{i}{2}\tau_{n}\Delta}\widetilde{\Psi}_{n}(0) - e^{-\frac{i}{2}\tau_{\infty}\Delta}\Phi \right\|_{L^{p+1}}^{p+1}
\\[6pt]
&\lesssim  
\left\| e^{-\frac{i}{2}\tau_{n}\Delta}\widetilde{\Psi}_{n}(0) - e^{-\frac{i}{2}\tau_{\infty}\Delta}\Phi \right\|_{L^{\frac{d}{2}(p-1)}}^{p-1}
\left\| \nabla \left( e^{-\frac{i}{2}\tau_{n}\Delta}\widetilde{\Psi}_{n}(0) - e^{-\frac{i}{2}\tau_{\infty}\Delta}\Phi \right)
\right\|_{L^{2}}^{2}
\\[6pt]
&\le 
\left\| e^{-\frac{i}{2}\tau_{n}\Delta}\widetilde{\Psi}_{n}(0) - e^{-\frac{i}{2}\tau_{\infty}\Delta}\Phi \right\|_{L^{\frac{d}{2}(p-1)}}^{p-1}
\left( 
\left\| \nabla \Psi_{n}(0) \right\|_{L^{2}}^{2}
+
\left\| \nabla \Phi \right\|_{L^{2}}^{2}
\right),
\end{split}
\end{equation}
where the implicit constant depends only on $d$ and $p$. 
This estimate, together with (\ref{09/02/17/16:34}) and (\ref{09/02/17/16:44}), immediately yields (\ref{09/02/24/13:58}). 
\par 
Next, we shall show that 
\begin{equation}\label{09/02/24/14:22}
\lim_{n\to \infty}e^{\gamma_{n}\cdot \nabla}\Psi(t_{n})
=
\lim_{n\to \infty}e^{-\frac{i}{2}\tau_{n}\Delta}\widetilde{\Psi}_{n}(0)
=
e^{-\frac{i}{2}\tau_{\infty}\Delta}\Phi
\quad 
\mbox{strongly in $H^{1}(\mathbb{R}^{d})$}.
\end{equation}
Since  (\ref{10/05/17/1:47}) implies the strong convergence in $L^{2}(\mathbb{R}^{d})$,  if (\ref{09/02/24/14:22}) fails, then  we have by the weak convergence (\ref{09/02/16/15:35})
 that 
\begin{equation}\label{09/02/24/14:24}
\liminf_{n\to \infty}\left\| \nabla \Psi_{n}(0) \right\|_{L^{2}}
=
\liminf_{n\to \infty}\left\| \nabla \widetilde{\Psi}_{n}(0) \right\|_{L^{2}}
> \left\| \nabla \Phi \right\|_{L^{2}},
\end{equation}
so that we have by (\ref{09/02/24/13:58}) that 
\begin{equation}\label{10/05/17/20:08}
\lim_{n\to \infty}\mathcal{H}(\Psi_{n}(0))
=
\lim_{n\to \infty}\mathcal{H}(e^{-\frac{i}{2}\tau_{n}\Delta}\widetilde{\Psi}_{n}(0)) 
> \mathcal{H}(e^{-\frac{i}{2}\tau_{\infty}\Delta}\Phi).
\end{equation}
Moreover, it follows from (\ref{10/05/17/1:47}), (\ref{10/05/17/20:08}) and (\ref{09/02/17/16:35}) that  
\begin{equation}\label{09/02/24/14:30}
\widetilde{\mathcal{N}}_{2}(e^{-\frac{i}{2}\tau_{\infty}\Delta}\Phi)<
\lim_{n\to \infty}
\widetilde{\mathcal{N}}_{2}(\Psi_{n}(0))
=\widetilde{N}_{c}.
\end{equation}
Since $\Psi_{\infty}$ is the solution to (\ref{08/05/13/8:50}) with $\Psi_{\infty}(-\tau_{\infty})=e^{-\frac{i}{2}\tau_{\infty}\Delta}\Phi$ (see (\ref{09/02/24/14:28})), the estimate (\ref{09/02/24/14:30}), together with the definition of $\widetilde{N}_{c}$ (see (\ref{08/09/02/18:06})), leads to that     
\begin{equation}\label{10/05/17/22:41}
\left\|\Psi_{\infty} \right\|_{X(\mathbb{R})}<\infty ,
\end{equation}
which contradicts (\ref{09/09/17/14:55}). Thus, (\ref{09/02/24/14:22}) must hold. 
\par 
Now, we are in a position to prove $A=1$. We take an arbitrarily small number $\varepsilon>0$, and fix it in the following argument; we assume that $\varepsilon<\frac{1}{4}$ at least.
\par 
Let $R_{\varepsilon}>0$ be a number such that 
\begin{equation}\label{10/05/18/0:14}
\int_{|x|\le R_{\varepsilon}}|\Phi(x)|^{2}\,dx 
\ge 
\left\| \Phi \right\|_{L^{2}}^{2}
-\frac{\varepsilon}{2}
=
1-\frac{\varepsilon}{2},
\end{equation}
where we have used the fact that $\|\Phi \|_{L^{2}}=1$ (see (\ref{10/05/17/22:50}) and (\ref{10/05/17/1:47})). Then, it follows from (\ref{09/02/24/14:22}) and (\ref{10/05/18/0:14}) that  
\begin{equation}\label{08/11/01/15:20}
\int_{|x|\le R} | \Psi(x+\gamma_{n},t_{n})|^{2}\,dx > 
1-\varepsilon
\quad 
\mbox{for all sufficiently large $n\in \mathbb{N}$ and $R\ge R_{\varepsilon}$}.
\end{equation}
Combining (\ref{08/10/18/15:36}) and (\ref{08/11/01/15:20}), we obtain that 
\begin{equation}\label{10/05/17/23:00}
1-\varepsilon \le \left( 1+\frac{1}{n} \right)A
\quad 
\mbox{for all sufficiently large $n \in \mathbb{N}$ and $R\ge R_{\varepsilon}$}.\end{equation}
Taking $\varepsilon \to 0$ and $n\to \infty$ in (\ref{10/05/17/23:00}), we see that $1\le A$. Hence, $A=1$ as stated above. 
\par 
We apply Proposition \ref{08/10/03/9:46} to $|\Psi|^{2}$ and find that: There exist $R_{\varepsilon}>0$, $T_{\varepsilon}>0$ and a continuous path $y_{\varepsilon} \in C([T_{\varepsilon},\infty); \mathbb{R}^{d})$ such that 
\begin{equation}\label{09/10/08/16:24}
\int_{|x-y_{\varepsilon}(t)|<R}\left| \Psi(x,t)\right|^{2}
 \,dx > 1-\varepsilon 
\quad 
\mbox{for all $t \in [T_{\varepsilon}, \infty)$ 
and $R>R_{\varepsilon}$}.
\end{equation} 
We claim that, if necessary, taking $R_{\varepsilon}$ much larger ,  the following inequality holds for the same path $y_{\varepsilon}$ just found above:
\begin{equation}\label{09/08/10/16:50}
\int_{|x-y_{\varepsilon}(t)|<R}\left| \nabla \Psi(x,t)\right|^{2}
 \,dx > \left\| \nabla \Psi(t) \right\|_{L^{2}}^{2}-\varepsilon 
\quad 
\mbox{for all $t \in [T_{\varepsilon}, \infty)$ and $R\ge R_{\varepsilon}$}.
\end{equation}
We prove this by contradiction: Suppose the contrary that for any $k \in \mathbb{N}$, there exists $t_{k}^{0} \in [T_{\varepsilon},\infty)$ such that
\begin{equation}\label{08/11/04/8:36}
\int_{|x|< k} |\nabla \Psi(x+y_{\varepsilon}(t_{k}^{0}),t_{k}^{0})|^{2} \le \left\| \nabla \Psi(t_{k}^{0}) \right\|_{L^{2}}^{2}-\varepsilon .
\end{equation}
Put 
\begin{equation}\label{10/05/18/0:41}
\Psi_{k}^{0}(x,t):=
\Psi(x+y_{\varepsilon}(t_{k}^{0}),t+t_{k}^{0}).
\end{equation}
Then, in a similar argument for $\Psi_{n}$, we obtain    
 a subsequence of $\{ \Psi_{k}^{0} \}$ (still denoted by the same symbol), a sequence $\{\tau_{k}^{0}\}$ in $\mathbb{R}$ with $\displaystyle{\lim_{k\to \infty}
\tau_{k}^{0}=\tau_{\infty}^{0} \in \mathbb{R}}$, a sequence $\{y_{k}^{0}\}$ in $\mathbb{R}^{d}$, and a nontrivial function $\Phi^{0} \in H^{1}(\mathbb{R}^{d})$ such that  
\begin{equation}\label{08/11/02/1:25}
\lim_{k\to \infty}
\Psi_{k}^{0}(\cdot+y_{k}^{0},0)= e^{-{\frac{i}{2}\tau_{\infty}^{0}\Delta}}\Phi^{0} 
\quad 
\mbox{strongly in $H^{1}(\mathbb{R}^{d})$}.
\end{equation}
Furthermore, (\ref{08/11/02/1:25}) implies that there exists $\widetilde{R}_{\varepsilon}>0$ such that 
\begin{align}
\label{08/11/04/8:18}
&\int_{|x-y_{k}^{0}|>\widetilde{R}_{\varepsilon}}\left| \Psi(x+y_{\varepsilon}(t_{k}^{0}), t_{k}^{0})\right|^{2}\,dx \le \varepsilon 
\quad 
\mbox{for all sufficiently large $k \in \mathbb{N}$},
\\[6pt] 
\label{09/09/27/10:52}
&\int_{|x-y_{k}^{0}|\le \widetilde{R}_{\varepsilon}}\left| \nabla \Psi(x+y_{\varepsilon}(t_{k}^{0}), t_{k}^{0})\right|^{2}\,dx 
> \left\| \nabla \Psi(t_{k}^{0})\right\|_{L^{2}}^{2} -\varepsilon
\quad 
\mbox{for all sufficiently large $k \in \mathbb{N}$}.
\end{align}
On the other hand, we have by (\ref{09/10/08/16:24}) that  
\begin{equation}\label{08/11/04/8:19}
\int_{|x|\le R_{\varepsilon}}\left|\Psi (x+y_{\varepsilon}(t_{k}^{0}),t_{k}^{0})\right|^{2}\,dx > 1-\varepsilon.
\end{equation}
If $\displaystyle{\sup_{k\in \mathbb{N}}|y_{k}^{0}|= \infty}$, then we can take a subsequence of $\{y_{k}^{0}\}$ (still denoted by the same symbol) such that 
$\displaystyle{\lim_{k\to \infty}|y_{k}^{0}|= \infty}$, so that   
\begin{equation}\label{08/11/04/8:17}
\left\{x \in \mathbb{R}^{d} \left| |x-y_{k}^{0}|>\widetilde{R}_{\varepsilon}  \right. \right\}
\supset \left\{ x \in \mathbb{R}^{d} \left| |x|\le R_{\varepsilon} \right. \right\}
\quad 
\mbox{for all sufficiently large $k \in \mathbb{N}$}.
\end{equation}
However, the result derived from (\ref{08/11/04/8:18}) and (\ref{08/11/04/8:17}) contradicts (\ref{08/11/04/8:19}). Hence, it must hold that $\displaystyle{\sup_{k\in \mathbb{N}}|y_{k}^{0}|<\infty}$. Put $\displaystyle{R_{0}:= \sup_{k \in \mathbb{N}}|y_{k}^{0}|}$. Then, it follows from (\ref{09/09/27/10:52}) that  
\begin{equation}\label{10/05/18/10:30}
\begin{split}
\int_{|x|\le \widetilde{R}_{\varepsilon}+R_{0}} \left| \nabla \Psi(x+y_{\varepsilon}(t_{k}^{0}), t_{k}^{0})\right|^{2}\,dx
&\ge 
\int_{|x-y_{k}^{0}|\le \widetilde{R}_{\varepsilon}} \left| \nabla \Psi(x+y_{\varepsilon}(t_{k}^{0}), t_{k}^{0})\right|^{2}\,dx 
\\[6pt]
&> \left\| \nabla \Psi(t_{k}^{0})\right\|_{L^{2}}^{2}-\varepsilon,
\end{split}
\end{equation}
which contradicts (\ref{08/11/04/8:36}) for $k> \widetilde{R}_{\varepsilon}+R_{0}$. Thus, (\ref{09/08/10/16:50}) holds. 
\par 
Finally, we obtain (\ref{08/10/18/22:46}) and (\ref{08/11/01/15:41}) from (\ref{09/10/08/16:24}) and (\ref{09/08/10/16:50}), respectively,  by a space-time translation: regarding $\Psi(x+y_{\varepsilon}(T_{\varepsilon}), t+T_{\varepsilon})$ as our $\Psi(x,t)$ with $\gamma_{\varepsilon}(t)=y_{\varepsilon}(t+T_{\varepsilon})-y_{\varepsilon}(T_{\varepsilon})$. 
\end{proof}

\subsection{Proofs of Theorem \ref{08/05/26/11:53} and Corollary \ref{09/12/23/22:22}}
\label{09/03/04/19:08}

We begin with the proof of Theorem \ref{08/05/26/11:53}.

\begin{proof}[Proof of Theorem \ref{08/05/26/11:53}] The claims (i) and (ii) are direct consequences of  Proposition \ref{09/06/21/19:28}. 
\par 
We shall prove (iii). Proposition \ref{09/01/12/16:36} shows 
 that the wave operators $W_{\pm}$ exist on $\Omega$ and continuous. 
 It remains to prove the bijectivity of $W_{\pm}$ and the continuity of $W_{\pm}^{-1}$.
\par 
We first show that $W_{\pm}$ is surjective from $\Omega$ to $PW_{+}$. Let $\psi_{0} \in PW_{+}$ and let $\psi$ be the solution to (\ref{08/05/13/8:50}) with $\psi(0)=\psi_{0}$. Since we have shown that $\widetilde{N}_{c}=\widetilde{N}_{2}$ in Section \ref{09/05/06/9:13}, $\|\psi\|_{X(\mathbb{R})}<\infty$. Therefore, it follows from Proposition \ref{08/08/18/16:51} that  $\psi$ has asymptotic states $\phi_{\pm}$ in $H^{1}(\mathbb{R}^{d})$. It remains to prove that $\phi_{\pm} \in \Omega$.
\par 
The energy conservation law (\ref{08/05/13/9:03}) and Theorem \ref{09/05/18/10:43} show that 
\begin{equation}\label{10/05/09/12:15}
0<\mathcal{H}(\psi_{0})=\lim_{t\to \pm \infty}
\left( 
\left\| \nabla \psi(t) \right\|_{L^{2}}^{2}
-
\frac{2}{p+1}
\left\| \psi(t) \right\|_{L^{p+1}}^{p+1}
\right)
=
\left\| \nabla \phi_{\pm } \right\|_{L^{2}}^{2}.
\end{equation}
Moreover, the mass conservation law (\ref{08/05/13/8:59}) gives us that 
\begin{equation}\label{10/05/08/14:29}
\left\| \phi_{\pm} \right\|_{L^{2}}^{2}
=
\lim_{t\to \pm \infty}\left\|\psi(t)  \right\|_{L^{2}}^{2}
=\left\| \psi_{0} \right\|_{L^{2}}^{2}.
\end{equation}
Since $\psi_{0}\in PW_{+}$, we obtain from (\ref{10/05/09/12:15}), (\ref{10/05/08/14:29})  and (\ref{09/12/16/17:07}) that 
\begin{equation}\label{10/06/13/13:30}
\mathcal{N}_{2}(\phi_{\pm})= \widetilde{\mathcal{N}}_{2}(\psi_{0})<\widetilde{N}_{2},
\end{equation}
so that $\phi_{\pm}\in \Omega$.   
\par
Next, we shall show  the injectivity of $W_{\pm}$. Take any $\phi_{+,1}, \phi_{+,2} \in \Omega$ and suppose that  
\begin{equation}\label{10/05/08/14:34}
W_{+}\phi_{+,1}=W_{+}\phi_{+,2}.
\end{equation}
Put $\psi_{0}=W_{+}\phi_{+,1}=W_{+}\phi_{+,2}$ and let $\psi$ be the solution to the equation (\ref{08/05/13/8:50}) with $\psi(0)=\psi_{0}$. 
Then, we have 
\begin{equation}\label{10/05/08/14:46}
\begin{split}
\left\|\phi_{+,1}-\phi_{+,2} \right\|_{H^{1}}
&\le 
\lim_{t\to +\infty}
\left\| \psi(t)-e^{\frac{i}{2}t\Delta}\phi_{+,1} \right\|_{H^{1}}
+
\lim_{t\to +\infty}
\left\| \psi(t)-e^{\frac{i}{2}t\Delta}\phi_{+,2} \right\|_{H^{1}}
\\[6pt]
&=0,
\end{split}
\end{equation}
so that $\phi_{+,1}=\phi_{+,2}$. Similarly, we see that $W_{-}$ is injective from $\Omega$ to $PW_{+}$.
\par 
Finally, we shall prove that $W_{+}^{-1}$ is continuous. Take a function $\psi_{0}\in PW_{+}$ and a sequence $\{\psi_{0,n}\}$ in $PW_{+}$ such that 
\begin{equation}\label{10/05/07/18:17}
\lim_{n\to \infty}\psi_{0,n} = \psi_{0} 
\quad 
\mbox{strongly in $H^{1}(\mathbb{R}^{d})$}.
\end{equation}
Let $\psi$ and $\psi_{n}$ be the solutions to (\ref{08/05/13/8:50}) with $\psi(0)=\psi_{0}$ and $\psi_{n}(0)=\psi_{n,0}$. Since $\psi_{0}, \psi_{0,n} \in PW_{+}$ and $\widetilde{N}_{c}=\widetilde{N}_{2}$, we have by (\ref{08/09/02/18:06}) that 
\begin{equation}\label{10/05/08/14:56}
\left\| \psi \right\|_{X(\mathbb{R})}<\infty,\quad  \left\| \psi_{n} \right\|_{X(\mathbb{R})}< \infty.
\end{equation}
Note that since $\left\| \psi \right\|_{X(\mathbb{R})}<\infty$, for any $\delta>0$, there exists  $T_{\delta}>0$ such that 
\begin{equation}\label{10/05/07/19:02}
\left\| \psi \right\|_{X([T_{\delta},\infty))}<\delta.
\end{equation}
Now, we put 
\begin{equation}\label{10/05/07/18:27}
w_{n}=\psi_{n}-\psi.
\end{equation}
Then, it follows from the inhomogeneous Strichartz estimate (\ref{10/03/29/11:06}), an elementary inequality (\ref{09/09/27/21:10}) and the H\"older inequality that
\begin{equation}
\label{10/05/07/18:29}
\begin{split}
\left\| w_{n} \right\|_{X([t_{0},t_{1}))} 
 &\le 
\left\|e^{\frac{i}{2}(t-t_{0})\Delta} w(t_{0}) \right\|_{X([t_{0},t_{1}))}
\\[6pt]
&
\quad 
+
C
\left( \left\| \psi \right\|_{X([t_{0},t_{1}))}^{p-1} 
+ 
\left\|w_{n} \right\|_{X([t_{0}, t_{1}))}^{p-1}
\right)
\left\|w_{n} \right\|_{X([t_{0}, t_{1}))}
\\[6pt]
&\hspace{5cm}
\mbox{for all $0\le t_{0}<t_{1}\le \infty$},
\end{split}
\end{equation}
where $C$ is some constant depending only on $d$, $p$ and $q_{1}$. 
We claim that 
\begin{equation}
\label{10/05/07/19:04}
\lim_{n\to \infty}
\left\| w_{n} \right\|_{X([T_{\delta},\infty))}
=0
\quad 
\mbox{for all 
$\delta \in \Big(0,\  \left(\frac{1}{1+4C)} \right)^{\frac{1}{p-1}}\Big)$},
\end{equation}
where $C$ is the constant found in (\ref{10/05/07/18:29}) and $T_{\delta}$ is the time satisfying (\ref{10/05/07/19:02}). Let $\varepsilon$ be a number such that 
\begin{equation}\label{10/05/07/19:12}
0<
\varepsilon  
< 
\left( \frac{1}{(1+4C)^{p}} \right)^{\frac{1}{p-1}}.
\end{equation}
Then, the estimate (\ref{08/10/25/23:16}) and the continuous dependence of solutions on initial data show that there exists $N_{\varepsilon}\in \mathbb{N}$ such that  \begin{equation}\label{10/05/07/19:08}
\left\| e^{\frac{i}{2}(t-T_{\delta})}w_{n}(T_{\delta}) \right\|_{X(\mathbb{R})}<\varepsilon
\quad 
\mbox{for all $n\ge N_{\varepsilon}$}.
\end{equation}
We define a time $T_{\delta,n}$ by 
\begin{equation}\label{10/05/07/22:37}
T_{\delta, n}
=
\sup \left\{ t\ge T_{\delta} \bigm| 
\left\|w_{n} \right\|_{X([T_{\delta},t))}\le (1+4C) \varepsilon \right\}
.
\end{equation}
For (\ref{10/05/07/19:04}), it suffices to show that $T_{\delta,n}=\infty$ for all $n\ge N_{\varepsilon}$. We shall prove this. Note that (\ref{10/05/07/18:29}) with $t_{0}=T_{\delta}$, together with (\ref{10/05/07/19:08}), implies that $T_{\delta,n}>T_{\delta}$ for all $n\ge N_{\varepsilon}$. Supposing the contrary that $T_{\delta,n}<\infty$ for some $n\ge N_{\varepsilon}$, we have from the continuity of $w_{n}$ that 
\begin{equation}\label{10/05/07/22:57} 
\left\| w_{n} \right\|_{X([T_{\delta},T_{\delta,n}])} 
=(1+4C)\varepsilon
.
\end{equation} 
However, (\ref{10/05/07/18:29}), together with (\ref{10/05/07/19:02}), 
(\ref{10/05/07/19:12}), (\ref{10/05/07/19:08}) and (\ref{10/05/07/22:57}), 
 shows that 
\begin{equation}\label{10/05/07/22:59}
\begin{split}
\left\| w_{n} \right\|_{X([T_{\delta},T_{\delta,n}])} 
&\le 
\varepsilon
+ C 
\left(  
\delta^{p-1}
+
(1+4C)^{p-1}\varepsilon^{p-1}
\right)
(1+4C)\varepsilon 
\\[6pt]
&
\le \varepsilon+2C \varepsilon
\qquad 
\mbox{for all 
$\delta \in \Big(0,\  \left(\frac{1}{1+4C)} \right)^{\frac{1}{p-1}}\Big)$ and 
$n \ge N_{\varepsilon}$}.
\end{split}
\end{equation}
This is a contradiction. Thus, we see that $T_{\delta,n}=\infty$ for all $n\ge N_{\varepsilon}$. 
\par 
Besides (\ref{10/05/07/19:04}), we see that there exists $\delta_{0}>0$ depending only on $d$, $p$ and $q_{1}$ such that 
\begin{equation}\label{10/05/10/9:57}
\lim_{n\to \infty}
\left\|w_{n} \right\|_{S([T_{\delta},\infty))}
=0
\quad 
\mbox{for all $\delta \in (0,\delta_{0})$}.
\end{equation}
Indeed,  the Strichartz estimate, together with (\ref{09/09/27/21:10}), yields that     
\begin{equation}\label{10/05/07/23:10}
\left\| w_{n} \right\|_{S([T_{\delta},\infty))}
\lesssim 
\left\| w_{n}(T_{\delta})\right\|_{L^{2}}
+
\left( \left\| \psi \right\|_{X([T_{\delta},\infty))}^{p-1}
+
\left\| w_{n} \right\|_{X([T_{\delta},\infty))}^{p-1}\right)
\left\| w_{n} \right\|_{S([T_{\delta},\infty))},
\end{equation}
where the implicit constant depends only on $d$, $p$ and $q_{1}$. This estimate, together with (\ref{10/05/07/19:04}), gives (\ref{10/05/10/9:57}).  
\par 
We claim that 
\begin{equation}\label{10/05/10/10:28}
\lim_{n\to \infty}
\left\| \psi_{n}-\psi \right\|_{L^{\infty}([T,\infty);H^{1})}
=
\lim_{n\to \infty}
\left\| w_{n} \right\|_{L^{\infty}([T,\infty);H^{1})}=0
\quad 
\mbox{for sufficiently large $T>0$}.
\end{equation}
Considering the integral equations of $\partial_{j}\psi$ and $\partial_{j}\psi_{n}$ for $1\le j\le n$, we obtain that 
\begin{equation}\label{10/05/10/10:09}
\begin{split}
\left\| \partial_{j}\psi_{n}-\partial_{j} \psi \right\|_{S([T_{\delta}, \infty))}
&\lesssim 
\left\| \psi_{n}(T_{\delta})-\psi(T_{\delta})\right\|_{H^{1}}
+
\left\| 
|\psi_{n}|^{p-1}(\partial_{j}\psi_{n}-\partial_{j}\psi)
\right\|_{L^{r_{0}'}([T_{\delta},\infty);L^{q_{1}'})}
\\[6pt]
&\quad + 
\left\| 
\left( |\psi_{n}|^{p-1}-|\psi|^{p-1} \right) \partial_{j}\psi 
\right\|_{L^{r_{0}'}([T_{\delta},\infty);L^{q_{1}'})}
\\[6pt]
&\quad +
\left\| 
|\psi_{n}|^{p-3}\psi_{n}^{2}\left(\partial_{j}\overline{\psi_{n}}
-\partial_{j}\overline{\psi}\right)
\right\|_{L^{r_{0}'}([T_{\delta},\infty);L^{q_{1}'})}
\\[6pt]
&\quad +
\left\| 
\left( 
|\psi_{n}|^{p-3}\psi_{n}^{2}
-|\psi|^{p-3}\psi^{2}
\right) \partial_{j}\overline{\psi}
\right\|_{L^{r_{0}'}([T_{\delta},\infty);L^{q_{1}'})}
\\[6pt]
&\lesssim 
\left\| \psi_{n}(T_{\delta})-\psi(T_{\delta})\right\|_{H^{1}}
+
\left\| \psi_{n} \right\|_{X([T_{\delta},\infty)}^{p-1}
\left\| \partial_{j}\psi_{n}-\partial_{j}\psi \right\|
_{S([T_{\delta},\infty))}
\\[6pt]
&\quad + 
\left\| |\psi_{n}|^{p-1}-|\psi|^{p-1} 
\right\|_{L^{\frac{r_{2}}{p-1}}([T_{\delta},\infty);L^{\frac{q_{2}}{p-1}})} 
\left\| \partial_{j}\psi \right\|_{S([T_{\delta},\infty))}
\\[6pt]
&\quad 
+
\left\| \psi_{n} \right\|_{X([T_{\delta},\infty))}^{p-1}
\left\| \partial_{j}\psi_{n}
-\partial_{j}\psi\right\|_{S([T_{\delta},\infty))}
\\[6pt]
&\quad +
\left\| 
|\psi_{n}|^{p-3}\psi_{n}^{2}
-|\psi|^{p-3}\psi^{2}
\right\|_{L^{\frac{r_{2}}{p-1}}([T_{\delta},\infty);L^{\frac{q_{2}}{p-1}})}
\left\| \partial_{j}\psi 
\right\|_{S([T_{\delta},\infty))}, 
\end{split}
\end{equation}
where the implicit constant depends only on $d$, $p$ and $q_{1}$. 
This estimate, with the help of (\ref{10/05/07/19:04}) and (\ref{10/05/10/9:57}),  leads to (\ref{10/05/10/10:28}).
\par 
Now, put $\phi_{+,n}=W_{+}^{-1}\psi_{0,n}$. Then, we have by (\ref{10/05/10/10:28})  that  
\begin{equation}\label{10/05/07/23:15}
\begin{split}
\left\|\phi_{+,n}-\phi_{+} \right\|_{H^{1}}
&\le 
\left\|\psi_{n}(t)-e^{\frac{i}{2}t\Delta}\phi_{+,n} \right\|_{H^{1}}
+
\left\|\psi(t)-e^{\frac{i}{2}t\Delta}\phi_{+} \right\|_{H^{1}}
\\[6pt]
&+
\left\| \psi_{n}(t) -\psi(t) \right\|_{H^{1}}
\\[6pt]
&\to 0 
\qquad 
\mbox{as $t \to \infty$ and $n\to \infty$}. 
\end{split}
\end{equation}
Thus, we have proved that $W_{+}$ is a homeomorphism from $\Omega$ to $PW_{+}$. Similarly, we can prove that $W_{-}$ is a homeomorphism from $\Omega$ to $PW_{+}$.
\end{proof}

We shall give the proof of Corollary \ref{09/12/23/22:22}.
\begin{proof}[Proof of Corollary \ref{09/12/23/22:22}]
Let $f_{1}, f_{2}\in PW_{+}\cup \{0\}$. Then, Theorem \ref{08/05/26/11:53} shows that the corresponding solutions $\psi_{1}$ and $\psi_{2}$ have asymptotic states at $+\infty$. Hence, it follows from Theorem \ref{09/05/18/10:43} that 
\begin{equation}\label{10/05/10/0:41}
\lim_{t\to \infty}\left\| \psi_{1}(t)\right\|_{L^{p+1}}
=
\lim_{t\to \infty}\left\| \psi_{2}(t)\right\|_{L^{p+1}}
=0,
\end{equation} 
so that we can obtain  
\begin{equation}\label{08/11/20/18:12}
\lim_{t\to \infty}\left\| \psi_{j}(t) \right\|_{L^{q}}
=0
\quad 
\mbox{for all $q \in (2,2^{*})$ and $j=1,2$}.
\end{equation}
Since the solutions are continuous in $H^{1}(\mathbb{R}^{d})$, we find by (\ref{08/11/20/18:12}) that $f_{1}$ and $f_{2}$ are connected by the path $\{ \psi_{1}(t) \bigm| t\ge 0\} \cup \{0\} \cup \{ \psi_{2}(t) \bigm| t\ge 0\}$ in $L^{q}(\mathbb{R}^{d})$ with $q \in (2,2^{*})$.  
\end{proof}

\section{Analysis on $\boldsymbol{PW_{-}}$}
\label{08/07/03/0:40}
We shall give the proofs of Theorem \ref{08/06/12/9:48}, Theorem \ref{08/04/21/9:28}, Proposition \ref{08/11/02/22:52} and Proposition \ref{10/01/26/14:49}.
\par 
We begin with the proof of Theorem \ref{08/06/12/9:48}. We employ the idea of Nawa \cite{Nawa8}.  Here, the generalized virial identity below (see (\ref{08/03/29/19:05})) plays an important role:
\[
\begin{split}
&(W_{R}, |\psi(t)|^{2})
\\
&=(W_{R},|\psi_{0}|^{2})
+2t \Im{(\vec{w}_{R} \cdot \nabla \psi_{0},\psi_{0})}+2\int_{0}^{t}\int_{0}^{t'}\mathcal{K}(\psi(t''))\,dt''dt'
\\
& \qquad 
-2\int_{0}^{t}\int_{0}^{t'}\mathcal{K}^{R}(\psi(t''))\,dt''dt'
-\frac{1}{2}\int_{0}^{t}\int_{0}^{t'}(\Delta ({\rm div}\,{\vec{w}_{R}}), |\psi(t'')|^{2})\,dt''dt'
.
\end{split}
\]
 
\begin{proof}[Proof of Theorem \ref{08/06/12/9:48}] 
Since we have already proved (\ref{09/12/23/22:48}) (see Proposition \ref{08/05/26/10:57}), proofs of (\ref{09/05/14/13:32}) and (\ref{09/05/13/15:58}) remain.  For simplicity, we consider the forward time only. The problem for the backward time can be proved in a similar way. 
\par 
Take any $\psi_{0} \in PW_{-}$ and let $\psi$ be the corresponding solution to our equation (\ref{08/05/13/8:50}) with $\psi(0)=\psi_{0}$. When the maximal existence time $T_{\max}^{+}<\infty$, we have (\ref{09/05/14/13:32}) as mentioned in (\ref{10/01/27/11:26}). Therefore, it suffices to prove (\ref{09/05/13/15:58}).\par 
We suppose the contrary that (\ref{09/05/13/15:58}) fails when $T_{\max}^{+}=\infty$, so that there exists $R_{0}>0$ such that  
\begin{equation}\label{08/06/12/9:57}
M_{0}:=\sup_{t \in [0,\infty)} \int_{|x|\ge R_{0}}|\nabla \psi(x,t)|^{2}\,dx < \infty.
\end{equation}
Then, we shall derive a contradiction in three steps.
\\
{\it Step1}. We claim that: there exists a constant $m_{0}>0$ such that 
\begin{equation}\label{08/06/13/04:35}
m_{0}< \inf\left\{  
\int_{|x|\ge R}|v(x)|^{2}\,dx \left| 
\begin{array}{l}
v \in H^{1}(\mathbb{R}^{d}), 
\quad 
\mathcal{K}^{R}(v) \le 
-\frac{1}{4} \varepsilon_{0},  
\\[6pt]
\|\nabla v \|_{L^{2}(|x|\ge R)}^{2} \le M_{0}, 
\quad  \|v\|_{L^{2}}\le \|\psi_{0}\|_{L^{2}}  
\end{array}
\right.
\right\}
\  
\mbox{for all $R>0$},
\end{equation}
where $\varepsilon_{0}=\mathcal{B}(\psi_{0})-\mathcal{H}(\psi_{0})>0$. Let us prove this claim. Take any $v \in H^{1}(\mathbb{R}^{d})$ with the following properties:
\begin{equation}\label{10/03/01/15:45}
\mathcal{K}^{R}(v) \le 
-\frac{1}{4} \varepsilon_{0},  
\quad 
\|\nabla v \|_{L^{2}(|x|\ge R)}^{2} \le M_{0}, 
\quad  \|v\|_{L^{2}}\le \|\psi_{0}\|_{L^{2}}.
\end{equation}
Note here that the second and third properties in (\ref{10/03/01/15:45}) shows that 
\begin{equation}\label{10/03/01/16:22}
\|v \|_{H^{1}(|x|\ge R)}^{2}
\le 
\|\psi_{0}\|_{L^{2}}^{2}+M_{0}.
\end{equation}
We also have by the first property in (\ref{10/03/01/15:45}) and (\ref{08/04/20/12:50}) that  
\begin{equation}\label{08/06/14/22:02}
\frac{1}{4}\varepsilon_{0}
\le 
-\mathcal{K}^{R}(v)
\le 
\int_{|x|\ge R}\rho_{3}(x)|v(x)|^{p+1}\, dx.
\end{equation}

Now, we define $p_{*}$ by 
\begin{equation}\label{10/05/20/17:06}
p_{*}=\left\{ 
\begin{array}{ccc}
2p-1 &\mbox{if}& d=1,2,
\\
2^{*}-1 &\mbox{if }& d\ge 3.
\end{array}
\right.
\end{equation}
Then, the H\"older inequality and the Sobolev embedding show that 
\begin{equation}\label{08/06/14/22:18}
\begin{split}
\int_{|x|\ge R}\rho_{3}(x)|v(x)|^{p+1}\, dx
&\le 
\|\rho_{3}\|_{L^{\infty}}
\|v \|_{L^{2}(|x|\ge R)}^{p+1-\frac{(p-1)(p_{*}+1)}{p_{*}-1}}
\|v \|_{L^{p_{*}+1}(|x|\ge R)}^{\frac{(p-1)(p_{*}+1)}{p_{*}-1}}
\\
&\lesssim 
 \|\rho_{3}\|_{L^{\infty}}
\|v \|_{L^{2}(|x|\ge R)}^{p+1-\frac{(p-1)(p_{*}+1)}{p_{*}-1}}
\|v\|_{H^{1}(|x|\ge R)}^{\frac{(p-1)(p_{*}+1)}{p_{*}-1}},
\end{split}
\end{equation}
where the implicit constant depends only on $d$ and $p$. 
This estimate, together with (\ref{10/03/01/16:22}) and (\ref{08/06/14/22:02}), yields that  
\begin{equation}\label{10/03/01/16:35}
\frac{\varepsilon_{0}}
{\|\rho_{3}\|_{L^{\infty}} 
\sqrt{ \left\|\psi_{0}\right\|_{L^{2}}^{2}+M_{0} }^{\frac{(p-1)(p_{*}+1)}{p_{*}-1}}
}
\lesssim 
\|v\|_{L^{2}(|x|\ge R)}^{p+1-\frac{(p-1)(p_{*}+1)}{p_{*}-1}},
\end{equation}
where the implicit constant depends only on $d$ and $p$. Since $\|\rho_{3}\|_{L^{\infty}}\lesssim 1$ (see (\ref{08/12/29/12:53})), the estimate (\ref{10/03/01/16:35}) gives us the desired result (\ref{08/06/13/04:35}).  
\\[12pt]
{\it Step2}. Let $m_{0}$ be a constant found in (\ref{08/06/13/04:35}). Then, we prove that  
\begin{equation}\label{10/03/01/17:26}
\sup_{0\le t <\infty}\int_{|x|\ge R}|\psi(x,t)|^{2}\,dx \le m_{0}
\end{equation}
for all $R$ satisfying the following properties: 
\begin{align}
\label{10/03/01/17:21}
&R>R_{0}, 
\\[6pt]
\label{08/03/29/20:04}
&\frac{10d^{2}K}{R^{2}}\|\psi_{0}\|_{L^{2}}^{2}< \varepsilon_{0},
\\[6pt]
\label{08/03/29/20:05}
&\int_{|x|\ge R}|\psi_{0}(x)|^{2}\,dx < m_{0},
\\[6pt]
\label{08/03/29/20:06}
&\frac{1}{R^{2}}\left( 1+ \frac{2}{\varepsilon_{0}} \left\|\nabla \psi_{0}\right\|_{L^{2}}^{2} \right)(W_{R},|\psi_{0}|^{2})<m_{0},
\end{align}
where $K$ is the constant given in (\ref{09/09/26/11:16}). 
We remark that Lemma \ref{10/03/02/17:47} shows that we can take $R$ satisfying (\ref{08/03/29/20:06}). 
\par 
Now, for $R>0$ satisfying (\ref{10/03/01/17:21})--(\ref{08/03/29/20:06}), we put\begin{equation}
\label{10/03/03/16:49}
T_{R}=\sup
\left\{ T>0 \biggm| \sup_{0\le t <T }\int_{|x|\ge R}|\psi(x,t)|^{2}\,dx \le m_{0} \right\}.
\end{equation}
Note here that since $\psi \in C(\mathbb{R};L^{2}(\mathbb{R}^{d}))$ and $\psi(0)=\psi_{0}$, we have by (\ref{08/03/29/20:05}) that $T_{R}>0$. 
It is clear that $T_{R}=\infty$ shows (\ref{10/03/01/17:26}). 
\par 
We suppose the contrary that  $T_{R}<\infty$. Then, it follows from  $\psi \in C(\mathbb{R};L^{2}(\mathbb{R}^{d}))$ that 
\begin{equation}\label{10/03/03/17:19}
\int_{|x|\ge R}|\psi(x,T_{R})|^{2}\,dx =m_{0}.
\end{equation} 
Hence, the definition of $m_{0}$ (see (\ref{08/06/13/04:35})), together with (\ref{08/06/12/9:57}), (\ref{10/03/01/17:21}) and the mass conservation law (\ref{08/05/13/8:59}), 
 leads us to that 
\begin{equation}\label{10/03/01/19:19}
-\frac{1}{4}\varepsilon_{0}\le \mathcal{K}^{R}(\psi(T_{R})).
\end{equation}
Moreover, applying this inequality (\ref{10/03/01/19:19}) and (\ref{09/12/23/22:48}) (or (\ref{09/06/21/18:52})) to the generalized virial identity (\ref{08/03/29/19:05}), we obtain that  
\begin{equation}\label{10/03/02/15:40}
\begin{split}
(W_{R}, |\psi(T_{R})|^{2})
&< (W_{R},|\psi_{0}|^{2})
+2T_{R} \Im{(\vec{w}_{R} \cdot \nabla \psi_{0},\psi_{0})}
-\varepsilon_{0}^{2}T_{R}^{2}
\\[6pt]
&\qquad 
+\frac{1}{4}\varepsilon_{0}T_{R}^{2} 
-\frac{1}{2}\int_{0}^{T_{R}}\int_{0}^{t'}(\Delta ({\rm div}\,{\vec{w}_{R}}), |\psi(t'')|^{2})\,dt''dt'
.
\end{split}
\end{equation}
Here, the estimate (\ref{08/04/20/16:54}), the mass conservation law (\ref{08/05/13/8:59}) and (\ref{08/03/29/20:04}) show that the last term on the right-hand side above is estimated as follows:
\begin{equation}\label{10/03/02/16:08}
\begin{split}
-\frac{1}{2}\int_{0}^{T_{R}}\int_{0}^{t'}(\Delta ({\rm div}\,{\vec{w}_{R}}), |\psi(t'')|^{2})\,dt''dt'
&\le \frac{1}{2}\int_{0}^{T_{R}}\int_{0}^{t'}\frac{10d^{2}K}{R^{2}} \left\|\psi(t'') \right\|_{L^{2}}^{2}\,dt''dt'
\\[6pt]
&\le 
\frac{1}{4}\varepsilon_{0}T_{R}^{2}.
\end{split}
\end{equation}
Therefore, we have 
\begin{equation}\label{09/09/24/13:05}
\begin{split}
(W_{R}, |\psi(T_{R})|^{2})
&< (W_{R},|\psi_{0}|^{2})
+2T_{R} \Im{(\vec{w}_{R} \cdot \nabla \psi_{0},\psi_{0})}-\frac{1}{2}T_{R}^{2}\varepsilon_{0}.
\\[6pt]
&=(W_{R},|\psi_{0}|^{2}) -\frac{1}{2}\varepsilon_{0}
\left\{ T_{R} -\frac{2}{\varepsilon_{0}}\Im{(\vec{w}_{R} \cdot \nabla \psi_{0},\psi_{0})} \right\}^{2}
+ \frac{2}{\varepsilon_{0}}\left| (\vec{w}_{R} \cdot \nabla \psi_{0},\psi_{0}) \right|^{2}
\\[6pt]
&\le (W_{R},|\psi_{0}|^{2}) + \frac{2}{\varepsilon_{0}}\left| (\vec{w}_{R} \cdot \nabla \psi_{0},\psi_{0}) \right|^{2}
\\[6pt]
&\le 
(W_{R},|\psi_{0}|^{2}) + \frac{2}{\varepsilon_{0}}
\left\| \nabla \psi_{0} \right\|_{L^{2}}^{2}
\int_{\mathbb{R}^{d}} W_{R}(x)|\psi_{0}(x)|^{2}\,dx,
\end{split}
\end{equation}
where we have used the Schwarz inequality and (\ref{09/09/25/13:24}) 
to derive the last inequality. Combining this inequality (\ref{09/09/24/13:05}) with (\ref{08/03/29/20:06}), we see that 
\begin{equation}\label{10/03/02/16:28}
(W_{R}, |\psi(T_{R})|^{2}) \le 
\left( 1+ \frac{2}{\varepsilon_{0}}
 \left\|\nabla \psi_{0}\right\|_{L^{2}}^{2} \right)
 (W_{R},|\psi_{0}|^{2})<R^{2}m_{0}.
\end{equation}
On the other hand, since $W_{R}(x) \ge R^{2}$ for $|x|\ge R$ (see (\ref{10/02/27/22:19})), we have   
\begin{equation}\label{09/09/24/16:30}
\int_{|x|\ge R } |\psi(x,T_{R})|^{2} \,dx 
= 
\frac{1}{R^{2}}\int_{|x|\ge R} R^{2} |\psi(x,T_{R})|^{2} \,dx 
\le \frac{1}{R^{2}} (W_{R}, |\psi(T_{R})|^{2}).
\end{equation}
Thus, it follows form (\ref{10/03/02/16:28}) and (\ref{09/09/24/16:30}) that 
\begin{equation}\label{10/03/06/23:56}
\int_{|x|\ge R } |\psi(x,T_{R})|^{2} \,dx  <m_{0},
\end{equation}
which contradicts (\ref{10/03/03/17:19}), so that  $T_{R}=\infty$ and (\ref{10/03/01/17:26}) hold. 
\\
{\it Step3}. We complete the proof of Theorem \ref{08/06/12/9:48}. The definition of $m_{0}$, together with the mass conservation law (\ref{08/05/13/8:59}), (\ref{08/06/12/9:57}) and (\ref{10/03/01/17:26}), shows that 
\begin{equation}\label{10/03/03/17:48}
-\frac{1}{4}\varepsilon_{0}\le \mathcal{K}^{R}(\psi(t))
\qquad 
\mbox{for all $R>0$ satisfying (\ref{10/03/01/17:21})--(\ref{08/03/29/20:06}), and all $t\ge 0$}.
\end{equation}
Applying this estimate (\ref{10/03/03/17:48}) to the generalized virial identity (\ref{08/03/29/19:05}), we obtain the following estimate as well as {\it Step2} (see (\ref{10/03/02/15:40}) and (\ref{10/03/02/16:08})):
\begin{equation}\label{10/03/03/17:56}
(W_{R}, |\psi(t)|^{2})
\le (W_{R},|\psi_{0}|^{2})
+2t \Im{(\vec{w}_{R} \cdot \nabla \psi_{0},\psi_{0})}-\frac{1}{2}t^{2}\varepsilon_{0}
\quad 
\mbox{for all $t\ge 0$}.
\end{equation}
This inequality means that $(W_{R}, |\psi(t)|^{2})$ becomes negative in a finite time, so that $T_{\max}^{+}$ must be finite. However, this contradicts $T_{\max}^{+}=\infty$. Hence, (\ref{08/06/12/9:57}) derives an absurd conclusion: Thus,  (\ref{09/05/13/15:58}) holds. 
\end{proof}

Next, we give the proof of Theorem \ref{08/04/21/9:28}. 
\begin{proof}[Proof of Theorem \ref{08/04/21/9:28}] 
Let $\psi_{0}$ be a radially symmetric function in $PW_{-}$, and let $\psi$ be the solution to (\ref{08/05/13/8:50}) with $\psi(0)=\psi_{0}$.
\par 
To handle the term containing $\mathcal{K}^{R}$ in the generalized virial identity (\ref{08/03/29/19:05}) for $\psi$,  we consider the following variational problem, as well as Lemma 3.3 in \cite{Nawa8}: 
\begin{lemma}
\label{08/04/20/18:30}
Assume that $d\ge 3$, $2+\frac{4}{d}< p+1 < 2^{*}$, and $p\le 5$ if $d=2$. Then, there exists $R_{*}>0$ and $m_{*}>0$ such that  
\[
m_{*} < \inf\left\{  \int_{|x|\ge R}|v(x)|^{2}\,dx \left|
\begin{array}{l}
v \in H^{1}_{rad}(\mathbb{R}^{d}), 
\ \mathcal{K}^{R}(v) \le -\frac{1}{4}\varepsilon_{0}, 
\\[6pt] 
\|v\|_{L^{2}}\le \|\psi_{0}\|_{L^{2}}  
\end{array}
\right. 
\right\}
\quad 
\mbox{for all $R\ge R_{*}$},
\]
where $H^{1}_{rad}(\mathbb{R}^{d})$ is the set of radially symmetric functions in $H^{1}(\mathbb{R}^{d})$, and $\varepsilon_{0}=\mathcal{H}(\psi_{0})-\mathcal{B}(\psi_{0})>0$.
\end{lemma}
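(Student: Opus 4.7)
The strategy is to adapt Step~1 of the proof of Theorem~\ref{08/06/12/9:48}, exploiting radial symmetry to replace the a priori gradient bound $M_0$ (which is unavailable here) by the Strauss decay estimate. From (\ref{08/04/20/12:50}), the hypothesis $\mathcal{K}^R(v)\le -\varepsilon_0/4$ immediately yields
\[
\int_{|x|\ge R}\rho_3(x)|v(x)|^{p+1}\,dx \;\ge\; \tfrac{\varepsilon_0}{4}.
\]
A closer look at the definition of $\mathcal{K}^R$ provided by Lemma~\ref{08/04/20/0:27} shows that the positive gradient contributions to $\mathcal{K}^R$, combined with $\mathcal{K}^R(v)\le 0$, produce a companion bound of the form $\|\nabla v\|_{L^2(|x|\ge R)}^2 \lesssim \int_{|x|\ge R}\rho_3|v|^{p+1}\,dx$.

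For $d\ge 2$ and $v\in H^1_{\rm rad}(\mathbb{R}^d)$, the Strauss radial decay
\[
|x|^{d-1}|v(x)|^2\;\le\; C_d\,\|v\|_{L^2(|y|\ge |x|)}\|\nabla v\|_{L^2(|y|\ge |x|)}
\]
combined with $\int_{|x|\ge R}|v|^{p+1}\le \|v\|_{L^\infty(|x|\ge R)}^{p-1}\|v\|_{L^2(|x|\ge R)}^2$ gives the key interpolation
\[
\int_{|x|\ge R}|v|^{p+1}\,dx\;\le\;C\,R^{-(d-1)(p-1)/2}\,A^{(p+3)/2}\,B^{(p-1)/2},
\]
where $A:=\|v\|_{L^2(|x|\ge R)}$ and $B:=\|\nabla v\|_{L^2(|x|\ge R)}$. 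Feeding this into the companion gradient bound yields $B^{(5-p)/2}\lesssim R^{-(d-1)(p-1)/2}\,A^{(p+3)/2}$. Here the hypothesis $p\le 5$ (automatic for $d\ge 3$ in the $H^1$-subcritical range, imposed for $d=2$) is decisive: the exponent $(5-p)/2$ being positive lets me solve for $B$ as $B\lesssim R^{-(d-1)(p-1)/(5-p)}\,A^{(p+3)/(5-p)}$.

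Substituting this back into the interpolation and comparing with the lower bound $\varepsilon_0/4 \le \|\rho_3\|_{L^\infty}\int|v|^{p+1}$, straightforward algebra gives
\[
A^{2(p+3)/(5-p)}\;\ge\; c\,\varepsilon_0\,R^{2(d-1)(p-1)/(5-p)},
\]
and hence $\|v\|_{L^2(|x|\ge R)}^2\gtrsim R^{2(d-1)(p-1)/(p+3)}$ with a constant depending only on $d$, $p$, $\varepsilon_0$ and $\|\rho_3\|_{L^\infty}$. Choosing $R_*$ large enough that the right-hand side exceeds any pre-assigned $m_*>0$ (while recalling $A\le\|v\|_{L^2}\le\|\psi_0\|_{L^2}$ keeps $v$ in the constraint set) completes the proof for all $R\ge R_*$.

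The main obstacle I anticipate is establishing the companion gradient estimate $B^2\lesssim \int\rho_3|v|^{p+1}$ from $\mathcal{K}^R(v)\le 0$: this hinges on the nonnegativity and effective positivity, on the relevant annular region, of the coefficient of $|\nabla v|^2$ in the integrand defining $\mathcal{K}^R$, which is a delicate property of the cutoff $\vec{w}_R$ from Appendix~\ref{08/10/19/14:57}. If that coefficient turns out to degenerate, one must instead apply the Strauss estimate with $\|\nabla v\|_{L^2(|y|\ge R/2)}$ replaced by a weighted variant and re-run the absorption argument. It is precisely this absorption step that fails for $d=2,\ p>5$, accounting for the ``quantum force'' restriction flagged in the remark following Theorem~\ref{08/04/21/9:28}.
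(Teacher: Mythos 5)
The pivotal step of your argument---the ``companion bound'' $\|\nabla v\|_{L^{2}(|x|\ge R)}^{2}\lesssim \int\rho_{3}|v|^{p+1}\,dx$ deduced from $\mathcal{K}^{R}(v)\le 0$---is exactly where the proof breaks, and your anticipated fallback does not repair it. For radial $v$ one has $\mathcal{K}^{R}(v)=\int\rho_{0}|\nabla v|^{2}-\int\rho_{3}|v|^{p+1}$ with $\rho_{0}(x)=1-w'\left(\frac{|x|}{R}\right)$, and by the construction of $w$ in Appendix \ref{08/10/19/14:57} this weight vanishes identically for $|x|\le R$ and vanishes to the order $\left(\frac{|x|}{R}-1\right)^{\frac{d}{2}(p-1)}$ as $|x|\downarrow R$---which is the same order to which $\rho_{3}$ vanishes there. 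Hence $\mathcal{K}^{R}(v)\le 0$ controls only the degenerate quantity $\int\rho_{0}|\nabla v|^{2}$; no unweighted bound on $\|\nabla v\|_{L^{2}(|x|\ge R)}$ follows (a $v$ whose gradient is concentrated in the thin annulus where $\rho_{0}$ is tiny defeats it), and enlarging the region to $|y|\ge R/2$, as you suggest, helps even less since $\rho_{0}\equiv 0$ on $|x|\le R$. Consequently the chain $B^{(5-p)/2}\lesssim R^{-(d-1)(p-1)/2}A^{(p+3)/2}$ and your final lower bound $A^{2}\gtrsim R^{2(d-1)(p-1)/(p+3)}$ are unsubstantiated; only your first observation, $\int\rho_{3}|v|^{p+1}\ge \varepsilon_{0}/4$, is secure.

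The paper avoids this by never discarding the weights. It applies the weighted radial $L^{\infty}$ estimate (Cazenave's Lemma 6.5.11, inequality (\ref{08/06/12/7:28})) with $\kappa$ built from $\rho_{3}$, so that the Strauss-type decay produces only $\|\sqrt{\rho_{3}}\,\nabla v\|_{L^{2}}$ plus an $O(R^{-1/2})\|v\|_{L^{2}}$ term; after Young's inequality (this is where $p\le 5$, i.e.\ $\frac{p-1}{2}\le 2$, enters) and taking $R$ large enough that the error terms are $\le \varepsilon_{0}/8$, one is left to absorb $\frac{p-1}{4}\|v\|_{L^{2}(|x|\ge R)}^{2}\int\rho_{3}|\nabla v|^{2}$ into $\int\rho_{0}|\nabla v|^{2}$. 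The absorption works precisely because of the uniform comparability $\inf_{|x|\ge R}\rho_{0}/\rho_{3}\ge K_{4}>0$ ((\ref{08/04/20/16:07}) in Lemma \ref{08/04/20/0:27}), which compensates the common degeneration of $\rho_{0}$ and $\rho_{3}$ at $|x|=R$ and yields $\|v\|_{L^{2}(|x|\ge R)}^{2}>\frac{4}{p-1}K_{4}=:m_{*}$, a constant independent of $R$ (so the $R$-dependence appears only in the error terms, not as a growing lower bound). This comparability---not an unweighted gradient bound---is the missing ingredient your proposal would need, and supplying it is essentially the content of the paper's proof.
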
 

\begin{proof}[Proof of Lemma \ref{08/04/20/18:30}] 
We take a function $v \in H_{rad}^{1}(\mathbb{R}^{d})$ with the following properties:  
\begin{align}
\label{10/03/05/13:28}
&\mathcal{K}^{R}(v) \le -\frac{1}{4}\varepsilon_{0},
\\[6pt]
\label{10/03/05/13:38}
&\|v\|_{L^{2}}\le \|\psi_{0}\|_{L^{2}},
\end{align}
where $R$ is a sufficiently large constant to be chosen later (see (\ref{10/03/06/18:09})). Since $v \in H_{rad}^{1}(\mathbb{R}^{d})$, 
 $\mathcal{K}^{R}(v)$ is written as follows (see Remark \ref{09/09/25/16:47}):
\begin{equation}\label{10/03/05/13:56}
\mathcal{K}^{R}(v)
=
\int_{\mathbb{R}^{d}}\rho_{0}(x)|\nabla v(x)|^{2}\,dx
-\int_{\mathbb{R}^{d}}\rho_{3}(x)|v(x)|^{p+1}\,dx.
\end{equation}
Hence, it follows from (\ref{10/03/05/13:28}) that 
\begin{equation}\label{08/04/20/19:49}
\begin{split}
\frac{1}{4}\varepsilon_{0} +\int_{\mathbb{R}^{d}}\rho_{0}(x)|\nabla v(x)|^{2}\,dx 
&\le -\mathcal{K}^{R}(v)+ \int_{\mathbb{R}^{d}}\rho_{0}(x)|\nabla v(x)|^{2}\,dx
\\[6pt]
&=\int_{\mathbb{R}^{d}}\rho_{3}(x)|v(x)|^{p+1}\,dx.
\end{split}
\end{equation}
To estimate the right-hand side of (\ref{08/04/20/19:49}), we employ the following inequality (see 
 Lemma 6.5.11 in \cite{Cazenave}): Assume that $d\ge 1$. Let $\kappa$ be a non-negative and radially symmetric function in $C^{1}(\mathbb{R}^{d})$ with $|x|^{-(d-1)}\max\{-\frac{x\nabla \kappa }{|x|},\, 0\} \in L^{\infty}(\mathbb{R}^{d})$. Then, we have that  
\begin{equation}\label{08/06/12/7:28}
\begin{split}
&
\|\kappa^{\frac{1}{2}} f\|_{L^{\infty}}
\\[6pt]
&\le  
K_{5}
\|f\|_{L^{2}}^{\frac{1}{2}}
\left\{ 
\left\| |x|^{-(d-1)}\kappa \, \frac{x \nabla f}{|x|} \right\|_{L^{2}}^{\frac{1}{2}}
+
\left\| |x|^{-(d-1)}
\max \left\{-\frac{x \nabla \kappa}{|x|}, \ 0 \right\}
\right\|_{L^{\infty}}^{\frac{1}{2}}
\left\| f \right\|_{L^{2}}^{\frac{1}{2}}
\right\}
\\[6pt]
&\hspace{9cm}
\mbox{for all $f \in H_{rad}^{1}(\mathbb{R}^{d})$}, 
\end{split}
\end{equation}
where $K_{5}>0$ is some constant depending only on $d$. 
\\
This inequality (\ref{08/06/12/7:28}), together with (\ref{08/12/29/12:53}), (\ref{08/04/20/13:06}) and  ${\rm supp}\, \rho_{3}=\{|x|\ge R\}$ (see (\ref{08/12/29/14:16})), yields the following estimate for the right-hand side of (\ref{08/04/20/19:49}):  
\begin{equation}\label{08/07/02/14:50}
\begin{split}
&\int_{\mathbb{R}^{d}}\rho_{3}(x)|v(x)|^{p+1}\,dx 
\\[6pt]
&\le 
\|\rho_{3}^{\frac{1}{4}}v\|_{L^{\infty}}^{p-1}
\int_{\mathbb{R}^{d}}\rho_{3}^{\frac{5-p}{4}}(x)|v(x)|^{2}\,dx
\\[6pt]
&\le \frac{K_{5}^{p-1}}{R^{\frac{p-1}{2}(d-1)}}
\left\| v \right\|_{L^{2}(|x|\ge R)}^{\frac{p-1}{2}}
\left\{ 
\left\|\sqrt{\rho_{3}} \frac{x\nabla v}{|x|}\right\|_{L^{2}}^{\frac{1}{2}}
\!\!
+
\left\|\max\left\{ -\frac{x \nabla \sqrt{\rho_{3}}}{|x|},\ 0 \right\} 
\right\|_{L^{\infty}}^{\frac{1}{2}}
\!\!
\left\| v \right\|_{L^{2}}^{\frac{1}{2}}
\right\}^{p-1}\hspace{-12pt}K_{3}^{\frac{5-p}{4}}\|v\|_{L^{2}}^{2}
\\[6pt]
&\le 
\frac{K_{5}^{p-1}K_{3}^{\frac{5-p}{4}}}{R^{\frac{p-1}{2}(d-1)}}
\left\| v \right\|_{L^{2}(|x|\ge R)}^{\frac{p-1}{2}}
\left\{ 
\left\|\sqrt{\rho_{3}}\nabla v \right\|_{L^{2}}^{\frac{1}{2}}
+
\frac{(K_{3}')^{\frac{1}{2}}}{R^{\frac{1}{2}}}
\left\| v \right\|_{L^{2}}^{\frac{1}{2}}
\right\}^{p-1}\hspace{-12pt}\|v\|_{L^{2}}^{2}
\\[6pt]
&\le 
\frac{K_{5}^{p-1}K_{3}^{\frac{5-p}{4}}}{R^{\frac{p-1}{2}(d-1)}}
\left\| v \right\|_{L^{2}(|x|\ge R)}^{\frac{p-1}{2}}
C_{p}\left\{ 
\left\|\sqrt{\rho_{3}}\nabla v \right\|_{L^{2}}^{\frac{p-1}{2}}
+
\frac{(K_{3}')^{\frac{p-1}{2}}}{R^{\frac{p-1}{2}}}
\left\| v \right\|_{L^{2}}^{\frac{p-1}{2}}
\right\} \|v\|_{L^{2}}^{2}
\end{split}
\end{equation}
for some constant $C_{p}>0$ depending only on $p$.
Moreover, using $p\le 5$ (so that $\frac{p-1}{2}\le 2$), $\|v\|_{L^{2}}\le \|\psi_{0}\|_{L^{2}}$, and the Young inequality ($ab \le \frac{a^{r}}{r}+\frac{b^{r'}}{r'}$ with $\frac{1}{r}+\frac{1}{r'}=1$), we obtain that 
\begin{equation}\label{10/03/06/17:54}
\begin{split}
\mbox{R.H.S. of (\ref{08/07/02/14:50})}
&\le 
\frac{p-1}{4}\left\| v \right\|_{L^{2}(|x|\ge R)}^{2}
\left\|\sqrt{\rho_{3}}\nabla v \right\|_{L^{2}}^{2}
+\frac{5-p}{4}
\left(
\frac{C_{p}\left\|\psi_{0}\right\|_{L^{2}}^{2}K_{5}^{p-1}K_{3}^{\frac{5-p}{4}}}{R^{\frac{p-1}{2}(d-1)}}\right)^{\frac{4}{5-p}}
\\[6pt]
&\qquad 
+C_{p}\frac{\left\|\psi_{0}\right\|_{L^{2}}^{p+1}K_{5}^{p-1}K_{3}^{\frac{5-p}{4}}(K_{3}')^{\frac{p-1}{2}}}{R^{\frac{d(p-1)}{2}}}.
\end{split}
\end{equation}
If $R$ is so large that 
\begin{equation}\label{10/03/06/18:09}
\frac{5-p}{4}
\left(
\frac{C_{p}\left\|\psi_{0}\right\|_{L^{2}}^{2}K_{5}^{p-1}K_{3}^{\frac{5-p}{4}}}{R^{\frac{p-1}{2}(d-1)}}\right)^{\frac{4}{5-p}}
+
\frac{C_{p}\left\|\psi_{0}\right\|_{L^{2}}^{p+1}K_{5}^{p-1}K_{3}^{\frac{5-p}{4}}(K_{3}')^{\frac{p-1}{2}}}{R^{\frac{d(p-1)}{2}}}
 \le \frac{1}{8}\varepsilon_{0},
\end{equation} 
then (\ref{08/04/20/19:49}), together with  (\ref{08/07/02/14:50}) and (\ref{10/03/06/17:54}),  shows that 
\begin{equation}\label{10/03/06/18:12}
\int_{\mathbb{R}^{d}}\left\{ \rho_{0}(x)- \frac{p-1}{4}\|v\|_{L^{2}(|x|\ge R)}^{2}\rho_{3}(x)\right\}|\nabla v(x)|^{2} \le -\frac{\varepsilon_{0}}{8}.
\end{equation}
Hence, it must hold that 
\begin{equation}\label{10/03/06/21:04}
\inf_{|x|\ge R}{ \frac{\rho_{0}(x)}{\rho_{3}(x)}}< 
\frac{p-1}{4}\|v\|_{L^{2}(|x| \ge R)}^{2},
\end{equation}
which, together with (\ref{08/04/20/16:07}), gives us the desired result.
\end{proof}

The following lemma tells us that the solution $\psi$ is well localized:
\begin{lemma}[cf. Lemma 3.4 in \cite{Nawa8}]
\label{08/03/30/15:12}
Assume that $d\ge 1$, $2+\frac{4}{d}<p+1<2^{*}$, and $p\le 5$ if $d=2$. 
Let $R_{*}$ and $m_{*}$ be constants found in Lemma \ref{08/04/20/18:30}.
Then, we have 
\[ 
T_{\max}^{+}=\sup{\left\{ T>0 \biggm|  \sup_{0\le t <T} \int_{|x|\ge  R}|\psi(x,t)|^{2}\,dx < m_{*}  \right\}}
\]
for all $R$ with the following properties:
\begin{align}
\label{10/03/07/0:10}
&R>R_{*}, 
\\[6pt]
\label{10/03/07/0:11}
&\frac{10d^{2}K}{R^{2}}\|\psi_{0}\|_{L^{2}}^{2}< \varepsilon_{0},
\\[6pt]
\label{10/03/07/0:12}
&\int_{|x|\ge R}|\psi_{0}(x)|^{2}\,dx < m_{*},
\\[6pt]
\label{10/03/07/0:13}
&\frac{1}{R^{2}}\left( 1+ \frac{2}{\varepsilon_{0}} \left\|\nabla \psi_{0}\right\|_{L^{2}}^{2} \right)(W_{R},|\psi_{0}|^{2})<m_{*}.
\end{align}
\end{lemma}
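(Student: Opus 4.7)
The plan is to run a bootstrap argument entirely parallel to \emph{Step 2} in the proof of Theorem \ref{08/06/12/9:48}, but with the generic variational lemma (\ref{08/06/13/04:35}) replaced by its radial counterpart, Lemma \ref{08/04/20/18:30}. This substitution is precisely what allows the conclusion to hold unconditionally, without invoking an a priori exterior gradient bound as in (\ref{08/06/12/9:57}).

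Let $T_{R}$ denote the supremum appearing on the right-hand side. Condition (\ref{10/03/07/0:12}) together with $\psi\in C([0,T_{\max}^{+});L^{2}(\mathbb{R}^{d}))$ immediately gives $T_{R}\in(0,T_{\max}^{+}]$, so it suffices to rule out the strict inequality $T_{R}<T_{\max}^{+}$. Assuming the contrary, continuity of $t\mapsto\int_{|x|\ge R}|\psi(x,t)|^{2}\,dx$ forces
\[
\int_{|x|\ge R}|\psi(x,T_{R})|^{2}\,dx=m_{*}.
\]
Because the flow of (\ref{08/05/13/8:50}) commutes with rotations and preserves the $L^{2}$-mass (\ref{08/05/13/8:59}), for every $t\in[0,T_{R}]$ the function $\psi(t)$ lies in $H^{1}_{rad}(\mathbb{R}^{d})$, satisfies $\|\psi(t)\|_{L^{2}}=\|\psi_{0}\|_{L^{2}}$, and has $\|\psi(t)\|_{L^{2}(|x|\ge R)}^{2}\le m_{*}$. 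Since $R>R_{*}$ by (\ref{10/03/07/0:10}), the contrapositive of Lemma \ref{08/04/20/18:30} then supplies the pointwise bound
\[
\mathcal{K}^{R}(\psi(t))>-\tfrac{1}{4}\varepsilon_{0}\qquad\mbox{for all }t\in[0,T_{R}].
\]

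With this exterior $\mathcal{K}^{R}$ bound in hand, I would insert it, the strict negativity $\mathcal{K}(\psi(t))<-\varepsilon_{0}$ from Proposition \ref{08/05/26/10:57}, and the double-time bound on $-\frac{1}{2}(\Delta({\rm div}\,\vec{w}_{R}),|\psi|^{2})$ furnished by (\ref{08/04/20/16:54}) together with (\ref{10/03/07/0:11}), into the generalized virial identity (\ref{08/03/29/19:05}). The bookkeeping is identical to the one leading to (\ref{09/09/24/13:05}) and yields
\[
(W_{R},|\psi(T_{R})|^{2})<(W_{R},|\psi_{0}|^{2})+2T_{R}\,\Im(\vec{w}_{R}\cdot\nabla\psi_{0},\psi_{0})-\tfrac{1}{2}\varepsilon_{0}T_{R}^{2}.
\]
Completing the square in $T_{R}$, dropping the resulting non-positive quadratic term, and invoking the Schwarz inequality (\ref{09/09/25/13:24}) would bound the right-hand side above by $\left(1+\tfrac{2}{\varepsilon_{0}}\|\nabla\psi_{0}\|_{L^{2}}^{2}\right)(W_{R},|\psi_{0}|^{2})$, which is strictly less than $R^{2}m_{*}$ by (\ref{10/03/07/0:13}). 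Since $W_{R}(x)\ge R^{2}$ on $\{|x|\ge R\}$, this would force $\int_{|x|\ge R}|\psi(x,T_{R})|^{2}\,dx<m_{*}$, contradicting the displayed identity above. Hence $T_{R}=T_{\max}^{+}$.

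The only genuinely new point is conceptual: Lemma \ref{08/04/20/18:30} is phrased as a lower bound on exterior $L^{2}$-mass under a negativity hypothesis on $\mathcal{K}^{R}$, but here it is consumed \emph{contrapositively} as a propagation-of-smallness statement for $\mathcal{K}^{R}$. Once this reading is adopted, the rest is a verbatim repetition of the variance-type argument already carried out in Step 2, and no new analytic difficulty arises.
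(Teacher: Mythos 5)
Your proof is correct and follows essentially the same route as the paper: the published argument is exactly this bootstrap, applying Lemma \ref{08/04/20/18:30} contrapositively at the first time the exterior mass reaches $m_{*}$ and then repeating the virial computation (\ref{10/03/02/15:40})--(\ref{10/03/06/23:56}) verbatim to contradict the equality at $T_{R}$. If anything, you are slightly more careful than the paper in noting that the lower bound $\mathcal{K}^{R}(\psi(t))>-\tfrac{1}{4}\varepsilon_{0}$ is needed (and available, by the same contrapositive reading) on all of $[0,T_{R}]$, since the generalized virial identity involves the double time integral of $\mathcal{K}^{R}$, whereas the paper displays the bound only at $t=T_{R}$.
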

\begin{proof}[Proof of Lemma \ref{08/03/30/15:12}] For $R>0$ satisfying (\ref{10/03/07/0:10})--(\ref{10/03/07/0:13}), we put 
\[
T_{R}:=\sup{\left\{ T>0 \biggm|  \sup_{0\le t \le T}\int_{|x|\ge  R}|\psi(x,t)|^{2}\,dx < m_{*} \right\}}.
\]
We suppose the contrary that $T_{R}<T_{\max}^{+}$, so that 
it follows from $\psi \in C([0,T_{\max}^{+});L^{2}(\mathbb{R}^{d}))$ that 
\begin{equation}\label{08/03/30/16:09}
\int_{|x|\ge R}|\psi(x,T_{R})|^{2}\,dx = m_{*}.
\end{equation}
Hence, we obtain by Lemma \ref{08/04/20/18:30} that 
\begin{equation}\label{10/03/06/23:50}
-\frac{1}{4}\varepsilon_{0}< \mathcal{K}^{R}(\psi(T_{R})).
\end{equation}
Then, the same computation as (\ref{10/03/02/15:40})--(\ref{10/03/06/23:56}) 
 yields that  
\begin{equation}\label{10/03/06/23:58}
\int_{|x|\ge R } |\psi(T_{R},x)|^{2} \,dx <m_{*}, 
\end{equation}
which contradicts (\ref{08/03/30/16:09}): Thus, $T_{R}=T_{\max}^{+}$. 
\end{proof}
Now, we are in a position to complete the proof of Theorem \ref{08/04/21/9:28}.
\\
{\it Proof of Theorem \ref{08/04/21/9:28} (continued)} We take $R>0$ satisfying  (\ref{10/03/07/0:10})--(\ref{10/03/07/0:13}). Then, Lemma \ref{08/03/30/15:12} gives us that   
\begin{equation}\label{10/03/07/0:19}
\int_{|x|\ge R}|\psi(t,x)|^{2}\,dx < m_{*} 
\quad 
\mbox{for all $t \in [0,T_{\max}^{+})$}.
\end{equation}
Moreover, Lemma \ref{08/04/20/18:30}, together with (\ref{10/03/07/0:19}), shows that 
\begin{equation}\label{10/03/07/0:22}
-\frac{\varepsilon_{0}}{4}< \mathcal{K}^{R}(\psi(t))
\quad 
\mbox{for all $t \in [0,T_{\max}^{+})$}.
\end{equation} 
Applying (\ref{10/03/07/0:22}) and (\ref{09/12/23/22:48}) to the generalized virial identity (\ref{08/03/29/19:05}), we obtain that  
\begin{equation}\label{10/03/07/0:29}
\begin{split}
(W_{R}, |\psi(t)|^{2})
&< (W_{R},|\psi_{0}|^{2})
+2t \Im{(\vec{w}_{R} \cdot \nabla \psi_{0},\psi_{0})}
-\varepsilon_{0}^{2}t^{2}
\\[6pt]
&\qquad 
+\frac{1}{4}\varepsilon_{0}t^{2} 
-\frac{1}{2}\int_{0}^{t}\int_{0}^{t'}(\Delta ({\rm div}\,{\vec{w}_{R}}), |\psi(t'')|^{2})\,dt''dt'
.
\end{split}
\end{equation}
Here, it follows from (\ref{08/04/20/16:54}), the mass conservation law (\ref{08/05/13/8:59}) and (\ref{10/03/07/0:11}) that the last term on the right-hand side above is estimated as follows :
\begin{equation}\label{10/03/07/0:31}
\begin{split}
-\frac{1}{2}\int_{0}^{t}\int_{0}^{t'}(\Delta ({\rm div}\,{\vec{w}_{R}}), |\psi(t'')|^{2})\,dt''dt'
&\le \frac{1}{2}\int_{0}^{t}\int_{0}^{t'}\frac{10d^{2}K}{R^{2}} \left\|\psi(t'') \right\|_{L^{2}}^{2}\,dt''dt'
\\[6pt]
&\le 
\frac{1}{4}\varepsilon_{0}t^{2}.
\end{split}
\end{equation}
Combining (\ref{10/03/07/0:29}) with (\ref{10/03/07/0:31}), we obtain that 
\begin{equation}\label{10/03/07/0:30}
(W_{R}, |\psi(t)|^{2})
< (W_{R},|\psi_{0}|^{2})
+2t \Im{(\vec{w}_{R} \cdot \nabla \psi_{0},\psi_{0})}-\frac{1}{2}t^{2}\varepsilon_{0}.
\end{equation}
This inequality means that $(W_{R}, |\psi(t)|^{2})$ becomes negative in a finite time, so that $T_{\max}^{+}<\infty$: This fact, together with (\ref{10/01/27/11:26}), shows  (\ref{10/02/22/20:33}). 
\par
Now, we shall show the claims (i)--(iii) in Theorem \ref{08/04/21/9:28}. 
\par 
We can find that Lemma \ref{08/03/30/15:12} is still valid if we replace $m_{*}$ with any $m \in (0,m_{*})$. Thus, we see that (i) holds. 
\par 
We next consider (ii). Since we have (\ref{08/04/20/12:51}), in order to prove (\ref{08/06/10/10:21}), it suffices to show that 
\begin{equation}\label{10/03/07/16:52}
\int_{0}^{T_{\max}^{+}}(T_{\max}^{+}-t)
\left(
 \int_{\mathbb{R}^{d}}\rho_{0}(x)|\nabla \psi(x,t)|^{2}\,dx 
\right) dt < \infty
\quad 
\mbox{for all sufficiently large $R>0$}.
\end{equation} 
Let us prove this. Integrating by parts, we find that  
\begin{equation}\label{10/03/07/17:54}
\begin{split}
&\int_{0}^{t} (t-t')\left( \int_{\mathbb {R}^{d}}\rho_{0}(x)|\nabla \psi(x,t')|^{2}\,dx
\right)dt'
\\[6pt]
&=
\int_{0}^{t}\int_{0}^{t'} 
\left( 
\int_{\mathbb{R}^{d}}\rho_{0}(x)|\nabla \psi(x,t'')|^{2}\,dx \right) dt''\,dt'
\quad 
\mbox{for all $t\ge 0$}. 
\end{split}
\end{equation}
This formula  (\ref{10/03/07/17:54}) and the generalized virial identity (\ref{08/03/29/19:05}) lead us to the following estimate: 
\begin{equation}\label{10/03/07/17:48}
\begin{split}
&2 \int_{0}^{t} (t-t')\left( \int_{\mathbb {R}^{d}}\rho_{0}(x)|\nabla \psi(x,t')|^{2}\,dx
\right)dt'
\\[6pt]
&\le 
(W_{R},|\psi_{0}|^{2})
+2t \Im{(\vec{w}_{R} \cdot \nabla \psi_{0},\psi_{0})}+2\int_{0}^{t}\int_{0}^{t'}\mathcal{K}(\psi(t''))\,dt''dt'
\\[6pt]
& \quad 
+2\int_{0}^{t}(t-t') \left( 
\int_{\mathbb{R}^{d}} \rho_{3}(x)|\psi(x,t')|^{p+1}\,dx
\right)dt'
-\frac{1}{2}\int_{0}^{t}\int_{0}^{t'}(\Delta ({\rm div}\,{\vec{w}_{R}}), |\psi(t'')|^{2})\,dt''dt' .
\end{split}
\end{equation}
Here, applying the estimate (\ref{10/03/07/0:31}) to the right-hand side of (\ref{10/03/07/17:48}) (abbreviated to R.H.S. of (\ref{10/03/07/17:48})), we have that 
\begin{equation}\label{10/03/07/18:21}
\begin{split}
\mbox{R.H.S. of (\ref{10/03/07/17:48})}
&\le 
(W_{R},|\psi_{0}|^{2})
+2t \Im{(\vec{w}_{R} \cdot \nabla \psi_{0},\psi_{0})}-\varepsilon_{0}t^{2}
\\[6pt]
&\quad +2\int_{0}^{t}(t-t') \left( 
\int_{\mathbb{R}^{d}} \rho_{3}(x)|\psi(x,t')|^{p+1}\,dx
\right)dt'
+\frac{\varepsilon_{0}}{4}t^{2}.
\end{split}
\end{equation} 
Moreover, taking $R$ so large that (\ref{10/03/06/18:09}) holds, 
 we find by the same computation as (\ref{08/07/02/14:50})--(\ref{10/03/06/17:54}) that 
\begin{equation}\label{10/03/07/19:03}
\int_{\mathbb{R}^{d}}\rho_{3}(x)|\psi(x,t)|^{p+1}\,dx 
\le 
\frac{p-1}{4}\left\|\psi(t) \right\|_{L^{2}(|x|\ge R)}^{2}
\int_{\mathbb{R}^{d}}\rho_{3}(x)|\nabla \psi(x,t)|^{2}\,dx + \frac{\varepsilon_{0}}{8}.
\end{equation}
Combining (\ref{10/03/07/17:48}), (\ref{10/03/07/18:21}) and (\ref{10/03/07/19:03}), we obtain that 
\begin{equation}\label{10/03/07/19:11}
\begin{split}
&2\int_{0}^{t}
(t-t')
\left( \int_{\mathbb{R}^{d}} \left\{ \rho_{0}(x)-
\frac{p-1}{4}\left\|\psi(t) \right\|_{L^{2}(|x|\ge R)}^{2}
\rho_{3}(x)
\right\}
|\nabla \psi(x,t)|^{2}\,dx
\right)\,dt'
\\[6pt]
&\le 
(W_{R},|\psi_{0}|^{2})
+2t \Im{(\vec{w}_{R} \cdot \nabla \psi_{0},\psi_{0})}
-\frac{\varepsilon_{0}}{2}t^{2}
\\[6pt]
&\hspace{5cm} 
\mbox{for all $R>0$ satisfying (\ref{10/03/06/18:09}) and (\ref{10/03/07/0:10})--(\ref{10/03/07/0:13})}.
\end{split}
\end{equation}
Here, using the result of (i) (the formula (\ref{08/06/12/8:54})), we can take 
 a constant $R_{1}>0$ such that 
\begin{equation}\label{10/03/07/16:47}
\frac{p-1}{4}\left\| \psi(t) \right\|_{L^{2}(|x|\ge R_{1})}^{2}\frac{1}{K_{4}}
\le 
\frac{1}{2}
\quad 
\mbox{for all $t \in I_{\max}$},
\end{equation}
where $K_{4}$ is some positive constant found in (\ref{08/04/20/16:07}). Therefore, (\ref{10/03/07/19:11}), together with (\ref{10/03/07/16:47}) and (\ref{08/04/20/16:07}), shows that 
\begin{equation}\label{10/03/07/22:23}
\begin{split}
&
\int_{0}^{t}(t-t')\left( 
\int_{\mathbb{R}^{d}}\rho_{0}(x)|\nabla \psi(x,t)|^{2}\,dx
\right)dt'
\\[6pt]
&\le (W_{R},|\psi_{0}|^{2})
+2t \Im{(\vec{w}_{R} \cdot \nabla \psi_{0},\psi_{0})}
-\frac{\varepsilon_{0}}{2}t^{2}
\quad  
\mbox{for all sufficiently large $R$ and $t \in [0,T_{\max}^{+})$}.
\end{split}
\end{equation}
This, together with (\ref{08/04/20/12:50}) and (\ref{08/04/20/12:51}), gives (\ref{08/06/10/10:21}). 
\par 
Once (\ref{08/06/10/10:21}) is proved, we can easily obtain (\ref{08/06/10/10:22}). Indeed, (\ref{10/03/07/19:03}), together with (\ref{08/06/10/10:21}) and (\ref{10/03/01/15:06}), immediately yields (\ref{08/06/10/10:22}). Thus, we have proved (ii). 
\par 
Finally, we prove (iii). We take $R>0$ so large that 
 (\ref{08/06/10/10:21}) holds valid. 
\par 
To prove (\ref{08/06/12/6:56}), we employ the radial interpolation inequality by Strauss \cite{Strauss}: 
\begin{equation}\label{10/05/30/14:01}
\left\|f \right\|_{L^{\infty}}^{2}
\lesssim 
\frac{1}{R^{d-1}}
\left\|f \right\|_{L^{2}}
\left\| \nabla f \right\|_{L^{2}}
\quad 
\mbox{for all $f \in H_{rad}^{1}(\mathbb{R}^{d})$}, 
\end{equation}
where the implicit constant depends only on $d$.
\\
This inequality (\ref{10/05/30/14:01}), together with (\ref{08/06/10/10:21})
 and the mass conservation law (\ref{08/05/13/8:59}), gives us (\ref{08/06/12/6:56}). 
\par 
The estimate (\ref{08/06/12/6:57}) follows from (\ref{08/06/12/6:56}) and the inequality 
\[
\|\psi(t)\|_{L^{p+1}(|x|>R)}^{p+1}\le \|\psi_{0}\|_{L^{2}}^{2}\|\psi(t)\|_{L^{\infty}(|x|>R)}^{p-1}.
\] 
We obtain (\ref{08/06/12/6:58}) and (\ref{08/06/12/6:59}) from (\ref{08/06/12/6:56}) and (\ref{08/06/12/6:57}), respectively. 
\end{proof}

We also have variants of (\ref{08/06/12/6:56})--(\ref{08/06/12/6:59}). For example: in their proofs, replacing $R$ by $R(T_{\max}^{+}-t)^{-\frac{1}{2(d-1)}}$, we obtain 
\[
\int_{0}^{T_{\max}^{+}}\|\psi(t)\|_{L^{\infty}(|x|>R(T_{\max}^{+}-t)^{-\frac{1}{2(d-1)}})}^{4}\,dt < \infty 
\] 
and 
\[
\int_{0}^{T_{\max}^{+}}\|\psi(t)\|_{L^{p+1}(|x|>R(T_{\max}^{+}-t)^{-\frac{1}{2(d-1)}})}^{\frac{4(p+1)}{p-1}}\,dt < \infty .
\] 
Moreover, replacing $R$ by $R(T_{\max}^{+}-t)^{-\frac{1}{d-1}}$ in (\ref{08/06/12/6:56}) and (\ref{08/06/12/6:57}), we  obtain 
\[
\liminf_{t \to T_{\max}^{+}} \|\psi(t)\|_{L^{\infty}\left(|x|>R(T_{\max}^{+}-t)^{-\frac{1}{d-1}}\right)}^{2}=0 
\]
and 
\[
\liminf_{t \to T_{\max}^{+}} \|\psi(t)\|_{L^{p+1}\left(|x|>R(T_{\max}^{+}-t)^{-\frac{1}{d-1}}\right)}^{p+1} =0 ,
\]
respectively.
\\
\par 
Now, we shall give the proof of Propositions \ref{08/11/02/22:52}. 

\begin{proof}[Proof of Proposition \ref{08/11/02/22:52}] 
 Suppose that $\psi$ is a solution to the equation (\ref{08/05/13/8:50}) satisfying that 
\begin{equation}\label{10/01/26/12:50}
\limsup_{t\to T_{\max}^{+}}\left\|\nabla \psi(t)  \right\|_{L^{2}}=
\limsup_{t\to T_{\max}^{+}}\left\| \psi(t) \right\|_{L^{p+1}}=\infty.
\end{equation}
Then, there exists a sequence $\{t_{n}\}_{n\in \mathbb{N}}$ in 
 $[0,T_{\max}^{+})$ such that 
\begin{align}
\label{10/03/08/17:57}
&\lim_{n\to \infty}t_{n} = T_{\max}^{+},  
\\[6pt]  
\label{10/01/26/12:53} 
&\left\| \psi(t_{n})\right\|_{L^{p+1}}=\sup_{t\in [0,t_{n})}
\left\| \psi(t)\right\|_{L^{p+1}}.
\end{align}
Using such a sequence $\{t_{n}\}_{n\in \mathbb{N}}$, we define a number $\lambda_{n}$ by 
\begin{equation}\label{10/01/26/12:58}
\lambda_{n}=\left\| \psi(t_{n})\right\|_{L^{p+1}}^{-\frac{(p-1)(p+1)}{d+2-(d-2)p}},
\quad 
n \in \mathbb{N}.
\end{equation}
It is easy to see that     
\begin{equation}\label{10/03/08/17:59}
\lim_{n\to \infty}\lambda_{n} = 0.
\end{equation}
We consider the scaled functions $\psi_{n}$ defined by 
\begin{equation}\label{10/01/26/13:14}
\psi_{n}(x,t):=\lambda_{n}^{\frac{2}{p-1}}\overline{\psi(\lambda_{n}x,t_{n}-\lambda_{n}^{2}t)}, 
\quad 
(x,t)\in \mathbb{R}^{d}\times 
\left(-\frac{T_{\max}^{+}-t_{n}}{\lambda_{n}^{2}}, \frac{t_{n}}{\lambda_{n}^{2}}\right],
\quad n \in \mathbb{N}.
\end{equation}
We can easily verify that   
\begin{align}
\label{10/03/08/15:29}
&2i\displaystyle{\frac{\partial \psi_{n}}{\partial t}} +\Delta \psi_{n}+|\psi_{n}|^{p-1}\psi_{n}=0  \quad \mbox{in} \ \mathbb{R}^{d}\times 
\left(-\frac{T_{\max}^{+}-t_{n}}{\lambda_{n}^{2}}, \frac{t_{n}}{\lambda_{n}^{2}}\right], 
\\[6pt]
\label{10/03/08/15:33}
&\left\| \psi_{n}(t)\right\|_{L^{2}}
=
\lambda_{n}^{-\frac{d(p-1)-4}{2(p-1)}}\left\| \psi_{0} \right\|_{L^{2}}
\quad 
\mbox{for all $t\in \left(-\frac{T_{\max}^{+}-t_{n}}{\lambda_{n}^{2}}, \frac{t_{n}}{\lambda_{n}^{2}}\right]$},
\\[6pt]
\label{10/03/08/15:34}
&\mathcal{H}(\psi_{n}(t))=\lambda_{n}^{\frac{2(p+1)}{p-1}-d}\mathcal{H}(\psi_{0})
\quad 
\mbox{for all $t\in \left(-\frac{T_{\max}^{+}-t_{n}}{\lambda_{n}^{2}}, \frac{t_{n}}{\lambda_{n}^{2}}\right]$},
\\[6pt]
\label{08/11/16/16:22}
&\sup_{t\in \big[0,\ \frac{t_{n}}{\lambda_{n}^{2}}\big]}\left\| \psi_{n}(t)\right\|_{L^{p+1}}=1,
\end{align}
where we put $\psi_{0}=\psi(0)$. Besides, we have that    
\begin{equation}\label{08/11/16/16:23}
\sup_{t\in \big[0,\frac{t_{n}}{\lambda_{n}^{2}}\big]}\left\| \nabla \psi_{n}(t)\right\|_{L^{2}}^{2} \le 1
\quad 
\mbox{for all sufficiently large $n \in \mathbb{N}$}.
\end{equation}
Indeed, (\ref{10/03/08/15:34}) and (\ref{08/11/16/16:22}) lead us to that   
\begin{equation}\label{10/03/08/15:52}
\begin{split}
\left\| \nabla \psi_{n}(t)\right\|_{L^{2}}^{2}
&= 
\mathcal{H}(\psi_{n}(t))+\
\frac{2}{p+1}\left\| \psi_{n}(t) \right\|_{L^{p+1}}^{p+1}
\\
&\le \lambda_{n}^{\frac{2(p+1)}{p-1}-d}\mathcal{H}(\psi_{0})+\frac{2}{p+1}
\quad 
\mbox{for all $t\in \big[0, \frac{t_{n}}{\lambda_{n}^{2}}\big]$},
\end{split}
\end{equation}
which, together with (\ref{10/03/08/17:59}), immediately yields (\ref{08/11/16/16:23}).
\par 
Now, we suppose that 
\begin{equation}\label{10/01/27/12:38}
\left\| \psi \right\|_{L^{\infty}([0,T_{\max}^{+});L^{\frac{d}{2}(p-1)})}<\infty,
\end{equation} 
so that $\{\psi_{n}\}$ satisfies that  
\begin{equation}\label{10/03/10/18:08}
\sup_{n\in \mathbb{N}}\left\| \psi_{n} \right\|_{L^{\infty}([0,\frac{t_{n}}{\lambda_{n}^{2}}];L^{\frac{d}{2}(p-1)})}
\le 
\left\| \psi \right\|_{L^{\infty}([0,T_{\max}^{+});L^{\frac{d}{2}(p-1)})}<\infty
.
\end{equation} 
Take any $T>0$. Then, extraction of some subsequence of $\{\psi_{n}\}$ allows us to assume that:  
\begin{equation}\label{10/04/12/18:23}
\sup_{t \in [0,T]}
\left\| \psi_{n}(t) \right\|_{L^{\frac{d}{2}(p-1)}}\lesssim 1,
\
 \sup_{t \in [0,T]}
\left\| \psi_{n}(t) \right\|_{L^{p+1}}=1,
\  
\sup_{t \in [0,T]}
\left\| \nabla \psi_{n}(t) \right\|_{L^{2}}\le 1 
\quad  \mbox{for all $n \in \mathbb{N}$},
\end{equation}
where the implicit constant is independent of $n$. The condition (\ref{10/04/12/18:23}) enable us to apply Lemmata \ref{08/10/25/23:47} and \ref{08/10/25/23:48}, so that: There exist a constant $\delta>0$ and a sequence $\{y_{n}\}_{n\in \mathbb{N}}$ in $\mathbb{R}^{d}$ such that, putting $\widetilde{\psi}_{n}(x,t)=\psi_{n}(x+y_{n},t)$,
 we have 
\begin{equation}\label{10/03/10/18:02}
\sup_{t \in [0,T]}\mathcal{L}^{d}\left( 
\left[ 
\left| \widetilde{\psi}_{n}(\cdot +y_{n}, t)\right| \ge \delta 
\right]\cap B_{1}(0)
\right)
\gtrsim 1
\quad 
\mbox{for all $n \in \mathbb{N}$},  
\end{equation}
where $B_{1}(0)=\{x \in \mathbb{R}^{d} | |x|< 1\}$ and the implicit constant is independent of $n$. Besides, we have by (\ref{10/03/08/15:29}) and 
(\ref{10/04/12/18:23}) that 
\begin{align}
\label{10/03/10/18:31}
&2i\displaystyle{\frac{\partial \widetilde{\psi}_{n}}{\partial t}} +\Delta \widetilde{\psi}_{n}+|\widetilde{\psi}_{n}|^{p-1}\widetilde{\psi}_{n}=0  \quad \mbox{in} \ \mathbb{R}^{d}\times [0,T],
\\[6pt]
\label{10/03/14/17:57}
&\widetilde{\psi}_{n} \in C([0,T];\dot{H}^{1}(\mathbb {R}^{d})\cap L^{p+1}(\mathbb{R}^{d}))
\quad 
\mbox{for all $n\in \mathbb{N}$},
\\[6pt]
\label{10/03/10/18:33}
&
\sup_{t \in [0,T]}\left\| \widetilde{\psi}_{n}(t) \right\|_{L^{p+1}}= 1
,
\quad 
\sup_{t \in [0,T]}\left\| \nabla \widetilde{\psi}_{n}(t) \right\|_{L^{2}}\le 1
\quad 
\mbox{for all $n \in \mathbb{N}$}.
\end{align}
We shall prove that $\{\widetilde{\psi}_{n} \}$ is an  equicontinuous sequence in $C([0,T];L^{2}(\Omega))$ for any compact set $\Omega \subset \mathbb{R}^{d}$.
 Let $\chi$ be a function in $C_{c}^{\infty}(\mathbb{R}^{d})$ such that $\chi\equiv 1$ on $\Omega$. Then, we can verify that 
\begin{equation}\label{10/03/09/14:13}
\begin{split}
&\left\| \chi \widetilde{\psi}_{n}(t)- \chi \widetilde{\psi}_{n}(s) 
\right\|_{L^{2}}^{2}
\\[6pt]
&=\int_{s}^{t}\frac{d}{dt'} \left\|\chi\widetilde{\psi}_{n}(t')
-\chi\widetilde{\psi}_{n}(s) \right\|_{L^{2}}^{2}dt'
\\[6pt]
&=2\Re{
\int_{s}^{t}
\!\!
\int_{\mathbb{R}^{d}}
\!
\overline{\chi(x)(\partial_{t}\widetilde{\psi}_{n})(x,t')}
\left\{ \chi(x)\widetilde{\psi}_{n}(x,t')-\chi(x)\widetilde{\psi}_{n}(x,s) \right\}dxdt'}
\quad
\mbox{for all $s,t \in [0,T]$}.
\end{split}
\end{equation}
This identity (\ref{10/03/09/14:13}), together with (\ref{10/03/10/18:31}), yields that
\begin{equation}\label{08/12/19/15:09}
\begin{split}
&\left\| \widetilde{\psi}_{n}(t)-\widetilde{\psi}_{n}(s)\right\|_{L^{2}(\Omega)}^{2}
\\[6pt]
&\le \left\| \chi \widetilde{\psi}_{n}(t)- \chi \widetilde{\psi}_{n}(s) \right\|_{L^{2}}^{2}
\\[6pt]
&\le 2  
|t-s| \left\| \chi \partial_{t} \widetilde{\psi}_{n} \right\|_{L^{\infty}([0,T];H^{-1})}
\left\| \chi \widetilde{\psi}_{n} \right\|_{L^{\infty}([0,T];H^{1})}
\\[6pt]
&\le 2  
|t-s| \left\{ \left\| \chi \Delta \widetilde{\psi}_{n} \right\|_{L^{\infty}([0,T];H^{-1})}+
\left\| \chi (|\widetilde{\psi}_{n}|^{p-1}\widetilde{\psi}_{n}) \right\|_{L^{\infty}([0,T];H^{-1})}
\right\}
\\[6pt]
&\qquad  \times 
 \left\{ \left\| \chi \widetilde{\psi}_{n}\right\|_{L^{\infty}([0,T];L^{2})}
+
\left\| (\nabla \chi) \widetilde{\psi}_{n }\right\|_{L^{\infty}([0,T];L^{2})}
+ 
\left\| \chi  \nabla \widetilde{\psi}_{n}\right\|_{L^{\infty}([0,T];L^{2})}
\right\}
\\[6pt]
&\hspace{264pt}\mbox{for all $s,t \in [0,T]$}.
\end{split}
\end{equation}
We consider the terms in the first parentheses on the right-hand side of (\ref{08/12/19/15:09}). They are estimated by the Sobolev embedding and the H\"older inequality as follows:  
\begin{equation}\label{10/03/09/14:30}
\begin{split}
&\left\| \chi \Delta \widetilde{\psi}_{n} \right\|_{L^{\infty}([0,T];H^{-1})}
+
\left\| 
\chi (|\widetilde{\psi}_{n}|^{p-1}\widetilde{\psi}_{n}) 
\right\|_{L^{\infty}([0,T];H^{-1})}
\\[6pt]
&=
\left\| \Delta (\chi  \widetilde{\psi}_{n})-(\Delta \chi)\widetilde{\psi}_{n} -2\nabla \chi \nabla \widetilde{\psi}_{n} \right\|_{L^{\infty}([0,T];H^{-1})}
+
\left\| 
\chi (|\widetilde{\psi}_{n}|^{p-1}\widetilde{\psi}_{n}) 
\right\|_{L^{\infty}([0,T];H^{-1})}
\\[6pt]
&\lesssim  \left\| \nabla (\chi \widetilde{\psi}_{n}) \right\|_{L^{\infty}([0,T];L^{2})}
+ \left\| (\Delta \chi)\widetilde{\psi}_{n} \right\|_{L^{\infty}([0,T];L^{\frac{p+1}{p}})}
+  \left\| \nabla \chi \nabla \widetilde{\psi}_{n} \right\|_{L^{\infty}([0,T];L^{2})}
\\[6pt]
&\qquad  \qquad 
+\left\| 
\chi (|\psi_{n}|^{p-1}\widetilde{\psi}_{n}) 
\right\|_{L^{\infty}([0,T];L^{\frac{p+1}{p}})}
\\[6pt]
&\le 
\left\| \nabla \chi \right\|_{L^{\frac{2(p+1)}{p-1}}}
\left\| \widetilde{\psi}_{n} \right\|_{L^{\infty}([0,T];L^{p+1})}
+
\left\| \chi \right\|_{L^{\infty}}
\left\| \nabla \widetilde{\psi}_{n} \right\|_{L^{\infty}([0,T];L^{2})}
\\[6pt]
&\qquad + 
\left\| \Delta \chi \right\|_{L^{\frac{p+1}{p-1}}}
\left\| \widetilde{\psi}_{n} \right\|_{L^{\infty}([0,T];L^{p+1})}
+ \left\| \nabla \chi \right\|_{L^{\infty}} 
\left\| \nabla \widetilde{\psi}_{n} \right\|_{L^{\infty}([0,T];L^{2})}
\\[6pt]
& \qquad \qquad +
\left\| 
\chi \right\|_{L^{\infty}}\left\|\widetilde{\psi}_{n} 
\right\|_{L^{\infty}([0,T];L^{p+1})}^{p+1}, 
\end{split}
\end{equation}
where the implicit constant depends only on $d$ and $p$. 
 On the other hand, the terms in the second parentheses on the right-hand side of (\ref{08/12/19/15:09}) are estimated by the H\"older inequality as follows: 
\begin{equation}\label{10/03/09/16:08}
\begin{split}
&
\left\| \chi \widetilde{\psi}_{n}\right\|_{L^{\infty}([0,T];L^{2})}
+
\left\| (\nabla \chi) \widetilde{\psi}_{n }\right\|_{L^{\infty}([0,T];L^{2})}
+ 
\left\| \chi  \nabla \widetilde{\psi}_{n}\right\|_{L^{\infty}([0,T];L^{2})}
\\[6pt]
&\le 2
\left\| \chi \right\|_{W^{1,\frac{2(p+1)}{p-1}}}
\left\| \widetilde{\psi}_{n} \right\|_{L^{\infty}([0,T];L^{p+1})}
+
\left\|\chi \right\|_{L^{\infty}}
\left\| \nabla \widetilde{\psi}_{n} \right\|_{L^{\infty}([0,T];L^{2})}.
\end{split}
\end{equation}
Hence, combining these estimates (\ref{08/12/19/15:09})--(\ref{10/03/09/16:08})  with (\ref{10/03/10/18:33}), we obtain that  
\begin{equation}\label{10/01/26/18:23}
\left\| \widetilde{\psi}_{n}(t)-\widetilde{\psi}_{n}(s)\right\|_{L^{2}(\Omega)}^{2}
\lesssim |t-s|,
\end{equation}
where the implicit constant depends only on $d$, $p$ and $\Omega$ ($\chi$ is determined by $\Omega$). Thus, we see that $\{\widetilde{\psi}_{n}\}$ is equicontinuous in $C([0,T];L^{2}(\Omega))$ for all compact set $\Omega \subset \mathbb{R}^{d}$.
\par 
The estimate (\ref{10/01/26/18:23}), with the help of the  Gagliardo-Nirenberg inequality and  (\ref{10/03/10/18:33}), also shows that $\{\widetilde{\psi}_{n} \}$ is an equicontinuous sequence in $C([0,T];L^{q}(\Omega))$ for all compact set $\Omega \subset \mathbb{R}^{d}$ and $q \in [2,2^{*})$. Hence, (\ref{10/03/10/18:33}) and the Ascoli-Arz\'era theorem, together with (\ref{10/03/10/18:02}) and (\ref{10/03/10/18:31}), yield that: There exist a subsequence of $\{\widetilde{\psi}_{n}\}$ (still denoted by the same symbol) and a nontrivial function $\psi_{\infty} \in L^{\infty}([0,\infty);\dot{H}^{1}(\mathbb{R}^{d})\cap L^{p+1}(\mathbb{R}^{d}))$ 
such that 
\begin{align}
\label{10/01/27/12:25}
&\lim_{n\to \infty}\widetilde{\psi}_{n}= \psi_{\infty} \quad \mbox{ strongly in $L^{\infty}([0,T];L_{loc}^{q}(\mathbb{R}^{d}))$} \quad \mbox{for all $q \in [2, 2^{*})$},
\\[6pt]
\label{10/03/09/16:53}
&\lim_{n\to \infty}\nabla \widetilde{\psi}_{n} = \nabla \psi_{\infty} 
\quad 
\mbox{weakly* in $L^{\infty}([0,T];L^{2}(\mathbb{R}^{d}))$},
\\[6pt]
\label{10/03/09/18:11}
&2i\frac{\partial \psi_{\infty}}{\partial t}+\Delta \psi_{\infty}+|\psi_{\infty}|^{p-1}\psi_{\infty}=0
\quad 
\mbox{in $\mathcal{D}'([0,\infty);\dot{H}^{-1}(\mathbb{R}^{d})+ L^{\frac{p+1}{p}}(\mathbb{R}^{d}))$}.
\end{align}

Finally, we hall prove the formula (\ref{10/03/10/18:47}). The Lebesgue dominated convergence theorem leads to that for all $\varepsilon>0$, there exists $R>0$ such that  
\begin{equation}\label{10/04/16/18:52}
\int_{|x|\le R}|\psi_{\infty}(0)|^{\frac{d(p-1)}{2}}\,dx
\ge 
(1-\varepsilon) 
\int_{\mathbb{R}^{d}}|\psi_{\infty}(0)|^{\frac{d(p-1)}{2}}\,dx.
\end{equation}
Combining this estimate with the strong convergence (\ref{10/01/27/12:25}), we have that 
\begin{equation}\label{10/04/16/19:00}
\lim_{n\to \infty}
\int_{|x|\le R}
|\widetilde{\psi}_{n}(x,0)|^{\frac{d(p-1)}{2}}\,dx
\ge 
(1-\varepsilon) 
\int_{\mathbb{R}^{d}}|\psi_{\infty}(0)|^{\frac{d(p-1)}{2}}\,dx.
\end{equation}
Hence, it follows from  (\ref{10/04/16/19:00}) and the relation
\begin{equation}\label{10/04/16/16:33}
\int_{|x|\le R}|\widetilde{\psi}_{n}(x,0)|^{\frac{d(p-1)}{2}}\,dx
=
\int_{|x-\lambda_{n}y_{n}|\le \lambda_{n}R}|\psi(x,t_{n})|^{\frac{d(p-1)}{2}}\,dx
\end{equation}
that the desired result holds. 
\end{proof} 

Finally, we shall give the proof of Proposition \ref{10/01/26/14:49}.

\begin{proof}[Proof of Proposition \ref{10/01/26/14:49}] 
We use the same assumptions, definitions and notation in Proposition \ref{08/11/02/22:52}, except the condition (\ref{10/01/26/15:36}). 
\par 
We find from the proof of Proposition \ref{08/11/02/22:52} that: For all $T>0$, there exists a subsequence of $\{\psi_{n}\}$ in $C([0,T];H^{1}(\mathbb{R}^{d}))$ (still denoted by the same symbol) with the following properties: 
\begin{align}
\label{10/03/11/15:27}
&\sup_{t\in [0,T]}\left\|\psi_{n}(t) \right\|_{L^{p+1}}=1
\quad \mbox{for all $n \in \mathbb{N}$},
\\[6pt]
&\label{10/03/11/15:39} 
\sup_{t \in [0,T]}\left\| \nabla \psi_{n}(t) \right\|_{L^{2}}\le 1
\quad 
\mbox{for all $n \in \mathbb{N}$},
\\[6pt]
&\label{10/03/11/15:36}
2i\frac{\partial \psi_{n}}{\partial t}+\Delta \psi_{n}+|\psi_{n}|^{p-1}\psi_{n}=0
\quad 
\mbox{in $\mathbb{R}^{d}\times [0,T]$}.
\end{align}
For such a subsequence $\{\psi_{n}\}$, we define renormalized functions $\Phi_{n}^{RN}$ by \begin{equation}\label{10/01/26/13:15}
\Phi_{n}^{RN}(x,t)=\psi_{n}(x,t)-e^{\frac{i}{2}t\Delta}\psi_{n}(x,0),
\quad n \in \mathbb{N}.
\end{equation}
Here, it is worth while noting that 
\begin{align}
\label{10/03/15/18:18}
&\Phi^{RN}_{n} \in C([0,T];H^{1}(\mathbb{R}^{d}))
\quad 
\mbox{for all $n \in \mathbb{N}$},
\\[6pt]
\label{10/03/11/16:00}
&\Phi_{n}^{RN}(t)
=\frac{i}{2}\int_{0}^{t}e^{\frac{i}{2}(t-t')\Delta}|\psi_{n}(t')|^{p-1}\psi_{n}(t')\,dt'
\quad 
\mbox{for all $n \in \mathbb{N}$}.
\end{align}
We shall show that  
\begin{equation}
\label{08/11/21/17:55}
\sup_{n\in \mathbb{N}}\left\|\Phi^{RN}_{n}\right\|_{L^{\infty}([0,T];H^{1})}
\le C_{T}
\end{equation}
for some constant $C_{T}>0$ depending only on $d$, $p$ and $T$. 
Applying the Strichartz estimate to the formula (\ref{10/03/11/16:00}), and using (\ref{10/03/11/15:27}), we obtain the following two estimates:  
\begin{equation}\label{10/03/11/16:22}
\begin{split}
\left\|\Phi^{RN}_{n}\right\|_{L^{\infty}([0,T];L^{2})}
&\lesssim 
\left\||\psi_{n}|^{p-1}\psi_{n} \right\|_{L^{\frac{4(p+1)}{4(p+1)-d(p-1)}}([0,T];L^{\frac{p+1}{p}})}
\\[6pt]
& \le T^{1-\frac{d(p-1)}{4(p+1)}}\left\| \psi_{n} \right\|_{L^{\infty}([0,T];L^{p+1})}^{p}
\\[6pt]
&\le T^{1-\frac{d(p-1)}{4(p+1)}},
\end{split}
\end{equation}
\begin{equation}\label{10/03/11/17:29}
\begin{split}
\left\| \nabla \Phi^{RN}_{n} \right\|_{L^{\infty}([0,T];L^{2})}
&\lesssim 
\left\| \nabla \left( |\psi_{n}|^{p-1}\psi_{n}\right) \right\|_{L^{\frac{4(p+1)}{4(p+1)-d(p-1)}}([0,T];L^{\frac{p+1}{p}})}
\\
& \le T^{1-\frac{d(p-1)}{2(p+1)}}\left\| \psi_{n}\right\|_{L^{\infty}([0,T];L^{p+1})}^{p-1}\left\| \nabla \psi_{n}\right\|_{L^{\frac{4(p+1)}{d(p-1)}}([0,T];L^{p+1})}
\\
&\le T^{1-\frac{d(p-1)}{2(p+1)}}\left\| \nabla \psi_{n}\right\|_{L^{\frac{4(p+1)}{d(p-1)}}([0,T];L^{p+1})},
\end{split}
\end{equation}
where the implicit constants depend only on $d$ and $p$. Therefore, for the desired estimate (\ref{08/11/21/17:55}), it suffices to show that  
\begin{equation}\label{08/11/19/23:00}
\sup_{n\in \mathbb{N}}
\left\| \nabla \psi_{n} \right\|_{L^{\frac{4(p+1)}{d(p-1)}}([0,T];L^{p+1})}
\le D_{T}
\end{equation}
for some constant $D_{T}>0$ depending only on $d$, $p$ and $T$. Here, note that  the pair $(p+1,\ \frac{4(p+1)}{d(p-1)})$ is admissible. In order to prove 
 (\ref{08/11/19/23:00}), we introduce an admissible pair $(q,r)$ with $q=p+2$ if $d=1,2$ and $q=\frac{1}{2}(p+1+2^{*})$ if $d\ge 3$, so that $p+1<q<2^{*}$. 
 Then, it follows from the integral equation for $\psi_{n}$ and the Strichartz estimate that     
\begin{equation}\label{10/04/16/17:45}
\begin{split}
&\left\|\nabla \psi_{n} \right\|_{L^{r}\left([0,T];L^{q}\right)}
\\[6pt]
&\lesssim 
\left\| \nabla \psi_{n}(0)\right\|_{L^{2}}
+ \left\| \nabla \left( |\psi_{n}|^{p-1} \psi_{n}\right) \right\|_{L^{\frac{4(p+1)}{4(p+1)-d(p-1)}}([0,T]; L^{\frac{p+1}{p}})}
\\[6pt]
&\lesssim 
\left\| \nabla \psi_{n}(0)\right\|_{L^{2}}
+ \left\| \psi_{n} \right\|_{ L^{\frac{2(p+1)(p-1)}{d+2-(d-2)p}}([0,T]; L^{p+1})}^{p-1}
\left\| \nabla \psi_{n} \right\|_{L^{\frac{4(p+1)}{d(p-1)}}([0,T];L^{p+1})} 
\\[6pt]
&\le
\left\| \nabla \psi_{n}(0)\right\|_{L^{2}} +
T^{\frac{d+2-(d-2)p}{2(p+1)}}\left\| \psi_{n} \right\|_{L^{\infty}([0,T];L^{p+1})}^{p-1}
\left\|\nabla \psi_{n} \right\|_{L^{\infty}([0,T];L^{2})}^{1-\frac{q(p-1)}{(q-2)(p+1)}}
\left\| \nabla \psi_{n} \right\|_{L^{r}([0,T];L^{q})}^{\frac{q(p-1)}{(q-2)(p+1)}},
\end{split}
\end{equation}
where the implicit constants depend only on $d$ and $p$. 
 Combining (\ref{10/04/16/17:45}) with (\ref{10/03/11/15:27}) and (\ref{10/03/11/15:39}), we obtain that
\begin{equation}\label{10/03/11/17:56}
\left\|\nabla \psi_{n} \right\|_{L^{r}\left([0,T];L^{q}\right)}
\lesssim  
 1 +
T^{\frac{d+2-(d-2)p}{2(p+1)}}\left\| \nabla \psi_{n} \right\|_{L^{r}([0,T];L^{q})}^{\frac{q(p-1)}{(q-2)(p+1)}},
\end{equation}
where the implicit constant depends only on $d$ and $p$. Since $0<\frac{q(p-1)}{(q-2)(p+1)}<1$, this estimate, together with the Young inequality ($ab \le 
 \frac{1}{\gamma}a^{\gamma}+\frac{1}{\gamma'}b^{\gamma'}$ for $\gamma,\gamma'>1$ with $\frac{1}{\gamma}+\frac{1}{\gamma'}=1$), yields that 
\begin{equation}\label{08/11/19/22:29}
\left\| \nabla \psi_{n} \right\|_{L^{r}([0,T];L^{q})}
\lesssim  1+T^{\frac{(q-2)\{d+2-(d-2)p\}}{4\{ q-(p+1)\}}}
, 
\end{equation}
where the implicit constant depends only on $d$ and $p$. Hence, interpolating 
 (\ref{10/03/11/15:39}) and  (\ref{08/11/19/22:29}), we obtain (\ref{08/11/19/23:00}), so that (\ref{08/11/21/17:55}) holds.  
\par 
Next, we shall show that $\{\Phi^{RN}_{n}\}$ is an equicontinuous sequence in 
 $C([0,T];L^{q}(\mathbb{R}^{d}))$ for all $q \in [2,2^{*})$. 
Differentiating the both sides of (\ref{10/03/11/16:00}), we obtain that 
\begin{equation}\label{10/03/15/16:24}
\begin{split}
\partial_{t}\Phi^{RN}_{n}(t)
&= \frac{i}{2}|\psi_{n}(t)|^{p-1}\psi_{n}(t)
-\frac{1}{4}\Delta \int_{0}^{t} e^{\frac{i}{2}(t-t')\Delta}|\psi_{n}(t')|^{p-1}\psi_{n}(t')\,dt'
\\[6pt]
&=\frac{i}{2}|\psi_{n}(t)|^{p-1}\psi_{n}(t)+\frac{i}{2}\Delta \Phi^{RN}_{n}(t).
\end{split}
\end{equation}
This formula (\ref{10/03/15/16:24}) and the H\"older inequality show that  
\begin{equation}\label{10/03/15/16:36}
\begin{split}
&\left\|\Phi^{RN}_{n}(t)-\Phi^{RN}_{n}(s)\right\|_{L^{2}}^{2}
\\[6pt]
&
=\int_{s}^{t}\frac{d}{dt'} \left\|\Phi^{RN}_{n}(t')-\Phi^{RN}_{n}(s) \right\|_{L^{2}}^{2}dt'
\\[6pt]
&=
2\Re \int_{s}^{t}\int_{\mathbb{R}^{d}} \overline{\partial_{t}\Phi^{RN}_{n}(x,t')}\left\{ \Phi^{RN}_{n}(x,t')-\Phi^{RN}_{n}(x,s)\right\}dxdt'
\\[6pt]
&\lesssim 
|t-s|\left\|\psi_{n} \right\|_{L^{\infty}([0,T];L^{p+1})}^{p}
\left\| \Phi^{RN}_{n} \right\|_{L^{\infty}([0,T];L^{p+1})}
+ |t-s|\left\| \nabla \Phi^{RN}_{n}\right\|_{L^{\infty}([0,T];L^{2})}^{2}
,
\end{split}
\end{equation}
where the implicit constant depends only on $d$ and $q$. Combining this estimate with (\ref{10/03/11/15:27}) and (\ref{08/11/21/17:55}),  we obtain 
\begin{equation}\label{10/03/15/16:54}
\left\|\Phi^{RN}_{n}(t)-\Phi^{RN}_{n}(s)\right\|_{L^{2}}^{2}
\lesssim |t-s|
\quad 
\mbox{for all $s,t \in [0,T]$},
\end{equation} 
where the implicit constant depends only on $d$, $q$ and $T$. Moreover, the Gagliardo-Nirenberg inequality, together with (\ref{08/11/21/17:55}) and (\ref{10/03/15/16:54}), shows that $\{\Phi^{RN}_{n}\}$ is an equicontinuous sequence in 
 $C([0,T];L^{q}(\mathbb{R}^{d}))$ for all $q \in [2,2^{*})$. 
\par 
Now, we are in a position to prove the main property of $\{\Phi^{RN}_{n}\}$, the alternatives (i) and (ii). We first suppose that  
\begin{equation}\label{10/03/15/17:02}
\lim_{n\to \infty}\sup_{t\in [0,T]}\left\| \Phi_{n}^{RN}(t)\right\|_{L^{\frac{d}{2}(p-1)}}=0.
\end{equation}
Then, a direct calculation immediately yields (\ref{08/11/20/17:56}).
\par 
On the other hand, when 
\begin{equation}\label{10/03/15/17:04}
A:=\limsup_{n\to \infty}\sup_{t\in [0,T]}\left\| \Phi_{n}^{RN}(t) \right\|_{L^{p+1}}>0,
\end{equation}
we can take a subsequence $\{\Phi^{RN}_{n}\}$ (still denoted by the same symbol) such that   
\begin{equation}\label{08/11/21/18:29}
\sup_{t \in [0,T]}\left\|\Phi^{RN}_{n}(t) \right\|_{L^{p+1}}
 \ge \frac{A}{2}
\qquad 
\mbox{for all  $n \in \mathbb{N}$}.
\end{equation}
This inequality (\ref{08/11/21/18:29}) and the uniform bound (\ref{08/11/21/17:55}) enable us to apply Lemmata \ref{08/10/25/23:47} and \ref{08/10/25/23:48}. Thus, we find that there exist a constant $\delta>0$ and a sequence $\{y_{n}\}_{n\in \mathbb{N}}$ in $\mathbb{R}^{d}$ such that, putting $\widetilde{\Phi}^{RN}_{n}(x,t)=\Phi^{RN}_{n}(x+y_{n},t)$, we have 
\begin{equation}\label{10/03/15/17:57}
\sup_{t \in [0,T]}\mathcal{L}^{d}\left( 
\left[ \left|\widetilde{\Phi}^{RN}_{n}(t)\right| >
\frac{\delta}{2} \right]
\cap B_{1}(0)
\right)>C
\end{equation}
for some constant $C>0$ being independent of $n$. 
Besides, we find by (\ref{08/11/21/17:55}) and (\ref{10/03/15/16:54}) that:  
 $\{\widetilde{\Phi}^{RN}_{n}\}$ is 
\begin{align}
\label{10/04/16/18:13}
&\mbox{a uniformly bounded sequence in $C([0,T];H^{1}(\mathbb{R}^{d}))$, and}
\\[6pt]
&\mbox{an equicontinuous sequence in $C([0,T];L^{q}(\mathbb{R}^{d}))$ for all $q\in [2,2^{*})$}. 
\end{align}
Hence, the Ascoli-Arzel\'a theorem, together with (\ref{10/03/15/17:57}), gives us that: There exist a subsequence of $\{\widetilde{\Phi}^{RN}_{n}\}$ (still denoted by the same symbol) and a nontrivial function $\Phi \in L^{\infty}([0,\infty);H^{1}(\mathbb{R}^{d}))$ such that 
\begin{align}
\label{08/12/18/16:24}
&\lim_{n\to \infty}\widetilde{\Phi}^{RN}_{n} = \Phi 
\quad \mbox{weakly* in $L^{\infty}([0,T];H^{1}(\mathbb{R}^{d}))$},
\\[6pt]
\label{08/12/17/18:26}
&
\lim_{n\to \infty}\widetilde{\Phi}^{RN}_{n}= \Phi 
\quad \mbox{strongly in $C([0,T];L_{loc}^{q}(\mathbb{R}^{d}))$
\quad 
for all  $q\in[2,2^{*})$}.
\end{align}
It remains to prove (\ref{10/03/15/19:52})--(\ref{08/11/20/17:57}). We find by (\ref{10/03/11/15:27}) that: There exists a function  
$F \in L^{\infty}([0,\infty);L^{\frac{p+1}{p}}(\mathbb{R}^{d}))$ such that   
\begin{equation}\label{08/12/18/16:23}
\lim_{n\to \infty}|\psi_{n}|^{p-1}\psi_{n} = F 
\quad 
\mbox{weakly* in $L^{\infty}([0,T];L^{\frac{p+1}{p}}(\mathbb{R}^{d}))$}.
\end{equation}
Then, it follows from the equation (\ref{10/03/15/16:24}) and (\ref{08/12/18/16:24}) that 
\begin{equation}\label{08/12/18/16:25}
2i\frac{\partial \Phi}{\partial t} +\Delta \Phi + F=0.
\end{equation}
Here, if $F$ were trivial, then $\Phi$ is so since $\displaystyle{\Phi(0)=\lim_{n\to \infty}\Phi^{RN}_{n}(0)=0}$. Therefore, $F$ is nontrivial.
\par 
We can prove the formula (\ref{08/11/20/17:57}) in a way similar to the proof of (\ref{10/03/10/18:47}) (cf. the estimates (\ref{10/04/16/18:52})--
 (\ref{10/04/16/16:33})). 
\end{proof}
\appendix
\section{Generalized virial identity}\label{08/10/19/14:57}
The proofs of Theorems \ref{08/05/26/11:53}, \ref{08/06/12/9:48} and \ref{08/04/21/9:28} are based on a generalization of the virial identity. To state it, we first introduce a positive function $w$ in $W^{3,\infty}([0,\infty))$, which is a variant of the function in \cite{Nawa8, Ogawa-Tsutsumi, Ogawa-Tsutsumi2}: 
\begin{equation}\label{10/02/27/22:14}
w(r)=
\left\{ \begin{array}{lcl} r &\mbox{if}& 0\le r <1,
 \\[6pt] 
 r-(r-1)^{\frac{d}{2}(p-1)+1} &\mbox{if}& 1\le r < 1+\left( \frac{2}{d(p-1)+2} \right)^{\frac{2}{d(p-1)}}, 
 \\[6pt] 
 \mbox{smooth and $w'\le 0$} &\mbox{if}& 1+\left( \frac{2}{d(p-1)+2} \right)^{\frac{2}{d(p-1)}} \le r <2, 
 \\[6pt]
 0 &\mbox{if}& 2\le r  . \end{array} \right.
\end{equation}
Since $w$ is determined by $d$ and $p$ only, we may assume that 
\begin{equation}\label{09/09/26/11:16}
K:=\left\| w \right\|_{W^{3,\infty}} \lesssim 1,
\end{equation}
where the implicit constant depends only on $d$ and $p$. 
Using this $w$, we define   
\begin{equation}\label{10/02/27/22:18}
\vec{w}_{R}(x)=(\vec{w}_{R}^{1}(x),\ldots , \vec{w}_{R}^{d}(x))
:=\frac{x}{|x|}Rw\left(\frac{|x|}{R}\right),
\quad 
R>0, \  x \in \mathbb{R}^{d}
\end{equation}
and 
\begin{equation}\label{10/02/27/22:19}
W_{R}(x):=2R \int_{0}^{|x|}w\left( \frac{r}{R}\right)\,dr,
\quad 
R>0, \  x \in \mathbb{R}^{d}.
\end{equation}
These functions have the following properties:  
\begin{lemma}\label{08/10/23/18:31}
Assume that $d\ge 1$ and $2+\frac{4}{d}\le p+1 <2^{*}$. 
Then, we have that  
\begin{align}
\label{09/09/25/13:24}
&\left| \vec{w}_{R}(x) \right|^{2} \le W_{R}(x) \quad 
\mbox{for all $R>0$ and $x \in \mathbb{R}^{d}$},
\\[6pt]
\label{08/10/23/18:42}
&\left\| \vec{w}_{R} \right\|_{L^{\infty}} \le 2R
\quad
\mbox{for all $R>0$}, 
\\[6pt]
\label{08/12/29/12:48}
&\left\| W_{R}\right\|_{L^{\infty}}
\le 8R^{2}
\quad
\mbox{for all $R>0$},
\\[6pt]
\label{08/10/23/18:35}
&\left\|\nabla \vec{w}_{R}^{j} \right\|  
\le 2dK
\quad  
\mbox{for all $R>0$ and $j=1, \ldots, d$},
\\[6pt]
\label{08/04/20/16:54}
&\|\Delta ({\rm div}\, \vec{w}_{R}) \|_{L^{\infty}} 
\le  
\frac{10d^{2}K}{R^{2}}
\quad 
\mbox{for all $R>0$},
\end{align}
where $K$ is the constant given in (\ref{09/09/26/11:16}). 
\end{lemma}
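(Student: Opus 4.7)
The lemma is proved by direct computation from the explicit definition (\ref{10/02/27/22:14}) of $w$. First, I would record basic pointwise properties of $w$: $w \ge 0$; $\mathrm{supp}\,w \subset [0,2]$; $w(r)=r$ on $[0,1]$; $\|w\|_{L^\infty} \le 2$ (verified by locating the maximum of the second piece in $[1,2)$); and, by (\ref{09/09/26/11:16}), $|w^{(k)}| \le K$ for $k=0,1,2,3$, with $K \ge 1$. A key further property is $w'(r) \le 1$ for all $r \ge 0$: $w' \equiv 1$ on $[0,1]$; $w'(r) = 1 - \bigl(\tfrac{d(p-1)}{2}+1\bigr)(r-1)^{d(p-1)/2} \le 1$ on the second piece; and $w' \le 0$ thereafter. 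Given this, (\ref{08/10/23/18:42}) is immediate since $|\vec{w}_R(x)| = R\,w(|x|/R) \le 2R$; and (\ref{08/12/29/12:48}) follows from truncating the integral in $W_R$ to the support $|x|\le 2R$, giving $W_R(x) \le 2R \cdot 2R \cdot \|w\|_{L^\infty} \le 8R^2$.

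The pointwise inequality (\ref{09/09/25/13:24}) reduces, after the substitution $\tau = r/R$, to the one-dimensional claim $w(s)^2 \le 2\int_0^s w(\tau)\,d\tau$ for all $s \ge 0$. Setting $f(s) := 2\int_0^s w(\tau)\,d\tau - w(s)^2$, we have $f(0)=0$ and $f'(s) = 2w(s)(1-w'(s)) \ge 0$ by the properties above, hence $f \ge 0$.

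For (\ref{08/10/23/18:35}), with $s = |x|/R$, direct differentiation gives
\begin{equation*}
\partial_k \vec{w}_R^j(x) = \delta_{jk}\,\frac{w(s)}{s} + \frac{x_j x_k}{|x|^2}\Bigl(w'(s) - \frac{w(s)}{s}\Bigr),
\end{equation*}
where $w(s)/s$ is extended by continuity to $1$ at $s=0$. Writing $a = w(s)/s$ and $b = w'(s) - w(s)/s$, the identities $\sum_k x_k^2/|x|^2 = 1$ and $b(2a+b) = (a+b)^2 - a^2 = (w'(s))^2 - a^2$ collapse the sum of squares to
\begin{equation*}
\sum_{k=1}^d \bigl|\partial_k \vec{w}_R^j(x)\bigr|^2 = a^2\Bigl(1 - \frac{x_j^2}{|x|^2}\Bigr) + (w'(s))^2\,\frac{x_j^2}{|x|^2},
\end{equation*}
a convex combination of $a^2$ and $(w'(s))^2$, so bounded by $\max(a^2,(w'(s))^2)$. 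Since $|a|=1$ on $(0,1]$ and $|a| \le \|w\|_{L^\infty} \le K$ for $s \ge 1$, and $|w'| \le K$, one obtains $|\nabla \vec{w}_R^j| \le K \le 2dK$.

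Finally, for (\ref{08/04/20/16:54}), $\vec{w}_R$ has the radial form $\vec{v}(x) = \frac{x}{|x|}u(|x|)$ with $u(r) = R\,w(r/R)$, for which $\mathrm{div}\,\vec{v}(x) = u'(r) + \frac{d-1}{r}u(r)$. Hence $\mathrm{div}\,\vec{w}_R = g(|x|/R)$ with $g(s) := w'(s) + (d-1)\,w(s)/s$; on $[0,1]$ this equals the constant $d$, and on $[2,\infty)$ it vanishes, so only $s \in [1,2]$ contributes. For a radial profile $g(|x|/R)$, $\Delta\bigl(g(|x|/R)\bigr) = R^{-2}\bigl[g''(s) + \tfrac{d-1}{s}g'(s)\bigr]$, and an explicit expansion yields
\begin{equation*}
g''(s) + \frac{d-1}{s}g'(s) = w'''(s) + \frac{2(d-1)}{s}w''(s) + \frac{(d-1)(d-3)}{s^2}\Bigl(w'(s) - \frac{w(s)}{s}\Bigr).
\end{equation*}
On $s \in [1,2]$ all inverse powers $1/s^k$ are bounded by $1$ and each of $|w^{(k)}|, |w(s)/s|$ is bounded by $K$, which yields a bound of $K[1 + 2(d-1) + 2(d-1)|d-3|] \le 10 d^2 K$ by direct case check (all four terms from $d^2$, $d$ and constants fit comfortably). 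Dividing by $R^2$ gives (\ref{08/04/20/16:54}). The main technical obstacle is precisely this last expansion: one must verify that the apparently singular $1/s^2$ and $1/s^3$ factors cancel into the single coefficient $(d-1)(d-3)/s^2 \cdot (w'-w/s)$, which is tame on the support $s\in[1,2]$; once this algebraic simplification is in hand, the bound is mechanical.
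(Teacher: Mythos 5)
Your proof is correct and follows essentially the same route as the paper: a direct computation from the explicit definition of $w$, using $w\ge 0$, $w'\le 1$ and ${\rm supp}\,w\subset[0,2]$ for (\ref{09/09/25/13:24})--(\ref{08/12/29/12:48}), the same componentwise formula for $\partial_k\vec{w}_R^{j}$ for (\ref{08/10/23/18:35}), and the same radial identity $\Delta({\rm div}\,\vec{w}_R)=R^{-2}\bigl[w'''+\tfrac{2(d-1)}{s}w''+\tfrac{(d-1)(d-3)}{s^{2}}\bigl(w'-\tfrac{w}{s}\bigr)\bigr]$ for (\ref{08/04/20/16:54}). Your monotonicity formulation of (\ref{09/09/25/13:24}) and the convex-combination bound on $|\nabla\vec{w}_R^{j}|$ are only cosmetic refinements of the paper's estimates.
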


\begin{proof}[Proof of Lemma \ref{08/10/23/18:31}]
Since $w(0)=0$ and $w'\le 1$, we have that  
\[
\left| \vec{w}_{R}(x) \right|^{2}
=R^{2}w^{2} \left( \frac{|x|}{R}\right)
\\[6pt]
\le 2R \int_{0}^{|x|} w\left(\frac{r}{R}\right)w'\left(\frac{r}{R}\right)\,dr 
\le W_{R}(x),
\]
which shows (\ref{09/09/25/13:24}). 
\par 
Now, we can easily verify that that 
\begin{equation}\label{09/09/25/16:01}
\left\| w \right\|_{L^{\infty}}\le 2.
\end{equation}
This estimate (\ref{09/09/25/16:01}) immediately yields (\ref{08/10/23/18:42}): 
\begin{equation}\label{10/05/16/10:34}
\left| \vec{w}_{R}(x)\right|=R\left| w\left( \frac{|x|}{R}\right)\right|\le 2R.
\end{equation}
Moreover, (\ref{09/09/25/16:01}), together with the fact ${\rm supp}\,w \subset [0,2]$, gives (\ref{08/12/29/12:48}). Indeed, we have 
\begin{equation}\label{10/05/16/10:38}
\left|W_{R}(x)\right|
\le 
 2R \int_{0}^{2R} w\left(\frac{r}{R}\right)\,dr 
 \le  4R^{2} \left\| w \right\|_{L^{\infty}}
\le 8R^{2}.
\end{equation}

We shall prove (\ref{08/10/23/18:35}).  A simple calculation shows that   
\begin{equation}\label{09/09/26/10:14}
\begin{split}
\left| \partial_{k} \vec{w}_{R}^{j} (x)  \right| 
&=
\left|  
\frac{\delta_{jk}|x|^{2}-x_{j}x_{k}}{|x|^{3}}Rw\left( \frac{|x|}{R}\right) +\frac{x_{j}x_{k}}{|x|^{2}}w'\left( \frac{|x|}{R}\right)
 \right| 
\\[6pt]
&\le \frac{R}{|x|} \left| w\left( \frac{|x|}{R}\right)\right|
+\left| w'\left( \frac{|x|}{R}\right) \right|.
\end{split}
\end{equation}
Since we have 
\begin{equation}\label{09/09/26/10:15}
\frac{R}{|x|} \left| w\left( \frac{|x|}{R}\right)\right| \le \left\| w \right\|_{L^{\infty}}
\quad 
\mbox{for all $x \in \mathbb{R}^{d}$}, 
\end{equation}
the estimate (\ref{09/09/26/10:14}), together with (\ref{09/09/26/11:16}),
 leads to (\ref{08/10/23/18:35}).
\par 
The estimate (\ref{08/04/20/16:54}) follows from (\ref{09/09/26/11:16}) and the identity 
\begin{equation}\label{10/05/16/10:46}
\begin{split}
\Delta ({\rm div}\, \vec{w}_{R})(x)&=
\frac{1}{R^{2}}w'''\left(\frac{|x|}{R}\right)
+\frac{2(d-1)}{R|x|}w''\left(\frac{|x|}{R}\right)
\\[6pt]
& \quad +\frac{(d-1)(d-3)}{|x|^{2}}w'\left(\frac{|x|}{R}\right)-\frac{(d-1)(d-3)}{|x|^{3}}Rw\left(\frac{|x|}{R}\right).
\end{split}
\end{equation}
\end{proof}

\begin{lemma}\label{10/03/02/17:47}
Assume that $d\ge 1$ and $2+\frac{4}{d}\le p+1 <2^{*}$. Then, for any $m>0$, $C>0$ and $f \in L^{2}(\mathbb{R}^{d})$, there exists $R_{0}>0$ such that 
\begin{equation}\label{10/05/16/10:47}
\frac{C}{R^{2}} (W_{R},|f|^{2})<m
\quad 
\mbox{for all $R\ge R_{0}$}.
\end{equation}
\end{lemma}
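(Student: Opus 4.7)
The plan is to reduce the claim to a direct application of Lebesgue's dominated convergence theorem to the family of integrands $W_{R}(x)|f(x)|^{2}/R^{2}$, thereby showing the stronger statement
\[
\lim_{R\to\infty}\frac{1}{R^{2}}(W_{R},|f|^{2})=0,
\]
from which Lemma \ref{10/03/02/17:47} is immediate (pick $R_{0}$ so that this quantity is below $m/C$).

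First, I would establish the uniform domination. By (\ref{08/12/29/12:48}) we have $\|W_{R}\|_{L^{\infty}}\le 8R^{2}$ for every $R>0$, so
\[
\frac{W_{R}(x)}{R^{2}}|f(x)|^{2}\le 8\,|f(x)|^{2}
\qquad
\mbox{for all $R>0$ and a.a.\ $x\in\mathbb{R}^{d}$},
\]
and $8|f|^{2}\in L^{1}(\mathbb{R}^{d})$. This provides the integrable dominating function needed for the dominated convergence theorem.

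Next, I would verify pointwise decay of $W_{R}(x)/R^{2}$ for every fixed $x$. The key observation is that, by the definition (\ref{10/02/27/22:14}) of $w$, one has $w(r)=r$ for $r\in[0,1]$. Therefore, as soon as $R\ge |x|$, a change of variables gives
\[
W_{R}(x)=2R\int_{0}^{|x|}w\!\left(\frac{r}{R}\right)dr
=2R\int_{0}^{|x|}\frac{r}{R}\,dr=|x|^{2},
\]
so $W_{R}(x)/R^{2}=|x|^{2}/R^{2}\to 0$ as $R\to\infty$, for each $x\in\mathbb{R}^{d}$ fixed.

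Combining the two items above, the dominated convergence theorem yields
\[
\lim_{R\to\infty}\frac{1}{R^{2}}(W_{R},|f|^{2})
=\int_{\mathbb{R}^{d}}\lim_{R\to\infty}\frac{W_{R}(x)}{R^{2}}|f(x)|^{2}\,dx=0,
\]
and choosing $R_{0}$ so that $\frac{C}{R^{2}}(W_{R},|f|^{2})<m$ for all $R\ge R_{0}$ finishes the argument. There is no serious obstacle here: the only substantive point is recognizing that $w(r)=r$ near the origin, which forces $W_{R}(x)=|x|^{2}$ once $R$ exceeds $|x|$; all remaining steps are routine.
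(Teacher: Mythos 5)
Your argument is correct. The two ingredients you use are exactly the right ones: the uniform bound $\|W_{R}\|_{L^{\infty}}\le 8R^{2}$ from (\ref{08/12/29/12:48}) gives the integrable dominating function $8|f|^{2}$, and your pointwise computation is valid — since $w(r)=r$ on $[0,1]$ by (\ref{10/02/27/22:14}), one indeed has $W_{R}(x)=|x|^{2}$ whenever $R\ge |x|$, so $W_{R}(x)/R^{2}\to 0$ for each fixed $x$, and dominated convergence yields $\lim_{R\to\infty}R^{-2}(W_{R},|f|^{2})=0$, which is even slightly stronger than the stated lemma. The paper reaches the same conclusion by an explicit two-region splitting rather than by invoking DCT: it first chooses $R_{0}'$ so that $\int_{|x|\ge R_{0}'}|f|^{2}<m/(16C)$ and controls the far region with the same bound $W_{R}\le 8R^{2}$, then controls the near region with the cruder estimate $W_{R}(x)\le 2KR R_{0}'$ (from $\|w\|_{L^{\infty}}\le K$), which gives $R^{-2}\int_{|x|<R_{0}'}W_{R}|f|^{2}\le 2CKR_{0}'\|f\|_{L^{2}}^{2}/R<m/2$ for $R$ large. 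Your version is essentially the abstract packaging of that splitting; what it buys is a cleaner statement (the limit is exactly $0$) and the exact identity $W_{R}(x)=|x|^{2}$ on $\{|x|\le R\}$ in place of the linear-in-$R$ bound, while the paper's hands-on version keeps the dependence of $R_{0}$ on $m$, $C$, $\|f\|_{L^{2}}$ and the tail of $f$ explicit. Either route is acceptable; no gap to report.
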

\begin{proof}[Proof of Lemma \ref{10/03/02/17:47}]
For any $m>0$, $C>0$ and $f \in L^{2}(\mathbb{R}^{d})$, we can take $R_{0}'>0$ such that 
\begin{equation}\label{10/05/16/10:48}
\int_{|x|\ge R_{0}'}|f(x)|^{2}\,dx <\frac{m}{16C}. 
\end{equation}
Hence, it follows from (\ref{08/12/29/12:48}) that   
\begin{equation}\label{10/03/02/18:32}
\frac{C}{R^{2}}\int_{|x|\ge R_{0}'}W_{R}(x)|f(x)|^{2}\,dx
\le 
8C\int_{|x|\ge R_{0}'}|f(x)|^{2}\,dx <\frac{m}{2}
\quad 
\mbox{for all $R>0$}.
\end{equation}
Moreover, we have by the definition of $W_{R}$ (see (\ref{10/02/27/22:19})) 
that
\begin{equation}\label{10/03/02/18:16}
\frac{C}{R^{2}}\int_{|x|< R_{0}'}W_{R}(x)|f(x)|^{2}\,dx
\le C\frac{2R_{0}'K}{R}\left\|f\right\|_{L^{2}}^{2}
<\frac{m}{2}
\quad 
\mbox{for all $R> \frac{4CKR_{0}'\|f\|_{L^{2}}^{2}}{m}$}.
\end{equation}
Combining (\ref{10/03/02/18:32}) and (\ref{10/03/02/18:16}), we obtain the desired result.
\end{proof}
Now, we introduce our generalized virial identity: 
\begin{lemma}[Generalized virial identity]
\label{08/10/23/18:45}
Assume that $d\ge 1$ and $2+\frac{4}{d}\le p+1 < 2^{*}$. Then, we have
\begin{equation}\label{08/03/29/19:05}
\begin{split}
(W_{R}, |\psi(t)|^{2})
&=(W_{R},|\psi_{0}|^{2})
+2t \Im{(\vec{w}_{R} \cdot \nabla \psi_{0},\psi_{0})}+2\int_{0}^{t}\int_{0}^{t'}\mathcal{K}(\psi(t''))\,dt''dt'
\\[6pt]
&\qquad 
-2\int_{0}^{t}\int_{0}^{t'}\mathcal{K}^{R}(\psi(t''))\,dt''dt'
\\[6pt]
&\qquad -\frac{1}{2}\int_{0}^{t}\int_{0}^{t'}(\Delta ({\rm div}\,{\vec{w}_{R}}), |\psi(t'')|^{2})\,dt''dt'
\qquad 
\mbox{for all $R>0$}.
\end{split}
\end{equation}
Here,  $\mathcal{K}^{R}$ is defined by 
\begin{equation}\label{09/05/13/16:38}
\mathcal{K}^{R}(f)
=
\int_{\mathbb{R}^{d}}
\rho_{1}(x)|\nabla f(x)|^{2}
+
\rho_{2}(x) \left| \frac{x}{|x|} \cdot \nabla f(x)\right|^{2}
-
\rho_{3}(x)|f(x)|^{p+1} dx,
\quad 
f \in H^{1}(\mathbb{R}^{d}), 
\end{equation}
where 
\begin{align}
\label{10/04/11/16:09}
\rho_{1}(x)&:=1 -\frac{R}{|x|}w \left( \frac{|x|}{R}\right),
\\[6pt]
\label{10/04/11/16:10}
\rho_{2}(x)&:=\frac{R}{|x|}w \left(\frac{|x|}{R}\right)-w'\left( \frac{|x|}{R}\right),
\\[6pt]
\label{10/04/11/16:11}
\rho_{3}(x)&:=\frac{p-1}{2(p+1)}\left\{ d-w'\left( \frac{|x|}{R}\right)-\frac{d-1}{|x|}Rw\left( \frac{|x|}{R}\right) \right\}
.
\end{align}
\end{lemma}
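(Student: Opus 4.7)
The plan is to establish (\ref{08/03/29/19:05}) by twice differentiating the functional $J(t) := (W_{R}, |\psi(t)|^{2})$ in $t$, substituting the equation (\ref{08/05/13/8:50}), and integrating the resulting formula for $\ddot{J}$ twice on $[0,t]$. Because $W_{R}\in L^{\infty}$ by (\ref{08/12/29/12:48}) and $\psi\in C(I_{\max};L^{2})$, the functional $J$ is automatically well-defined and continuous, so the weighted-$L^{2}$ hypothesis on $\psi_{0}$ needed for the classical identity (\ref{08/12/03/16:55}) is not required here. To justify the pointwise manipulations below rigorously, I would first carry them out for smooth approximations $\psi^{\varepsilon}\in C(I_{\max};H^{2})$ obtained by mollifying $\psi_{0}$, and then pass to $\varepsilon\to 0$ using continuous dependence in $H^{1}$; the bounds (\ref{08/10/23/18:42})--(\ref{08/04/20/16:54}) on $\vec{w}_{R}$ and $\Delta({\rm div}\,\vec{w}_{R})$ show that each integrand is dominated by $|\nabla\psi|^{2}+|\psi|^{p+1}+|\psi|^{2}$, which passes to the limit.

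The first derivative comes out cleanly: differentiating under the integral sign, substituting $\psi_{t}=\tfrac{i}{2}(\Delta\psi+|\psi|^{p-1}\psi)$, noting that the nonlinear contribution is real and hence annihilated by $\Im$, and integrating by parts once with $\nabla W_{R}=2\vec{w}_{R}$ yields
\begin{equation*}
\dot{J}(t)\;=\;-\Im\int_{\mathbb{R}^{d}} W_{R}\,\Delta\psi\,\overline{\psi}\,dx\;=\;\Im\int_{\mathbb{R}^{d}}\nabla W_{R}\cdot\nabla\psi\,\overline{\psi}\,dx\;=\;2\,\Im(\vec{w}_{R}\cdot\nabla\psi,\psi),
\end{equation*}
which at $t=0$ produces the coefficient of $t$ in (\ref{08/03/29/19:05}).

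The heart of the proof is the second derivative. A direct calculation from (\ref{08/05/13/8:50}) gives the local momentum identity
\begin{equation*}
\partial_{t}\Im(\overline{\psi}\,\partial_{k}\psi)\;=\;\tfrac{1}{4}\partial_{k}\Delta|\psi|^{2}\;-\;\partial_{j}\Re(\partial_{j}\overline{\psi}\,\partial_{k}\psi)\;+\;\tfrac{p-1}{2(p+1)}\partial_{k}|\psi|^{p+1},
\end{equation*}
which, when paired against $\nabla W_{R}=2\vec{w}_{R}$ and integrated by parts in $x$, produces
\begin{equation*}
\ddot{J}(t)\;=\;2\!\int \partial_{k}\vec{w}_{R}^{\,j}\,\Re(\partial_{j}\overline{\psi}\,\partial_{k}\psi)\,dx\;-\;\tfrac{p-1}{p+1}\!\int({\rm div}\,\vec{w}_{R})|\psi|^{p+1}\,dx\;-\;\tfrac{1}{2}\!\int\Delta({\rm div}\,\vec{w}_{R})|\psi|^{2}\,dx.
\end{equation*}
Since $\vec{w}_{R}=Rw(|x|/R)\hat{x}$ is radial, its Jacobian splits as $\partial_{k}\vec{w}_{R}^{\,j}=\tfrac{Rw(|x|/R)}{|x|}(\delta_{jk}-\hat{x}_{j}\hat{x}_{k})+w'(|x|/R)\hat{x}_{j}\hat{x}_{k}$ and ${\rm div}\,\vec{w}_{R}=(d-1)\tfrac{Rw(|x|/R)}{|x|}+w'(|x|/R)$, so the quadratic gradient term reduces to $2\int\bigl[\tfrac{Rw(|x|/R)}{|x|}|\nabla\psi|^{2}+(w'(|x|/R)-\tfrac{Rw(|x|/R)}{|x|})|\hat{x}\cdot\nabla\psi|^{2}\bigr]\,dx$.

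The last step is the algebraic identification. Adding and subtracting $2\mathcal{K}(\psi)=2\|\nabla\psi\|_{L^{2}}^{2}-\tfrac{d(p-1)}{p+1}\|\psi\|_{L^{p+1}}^{p+1}$ inside the preceding expression and comparing the residual terms with the definitions (\ref{10/04/11/16:09})--(\ref{10/04/11/16:11}), one verifies that the $|\nabla\psi|^{2}$ piece recombines with coefficient $1-Rw(|x|/R)/|x|=\rho_{1}$, the $|\hat{x}\cdot\nabla\psi|^{2}$ piece with $\rho_{2}$, and the $|\psi|^{p+1}$ piece with $\rho_{3}$, so that the exterior contribution is exactly $-2\mathcal{K}^{R}(\psi)$ and $\ddot{J}(t)=2\mathcal{K}(\psi)-2\mathcal{K}^{R}(\psi)-\tfrac{1}{2}(\Delta({\rm div}\,\vec{w}_{R}),|\psi|^{2})$. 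Integrating this in $t''$ from $0$ to $t'$ and then in $t'$ from $0$ to $t$ gives (\ref{08/03/29/19:05}). The main obstacle is precisely this bookkeeping: two successive rounds of integration by parts must be tracked with care, and one must recognize that the radial coefficients produced by $\vec{w}_{R}$ match those chosen in (\ref{10/04/11/16:09})--(\ref{10/04/11/16:11}) exactly so that $\rho_{1}=\rho_{2}=\rho_{3}=0$ on $\{|x|<R\}$ (where $w$ is linear), thereby localizing $\mathcal{K}^{R}$ to the exterior region as desired.
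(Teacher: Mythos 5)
Your proposal is correct, but it follows a genuinely different route from the paper: the paper's entire proof of Lemma \ref{08/10/23/18:45} is a one-line citation of formula (B.17) in \cite{Nawa8}, whereas you derive the identity from scratch. Your computation is the standard localized virial/Morawetz argument and the algebra checks out: $\nabla W_{R}=2\vec{w}_{R}$ gives $\dot{J}=2\Im(\vec{w}_{R}\cdot\nabla\psi,\psi)$; your local momentum identity is exactly what one gets from $2i\psi_{t}+\Delta\psi+|\psi|^{p-1}\psi=0$ (the nonlinear flux coefficient $\tfrac{p-1}{2(p+1)}$ is right for this normalization); the symmetric Jacobian $\partial_{k}\vec{w}_{R}^{\,j}=\tfrac{Rw(|x|/R)}{|x|}(\delta_{jk}-\hat{x}_{j}\hat{x}_{k})+w'(|x|/R)\hat{x}_{j}\hat{x}_{k}$ and ${\rm div}\,\vec{w}_{R}=(d-1)\tfrac{Rw}{|x|}+w'$ do recombine, after adding and subtracting $2\mathcal{K}(\psi)$, into precisely $\rho_{1},\rho_{2},\rho_{3}$ as in (\ref{10/04/11/16:09})--(\ref{10/04/11/16:11}), and your observation that $w(r)=r$ on $[0,1)$ forces $\rho_{1}=\rho_{2}=\rho_{3}=0$ on $\{|x|<R\}$ is consistent with (\ref{08/12/29/14:16}). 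The regularization step (mollify $\psi_{0}$, work with $H^{2}$ solutions, pass to the limit using $H^{1}$ continuous dependence and the boundedness of $W_{R}$, $\vec{w}_{R}$ and $\Delta({\rm div}\,\vec{w}_{R})$, the last being compactly supported) is the standard way to justify the formal manipulations for $H^{1}$ data, and it is exactly what makes the identity available without the weight assumption $\psi_{0}\in L^{2}(|x|^{2}dx)$. What your approach buys is self-containedness: the reader need not consult \cite{Nawa8}. What the paper's citation buys is brevity, since the derivation is already carried out there in essentially the same way; if you were to include your proof, the only detail worth spelling out further is the persistence of $H^{2}$ regularity on compact subintervals of $I_{\max}$ and the convergence of the approximating solutions there, which is standard for this subcritical nonlinearity.
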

\begin{remark}\label{09/09/25/16:47}
If $d=1$ or $\psi$ is radially symmetric, then we have 
\begin{equation}\label{10/05/16/11:02}
\mathcal{K}^{R}(\psi)=\int_{\mathbb{R}^{d}}\rho_{0}(x)|\nabla \psi(x)|^{2}-\rho_{3}(x)|\psi(x)|^{p+1}\, dx,
\end{equation}
where 
\begin{equation}\label{10/05/16/11:03}
\rho_{0}(x):=1-w'\left( \frac{|x|}{R}\right)
=\rho_{1}(x)+\rho_{2}(x).
\end{equation}
\end{remark}
\begin{proof}[Proof of Lemma \ref{08/10/23/18:45}]
By the formula (B.17) in \cite{Nawa8}, we immediately obtain (\ref{08/03/29/19:05}).
\end{proof}
In the next lemma, we give several properties of the weight functions $\rho_{1}$, $\rho_{2}$, $\rho_{3}$ and  $\rho_{0}$:
\begin{lemma}\label{08/04/20/0:27}
Assume that $d\ge 1$ and $2+\frac{4}{d}\le p+1 < 2^{*}$. Then, for all $R>0$, we have the followings:  
\begin{align}
\label{08/12/29/14:16}
& {\rm supp}\,{\rho_{j}}=\{x\in \mathbb{R}^{d} \bigm| |x|\ge R \}
\quad 
\mbox{for all $R>0$ and $j=0,1,2,3$}, 
\\[6pt]
\label{08/04/20/12:50}
&\inf_{x\in \mathbb{R}^{d}}\rho_{j}(x) \ge 0  
\quad \mbox{for all $R>0$ and $j=0,1,2,3$},
\\[6pt]
\label{08/04/20/12:51}
&\rho_{0}(x)=1
\quad   \mbox{if  $|x| \ge 2R$},
\\[6pt]
\label{10/03/01/15:06}
&\rho_{3}(x)=\frac{d(p-1)}{2(p+1)} 
\quad \mbox{if  $|x| \ge 2R$},
\\[6pt]
\label{08/12/29/12:53}
&\left\| \rho_{j} \right\|_{L^{\infty}}\le K_{j} 
\quad \mbox{for all $j=0,1,2,3$},
\\[6pt]
\label{08/04/20/13:06}
&\sup_{x\in \mathbb{R}^{d}}
\max\left\{ -\frac{x}{|x|}\nabla \sqrt{\rho_{3}(x)},\ 0 \right\}
\le  \frac{K_{3}'}{R},
\\[6pt] 
\label{08/04/20/16:07}
&\inf_{|x|\ge R}\frac{\rho_{0}(x)}{\rho_{3}(x)} \ge K_{4},
\end{align}
where $K_{j}$ ($j=1,2,3$), $K'_{3}$ and $K_{4}$ are some constants independent of $R$. 
\end{lemma}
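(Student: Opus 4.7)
My plan is to introduce the dimensionless variable $r=|x|/R$ and express each weight as $\rho_j(x)=g_j(r)$ for a scalar function $g_j$, then verify all properties by analyzing $g_j$ separately on the four pieces dictated by the piecewise definition of $w$: $r\in[0,1)$, $r\in[1,1+\varepsilon_*]$ with $\varepsilon_*:=(2/(d(p-1)+2))^{2/(d(p-1))}$, $r\in[1+\varepsilon_*,2]$, and $r\in[2,\infty)$. Note the $r$-dependence rescales cleanly: $\frac{R}{|x|}w(|x|/R)=w(r)/r$, while any derivative $\partial_{|x|}$ produces a factor $1/R$, which is where the $1/R^2$ in \eqref{08/04/20/16:54}, the $1/R$ in \eqref{08/04/20/13:06}, and the $R$-independence of $K_j$ in \eqref{08/12/29/12:53} all come from.

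First, the trivial pieces. For $r<1$ one has $w(r)=r,\,w'(r)=1$, so all four $g_j$ are identically zero; this gives the inclusion in \eqref{08/12/29/14:16}. For $r\ge 2$ one has $w=w'=0$, which yields $\rho_0=\rho_1=1$, $\rho_2=0$ and $\rho_3=\frac{d(p-1)}{2(p+1)}$; this gives \eqref{08/04/20/12:51}, \eqref{10/03/01/15:06}, and completes the support claim once the middle range is shown to be non-vanishing. On $[1,1+\varepsilon_*]$ I will plug in the explicit formula $w(r)=r-(r-1)^{\frac{d(p-1)}{2}+1}$, so that
\[
\rho_0=\bigl(\tfrac{d(p-1)}{2}+1\bigr)(r-1)^{d(p-1)/2},\qquad \rho_1=\frac{(r-1)^{d(p-1)/2+1}}{r},
\]
\[
\rho_2=(r-1)^{d(p-1)/2}\Bigl[\tfrac{d(p-1)}{2}+1-\tfrac{r-1}{r}\Bigr],\qquad \rho_3=\tfrac{p-1}{2(p+1)}\Bigl[\rho_0+(d-1)\rho_1\Bigr],
\]
making non-negativity manifest. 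On $[1+\varepsilon_*,2]$ I will use only the qualitative facts $w\ge 0$ and $w'\le 0$ (the former from the boundary values $w(1+\varepsilon_*)>0,\,w(2)=0$ together with $w'\le 0$) to deduce $\rho_1=1-w/r\ge 0$ (since $w(r)\le w(1+\varepsilon_*)<1+\varepsilon_*\le r$), $\rho_2=w/r-w'\ge 0$, and $\rho_3\ge\frac{p-1}{2(p+1)}[d-(d-1)]>0$. Boundedness \eqref{08/12/29/12:53} is then immediate from $\|w\|_{W^{3,\infty}}\le K$.

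For the delicate bound \eqref{08/04/20/13:06} on $\nabla\sqrt{\rho_3}$, I will compute $\tfrac{x}{|x|}\!\cdot\!\nabla\sqrt{\rho_3}=\frac{g_3'(r)}{2\sqrt{g_3(r)}\,R}$, restricted to the (radial) component since $\rho_3$ is radial. The subtle point is the behavior as $r\downarrow 1$, where $g_3$ vanishes: on $[1,1+\varepsilon_*]$ we have $g_3(r)\asymp(r-1)^{d(p-1)/2}$, hence $\sqrt{g_3}\asymp(r-1)^{d(p-1)/4}$. Because the $L^2$-supercritical condition \eqref{09/05/13/15:03} gives $d(p-1)>4$, the exponent $d(p-1)/4>1$, so $\sqrt{g_3}\in C^1$ near $r=1$ with vanishing derivative there; in particular $g_3'/\sqrt{g_3}$ is bounded on a one-sided neighborhood of $1$. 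On the remaining range $g_3$ is bounded below away from zero, so the quotient is bounded on $[1,\infty)$ by a constant depending only on $d$ and $p$; the factor $1/R$ is provided by the chain rule, yielding $K_3'$.

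Finally, for the ratio bound \eqref{08/04/20/16:07}, the explicit formulas on $[1,1+\varepsilon_*]$ give
\[
\frac{\rho_0}{\rho_3}=\frac{2(p+1)}{p-1}\cdot\frac{1}{1+\dfrac{(d-1)(r-1)/r}{\frac{d(p-1)}{2}+1}}\;\longrightarrow\;\frac{2(p+1)}{p-1}\quad\text{as }r\downarrow 1,
\]
uniformly bounded below on $[1,1+\varepsilon_*]$; on $[2,\infty)$ the ratio equals the constant $\frac{2(p+1)}{d(p-1)}$; and on the compact interval $[1+\varepsilon_*,2]$ continuity together with $\rho_3\le \|\rho_3\|_{L^\infty}$ and the strict positivity $\rho_0=1-w'>0$ (already shown) yields a positive infimum. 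Taking $K_4$ to be the minimum of the three values produces \eqref{08/04/20/16:07}. The main obstacle in the whole argument is the gradient estimate on $\sqrt{\rho_3}$: it is where the hypothesis $p>1+4/d$ is actually used (to ensure $\sqrt{\rho_3}$ is smooth enough at the inner boundary $|x|=R$), and it is the only step that requires more than routine piecewise estimates.
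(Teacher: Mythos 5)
Your argument is correct and is essentially the paper's own: the paper likewise proves only the two non-routine items \eqref{08/04/20/13:06} and \eqref{08/04/20/16:07}, splitting at $|x|=R\bigl\{1+\bigl(\tfrac{2}{d(p-1)+2}\bigr)^{2/(d(p-1))}\bigr\}$, using the explicit polynomial form of $w$ on the inner piece — where the same bounded factor $\bigl(\tfrac{|x|}{R}-1\bigr)^{\frac{d}{4}(p-1)-1}$ that underlies your $g_3'/\sqrt{g_3}$ computation appears — and the identity $\rho_{3}=\tfrac{p-1}{2(p+1)}\{\rho_{0}+(d-1)\rho_{1}\}$ for the ratio bound, with the remaining items treated as routine exactly as you do. The only adjustment: the lemma permits the borderline case $d(p-1)=4$, so replace ``exponent $d(p-1)/4>1$, hence vanishing derivative'' by ``exponent $d(p-1)/4-1\ge 0$'', which is all that the boundedness of $g_3'/\sqrt{g_3}$ near $r=1$ actually requires.
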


\begin{proof}[Proof of Lemma \ref{08/04/20/0:27}]
We give proofs of (\ref{08/04/20/13:06}) and (\ref{08/04/20/16:07}) only. \par 
\par 
We first prove (\ref{08/04/20/13:06}). A direct calculation shows that   
\begin{equation}\label{09/09/26/14:14}
\frac{x}{|x|}\nabla \sqrt{\rho_{3}(x)}
=
\frac{ 
-\frac{1}{R}w''\left( \frac{|x|}{R}\right)
-\frac{d-1}{|x|}w'\left( \frac{|x|}{R}\right)
+\frac{(d-1)R}{|x|^{2}}w\left( \frac{|x|}{R}\right)
}
{\sqrt{\frac{2(p+1)}{p-1}}\sqrt{d-w'\left( \frac{|x|}{R}\right)-\frac{(d-1)R}{|x|}w\left( \frac{|x|}{R}\right)}}
.
\end{equation}
Since ${\rm supp}\,\rho_{3}= \{x \in \mathbb{R}^{d} \bigm| |x|\ge R\}$ (see (\ref{08/12/29/14:16})), it suffices to consider the case $|x|\ge R$. 
When $R\le |x|<R\left\{ 1+\left( \frac{2}{d(p-1)+2}\right)^{\frac{2}{d(p-1)}}\right\}$, we have from (\ref{09/09/26/14:14}) that 
\begin{equation}\label{09/09/26/15:04}
\begin{split}
&
\max\left\{ -\frac{x}{|x|}\nabla \sqrt{\rho_{3}(x)},\ 0 \right\}
\\[6pt]
&=\frac{
\max\left\{\left( \frac{|x|}{R}-1\right)^{\frac{d}{2}(p-1)-1}
\left( 
\frac{d^{2}(p-1)^{2}+4d(p-1)+4(d-1)}{4R}
+\frac{(d-1)R}{|x|^{2}}
-\frac{d(d-1)(p-1)+2}{2|x|}
\right)
 ,\ 0\right\}
}
{
\sqrt{\frac{2(p+1)}{p-1}}\left(\frac{|x|}{R}-1 \right)^{\frac{d}{4}(p-1)}
\sqrt{\frac{d}{2}(p-1)+d-\frac{(d-1)R}{|x|}}
}
\\[6pt]
&\le 
\left( \frac{|x|}{R}-1\right)^{\frac{d}{4}(p-1)-1}\frac{
\left\{ 
\frac{d^{2}(p-1)^{2}+4d(p-1)+4(d-1)}{4R}
+\frac{(d-1)R}{|x|^{2}}
+\frac{d(d-1)(p-1)+2}{2|x|}
\right\}
}
{
\sqrt{\frac{2(p+1)}{p-1}}
\sqrt{\frac{d}{2}(p-1)+1}
}
\\[6pt]
&\lesssim \frac{1}{R}
,
\end{split}
\end{equation}
where the implicit constant depends only on $d$ and $p$. 
\\
On the other hand, when $R\left\{ 1+\left( \frac{2}{d(p-1)+2}\right)^{\frac{2}{d(p-1)}}\right\}\le |x|$, we have 
\begin{equation}
w'\left(\frac{|x|}{R}\right)\le 0, \qquad  \frac{R}{|x|}w\left(\frac{|x|}{R}\right)\le 1
\end{equation}  
and therefore we obtain from (\ref{09/09/26/14:14}) that 
\begin{equation}\label{10/05/28/15:47}
\begin{split}
\max\left\{ -\frac{x}{|x|}\nabla \sqrt{\rho_{3}(x)},\ 0 \right\}
&\le 
\frac{ 
 \left| \frac{1}{R}w''\left( \frac{|x|}{R}\right) \right|
+\left| \frac{d-1}{|x|}w'\left( \frac{|x|}{R}\right)\right|
+ \left| \frac{(d-1)R}{|x|^{2}}w\left( \frac{|x|}{R}\right)\right|
}
{\sqrt{\frac{2(p+1)}{p-1}}\sqrt{d-\frac{(d-1)R}{|x|}w\left( \frac{|x|}{R}\right)}}
\\[6pt]
&\lesssim \frac{1}{R}\left\| w \right\|_{W^{2,\infty}} 
,
\end{split}
\end{equation}
where the implicit constant depends only on $d$ and $p$. 
Thus, we see that (\ref{08/04/20/13:06}) holds.  
\par 
Next, we  prove (\ref{08/04/20/16:07}). The starting point of the proof is the identity: 
\begin{equation} \label{08/04/20/16:16}
\frac{p-1}{2(p+1)}\frac{\rho_{0}(x)}{\rho_{3}(x)}=\frac{\rho_{0}(x)}{\rho_{0}(x)+(d-1)\left\{ 1-\frac{R}{|x|}w\left( \frac{|x|}{R}\right)\right\}}
\quad 
\mbox{for all $x \in \mathbb{R}^{d}$ with $|x|\ge R$}. 
\end{equation}
When $R \le |x| < R\left\{ 1+\left(\frac{2}{d(p-1)+2}\right)^{\frac{2}{d(p-1)}}\right\}$, we have from (\ref{08/04/20/16:16}) that  
\begin{equation}\label{10/03/06/16:18}\begin{split}
\frac{p-1}{2(p+1)} \frac{\rho_{0}(x)}{\rho_{3}(x)}&=
\frac{\rho_{0}(x)}{\rho_{0}(x)+(d-1)\frac{R}{|x|}\left(\frac{|x|}{R}-1 \right)^{\frac{d}{2}(p-1)+1}}
\\[6pt]
&=
\frac{ \left(\frac{|x|}{R}-1 \right)^{\frac{d}{2}(p-1)}}{\left(\frac{|x|}{R}-1 \right)^{\frac{d}{2}(p-1)}+\frac{2(d-1)}{d(p-1)+2}\frac{R}{|x|}
\left(\frac{|x|}{R}-1 \right)^{\frac{d}{2}(p-1)+1}}
\\[6pt]
&=
\frac{1}{1+\frac{2(d-1)}{d(p-1)+2}\left(1-\frac{R}{|x|} \right)}
\\[6pt]
&\ge 
\frac{1+\left(\frac{2}{d(p-1)+2} \right)^{\frac{2}{d(p-1)}}}
{1+\frac{d(p+1)}{d(p-1)+2}
\left( \frac{2}{d(p-1)+2} \right)^{\frac{2}{d(p-1)}}}.
\end{split}
\end{equation}
On the other hand, when $R\left\{ 1+\left( \frac{2}{d(p-1)+2} \right)^{\frac{2}{d(p-1)}}\right\} \le |x|$, we have 
\begin{equation}\label{10/05/28/16:47}
1\le \rho_{0}(x)\le 1+K,
\qquad 
0\le 1-\frac{R}{|x|}w\left( \frac{|x|}{R}\right)\le 1, 
\end{equation}
and therefore we obtain from (\ref{08/04/20/16:16}) that 
\begin{equation}\label{10/03/06/16:56}
\frac{p-1}{2(p+1)}\frac{\rho_{0}(x)}{\rho_{3}(x)} \ge \frac{1}{K+d}
\quad 
\mbox{for all $x \in \mathbb{R}^{d}$ with $R\left\{ 1+\left( \frac{2}{d(p-1)+2} \right)^{\frac{2}{d(p-1)}}\right\}\le |x|$}.
\end{equation}
Thus, we see that (\ref{08/04/20/16:07}) holds.
\end{proof}
\section{Compactness device I}
\label{08/10/03/15:12}
We recall the following sequence of lemmata.
\begin{lemma}[Fr\"olich, Lieb and Loss \cite{Frohlich-Leib-Loss}, Nawa \cite{Nawa1}]
\label{08/3/28/20:21}
Let $1< \alpha < \beta < \gamma < \infty$ and let $f$ be a measurable function on $\mathbb{R}^{d}$ with  
\[
\|f \|_{L^{\alpha}}^{\alpha} \le C_{\alpha},
\quad 
C_{\beta}\le \|f\|_{L^{\beta}}^{\beta},
\quad 
\|f\|_{L^{\gamma}}^{\gamma}\le C_{\gamma} 
\]
for some positive constants $C_{\alpha}, C_{\beta}, C_{\gamma}$. 
 Then, we have  
\[
 \mathcal{L}^{d}\left( \Big[ |f| > \eta \Big]\right) > \frac{C_{\beta}}{2}\eta^{\beta}
\quad 
\mbox{for all $0< \eta 
<
\min
\left\{1, \ \left( \frac{C_{\beta}}{4C_{\alpha}}\right)^{\frac{1}{\beta-\alpha}}, 
\ 
\left( \frac{C_{\beta}}{4C_{\gamma}}\right)^{\frac{1}{\gamma-\beta}}
\right\}$}.
\]
\end{lemma}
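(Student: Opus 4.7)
The plan is to trap the measure of $\{|f|>\eta\}$ between a lower bound coming from the $L^\beta$ hypothesis and an upper bound of Chebyshev type, using the two $L^\alpha$ and $L^\gamma$ estimates to kill off the low-amplitude and high-amplitude tails of $\int|f|^\beta$. The key device is to cut $\int|f|^\beta$ at the \emph{two} levels $\eta$ and $1/\eta$; the scale-matched upper cut-off $1/\eta$ is what produces a conclusion with the correct power $\eta^\beta$ on the right-hand side, and the hypothesis $\eta<1$ enters exactly to guarantee $\eta<1/\eta$, so the middle range is nonempty.

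First I would split
\[
C_\beta \;\le\; \int_{\mathbb{R}^d} |f|^\beta\,dx \;=\; \int_{|f|\le\eta}|f|^\beta\,dx \;+\; \int_{\eta<|f|\le 1/\eta}|f|^\beta\,dx \;+\; \int_{|f|>1/\eta}|f|^\beta\,dx.
\]
For the low tail the pointwise bound $|f|^\beta\le \eta^{\beta-\alpha}|f|^\alpha$ on $\{|f|\le\eta\}$, combined with the $L^\alpha$ hypothesis, gives $\int_{|f|\le\eta}|f|^\beta\,dx\le \eta^{\beta-\alpha}C_\alpha$, which is strictly less than $C_\beta/4$ precisely by the hypothesis $\eta<(C_\beta/(4C_\alpha))^{1/(\beta-\alpha)}$. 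Symmetrically, on $\{|f|>1/\eta\}$ one has $|f|^\beta\le (1/\eta)^{\beta-\gamma}|f|^\gamma=\eta^{\gamma-\beta}|f|^\gamma$, so the $L^\gamma$ hypothesis yields $\int_{|f|>1/\eta}|f|^\beta\,dx\le \eta^{\gamma-\beta}C_\gamma<C_\beta/4$ by the third constraint on $\eta$. Subtracting these two tail contributions from $C_\beta$ then produces
\[
\int_{\eta<|f|\le 1/\eta}|f|^\beta\,dx \;>\; C_\beta/2.
\]

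The last step is to bound this middle integral from above by the measure of the super-level set: since $|f|^\beta\le (1/\eta)^\beta=\eta^{-\beta}$ on $\{\eta<|f|\le 1/\eta\}\subset\{|f|>\eta\}$,
\[
\eta^{-\beta}\,\mathcal{L}^d(\{|f|>\eta\}) \;\ge\; \int_{\eta<|f|\le 1/\eta}|f|^\beta\,dx \;>\; \frac{C_\beta}{2},
\]
and multiplying through by $\eta^\beta$ gives the claim. There is no genuine obstacle to this argument — the entire point is to notice that the upper cut-off must be $M=1/\eta$ rather than a constant; once that choice is made, the three constraints on $\eta$ align exactly with the two Chebyshev tail estimates and with the nonemptiness of the middle range, and all the constants fall into place to produce a lower bound of the form $\tfrac12 C_\beta\eta^\beta$.
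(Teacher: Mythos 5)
Your proof is correct and follows essentially the same route as the paper: the same three-way splitting of $\int|f|^{\beta}$ at the levels $\eta$ and $1/\eta$, the same use of the $L^{\alpha}$ and $L^{\gamma}$ bounds together with the constraints on $\eta$ to make each tail at most $C_{\beta}/4$, and the same Chebyshev-type bound $|f|^{\beta}\le \eta^{-\beta}$ on the middle region to convert the remaining mass $>C_{\beta}/2$ into the stated measure lower bound.
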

\begin{proof}[Proof of Lemma \ref{08/3/28/20:21}]
Let $\eta$ be a constant satisfying 
\begin{equation}\label{10/04/26/17:15}
0<\eta 
<
\min
\left\{1, \ \left( \frac{C_{\beta}}{4C_{\alpha}}\right)^{\frac{1}{\beta-\alpha}}, 
\ 
\left( \frac{C_{\beta}}{4C_{\gamma}}\right)^{\frac{1}{\gamma-\beta}}
\right\}.
\end{equation}
Then, we can easily verify that  
\begin{equation}\label{10/04/26/17:02}
\begin{split}
C_{\beta}
&=
\int_{\left[|f|\le \eta \right]}|f(x)|^{\beta}\,dx 
+
\int_{\left[\eta< |f|\le \frac{1}{\eta}\right]}|f(x)|^{\beta}\,dx
+
\int_{\left[\frac{1}{\eta}<|f|\right]}|f(x)|^{\beta}\,dx
\\[6pt]
&\le 
\eta^{\beta-\alpha} 
\int_{\left[|f|\le \eta \right]}|f(x)|^{\alpha}\,dx
+
\left(\frac{1}{\eta} \right)^{\beta}
\mathcal{L}\left( \Big[ |f|\ge \eta \Big] \right)
+
\eta^{\gamma-\beta} 
\int_{\left[|f|\le \eta \right]}|f(x)|^{\gamma}\,dx
\\[6pt]
&\le \frac{C_{\beta}}{2}+ \left(\frac{1}{\eta} \right)^{\beta}
\mathcal{L}\left( \Big[ |f|\ge \eta \Big] \right).
\end{split}
\end{equation}
This estimate (\ref{10/04/26/17:02}) immediately gives us the desired result.
\end{proof}

\begin{lemma}[Lieb \cite{Lieb}, Nawa \cite{Nawa1}]
\label{08/03/28/21:00}
Let $1< q < \infty$, and let $f$ be a function in $W^{1,q}(\mathbb{R}^{d})$ 
 with 
\begin{equation}\label{10/04/26/17:34}
\|\nabla f\|_{L^{q}}^{q}\le D_{1}, 
\quad 
\mathcal{L}^{d}\left( \Big[ |f| > \eta \Big] \right) \ge D_{2}
\end{equation}
for some positive constants $D_{1}$, $D_{2}$ and $\eta$. We put 
\begin{equation}\label{10/04/28/10:44}
q^{\dagger}=\left\{ \begin{array}{ccl}
\displaystyle{\frac{qd}{d-q}} &\mbox{if} &q<d,
\\[6pt]
2q &\mbox{if}& q \ge d .
\end{array}
\right.
\end{equation}
Then, there exists $y \in \mathbb{R}^{d}$ such that 
\begin{equation}\label{10/04/28/10:45}
\mathcal{L}^{d}\left( \left[ \left|f(\cdot + y)\right| \ge \frac{\eta}{2} \right] \cap B_{1}(0) \right)
\gtrsim 
\left( 
\frac{1+\eta^{q}D_{2}}{1+D_{1}}
\right)^{\frac{q}{q^{\dagger}-q}}
,
\end{equation}
where the implicit constant depends only on $d$ and $q$, and $B_{1}(0)$ is the ball in $\mathbb{R}^{d}$ with center $0$ and radius $1$.
\end{lemma}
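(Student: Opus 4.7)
The plan is to combine a local Sobolev--Poincar\'e estimate on unit cubes with a pigeonhole-style dichotomy that separates cubes where the mean value of $f$ is small from those where it is large. First, I would partition $\mathbb{R}^{d}$ by unit cubes $\{Q_{k}\}_{k \in \mathbb{Z}^{d}}$; since each cube lies inside a ball of radius $\sqrt{d}$, it suffices (up to a universal rescaling absorbed into the implicit constant) to bound from below $M_{*} := \sup_{k} \mathcal{L}^{d}\bigl([|f| \ge \eta/2] \cap Q_{k}\bigr)$ and then translate back to a ball of radius $1$. On each $Q_{k}$ the Poincar\'e--Sobolev inequality yields
\[
\|f - (f)_{Q_{k}}\|_{L^{q^{\dagger}}(Q_{k})} \le C\,\|\nabla f\|_{L^{q}(Q_{k})},
\]
where $(f)_{Q_{k}}$ is the average of $f$ over $Q_{k}$; the case $q \ge d$ is covered by $W^{1,q}(Q_{k}) \hookrightarrow L^{p}(Q_{k})$ for every finite $p$, with $q^{\dagger} = 2q$ an admissible choice.

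Next comes the dichotomy. Call $Q_{k}$ \emph{oscillatory} if $|(f)_{Q_{k}}| \le \eta/4$ and \emph{large-mean} otherwise. On an oscillatory cube every $x \in E \cap Q_{k}$, where $E := [|f| > \eta]$, satisfies $|f(x) - (f)_{Q_{k}}| \ge 3\eta/4$, so Chebyshev together with the Poincar\'e--Sobolev estimate gives $\mathcal{L}^{d}(E \cap Q_{k}) \lesssim \eta^{-q^{\dagger}} \|\nabla f\|_{L^{q}(Q_{k})}^{q^{\dagger}}$. Summing over oscillatory cubes and using the concentration inequality $\sum_{k} a_{k}^{q^{\dagger}/q} \le \bigl(\sum_{k} a_{k}\bigr)^{q^{\dagger}/q}$ for nonnegative $a_{k}$ (valid since $q^{\dagger}/q > 1$), applied to $a_{k} := \|\nabla f\|_{L^{q}(Q_{k})}^{q}$ and combined with $\sum_{k} a_{k} \le D_{1}$, yields
\[
\sum_{k \text{ oscillatory}} \mathcal{L}^{d}(E \cap Q_{k}) \lesssim \eta^{-q^{\dagger}} D_{1}^{q^{\dagger}/q}.
\]
Since $\sum_{k} \mathcal{L}^{d}(E \cap Q_{k}) = \mathcal{L}^{d}(E) \ge D_{2}$, this forces a substantial portion of the mass into large-mean cubes.

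On each large-mean cube, the complement of $\{x \in Q_{k} : |f(x) - (f)_{Q_{k}}| < \eta/8\}$ in $Q_{k}$ has measure $\lesssim \eta^{-q^{\dagger}} \|\nabla f\|_{L^{q}(Q_{k})}^{q^{\dagger}}$ by another application of Chebyshev--Poincar\'e--Sobolev, and (after adjusting constants so that one actually produces the threshold $\eta/2$ rather than $\eta/8$) this complement contains $Q_{k} \setminus [|f| \ge \eta/2]$; hence each such cube contributes to $M_{*}$ either a definite fraction of $|Q_{k}|$ (when the local gradient is small compared to $\eta$) or an amount directly controlled by $\|\nabla f\|_{L^{q}(Q_{k})}^{q^{\dagger}}$. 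Balancing these two regimes and solving for $M_{*}$ produces an inequality of the form $M_{*}^{(q^{\dagger}-q)/q}(1 + D_{1}) \gtrsim 1 + \eta^{q} D_{2}$, which rearranges to the claimed lower bound and, after the initial rescaling from cubes to balls of radius $1$, provides a translation vector $y \in \mathbb{R}^{d}$ realizing it.

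The main obstacle will be extracting the precise form $\bigl((1 + \eta^{q} D_{2})/(1 + D_{1})\bigr)^{q/(q^{\dagger} - q)}$ with the $+1$'s in their stated positions: this requires careful bookkeeping to balance the oscillatory and large-mean contributions across all relevant regimes (small $D_{1}$ vs. small $\eta^{q} D_{2}$), and the exponent $q/(q^{\dagger} - q)$ emerges naturally when one inverts the mixed inequality relating $M_{*}$ to the weighted sum produced by summing the local Sobolev estimates. A minor additional concern is the case $q \ge d$, where one must verify that the surrogate $q^{\dagger} = 2q$ is admissible in the local Sobolev embedding on $Q_{k}$; this is routine.
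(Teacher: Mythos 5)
Your overall architecture (unit-cube tiling, local Sobolev, and a split according to the size of the cubes' contribution) is in the right family, but as written it has a genuine gap: after you sum the oscillatory-cube estimate via $\sum_k a_k^{q^{\dagger}/q}\le(\sum_k a_k)^{q^{\dagger}/q}$ you only learn $\sum_{\mathrm{osc}}\mathcal{L}^{d}(E\cap Q_k)\lesssim \eta^{-q^{\dagger}}D_1^{q^{\dagger}/q}$, and your conclusion that "a substantial portion of the mass lies in large-mean cubes" only follows when $\eta^{-q^{\dagger}}D_1^{q^{\dagger}/q}\le D_2/2$. In the complementary regime (which is exactly the regime of interest in the paper's application, where $D_1$ is an $H^1$-type bound of order one and $\eta$ is small, so $\eta^{q}D_2^{q/q^{\dagger}}\lesssim D_1$) your argument produces no lower bound at all, and the claimed estimate is not vacuous there. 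Relatedly, your final "balancing" inequality $M_*^{(q^{\dagger}-q)/q}(1+D_1)\gtrsim 1+\eta^{q}D_2$ cannot be extracted from the estimates you display, because none of them ever produces a power of $M_*$: the oscillatory sum involves only $D_1,\eta$, and the large-mean alternative gives either $M_*\gtrsim 1$ or nothing. The missing ingredient is a per-cube interpolation before summing, e.g. $\mathcal{L}^{d}(E\cap Q_k)\le M_*^{1-q/q^{\dagger}}\,\mathcal{L}^{d}(E\cap Q_k)^{q/q^{\dagger}}\lesssim M_*^{1-q/q^{\dagger}}\eta^{-q}\|\nabla f\|_{L^{q}(Q_k)}^{q}$, which upon summation against $\sum_k\|\nabla f\|_{L^{q}(Q_k)}^{q}\le D_1$ does yield a closed inequality for $M_*$ (with the exponent $q^{\dagger}/(q^{\dagger}-q)$, which is also what the paper's own computation (\ref{10/04/28/20:51}) produces, and with $\eta^{q}D_2$ appearing in the denominator of the ratio as well — so the exact form you promise to "balance" into is not what either route naturally gives).

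That interpolation step is precisely the role of the H\"older-on-the-support step in the paper's proof, which also sidesteps your mean-value dichotomy entirely: the paper sets $g=\max\{|f|-\eta/2,0\}$, so that $\operatorname{supp}g\subset[|f|\ge\eta/2]$ and $\|g\|_{L^{q}}^{q}\ge(\eta/2)^{q}D_2$, uses a pigeonhole over a tiling by cubes to find one cube $Q_{y_0}$ with $\int_{Q_{y_0}}|\nabla g|^{q}<(1+D_1/\|g\|_{L^{q}}^{q})\int_{Q_{y_0}}|g|^{q}$, and then combines the local embedding $W^{1,q}(Q_{y_0})\hookrightarrow L^{q^{\dagger}}(Q_{y_0})$ with H\"older in the form $\int_{Q_{y_0}}|g|^{q}\le\mathcal{L}^{d}(Q_{y_0}\cap\operatorname{supp}g)^{1-q/q^{\dagger}}\bigl(\int_{Q_{y_0}}|g|^{q^{\dagger}}\bigr)^{q/q^{\dagger}}$ to solve for the measure of the support. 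If you want to keep your dichotomy route, you must (i) insert the per-cube interpolation so that $M_*$ actually appears, (ii) handle the regime where the oscillatory cubes could carry all of $D_2$, and (iii) fix the thresholds (your $\eta/4$ mean-size cut and $\eta/8$ closeness do not reach the level $\eta/2$; you need, e.g., a cut at $3\eta/4$), which you flag but do not carry out.
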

\begin{proof}[Proof of Lemma \ref{08/03/28/21:00}]
Let $f$ be a function in $W^{1,q}(\mathbb{R}^{d})$ satisfying (\ref{10/04/26/17:34}). Put 
\begin{equation}\label{10/04/28/10:50}
g(x)=\max\left\{ |f(x)|-\frac{\eta}{2}, 0 \right\}.
\end{equation}
Then, we easily verify that 
\begin{align}
\label{10/04/28/10:55}
&g \in W^{1,q}(\mathbb{R}^{d}), 
\qquad 
\left\| \nabla g \right\|_{L^{q}(\mathbb{R}^{d})}
\le D_{1},
\\[6pt] 
\label{10/04/28/10:56}
&{\rm supp}\,{g}\subset \left[ |f|\ge \frac{\eta}{2}\right].
\end{align}
We first claim that   
\begin{equation}\label{10/04/26/17:35}
\int_{Q_{y}}|\nabla g(x)|^{q}\,dx 
< 
\left( 
1+ \frac{D_{1}}{\left\|g\right\|_{L^{q}}^{q}}
\right)
\int_{Q_{y}}|g(x)|^{q}\,dx
\quad 
\mbox{for some $y \in \mathbb{R}^{d}$},
\end{equation}
where $Q_{y}$ denotes the cube in $\mathbb{R}^{d}$ with the center 
$y$ and side length $\frac{2}{\sqrt{d}}$. We note that $Q_{y}$ is inscribed in $B_{1}(y)$ for all $y \in \mathbb{R}^{d}$. 
\par 
Let $\{y_{n}\}_{n\in \mathbb{N}}$ be a sequence in $\mathbb{R}^{d}$ such that 
\begin{equation}\label{10/04/26/17:43}
\bigcup_{n \in \mathbb{N}} Q_{y_{n}}=\mathbb{R}^{d},
\qquad 
\overset{\circ}{Q_{y_{m}}}\cap \overset{\circ}{Q_{y_{n}}}=\emptyset
 \quad \mbox{for $m\neq n$}
\end{equation}
Supposing the contrary that (\ref{10/04/26/17:35}) fails, we have 
\begin{equation}\label{10/04/28/15:48}
\int_{Q_{y_{n}}}|\nabla g(x)|^{q}\,dx 
\ge  
\left(
1+\frac{D_{1}}{\left\|g \right\|_{L^{q}}^{q}}
\right)
\int_{Q_{y_{n}}}|g(x)|^{q}\,dx
\quad 
\mbox{for all $n \in \mathbb{N}$}.
\end{equation}
Then, summing (\ref{10/04/28/15:48}) over all $n\in \mathbb{N}$ yields that 
\begin{equation}\label{10/04/28/16:00}
D_{1}\ge \int_{\mathbb{R}^{d}}|\nabla g(x)|^{q}\,dx 
\ge 
\left( 
1+\frac{D_{1}}{\left\|g \right\|_{L^{q}}^{q}}
\right)
\int_{\mathbb{R}^{d}}|g(x)|^{q}\,dx
>D_{1}, 
\end{equation} 
which is a contradiction. Hence, (\ref{10/04/26/17:35}) holds.
\par 
Now, it follows from (\ref{10/04/26/17:35}) that: There exists $y_{0} \in \mathbb{N}$ such that 
\begin{equation}\label{10/04/28/16:34}
\int_{Q_{y_{0}}}|\nabla g (x)|^{q}\,dx +|g(x)|^{q}\,dx 
< 
\left( 
2 +\frac{D_{1}}{1+\left\|g \right\|_{L^{q}}^{q}} 
\right)
\int_{Q_{y_{0}}}|g (x)|^{q}\,dx.
\end{equation}
On the other hand, the Sobolev embedding leads us to that 
\begin{equation}\label{10/04/28/16:41}
\left( \int_{Q_{y_{0}}}|g (x)|^{q^{\dagger}}\,dx,\right)^{\frac{q}{q^{\dagger}}}
\lesssim 
\int_{Q_{y_{0}}}|\nabla g (x)|^{q}\,dx +|g(x)|^{q}\,dx, 
\end{equation}
where $q^{\dagger}$ is the exponent defined in (\ref{10/04/28/10:44}), and the implicit constant depends only on $d$ and $q$. Combining these estimates (\ref{10/04/28/16:34}) and (\ref{10/04/28/16:41}), we obtain that 
\begin{equation}\label{10/04/28/16:47}
\begin{split}
\left( 
\int_{Q_{y_{0}}}|g (x)|^{q^{\dagger}}\,dx
\right)^{\frac{q}{q^{\dagger}}}
&\lesssim 
\left( 2 + 
\frac{D_{1}}{1+\left\|g\right\|_{L^{q}}^{q}} 
\right)
\int_{Q_{y_{0}}}|g (x)|^{q}\,dx
\\[6pt]
&\lesssim 
\left( 2 + 
\frac{D_{1}}{1+\left\|g\right\|_{L^{q}}^{q}} 
\right)
\mathcal{L}\left( Q_{y_{0}}\cap {\rm supp}\,g  \right)^{1-\frac{q}{q^{\dagger}}}
\left( \int_{Q_{y_{0}}}|g (x)|^{q^{\dagger}}\,dx
\right)^{\frac{q}{q^{\dagger}}},
\end{split}
\end{equation}
where the implicit constant depends only on $d$ and $q$.
Hence, we see that 
\begin{equation}\label{10/04/28/20:51}
\left(  
\frac{1+\left\|g\right\|_{L^{q}}^{q}}{
D_{1}+2+2\left\|g\right\|_{L^{q}}^{q}} 
\right)^{\frac{q^{\dagger}}{q^{\dagger}-q}}
\lesssim 
\mathcal{L}\left( Q_{y_{0}}\cap {\rm supp}\,g  \right),
\end{equation}
where the implicit constant depends only on $d$ and $q$. Here, it follows from  the definition of $g$ (see (\ref{10/04/28/10:50})) and the assumption (\ref{10/04/26/17:34}) that 
\begin{equation}\label{10/04/28/16:59}
\left\| g \right\|_{L^{q}}^{q}
\ge 
\int_{[|f|\ge \eta]}\left( |f(x)|-\frac{\eta}{2}\right)^{q}
\\[6pt]
\ge 
\left( \frac{\eta}{2} \right)^{q} 
D_{2} 
.
\end{equation}
Since ${\rm supp}\,g \subset \left[|f|\ge \frac{\eta}{2}\right]$ (see (\ref{10/04/28/10:56})), the estimate (\ref{10/04/28/20:51}), together with (\ref{10/04/28/16:59}), gives us the desired result.  
\end{proof}

\begin{lemma}[Lieb \cite{Lieb}, Nawa \cite{Nawa1}]
\label{08/09/27/22:43}
Let $1<q<\infty$, and let $\{f_{n}\}_{n \in \mathbb{N}}$ be a uniformly bounded sequence in $W^{1,q}(\mathbb{R}^{d})$ with  
\[
\inf_{n\in \mathbb{N}}\mathcal{L}^{d}\left( \Big[ |f_{n}| > \delta 
\Big]\right) \ge C
\]
for some constants $\delta>0$ and $C>0$. Then, there exists a sequence $\{y_{n}\}_{n\in \mathbb{R}^{d}}$ in $\mathbb{R}^{d}$ and a nontrivial function $f \in W^{1,q}(\mathbb{R}^{d})$ such that 
\begin{equation}
\label{10/04/28/21:17}
f_{n}(\cdot +y_{n}) \to f \quad \mbox{weakly in $W^{1,q}(\mathbb{R}^{d})$}.
\end{equation}
\end{lemma}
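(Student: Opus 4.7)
\textbf{Proof proposal for Lemma \ref{08/09/27/22:43}.}

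The plan is to combine Lemma \ref{08/03/28/21:00} (to localize mass after translation) with reflexivity of $W^{1,q}(\mathbb{R}^{d})$ and the Rellich--Kondrachov compact embedding (to pass to a nontrivial weak limit). Set $D_{1}:=\sup_{n}\|\nabla f_{n}\|_{L^{q}}^{q}<\infty$, which is finite by the uniform boundedness assumption, and let $D_{2}:=C$, $\eta:=\delta$. For each $n$, applying Lemma \ref{08/03/28/21:00} to $f_{n}$ produces $y_{n}\in\mathbb{R}^{d}$ such that
\[
\mathcal{L}^{d}\!\left(\Big[|f_{n}(\cdot+y_{n})|\ge \tfrac{\delta}{2}\Big]\cap B_{1}(0)\right)\gtrsim \left(\frac{1+\delta^{q}C}{1+D_{1}}\right)^{\!\frac{q}{q^{\dagger}-q}}=:\kappa>0,
\]
where the implicit constant and $\kappa$ are independent of $n$. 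This is the key uniform lower bound on the translated sequence $g_{n}(x):=f_{n}(x+y_{n})$, concentrated on a fixed ball.

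Next, since translation is an isometry of $W^{1,q}(\mathbb{R}^{d})$, the sequence $\{g_{n}\}$ remains uniformly bounded in $W^{1,q}(\mathbb{R}^{d})$. Because $1<q<\infty$, the space $W^{1,q}(\mathbb{R}^{d})$ is reflexive, so there exists a subsequence (still denoted $\{g_{n}\}$) and $f\in W^{1,q}(\mathbb{R}^{d})$ with $g_{n}\to f$ weakly in $W^{1,q}(\mathbb{R}^{d})$. By the Rellich--Kondrachov compactness theorem (for the bounded domain $B_{1}(0)$, the embedding $W^{1,q}(B_{1}(0))\hookrightarrow L^{q}(B_{1}(0))$ is compact since $q<q^{\dagger}$ in either case of the definition of $q^{\dagger}$), we have $g_{n}\to f$ strongly in $L^{q}(B_{1}(0))$, after passing to a further subsequence.

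It remains to verify that $f$ is nontrivial. Suppose for contradiction that $f\equiv 0$. Then $\|g_{n}\|_{L^{q}(B_{1}(0))}\to 0$. On the other hand, the Chebyshev-type lower bound derived from the localization gives
\[
\int_{B_{1}(0)}|g_{n}(x)|^{q}\,dx \ge \left(\frac{\delta}{2}\right)^{\!q}\mathcal{L}^{d}\!\left(\Big[|g_{n}|\ge \tfrac{\delta}{2}\Big]\cap B_{1}(0)\right)\gtrsim \left(\frac{\delta}{2}\right)^{\!q}\kappa>0
\]
uniformly in $n$, a contradiction. Hence $f\neq 0$, and the proof is complete.

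The main work is really done by Lemma \ref{08/03/28/21:00}, which supplies the translations aligning the ``mass'' of $f_{n}$ with a fixed compact set; after that, the rest is soft functional analysis (reflexivity plus local compact embedding). There is no serious obstacle, provided one is careful that both cases in the definition of $q^{\dagger}$ yield a compact embedding into $L^{q}$ on $B_{1}(0)$, which they do.
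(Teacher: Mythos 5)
Your argument is correct: the paper itself gives no proof of this lemma (it simply cites Lieb and Nawa), and your route — applying Lemma \ref{08/03/28/21:00} with $\eta=\delta$, $D_{2}=C$, $D_{1}=\sup_{n}\|\nabla f_{n}\|_{L^{q}}^{q}$ to get translations with an $n$-independent lower bound on $\mathcal{L}^{d}\bigl(\bigl[|f_{n}(\cdot+y_{n})|\ge \delta/2\bigr]\cap B_{1}(0)\bigr)$, then reflexivity of $W^{1,q}(\mathbb{R}^{d})$ and the compact embedding $W^{1,q}(B_{1}(0))\hookrightarrow L^{q}(B_{1}(0))$ to extract a weak limit which the Chebyshev bound forces to be nontrivial — is exactly the intended Lieb-type argument that the preceding lemma of the appendix is set up to feed. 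The only cosmetic remark is that, as in the paper's own applications of this lemma, the weak convergence is along a subsequence, which your proof correctly extracts even though the statement does not say so explicitly.
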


\begin{lemma}[Brezis and Lieb \cite{Brezis-Lieb}, Nawa \cite{Nawa1}] 
\label{08/03/28/21:21}
Let $0<q < \infty$ and let $\{f_{n}\}_{n \in \mathbb{N}}$ be a uniformly bounded sequence in $L^{q}(\mathbb{R}^{d})$ with $f_{n}\to f$ a.e. in $\mathbb{R}^{d}$ for some $f \in L^{q}(\mathbb{R}^{d})$. Then, we have 
\begin{equation}\label{08/09/28/12:26}
\lim_{n\to \infty} \int_{\mathbb{R}^{d}}\Big||f_{n}(x)|^{q}-|f_{n}(x)-f(x)|^{q}-|f(x)|^{q} \Big|\,dx = 0 .
\end{equation}
\end{lemma}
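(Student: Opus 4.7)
The plan is to follow the classical Brezis--Lieb argument: reduce the absolute difference inside the integral to something dominated by an integrable majorant (depending on a small parameter $\varepsilon$) plus a controllable error, then apply the dominated convergence theorem.

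First, I will establish the elementary pointwise inequality: for every $\varepsilon>0$ there exists a constant $C_{\varepsilon}>0$ such that
\begin{equation*}
\bigl||a+b|^{q}-|a|^{q}\bigr| \le \varepsilon\,|a|^{q}+C_{\varepsilon}\,|b|^{q}
\quad\text{for all } a,b\in\mathbb{C}.
\end{equation*}
When $q\ge 1$ this follows from convexity of $t\mapsto t^{q}$ via the mean value theorem together with Young's inequality (splitting into the cases $|b|\le \delta|a|$ and $|b|\ge \delta|a|$ for some small $\delta=\delta(\varepsilon)$); when $0<q<1$ one uses the subadditivity $|x+y|^{q}\le |x|^{q}+|y|^{q}$ and a similar case split. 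Applying this with $a=f_{n}-f$ and $b=f$ and using the triangle inequality yields
\begin{equation*}
\bigl||f_{n}|^{q}-|f_{n}-f|^{q}-|f|^{q}\bigr|
\le \varepsilon\,|f_{n}-f|^{q}+(C_{\varepsilon}+1)\,|f|^{q}.
\end{equation*}

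Next, for each $\varepsilon>0$ and $n\in\mathbb{N}$, I define
\begin{equation*}
W_{n,\varepsilon}(x):=\max\!\left\{\bigl||f_{n}(x)|^{q}-|f_{n}(x)-f(x)|^{q}-|f(x)|^{q}\bigr|-\varepsilon|f_{n}(x)-f(x)|^{q},\ 0\right\}.
\end{equation*}
By the inequality above, $0\le W_{n,\varepsilon}(x)\le (C_{\varepsilon}+1)|f(x)|^{q}$, which is integrable since $f\in L^{q}(\mathbb{R}^{d})$. Moreover, the pointwise a.e. convergence $f_{n}\to f$ implies $W_{n,\varepsilon}(x)\to 0$ for a.a. $x\in\mathbb{R}^{d}$. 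Hence the Lebesgue dominated convergence theorem gives
\begin{equation*}
\lim_{n\to\infty}\int_{\mathbb{R}^{d}}W_{n,\varepsilon}(x)\,dx=0.
\end{equation*}

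Finally, from the definition of $W_{n,\varepsilon}$,
\begin{equation*}
\int_{\mathbb{R}^{d}}\bigl||f_{n}|^{q}-|f_{n}-f|^{q}-|f|^{q}\bigr|\,dx
\le \int_{\mathbb{R}^{d}}W_{n,\varepsilon}\,dx+\varepsilon\int_{\mathbb{R}^{d}}|f_{n}-f|^{q}\,dx.
\end{equation*}
The uniform bound of $\{f_{n}\}$ in $L^{q}(\mathbb{R}^{d})$ and $f\in L^{q}(\mathbb{R}^{d})$ yield $\sup_{n}\|f_{n}-f\|_{L^{q}}^{q}\le M$ for some $M<\infty$ (with an extra factor when $q<1$, harmless since constants do not affect the limit). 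Letting $n\to\infty$ first and then $\varepsilon\downarrow 0$ gives the claim. The only nonroutine step is the verification of the elementary inequality for all $q\in(0,\infty)$; this is where the case distinction between $q\ge 1$ and $q<1$ must be handled carefully, but no serious obstacle is expected.
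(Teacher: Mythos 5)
Your argument is correct: it is exactly the classical Brezis--Lieb proof (the $\varepsilon$-refined pointwise inequality, the truncated majorant $W_{n,\varepsilon}$ dominated by $(C_{\varepsilon}+1)|f|^{q}$, dominated convergence, then $\varepsilon\downarrow 0$ using the uniform $L^{q}$ bound), and the paper itself states this lemma without proof, merely citing Brezis--Lieb and Nawa, so your proof coincides with the cited classical argument. One minor simplification: for $0<q\le 1$ the subadditivity $|a+b|^{q}\le|a|^{q}+|b|^{q}$ already yields $\bigl||a+b|^{q}-|a|^{q}\bigr|\le|b|^{q}$, so in that range no case split or $\varepsilon$-dependent constant is needed.
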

\section{Compactness device II}
\label{08/10/07/9:00}
We recall the following sequence of lemmas: All their proofs are found in Appendix C of \cite{Nawa8}.
\begin{lemma}[Nawa \cite{Nawa8}]
\label{08/10/25/23:47}
Let $1< \alpha < \beta < \gamma < \infty$, $I\subset \mathbb{R}$ and let $u \in C(I;L^{\alpha}(\mathbb{R}^{d}) \cap L^{\gamma}(\mathbb{R}^{d}))$ with  
\[
\sup_{t\in I}\|u(t) \|_{L^{\alpha}}^{\alpha} \le C_{\alpha},
\quad 
C_{\beta}\le \sup_{t\in I}\|u(t)\|_{L^{\beta}}^{\beta},
\quad 
\sup_{t\in I}\|u(t)\|_{L^{\gamma}}^{\gamma}\le C_{\gamma} 
\]
for some positive constants $C_{\alpha}$, $C_{\beta}$ and $C_{\gamma}$. Then, we have  \[
\sup_{t\in I} \mathcal{L}^{d}\left( \Big[ |u(t)| > \delta \Big]\right) >C
\]
for some constants $C>0$ and $\delta >0$ depending only on $\alpha$, $\beta$, $\gamma$, $C_{\alpha}$, $C_{\beta}$ and $C_{\gamma}$.  
\end{lemma}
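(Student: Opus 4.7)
The plan is to reduce this time-uniform statement to the static pointwise-in-time version already recorded as Lemma \ref{08/3/28/20:21}, by extracting a single good time slice. The key observation is that under the hypothesis $u \in C(I; L^{\alpha}(\mathbb{R}^{d}) \cap L^{\gamma}(\mathbb{R}^{d}))$, Riesz--Thorin interpolation gives that $t \mapsto \|u(t)\|_{L^{\beta}}^{\beta}$ is continuous on $I$, since $\alpha < \beta < \gamma$. Combined with the lower bound $\sup_{t\in I}\|u(t)\|_{L^{\beta}}^{\beta} \ge C_{\beta}$, this allows us to pick a single time $t_{0} \in I$ at which
\[
\|u(t_{0})\|_{L^{\beta}}^{\beta} \ge \tfrac{1}{2}C_{\beta}.
\]

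At this frozen time, the single function $f := u(t_{0})$ satisfies the three-norm bounds
\[
\|f\|_{L^{\alpha}}^{\alpha} \le C_{\alpha},
\qquad
\|f\|_{L^{\beta}}^{\beta} \ge \tfrac{1}{2}C_{\beta},
\qquad
\|f\|_{L^{\gamma}}^{\gamma} \le C_{\gamma},
\]
which are exactly the hypotheses of Lemma \ref{08/3/28/20:21} with $C_{\beta}$ replaced by $C_{\beta}/2$. That lemma then yields the existence of a threshold $\eta_{0} > 0$, depending only on $\alpha,\beta,\gamma,C_{\alpha},C_{\beta},C_{\gamma}$, such that for every $\eta \in (0, \eta_{0}]$ we have
\[
\mathcal{L}^{d}\bigl(\bigl[|f| > \eta\bigr]\bigr) \,>\, \tfrac{1}{4} C_{\beta}\, \eta^{\beta}.
\]
Fixing any such $\eta$, say $\delta := \eta_{0}/2$, and setting $C := \tfrac{1}{4} C_{\beta}\, \delta^{\beta}$, both of which depend only on the allowed parameters, we arrive at
\[
\sup_{t\in I} \mathcal{L}^{d}\bigl(\bigl[|u(t)| > \delta\bigr]\bigr) \ge \mathcal{L}^{d}\bigl(\bigl[|u(t_{0})| > \delta\bigr]\bigr) > C,
\]
which is the desired conclusion.

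There is essentially no genuine obstacle here; the content of the lemma is the static statement, and the only thing added is the ability to pass from a supremum in time to a pointwise-in-time bound. The one place that needs mild care is verifying that the $L^{\beta}$-norm of $u(t)$ depends continuously on $t$ so that $t_{0}$ may be selected — but this follows immediately from the log-convex interpolation inequality $\|u(t)-u(s)\|_{L^{\beta}} \lesssim \|u(t)-u(s)\|_{L^{\alpha}}^{\theta}\|u(t)-u(s)\|_{L^{\gamma}}^{1-\theta}$ for the appropriate $\theta \in (0,1)$, together with the assumed continuity in $L^{\alpha} \cap L^{\gamma}$.
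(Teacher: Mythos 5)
Your argument is correct: since the paper itself gives no proof of Lemma \ref{08/10/25/23:47} (it defers to Appendix C of Nawa's paper), the natural route is exactly the one you take, namely freezing a single time slice $t_{0}$ with $\|u(t_{0})\|_{L^{\beta}}^{\beta}\ge C_{\beta}/2$ and invoking the static Lemma \ref{08/3/28/20:21}, whose constants depend only on $\alpha,\beta,\gamma,C_{\alpha},C_{\beta},C_{\gamma}$, so the resulting $\delta$ and $C$ are admissible and the supremum bound follows. (Your interpolation/continuity remark is harmless but not even needed: the hypothesis $\sup_{t}\|u(t)\|_{L^{\beta}}^{\beta}\ge C_{\beta}$ already furnishes such a $t_{0}$ by the definition of the supremum, with $u(t_{0})\in L^{\beta}$ guaranteed by the same log-convexity inequality.)
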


\begin{lemma}[Nawa \cite{Nawa8}]
\label{08/10/25/23:48}
Let $1\le q < \infty$, $I \Subset \mathbb{R}$ and let $u$ be a function in $C(I;W^{1,q}(\mathbb{R}^{d}))$ such that 
\begin{align*}
&\sup_{t\in I}\|\nabla u(t)\|_{L^{q}}\le C_{q},
\\[6pt]
&\sup_{t\in I}\mathcal{L}^{d}\left( 
\Big[ |u(t)| > \delta \Big] \right)
\ge C
\end{align*} 
for some positive constants $C_{q}$, $\delta$ and  $C$. Then, there exists $y \in \mathbb{R}^{d}$ such that  
\[
\sup_{t\in I}\mathcal{L}^{d}\left( \left[ \left|u(\cdot + y,t)\right| > \frac{\delta}{2} \right] \cap B_{1}(0) \right)\ge C' 
\]
for some constant $C'>0$ depending only on $C_{q}$, $C$, $\delta$ and $d$, where $B_{1}(0)$ is the ball in $\mathbb{R}^{d}$ with center $0$ and radius $1$.
\end{lemma}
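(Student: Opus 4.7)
The plan is to reduce this time-dependent statement directly to the static Lieb-type estimate already recorded as Lemma \ref{08/03/28/21:00} (Appendix B). The point is that the conclusion only asks for the \emph{supremum} in $t$ of a measure of level sets to be bounded below, and the hypothesis provides the \emph{supremum} in $t$ of a (larger) measure of level sets to be bounded below. Therefore one need not track the time variable throughout; it suffices to freeze $t$ at a near-maximizer and apply the stationary result.

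Concretely, first I would use the definition of the supremum: from
\[
\sup_{t\in I}\mathcal{L}^{d}\bigl(\bigl[|u(t)|>\delta\bigr]\bigr)\ \ge\ C,
\]
pick a single time $t_{0}\in I$ such that
$
\mathcal{L}^{d}\bigl(\bigl[|u(t_{0})|>\delta\bigr]\bigr)\ \ge\ C/2.
$
Since $u\in C(I;W^{1,q}(\mathbb{R}^{d}))$, the function $f:=u(\cdot,t_{0})$ lies in $W^{1,q}(\mathbb{R}^{d})$ and satisfies the two hypotheses of Lemma \ref{08/03/28/21:00} with the parameters
\[
D_{1}:=C_{q}^{\,q},\qquad D_{2}:=C/2,\qquad \eta:=\delta .
\]

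Next I would apply Lemma \ref{08/03/28/21:00} to this $f$. This produces a vector $y\in\mathbb{R}^{d}$ (depending on $t_{0}$ but not on $t$) together with the bound
\[
\mathcal{L}^{d}\!\left(\left[\,|u(\cdot+y,t_{0})|\ \ge\ \tfrac{\delta}{2}\,\right]\cap B_{1}(0)\right)
\ \gtrsim\
\left(\frac{1+\delta^{q}\,C/2}{1+C_{q}^{\,q}}\right)^{\!\frac{q}{q^{\dagger}-q}}\ =:\ C',
\]
with an implicit constant depending only on $d$ and $q$, so that $C'$ depends only on $C_{q},C,\delta,d,q$, as required.

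Finally, since the estimate is valid at the specific time $t=t_{0}\in I$, taking the supremum over $t\in I$ of the same quantity preserves the lower bound, yielding the desired conclusion with this $y$ and this $C'$. The whole argument is essentially a single-line reduction; the only minor point to verify is that Lemma \ref{08/03/28/21:00} is applicable, i.e.\ that $q>1$ (the $q=1$ case being irrelevant for our applications, where $u\in H^{1}$). There is no genuine obstacle — the quantitative, time-independent Lieb–Nawa lemma does all the work, and the continuity hypothesis on $u$ is used only to ensure $f=u(\cdot,t_{0})\in W^{1,q}(\mathbb{R}^{d})$.
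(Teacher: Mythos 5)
Your reduction is correct: since the conclusion only asks for a lower bound on the \emph{supremum} in $t$, freezing $t$ at a near-maximizing time $t_{0}$ and applying the static Lemma \ref{08/03/28/21:00} to $f=u(\cdot,t_{0})$ does prove the statement as written, and the paper itself offers no argument to compare against (it simply cites Appendix C of \cite{Nawa8}), so this is essentially the intended route; the continuity in time and the compactness of $I$ are indeed not needed beyond $u(\cdot,t_{0})\in W^{1,q}(\mathbb{R}^{d})$. Two small caveats are worth recording. First, Lemma \ref{08/03/28/21:00} is stated for $1<q<\infty$ while the present lemma allows $q=1$, so your proof does not literally cover $q=1$; this is harmless because the proof of Lemma \ref{08/03/28/21:00} (Sobolev embedding on unit cubes) extends to $q=1$, and only $q=2$ is ever used in this paper. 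Second, Lemma \ref{08/03/28/21:00} bounds the measure of the non-strict level set $\left[\,|u(\cdot+y,t_{0})|\ge \delta/2\,\right]$, whereas the conclusion here is written with a strict inequality, and the two sets may differ in measure; either rerun the proof of Lemma \ref{08/03/28/21:00} with the cut-off $\max\{|f|-\tfrac{3}{4}\delta,\,0\}$, whose support lies in $\left[\,|f|>\delta/2\,\right]$, or accept an immaterial change of the threshold constant, which is all that the applications (\ref{10/03/10/18:02}) and (\ref{10/03/15/17:57}) require.
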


\begin{lemma}[Nawa \cite{Nawa8}] 
\label{08/10/28/08:14}
Let $0<q < \infty$, $I \Subset \mathbb{R}$ and let $\{u_{n}\}_{n \in \mathbb{N}}$ be a equicontinuous and uniformly bounded sequence in $C(I; L^{q}(\mathbb{R}^{d}))$ with $\displaystyle{\lim_{n\to \infty}u_{n}=u}$ a.e. in $\mathbb{R}^{d}\times I$ for some $u \in C(I; L^{q}(\mathbb{R}^{d}))$. Then, we have that:
\begin{align}
\label{09/12/05/16:05}
&\lim_{n\to \infty} \sup_{t \in I}\int_{\mathbb{R}^{d}}\Big||u_{n}(x,t)|^{q}-|u_{n}(x,t)-u(x,t)|^{q}-|u(x,t)|^{q} \Big|\,dx = 0,
\\[6pt]
\label{10/05/19/18:01}
&\lim_{n\to \infty}
\left\{ 
|u_{n}|^{q-1}u_{n}-|u_{n}-u|^{q-1}(u_{n}-u)-|u|^{q-1}u
\right\}= 0 
\quad 
\mbox{strongly in $L^{\infty}(I;L^{q'}(\mathbb{R}^{d}))$}.
\end{align}
\end{lemma}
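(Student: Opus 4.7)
The strategy is to reduce the claim to the classical (pointwise) Brezis--Lieb lemma (Lemma \ref{08/03/28/21:21}) applied at each fixed $t \in I$, and then upgrade the resulting pointwise-in-$t$ convergence to uniform convergence using the equicontinuity hypothesis. Precompactness of $I$ is essential.

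\textbf{Step 1 (Pointwise-in-$t$ convergence).} Set
\[
\phi_n(t) := \int_{\mathbb{R}^d} \bigl||u_n(x,t)|^q - |u_n(x,t) - u(x,t)|^q - |u(x,t)|^q\bigr|\,dx.
\]
By Fubini, the hypothesis $u_n \to u$ a.e.\ in $\mathbb{R}^d \times I$ gives $u_n(\cdot, t) \to u(\cdot, t)$ a.e.\ in $\mathbb{R}^d$ for almost every $t \in I$. For such $t$, the sequence $\{u_n(\cdot,t)\}$ is bounded in $L^q(\mathbb{R}^d)$ by the uniform boundedness assumption, so Lemma \ref{08/03/28/21:21} yields $\phi_n(t) \to 0$ as $n \to \infty$ for a.e.\ $t \in I$. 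An analogous pointwise statement for $\psi_n(t) := \bigl\||u_n(t)|^{q-1}u_n(t) - |u_n(t) - u(t)|^{q-1}(u_n(t) - u(t)) - |u(t)|^{q-1}u(t)\bigr\|_{L^{q'}}$ follows from the companion Brezis--Lieb convergence in $L^{q'}$.

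\textbf{Step 2 (Equicontinuity of $\phi_n, \psi_n$ in $t$, uniformly in $n$).} For $q \ge 1$ one has the elementary inequality $\bigl||a|^q - |b|^q\bigr| \le C_q(|a|^{q-1} + |b|^{q-1})|a - b|$; for $0 < q < 1$ one has $\bigl||a|^q - |b|^q\bigr| \le |a-b|^q$. Applying these inside the integrand defining $\phi_n(t)-\phi_n(s)$ and invoking H\"older's inequality, I obtain an estimate of the form
\[
|\phi_n(t) - \phi_n(s)| \le C\bigl(M^{q-1}\bigr)\bigl(\|u_n(t) - u_n(s)\|_{L^q} + \|u(t) - u(s)\|_{L^q}\bigr),
\]
where $M := \sup_{m,\tau}(\|u_m(\tau)\|_{L^q} + \|u(\tau)\|_{L^q}) < \infty$. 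Since $\{u_n\}$ is equicontinuous in $C(I;L^q(\mathbb{R}^d))$ and $u \in C(I;L^q(\mathbb{R}^d))$, the right-hand side tends to $0$ as $|t - s| \to 0$ uniformly in $n$. The same method, using $\bigl||a|^{q-1}a - |b|^{q-1}b\bigr| \le C_q(|a|^{q-2} + |b|^{q-2})|a - b|$ for $q \ge 2$ (and analogous H\"older-continuity bounds for $1 < q < 2$), yields equicontinuity of $\psi_n$.

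\textbf{Step 3 (Dense-set argument on the compact $I$).} Let $\varepsilon > 0$. By Step 2 choose $\delta > 0$ so that $|\phi_n(t) - \phi_n(s)| < \varepsilon/2$ whenever $|t - s| < \delta$, uniformly in $n$. Since $I$ is precompact, cover $\overline{I}$ by finitely many balls $B(t_k, \delta)$, $k = 1, \ldots, N$; the full-measure set of $t$ on which $\phi_n(t) \to 0$ is dense in $I$, so we may choose each $t_k$ within it. By Step 1 pick $n_0$ such that $\phi_n(t_k) < \varepsilon/2$ for every $k$ and every $n \ge n_0$. Then for any $t \in I$, picking $t_k$ with $|t - t_k| < \delta$ gives $\phi_n(t) < \varepsilon$ for all $n \ge n_0$, proving the first assertion. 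The identical argument applied to $\psi_n$ in place of $\phi_n$ yields the second assertion.

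\textbf{Expected main obstacle.} The principal technical point is the uniform-in-$n$ equicontinuity of $\phi_n$ and $\psi_n$ in Step 2. The elementary inequalities for $||a|^q - |b|^q|$ and $||a|^{q-1}a - |b|^{q-1}b|$ take different forms in the ranges $0 < q < 1$, $1 \le q < 2$, $q \ge 2$, and each case requires a slightly different H\"older application to convert pointwise differences into $L^q$-norm differences of $u_n$ and $u$. Once this equicontinuity is in hand, the upgrade from pointwise a.e.-in-$t$ convergence to uniform-in-$t$ convergence is the standard precompact-interval argument sketched in Step 3.
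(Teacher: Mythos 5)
A preliminary remark: the paper itself does not prove this lemma — it is quoted from Appendix C of \cite{Nawa8} — so I am judging your argument on its own merits. Your architecture (pointwise-in-$t$ Brezis--Lieb via Fubini, uniform-in-$n$ equicontinuity of the error functionals, then a finite-cover argument on the precompact $I$) is the natural one, and it does establish the first conclusion \eqref{09/12/05/16:05}: Step 1 is a correct application of Lemma \ref{08/03/28/21:21}, and your Step 2 estimate for $\phi_n$ is fine for $q\ge 1$ (for $0<q<1$ no H\"older is needed, just $\bigl||a|^q-|b|^q\bigr|\le|a-b|^q$ and integration). The only loose end there is that you quietly use uniform continuity of $t\mapsto u(t)$ in $L^{q}$, which is not part of the hypotheses when $I$ is open; it follows, e.g., from Fatou, since $\|u(t)-u(s)\|_{L^q}\le\liminf_n\|u_n(t)-u_n(s)\|_{L^q}$ for a.e.\ $t,s$ and $u\in C(I;L^q)$, so the modulus of equicontinuity of $\{u_n\}$ transfers to $u$. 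That is minor and easily repaired.

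The genuine gap is in your treatment of the second conclusion \eqref{10/05/19/18:01}. First, the pointwise-in-$t$ input you need is \emph{not} the cited scalar Brezis--Lieb lemma; the ``companion Brezis--Lieb convergence in $L^{q'}$'' is precisely the content that has to be proved (e.g.\ via \eqref{09/09/27/21:10}, the splitting $|g_n|\le\varepsilon|u_n-u|^{q-1}+C_\varepsilon|u|^{q-1}$ and dominated convergence), and you only assert it. Second, your Step 2 inequality $\bigl||a|^{q-1}a-|b|^{q-1}b\bigr|\le C_q(|a|^{q-2}+|b|^{q-2})|a-b|$ is false; the correct bound \eqref{09/09/27/21:10} carries $|a|^{q-1}+|b|^{q-1}$, and with it the H\"older pairing of $|u_n|^{q-1}\in L^{q/(q-1)}$ against $u_n(t)-u_n(s)\in L^{q}$ lands in $L^{1}$, not $L^{q'}$. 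Indeed, under the stated hypothesis $u_n(t)\in L^{q}$ the quantity $|u_n|^{q-1}u_n$ need not lie in $L^{q'}$ at all, so neither your Step 1 nor your Step 2 for $\psi_n$ can close as literally written (the statement as printed has an exponent mismatch; note also that $L^{q'}$ requires $q>1$). What the lemma is actually used for in the paper (the proof of Proposition \ref{10/01/26/14:49}, with $q=p+1$ and $|\psi_n|^{p-1}\psi_n\in L^{(p+1)/p}$) is the map $u\mapsto|u|^{q-2}u$, i.e.\ a nonlinearity of degree $q-1$; for that map your scheme does work: $\bigl||a|^{q-2}a-|b|^{q-2}b\bigr|\lesssim(|a|^{q-2}+|b|^{q-2})|a-b|$ for $q\ge2$ (and $\lesssim|a-b|^{q-1}$ for $1<q<2$), and H\"older with $\frac{1}{q'}=\frac{q-2}{q}+\frac{1}{q}$ gives the uniform-in-$n$ equicontinuity in $L^{q'}$. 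So you should prove the corrected ($|u|^{q-2}u$) version — supplying the pointwise $L^{q'}$ Brezis--Lieb step rather than citing it — or else add the extra integrability that the literal statement would demand.
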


\section{Elementary inequalities}\label{09/03/06/16:43}
In this section, we summarize elementary inequalities often used in this paper. We begin with the following obvious inequalities.  
\begin{lemma}\label{09/05/03/10:56}
Let $1<q<\infty$. Then, we have that 
\begin{align}
\label{09/09/27/21:09}
&\left| \left| a \right|^{q}-\left| b \right|^{q} \right| 
\lesssim  \left( |a|^{q-1}+|b|^{q-1}\right) |a-b|
\qquad 
\mbox{for all $a,b \in \mathbb {C}$},
\\[6pt]
\label{09/09/27/21:10}
&\left| \left| a \right|^{q-1}a -\left| b \right|^{q-1}b \right| 
\lesssim  \left( |a|^{q-1}+|b|^{q-1}\right) |a-b|
\qquad
\mbox{for all $a,b \in \mathbb {C}$},
\end{align}
where the implicit constants depend only on $q$. 
\end{lemma}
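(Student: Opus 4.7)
Both estimates are of the form ``modulus of a difference of a $C^1$-type function evaluated at $a$ and $b$ is bounded by the sup of the derivative times $|a-b|$''. The natural device is therefore the fundamental theorem of calculus along the segment $[b,a]$, together with the elementary inequality $(s+t)^{q-1}\lesssim s^{q-1}+t^{q-1}$ for $s,t\ge 0$ and $q>1$.

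For the first inequality, I would set $\varphi(t):=|b+t(a-b)|^{q}$ for $t\in[0,1]$. Away from $t$ with $b+t(a-b)=0$ one has
\[
\varphi'(t)=q\,|b+t(a-b)|^{q-2}\,\Re\!\left(\overline{b+t(a-b)}\,(a-b)\right),
\]
so $|\varphi'(t)|\le q\,|b+t(a-b)|^{q-1}|a-b|$. Since $|b+t(a-b)|\le |a|+|b|$, integrating from $0$ to $1$ gives
\[
\bigl||a|^{q}-|b|^{q}\bigr|=|\varphi(1)-\varphi(0)|\le q\,(|a|+|b|)^{q-1}|a-b|\lesssim \bigl(|a|^{q-1}+|b|^{q-1}\bigr)|a-b|,
\]
where the implicit constant depends only on $q$. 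The isolated zeros of $t\mapsto b+t(a-b)$ are harmless since $\varphi$ is absolutely continuous on $[0,1]$ whenever $q>1$.

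For the second inequality I would argue along the same lines, viewing $F(z):=|z|^{q-1}z$ as a map from $\mathbb{R}^{2}$ to $\mathbb{R}^{2}$. When $q\ge 2$ the map $F$ is of class $C^{1}$ with Jacobian bounded in norm by $C_{q}|z|^{q-1}$, so the fundamental theorem of calculus applied to $t\mapsto F(b+t(a-b))$ yields the estimate exactly as above. The one genuine subtlety is the range $1<q<2$, where $F$ is only H\"older at the origin. I would dispose of it by a simple case split: if $|a-b|\ge \tfrac{1}{2}(|a|+|b|)$, then
\[
\bigl||a|^{q-1}a-|b|^{q-1}b\bigr|\le |a|^{q}+|b|^{q}\le (|a|+|b|)^{q-1}(|a|+|b|)\le 2(|a|+|b|)^{q-1}|a-b|,
\]
and one concludes as before. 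If instead $|a-b|<\tfrac{1}{2}(|a|+|b|)$, then the whole segment $\{b+t(a-b):t\in[0,1]\}$ stays at distance $\gtrsim |a|+|b|$ from the origin (so $F$ is smooth on a neighbourhood of the segment), and the fundamental theorem of calculus again produces the desired bound.

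The routine calculations are straightforward; the only place to be careful is the $1<q<2$ case of the second inequality, and the case split just described removes that difficulty.
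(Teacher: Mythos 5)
Your argument is correct. Note, though, that the paper offers no proof of this lemma at all: it is introduced in Appendix E with the words ``the following obvious inequalities'' and stated without proof, so there is no proof of record to compare against; your fundamental-theorem-of-calculus argument along the segment $[b,a]$ is exactly the standard way to make the ``obvious'' precise, and both estimates come out with constant at most $Cq2^{q-1}$. Two minor remarks. First, the case split you introduce for $1<q<2$ in the second inequality is not actually needed: the map $F(z)=|z|^{q-1}z$ is $C^{1}$ on all of $\mathbb{C}$ for every $q>1$, since its differential satisfies $|DF(z)|\le q|z|^{q-1}\to 0$ as $z\to 0$ and $F$ is differentiable at the origin with derivative $0$; so the same FTC argument you use for the first inequality applies verbatim. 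Second, if you do keep the case split, the assertion that for $|a-b|<\tfrac12(|a|+|b|)$ the segment stays at distance $\gtrsim |a|+|b|$ from the origin is true but deserves a line of justification: if the nearest point $p$ of the segment to $0$ is interior, then $|a|+|b|\le |a-b|+2|p|$, whence $|p|>\tfrac14(|a|+|b|)$, while if it is an endpoint, say $|b|\le |a|$, then $|a|-|b|\le|a-b|<\tfrac12(|a|+|b|)$ forces $|b|>\tfrac13|a|\ge\tfrac16(|a|+|b|)$. With either of these fixes the proof is complete and entirely elementary.
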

Besides the above, we also have the following inequality.

\begin{lemma}\label{09/03/06/16:46}
Let $0< q < \infty$, $L\in \mathbb{N}$ and $a_{1},\ldots, a_{L} \in \mathbb{C}$. Then, we have 
\[
\left| 
\left| \sum_{k=1}^{L}a_{k}  \right|^{q}\sum_{l=1}^{L}a_{l} 
-\sum_{l=1}^{L}\left| a_{l} \right|^{q}a_{l}
\right|
\le C \sum_{l=1}^{L}\sum_{{1\le k \le L} \atop { k\neq l}} \left| a_{l} \right| \left| a_{k} \right|^{q}
\]
for some constant $C>0$ depending only on $q$ and $L$.
\end{lemma}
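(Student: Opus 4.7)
The plan is to reduce the estimate to a pointwise bound on $|S|^q - |a_l|^q$, where $S := \sum_{k=1}^L a_k$. The key algebraic identity is
\begin{equation*}
|S|^q S - \sum_{l=1}^L |a_l|^q a_l = \sum_{l=1}^L \bigl(|S|^q - |a_l|^q\bigr) a_l,
\end{equation*}
obtained by writing $|S|^q S = \sum_l |S|^q a_l$ and subtracting term by term. Taking absolute values, the left-hand side of the lemma is bounded by $\sum_l \bigl| |S|^q - |a_l|^q \bigr| \, |a_l|$, so it suffices to control $\bigl||S|^q - |a_l|^q\bigr|$ by a sum of $|a_k|$-powers with $k \neq l$ and then multiply through by $|a_l|$.

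For $q \ge 1$ I would use the elementary inequality $\bigl||a|^q - |b|^q\bigr| \le q(|a|^{q-1} + |b|^{q-1})|a - b|$ (a special case of Lemma \ref{09/05/03/10:56}) applied with $a = S$ and $b = a_l$, so that $|S - a_l| = |\sum_{k \neq l} a_k| \le \sum_{k \neq l} |a_k|$. Since $|S|^{q-1} \le L^{q-1} \sum_k |a_k|^{q-1}$ (by $L^\infty$-$\ell^1$ inequality), this produces a bound of the form $C_{L,q} \sum_k |a_k|^{q-1} \sum_{k' \neq l}|a_{k'}|$. After multiplying by $|a_l|$, each resulting triple product $|a_l||a_k|^{q-1}|a_{k'}|$ with $k' \neq l$ is handled by Young's inequality $|a_k|^{q-1}|a_{k'}| \le \tfrac{q-1}{q}|a_k|^q + \tfrac{1}{q}|a_{k'}|^q$; the indices are then checked case by case (with $k = l$ yielding $|a_l|^q|a_{k'}|$, which has the desired form after swapping the roles of $l$ and $k'$) to see that everything collapses into the right-hand side $\sum_l \sum_{k \neq l} |a_l||a_k|^q$ with a constant depending only on $q$ and $L$.

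For $0 < q \le 1$ the argument is shorter. The subadditivity $(x+y)^q \le x^q + y^q$ for nonnegative $x, y$ gives $\bigl||S|^q - |a_l|^q\bigr| \le \bigl||S| - |a_l|\bigr|^q \le |S - a_l|^q \le \bigl(\sum_{k \neq l}|a_k|\bigr)^q \le C_L \sum_{k \neq l}|a_k|^q$, and multiplying by $|a_l|$ gives the claim immediately.

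The routine part is the algebraic identity; the only genuinely nontrivial step is the combinatorial bookkeeping in the $q \ge 1$ case, where one must verify that \emph{every} cross term $|a_l||a_k|^{q-1}|a_{k'}|$ arising from the Young estimate can be absorbed into the asymmetric sum $\sum_l \sum_{k \neq l} |a_l||a_k|^q$; the case $k = l$ is the subtle one, since the product becomes $|a_l|^q |a_{k'}|$ and one has to reinterpret the index $k'$ (not $l$) as the inner summation variable. Once this bookkeeping is made explicit the constant $C$ can be taken to depend on $q$ and $L$ alone, as claimed.
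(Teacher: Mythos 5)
Your proof is correct, and it shares the paper's first step — the identity $|S|^{q}S-\sum_{l}|a_{l}|^{q}a_{l}=\sum_{l}\bigl(|S|^{q}-|a_{l}|^{q}\bigr)a_{l}$ and the resulting reduction to bounding $|a_{l}|\,\bigl||S|^{q}-|a_{l}|^{q}\bigr|$ — but from there it proceeds differently. The paper rearranges the indices so that $|a_{1}|\le\cdots\le|a_{L}|$, disposes of the terms $l<L$ crudely (each is at most $(L^{q}+1)|a_{l}||a_{L}|^{q}$), and only for the maximal index $l=L$ invokes the elementary inequality $\bigl||a|^{q}-|b|^{q}\bigr|\lesssim(|a|^{q-1}+|b|^{q-1})|a-b|$; the rearrangement then gives $|S|^{q-1}\lesssim|a_{L}|^{q-1}$ directly, so no Young inequality or cross-term bookkeeping is needed, while for $0<q\le 1$ the paper needs a further case split ($|a_{L-1}|\gtrless|a_{L}|/L$) and a concavity estimate. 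You instead treat all indices $l$ symmetrically: for $q\ge 1$ you pay for this with the bound $|S|^{q-1}\le L^{q-1}\sum_{k}|a_{k}|^{q-1}$, Young's inequality, and the index check you rightly single out — the term with $k=l$ must be kept as $|a_{l}|^{q}|a_{k'}|$ and read with $k'$ as the outer index (applying Young there would produce $|a_{l}|^{q+1}$, which is not absorbable), and your sketch handles exactly this point correctly; for $0<q\le 1$ your one-line argument via $\bigl||S|^{q}-|a_{l}|^{q}\bigr|\le|S-a_{l}|^{q}\le\sum_{k\neq l}|a_{k}|^{q}$ is genuinely simpler and cleaner than the paper's rearrangement-plus-case-split treatment. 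In short, the paper's rearrangement buys freedom from Young-type bookkeeping in the $q>1$ case, while your symmetric argument buys a shorter small-$q$ case and no need to order the $a_{k}$; both give a constant depending only on $q$ and $L$.
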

\begin{proof}[Proof of Lemma \ref{09/03/06/16:46}] 
Since 
\[
\left| 
\left| \sum_{k=1}^{L}a_{k}  \right|^{q}\sum_{l=1}^{L}a_{l} 
-\sum_{l=1}^{L}\left| a_{l} \right|^{q}a_{l}
\right|
\le 
\sum_{l=1}^{L}\left| a_{l} \right|
\left| 
\left| \sum_{k=1}^{L}a_{k}  \right|^{q} 
-\left| a_{l} \right|^{q}
\right|,
\]
it suffices to prove that 
\begin{equation}\label{09/03/06/17:15}
\sum_{l=1}^{L}\left| a_{l} \right|
\left| 
\left| \sum_{k=1}^{L}a_{k}  \right|^{q}
-\left| a_{l} \right|^{q}
\right| 
\le C \sum_{l=1}^{L}\sum_{{1\le k \le L} \atop { k\neq l}} \left| a_{l} \right| \left| a_{k} \right|^{q}
\end{equation}
for some constant $C>0$ depending only on $q$ and $L$. Rearranging the sequence, we may assume that 
\[
|a_{1}|\le |a_{2}| \le \cdots \le |a_{L-1}|\le |a_{L}|.
\] 
Then, we have 
\[
\begin{split}
\sum_{l=1}^{L-1}\left| a_{l} \right|
\left| 
\left| \sum_{k=1}^{L}a_{k}  \right|^{q}
-\left| a_{l} \right|^{q}
\right| 
&\le 
\sum_{l=1}^{L-1}\left| a_{l} \right|
\left\{  \left( \sum_{k=1}^{L} \left| a_{k} \right| \right)^{q} + \left| a_{l} \right|^{q} \right\}
\\
&\le 
\sum_{l=1}^{L-1}\left| a_{l} \right| \left\{ \left(L|a_{L}|\right)^{q}+\left|a_{L}\right|^{q} \right\}
\\
&=(L^{q}+1)\sum_{l=1}^{L-1}\left| a_{l} \right| |a_{L}|^{q}
\\
&\le (L^{q}+1)\sum_{l=1}^{L}\sum_{{1\le k \le L}\atop {k\neq l}}|a_{l}|\left| a_{k} \right|^{q}.
\end{split}
\]
It remains an estimate for the term
\[
\left| a_{L} \right|
\left| 
\left| \sum_{k=1}^{L}a_{k}  \right|^{q}
-\left| a_{L} \right|^{q}
\right| .
\]
When $q>1$, we have by (\ref{09/09/27/21:09}) that 
\begin{equation*}\label{09/03/06/17:40}
\begin{split}
\left| a_{L} \right|
\left| 
\left| \sum_{k=1}^{L}a_{k}  \right|^{q}
-\left| a_{L} \right|^{q}
\right| 
&\lesssim \left| a_{L} \right| \left( \biggm| \sum_{k=1}^{L}  a_{k}  \biggm|^{q-1} + \left| a_{L} \right|^{q-1} \right) \left|\sum_{l=1}^{L-1} a_{l}  \right|
\\
&\le  \left(L^{q-1}+1 \right)\left| a_{L} \right|^{q}  \sum_{l=1}^{L-1}\left| a_{l}  \right|
\\
&\le  \left(L^{q-1}+1 \right) \sum_{l=1}^{L}\sum_{{1\le k \le L} \atop { k\neq l}} \left| a_{l} \right| \left| a_{k} \right|^{q},
\end{split}
\end{equation*}
where the implicit constant depends only on $q$. Hence, in the case $q>1$, we have obtained the result. We consider the case $0<q\le 1$. When $|a_{L-1}|\ge |a_{L}|/L$, we have 
\[
\begin{split}
\left| a_{L} \right|
\left| 
\left| \sum_{l=1}^{L}a_{l}  \right|^{q}
-\left| a_{L} \right|^{q}
\right| 
&\le 
\left| a_{L} \right|
\left( L^{q}+1 \right) \left| a_{L}\right|^{q}
\\
&\le L\left| a_{L-1} \right|
\left( L^{q}+1 \right) \left| a_{L}\right|^{q}
\\
&\le 
L\left(L^{q}+1 \right) \sum_{l=1}^{L}\sum_{{1\le k \le L} \atop { k\neq l}} \left| a_{l} \right| \left| a_{k} \right|^{q}.
\end{split}
\]
On the other hand, when $\left| a_{L-1}\right|\le \left| a_{L} \right|/L$, one can see that 
\[
\frac{\left| a_{L} \right|}{L} \le \left| a_{L} \right| - \left| \sum_{l=1}^{L-1}a_{l} \right| \le 
\left| \sum_{l=1}^{L}a_{l}\right|
\le 
\left| a_{L} \right|
+
\left| \sum_{l=1}^{L-1}a_{l} \right|.
\]
Moreover, we have by the convexity of the function $f(t)=t^{q}$ ($0<q\le 1$) that   
\[
\left(\left| a_{L} \right| +\left| \sum_{l=1}^{L-1}a_{l}\right|\right)^{q} 
-\left| a_{L} \right|^{q} \le \left| a_{L} \right|^{q} -\left( \left| a_{L} \right| -\left| \sum_{l=1}^{L-1}a_{l}\right| \right)^{q}.
\]
Hence, it follows from these estimates  that 
\[
\begin{split}
\left| \left| \sum_{l=1}^{L}a_{l}\right|^{q} -\left| a_{L} \right|^{q} \right|
&\le 
\left| a_{L}\right|^{q} - \left( \left| a_{L}\right| - \left| \sum_{l=1}^{L-1}a_{l} \right|\right)^{q}
\\
&\le 
q \left| \sum_{l=1}^{L-1}a_{l}\right|
\left(\left| a_{L}\right| -\left| \sum_{l=1}^{L-1}a_{l} \right| \right)^{q-1}
\\
&
\le 
q \left| \sum_{l=1}^{L-1}a_{l}\right| |a_{L}|^{q-1}
\end{split}
\]
and therefore  
\[
\left| a_{L} \right|\left| \left| \sum_{l=1}^{L}a_{l}\right|^{q} -\left| a_{L} \right|^{q} \right|
\le q \left| \sum_{l=1}^{L-1}a_{l}\right| |a_{L}|^{q}
\le q \sum_{l=1}^{L}\sum_{{1\le k \le L} \atop { k\neq l}} \left| a_{l} \right| \left| a_{k} \right|^{q}.
\]
Thus, we have completed the proof. 
\end{proof}

\section{Concentration function}
\label{08/10/07/9:02}
In this section, $B_{R}(a)$ denotes an open ball in $\mathbb{R}^{d}$ 
with the  center $a \in \mathbb{R}^{d}$ and the  radius $R$:
$$
  B_{R}(a)
   := 
  \bigl\{ 
                 x\in \mathbb{R}^d  
                                               \bigm | 
                                                              |x-a|<R
       \bigr\}.
$$

In order to trace the ``fake'' soliton, we prepare Proposition \ref{08/10/03/9:46} below.
We begin with:

\begin{lemma}[Nawa \cite{Nawa1-2}]\label{09/12/29/9:43}
Let  $0< T_{m} \le \infty$, $1<q<\infty$ and let $\rho$ be a non-negative function 
in $C([0,T_{m});L^{1}(\mathbb{R}^{d})\cap L^{q}(\mathbb{R}^{d}))$.
Suppose that 
\begin{equation}\label{10/05/19/19:45}
          \int_{|x-y_{0}|<R_{0}}\rho(x,t_{0})\,dx > C_{0}
\end{equation}
for some $C_{0}>0$, $R_{0}>0$ and $(y_{0},t_{0})\in \mathbb{R}^{d}\times [0,T_{m})$. 
Then, there exist $\theta >0$ and $\Gamma>0$, both depending only on 
 $d$, $q$, $\rho$, $C_{0}$, $R_{0}$ and $t_{0}$, such that 
\[
     \int_{|x-y|<R_{0}}\rho(x,t)\,dx  
         > C_{0},
         \quad
         \forall  (y,t)
              \in 
                B_{\Gamma}(y_0) \times (t_0-\theta, t_0 +\theta).
\] 
\end{lemma}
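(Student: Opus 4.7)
The plan is to show that the concentration function
\[
\Phi(y,t) := \int_{|x-y|<R_{0}}\rho(x,t)\,dx
= \int_{\mathbb{R}^{d}}\chi_{B_{R_{0}}(y)}(x)\,\rho(x,t)\,dx
\]
is jointly continuous on $\mathbb{R}^{d}\times [0,T_{m})$. Once this is established, the hypothesis $\Phi(y_{0},t_{0})>C_{0}$ together with continuity immediately yields an open neighborhood $B_{\Gamma}(y_{0})\times(t_{0}-\theta, t_{0}+\theta)$ on which $\Phi(y,t)>C_{0}$, as required.

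To prove the continuity of $\Phi$, I would split the difference as follows:
\begin{equation*}
\bigl|\Phi(y,t)-\Phi(y_{0},t_{0})\bigr|
\le
\bigl|\Phi(y,t)-\Phi(y,t_{0})\bigr|
+\bigl|\Phi(y,t_{0})-\Phi(y_{0},t_{0})\bigr|.
\end{equation*}
For the first (temporal) term, I would bound
\[
\bigl|\Phi(y,t)-\Phi(y,t_{0})\bigr|
\le
\int_{\mathbb{R}^{d}}\bigl|\rho(x,t)-\rho(x,t_{0})\bigr|\,dx
=\left\|\rho(\cdot,t)-\rho(\cdot,t_{0})\right\|_{L^{1}},
\]
uniformly in $y$; this goes to $0$ as $t\to t_{0}$ by the hypothesis $\rho\in C([0,T_{m});L^{1}(\mathbb{R}^{d}))$. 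For the second (spatial) term, I would apply H\"older's inequality with exponents $q$ and $q'=q/(q-1)$:
\[
\bigl|\Phi(y,t_{0})-\Phi(y_{0},t_{0})\bigr|
\le
\left\|\chi_{B_{R_{0}}(y)}-\chi_{B_{R_{0}}(y_{0})}\right\|_{L^{q'}}
\left\|\rho(\cdot,t_{0})\right\|_{L^{q}}.
\]
Since the symmetric difference $B_{R_{0}}(y)\triangle B_{R_{0}}(y_{0})$ has Lebesgue measure tending to $0$ as $y\to y_{0}$ (one can bound it by the measure of an annular shell of width $|y-y_{0}|$ around $\partial B_{R_{0}}(y_{0})$), we obtain $\|\chi_{B_{R_{0}}(y)}-\chi_{B_{R_{0}}(y_{0})}\|_{L^{q'}}\to 0$ as $y\to y_{0}$; the factor $\|\rho(\cdot,t_{0})\|_{L^{q}}$ is finite by hypothesis.

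Combining these estimates, for any $\varepsilon>0$ (in particular for $\varepsilon=\Phi(y_{0},t_{0})-C_{0}>0$) one can choose $\Gamma=\Gamma(\varepsilon,d,q,\|\rho(\cdot,t_{0})\|_{L^{q}},R_{0})>0$ so that the spatial term is $<\varepsilon/2$ for $y\in B_{\Gamma}(y_{0})$, and then choose $\theta=\theta(\varepsilon,\rho,t_{0})>0$ so that the temporal term is $<\varepsilon/2$ for $|t-t_{0}|<\theta$; this yields $\Phi(y,t)>\Phi(y_{0},t_{0})-\varepsilon>C_{0}$ on the claimed neighborhood. The only mildly delicate point is the estimate on $\|\chi_{B_{R_{0}}(y)}-\chi_{B_{R_{0}}(y_{0})}\|_{L^{q'}}$, but this is a routine measure-theoretic computation: the $L^{q'}$-norm equals $\mathcal{L}^{d}(B_{R_{0}}(y)\triangle B_{R_{0}}(y_{0}))^{1/q'}$, which is $O(|y-y_{0}|^{1/q'})$ uniformly for $|y-y_{0}|\le R_{0}$.
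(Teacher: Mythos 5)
Your proof is correct and follows essentially the same route as the paper: spatial continuity of the concentration function at fixed time via H\"older's inequality against $\|\rho(t_{0})\|_{L^{q}}$ (your symmetric-difference estimate is the same computation as the paper's bound on $B_{R_{0}}(y_{0})\setminus B_{R_{0}}(y)$), and temporal continuity via the $L^{1}$-continuity of $t\mapsto\rho(t)$, uniformly in $y$. The only differences are cosmetic (your $\varepsilon/2$ splitting versus the paper's $\varepsilon_{0}/3$ bookkeeping), so no changes are needed.
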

%
\begin{remark}
When $\rho \colon [0,T_{m}) \to L^{1}(\mathbb{R}^{d})$ is uniformly continuous, 
we can take $\theta$ independent of $t_{0}$.
Unfortunately, we are not in a case to assume the uniform continuity.
\end{remark}
%
\begin{proof}[Proof of Lemma \ref{09/12/29/9:43}]
Put 
\[
\varepsilon_{0}=\int_{|x-y_{0}|<R_{0}}\rho(x,t_{0})\,dx -C_{0}.
\]
Then, we have by (\ref{10/05/19/19:45}) that $\varepsilon_{0}>0$. 
Since we have by the H\"older inequality that 
\begin{equation}\label{10/05/19/19:49}
     \int_{B_{R_{0}}(y_{0}) \setminus B_{R_{0}}(y)}
       \rho(x,t_{0})
       \,dx
      \le 
         \mathcal{L}^{d}
         \left(
                 B_{R_{0}}(y_{0})\setminus B_{R_{0}}(y)
          \right)^{1-\frac{1}{q}}
                          \|\rho (t_{0})\|_{L^{q}(\mathbb{R}^{d})},
            \quad 
  \forall y \in \mathbb{R}^{d},
\end{equation}
we can take $\Gamma>0$, depending only on $d$, $q$, $\rho$, 
 $C_{0}$, $R_{0}$ and $t_{0}$, such that
\begin{equation}\label{09/12/29/10:59}
       \int_{B_{R_{0}}(y_{0}) \setminus B_{R_{0}}(y)}
          \rho(x,t_{0})
          \,dx
         < 
             \frac{\varepsilon_{0}}{3},
              \quad 
              \forall y \in B_{\Gamma}(y_{0}).
\end{equation}
Moreover, it follows from  (\ref{09/12/29/10:59}) that 
\begin{equation}\label{10/05/19/19:53}
  \begin{split}
       &    \int_{B_{R_{0}}(y_{0})}\rho(x,t_{0})\,dx
        \\[6pt]
         &    =
              \int_{B_{R_{0}}(y)}\rho(x,t_{0})\,dx
               +
                  \int_{B_{R_{0}}(y_{0}))\setminus  B_{R_{0}}(y)}
                        \hspace{-12pt}\rho(x,t_{0})
                     \,dx 
                   -
                        \int_{B_{R_{0}}(y)\setminus B_{R_{0}}(y_{0})}
                            \hspace{-12pt} \rho(x,t_{0})
                            \,dx
                            \\[6pt]
             &    <
                      \int_{B_{R_{0}}(y)}\rho(x,t_{0})\,dx
                       +
                         \frac{\varepsilon_{0}}{3},
                  \qquad 
                \forall  y \in B_{\Gamma}(y_{0}),
   \end{split}
\end{equation}
so that  we have 
\begin{equation}\label{08/10/05/15:10}
   \int_{B_{R_{0}}(y)}\rho(x,t_{0})\,dx
    >
      \int_{B_{R_{0}}(y_{0})}\rho(x,t_{0})\,dx
         - 
           \frac{\varepsilon_{0}}{3},
     \quad 
     \forall y \in B_{\Gamma}(y_{0}).
\end{equation}
Now, since $\rho$ is non-negative, we have that, 
for  any measurable set $\Omega \subset \mathbb{R}^{d}$,  
\begin{equation}\label{09/12/29/11:12}
   \int_{\Omega} \rho(x,t_{0})\,dx
        - 
        \int_{\Omega}\rho(x,t)\,dx 
    \le 
    \left\|\rho(t)  -    \rho(t_{0})\right\|_{L^{1}(\mathbb{R}^{d})},
    \quad
    \forall  t \in [0,T_{m}).
\end{equation}
This estimate (\ref{09/12/29/11:12}),
 together with the continuity of $\rho \colon [0,T_{m})\to L^{1}(\mathbb{R}^{d})$, gives us  that  
 there exists $\theta >0$, depending only on $d$, $\rho$, $C_{0}$, $R_{0}$ and $t_{0}$, such that 
\begin{equation}\label{10/05/19/20:05}
    \begin{split}
     \int_{B_{R_{0}}(y)\cap B_{R_{0}}(y_{0})}\rho(x,t_{0})\,dx 
     & -
        \int_{B_{R_{0}}(y)\cap B_{R_{0}}(y_{0})}\rho(x,t)\,dx 
       < 
         \frac{\varepsilon_{0}}{3},\\[8pt]
       &  \qquad\qquad
         \forall (y,t)\in \mathbb{R}^d \times (t_0-\theta, t_0 + \theta),
    \end{split}
\end{equation}
and 
\begin{equation}\label{10/05/19/20:06}
  \begin{split}
        \int_{B_{R_{0}}(y)\setminus B_{R_{0}}(y_{0})}
         \rho(x,t_{0})\,dx 
        &  - 
            \int_{B_{R_{0}}(y)\setminus B_{R_{0}}(y_{0})}
              \rho(x,t)\,dx 
             <
                 \frac{\varepsilon_{0}}{3},\\[8pt]
        &    \qquad\qquad
                    \forall (y,t)\in \mathbb{R}^d \times (t_0-\theta, t_0 + \theta),
   \end{split}
\end{equation}
Combining these estimates, we see that
there exists $\theta >0$, depending only on $d$, $\rho$, $C_{0}$, $R_{0}$ and $t_{0}$, such that   
\begin{equation}\label{08/10/05/15:31}
 \int_{B_{R_{0}}(y)}\rho(x,t_{0})\,dx 
 - 
          \int_{B_{R_{0}}(y)}
            \rho(x,t)\,dx 
          < 
              \frac{2\varepsilon_{0}}{3},\\[8pt]
\qquad
                         \forall (y,t)\in \mathbb{R}^d \times (t_0-\theta, t_0 + \theta).
\end{equation}
Moreover, it follows from the estimates (\ref{08/10/05/15:10}) and (\ref{08/10/05/15:31}) that
there exist $\theta >0$ and $\Gamma>0$, 
both depending only on $d$, $q$, $\rho$, $C_{0}$, $R_{0}$ and $t_{0}$, such that 
\[
   \begin{split}
         \int_{|x-y|<R_{0}}\rho(x,t)\,dx 
       &  = 
            \int_{B_{R_{0}}(y)}\rho(x,t_{0})\,dx
             -
              \left( 
                       \int_{B_{R_{0}}(y)}\rho(x,t_{0})\,dx
                         -
                           \int_{B_{R_{0}}(y)}\rho(x,t)\,dx
                 \right)\\[6pt]
        &  > 
                \int_{B_{R_{0}}(y)}\rho(x,t_{0})\,dx
                  -
                  \frac{2\varepsilon_{0}}{3}\\[6pt]
        &   \ge 
                    \int_{B_{R_{0}}(y_{0})}\rho(x,t_{0})\,dx
                   - 
                    \varepsilon_{0} = C_{0}
     \end{split}
\]
{for all $y \in \mathbb{R}^{d}$ with $|y-y_{0}|<\Gamma$, and all $t \in [0,T_{m})$ with $|t-t_{0}|<\theta$}.
Thus, we have proved the lemma.
\end{proof}

\begin{proposition}[Nawa \cite{Nawa1-2, Nawa1-3}]\label{08/10/03/9:46}
Assume that $d\ge 1$. Let $0< T_{m} \le \infty$, $1<q<\infty$, and let $\rho$ be a non-negative function 
in $C([0,T_{m});L^{1}(\mathbb{R}^{d})\cap L^{q}(\mathbb{R}^{d}))$
with  
\begin{equation}\label{09/12/27/11:28}
       \|\rho(t)\|_{L^{1}(\mathbb{R}^{d})}=1 \quad 
        \mbox{ for all $t \in [0,T_{m})$}.
\end{equation}
We put
\[
    A_{\rho}
     =
        \sup_{R>0}\liminf_{t \to T_{m}}\sup_{y\in \mathbb{R}^{d}}
          \int_{|x-y|\le R}\rho(x,t)\,dx.
\]
If $A_{\rho}>\frac{1}{2}$, then for all $\varepsilon \in (0,1)$, 
there exist $R_{\varepsilon}>0$, $T_{\varepsilon}>0$ 
and a continuous path $\gamma_{\varepsilon} \in C([T_{\varepsilon},T_{m}); \mathbb{R}^{d})$ such that 
\[
        \int_{|x-\gamma_{\varepsilon}(t)|\le R}
           \rho(x,t)
           \,dx 
        \ge 
               (1-\varepsilon)A_{\rho} 
               \quad 
         \mbox{ for all $R\ge 3R_{\varepsilon}$ and $t \in [T_{\varepsilon}, T_{m})$}.
\]
\end{proposition}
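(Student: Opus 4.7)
The strategy is to exploit the fact that $A_\rho > 1/2$ forces \emph{uniqueness} of the concentration region up to translations of order $R_\varepsilon$, and then use Lemma \ref{09/12/29/9:43} to assemble a continuous path. Since the conclusion for $\varepsilon' \in (0, \varepsilon)$ implies that for $\varepsilon$, I may reduce to the case $\varepsilon < 1 - \frac{1}{2A_\rho}$, so that $2(1 - \tfrac{\varepsilon}{2})A_\rho > 1$.

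First, by the definition of $A_\rho$, I choose $R_\varepsilon > 0$ so that $\liminf_{t \to T_m} \sup_y \int_{|x-y| \le R_\varepsilon} \rho(x,t)\,dx \ge (1 - \tfrac{\varepsilon}{4})A_\rho$, and then select $T_\varepsilon < T_m$ so that for every $t \in [T_\varepsilon, T_m)$ some $y$ realizes
$$\int_{|x-y|\le R_\varepsilon} \rho(x,t)\,dx > (1 - \tfrac{\varepsilon}{2})A_\rho.$$
Set $F(t) := \{\, y \in \mathbb{R}^d : \int_{B_{R_\varepsilon}(y)} \rho(\cdot, t)\,dx \ge (1 - \tfrac{\varepsilon}{2})A_\rho\,\}$. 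This set is closed and non-empty. The \emph{uniqueness} claim $\mathrm{diam}\,F(t) \le 2R_\varepsilon$ follows from $\|\rho(t)\|_{L^1} = 1$ together with $\rho \ge 0$: two points in $F(t)$ at distance greater than $2R_\varepsilon$ would produce disjoint balls whose combined $\rho$-integral exceeds $2(1 - \tfrac{\varepsilon}{2})A_\rho > 1$, a contradiction.

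Second, I apply Lemma \ref{09/12/29/9:43} with $C_0 = (1 - \tfrac{\varepsilon}{2})A_\rho$ and $R_0 = R_\varepsilon$: for each $t_0 \in [T_\varepsilon, T_m)$ and each $y_0 \in F(t_0)$, there exist $\theta(t_0, y_0) > 0$ and $\Gamma(t_0, y_0) > 0$ such that $B_{\Gamma(t_0,y_0)}(y_0) \subset F(t)$ for all $|t - t_0| < \theta(t_0, y_0)$. I now construct $\gamma_\varepsilon$ inductively. Start with $\gamma_\varepsilon(T_\varepsilon) = y_0 \in F(T_\varepsilon)$ and keep $\gamma_\varepsilon$ constant as long as its value lies in $F(t)$. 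Suppose this fails at a first time $t_1$; by continuity of $\rho$ in $L^1$ and non-emptiness of $F(t_1)$, pick $y_1 \in F(t_1)$ and observe that for $t$ slightly below $t_1$ both $y_0$ and $y_1$ lie in $F(t)$, hence $|y_0 - y_1| \le 2R_\varepsilon$ by the uniqueness step. On a small prior interval $[t_1 - \delta, t_1]$ where $y_0 \in F(t)$ still holds, I replace the constant value by the linear interpolation from $y_0$ to $y_1$; on that interval $|\gamma_\varepsilon(t) - y_0| \le 2R_\varepsilon$, so for $R \ge 3R_\varepsilon$,
$$B_R(\gamma_\varepsilon(t)) \supset B_{R_\varepsilon}(y_0) \implies \int_{B_R(\gamma_\varepsilon(t))} \rho(x,t)\,dx \ge (1 - \tfrac{\varepsilon}{2})A_\rho \ge (1-\varepsilon)A_\rho.$$
Iterating yields $\gamma_\varepsilon$ with the property that at every $t$ it lies within $2R_\varepsilon$ of some element of $F(t)$, which delivers the desired mass lower bound for any $R \ge 3R_\varepsilon$.

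The main obstacle I foresee is ensuring that this inductive construction genuinely exhausts $[T_\varepsilon, T_m)$, i.e., that the sequence of jump times $\{t_k\}$ does not accumulate inside the interval. This is resolved by observing that on each compact subinterval $[T_\varepsilon, T_m - \eta]$ the function $t \mapsto \rho(t) \in L^1 \cap L^q$ is uniformly continuous, so the parameters $\theta$ and $\Gamma$ furnished by Lemma \ref{09/12/29/9:43} can be bounded uniformly from below (their construction there depends only on the modulus of continuity of $\rho$ and on $\|\rho(t_0)\|_{L^q}$, which is bounded on the compact subinterval). Hence only finitely many jumps occur on each $[T_\varepsilon, T_m - \eta]$, and letting $\eta \downarrow 0$ gives the continuous path on all of $[T_\varepsilon, T_m)$.
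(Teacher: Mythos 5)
Your core mechanism --- the disjoint-balls mass argument (valid because the working level $\lambda$ satisfies $2\lambda>1$) forcing any two admissible centers at the same time to lie within $2R_{\varepsilon}$ of each other, linear interpolation between consecutive centers, and the enlargement to radius $3R_{\varepsilon}$ so that the moving ball always contains a fixed ball of radius $R_{\varepsilon}$ on which the bound is known --- is exactly the mechanism of the paper's proof, with Lemma \ref{09/12/29/9:43} supplying the local-in-time persistence. The gap lies in your treatment of the one obstacle you single out, the possible accumulation of jump times. You assert that the parameters $\theta$ and $\Gamma$ of Lemma \ref{09/12/29/9:43} are bounded below on compact subintervals because they depend only on the modulus of continuity of $\rho$ and on $\|\rho(t_{0})\|_{L^{q}}$. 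They do not: in the proof of that lemma both are chosen in terms of the margin $\varepsilon_{0}=\int_{B_{R_{0}}(y_{0})}\rho(x,t_{0})\,dx-C_{0}$, and they degenerate as $\varepsilon_{0}\to 0$. Your construction takes the new center $y_{1}$ to be an arbitrary element of $F(t_{1})$, which is defined by the non-strict inequality at exactly the level $C_{0}=(1-\tfrac{\varepsilon}{2})A_{\rho}$; such a $y_{1}$ may have zero (or arbitrarily small) margin, in which case Lemma \ref{09/12/29/9:43} is not even applicable to it and the path may leave $F(t)$ immediately after $t_{1}$. Hence nothing in your argument prevents the dwell times from shrinking to zero and the jump times from accumulating strictly before $T_{m}$, where the piecewise-linear path (oscillating inside a ball of radius $2R_{\varepsilon}$) need not have a limit, so the construction need not exhaust $[T_{\varepsilon},T_{m})$.

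The gap is repairable within your scheme by a two-level trick: arrange $R_{\varepsilon}$ and $T_{\varepsilon}$ so that for every $t\in[T_{\varepsilon},T_{m})$ some $y$ satisfies $\int_{B_{R_{\varepsilon}}(y)}\rho(x,t)\,dx>(1-\tfrac{3\varepsilon}{8})A_{\rho}$, and always choose the new center at this higher level while keeping $F(t)$ at level $(1-\tfrac{\varepsilon}{2})A_{\rho}$; then every chosen center has margin at least $\tfrac{\varepsilon}{8}A_{\rho}$, and uniform continuity of $t\mapsto\rho(t)$ in $L^{1}$ on compact subintervals does yield a uniform positive dwell time, so your finitely-many-jumps argument goes through. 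The paper sidesteps the issue altogether: it applies Lemma \ref{09/12/29/9:43} at every $\tau\in(T_{\varepsilon}^{*},T_{m})$ to produce an open cover of that interval by time intervals on each of which a single center works, extracts a countable subcover by the Lindel\"of property, arranges it as a chain in which only consecutive intervals overlap, and interpolates on the overlaps, so that no quantitative lower bound on the interval lengths is ever needed.
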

%

\vspace{0.3cm}
\begin{proof}[Proof of Proposition \ref{08/10/03/9:46}] 
We put $\eta:=2A_{\rho}-1$, so that $A_{\rho}=\frac{1}{2}+\frac{\eta}{2}$. 
Then, the assumptions (\ref{09/12/27/11:28}) and $A_{\rho}>\frac{1}{2}$ imply that $0< \eta \le 1$. 
We choose arbitrarily $\varepsilon \in (0,\frac{\eta}{1+\eta})$  and fix it.

It follows from the definition of $A_{\rho}$ that 
for our $\varepsilon>0$, there exist $R_{\varepsilon}>0$ and $T_{\varepsilon}>0$ with the following property: 
for all $t \in [T_{\varepsilon},T_{m})$, there exists a point $y_{\varepsilon}(t) \in \mathbb{R}^{d}$ such that
\begin{equation}\label{08/10/05/22:09}
      \int_{|x-y_{\varepsilon}(t)| \le R_{\varepsilon}} 
         \rho(x,t)
           \,dx 
            > 
            \left(
                     1-\varepsilon 
            \right)
                       A_{\rho}.
\end{equation}
For these $R_{\varepsilon}$ and $T_{\varepsilon}$, 
we shall construct a continuous path $\gamma_{\varepsilon} \colon [T_{\varepsilon}, T_{m}) \to \mathbb{R}^{d}$ 
satisfying that 
\begin{equation}\label{10/01/31/16:23}
      \int_{|x-\gamma_{\varepsilon}(t)| \le 3R_{\varepsilon}} 
        \rho(x,t)
         \,dx 
       \ge 
              \left(
                       1-\varepsilon
               \right)
                           A_{\rho} 
         \quad 
             \mbox{for all $t \in [T_{\varepsilon},T_{m})$},
\end{equation}
which will lead us to  the desired conclusion. 
To this end, we define:
\begin{equation}\label{09/12/27/16:58}
       T_{\varepsilon}^{*}
        =
            \sup
                       \left\{
                                t \in [T_{\varepsilon}, T_{m}) 
                                    \biggm| 
                                     \int_{|x-y_{\varepsilon}(T_{\varepsilon})| \le R_{\varepsilon}} 
                                       \rho(x,t)
                                        \,dx 
                                              >
                                              \left(
                                                         1- \varepsilon
                                                  \right) 
                                                              A_{\rho}
                               \right\},
\end{equation}
where $y(T_{\varepsilon})$ is a point found in (\ref{08/10/05/22:09}). 
By Lemma \ref{09/12/29/9:43}, it follows from (\ref{08/10/05/22:09}) that 
 $T_{\varepsilon}^{*}> T_{\varepsilon}$.
 If $T_{\varepsilon}^{*}=T_{m}$, 
then  there is nothing to prove: 
$\gamma_{\varepsilon}(t) \equiv y_{\varepsilon}(T_{\varepsilon})$ $(t\in  [T_{\varepsilon}, T_{m}) )$
is the desired continuous path. 

 Hence, we consider the case of $T_{\varepsilon}^{*}<T_{m}$. 
In this case, by Lemma \ref{09/12/29/9:43} again, we find that: 
 there exists a constant $\theta_{\varepsilon}(T_{\varepsilon}^{*})>0$ such that
\begin{equation}\label{08/10/16/0:41}
         \int_{|x-y_{\varepsilon}(T_{\varepsilon}^{*})| \le R_{\varepsilon}} 
          \rho(x,t)
            \,dx 
            > 
                \left(
                        1-\varepsilon 
                 \right)
                            A_{\rho}
           \quad 
                 \mbox{
                              for all 
                                        $
                                             t \in (T_{\varepsilon}^{*}-\theta_{\varepsilon}(T_{\varepsilon}^{*}), 
                                             T_{\varepsilon}^{*}+\theta_{\varepsilon}(T_{\varepsilon}^{*}))
                                          $
                              }.
\end{equation}
Here, we claim that:   
\begin{equation}\label{09/12/27/19:35}
      \left(
                   B_{R_{\varepsilon}}(y_{\varepsilon}(T_{\varepsilon}^{*})) 
                        \times 
                          \{t\} 
            \right)
             \cap 
               \left( 
                          B_{R_{\varepsilon}}(y_{\varepsilon}(T_{\varepsilon}))
                             \times 
                                \{t\} 
                    \right) 
              \neq 
                         \emptyset 
              \quad\text{for}\quad
                         t \in (T_{\varepsilon}^{*}-\theta_{\varepsilon}(T_{\varepsilon}^{*}) ,T_{\varepsilon}^{*}].
\end{equation}
For: we suppose the contrary that 
$t_{1} \in (T_{\varepsilon}^{*}-\theta_{\varepsilon}(T_{\varepsilon}^{*}), T_{\varepsilon}^{*}]$ 
such that  
\begin{equation}\label{09/12/28/11:50}
   \left( 
            B_{R_{\varepsilon}}(y_{\varepsilon}(T_{\varepsilon}^{*}))
                \times 
                \{t_{1}\} 
      \right) 
               \cap 
                      \left( 
                                B_{R_{\varepsilon}}(y_{\varepsilon}(T_{\varepsilon})) 
                                  \times 
                                    \{t_{1}\}
                         \right)
             = 
                  \emptyset .
\end{equation}
Then, it follows from the assumption (\ref{09/12/27/11:28}) and (\ref{09/12/28/11:50}) that 
\begin{equation}\label{09/12/27/20:02}
     \begin{split}
     1
        &   \ge 
                \int_{
                             B_{R_{\varepsilon}}(y_{\varepsilon}(T_{\varepsilon}^{*}))
                          \cup 
                               B_{R_{\varepsilon}}(y_{\varepsilon}(T_{\varepsilon}))
                          } 
                              \rho(x,t_{1})
                                 \,dx\\[6pt]
         &     = 
                 \int_{
                           B_{R_{\varepsilon}}(y_{\varepsilon}(T_{\varepsilon}^{*}))
                          } 
                             \rho(x,t_{1})
                      \,dx
                      +
                        \int_{
                                   B_{R_{\varepsilon}}(y_{\varepsilon}(T_{\varepsilon}))
                                 } 
                                     \rho(x,t_{1})\,dx.
          \end{split}
\end{equation}
Moreover, (\ref{08/10/16/0:41}) and the definition of $T_{\varepsilon}^{*}$  yeild that 
the right-hand side of (\ref{09/12/27/20:02}) is greater than $2(1-\varepsilon)A_{\rho}$. 
Hence, we obtain  
\begin{equation*}\label{09/12/27/20:06}
     1 
        > 
             2(1-\varepsilon)A_{\rho}
               =
                   2(1-\varepsilon)
                       \left(
                                 \frac{1}{2}+\frac{\eta} {2}
                           \right) 
                = 
                    1
                        +
                             \eta
                                  -
                                       \varepsilon
                                          (1+\eta),
\end{equation*}
so that  we have
\begin{equation}\label{10/01/31/22:37}
        \varepsilon
          > 
            \frac{\eta}{1+\eta},
\end{equation}
which (\ref{10/01/31/22:37}) contradicts our choice of  
$\varepsilon < \dfrac{\eta}{1+\eta}$. 
Therefore (\ref{09/12/27/19:35}) holds valid. 

\vspace{0.3cm}
Now, for our $\varepsilon \in  (0,\frac{\eta}{1+\eta})$, 
we define a path 
$\gamma_{\varepsilon}^{*} \colon [T_{\varepsilon},T_{\varepsilon}^{*}] \to \mathbb{R}^{d}$ by 
\begin{equation}\label{09/12/29/18:05}
    \gamma_{\varepsilon}^{*}(t)
      =
          \begin{cases}
                        \quad  y_{\varepsilon}(T_{\varepsilon})  &   \mbox{if} \qquad 
                                                         T_{\varepsilon} 
                                                             \le  
                                                                 t
                                                                   <
                                                                      T_{\varepsilon}^{*}    -  \theta_{\varepsilon}(T_{\varepsilon}^{*}),
                                                         \\[8pt]
                        \quad   y_{\varepsilon}(T_{\varepsilon}^{*})
                                         +
                                      \displaystyle\frac{T_{\varepsilon}^{*} - t}{\theta_{\varepsilon} (T_{\varepsilon}^{*})}
                                                   \left( 
                                                           y_{\varepsilon}(T_{\varepsilon})  -     y_{\varepsilon}(T_{\varepsilon}^{*})
                                                       \right) 
                                                                                 &\mbox{if}  \qquad
                                                            T_{\varepsilon}^{*}      -     \theta_{\varepsilon}(T_{\varepsilon}^{*}) 
                                                              \le 
                                                                  t 
                                                                \le
                                                                    T_{\varepsilon}^{*},\\[8pt]
                       \quad  y_{\varepsilon}(T_{\varepsilon}^{*})  &   \mbox{if} \qquad 
                                                         T_{\varepsilon}^{*} 
                                                             \le  
                                                                 t
                                                                   <
                                                                      T_{\varepsilon}^{*}+\theta_{\varepsilon}(T_{\varepsilon}^{*}).
              \end{cases}
\end{equation} 
Clearly we have 
$
\gamma_{\varepsilon}^{*}
  \in 
  C([T_{\varepsilon},T_{\varepsilon}^{*}+\theta_{\varepsilon}(T_{\varepsilon}^{*})]; \mathbb{R}^{d})
$. 
Moreover, it follows from (\ref{09/12/27/19:35})  that  
\begin{equation}\label{10/05/19/21:10}
    B_{3R_{\varepsilon}}(\gamma_{\varepsilon}^{*}(t))
           \supset 
             B_{R_{\varepsilon}}(y_{\varepsilon}(T_{\varepsilon}^{*}))
             \cup
             B_{R_{\varepsilon}}(y_{\varepsilon}(T_{\varepsilon})) 
        \quad 
                  \mbox{for all   $t 
                    \in 
                       [T_{\varepsilon}, T_{\varepsilon}^{*}+\theta_{\varepsilon}(T_{\varepsilon}^{*})]$}
\end{equation}
which, together with the definition of $T_{\varepsilon}^{*}$, 
yields that   
\begin{equation}\label{08/10/16/1:12}
    \begin{split}
            \int_{|x-\gamma_{\varepsilon}^{*}(t)|\le 3R_{\varepsilon}} 
                      \rho(x,t)
                    \,dx
             & \ge 
                     \int_{
                                            B_{R_{\varepsilon}}(y_{\varepsilon}(T_{\varepsilon}^{*}))
                                              \cup
                                                        B_{R_{\varepsilon}}(y_{\varepsilon}(T_{\varepsilon})) 
                                               }
                                         \rho(x,t)\,dx\\[8pt]
             &
                            \ge 
                      \left(
                                  1-\varepsilon 
                         \right)
                                                                A_{\rho}
                \qquad 
                    \mbox{ for all $t \in [T_{\varepsilon},T_{\varepsilon}^{*}+\theta_{\varepsilon}(T_{\varepsilon}^{*})]$}.
     \end{split}
\end{equation}
Here, we have enlarged the radius of the ball centered at 
the $\gamma_{\varepsilon}^{*}(t)$ for 
 $t \in [T_{\varepsilon},T_{\varepsilon}^{*}+\theta_{\varepsilon}(T_{\varepsilon}^{*})]$.

We shall extend the path $\gamma_{\varepsilon}^{*}$ to the whole interval $[T_{\varepsilon}, T_{m})$. 
We recall  (\ref{08/10/05/22:09}), so that we have from Lemma \ref{09/12/29/9:43} that,
 for all 
                            $\tau \in (T_{\varepsilon}^*,T_{m})$, 
there exists $y_{\varepsilon}(\tau)$, and exists a constant $\theta_{\varepsilon}(\tau)>0$ 
depending only on $d$, $q$, $\rho$, $\varepsilon$ and $\tau$ such that  
\begin{equation}\label{09/12/30/14:56}
    \int_{|x-y_{\varepsilon}(\tau)|\le R_{\varepsilon}} 
       \hspace{-6pt}
        \rho(x,t)
        \,dx 
        >
            \left( 
                      1-\varepsilon 
             \right)
                         A_{\rho} 
                \quad 
                          \mbox{for all 
                                     $t \in (\tau-\theta_{\varepsilon}(\tau), \tau+ \theta_{\varepsilon}(\tau))
                                            \subset 
                                                 (T_{\varepsilon}^*  ,  T_{m})$}
\end{equation}
Now we put 
    $
         I_{\tau}
           : =
                 (\tau-\theta_{\varepsilon}(\tau), \tau+\theta_{\varepsilon}(\tau))
     $. 
Then, we have    
\[
             \bigcup_{\tau \in (T_{\varepsilon}^*,T_{m}) }
                 I_{\tau}
                  =  
                       ( T_{\varepsilon}^*  ,   T_{m}),
\]
so that $\{I_{\tau} \}_{\tau \in (T_{\varepsilon}^*, T_{m})}$ 
is an open covering of $(T_{\varepsilon}^*, T_{m})$. 
Since $(T_{\varepsilon}^*, T_{m})$ is a Linder\"of space, 
we can take a countable subcovering $\{ I_{\tau_{k}}\}_{k \in \mathbb{N}}$, 
where $\{\tau_{k}\}_{k \in \mathbb{N}}$ is some increasing sequence in $(T_{\varepsilon}^*,T_{m})$ 
such that $\tau_k < \tau_{k+1}$, 
arranging that $I_{\tau_k}\cap I_{\tau_{k+2}}=\emptyset$ for $k=1,2, \cdots$.
We note that one can take $y_{\varepsilon}(\tau_1) = y_{\varepsilon}(T_{\varepsilon}^*)$.

We define $\gamma_{\varepsilon}:=\gamma_{\varepsilon}^*$ on $[T_{\varepsilon}, T_{\varepsilon}^*]$.
If necessary, we make an analogous procedure as in constructing $\gamma_{\varepsilon}^*$ in  \eqref{09/12/29/18:05} 
to define $\gamma_{\varepsilon}$ on $[T_{\varepsilon}^*, T_m)$:
by writing $I_{\tau_k}\cap I_{\tau_{k+1}} = (a_k, b_k)$ and $y_k:= y_{\varepsilon}(\tau_k)$,
\begin{equation*}
    \gamma_{\varepsilon}(t)
      =
          \begin{cases}
                        \quad  y_k  &   \mbox{if} \qquad  t\in I_{\tau_k}\setminus (a_k, b_k),
                                                         \\[6pt]
                        \quad   y_{k+1}
                                         +
                                      \displaystyle\frac{b_k - t}{\, b_k - a_k}
                                                   \left( 
                                                           y_{k}   -     y_{k+1}
                                                       \right) 
                                                                                 &\mbox{if}  \qquad t\in  (a_k, b_k),
                                                            \\[6pt]
                         \quad  y_{k+1} &   \mbox{if} \qquad t\in I_{\tau_{k+1}}\setminus (a_k, b_k).
              \end{cases}
\end{equation*} 
Then, $\gamma_{\varepsilon} \colon [T_{\varepsilon},T_m)\to \mathbb{R}^{d}$  
is continuous and satisfies that  
\[
      \int_{|x-\gamma_{\varepsilon}(t)|\le 3R_{\varepsilon}}
         \rho(x,t)
          \,dx
           \ge
                   (1-\varepsilon)
                                           A_{\rho}
                         \quad 
                         \mbox{for all $t \in [T_{\varepsilon}, T_{m})$},
\]
since we have
\begin{equation*}
      \left(
                   B_{R_{\varepsilon}}(y_{k}) 
                        \times 
                          \{t\} 
            \right)
             \cap 
               \left( 
                          B_{R_{\varepsilon}}(y_{k+1})
                             \times 
                                \{t\} 
                    \right) 
              \neq 
                         \emptyset 
              \quad \mbox{for all $t \in (a_k, b_k)$}.
\end{equation*}
\end{proof}

\begin{remark}
Here, we remark that:
 if $A_{\rho}=1$, the proof becomes easier.
Then the following estimate holds: 
 for all $\varepsilon>0$, 
\begin{equation}\label{09/12/27/20:12}
    \int_{
             B_{R_{\varepsilon}}(y_{\varepsilon}(T_{\varepsilon}^{*}))
               \cap 
                 B_{R_{\varepsilon}}(y_{\varepsilon}(T_{\varepsilon}))
                } 
                  \rho(x,t_{1})
            \,dx
        > 
               1-2\varepsilon
    \quad 
         \mbox{
                      for all 
                           $
                                 t \in (T_{\varepsilon}^{*}-\theta_{\varepsilon}(T_{\varepsilon}^{*}), T_{\varepsilon}^{*}]
                                 $
                           }.
\end{equation} 
Indeed, we have from (\ref{08/10/16/0:41}) and the definition of $T_{\varepsilon}^{*}$ that 
\[
     \begin{split}
   1
     &       \ge  
                 \int_{
                            B_{R_{\varepsilon}}(y_{\varepsilon}(T_{\varepsilon}^{*}))
                             \cup 
                                B_{R_{\varepsilon}}(y_{\varepsilon}(T_{\varepsilon}))
                             } 
                                \rho(x,t_{1})
                          \,dx\\
     &        =
                    \int_{
                              B_{R_{\varepsilon}}(y_{\varepsilon}(T_{\varepsilon}^{*}))
                               } 
                                  \rho(x,t_{1})
                                   \,dx
                +
                        \int_{
                                    B_{R_{\varepsilon}}(y_{\varepsilon}(T_{\varepsilon}))
                                  } 
                                    \rho(x,t_{1})
                              \,dx\\[6pt]
       & \hspace{60pt} -
                       \int_{
                                  B_{R_{\varepsilon}}(y_{\varepsilon}(T_{\varepsilon}^{*}))
                                    \cap 
                                        B_{R_{\varepsilon}}(y_{\varepsilon}(T_{\varepsilon}))
                                    } 
                                       \rho(x,t_{1})\,dx\\[6pt]
         &    > 
                 2\left(
                            1-\varepsilon
                    \right)
                                 -
                                    \int_{
                                                 B_{R_{\varepsilon}}(y_{\varepsilon}(T_{\varepsilon}^{*}))
                                                 \cap 
                                                     B_{R_{\varepsilon}}(y_{\varepsilon}(T_{\varepsilon}))
                                                 } 
                                                    \rho(x,t_{1})
                                                       \,dx
    \end{split}
\]
{
                                          for all 
                                                         $t \in (T_{\varepsilon}^{*}-\theta_{\varepsilon}(T_{\varepsilon}), T_{\varepsilon}^{*}]$
                                        }, 
which gives (\ref{09/12/27/20:12}). 
Thus we do not need to enlarge the radius of the ball centered at the $\gamma(t)$ for $t\in [T_{\varepsilon}, T_m)$.
\end{remark}
%

\section{Variational problems}\label{08/05/13/15:57}
We give the proofs of the variational problems,  Proposition \ref{08/06/16/15:24} and Proposition \ref{08/05/13/15:13}.
\par  
We begin with the proof of Proposition \ref{08/06/16/15:24}. 
\begin{proof}[Proof of Proposition \ref{08/06/16/15:24}]
We first prove the relation (\ref{08/05/13/11:33}):
\[
N_{1}^{\frac{p-1}{2}}=\left( \frac{2}{d}\right)^{\frac{p-1}{2}}
\left\{ \frac{d(p-1)}{(d+2)-(d-2)p}\right\}^{\frac{1}{4}\left\{ (d+2)-(d-2)p \right\}}N_{2}.
\]
 Take any  $f \in H^{1}(\mathbb{R}^{d})\setminus \{0\}$ with $\mathcal{K}(f)\le 0$ and put $f_{\lambda}(x)=\lambda^{\frac{2}{p-1}}f(\lambda x)$ for $\lambda>0$. Then, one can easily verify that:
\begin{align}
\|f_{\lambda}\|_{\widetilde{H}^{1}}^{2}
&=\lambda^{\frac{4}{p-1}+2-d}
\left\{ \frac{p-\left(1+\frac{4}{d}\right)}{p-1}\right\}
\|\nabla f\|_{L^{2}}^{2}
+\lambda^{\frac{4}{p-1}-d}\|f\|_{L^{2}}^{2},
\\[6pt]
\mathcal{K}(f_{\lambda})
&=\lambda^{\frac{4}{p-1}+2-d}\mathcal{K}(f)\le 0,
\\[6pt] 
\mathcal{N}_{2}(f_{\lambda})
&=
\mathcal{N}_{2}(f).
\end{align}
Moreover, an elementary calculus shows that $\|f_{\lambda}\|_{\widetilde{H}^{1}}^{2}$ takes the minimum at  
\begin{equation}\label{10/01/05/22:26}
\lambda =\left( \frac{d(p-1)}{d+2-(d-2)p}\right)^{\frac{1}{2}} \frac{\|f\|_{L^{2}}}{\|\nabla f\|_{L^{2}}},
\end{equation}
so that   
\begin{equation}\label{08/06/17/13:36}
\min_{\lambda>0}\|f_{\lambda}\|_{ \widetilde{H}^{1} }^{p-1}
=
\left( \frac{d}{2}\right)^{\frac{p-1}{2}} \left( \frac{d(p-1)}{d+2-(d-2)p} \right)^{\frac{1}{4}\left\{ d+2-(d-2)p+4 \right\} }\mathcal{N}_{2}(f). 
\end{equation}
Now, we consider a minimizing sequence $\{f_{n} \}_{n\in \mathbb{N}}$ of the variational problem for $N_{2}$ (see (\ref{08/07/02/23:24})). Then, it follows from (\ref{08/06/17/13:36}) and the definition of the variational value $N_{1}$ (see (\ref{08/07/02/23:23})) that 
\begin{equation}\label{10/01/03/11:06}
 N_{1}^{\frac{p-1}{2}}\le 
\left( \frac{d}{2}\right)^{\frac{p-1}{2}} \left( \frac{d(p-1)}{d+2-(d-2)p} \right)^{\frac{1}{4}\left\{ d+2-(d-2)p+4 \right\} }N_{2}.
\end{equation}
On the other hand, considering a minimizing sequence $\{f_{n}\}_{n\in \mathbb{N}}$ of the variational problem for $N_{1}$, we obtain
\begin{equation}\label{10/01/03/11:11}
N_{1}^{\frac{p-1}{2}}\ge 
\left( \frac{d}{2}\right)^{\frac{p-1}{2}} \left( \frac{d(p-1)}{d+2-(d-2)p} \right)^{\frac{1}{4}\left\{ d+2-(d-2)p+4 \right\} }N_{2}.
\end{equation}
This inequality (\ref{10/01/03/11:11}) together with (\ref{10/01/03/11:06}) gives (\ref{08/05/13/11:33}).
\par 
Next, we prove (\ref{08/05/13/11:25}):
\[
N_{3}=\frac{d(p-1)}{2(p+1)} N_{2}.
\] 
Let $\{f_{n}\}_{n\in \mathbb{N}}$ be a minimizing sequence of the variational problem for $N_{3}$ (see (\ref{08/07/02/23:25})). For each $f_{n}$, there is a constant $s_{n}>0$ such that $\mathcal{K}(s_{n}f_{n})=0$. We put $g_{n}=s_{n}f_{n}$. Then, one can easily verify that 
\[
\mathcal{I}(f_{n})=\mathcal{I}(g_{n}),
\]
so that 
\begin{equation}\label{10/01/03/11:39}
\lim_{n\to \infty}\mathcal{I}(g_{n})= N_{3}.
\end{equation}
Moreover, using $\mathcal{K}(g_{n})=0$, we find that      
\begin{equation}\label{10/01/03/11:43}
\mathcal{I}(g_{n})=\frac{d(p-1)}{2(p+1)}\mathcal{N}_{2}(g_{n})
  \ge \frac{d(p-1)}{2(p+1)}N_{2}
\quad 
\mbox{for all $n \in \mathbb{N}$}.
\end{equation}
Hence, (\ref{10/01/03/11:39}) and (\ref{10/01/03/11:43}) give us that 
\begin{equation}\label{10/01/03/11:45}
\frac{d(p-1)}{2(p+1)}N_{2} \le N_{3}. 
\end{equation}
Now, we consider a minimizing sequence $\{f_{n}\}_{n\in \mathbb{N}}$ of the variational problem for $N_{2}$ (see (\ref{08/07/02/23:24})). Then, it follows from  $\mathcal{K}(f_{n})\le 0$ and the definition of $N_{3}$ that 
\begin{equation}\label{10/01/03/11:58}
N_{3}\frac{2(p+1)}{d(p-1)}\|\nabla f_{n}\|_{L^{2}}^{2}
\le N_{3} \|f_{n}\|_{L^{p+1}}^{p+1}
\le \|f_{n}\|_{L^{2}}^{p+1-\frac{d}{2}(p-1)}\|\nabla f_{n} \|_{L^{2}}^{\frac{d}{2}(p-1)}.
\end{equation}
Dividing the both sides of (\ref{10/01/03/11:58}) by $\frac{2(p+1)}{d(p-1)}\|\nabla f_{n}\|_{L^{2}}^{2}$, we have   
\begin{equation}\label{10/01/03/11:59}
N_{3} \le \frac{d(p-1)}{2(p+1)}\|f_{n}\|_{L^{2}}^{p+1-\frac{d}{2}(p-1)}\|\nabla f_{n} \|_{L^{2}}^{\frac{d}{2}(p-1)-2}
=
\frac{d(p-1)}{2(p+1)}
\mathcal{N}_{2}(f_{n})
.
\end{equation}
Since $\{f_{n}\}_{n\in \mathbb{N}}$ is a minimizing sequence of the variational problem for $N_{2}$, taking $n\to \infty$ in (\ref{10/01/03/11:59}), 
we obtain 
\begin{equation*}
N_{3}\le \frac{d(p-1)}{2(p+1)}N_{2},
\end{equation*}
which together with (\ref{10/01/03/11:45}) gives (\ref{08/05/13/11:25}). 
\end{proof}

Next, we give the proof of Proposition \ref{08/05/13/15:13}.
\begin{proof}[Proof of Proposition \ref{08/05/13/15:13}] 
We first solve the variational problem for  
\[
N_{1}=\inf\left\{ \left\| f \right\|_{\widetilde{H}^{1}}^{2}
\, | \, f \in H^{1}(\mathbb{R}^{d})\setminus \{0\}, \ \mathcal{K}(f)\le 0
\right\}.
\]
Considering a concrete function, we verify that  
\begin{equation}\label{10/01/05/10:19}
N_{1} \lesssim 1,
\end{equation}
where the implicit constant depends only on $d$ and $p$. Moreover, the functional $\|\cdot \|_{\widetilde{H}^{1}}$ satisfies that 
\begin{equation}\label{09/12/19/15:38}
\|f\|_{H^{1}}^{2}
\le 
\frac{d(p-1)}{d(p-1)-4}\|f\|_{\widetilde{H}^{1}}^{2}
\quad 
\mbox{for all $f \in H^{1}(\mathbb{R}^{d})$}.
\end{equation}
We take a minimizing sequence $\{ f_{n}\}_{n \in \mathbb{N}}$ for this problem, so that     
\begin{align}\label{09/12/19/15:33}
\lim_{n\to \infty}\|f_{n}\|_{\widetilde{H}^{1}}^{2} 
&=N_{1}, 
\\[6pt]
\label{09/12/19/15:32}
\mathcal{K}(f_{n})&\le 0 
\quad 
\mbox{for all $n \in \mathbb{N}$}.
\end{align} 
Here, the property (\ref{09/12/19/15:32}) is equivalent to the inequality        
\begin{equation}\label{09/12/19/15:54}
\frac{2(p+1)}{d(p-1)}\|\nabla f_{n} \|_{L^{2}}^{2} 
\le 
\|f_{n}\|_{L^{p+1}}^{p+1}
\quad 
\mbox{for all $n \in \mathbb{N}$}. 
\end{equation}
By (\ref{10/01/05/10:19}), (\ref{09/12/19/15:38}) and (\ref{09/12/19/15:33}), we obtain the uniform bound of $\{f_{n}\}_{n \in \mathbb{N}}$ in $H^{1}(\mathbb{R}^{d})$: there exists $C_{1}>0$ depending only on $d$ and $p$ such that 
\begin{equation}\label{08/05/13/16:01}
\sup_{n \in \mathbb{N}}\|f_{n}\|_{H^{1}}\le C_{1}.
\end{equation}
Moreover, by the Gagliardo-Nirenberg inequality (we can obtain this inequality without solving the variational problem for $N_{3}$) and (\ref{08/05/13/16:01}),  we have   
\begin{equation}\label{09/12/19/16:04}
\|f_{n}\|_{L^{p+1}}^{p+1} 
\lesssim  
C_{1}^{p+1-\frac{d}{2}(p-1)}\|\nabla f_{n}\|_{L^{2}}^{\frac{d}{2}(p-1)},
\end{equation}
where the implicit constant depends only on $d$ and $p$. This inequality (\ref{09/12/19/16:04}), together with (\ref{09/12/19/15:54}), yields  that  
\begin{equation}\label{08/06/28/22:39}
\frac{2(p+1)}{d(p-1)} \lesssim 
C_{1}^{p+1-\frac{d}{2}(p-1)}
\|\nabla f_{n}\|_{L^{2}}^{\frac{d}{2}(p-1)-2}.
\end{equation}
Hence, using (\ref{09/12/19/15:54}) again, we have that: there exists  $C_{2}>0$ depending only on $d$ and $p$ such that 
\begin{equation}\label{08/05/13/15:39}
\inf_{n\in \mathbb{N}}\|f_{n}\|_{L^{p+1}}\ge C_{2}.
\end{equation}
The properties (\ref{08/05/13/16:01}) and (\ref{08/05/13/15:39}) enable us to apply Lemma \ref{08/3/28/20:21}, so that we obtain   
\[
\mathcal{L}^{d}\left[ |f_{n}| \ge \delta\right]>C
\]
for some constants $C$ and $\delta >0$ independent of $n$. 
Moreover, applying Lemma \ref{08/03/28/21:00}, we find that: there exists $y_{n} \in \mathbb{R}^{d}$ such that, putting $\widetilde{f}_{n}(x)=f_{n}(x+y_{n})$,
 we have     
\begin{equation}\label{08/05/19/14:04}
\mathcal{L}^{d}\left( \left[ \left|\widetilde{f}_{n}\right|\ge \frac{\delta}{2}\right]
\cap B_{1}(0) \right)>C'
\end{equation}
for some constant $C'>0$ independent of $n$. Here, we can easily verify that this sequence $\{\widetilde{f}_{n}\}_{n\in \mathbb{N}}$ has same properties as the original one $\{f_{n}\}_{n\in \mathbb{N}}$:
\begin{align}
\label{10/01/05/11:16}
\lim_{n\to \infty}\|\widetilde{f}_{n}\|_{\widetilde{H}^{1}}^{2} 
&=N_{1},
\\[6pt]
\label{10/01/05/11:18}
\mathcal{K}(\widetilde{f}_{n})&\le 0 
\quad 
\mbox{for all $n \in \mathbb{N}$},
\\[6pt]
\label{10/01/05/11:22}
\sup_{n \in \mathbb{N}}\|\widetilde{f}_{n}\|_{H^{1}}&\le C_{1}.
\end{align}
We apply Lemma \ref{08/09/27/22:43} to $\{\widetilde{f}_{n}\}_{n\in \mathbb{N}}$ and obtain a subsequence $\{\widetilde{f}_{n}\}_{n \in \mathbb{N}}$ (still denoted by the same symbol) and a nontrivial function $Q \in H^{1}(\mathbb{R}^{d})$ such that 
\begin{equation}\label{10/01/03/15:38}
\lim_{n\to \infty}
\widetilde{f}_{n} = Q 
\quad 
\mbox{weakly in $H^{1}(\mathbb{R}^{d})$}.
\end{equation}
The property (\ref{10/01/03/15:38}) also gives us that 
\begin{equation}
\begin{split}
\label{08/05/14/12:07}
\|\nabla \widetilde{f}_{n}\|_{L^{2}}^{2}
-\|\nabla \widetilde{f}_{n}-\nabla Q\|_{L^{2}}^{2}
-\|\nabla Q\|_{L^{2}}^{2}
&=2\Re \int_{\mathbb{R}^{d}} (\nabla \widetilde{f}_{n}-\nabla Q) \overline{\nabla Q}\,dx 
\\[6pt]
&\to 0 
\qquad 
\mbox{as $n \to \infty$},
\end{split}
\end{equation}
and 
\begin{equation}
\label{10/01/05/15:06}
\|Q \|_{\widetilde{H}^{1}}^{2}\le N_{1}.
\end{equation}
Here, (\ref{10/01/05/15:06}) follows from the lower continuity in the weak topology and (\ref{10/01/05/11:16}). Moreover, Lemma \ref{08/03/28/21:21}, together with (\ref{10/01/03/15:38}), gives us that: for all $2\le q < 2^{*}$,  
\begin{equation}\label{08/05/14/12:08}
\|\widetilde{f}_{n}\|_{L^{q}}^{q}-\|\widetilde{f}_{n}-Q\|_{L^{q}}^{q}-\|Q\|_{L^{q}}^{q} \to 0 
\quad 
\mbox{as $n \to \infty$}.
\end{equation}
It follows from (\ref{08/05/14/12:07}) and (\ref{08/05/14/12:08}) that 
\begin{equation}\label{08/05/14/12:17}
\mathcal{K}(\widetilde{f}_{n})-\mathcal{K}(\widetilde{f}_{n}-Q)-\mathcal{K}(Q) \to 0 
\quad 
\mbox{as $n \to \infty$}
\end{equation}
and 
\begin{equation}\label{08/05/14/12:26}
\|\widetilde{f}_{n} \|_{\widetilde{H}^{1}}^{2}
-\| \widetilde{f}_{n}-Q\|_{\widetilde{H}^{1}}^{2}
-\| Q \|_{\widetilde{H}^{1}}^{2} \to 0 
\quad 
\mbox{as $n \to \infty$}.
\end{equation}
We show that the function $Q$ is a minimizer of the variational problem for $N_{1}$. To this end, it suffices to prove that 
\begin{equation}\label{10/01/04/22:59}
\mathcal{K}(Q)\le 0.
\end{equation} 
Indeed, by the definition of $N_{1}$, (\ref{10/01/04/22:59}) implies that $N_{1}\le \|Q\|_{\widetilde{H}^{1}}$, which, together with (\ref{10/01/05/15:06}), yields that 
\begin{equation}\label{10/01/05/15:01}
\|Q\|_{\widetilde{H}^{1}}^{2}=N_{1}.
\end{equation} 
We prove (\ref{10/01/04/22:59}) by contradiction: suppose that $\mathcal{K}(Q)>0$. Then, (\ref{10/01/05/11:18}) and (\ref{08/05/14/12:17}) imply that   
\[
\mathcal{K}(\widetilde{f}_{n}-Q) \le 0
\qquad 
\mbox{for all sufficiently large $n \in \mathbb{N}$}.
\] 
Therefore, by the definition of $N_{1}$, we have  
\begin{equation}\label{08/05/14/12:27}
\|\widetilde{f}_{n}-Q\|_{\widetilde{H}^{1}}^{2}\ge N_{1}
\qquad 
\mbox{for all sufficiently large $n \in \mathbb{N}$}. 
\end{equation}
Combining (\ref{08/05/14/12:26}), (\ref{10/01/05/11:16}) and (\ref{08/05/14/12:27}), we obtain that $\|Q\|_{\widetilde{H}^{1}}=0$, which is a contradiction. Hence, we have proved (\ref{10/01/04/22:59}). 
\par 
Now, it follows from (\ref{10/01/05/11:16}),  (\ref{10/01/03/15:38}) and (\ref{10/01/05/15:01}) that 
\[
\begin{split}
\|\widetilde{f}_{n}-Q\|_{\widetilde{H}_{1}}^{2}
&=\|\widetilde{f}_{n}\|_{\widetilde{H}^{1}}^{2}
-\frac{2\left\{ p-\left( 1+\frac{4}{d}\right)\right\} }{p-1}\Re{\int_{\mathbb{R}^{d}} \nabla \widetilde{f}_{n}(x) \overline{\nabla Q(x)}\,dx }
\\
& \qquad \qquad 
-2\Re{\int_{\mathbb{R}^{d}} \widetilde{f}_{n}(x) \overline{Q(x)}\,dx }+
\|Q\|_{\widetilde{H}^{1}}^{2}
\\
& \to 0
\qquad 
\mbox{as $n \to \infty$}.
\end{split}
\]
This, together with (\ref{09/12/19/15:38}), immediately yields the strong convergence:
\begin{equation}\label{10/01/04/23:10}
\lim_{n\to \infty}\widetilde{f}_{n} = Q \ \mbox{ strongly in $H^{1}$}.
\end{equation}  
We shall prove 
\begin{equation}\label{10/01/05/21:19}
\mathcal{K}(Q)=0.
\end{equation}
Putting $Q_{s}=sQ$ for $s \in \mathbb{R}$, we have that 
\[
\mathcal{K}(Q_{s})>0 
\quad 
\mbox{for all  $s \in  \Big( 0,\    \left\{ \frac{2(p+1)}{d(p-1)}\frac{\|\nabla Q\|_{L^{2}}^{2}}{\|Q\|_{L^{p+1}}^{p+1}}\right\}^{\frac{1}{p-1}}
\Big)$}. 
\]
Here, (\ref{10/01/04/22:59}) implies that 
\[
\left\{ \frac{2(p+1)}{d(p-1)}\frac{\|\nabla Q\|_{L^{2}}^{2}}{\|Q\|_{L^{p+1}}^{p+1}}\right\}^{\frac{1}{p-1}}\le 1. 
\]
Supposing the undesired situation $\mathcal{K}(Q)<0$ ($\mathcal{K}(Q)\le 0$ has been proved already), we have by the intermediate value theorem that there exists $s_{0} \in (0,1)$ such that $\mathcal{K}(Q_{s_{0}})=0$, so that $\|Q_{s_{0}}\|_{\widetilde{H}^{1}}^{2}\ge N_{1}$ by the definition of $N_{1}$. However, it follows from (\ref{10/01/05/15:01}) that  
\[
\|Q_{s_{0}}\|_{\widetilde{H}^{1}}^{2}=s_{0}^{2}\|Q\|_{\widetilde{H}^{1}}^{2}
< \|Q\|_{\widetilde{H}^{1}}^{2}=N_{1},
\]
which is a contradiction. Hence, (\ref{10/01/05/21:19}) holds valid. 
\par 
Next, we consider the variational problem for $N_{2}$. We show that the function $Q$ obtained above is a minimizer of this problem. Put $Q_{\lambda}(x)=\lambda^{\frac{2}{p-1}}Q(\lambda x)$ for $\lambda>0$. We easily verify that 
\[
\mathcal{K}(Q_{\lambda})=0
 \quad 
 \mbox{for all $\lambda >0$},
\]
which implies that  
\[
\|Q_{\lambda}\|_{\widetilde{H}^{1}}^{2}\ge N_{1} 
\quad 
\mbox{for all $\lambda>0$}.
\]
Then, the same argument as (\ref{10/01/05/22:26}), together with (\ref{10/01/05/15:01}), shows that $\|Q_{\lambda}\|_{\widetilde{H}^{1}}\colon (0,\infty)\to [0,\infty)$ takes the minimum at 
\begin{equation}\label{10/01/05/22:28}
\lambda=\left( \frac{d(p-1)}{d+2-(d-2)p}\right)^{\frac{1}{2}}\frac{\left\| Q\right\|_{L^{2}}}{\left\| \nabla Q\right\|_{L^{2}}}=1.
\end{equation}
Therefore, as well as (\ref{08/06/17/13:36}), we have that 
\begin{equation*}\label{09/12/19/17:25}
\begin{split}
N_{1}^{\frac{p-1}{2}}&=\|Q\|_{\widetilde{H}^{1}}^{p-1}
=\|Q_{\lambda}\|_{\widetilde{H}^{1}}^{p-1}|_{\lambda=1}
\\
&=
\left( \frac{d}{2}\right)^{\frac{p-1}{2}} \left( \frac{d(p-1)}{d+2-(d-2)p} \right)^{\frac{1}{4}\left\{ d+2-(d-2)p+4 \right\} }\mathcal{N}_{2}(Q).
\end{split}
\end{equation*}
This, together with (\ref{08/05/13/11:33}) in Proposition \ref{08/06/16/15:24},   leads us to the conclusion that 
\begin{equation}\label{10/01/05/21:17}
\mathcal{N}_{2}(Q)=N_{2}.
\end{equation} 
Here, we remark that (\ref{10/01/05/21:19}) and (\ref{10/01/05/22:28}) yield the relation (\ref{08/09/24/15:13}): 
\[
\left\| Q \right\|_{L^{2}}^{2}
=
\frac{d+2-(d-2)p}{d(p-1)}
\left\| \nabla Q\right\|_{L^{2}}^{2}
=
\frac{d+2-(d-2)p}{2(p+1)}
\left\| Q \right\|_{L^{p+1}}^{p+1}.
\]  

We shall show that $Q$ is also a minimizer of the variational problem for $N_{3}$. Indeed, (\ref{08/05/13/11:25}) in Proposition \ref{08/06/16/15:24}, together with (\ref{10/01/05/21:19}) and (\ref{10/01/05/21:17}), yields that     
\begin{equation}\label{10/01/05/21:48}
\begin{split}
\mathcal{I}(Q)&=
\frac{ \|Q\|_{L^{2}}^{p+1-\frac{d}{2}(p-1)} 
\|\nabla Q\|_{L^{2}}^{ \frac{d}{2}(p-1) }}
 {\|Q\|_{L^{p+1}}^{p+1}}
\\[6pt]
&=\frac{d(p-1)}{2(p+1)}\mathcal{N}_{2}(Q)
=\frac{d(p-1)}{2(p+1)}N_{2}=\mathcal{N}_{3}.
\end{split}
\end{equation}
\indent 
Finally, we prove that $Q$ satisfies the equation (\ref{08/05/13/11:22}) with $\omega =1$:  
\begin{equation}\label{08/06/14/13:28}
\Delta Q -Q +|Q|^{p-1}Q=0.
\end{equation}
Since $\mathcal{I}(Q)$ is the critical value of $\mathcal{I}$ (see (\ref{10/01/05/21:48})), we have  
\begin{equation}\label{10/01/06/12:25}
\begin{split}
&0=\frac{d}{d\varepsilon} \mathcal{I}(Q+\varepsilon \phi) 
\biggm|_{\varepsilon=0}
\\[6pt]
&=\frac{
(p+1-\frac{d}{2}(p-1))
\|Q\|_{L^{2}}^{p-1-\frac{d}{2}(p-1)}
\|\nabla Q\|_{L^{2}}^{\frac{d}{2}(p-1)}
\Re \int Q \overline{\phi} dx
}{\|Q\|_{L^{p+1}}^{p+1}}
\\[6pt]
&\qquad 
+ 
\frac{
\frac{d}{2}(p-1)
\|Q\|_{L^{2}}^{p+1-\frac{d}{2}(p-1)}
\|\nabla Q\|_{L^{2}}^{\frac{d}{2}(p-1)-2}
\Re \int \nabla Q \overline{\nabla \phi} dx
}{\|Q\|_{L^{p+1}}^{p+1}}
\\[6pt]
&\qquad 
- 
\frac{
(p+1)
\|Q\|_{L^{2}}^{p+1-\frac{d}{2}(p-1)}
\|\nabla Q\|_{L^{2}}^{\frac{d}{2}(p-1)}
\Re \int |Q|^{p-1}Q \overline{\phi} dx
}{\|Q\|_{L^{p+1}}^{2(p+1)}}
\quad 
\mbox{for all $\phi \in C_{c}^{\infty}(\mathbb{R}^{d})$}.
\end{split}
\end{equation}
Combining  (\ref{10/01/06/12:25})  with (\ref{10/01/05/21:19}) ($\| Q \|_{L^{p+1}}^{p+1}=\frac{2(p+1)}{d(p-1)}\|\nabla Q \|_{L^{2}}^{2}$), we obtain that 
\begin{equation}\label{10/01/06/12:37}
\begin{split}
0&=\frac{d^{2}(p-1)^{2}}{4(p+1)}\left(\frac{2(p+1)}{d(p-1)} -1 \right)
\|Q\|_{L^{2}}^{p-1-\frac{d}{2}(p-1)}
\|\nabla Q\|_{L^{2}}^{\frac{d}{2}(p-1)-2}
\Re \int_{\mathbb{R}^{d}} Q(x)\overline{\phi(x)}\,dx 
\\[6pt]
& \qquad 
+
\frac{d^{2}(p-1)^{2}}{4(p+1)}
\frac{\|Q\|_{L^{2}}^{p+1-\frac{d}{2}(p-1)}
\|\nabla Q\|_{L^{2}}^{\frac{d}{2}(p-1)-2}}
{\|\nabla Q\|_{L^{2}}^{2}}
\Re \int_{\mathbb{R}^{d}} \nabla Q(x)\overline{\nabla \phi(x)}\,dx 
\\[6pt]
&\qquad -
\frac{d^{2}(p-1)^{2}}{4(p+1)}\frac{\|Q\|_{L^{2}}^{p+1-\frac{d}{2}(p-1)}
\|\nabla Q\|_{L^{2}}^{\frac{d}{2}(p-1)-2}}{\|\nabla Q\|_{L^{2}}^{2}}
\Re \int_{\mathbb{R}^{d}}|Q(x)|^{p-1}Q(x) \overline{\phi(x)}\,dx.
\end{split}
\end{equation}
This, together with (\ref{10/01/05/22:28}), shows that $Q$ satisfies the equation (\ref{08/06/14/13:28}) in a weak sense. Moreover, it turns out that $Q$ has the following properties: $Q$ is positive (see \cite{Lieb-Loss}), radially symmetric (see \cite{Gidas-Ni-Nirenberg}), unique up to the translations and phase shift (see \cite{Kwong}), and satisfies the decay estimate (see \cite{Berestycki-Lions, Lieb-Loss}):   
\begin{equation}\label{10/01/21/11:02}
|\partial^{\alpha} Q(x)|\le C e^{-\delta |x|}
\quad 
\mbox{for all multi-index $\alpha$ with $|\alpha|\le 2$},  
\end{equation}
where $C$ and $\delta$ are some positive constants. 
\par 
Finally, we shall show that $Q$ belongs to the Schwartz space $\mathcal{S}(\mathbb{R}^{d})$. Since $Q$ is a smooth and radially symmetric solution to 
 (\ref{08/06/14/13:28}),  we have that  
\begin{equation}\label{10/01/21/10:56}
\frac{d^{2}}{dr^{2}}\partial^{\alpha} Q= \partial^{\alpha}Q+\partial^{\alpha}Q^{p}-\partial^{\alpha} \frac{d-1}{r}\frac{d}{dr} Q 
\quad 
\mbox{for all multi-index $\alpha$ and $r=|x|$}. 
\end{equation}
Then, an induction argument, together with (\ref{10/01/21/11:02}), leads to 
 that $Q \in \mathcal{S}(\mathbb{R}^{d})$. 
\end{proof}

\section*{Acknowledgments}
The authors would like to express their deep gratitude to Professor Kenji Nakanishi for pointing out a mistake in the earliest version of this paper. The authors also thank Doctor Takeshi Yamada for his valuable comments.  
\par 
H. Nawa is partially supported by the Grant-in-Aid for Scientific Research (Challenging Exploratory Research \# 19654026) of JSPS.

\bibliographystyle{plain}

\vspace{24pt}
\noindent
{
Takafumi Akahori,
\\
Graduate School of Science and Engineering
\\
Ehime University, 
\\ 
2-5 Bunkyo-cho, Matsuyama, 790-8577, Japan 
\\ 
E-mail: akahori@math.sci.ehime-u.ac.jp
}
\\
\\
{
Hayato Nawa,
\\
Division of Mathematical Science, Department of System Inovation
\\
Graduate School of Engineering Science
\\
Osaka University, 
\\
Toyonaka 560-8531 , JAPAN 
\\
E-mail: nawa@sigmath.es.osaka-u.ac.jp
}
\end{document}